\numberwithin{paragraph}{section}
\setlist[enumerate]{label=\it{(\roman*)},
	ref=\it{(\roman*)}}
\newlist{enumerate1}{enumerate}{1}
\setlist[enumerate1]{resume,leftmargin=*,label=\it(\arabic*),ref=\it{(\arabic*)}}
\newlist{enumeratea}{enumerate}{1}
\setlist[enumeratea]{resume,leftmargin=*,label=\it(\alph*),ref=\it{(\alph*)}}
\newcommand{\Bcal}{{\mathscr B}}
\newcommand{\Dcal}{{\mathscr D}}
\newcommand{\Hcal}{{\mathscr H}}
\newcommand{\Lcal}{{\mathscr L}}
\newcommand{\Xcal}{{\mathscr X}}
\newcommand{\C}{{\mathbb C}}
\newcommand{\R}{{\mathbb R}}
\newcommand{\Q}{{\mathbb Q}}
\newcommand{\Z}{{\mathbb Z}}
\newcommand{\N}{{\mathbb N}}
\newcommand{\K}{{\mathbb K}}
\renewcommand{\Im}{\mathrm{Im}}
\newcommand{\metr}{{\|\hspace{1ex}\|}}
\newcommand{\val}{{|\hspace{1ex}|}}
\newcommand{\Spec}{\mathrm{Spec}}
\newcommand{\Supp}{\mathrm{Supp}}
\newcommand{\Div}{\mathrm{Div}}
\newcommand{\Pic}{\mathrm{Pic}}
\newcommand{\arnef}{\mathrm{ar\text{-}nef}}
\newcommand{\arint}{\mathrm{ar\text{-}int}}
\newcommand{\relint}{\mathrm{rel\text{-}int}}
\newcommand{\relnef}{\mathrm{rel\text{-}nef}}
\newcommand{\relsnef}{\mathrm{rel\text{-}snef}}
\newcommand{\arsnef}{\mathrm{ar\text{-}snef}}
\newcommand{\nef}{\mathrm{nef}}
\newcommand{\snef}{\mathrm{snef}}
\newcommand{\integrable}{\mathrm{int}}
\newcommand{\FS}{\mathrm{FS}}
\newcommand{\tFS}{\mathrm{tFS}}
\newcommand{\YZ}{\mathrm{YZ}}
\newcommand{\CM}{\mathrm{CM}}
\newcommand{\gm}{\mathrm{gm}}
\newcommand{\SP}{\mathrm{SP}}
\newcommand{\DSP}{\mathrm{DSP}}
\newcommand{\mo}{\mathrm{mo}}
\newcommand{\cpt}{\mathrm{cpt}}
\newcommand{\OO}{\mathcal{O}}
\newcommand{\ord}{\mathrm{ord}}
\renewcommand{\Im}{\operatorname{Im}\nolimits}
\newcommand{\Ker}{\operatorname{Ker}\nolimits}
\renewcommand{\dim}{\operatorname{dim}\nolimits}
\newcommand{\kcirc}{K^\circ}
\newcommand{\ktilde}{\tilde{K}}
\newcommand{\trop}{\mathrm{trop}}
\newcommand{\an}{\mathrm{an}}
\newtheorem{theorem}{Theorem}[section]
\newtheorem{corollary}[theorem]{Corollary}
\newtheorem{lemma}[theorem]{Lemma}
\newtheorem{lemma*}{Lemma}
\newtheorem{proposition}[theorem]{Proposition}
\newtheorem{prop}[theorem]{Proposition}
\newtheorem{theointro}{Theorem}
\theoremstyle{definition}
\newtheorem{definition}[theorem]{Definition}
\newtheorem{proposition&definition}[theorem]{Proposition\&Definition}
\newtheorem{lemma&definition}[theorem]{Lemma\&Definition}
\newtheorem{theorem&definition}[theorem]{Theorem\&Definition}
\newtheorem{example}[theorem]{Example}
\newtheorem{ex}[theorem]{Example}
\newtheorem{example*}{Example}
\newtheorem{remark}[theorem]{Remark}
\newtheorem{remark*}{Remark}
\newtheorem{question*}{Question}
\newtheorem{art}[theorem]{}
 \thanks{The authors were supported by the collaborative research 
	center SFB 1085 \emph{Higher Invariants - Interactions between Arithmetic Geometry and Global Analysis} funded by the Deutsche Forschungsgemeinschaft.}
\begin{document}
	
	\title[Abstract divisorial spaces and arithmetic intersection numbers]{abstract divisorial spaces and arithmetic intersection numbers}
	
	\author[Y.~Cai]{Yulin Cai}
	\address{Y. Cai, Hangzhou International Innovation Institute, Beihang University, Hangzhou 311115, China}
	\email{ylcai5388339@gmail.com}
	
	\author[W.~Gubler]{Walter Gubler}
	\address{W. Gubler, Mathematik, Universit{\"a}t 
		Regensburg, 93040 Regensburg, Germany}
	\email{walter.gubler@mathematik.uni-regensburg.de}

	\begin{abstract}
		Yuan and Zhang introduced arithmetic intersection numbers for adelic line bundles on quasi-projective varieties over a number field. Burgos and Kramer generalized this approach allowing more singular metrics at archimedean places. We introduce abstract divisorial spaces as a tool to generalize these arithmetic intersection numbers to the setting of a proper adelic base curve in the sense of Chen and Moriwaki. We also allow more singular metrics at non-archimedean places using relative mixed energy there as well.
	\end{abstract}

\keywords{Arakelov geometry over adelic curves, heights with respect to singular metrics} 
\subjclass{{Primary 14G40; Secondary 11G50}}

\maketitle

\setcounter{tocdepth}{1}

\tableofcontents

\section{Introduction}

Arakelov theory is an arithmetic intersection theory which is used in arithmetic geometry to solve diophantine problems. First, Arakelov theory was developed for arithmetic surfaces which helped Faltings \cite{faltings1983end} to give a proof of the Mordell, Shafarevich and Tate conjecture. Gillet and Soul\'e \cite{gillet1990arithmetic} generalized Arakelov theory to higher dimensions which was used by Faltings \cite{Faltings91} to prove the Mordell--Lang conjecture for subvarieties of abelian varieties.

\subsection{Classical Arakelov theory} \label{subsec: classical Arakelov theory}
We give here a brief description of the classical Arakelov theory over a number field $K$. A similar description is possible also for function fields. The applications in diophantine geometry mainly use heights of the ambient variety or of subvarieties. These heights are defined as arithmetic intersection numbers of arithmetic divisors, so we stick to this case. Let $\Xcal$ be a projective arithmetic variety over the ring of algebraic integers $\OO_K$ which means a  flat projective variety over $\OO_K$ {with regular generic fiber}. We assume that the relative dimension of $\Xcal$ over $\OO_K$ is $d$. An \emph{arithmetic model divisor} on $\Xcal$ is a pair $(D,g_D)$ where $D$ is a Cartier divisor on $\Xcal$ and $g_D$ is a family of smooth Green functions for $D$ at the archimedean places. These Green functions correspond to smooth metrics $\metr$ of $\OO_{\mathscr X}(D)$ at the archimedean places given by $g_D=-\log\|s_D\|$ where $s_D$ is the canonical rational section of $\OO_{\mathscr X}(D)$. 
The arithmetic intersection product in the theory of Gillet and Soul\'e \cite{gillet1990arithmetic} gives arithmetic intersection numbers $(D_0,g_0) \cdots (D_d,g_d) \in \R$ for any arithmetic model divisors $(D_0,g_0), \dots, (D_d,g_d)$ on $\Xcal$. 
The product formula of $K$ yields that these arithmetic intersection numbers depend only on the isometry classes of the underlying metrized line bundles $(\OO_{\mathscr X}(D_j), \metr_j)$. 

The following positivity notions are important for the following. An arithmetic divisor is \emph{relatively nef} if the metrics have semipositive curvature forms at the archimedean places and if the restriction of the model $\OO_{\mathscr X}(D)$ to every fiber $\Xcal_{\mathfrak p}$ is a nef line bundle in the sense of algebraic geometry for every maximal ideal $\mathfrak p $ of $\OO_K$. We call an arithmetic divisor $(D,g_D)$ on $\Xcal$ \emph{arithmetically nef} if it is relatively nef and if for every horizontal curve $Y$ of $\Xcal$, the height of $Y$ with respect to the  metrized line bundle $(\OO_{\mathscr X}(D),\metr)$ corresponding to $(D,g_D)$ is non-negative where the height is the classical height of the generic point of $Y$ viewed as a closed point of the generic fiber of $\Xcal$ over $\OO_K$. Note that for vertical curves $Y$ (i.e.~curves contained in a  fiber $\Xcal_{\mathfrak p}$ for some maximal ideal $\mathfrak p$ of $\OO_K$), relative nefness yields that the degree of $Y$ with respect to $\OO_{\mathscr X}(D)$ is non-negative.

\subsection{Semipositive metrics in the sense of Zhang} \label{subsec: semipositive metrics in the sense of Zhang}
Classical Arakelov theory handles the non-archimedean places by using models over $\OO_K$. It was realized by Zhang \cite{zhang1995smallpoints} that the contributions of the non-archimedean places to the arithmetic intersection numbers is also determined by metrics. Given a projective arithmetic variety $\Xcal$ over $\OO_K$ with generic fiber $X=\Xcal_K$ and a line bundle $\Lcal$ on $\Xcal$ with $L=\Lcal_K$, then for every non-archimedean place $v$ of $K$, there is a unique metric $\metr_{\Lcal,v}$ of $L$ at $v$ such that $\|s(x)\|_{\Lcal,v}=1$ for every local frame $s$ of  $\Lcal$ at the reduction of $x$ in the  fiber of $\Xcal_{\mathfrak p_v}$ over the maximal ideal $\mathfrak p_v$ of $\OO_K$ corresponding to $v$. We call $(\Xcal,\Lcal)$ a \emph{model} of $(X,L)$. Then a \emph{model metric} $\metr$ of $L$ is a  metric $\metr_v$ of $L$ at every place $v$ of $K$ which is smooth if $v$ is archimedean and such that there is $n \in \N_{>0}$ and a global model $(\Xcal,\Lcal)$ of $(X,L^{\otimes n})$ such that $\metr_v^{\otimes n}=\metr_{\Lcal,v}$ if $v$ is non-archimedean.
The upshot is that arithmetic intersection numbers are now defined for line bundles $(L_0,\metr_0), \dots, (L_d,\metr_d)$ endowed with model metrics on a $d$-dimensional (regular) projective variety $X$ over $K$. The \emph{height} of a $e$-dimensional closed subvariety $Y$ of $X$ with respect to a metrized line bundle $\overline L$ endowed with a model metric is defined as the $(e+1)$-fold self-intersection number of $\overline L|_Y$ on $Y$. We call $\overline L=(L,\metr)$ \emph{relatively nef} (resp.~\emph{arithmetically nef}) if the arithmetic divisor $({\rm div}(s), -\log\|s\|)$ is {relatively nef} (resp.~{arithmetically nef}) for a non-zero rational section $s$ of $L$. This does not depend on the choice of $s$.

In case of toric or abelian varieties, very ample line bundles have canonical metrics. However, they are either not smooth at the archimedean places (in the toric case) or cannot be given by a model at non-archimedean places of bad reduction (in the case of abelian varieties).  Using Tate's limit argument, the canonical metrics are uniform limits of semipositive model metrics. Zhang defined \emph{semipositive metrics} as uniform limits of semipositive model metrics. Such a uniform convergence is allowed at finitely many places, at the other {places} we choose the model metrics induced by a single model. A metric of $L$ is called \emph{integrable} if it is the quotient of two semipositive  metrics with respect to tensor multiplication. Zhang then showed that the arithmetic intersection numbers extend to line bundles endowed with integrable metrics. This allows to deal with canonical metrics on toric and abelian varieties. Ullmo \cite{ullmo1998} used that to prove the Bogomolov conjecture about small geometric points of a curve inside its Jacobian and Zhang \cite{zhang1998} generalized this shortly afterwards for closed subvarieties in an abelian variety. 

\subsection{Arakelov theory over adelic curves} \label{subsec: Arakelov theory over adelic curves}
The above descriptions of arithmetic intersection products work well in the case of a number field and in case of function fields. But there are other fields of interest which have a product formula with respect to a non-necessarily discrete set of absolute values. An example arising from relative Arakelov theory was studied by Moriwaki \cite{moriwaki2000}. Let $\Bcal$ be a projective  arithmetic variety over $\OO_F$ {of relative dimension $b$} for a number field $F$, let $K$ be the function field of $\Bcal$ and let $B$ the generic fiber of $\Bcal$. We fix an arithmetic polarization $\overline\Hcal$ on $\Bcal$ which is an ample line bundle $\Hcal$ on $\Bcal$ endowed with an arithmetically nef metric which yields that the height $h_{\overline{\mathscr H}}(V)$ of any closed subvariety $V$ of $\Bcal$ with respect to $\Hcal$ is non-negative.  Then Moriwaki considers the set $\Omega$ of absolute values on $K$ where the non-archimedean ones are parametrized by the prime divisors $V$ of $\Bcal$ endowed with the weight $h_{\overline{\mathscr H}}(V)$ and where the archimedean places are given by the generic points (i.e. the points of the complex manifold  $B_v^\an$ associated to an archimedean place $v$ which do not lie in any proper closed algebraic subvariety). Then the corresponding absolute value is given by $|f(p)|_v$ for any $f \in K$. These archimedean absolute values are not discrete, they are parametrized by $B_v^\an$ for any archimedean place $v$ and are endowed with the weight 
$$\mu_v = \frac{[F_v:\Q_v]}{[F:\Q] }\, c_1(H_v,\metr_v)^b$$
where $\Q_v$ and $F_v$ are the completions of $\Q$ and $F$ with respect to the archimedean place $v$ of $F$. Then the induction formula for heights yields that $K$ satisfies the \emph{product formula}  
\begin{equation} \label{intro: product formula}
	\int_\Omega \log|f|_v \,\nu(dv) = 0
\end{equation}
for any non-zero element $f$ of $K$ where the counting measure is used at the non-archimedean places of $\Omega$. Moriwaki \cite{moriwaki2000} showed that Arakelov theory can be done for projective varieties over such finitely generated fields giving arithmetic intersection numbers and proving the Bogomolov conjecture as a natural generalization of the number field case.

This was the prototype for Chen and Moriwaki \cite{chen2020arakelov} to introduce adelic curves and developing Arakelov theory over such adelic curves. The adelic curves capture all situations where global heights in diophantine geometry occur. An \emph{adelic curve} consists of a field $K$ and a  space $\Omega$ parametrizing absolute values on $K$ by {a map $\phi$}. The space $\Omega$ is endowed with a positive measure $\nu$. We assume throughout that the adelic curve is \emph{proper} which means that the product formula \eqref{intro: product formula} holds. We assume that $K$ is countable or that the underlying $\sigma$-algebra of $\Omega$ is discrete. Let $X$ be a projective variety over $K$. 
A \emph{relatively nef adelic metric} of a line bundle $L$ over $X$ is a family $\metr=(\metr_\omega)_{\omega \in \Omega}$ of metrics which  is locally bounded and measurable such that $\metr_\omega$ is a relatively nef metric of $L$ in the sense of Zhang for every $\omega \in \Omega$, see Section \ref{section: global theory} for details.
Then Chen and Moriwaki \cite{chen2021arithmetic} introduce arithmetic intersection numbers for integrable adelic line bundles as a generalization of the numbers from Zhang's theory in \S \ref{subsec: semipositive metrics in the sense of Zhang}. 
Here, an \emph{integrable adelic line bundle} $(L,\metr)$ on $X$ has the form $(L,\metr)=(L',\metr') \otimes (L'',\metr'')^{-1}$ for {relatively nef adelic line bundles} $(L',\metr'),(L'',\metr'')$ on $X$.  Chen and Moriwaki   \cite{chen2024positivity} prove the arithmetic Hilbert--Samuel formula in this setting and give a generalization of Yuan's equidistribution theorem.

\subsection{Arakelov theory for line bundles with singular metrics} \label{subsec: Arakelov theory for line bundles with singular metrics}
Let us come back to a number field $K$. Some natural metrics occurring in arithmetic geometry are not continuous and hence cannot be handled by classical Arakelov theory or by Zhang's extension. An example is the Petersson metric of the Hodge bundle on a compactification of the moduli space $A_g$ of principally polarized abelian varieties of dimension $g$. This played a major role in the proof of Faltings \cite{faltings1983end} showing the three finiteness results mentioned at the beginning. A systematic approach to Arakelov theory for singular metrics was done by Burgos, Kramer and K\"uhn \cite{burgos2005arithmetic}, \cite{burgos2007cohomological} leading to arithmetic intersection products for line bundles with  $\log$-$\log$ singularities along the boundary. 

A new approach is given by Yuan and Zhang \cite{yuan2021adelic}. Let $U$ be a quasi-projective variety over the number field $K$ with projective compactification $X$ such that the boundary $B=X \setminus U$ is a Cartier divisor. Then they develop an Arakelov theory for \emph{compactified metrized} line bundles. Let $L_U$ be a line bundle over $U$. The metric $\metr$ of a compactified metrized line bundle $\overline L$ is given by a Cauchy sequence of model metrics of $L_U$  of $U$ with respect to a boundary topology induced by the arithmetic boundary divisor $(B,g_B)$. This means more precisely that there is a sequence of line bundles $L_n$ on projective compactifications $X_n$ of $U$ endowed with model metrics $\metr_n$ with $L_n|_U=L_U$ satisfying the following Cauchy condition: Let $s$ be any non-zero rational section of $L_U$, then it extends uniquely to a rational section $s_n$ of $L_n$ and we require  that for all $\varepsilon\in \Q_{>0}$, there is $n_0 \in \N$ with 
$$-\varepsilon B \leq {\rm div}(s_n) - {\rm div}(s_m) \leq \varepsilon B \quad \text{and} \quad -\varepsilon g_B \leq \log\|s_m\|_m - \log\|s_n\|_n \leq \varepsilon g_B$$ 
for all $n,m \geq n_0$, where the inequality on  the left takes place after pulling back to a joint projective compactification. This Cauchy condition does not depend on the choice of $s$. We identify two Cauchy sequences if they are zero-sequences with respect to the boundary topology (same conditions as above with no $m$). If we skip the metrics, we get \emph{compactified geometric line bundles}, so the compactified metrized line bundle $\overline L$ has an underlying compactified geometric line bundle $L$ given by the Cauchy sequence of line bundles $L_n$ on the projective compactifications $X_n$ of $U$. 

For a fixed place $v$ of $K$, compactified metrics of $L_U$ with the same underlying compactified geometric line bundle $L$ are precisely those whose Green functions $g,g'$ satisfy $g'-g=o(g_B)$ along the boundary $B$ for a Green function $g_B$ of $B$. This is a weaker assumption than in the approach of Burgos--Kramer--K\"uhn \cite{burgos2007cohomological} as $\log (\log(t))=o(\log(t))$ for $t \to 0$ in $\R_{>0}$ and as the singularities in \cite{burgos2007cohomological} are allowed only at archimedean places $v$. A compactified metrized line bundle is called \emph{strongly relatively nef} (resp.~\emph{strongly arithmetically nef}) if it is given by a Cauchy sequence of line bundles endowed with {relatively nef} (resp.~{arithmetically nef})  model metrics. If the metric of a compactified  line bundle is the quotient of two strongly arithmetically nef compactified metrics, then we call it \emph{arithmetically integrable}. It is clear that the arithmetically integrable compactified line bundles of $U$ include the line bundles on the projective compactification $X$ endowed with integrable metrics in the sense of Zhang. Then Yuan and Zhang \cite[Theorem~4.1.3]{yuan2021adelic} extend the arithmetic intersection product to arithmetically integrable compactified line bundles. They extend Yuan's equidistribution theorem to this setting \cite[Theorem~5.4.3]{yuan2021adelic} and show that canonical metrics on symmetric relatively ample line bundles of abelian schemes over $U$ are arithmetically {nef and hence arithmetically integrable} \cite[Theorem~6.1.1]{yuan2021adelic}.

The uniform Mordell conjecture asks to bound the number of $K$-rational number of a curve of genus $g>1$ solely in terms of $g$ and the rank of $J(K)$ for the Jacobian $J$. This was recently solved by Dimitrov, Gao, Habegger \cite{DGH21} subject to a condition which was later removed by K\"uhne \cite{kühne2021equidistribution}. Yuan \cite{yuan2021arithmetic} used the theory of compactified line bundles to show that Hodge line bundle on a relative curve endowed with its canonical metric is a compactified line bundle which is arithmetically nef and big. This allowed him to give a new proof of the uniform Mordell conjecture with sharper bounds and which also applies to the function field setting.   

\subsection{The energy approach of Burgos and Kramer} \label{energy approach of Burgos and Kramer}
While compactified metrized line bundles are allowed to have rather strong singularities along the boundary, the assumption that the metrics are arithmetically nef is quite restrictive. There are metrics with $\log$-$\log$ singularities which are semipositive in the approach of Burgos--Kramer--K\"uhn \cite{burgos2007cohomological}, but which are not arithmetically integrable. This is illustrated in the running example below. It is suggested by an example of Burgos and Kramer that this is also the case for the Petersson metric of the Hodge bundle on $A_g$. To combine the two approaches, Burgos and Kramer \cite{burgos2023on} gave the following extension of the arithmetic intersection numbers of compactified metrized line bundles. We consider a compactified metrized line bundle $(L,\metr)$ of the $d$-dimensional quasi-projective variety $U$ over the number field $K$ endowed with a (singular) strongly arithmetically nef metric $\metr$. Then we denote by $h_{(L,\metr)}(U)$ the height of $U$ with respect to $(L,\metr)$ given by the arithmetic intersection numbers of Yuan and Zhang. Burgos and Kramer consider now a more singular relatively nef metric $\metr'$ of $L$ where they assume that $\metr_v'=\metr_v$ for all non-archimedean places $v$ of $K$. For an archimedean place $v$ of $K$, they use the theory for the \emph{relative energy}  
$$E(\metr_v,\metr_v') \coloneqq \sum_{^j=0}^d \int_{U_v^\an} \log(\metr_v/\metr_v') \, c_1(L_v,\metr_v)^{j}  \wedge c_1(L_v,\metr_v')^{n-j} \in \R \cup \{-\infty\}$$
of singular metrics given by Darvas, Di Nezza and Lu \cite{darvas2023relative} to define the height of $X$ with respect to $(L,\metr')$ by
$$h_{(L,\metr')}(U)=h_{(L,\metr)}(U)+ \sum_{v | \infty} \frac{[K_v:\Q_v]}{[K:\Q]} \, E(\metr_v,\metr_v').$$
Note that the height is finite if and only if the relative energy is finite for all archimedean places $v$ of $K$. Of course, there is also a mixed version of this which can be used to extend the arithmetic intersection numbers of Yuan and Zhang. Burgos and Kramer also show that this extension covers all arithmetic intersection numbers from the Burgos--Kramer--K\"uhn approach described in \ref{subsec: Arakelov theory for line bundles with singular metrics}. In particular, arithmetic intersection numbers on $A_g$ using the Hodge line bundle endowed with the canonical Petersson metric are well-defined and finite. However, the previous approach of Burgos--Kramer--K\"uhn could not be used to deal with the line bundle $J_{k,m}$ of Siegel--Jacobi forms of weight $k$ and index $m$ over the universal abelian scheme $B_g$ as a coarse moduli space as the singularity of the canonical metric of $J_{k,m}$ is not of $\log$-$\log$ type. Note that $B_g$ is an abelian scheme over the quasi-projective variety $A_g$, we neglect the choice of a level structure for $N \geq 3$ for simplicity of the exposition. Now Burgos and Kramer  show that {the relative energy of} the canonical metric of $J_{k,m}$ with respect to any fixed choice of a strongly arithmetically nef metric of $J_{k,m}$ (which  exists in this case)  is finite. This shows that arithmetic intersection numbers involving $J_{k,m}$ endowed with the canonical  metrics at the archimedean places is well-defined which allowed them to compute the height of $B_g$ with respect to $J_{k,m}$ endowed with the canonical metric.

\subsection{A running example} \label{subsec: a running example}
Very often in algebraic or arithmetic geometry, toric geometry gives enlightening examples. In Gari Peralta's thesis \cite{peralta-thesis24}, he investigated compactified toric metrics on toric varieties over a number field. We pick from the toric world just a simple example to illustrate the rather abstract results. We do not claim any originality and we refer to his thesis for generalizations. We also thank Jose Burgos for discussions about the example.

We choose  $K=\Q$ for the base field. We denote by $M_\Q$ the set of places $v$ of $\Q$ given either by $v=\infty$ or by a prime number $p$. For $v \in M_\Q$, we choose the standard $v$-adic absolute value on $\Q$. We endow $\Omega=M_\Q$ with the counting measure to get a proper adelic curve as described in \ref{subsec: Arakelov theory over adelic curves}. 

We choose $U=\mathbb A_\Q^1$ with the projective compactification $X=\mathbb P_\Q^1$. We fix homogeneous coordinates $x_0,x_1$ of $X$ viewed as global sections of $\OO_X(1)$ such that $U=X \setminus {\rm div}(x_0)=X \setminus \{\infty\}$.  Then $U$ and $X$ are toric varietes and many things could be generalized to toric varieties, but for simplicity we stick to this simple example. 

Let us write $t\coloneqq x_1/x_0$ for the affine coordinate on $U=\mathbb A_\Q^1$. 
For a place $v$ of $\Q$, let $U_v^\an$ be the analytification of $U\times_\Q \Q_v$ over the completion $\Q_v$. In the archimedean case, it is a manifold while in the non-archimedan case it is the Berkovich space given by the multiplicative seminorms on $\Q_v[t]$ extending the absolute value of $\Q_v$. Then we have the \emph{tropicalization map}
$$\trop_v\colon U_v^\an \longrightarrow \overline \R \, \, \quad x \mapsto -\log|t(x)|_v$$
where $ \overline \R\coloneqq \R \cup \{\pm \infty\}$. We note that $\trop_v(0)=\infty$ and we extend $\trop_v$ to $X_v^\an$ by setting $\trop(\infty)=-\infty$. 

As usual in toric geometry, we associate to the toric divisor $D=[\infty]={\rm div}(x_0)$ the piecewise linear function $\Psi(u)=\min(0,u)$ on $\R$ where the correspondence is given by noting that the slope on $[-\infty,0]$ is the multiplicity $1$ of $D$ at $\infty$ while the slope on $[0,\infty]$ is the multiplicity $0$ of $D$ at $0$ (toric divisors are the divisors supported on $X \setminus \mathbb G_{\rm m}^1$). Then $g_{D}\coloneqq (-\Psi\circ \trop_v)_{v \in M_\Q}$ is a family of Green functions for $D$. The corresponding metric $\metr$ of $\OO_X(1)$ is given by $\|x_0\|_v = |x_0|_v/\max(|x_0|_v,|x_1|_v)$ for any $v \in M_\Q$. In the non-archimedean case, it is the model metric associated to the model $(\Xcal=\mathbb P_\Z^1,\OO_\Xcal(1))$ of $(X,L)$ and in the archimedean case it is a  semipositive metric which is continuous but not smooth. We call $\metr$ the \emph{canonical metric} of $\OO_X(1)$. 

A continuous metric $\metr_v$ of the base change $\OO_X(1)$ to $X_v^\an$ is called \emph{toric} if there	is a continuous function $\psi_v\colon \R \to  \R$ which extends continuously to a self-map of $\overline \R$ such that 
$$\psi_v\circ \trop_v(x)=\log\|x_0(x)\|_v$$
for all $x \in X_v^\an \setminus \{0,\infty\}$. 
The canonical  metric of $\OO_X(1)$  is the toric metric corresponding to $\psi_v=\Psi$. 

In the following, we illustrate the above notions in the case of toric metrics. 
In the non-archimedean case, i.e.~$p$ is a prime number,  the toric metric $\metr_v$ is a model metric if and only if $\psi_v$ is a rational piecewise affine function. 

A toric metric of $\metr_v$ of $\OO_X(1)$ is semipositive in the sense of Zhang if and only if $\psi_v$ is concave and $\psi_v-\Psi$ is bounded, see  \cite[Theorem~4.8.1]{BPS}, or \cite[Theorem~II]{gubler-hertel}. This is due to the fact that the corresponding concave function $\psi_v$  can be uniformly approximated by smooth (resp.~rational piecewise affine)  functions for $v=\infty$ (resp.~for $v$ non-archimedean). 

Now let us consider the trivial line bundle $L_U= \OO_U$ on $U=\mathbb A_\Q^1$. We consider now compactified metrics with underlying compactified geometric line bundle $L=\OO_X(1)$ which means that in defining Cauchy sequence, we can choose $L_n=\OO_X(1)$ and $L_n|_{U}=L_U$ by identifying the section $x_0$ with $1$ for all $n \in \N$.  A singular relatively nef toric metric $\metr'$ of $L$  in the approach of Yuan--Zhang is given  by a continuous concave functions $\psi_v'$ with $\psi_v'(u)-\Psi(u)= o(-u)$ for $u \to -\infty$ and with $\psi_v'(u)-\Psi(u)=O(1)$ for $u \to \infty$ requiring  $\psi_v'=\Psi$ up to finitely many places $v \in M_K$. 

We consider here the following example for such a singular metric $\metr'$ of $L$. We take the canonical metric $\metr$ of $L$ corresponding to $\psi_v=\Psi$ for all $v \in M_\Q$ as a reference metric. We fix $\alpha \in ]0,1[$ and a place $v$ of $M_K$ where the singular toric metric $\metr_v'$ of $L=\OO_X(1)$ is given by the concave piecewise smooth function $\psi_v' \coloneqq \Psi + \rho$ with 
$\rho(u)\coloneqq \frac{1}{\alpha}$ for $u \geq 0$ and $\rho(u)\coloneqq \frac{1}{\alpha}(1-u)^\alpha$ for $u \leq 0$. We take $\metr_w'=\metr_w$ for all other places $w \in M_\Q \setminus \{v\}$. In any case, it is clear that the singular metric $\metr'$ of $L$  is relatively nef. We will see in \cref{rareness of arithmetic nef} that the metric $\metr'$ is never arithmetically integrable. Moreover, we will see in \cref{specific example for height function} that the relative energy is finite if and only if $\alpha<1/2$, and in this case we can compute the height as
$$h_{(L,\metr')}(U) = \frac{2-3\alpha}{\alpha(2\alpha-1)}.$$
Note that this height does neither make sense in the approach of Burgos--Kramer--K\"uhn \cite{burgos2007cohomological} as the singularity of $\metr$ is not of $\log$-$\log$-type, nor does it make sense in the Yuan--Zhang approach \cite{yuan2021adelic} as $\metr$ is not an arithmetically integrable compactified metric. If $v$ is an archimedean place, we can use the energy approach of Burgos and Kramer \cite{burgos2023on} to define the height. As we extend their approach to non-archimedean places in this paper, the formula makes sense and is true in any case.

\subsection{Goals} \label{subsec: goals}
The approach of Yuan and Zhang, giving arithmetic intersection numbers for arithmetically integrable compactified metrized line bundles over quasi-projective varieties over $K$, is written in case of a number field $K$. It is hinted in \cite[\S 2.7]{yuan2021adelic} how to generalize the approach in case when $K$ is a function field. 

The goal of this paper is to generalize the arithmetic intersection numbers of compactified metrized line bundles in case where $(K,\Omega)$ is any adelic curve in the sense of Chen and Moriwaki. This will allow to use the approach of Yuan and Zhang in all possible situations considered in Arakelov theory. Replacing projective models by proper models, which leads only to minor technical complications, we will get arithmetic intersection numbers for compactified metrized line bundles on all algebraic varieties, not only quasi-projective varieties. This is only a slight generalization as the arithmetic intersection numbers are birational invariants, so we can always reduce even to the affine case by passing to a dense open subset. 

We will then extend the arithmetic intersection numbers to more singular metrics using the energy approach of Burgos and Kramer. We will allow also that such more singular metrics occur at non-archimedean places of the parameter space $\Omega$. Instead of using the advanced complex pluri-potential theory for the relative energy of  Darvas, Di Nezza and Lu, we use an approach based on the study of Zhang's continuous semipositive metrics from \cite{gubler2019on} at non-archimedean places.

\subsection{Abstract divisorial spaces} \label{subsec: intro abstract divisorial spaces}

It is clear that the construction of Yuan and Zhang \cite{yuan2021adelic} of compactified metrics and line bundles  is through a completion process with respect to the boundary topology. We introduce here abstract divisorial spaces which allow to see this completion process in an abstract setting. Such abstract divisorial spaces occur in various applications. They are our main tool in this paper allowing us to generalize compactified metrized line bundles to the case of a proper adelic base curve in the sense of Chen and Moriwaki \cite{chen2020arakelov}.

We define an \emph{abstract divisorial space} as a pair $(M,N)$ where $M$ is an ordered $\Q$-vector space and where $N$ is a cone in $M$ with $M=N-N$. For $b \in M_{\geq 0} \coloneqq \{x \in M \mid x \geq 0\}$, the space $M$ is endowed with the \emph{$b$-topology} which is the unique topology of $M$ such that $M$ is a topological group with a basis of neighborhoods of $0$ given by the (non-open) sets
$$\{x \in M \mid -\varepsilon b \leq x \leq \varepsilon b \} \quad (\varepsilon \in \Q_{>0}).$$ 
The completion of $M$ with respect to the $b$-topology is denoted by $\widehat{M}^{d_b}$ as it can be also defined with respect to a natural pseudo-metric $d_b$. Let $\widehat{N}^b$ be the closure of the image of $N$ in $\widehat{M}^{d_b}$ and let $\widehat{M}^b\coloneqq \widehat{N}^b-\widehat{N}^b$, then we get an abstract divisorial space $(\widehat{M}^b,\widehat{N}^b)$ which we can characterize by a universal property in the category of abstract divisorial spaces.

The basic example for an abstract divisorial space is the space of $\Q$-Cartier divisors on a projective variety $X$ endowed with nef cone. The partial order is induced by the cone of effective $\Q$-Cartier divisors. There are many variations of this geometric example as $\Pic(X) \otimes_\Z \Q$ endowed with the cone of isomorphism classes of nef $\Q$-line bundles. For a quasi-projective variety $U$ over the base field $K$, we consider a projective compactification $X$ such that $X\setminus U$ is the support of an effective Cartier divisor $B$. Using the direct limit of the associated divisorial spaces and its completion with respect to the $B$-topology, we get the space of compactified geometric line bundles of Yuan-Zhang \cite{yuan2021adelic} described in \S \ref{subsec: Arakelov theory for line bundles with singular metrics}. We can also define abstract divisorial spaces over $\R$ where $M$ is a vector space over $\R$ instead of $\Q$. This plays only a minor role in this paper, but a natural example is the space of signed Borel measures on a compact space endowed with the cone of positive Borel measures.

In the arithmetic setting with $\Xcal$ a projective arithmetic variety over $\OO_K$ for a number field $K$,  the arithmetic model $\Q$-divisors form an abstract divisorial space with the cone of arithmetically nef $\Q$-divisors. For $U$ a quasi-projective variety over $K$, a similar completion process of abstract divisorial spaces for projective models leads to the compactified metrized line bundles of Yuan-Zhang from \S~\ref{subsec: Arakelov theory for line bundles with singular metrics}. This generalizes to the case of an adelic base curve and will allow us to define compactified metrized line bundles in this setting. 

Let $n \in \N$ and let $(M,N)$ be an abstract divisorial space. Then an \emph{$(n+1)$-intersection map} is a multilinear symmetric map $h\colon M^{n+1}\to \R$ which is non-negative on $N^{n+1} \cup (M_{\geq 0}\times N^n)$. If $b \in M_{\geq 0}$ {has an upper bound} in $N$, then a crucial result in Section~\ref{section: abstract divisorial spaces} shows that $h$ has a unique extension to an $(n+1)$-intersection map $h\colon (\widehat{M}^b)^{n+1}\to \R$. In the geometric example of an abstract divisorial space given by the $\Q$-Cartier divisors on a projective variety $X$ of dimension $n$, the algebraic intersection numbers of $\Q$-Cartier give an $n$-intersection map. Similarly, if $\Xcal$ is a projective arithmetic variety over $\OO_K$ of relative dimension $n$, then the arithmetic intersection numbers of arithmetic model $\Q$-divisors define an $(n+1)$-intersection map. The crucial result above explains the extension of geometric (resp.~arithmetic) intersection numbers to the quasi-projective setting and will allow to perform a similar construction over adelic curves. In Section \ref{section: abstract divisorial spaces}, we will study abstract divisorial spaces and we will define $(n+1)$-intersection maps more generally as multilinear maps between abstract divisorial spaces. 

\subsection{Notation and terminology} \label{subsec: Notation and terminology}

Before we state our main results, we fix our notation used in this paper. 

The natural numbers $\N$ include $0$. If we have $S \subset T$, this allows $S=T$. 
For a field $K$ with absolute value $\val_\omega$, we denote by $K_\omega$ the completion of $K$ with respect to $\val_\omega$. For a partially ordered set $M$, we use $M_{\geq 0} \coloneqq \{x \in M \mid x \geq 0\}$ and $M_{>0} \coloneqq  \{x \in M \mid x > 0\}$. 

For any scheme $X$, we denote by $\Div(X)$ the group of Cartier divisors on $X$ and by $\Pic(X)$ 
the group of isomorphism classes of line bundles on $X$. An \emph{algebraic variety} over the field $K$ is defined as a  geometrically integral  separated scheme of finite type over $K$. We say that a line bundle $L$ or a Cartier divisor $D$  is \emph{semiample} if there is $n \in N_{>0}$ such that $L^{\otimes n}$ is generated by global sections for $L = \OO_X(D)$ in the latter case.

\subsection*{Geometric setting}
Let $U$ be an {algebraic variety} over a field $K$. The geometric setting is considered in Section \ref{sec: geometric theory}. The geometric theory is based on \emph{proper $K$-models $X$ of $U$} which means proper varieties $X$ containing $U$ as a dense open subset. Replacing $X$ by a blow up, we may assume that $X\setminus U$ is the support of an effective Cartier divisor $B$. Such a divisor is called a \emph{boundary divisor} and gives rise to the \emph{boundary topology} on the spaces below, similarly as in \S \ref{subsec: intro abstract divisorial spaces}. The boundary topology is independent of the choice of $B$. {We consider the following spaces and cones:}
\begin{itemize}
	\item $\Div_\Q(U)$ group of $\Q$-Cartier divisors of $U$
	\item $\Div_\Q(U)_\mo = \varinjlim\limits_X \Div_\Q(X)$ with $X$ ranging over all proper $K$-models  of $U$, see \ref{geometric model divisors}  
	\item $\widetilde{\Div}_\Q(U)_\cpt$ group of \emph{compactified geometric divisors}, obtained as the completion of $\Div_\Q(U)_\mo$ with respect to the boundary topology, see \ref{def:geometric boundary topology}
	\item $\widetilde{\Div}_\Q(U)_\snef$ cone of \emph{strongly nef compactified geometric divisors},  closure in $\widetilde{\Div}_\Q(U)_\cpt$ of the nef cone of $\Div_\Q(U)_\mo$ with respect to the boundary topology, see  \ref{strongly nef and nef compactified geometric divisors}
	\item $\widetilde{\Div}_\Q(U)_\integrable \coloneqq \widetilde{\Div}_\Q(U)_\snef-\widetilde{\Div}_\Q(U)_\snef$ divisorial space of \emph{integrable compactified geometric divisors}, see \ref{strongly nef and nef compactified geometric divisors}
	\item $\widetilde{\Div}_\Q(U)_\nef$ cone of \emph{nef compactified geometric divisors}, given as the closure of $\widetilde{\Div}_\Q(U)_\snef$ with respect to the finite subspace topology of $\widetilde{\Div}_\Q(U)_\cpt$, see  \ref{strongly nef and nef compactified geometric divisors}
\end{itemize}

The difference to the original approach of Yuan and Zhang is that we allow arbitrary algebraic varieties $U$ as opposed to quasi-projective varieties in \cite{yuan2021adelic} and the compactified geometric divisors rely on proper $K$-models instead of projective $K$-models. 

\subsection*{Local setting}  We assume  that $K$ is endowed with a complete absolute value $\val_v$. In the non-archimedean case, we denote the valuation ring by $K^\circ$ and the residue field by $\widetilde K$. Let $U$ be an algebraic variety  over $K$, let {$U^\an$ be the analytification of $U$} (in the sense of Berkovich). For $D \in \Div_\Q(U)$, we consider Green functions $g_D$ on {$U^\an$}, see \ref{Green functions for Q-divisors}. We fix a \emph{metrized boundary divisor $(B,g_B)$} consisting of a boundary divisor $B$ and a Green function $g_B>0$ for $B$. Again, this leads to a \emph{boundary topology} on the spaces below independent of the choice of $B$. 
Similarly as in the geometric case, we will use the following spaces studied in  Section \ref{section: local theory}.:
\begin{itemize}
	\item $\widehat\Div_\Q(U) \coloneqq \{(D,g_D) \mid \text{$D \in \Div_\Q(U)$, $g_D$ Green function for $D$}\}$
	\item $\widehat\Div_\Q(U)_\mo$ subgroup where $g_D$ comes from a smooth (resp.~model) metric over a proper $K$-model $X$ of $U$ for $v$ archimedean (resp.~non-archimedean), see \ref{model Green function}
	\item  $\widehat\Div_\Q(U)_\tFS$ subgroup of $\widehat\Div_\Q(U)$ generated by Green functions coming from twisted Fubini--Study metrics, used only if $v$ is trivial instead of $\widehat\Div_\Q(U)_\mo$,  see \ref{Fubini-Study metric}
	\item $\widehat{\Div}_\Q(U)_\cpt$ subgroup of \emph{compactified metrized divisors} in  $\widehat\Div_\Q(U)$, obtained as the completion of $\widehat\Div_\Q(U)_{\mo/\tFS}$  with respect to the boundary topology, see \ref{def:boundarytopologylocal}
		\item $\widehat\Div_\Q(U)_\snef$ closure in $\widehat{\Div}_\Q(U)_\cpt$ of the semipositive/twisted Fubini--Study cone of $\widehat\Div_\Q(U)_{\mo/\tFS}$ with respect to the boundary topology, see  \ref{strongly nef and nef compactified divisors}
	\item $\widehat\Div_\Q(U)_\integrable \coloneqq \widehat\Div_\Q(U)_\snef-\widehat\Div_\Q(U)_\snef$ divisorial space of \emph{integrable compactified metrized divisors}, see \ref{strongly nef and nef compactified divisors}
	\item $\widehat{\Div}_\Q(U)_\nef$ the cone of \emph{nef compactified metrized divisors}, given as the closure of $\widehat\Div_\Q(U)_\snef$ with respect to the finite subspace topology of $\widehat{\Div}_\Q(U)_\cpt$, see  \ref{strongly nef and nef compactified divisors}
\end{itemize}
We have similar notions for \emph{compactified metrized line bundles} which we discuss in \S \ref{subsection:local adelic line bundles}. 
The difference to the approach Yuan and Zhang \cite[Section 3.6]{yuan2021adelic} is again that we allow arbitrary algebraic varieties $U$, that we allow proper $K$-models $X$ of $U$ and proper models of $X$ over $K^\circ$ as opposed to projective models and that we allow $v$ to be the trivial valuation when we always use twisted Fubini--Study metrics instead of model metrics. In the non-trivially valued case and for $U$ quasi-projective, the above notions agree with the corresponding notions in \cite[Section 3.6]{yuan2021adelic}.

\subsection*{Global setting} We consider now a field $K$ endowed with the structure of a proper adelic curve $S$ consisting of a parameter space $\Omega$ of absolute values of $K$ satisfying the product formula with respect to a fixed positive measure $\nu$ on $\Omega$. We always assume that the archimedean absolute values are normalized (see Remark \ref{assumeption on adelic curves}) and that the underlying $\sigma$-algebra $\mathcal{A}$ is either discrete or that $K$ is countable. The adelic curves were introduced by Chen and Moriwaki who make these hypotheses frequently for obtaining more advanced results. We will summarize adelic curves in Section \ref{sec: adelic curves}. 

Let $U$ be an algebraic variety over $K$. Let $K_\omega$ be the completion of $K$ with respect to $\omega \in \Omega$ and let $U_\omega$ be the base change of $U$ to $K_\omega$. An \emph{$S$-Green function} $g_D$ for $D \in \Div_\Q(U)$ is a  family $g_D=(g_{D,\omega})_{\omega \in \Omega}$ where $g_{D,\omega}$ is a Green function for the base change $D_\omega$ of $D$ to $U_\omega$ for each $\omega \in \Omega$, see \ref{global Green functions}. In the following, we  always require that the family $g_D$ is locally $S$-bounded and $S$-measurable, see \ref{global Green functions}. On the spaces below, we consider various boundary topologies, each of them given as the $b$-topology as in \ref{subsec: intro abstract divisorial spaces} for some $b=(B,g_B)$ where $B$ is an effective Cartier divisor on a proper $K$-model $X$ with support in $X \setminus U$ and where $g_B$ is an $S$-Green function for $B$ with $g_B\geq 0$. The reason is that we do not know the existence of a cofinal $b$ in contrast to the geometric and the local setting. 
 In the global case, we use  the  notions:
 	\begin{itemize}
 		\item $\widehat\Div_{S,\Q}(U) \coloneqq \{(D,g_D) \mid \text{$D \in \Div_\Q(U)$, $g_D$ $S$-Green function for $D$}\}$, see \ref{global Green functions}
 		\item $\widehat\Div_{S,\Q}(U)_\CM$ subgroup {of $\widehat\Div_{S,\Q}(U)$} 
 {given} by $(D,g_D) \in \widehat\Div_{S,\Q}(U)$ such that  $g_{D,\omega}$ is induced by a continuous  Green function  on $X_\omega$ for any $\omega\in\Omega$ for some proper $K$-model $X$ of $U$, see \cref{global Green functions} 
 		\item $\widehat\Div_{S,\Q}(U)_\cpt$ subgroup of \emph{compactified $S$-metrized divisors} in  $\widehat\Div_{S,\Q}(U)$, obtained from completions of $\widehat\Div_{S,\Q}(U)_\CM$  with respect to some boundary topologies, see \cref{def:boundarytopologyglobal} 
 		\item $\widehat\Div_{S,\Q}(U)_\relsnef$ cone of \emph{strongly relatively nef compactified $S$-metrized divisors}, induced by Cauchy sequences of elements from the  {semipositive} cone of $\widehat\Div_{S,\Q}(U)_\CM$  with respect to some boundary topology, see  \ref{def:boundarytopologyglobal}
 		\item $\widehat\Div_{S,\Q}(U)_\relint \coloneqq \widehat\Div_{S,\Q}(U)_\relsnef-\widehat\Div_{S,\Q}(U)_\relsnef$ divisorial space of \emph{relatively integrable compactified $S$-metrized divisors}, see  \ref{def:boundarytopologyglobal}
 		\item $\widehat\Div_{S,\Q}(U)_\relnef$ \emph{relatively nef compactified $S$-metrized divisors},  the closure of the 
 		cone $\widehat\Div_{S,\Q}(U)_\relsnef$ wrt the finite subspace topology of $\widehat\Div_{S,\Q}(U)_\cpt$, see  \ref{def:boundarytopologyglobal}
 		\item $\widehat\Div_{S,\Q}(U)_\arsnef$ cone of \emph{strongly arithmetically nef compactified $S$-metrized divisors}, induced by Cauchy sequences of elements from the arithmetically nef cone of $\widehat\Div_{S,\Q}(U)_\CM$  with respect to some boundary topology, see  \ref{divisorial space on U based on arithmetic nef}
 		\item $\widehat\Div_{S,\Q}(U)_\arint \coloneqq \widehat\Div_{S,\Q}(U)_\relsnef-\widehat\Div_{S,\Q}(U)_\relsnef$ divisorial space of \emph{arithmetically integrable compactified $S$-metrized divisors}, see  \ref{divisorial space on U based on arithmetic nef}
 			\item $\widehat\Div_{S,\Q}(U)_\arnef$ \emph{arithmetically nef compactified $S$-metrized divisors},  the closure of the
 		cone $\widehat\Div_{S,\Q}(U)_\arsnef$ wrt the finite subspace topology of $\widehat\Div_{S,\Q}(U)_\cpt$, 	  see  \ref{divisorial space on U based on arithmetic nef}
 	\end{itemize}
We have similar notions for \emph{compactified $S$-metrized line bundles} which we discuss in \S \ref{subsection:global adelic line bundles}. 
The  theory is built upon the  adelic line bundles on proper $K$-models $X$ of $U$ introduced and studied by Chen and Moriwaki \cite{chen2020arakelov} (in case of projective $X$, but the generalization to proper varieties is straightforward).  We note that $\widehat\Div_{S,\Q}(X)$ is the group of $S$-metrized divisors arising from adelic line bundles in the sense of Chen and Moriwaki. Then $\widehat\Div_{S,\Q}(U)_\CM$ is defined as the direct limit of the groups $\widehat\Div_{S,\Q}(X)$ with $X$ ranging over all proper $K$-models of $U$. 

We have seen in \S \ref{subsec: Arakelov theory over adelic curves} that integrable adelic line bundles allow arithmetic intersection numbers. In particular, the arithmetic intersection numbers are defined on the semipositive cone in $\widehat\Div_{S,\Q}(U)_\CM$ which we will use later to extend them to $\widehat\Div_{S,\Q}(U)_\arint$. 

The above definitions are strongly motivated by the approach of Yuan and Zhang \cite{yuan2021adelic}. There are the following differences. It is clear that work in the more general setting of an adelic base curve $S=(K,\Omega,\mathcal{A},\nu)$. As in the geometric and the local setting, we allow arbitrary algebraic varieties $U$ over $K$. Another difference is that the approach of Yuan and Zhang relies on models over the algebraic integers of $K$ which does not make sense over an adelic curve $S$. Instead, we rely on the adelic line bundles (in the sense of Chen and Moriwaki) on proper $K$-models of $U$ which naturally leads to the fact that all the spaces introduced above are larger than the corresponding spaces introduced by Yuan and Zhang in the number field case, see Section \ref{sec: Comparing with Yuan-Zhang's theory} for details. 

Burgos and Kramer \cite{burgos2023on} work in a similar setting as Yuan and Zhang, but use real Cartier divisors and assume that the quasi-projective variety $U$ is normal. Keeping all this in mind, we have the following comparison of notions:

\begin{itemize}
	\item  \emph{Compactified $S$-metrized divisors} are called in \cite{yuan2021adelic} either \emph{adelic divisors} or \emph{compactified divisors}. We don't call them adelic divisors as this notion  has a different meaning in the work of Chen and Moriwaki \cite[6.2.3]{chen2020arakelov}. 
	\item  \emph{Compactified $S$-metrized line bundles} are called in \cite{yuan2021adelic} either \emph{adelic line bundles} or \emph{compactified line bundles}. Again we omit the notion adelic line bundle here. 
	\item \emph{(Strongly) arithmetically nef compactified $S$-metrized line bundles} are called \emph{(strongly) nef adelic line bundles} in \cite{yuan2021adelic}.
	\item \emph{Strongly arithmetically nef compactified $S$-metrized divisors} are called \emph{nef adelic arithmetic divisors} in \cite{burgos2023on}.
\end{itemize}
Note that relative nefness does not play a role in \cite{yuan2021adelic}. We use the notion of \emph{arithmetically nef} instead of just \emph{nef} to stress the difference to the notion \emph{relatively nef} which is crucial for extending the arithmetic intersection numbers later.

\subsection{Main results} \label{subsec: main results}

We fix a proper adelic curve $S=(K,\Omega,\mathcal{A},\nu)$ as above. 
Using the notions introduced in \ref{subsec: Notation and terminology}, we can state our first extension result for arithmetic intersection numbers.

\begin{theointro} \label{theointro: first extension}
	For any algebraic variety $U$ over $K$, for any $\overline{D_0}, \dots, \overline{D_k} \in \widehat{\Div}_{S,\Q}(U)_{\arint}$ and any $k$-dimensional cycle $Z$ of $U$, there is a unique arithmetic intersection number $(\overline{D_0} \cdots \overline{D_k} \mid Z)_S \in \R$ with the following properties:
\begin{enumerate}
	\item \label{intro: global intersection number of compactified line bundles} The number $(\overline{D_0} \cdots \overline{D_k} \mid Z)_S \in \R$ depends only on the isometry classes of the underlying $S$-metrized $\Q$-line bundles $\overline{L_j}=(\mathcal O_U(D_j), \metr_j)$, $j=0,\dots, k$, so we set
	$$(\overline{L_0} \cdots \overline{L_k} \mid Z)_S \coloneqq (\overline{D_0} \cdots \overline{D_k} \mid Z)_S.$$
	\item \label{intro: global intersection number multilinear symmetric} The pairing $(\overline{L_0} \cdots \overline{L_k} \mid Z)_S \in \R$ is multilinear and symmetric in $\overline{L_0}, \dots, \overline{L_k}$ and linear in $Z$. 
	\item \label{intro: global intersection number on proper varieities} If $U=X$ is proper, then $(\overline{L_0}\cdots \overline{L_k}\mid Z)_S$ agrees with the arithmetic intersection numbers introduced by Chen and Moriwaki. 
	\item \label{intro: global intersection number factorial} If $\varphi \colon U' \to U$ is a morphism of algebraic varieties over $K$ and if $Z'$ is a $k$-dimensional cycle on $X'$, then the projection formula holds:
	$$ (\varphi^*\overline{L_0} \cdots \varphi^*\overline{L_k} \mid Z')_S = (\overline{L_0} \cdots \overline{L_k} \mid \varphi_*Z')_S.$$
	\item \label{intro: global intersection number limits} 
	The arithmetic intersection  numbers $(\overline{D_0} \cdots \overline{D_k} \mid Z)_S$ are continuous in  $\overline{D_0}, \dots, \overline{D_k} \in \widehat{\Div}_{S,\Q}(U)_{\arsnef}$, with respect to any boundary topology.
		\item \label{intro: global intersection number field extension} 
	The arithmetic intersection numbers $(\overline{D_0} \cdots \overline{D_k} \mid Z)_S$ are invariant under base change to an algebraic extension $K'/K$.
\end{enumerate}
\end{theointro}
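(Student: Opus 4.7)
The plan is to combine the abstract divisorial-space extension result from Section~\ref{section: abstract divisorial spaces} with the Chen--Moriwaki arithmetic intersection theory on proper varieties. By multilinearity in $Z$, I first reduce to the case $Z=V$ where $V$ is an irreducible closed subvariety of $U$ of dimension $k$. Let $\overline V$ denote the closure of $V$ in a proper $K$-model $X$ of $U$; such $X$ exists by Nagata's compactification theorem. For $\overline D_0,\dots,\overline D_k$ coming from adelic divisors on some common $X$, and hence defining elements of $\widehat{\Div}_{S,\Q}(U)_{\CM}$ via the direct limit, the Chen--Moriwaki arithmetic intersection number $(\overline D_0\cdots \overline D_k\mid \overline V)_S$ is well-defined. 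Compatibility with blow-ups $X'\to X$ follows from the projection formula of Chen--Moriwaki theory, so passing to the direct limit over all proper $K$-models gives a symmetric multilinear pairing on $\widehat{\Div}_{S,\Q}(U)_{\CM}^{k+1}$ depending linearly on $V$.

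Next, I apply the extension result for $(k+1)$-intersection maps described in \S\ref{subsec: intro abstract divisorial spaces}. The pair consisting of $\widehat{\Div}_{S,\Q}(U)_{\CM}$ and its strongly arithmetically nef cone forms an abstract divisorial space, and the pairing $(-\cdots-\mid V)_S$ is a $(k+1)$-intersection map because non-negativity of arithmetic intersections of strongly arithmetically nef adelic line bundles against a subvariety, as well as of mixed products with one effective factor against a nef product, is standard in Chen--Moriwaki theory. Fixing a boundary datum $b=(B,g_B)$ with $g_B\geq 0$, the upper-bound hypothesis of the extension theorem is checked by taking a sufficiently ample divisor $H\geq B$ on a proper $K$-model dominating $X$ and equipping it with a strongly arithmetically nef $S$-metric whose Green function dominates $g_B$. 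The extension theorem then produces a unique extension from $\widehat{\Div}_{S,\Q}(U)_{\CM}^{k+1}$ to $(\widehat{\Div}_{S,\Q}(U)_{\cpt})^{k+1}$, and by bilinearity to $\widehat{\Div}_{S,\Q}(U)_{\arint}^{k+1}$. This yields the pairing together with the uniqueness clause and with the continuity assertion (v) on $\widehat{\Div}_{S,\Q}(U)_{\arsnef}$; independence of $b$ follows because any two boundary data are dominated by a common third.

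The remaining properties then follow by density and continuity. For (iii), when $U=X$ is proper, the Chen--Moriwaki intersection number already defines a $(k+1)$-intersection map on $\widehat{\Div}_{S,\Q}(X)_{\CM}$ extending continuously to the completion, so uniqueness of the extension forces agreement. For property (i), the dependence only on the isometry class of the underlying $S$-metrized line bundles holds on the dense subspace $\widehat{\Div}_{S,\Q}(U)_{\CM}$ by Chen--Moriwaki theory and the product formula, and extends by continuity. The projection formula (iv) holds on $\widehat{\Div}_{S,\Q}(U')_{\CM}$ by naturality of Chen--Moriwaki theory along morphisms of proper $K$-models, and passes to the completion once one checks that pullback is continuous with respect to boundary topologies (pulling back a boundary datum $b$ on $U$ to $\varphi^*b$ on $U'$). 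Invariance under algebraic base change (vi) follows from the corresponding property on proper models together with compatibility of the boundary topology with base change.

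The main obstacle will be verifying the upper-bound hypothesis of the abstract extension theorem in the adelic-curve setting, and correspondingly showing that the extension is independent of the auxiliary boundary datum. Unlike the number-field case, over a general proper adelic curve $S=(K,\Omega,\mathcal A,\nu)$ there is no ample arithmetic model in the classical sense; constructing a strongly arithmetically nef dominator for an arbitrary $(B,g_B)$ requires gluing relatively semipositive $S$-Green functions fiberwise while preserving local $S$-boundedness, $S$-measurability, and the non-negativity of heights on horizontal curves. Handling this uniformly across $\Omega$, and reconciling the resulting intersection numbers under comparison of different boundary data, is the principal technical difficulty.
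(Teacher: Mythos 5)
Your overall strategy matches the paper's: combine Chen--Moriwaki theory on proper $K$-models with the abstract extension result \cref{continuous extension of intersection maps in absolute case}. However, your verification of the upper-bound hypothesis has a gap that the paper is specifically constructed to circumvent, and you have misidentified where the difficulty lies.

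You propose to dominate a boundary datum $b=(B,g_B)$ by ``taking a sufficiently ample divisor $H\geq B$ on a proper $K$-model dominating $X$ and equipping it with a strongly arithmetically nef $S$-metric.'' But proper $K$-models of a general algebraic variety $U$ need not be projective, so ample divisors need not exist on them. If $U$ is quasi-projective one can indeed choose a projective $K$-model and your construction works; but the theorem is stated for arbitrary algebraic varieties $U$, and a non-quasi-projective $U$ does not embed in any projective $K$-model. This is not an adelic-curve phenomenon at all --- the ``gluing fiberwise across $\Omega$'' issues you flag as the principal obstacle are already packaged into Chen--Moriwaki's integrable adelic divisors on proper models (\cref{global intersection number}, \cref{global abstract divisorial space}); the genuine obstruction is geometric. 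The paper resolves it in \cref{dominated by S-nef boundary divisors}: apply Chow's lemma to get a projective $X_0'\to X_0$, but then one must shrink $U$ to a quasi-projective dense open $V\subset U$ over which $\pi$ is an isomorphism. The intersection map is then constructed on the $\overline B'$-completion of $(M_{S,\Q}(V),N'_{S,\Q}(V))$ where the upper-bound hypothesis now holds, and transferred back to $U$ by composing with the continuous restriction morphism $(M_{S,\Q}(U)^{\overline B},N'_{S,\Q}(U)^{\overline B})\to (M_{S,\Q}(V)^{\overline B'},N'_{S,\Q}(V)^{\overline B'})$ furnished by the universal property of \cref{b-completion}. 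Without this shrinking step there is no way to produce the nef dominator $\widetilde b\in N'_{S,\Q}(U)$, and your argument stalls exactly at the point you anticipated.

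A secondary point: you reduce to the case of an irreducible closed $k$-dimensional $V\subset U$, but such a $V$ need not be geometrically integral, hence not a variety in the sense of the paper. The paper avoids this by first establishing the case $Z=U$, $k=d=\dim(U)$, and then treating general cycles by linearity in the components together with base change to the algebraic closure (using property \ref{intro: global intersection number field extension} as an input to the reduction, not merely as an output). You should make this base change explicit rather than appealing directly to well-definedness of Chen--Moriwaki intersection numbers against $\overline V$.
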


This will be proved in Theorem \ref{global intersection number on algebraic varieties}. The idea is that the semipositive cone $N_{S,\Q}(U)$ in $\widehat\Div_{S,\Q}(U)_\CM$ gives rise to an abstract divisorial space $(M_{S,\Q}(U),N_{S,\Q}(U))$, but it is better to use the cone $N'_{S,\Q}(U)$ of arithmetically nef $S$-metrized divisors in $\widehat\Div_{S,\Q}(U)_\CM$ as then the arithmetic intersection numbers of Chen and Moriwaki give a $(d+1)$-intersection map on the induced abstract divisorial space $(M_{S,\Q}'(U),N'_{S,\Q}(U))$ where $d \coloneqq \dim(U)$. Then the crucial extension result from Section \ref{section: abstract divisorial spaces} yields the theorem using that $\widehat{\Div}_{S,\Q}(U)_{\arint}$ is defined as a direct limit of abstract divisorial spaces obtained from $(M_{S,\Q}'(U),N'_{S,\Q}(U))$ by completing with respect to boundary topologies. 

For $\overline D \in \widehat{\Div}_{S,\Q}(U)_{\arint}$, the \emph{height} $h_{\overline L}(U)$ of $U$ with respect to $\overline L = \OO_U(\overline D)$ is defined by 
$$h_{\overline L}(U) \coloneqq (\overline D^{d+1}\mid U)_S.$$

However, Burgos and Kramer \cite{burgos2023on} have shown that we can extend the relative energy to certain relatively nef compactified metrics  in case of an archimedean place. We will show below that this holds also in the non-archimedean case. The relative energy is a good replacement for the lacking local heights which can be used to extend heights in the global case in some situations. 

Let $K$ be any field endowed with a complete absolute value $\val_v$. Using the notation from the local setting introduced in \S \ref{subsec: intro abstract divisorial spaces}, we consider  $(D,g),(D,h) \in \widehat{\Div}_\Q(U)_\nef$ with the same underlying compactified geometric divisor. This is stronger than just assuming that the $\Q$-Cartier divisors on $U$ agree, it means that they have the same image in $\widetilde{\Div}_{\Q}(U)_\nef$. We assume that \emph{$h$ is more singular than $g$} which means $h \leq g +C$ for some constant $C \in \R$. Then we define the \emph{relative energy} by 
$$E(g,h) \coloneqq \sum_{j=0}^d \int_{U^\an} (h-g) \, c_1(\OO_U(D), \metr_g)^{j}\wedge c_1(\OO_U(D), \metr_h)^{d-j} \in \R \cup \{-\infty\}$$
where $\metr_g, \metr_h$ are the metrics of $\OO_U(D)$ corresponding to the Green functions $g$ and $h$, respectively. Here, we use the Monge-Amp\`ere measures on $U^\an$ given by complex pluri-potential theory in the archimedean case and by the theory of real $(p,q)$ forms introduced by Chambert-Loir and Ducros \cite{chambert2012formes} in the non-archimedean case, see \S \ref{subsection: mixed MA measures} for details. There is also a mixed version of the relative energy which we study in Section \ref{section: mixed relative energy in the local case}. 

Let us come back to the global setting with a proper adelic curve $S=(K,\Omega,\mathcal{A},\nu)$ as above. We consider $(D,g),(D,h)  \in \widehat{\Div}_{S,\Q}(U)_\relnef$  with the same underlying  compactified geometric divisor in $\widetilde{\Div}_\Q(U)_\cpt$. We assume that \emph{$h$ is more singular than $g$} which means that $h_\omega$ is more singular than $g_\omega$ for all $\omega \in \Omega$. Since the relative energy $E(g_\omega,h_\omega)$ is a measurable function in $\omega \in \Omega$,  we define the \emph{relative energy} by
$$E(g,h)= \int_{\Omega} E(g_\omega,h_\omega) \, \nu(d\omega) \in \R \cup \{-\infty\}.$$
The relative energy depends only on the underlying relatively nef compactified $S$-metrics $\metr,  \metr'$ of the $\Q$-line bundle $L=\OO_U(D)$, so we set $E(\metr,\metr') \coloneqq E(g,h)$. There is also a mixed version of the relative energy in the global setting which we study in Section \ref{section: mixed relative energy in the global case}.

Using this, we can extend the arithmetic intersection numbers to all compactified geometric line bundles $L$ of $U$ and all relatively nef compactified $S$-metrics of $L$ if $L$ has also an arithmetically nef compactified $S$-metric, see Theorem  \ref{extension of global intersection number on algebraic varieties}. These intersection numbers might be equal to $-\infty$. For simplicity, we stick below to the case of \emph{heights} in the unmixed case. 

\begin{theointro} \label{B:extension of global intersection number on algebraic varieties}
	For every compactified  geometric $\Q$-line bundle $L$ of $U$ which has an arithmetically nef compactified $S$-metric, and for every relatively nef compactified $S$-metric $\metr$ of $L$, there is a unique $h_{(L,\metr)}(U) \in \R \cup \{-\infty\}$ satisfying the following properties:
	\begin{enumerate}
		\item \label{intro main3}
	If $\metr$ is an arithmetically nef compactified $S$-metric, then $h_{(L,\metr)}(U)$ is the height introduced above based on the arithmetic intersection numbers from Theorem \ref{theointro: first extension}.

		\item \label{intro main4}
		If $\metr,\metr'$ are relatively nef compactified $S$-metrics of $L$, then 
		$$h_{(L,\metr')}(U)=h_{(L,\metr)}(U)+ E(\metr,\metr').$$

		\item \label{intro main6}
		If $\varphi \colon U' \to U$ is a morphism of algebraic varieties over $K$, then
		$$h_{(\varphi^*L,\varphi^*\metr)}(U')=\deg(\varphi) h_{(L,\metr)}(U),$$
		where $\deg(\varphi)\coloneqq [K(U'):K(U)]$ if $\varphi$ is dominant and $\deg(\varphi)\coloneqq 0$ otherwise.
		
				\item \label{intro main5}
		We have invariance of $h_{(L,\metr)}(U)$  under base change to an algebraic field extension $K'/K$.
	\end{enumerate}
\end{theointro}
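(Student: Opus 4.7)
The plan is to fix, once and for all, an arithmetically nef compactified $S$-metric $\metr_0$ of $L$ (which exists by hypothesis) and \emph{define}
$$h_{(L,\metr)}(U) \coloneqq h_{(L,\metr_0)}(U) + E(\metr_0,\metr)$$
for every relatively nef compactified $S$-metric $\metr$ of $L$. The first summand is finite by Theorem~\ref{theointro: first extension}. For the second, one first checks place by place that $\metr_\omega$ is more singular than $\metr_{0,\omega}$ in the sense of \S\ref{energy approach of Burgos and Kramer}: this comes from the comparison of compactified metrics sharing the same underlying compactified geometric $\Q$-line bundle (so that $g_\omega - g_{0,\omega}$ is a $B$-Cauchy limit of bounded differences of model Green functions) combined with the fact that arithmetic nefness forces $\metr_{0,\omega}$ to sit at the ``least singular'' end of the relatively nef cone. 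Measurability and $\nu$-integrability of $\omega\mapsto E(\metr_{0,\omega},\metr_\omega)$, interpreted in $\R\cup\{-\infty\}$, are part of the global energy formalism of Section~\ref{section: mixed relative energy in the global case}, so $E(\metr_0,\metr)\in\R\cup\{-\infty\}$ and the definition is meaningful.

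The main work is to prove independence of the choice of $\metr_0$. For two arithmetically nef compactified $S$-metrics $\metr_0,\metr_0'$ of $L$ the key identity is
$$h_{(L,\metr_0')}(U)-h_{(L,\metr_0)}(U)=E(\metr_0,\metr_0'),$$
where both sides are finite. When $\metr_0,\metr_0'$ come from arithmetically nef model metrics on a common proper $K$-model $X$, this is the standard integration-by-parts / telescoping identity expressing the difference of $(d+1)$-fold self-intersections as a sum of fiberwise integrals against mixed Monge--Amp\`ere measures, supplied by the Chen--Moriwaki local-global formula. The general strongly arithmetically nef case follows by passing to the limit in a suitable boundary topology, using the continuity of the arithmetic intersection numbers from Theorem~\ref{theointro: first extension}\,\ref{intro: global intersection number limits} on the left and the continuity of relative energy on the right. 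Combining this with the cocycle identity $E(\metr_0',\metr)=E(\metr_0',\metr_0)+E(\metr_0,\metr)$, which is built into the mixed energy machinery of Sections~\ref{section: mixed relative energy in the local case} and~\ref{section: mixed relative energy in the global case}, yields the desired independence.

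Properties~\ref{intro main3}--\ref{intro main5} then follow rather formally. Assertion~\ref{intro main3} results by taking $\metr_0=\metr$ when $\metr$ is itself arithmetically nef, so that $E(\metr_0,\metr)=0$; assertion~\ref{intro main4} is a direct rewriting of the cocycle identity for $E$ against a common reference; the projection formula~\ref{intro main6} combines Theorem~\ref{theointro: first extension}\,\ref{intro: global intersection number factorial} with the pullback identity $E(\varphi^*\metr_0,\varphi^*\metr)=\deg(\varphi)\cdot E(\metr_0,\metr)$, which follows place by place from push--pull for mixed Monge--Amp\`ere currents (Chambert-Loir--Ducros non-archimedeanly, classical complex pluripotential theory archimedeanly), after noting that the pullback of an arithmetically nef compactified $S$-metric is again arithmetically nef; base-change invariance~\ref{intro main5} likewise combines Theorem~\ref{theointro: first extension}\,\ref{intro: global intersection number field extension} with the obvious base-change invariance of the fiberwise energies. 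Uniqueness is forced: any function satisfying~\ref{intro main3} together with~\ref{intro main4} applied against an arithmetically nef reference is pinned down to the formula we used as a definition.

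The hard part is the change-of-metric identity $h_{(L,\metr_0')}(U)-h_{(L,\metr_0)}(U)=E(\metr_0,\metr_0')$ in the singular strongly arithmetically nef setting at non-archimedean places; this is precisely where the non-archimedean relative energy theory built in Section~\ref{section: mixed relative energy in the global case}, substituting for the Darvas--Di Nezza--Lu archimedean theory invoked by Burgos--Kramer, does the real work.
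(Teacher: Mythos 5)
There is a genuine gap in your definition: you assert that for any arithmetically nef reference $\metr_0$ and any relatively nef $\metr$ of $L$, the metric $\metr$ is automatically more singular than $\metr_0$, attributing this to ``arithmetic nefness forcing $\metr_0$ to sit at the least singular end of the relatively nef cone.'' This is false. Having the same underlying compactified geometric $\Q$-line bundle only forces $g_\omega - g_{0,\omega}=o(g_{B,\omega})$ along the boundary (\cref{proposition:green functions for adelic divisors}, \cref{same Green function associated to YZ}); since $g_{B,\omega}\to\infty$ there, $o(g_{B,\omega})$ allows $g_\omega - g_{0,\omega}$ to be unbounded above, so $[g]\leq[g_0]$ is not automatic. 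Arithmetic nefness of $\metr_0$ is a global positivity of heights and gives no pointwise upper bound for other relatively nef metrics with the same boundary growth. Without the comparison $[\metr]\leq[\metr_0]$ the expression $E(\metr_0,\metr)$ is simply not defined, so your putative definition of $h_{(L,\metr)}(U)$ does not get off the ground.

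The fix is the one the paper uses: after shrinking $U$ so that there is a boundary divisor in $N_{S,\Q}(U)$ and all relevant metrics become strongly (relatively/arithmetically) nef, replace the reference by $h_0'\coloneqq\max\{g,h_0\}$, where $h_0$ is the Green function of an arithmetically nef metric. Then $g\leq h_0'$ by construction, so $[g]\leq[h_0']$ trivially, and $\cref{lemma:maximum is nef global}$ guarantees that $(D,h_0')$ is again strongly arithmetically nef. Only then can one define $h_{(L,\metr)}(U)=h_{(L,\metr_{h_0'})}(U)+E(h_0',g)$. The same issue recurs in your independence argument: two arithmetically nef references $h_0,f_0$ are not a priori comparable in singularity, so $E(\metr_0,\metr_0')$ need not be defined. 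Again the remedy is to pass to $\max\{f_0,h_0\}$ (still arithmetically nef by the same lemma) and then use the cocycle identity (\cref{prop: transitivity of global mixed energy}) together with \cref{thm:difference of intersection of arithmetically nef}. Aside from this missing step, the overall architecture you describe — fix a reference, define the height as arithmetic intersection plus relative energy, prove independence via the change-of-metric identity, then derive the listed properties — is the same as the paper's.
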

This generalizes the result of Burgos and Kramer \cite[Theorem 4.4]{burgos2023on}, first from the number field case to the setting of adelic curves in the sense of Chen and Moriwaki, and second allowing singular metrics also at non-archimedean places. In Theorem \ref{extension of global intersection number on algebraic varieties}, 
it is important that the metrics $\metr, \metr'$ and the arithmetically nef reference metric have the same underlying compactified metrized line bundle which is stronger than assuming that they are metrics on the same line bundle over $U$.

\subsection*{Acknowledgements}

We thank Clara Otte, Debam Biswas and Klaus K\"unnemann for their comments on earlier draft of this paper.
We are very grateful to Jose Burgos Gil and J\"urg Kramer for sharing a preliminary draft of \cite{burgos2023on} and many helpful discussions around the topic of singular metrics.  We thank also Ruoyi Guo for sending us a preliminary draft of \cite{guo2025an}. 
{We are very grateful to an anonymous referee for the careful reading and the precious comments.}


\section{Abstract divisorial spaces and completions} \label{section: abstract divisorial spaces}

In the following, we denote by $\K$ either  $\Q$ or $\R$. {In fact, any field $\K$ with $\Q \subset \K \subset \R$ would be fine, but the two are the most important for applications.}

\subsection{Abstract divisorial spaces}

{Recall that an \emph{ordered abelian group} is an abelian group $G$ endowed with a partial order  "$\geq$" such that for any $x,y,z\in G$ we have that $x \geq y$ implies $x+z\geq y+z$. It is obvious that the partial order is determined by the submonoid  $G_{\geq 0}\coloneq\{x\in G\mid x\geq 0\}$. We will also use $G_{>0}\coloneq G_{\geq0}\setminus\{0\}$.}

{An \emph{ordered $\K$-vector space} is a $\K$-vector space $M$ equipped with a partial order "$\geq$" such that the underlying abelian group is an ordered abelian group and such that for any $x,y \in M$ and $r \in \K_{\geq 0}$, we have that $x\geq y$ implies $rx\geq ry$.}

{A subset $C$ of a $\K$-vector space $M$ is called a \emph{cone} if  $C+C \subset C$ and $\K_{\geq0}\cdot C\subset C$. A cone $C$ is called \emph{pointed} if $C \cap (-C)=\{0\}$. It is easy to see that if $C$ is a pointed cone in $M$, then there is a unique structure on $M$ as an ordered $\K$-vector space such that $C=M_{\geq 0}$. Conversely, in an ordered  $\K$-vector space $M$, the set $M_{\geq 0}$ is a pointed cone.}

\begin{definition}
	An \emph{abstract divisorial space} is a pairing $(M,N)$, where
	\begin{enumeratea}
		\item $M$ is a  ordered $\K$-vector space;
		\item $N$ is a cone in $M$ such that $M=N-N$. 
	\end{enumeratea}
\end{definition}

\begin{ex} \label{two examples for divisorial spaces}
	\begin{enumerate1}
		\item  The field of real numbers $\R$ is naturally an ordered $\R$-vector space and then $(\R,\R_{\geq 0})$ is an abstract divisorial space.
		\item The pair $(\Q^2, \Q^2_{\geq 0})$ is an abstract divisorial space with partial order defined by the pointed cone $\Q^2_{>0}\cup \{0\}$.
	\end{enumerate1}
\end{ex}

\begin{definition} \label{def: intersection map}
	An \emph{$(n+1)$-intersection map} {$h\colon (M,N)^{n+1}\to(M',N')$} between abstract divisorial spaces is a multilinear symmetric map $h\colon M^{n+1}\rightarrow M', \ \ (x_0,\dots, x_n)\mapsto x_0\cdots x_n$ satisfying the following axioms:
	\begin{itemize}
		\item[(\rm{NEF})] $h(N^{n+1})\subset N'$;
		\item[(\rm{EFF})] $h(M_{\geq 0}\times N^n)\subset M'_{\geq0}$;
	\end{itemize}
	We say that $h$ is \emph{positively non-degenerate} if, moreover,
	\begin{enumerate}
		\item [(\rm{AMP})] for any $x\in M_{>0}$, there is $a\in N$ such that $x a^n\in M_{>0}'$.
	\end{enumerate}
	Note that the notation with the products is just formal having the applications towards arithmetic intersection numbers in mind. 
\end{definition}

\begin{definition} \label{def: morphism of ADS}
	A \emph{morphism} between abstract divisorial spaces $(M,N)$ and $(M',N')$ over $\K$ is a linear map $\varphi\colon M \to M'$ over $\K$ which respects the order and with $\varphi(N) \subset N'$. In other words, a morphism is just  a $1$-intersection map. The abstract divisorial spaces with morphisms between them form a category, denoted by $\mathcal{ADS}_\K$.

	Similarly, a \emph{morphism} between $(n+1)$-intersection maps $h_1\colon (M_1,N_1)^{n+1}\rightarrow (M_1',N_1')$, $h_2\colon (M_2,N_2)^{n+1}\rightarrow (M_2',N_2')$ is a pair $(\varphi, \varphi')$ with $\varphi\colon  M_1\rightarrow M_2$ and $\varphi'\colon  M_1'\rightarrow M_2'$ morphisms in $\mathcal{ADS}_\K$  such that the following diagram commutes
	\[\xymatrix{M_1^{n+1}\ar[r]^-{h_1}\ar[d]_{\varphi^{n+1}}& M_1'\ar[d]^{\varphi'}\\
		M_2^{n+1}\ar[r]^-{h_2}& M_2'}.\] 
	The category of $(n+1)$-intersection maps is denoted by $\mathrm{Int}_\K^{n+1}$. 
\end{definition}

\begin{remark} \label{simultaneous element in AMP}
	If an $(n+1)$-intersection map $h\colon  (M,N)^{n+1} \to (M',N')$ is positively non-degenerate, then for  $x_1,\dots, x_r\in M_{>0}$, there is $a\in N$ such that $x_1a^n, \dots, x_ra^n\in M_{>0}'$.
	
	\begin{proof}
		Let $a_i$ be the element from axiom (\rm{AMP}) working for $x_i$, then $a=a_1+\dots+a_r$ works for $x_1,\dots,x_r$.
	\end{proof}
\end{remark}

\begin{remark} \label{absolute intersection maps}
	Let $(M,N)$ be an abstract divisorial space over $\Q$ and let $h\colon  M^{n+1}\rightarrow \R, \ \ (x_0,\dots, x_n)\mapsto x_0\cdots x_n$ be a multilinear symmetric map satisfying		
	$h(N^{n+1})\subset \R_{\geq 0}$ and $h(M_{\geq 0}\times N^n)\subset \R_{\geq 0}$. We call such a map an \emph{absolute $(n+1)$-intersection map}. It is a notion important for the applications to heights in diophantine geometry. Note that $h \in \mathrm{Int}_\Q^n$ if we view the divisorial space $(\R,\R_{\geq 0})$ from \cref{two examples for divisorial spaces} as an object of $\mathcal{ADS}_\Q$. 
\end{remark}

\begin{art} \label{base extension from integers to rationals}
	Let $M$ be an ordered abelian group and let $N$ be a submonoid of $M$. We define the \emph{divisible hull of $N$ in $M$} by $\sqrt{N}\coloneqq \{x \in M \mid \exists m \in \N_{>0}, \, mx \in N\}$. We assume that $M=\sqrt{N}-\sqrt{N}$. Let $M_\Q=M\otimes_\Z \Q$, then the natural map $\iota\colon M \to M_\Q$ has the subgroup of torsion elements as a kernel. There is a natural partial order on $M_\Q$ induced by the cone generated by $\iota(M_{\geq 0})$. We define $N_\Q$ as the cone in $M_\Q$ generated by $\iota(N)$. Then $(M_\Q,N_\Q)$ is an abstract divisorial space. 
			
	Let $M'$ be another abelian group with a submonoid $N'$ satisfying $M'=\sqrt{N'}-\sqrt{N'}$. We consider a multilinear symmetric map $h\colon M^{n+1} \to M'$ satisfying the axioms $({\rm NEF})$ and $({\rm EFF})$. By multilinearity, the map $h$ induces a unique multilinear map $h_\Q\colon M_\Q^{n+1} \to M_\Q'$. It is obvious that $h_\Q$ is an $(n+1)$-intersection map between abstract divisorial spaces over $\Q$. If for any non-torsion element $x \in M_{\geq 0}$, there is $a \in N$ such that $a x^n$ is non-torsion, then $h_\Q$ is positively non-degenerate.
\end{art}

\begin{art} \label{base extension from rational to reals}
	Let $(M,N)$ be an abstract divisorial space over $\Q$. We would like to define a natural base change $(M_\R,N_\R)$ as an abstract divisorial space. The natural candidates are $M_\R  \coloneq M \otimes_\Q \R$ and $N_\R$ the cone generated by $N$ in $M_\R$. The partial order should be induced by the cone $(M_\R)_{\geq0}$ in $M_\R$ generated by $M_{\geq 0}$. The problem is that the cone $(M_\R)_{\geq0}$ has not to be pointed which means that we do not necessarily have $(M_\R)_{\geq 0} \cap (-(M_\R)_{\geq 0})=\{0\}$ and hence this does not define a partial order by lack of antisymmetry. 
			
	To have such a natural base change, we assume that $(M,N)$ has an $(n+1)$-intersection map $h \colon (M,N)^{n+1}\to (\R,\R_{\geq 0})$ which is positively non-degenerate. Then we claim that $(M_\R)_{\geq 0}$ induces a partial order on $M_\R$ and that $(M_\R,N_\R)$ is an abstract divisorial space over $\R$. As we have seen above, the crucial point is to show that  $$(M_\R)_{\geq 0} \cap (-(M_\R)_{\geq 0})=\{0\},$$ so let $x=\sum_{i=1}^r\lambda_ix_i=-\sum_{j=1}^s\mu_jy_j\in (M_\R)_{\geq 0} \cap (-(M_\R)_{\geq 0})$ 
	with $x_i, y_j\in M_{>0}$ and $\lambda_i, \mu_j \in \R_{>0}$. By positive non-degeneracy of $h$ and Remark \ref{simultaneous element in AMP}, there is $a\in M_{>0}$ such that $x_1a^n,\dots, x_ra^n, y_1a^n,\dots, y_sa^n\in \R_{>0}$, so
	\[0=xa^n-xa^n=\sum\limits_{i=1}^r\lambda_ix_ia^n+\sum\limits_{j=1}^s\mu_jy_ja^n.\]
	This implies that $r=s=0$, so $x=0$.
			
	By multilinearity, the $(n+1)$-intersection map $h$ extends uniquely to an $(n+1)$-intersection map $h_\R \colon (M_\R,N_\R)^{n+1} \to (\R,\R_{\geq0})$ and using Remark \ref{simultaneous element in AMP} similarly as above, it is clear that $h_\R$ is positively non-degenerate. 
			
	If $h'$ is any $(n'+1)$-intersection map from $(M,N)$ to an abstract divisorial space $(M',N')$ over $\R$, then by multilinearity, there is a unique extension to an $(n'+1)$-intersection map $h_\R'$ from $M_\R$ to $M'$.
\end{art}

\begin{ex} \label{geometric intersection numbers}
	Let $X$ be a projective variety of dimension $n$ over a field $k$. We set $M$ as the group of all Cartier divisors on $X$. Then $M$ is an ordered abelian group using $M_{\geq 0}$ as the submonoid of effective Cartier divisors. We denote by $N$ the submonoid of semiample Cartier divisors. Then  $M=N-N$ and hence $(M_\Q,N_\Q)$ is an abstract divisorial space. 
			
	The multi-degree is an $n$-intersection map $d\colon M^{n} \to \R$ given by the geometric intersection numbers $d(D_1,\dots,D_n)=D_1 \cdots D_n$.

	Instead of $N$, we could use the submonoid $\overline{N}$ of $M$ given by the nef divisors. Then everything works as above and we have
	$N \subset \overline{N}$ and $N_\Q   \subset \overline{N}_\Q$. 
	The multi-degree is still an $n$-intersection map.
\end{ex}
		
\begin{ex} \label{geometric with line bundles}
	We assume now that $X$ is a 
	projective variety over a field $k$. Then we set $M=\Pic(X)$ as the Picard group. Every line bundle has a regular meromorphic section and hence we may view $\Pic(X)$ as a quotient of the group of Cartier divisors considered in Example \ref{geometric intersection numbers}. Everything goes as above, the only problem is to show that the Picard group $\Pic(X)$ is ordered. We have to show that $M_{\geq 0} \cap (-M_{\geq 0})=\{0\}$. An element on the left is induced by a line bundle which has a non-trivial global section $s$ and for which $L^{-1}$ has a non-trivial global section $t$. Then $s \otimes t$ is a non-trivial global section of $\mathcal O_X$ and hence is a constant $\alpha \in k^\times$ using that $X$ is geometrically reduced and geometrically connected \cite[Corollary~3.3.21]{liu2006algebraic}. It follows that $0 \leq \mathrm{div}(s) \leq \mathrm{div}(\alpha)=0$ and hence $s$ is a trivializing section of $L$ proving the claim. Using ample line bundles for $a$ in axiom $({\rm AMP})$, it is clear that {if $X$ is normal, then} the multi-degree is positively non-degenerate and hence induces a positively non-degenerate $n$-intersection map $d_\R\colon (M_\R,N_\R)^n \to (\R,\R_{\geq0})$ by \ref{base extension from rational to reals}. 
\end{ex}

\begin{ex} \label{example MA}
	Let $X$ be a projective complex manifold of dimension $n$. We set $M$ as the isometry classes of holomorphic line bundles endowed with smooth metrics modulo those metrized line bundles with $c_1(L,\metr)=0$. Then $N=M_{\geq 0}$ is the submonoid of classes of metrized line bundles $(L,\metr)$ with positive Chern form $c_1(L,\metr)$. This induces a partial order on the abelian group $M$. By Remark \ref{base extension from integers to rationals}, we get an abstract divisorial space $(M_\Q,N_\Q)$ over $\Q$. The multi-degree is an $n$-intersection map on $M_\Q$ using the fact that
	$$\deg_{L_1,\dots,L_n}(X)= \int_X c_1(L_1,\metr_1)\wedge \dots \wedge c_1(L_n,\metr_n).$$
	One can show that the $n$-intersection map is positively non-degenerate using basic properties of intersection products with ample classes. We conclude from \ref{base extension from rational to reals} that $(M_\R,N_\R)$ is a well-defined abstract divisorial space over $\R$. 
			
	The Monge--Amp\`ere operator $c_1(L_1,\metr_1)\wedge \dots \wedge c_1(L_n,\metr_n)$ induces an $n$-intersection map from $(M_\Q,N_\Q)^n$  to the abstract divisorial space over $\R$ of Radon measures $(M',N')$ on $X$. For the latter, we use $M_{\geq 0}'=N'$ the subcone of positive Radon measures. The Monge--Amp\`ere operator extends to an $n$-intersection map $(M_\R,N_\R)^n \to (M',N')$ as in Example \ref{base extension from rational to reals}. 
\end{ex}

\begin{lemma} \label{direct limits}
	The categories $\mathcal{ADS}_\K$ and $\mathrm{Int}_\K^{n+1}$ admit direct limits {and products of families}.
\end{lemma}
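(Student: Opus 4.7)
The approach is to construct the direct limits and products explicitly on underlying $\K$-vector spaces, carry across the cones and orders, and then lift these constructions from $\mathcal{ADS}_\K$ to $\mathrm{Int}_\K^{n+1}$ using multilinearity. I will treat $\mathcal{ADS}_\K$ first, since a morphism in $\mathrm{Int}_\K^{n+1}$ is a compatible pair of morphisms in $\mathcal{ADS}_\K$, so the source and target of the limit/product will live in $\mathcal{ADS}_\K$.

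For products in $\mathcal{ADS}_\K$ of a family $(M_i,N_i)_{i\in I}$, I would set $M\coloneqq \prod_i M_i$ with componentwise $\K$-vector space structure, pointed cone $M_{\geq 0}\coloneqq \prod_i(M_i)_{\geq 0}$ and cone $N\coloneqq\prod_i N_i$. Pointedness is inherited coordinatewise, and $M=N-N$ follows from $M_i=N_i-N_i$. The componentwise projections are morphisms in $\mathcal{ADS}_\K$, and the universal property for cones of morphisms $\psi_i\colon (M'',N'')\to(M_i,N_i)$ follows from the vector-space universal property together with the componentwise definitions of the order and the cones.

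For direct limits in $\mathcal{ADS}_\K$ of a directed system $\{(M_i,N_i),\varphi_{ij}\}$, form $M\coloneqq \varinjlim_i M_i$ as a $\K$-vector space, with canonical maps $\varphi_i\colon M_i\to M$; take $N\coloneqq\bigcup_i\varphi_i(N_i)$ and $M_{\geq 0}\coloneqq\bigcup_i\varphi_i((M_i)_{\geq 0})$. Directedness makes both $N$ and $M_{\geq 0}$ closed under addition, and $M=N-N$ is clear from $M_i=N_i-N_i$. The key point is to verify that $M_{\geq 0}$ is pointed. If $\varphi_i(x)=-\varphi_j(y)$ with $x\in(M_i)_{\geq 0}$ and $y\in(M_j)_{\geq 0}$, then choosing $k\geq i,j$ and then $\ell\geq k$ such that $\varphi_{i\ell}(x)+\varphi_{j\ell}(y)=0$ in $M_\ell$, one has two elements $a\coloneqq\varphi_{i\ell}(x)\geq 0$ and $b\coloneqq\varphi_{j\ell}(y)\geq 0$ of $M_\ell$ with $a+b=0$; since $M_\ell$ is itself ordered and its non-negative cone is pointed, $a=b=0$, hence $\varphi_i(x)=0$ in $M$. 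The universal property is now immediate from the vector-space universal property plus the observation that any compatible cone of morphisms automatically preserves $N$ and the order.

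For $\mathrm{Int}_\K^{n+1}$, given a directed system of $(n+1)$-intersection maps $h_i\colon(M_i,N_i)^{n+1}\to(M_i',N_i')$ together with transition pairs $(\varphi_{ij},\varphi_{ij}')$, I would take $(M,N)\coloneqq\varinjlim(M_i,N_i)$ and $(M',N')\coloneqq\varinjlim(M_i',N_i')$ in $\mathcal{ADS}_\K$ constructed as above. For $m_0,\dots,m_n\in M$ pick, by directedness, a single index $i$ and representatives $x_j\in M_i$ with $\varphi_i(x_j)=m_j$, and set $h(m_0,\dots,m_n)\coloneqq\varphi_i'(h_i(x_0,\dots,x_n))$. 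Independence of this choice follows from compatibility of $h_i$ with the transitions $(\varphi_{ij},\varphi_{ij}')$; multilinearity and symmetry pass to the colimit, and axioms (\mathrm{NEF}) and (\mathrm{EFF}) pass because any finite tuple in $N$, respectively $M_{\geq 0}\times N^n$, lifts to a single stage $i$ with the corresponding positivity. Products in $\mathrm{Int}_\K^{n+1}$ are constructed analogously, with $h\coloneqq\prod_i h_i$ defined coordinatewise. The only nontrivial step in the whole argument is the verification that the vector-space colimit of ordered $\K$-vector spaces with cone-preserving maps has a pointed cone of non-negative elements; once that is established, all universal properties and the multilinear extensions are formal.
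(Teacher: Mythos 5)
Your proposal is correct and takes essentially the same approach as the paper's proof: construct colimits and products of the underlying $\K$-vector spaces, carry the cones $N$ and $M_{\geq 0}$ across as colimits/products, and check the axioms. Your explicit verification that $M_{\geq 0}$ remains pointed in the colimit (reducing to a single stage $M_\ell$ via directedness and using pointedness there) fills in a step the paper leaves implicit; everything else matches.
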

\begin{proof}
	Let $J$ be a directed set and let $(M_j, N_j) \in \mathcal{ADS}_\K$ for any $j\in J$ with given morphism $(M_i, N_i) \to (M_j, N_j)$  for $i \leq j$ satisfying the transitivity rule.   Let $M\coloneq \varinjlim_{j \in J}M_j$ and let  $N\coloneq \varinjlim_{j\in J}N_j$ in the category of $\K$-vector spaces. The partial order on $M$ is induced by $M_{\geq 0}\coloneq  \varinjlim_{j\in J}M_{j,\geq 0}$. Then $(M,N)$ is an abstract divisorial space and it is the desired direct limit. 
			
	Let $(M_i,N_i)_{i \in I}$ be a family of abstract divisorial spaces over $\K$. We define $M \coloneqq \prod_{i \in I} M_i$ as a $\K$-vector space and the cone $N \coloneqq \prod_{i \in I} N_i$. Using the cone $M_{\geq 0}\coloneqq \prod_{i \in I} (M_i)_{\geq 0}$ to define $M$ as an ordered $\K$-vector space, we easily deduce that $(M,N)$ is the product of the family $(M_i,N_i)_{i \in I}$ in  $\mathcal{ADS}_\K$.
			
	Using these two constructions, one readily proves the corresponding claims for  $\mathrm{Int}_\K^{n+1}$.
\end{proof}

\begin{art}[Induced divisorial subspace] \label{induced divisorial subspace}
	Let $(M,N)$ be an abstract divisorial space over $\K$ and let $E$ be a $\K$-subspace of $M$.	Then the \emph{induced divisorial subspace $(M_E,N_E)$ on $E$} is defined by the cone $N_E \coloneqq N \cap E$ in $E$ and by the $\K$-subspace $M_E \coloneqq N_E-N_E$ ordered by the induced partial order from $M$. Obviously, it is an abstract divisorial space over $\K$.
\end{art}
				
\begin{art}[Closure of the cone $N$] \label{closure for finite subspaces}
	Let $(M,N)$ be an abstract divisorial space over $\K$. The goal is to replace $N$ by some sort of closure in $M$.

	Let $E$ be any finite-dimensional subspace of $M$ over $\K$ and let $(M_E,N_E)$ be the induced divisorial subspace on $E$ from \ref{induced divisorial subspace}.  Then we have 
	$$(M,N) =  \varinjlim_{E}(M_E,N_E),$$
	with $E$ ranging over all such subspaces, using the direct limits from \cref{direct limits}. Let $\overline{N_E}$ be the closure of $N_E$ inside $E$ with respect to the canonical euclidean topology on the finite dimensional $\K$-vector space $E$. Then $\overline{N_E}$ is a cone in $M_E$ containing $N_E$ and hence $(M_E,\overline{N_E})$ is an abstract divisorial space over $\K$. We define then 
	$$\overline{N} = \varinjlim_{E}\overline{N_E}$$
	with $E$ ranging over all such subspaces. Then $\overline{N}$ is a cone in $M$ containing $N$, hence $(M,\overline{N})$ is an abstract divisorial space over $\K$. We call $\overline N$ the \emph{closure} of $N$ in $M$.

	If $h$ is an $(n+1)$-intersection map from $(M,N)$ to the abstract divisorial space $(M',N')$, then $h$ induces an $(n+1)$-intersection map from $(M,\overline N)$ to $(M',\overline{N'})$ using continuity on finite-dimensional subspaces. Note that if $(M',N')=(\R,\R_{\geq 0})$, then $\overline{N'}=N'$.
\end{art}
				
\begin{remark} \label{closure of semiample is nef}
	If $M$ is the  Picard group on a projective variety $X$ over $k$ and $N$ is the set of ample (or more generally the semiample) classes in $\Pic(X)$, then $\overline{N_\Q}$ is equal to the nef cone $\overline{N}_\Q$ from \cref{geometric with line bundles}.
\end{remark}
				
\begin{remark} \label{finite subspace topology}
	More generally, if $V$ is any $\K$-vector space, then we have the \emph{finite subspace topology} on $V$ which is the finest topology on $V$ such that the inclusions $E \to V$ are continuous for all finite dimensional $\K$-subspaces $E$ of $V$ endowed with the canonical euclidean topology. If $V=M$ for an abstract divisorial space $(M,N)$ over $\K$, then the closure $\overline N$ of the cone $N$ in $M$ from \ref{closure for finite subspaces} is indeed the closure of $N$ with respect to the finite subspace topology.
\end{remark}
				
\begin{lemma}\label{lemma:closure in finite subspace topology}
	Let $(M,N)$ be an abstract divisorial space over $\K$, and $\overline{N}$ the closure of $N$ in $M$ with respect to the finite subspace topology. Then $x\in \overline{N}$ if and only if there is $y\in N$ such that $x+\frac{1}{n}y\in N$ for any $n\in\N_{\geq 1}$.
\end{lemma}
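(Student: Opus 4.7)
The plan is to prove the two directions separately, with the backward implication essentially trivial and the forward one reduced to a standard fact about convex cones with non-empty interior in a finite-dimensional vector space.

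For the direction ``$\Leftarrow$'', suppose $y \in N$ satisfies $x + \frac{1}{n}y \in N$ for every $n \geq 1$. Let $E$ be the $\K$-subspace of $M$ spanned by $x$ and $y$; it is finite-dimensional, so $N_E = N \cap E$ sits inside the ordinary euclidean topology on $E$. The sequence $x + \frac{1}{n}y$ lies in $N_E$ and converges to $x$ as $n \to \infty$, so $x \in \overline{N_E} \subset \overline{N}$ by the definition of the closure in \cref{closure for finite subspaces}.

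For the direction ``$\Rightarrow$'', assume $x \in \overline{N}$. By construction of $\overline{N}$ as the direct limit of the $\overline{N_E}$, there is a finite-dimensional $\K$-subspace $E$ of $M$ with $x \in \overline{N_E}$. Since $M_E = N_E - N_E$ is a linear subspace of the finite-dimensional space $E$ it is closed, so in fact $x \in M_E$. The crucial step is to show that $N_E$ has non-empty interior in $M_E$ with its canonical euclidean topology. Choose a basis $y_1,\dots,y_r$ of $M_E$ with every $y_i \in N_E$, which is possible because $M_E = N_E - N_E$. The map $\K^r \to M_E$, $(a_1,\dots,a_r) \mapsto \sum a_i y_i$ is a linear isomorphism, hence a homeomorphism, and it sends the open positive orthant into $N_E$ because $N_E$ is a cone closed under addition. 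Thus $y \coloneqq y_1 + \dots + y_r$ lies in the interior of $N_E$ in $M_E$.

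It remains to produce the element for every $n$ using the same $y$. Fix $n \geq 1$. Since $N_E$ is a cone, $\frac{1}{n}y$ again lies in the interior of $N_E$ inside $M_E$, so there is an open neighborhood $U$ of $\frac{1}{n}y$ in $M_E$ contained in $N_E$. Pick a sequence $x_k \in N_E$ with $x_k \to x$ in $M_E$; for $k$ large enough, $\frac{1}{n}y + (x - x_k) \in U \subset N_E$, and combining this with $x_k \in N_E$ we deduce
\[
x + \tfrac{1}{n}y \;=\; x_k + \bigl(\tfrac{1}{n}y + x - x_k\bigr) \;\in\; N_E + N_E \;\subset\; N_E \;\subset\; N.
\]
The only degenerate case is $M_E = 0$, in which $x = 0$ and $y = 0 \in N$ works trivially.

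The main (mild) obstacle is the existence of an interior point of $N_E$; everything else is a routine translation argument. Note that this step relies essentially on the abstract divisorial space axiom $M = N - N$ applied to the finite-dimensional subspace $M_E$, without which no neighborhood of $\frac{1}{n}y$ would be available inside $N_E$.
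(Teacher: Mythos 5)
Your proof is correct and takes essentially the same approach as the paper: both pick a basis $y_1,\dots,y_r$ of $M_E$ lying in $N_E$, set $y = y_1 + \cdots + y_r$, approximate $x$ by a sequence in $N_E$, and conclude $x + \tfrac{1}{n}y \in N_E$ by a perturbation argument. The paper carries this out with explicit coordinates ($a_i + 1/n \geq a_{ji}$ for $j$ large), whereas you phrase the same step more conceptually via the observation that $y$ is an interior point of $N_E$ in $M_E$; the underlying argument is identical.
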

\begin{proof}
	Let $x\in \overline{N}$, i.e. there is a finite-dimensional subspace $E$ such that $x\in \overline{E\cap N}$. Let $e_1,\dots, e_m\in E\cap N$ be a basis of $E$, and $\{y_j\}_{j{\geq 1}}\subset E\cap N$ a sequence converging to $x$. Write $x=a_1e_1+\cdots+a_me_m$ and $y_{j}=a_{j1}e_1+\cdots+a_{jm}e_m$ with $a_i, a_{ji}\in \K$, then $\lim_{j\to \infty}a_{ji}=a_i$ for any $i$. Set $y\coloneqq e_1+\cdots+e_m\in N$. For any $n\in\N_{\geq 1}$, there is $j$ large enough such that that $a_{i}+1/n \geq a_{ji}$, i.e. $x+1/n\cdot y= x_{j}+ z_j$ with $z_j\in \sum_{i=1}^m\K_{\geq 0}e_{i}\subset N$. Hence $x+\frac{1}{n}y\in N$. Conversely, for $x\in M$, if there is $y\in N$ such that $x+1/n\cdot y$ for any $n\in\N_{\geq1}$, since $x+1/n\cdot y\to x, n\to \infty$ with respect to finite subspace topology, then $x\in \overline N$.
\end{proof}

\subsection{The $b$-topology and completion}	\label{subsection: b-topology and completion}

\begin{definition} \label{def: b-metric}
	Let $(M,N)$ be an abstract divisorial space over $\K$ and {$b \in M_{\geq 0}$}. We have a pseudo-metric\footnote{Recall that a pseudo-metric $d$ satisfies the same axioms as a metric except that $d(x,y)=0$ is allowed for $x \neq y$. As a metric, it induces a uniform structure and hence we get a completion.}  on $M$ defined as follows: for any $x,y\in M$,
	\[d_b(x,y)\coloneq \min\{\inf\{\delta\in \K_{\geq 0} \mid -\delta b\leq x-y\leq \delta b\},1\}.\]
	The topology defined by $d_b(\cdot,\cdot)$ is called the \emph{$b$-topology} on $M$. 
\end{definition}
\begin{remark} \label{remarks pseudo-distance}
	\begin{enumerate1}
		\item For fixed $y \in M$ and for $0<\delta <\epsilon \leq 1$ in $\K$, we have 
		$$\{x \in M \mid d_b(x,y) \leq \delta \} \subset \{x \in M \mid -\epsilon b \leq x-y \leq \epsilon b \} \subset\{x \in M \mid d_b(x,y) \leq \epsilon \} $$
		and hence the sets $\{x \in M \mid -\epsilon b \leq x-y \leq \epsilon b \}_{\epsilon \in \K_{>0}}$ form a basis of neighborhoods for the $b$-topology.
		
		\item Notice that $M$ is not a topological vector space with respect to the $b$-topology, but it is a topological group, i.e. the scalar multiplication $\K \times M \to M$ might  not be continuous. However, for fixed $r \in \K$, the map $M \to M, \, x \mapsto rx$ is obviously continuous.
		
		\item The pseudo-metric is not necessarily a metric (equivalently, $M$ is not necessarily Hausdorff), i.e. the linear subspace $M_0^b\coloneq \{x\in M\mid d_b(x,0)=0\}$ is not $0$ in general, see the example below. 
	\end{enumerate1}
\end{remark}
				
\begin{ex}	
	\begin{enumerate1}
		\item Let $(M,N)=(\Q^2, \Q^2_{\geq 0})$ with partial order defined by the pointed cone $\Q^2_{>0}\cup \{0\}$, and $b=(1,1)$. 
			Then the $b$-topology is the Euclidean topology on $\Q^2$;
			
		\item Let $(M,N)=(\Q^2, \Q^2_{\geq 0})$ with partial order defined by the pointed cone $(\Q_{>0}\times\Q)\cup (\{0\}\times\Q_{\geq 0})$, and $b=(1,0)$. 
		Then the $b$-topology is not Hausdorff. Indeed, we have $d((0,1),(0,0))=0$.
	\end{enumerate1}
\end{ex}

\begin{remark} \label{continuity of morphisms}
	Let $\varphi\colon (M,N) \to (M',N')$ be a morphism of abstract divisorial spaces over $\K$, let $b \in M_{\geq 0}$ and let $b' \in M'_{\geq 0}$ with $\varphi(b)\leq b'$. Then $\varphi$ is continuous with respect  to the $b$-topology on $M$ and with respect to the $b'$-topology on $M'$. This crucial fact is obvious from the definitions.
\end{remark}

In the next proposition, we will define the \emph{completion} $(\widehat{M}^b,\widehat{N}^b)$ of an abstract divisorial space $(M,N)$ over $\K$ for a fixed {$b \in M_{\geq 0}$}. We will use the completion $\widehat{M}^{d_b}$ of $M$ with respect to the pseudo-metric $d_b$. Obviously, it is a vector space over $\K$. Note that the canonical map $M \to \widehat{M}^{d_b}$ is not necessarily  injective, the kernel is the space $M_0^b$ defined in Remark \ref{remarks pseudo-distance}. Similarly, we denote by $\widehat{N}^{d_b}$ the completion of $N$ with respect to $d_b$. Then $\widehat{N}^{d_b}$ is a closed cone of $\widehat{M}^{d_b}$. Similarly, we define $\widehat{M}_{\geq 0}^{d_b}$ as the completion of $M_{\geq 0}$ which is again a subset of $\widehat{M}^{d_b}$. In the next lemma, we will define the completion $(\widehat{M}^b,\widehat{N}^b)$ as an abstract divisorial space over $\K$ and we will characterize it by a universal property.
							
\begin{prop} \label{b-completion}
	We use the above assumptions and notation. Let $\widehat{N}^b\coloneqq \widehat{N}^{d_b}$ and  let us endow  $\widehat{M}^b\coloneqq \widehat{N}^b-\widehat{N}^b\subset\widehat{M}^{d_b}$ with the induced topology. 
	Then the cone $\widehat{N}^b$ is Hausdorff complete and the cone $\widehat{M}_{\geq 0}^b\coloneqq \widehat{M}_{\geq 0}^{d_b}\cap \widehat{M}^b$ is closed in $\widehat{M}^b$.  
	Moreover, $(\widehat{M}^b,\widehat{N}^b)$ is an abstract divisorial space over $\K$ with partial order defined by   $\widehat{M}_{\geq 0}^b$. The natural map $(M,N)\rightarrow(\widehat{M}^b,\widehat{N}^b)$ is a morphism which satisfies the following universal property: 
								
	For any abstract divisorial space $(M',N')$ over $\K$ endowed with the $b'$-topology for some $b' \in M_{\geq 0}'$ and any morphism $\varphi\colon (M,N)\rightarrow (M',N')$ such that $N'$ is Hausdorff complete,  $M_{\geq 0}'$ is closed in $M'$ and $\varphi(b)\leq b'$,  there is a unique morphism $\widehat\varphi$
    such that  the following diagram commutes:
	\[\xymatrix{(M,N)\ar[rr]^-{\varphi}\ar[dr]&& (M',N')\\
	&(\widehat{M}^b,\widehat{N}^b)\ar[ur]_{\widehat{\varphi}}&}.\]
\end{prop}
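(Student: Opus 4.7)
The plan is to construct $(\widehat{M}^b,\widehat{N}^b)$ in two stages. First I would take the metric completion $\widehat{M}^{d_b}$ of the pseudo-metric space $(M,d_b)$; by Remark \ref{remarks pseudo-distance}, addition $M\times M\to M$ and each individual scalar multiplication $x\mapsto rx$ are $d_b$-continuous, so they extend uniquely to $\widehat{M}^{d_b}$, endowing it with the structure of a complete Hausdorff $\K$-vector space. I would then define $\widehat{N}^b$ as the closure of the image of $N$ in $\widehat{M}^{d_b}$ and $\widehat{M}^{d_b}_{\geq 0}$ as the closure of $M_{\geq 0}$: both are closed cones since stability under addition and under multiplication by a fixed non-negative scalar passes to the closure, and $\widehat{N}^b$ is Hausdorff complete as a closed subspace of a Hausdorff complete space. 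The definitions $\widehat{M}^b=\widehat{N}^b-\widehat{N}^b$ and $\widehat{M}^b_{\geq 0}=\widehat{M}^{d_b}_{\geq 0}\cap\widehat{M}^b$ are dictated by the statement, and closedness of $\widehat{M}^b_{\geq 0}$ in $\widehat{M}^b$ is then immediate.

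I expect the main obstacle to be showing that $\widehat{M}^b_{\geq 0}$ is a \emph{pointed} cone, since this is exactly what upgrades it to a partial order on $\widehat{M}^b$. My plan: suppose $x$ and $-x$ both lie in $\widehat{M}^b_{\geq 0}$, so there exist $x_n,y_n\in M_{\geq 0}$ with $x_n\to x$ and $y_n\to -x$ in $\widehat{M}^{d_b}$. Then $x_n+y_n\to 0$, so for any $\varepsilon\in\K_{>0}$ and $n$ large enough one has $-\varepsilon b\leq x_n+y_n\leq \varepsilon b$ in $M$. The crucial inequality $0\leq x_n\leq x_n+y_n$, which follows from $y_n\geq 0$, then forces $0\leq x_n\leq \varepsilon b$; hence $d_b(x_n,0)\leq \varepsilon$, which by the triangle inequality gives $d_b(x,0)=0$. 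Hausdorffness of $\widehat{M}^{d_b}$ gives $x=0$, so pointedness holds. With this in hand $(\widehat{M}^b,\widehat{N}^b)$ is an abstract divisorial space, and the natural map from $(M,N)$ is tautologically a morphism.

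For the universal property, by Remark \ref{continuity of morphisms} the hypothesis $\varphi(b)\leq b'$ makes $\varphi$ uniformly continuous for the $b$- and $b'$-topologies, so it extends uniquely to a continuous $\K$-linear map $\tilde\varphi\colon\widehat{M}^{d_b}\to\widehat{M'}^{d_{b'}}$. Since $N'$ is Hausdorff complete in the $b'$-topology, the natural map $N'\to \widehat{M'}^{d_{b'}}$ is an isometric embedding with closed image, and this closed image contains the closure of the image of $\varphi(N)$; hence $\tilde\varphi(\widehat{N}^b)\subset N'$. I then set $\widehat\varphi\coloneqq\tilde\varphi|_{\widehat{M}^b}$, which is $\K$-linear with values in $N'-N'=M'$ and sends $\widehat{N}^b$ into $N'$. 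Order preservation is checked by approximating $x\in \widehat{M}^b_{\geq 0}$ by a sequence $x_n\in M_{\geq 0}$: since $\varphi(x_n)\in M'_{\geq 0}$ converges to $\widehat\varphi(x)$ in the $b'$-topology of $M'$, the closedness of $M'_{\geq 0}$ yields $\widehat\varphi(x)\in M'_{\geq 0}$. Uniqueness follows because any $\widehat\varphi$ satisfying the diagram is $\K$-linear, continuous on $\widehat{N}^b$, and already determined on the dense subset $N$.
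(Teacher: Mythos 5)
Your proposal is correct and follows essentially the same route as the paper's proof. The pointedness argument, the observation that $\widehat{M}^{d_b}_{\geq 0}$ is closed so that its intersection with $\widehat{M}^b$ is closed, and the construction of $\widehat\varphi$ by continuous extension followed by the two separate density arguments (image of $N$ dense in $\widehat{N}^b$, image of $M_{\geq 0}$ dense in $\widehat{M}^b_{\geq 0}$) all match the paper; your pointedness step merely uses $y_n\geq 0$ with $x_n+y_n\to 0$ where the paper writes $y_n\in -M_{\geq 0}$ and $x_n-y_n\to 0$, which is the same computation up to a sign convention, and you are slightly more explicit than the paper in unpacking why $x_n\to 0$.
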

\begin{proof}
By completion, it is clear that $\widehat{N}^b$ is a closed cone spanning $\widehat{M}^b$. To see that $(\widehat{M}^b, \widehat{N}^b)$ is an abstract divisorial space, it suffices to show that  the cone $\widehat{M}_{\geq 0}^{d_b}$ is pointed, see the introductory remarks to Section \ref{section: abstract divisorial spaces}.
	Let $x\in \widehat{M}^{d_b}_{\geq 0}\cap (-\widehat{M}^{d_b}_{\geq 0})$ be represented by Cauchy sequences $(x_n)_{n\geq 1}\subset {M}_{\geq 0}$ and $(y_n)_{n\geq 1}\subset -{M}_{\geq 0}$. Then the Cauchy sequence $(x_n-y_n)_{n\geq 1}\subset M_{\geq0}$ represents $0\in \widehat{M}_{\geq 0}^{d_b}$. Since $-y_n\geq 0$, we have $0\leq x_n\leq x_n-y_n$. This implies that $(x_n)_{n\geq 1}$ converges to $0$ and hence $x=0$ proving that the cone is pointed.
								
	The natural map $M\rightarrow \widehat{M}^{d_b}$ is continuous and maps $N$, $M$, $M_{\geq 0}$ to $\widehat{N}^b, \widehat{M}^b,\widehat{M}_{\geq 0}^b$, respectively. So it is a morphism. Since $\widehat{M}_{\geq 0}^{d_b}$ is complete, it is closed in $\widehat{M}^{d_b}$ with respect to the $b$-topology and hence  $\widehat{M}_{\geq 0}^b$ is closed in $\widehat{M}^b$.
								
	For the universal property, let $\varphi\colon (M,N)\rightarrow (M',N')$ be as in the assumptions. Using $\varphi(b)\leq b'$, the map $\varphi$ is continuous by \cref{continuity of morphisms} and hence there is a unique continuous extension $\widehat{\varphi} \colon \widehat{M}^{d_b} \to  \widehat{M'}^{d_{b'}}$. Since  $N'$ is Hausdorff complete with respect to $d_{b'}$, it is closed in $\widehat{M'}^{d_{b'}}$. By density of the image of $N$ in $\widehat{N}^b$, we conclude that $\widehat{\varphi}(\widehat{N}^b) \subset N'$. 
	It remains to show that $\widehat{\varphi}(\widehat{M}^b_{\geq 0})\subset {M}'_{\geq 0}$. This is from the fact that the image of $M_{\geq 0}$ is dense in $\widehat{M}_{\geq 0}^b$ and from  $M'_{\geq 0}$ closed in $M'$ with respect to the $b'$-topology. Uniqueness of the map $\widehat{\varphi}\colon \widehat{M}^b \to M'$ 	is clear by density of the image of $M$ in $\widehat{M}^b$ and continuity. 
\end{proof}

\begin{definition} \label{S-completion}
	Let $(M,N)$ be an abstract divisorial space and let $S$ be a directed subset of $M_{\geq 0}$. For $b \leq b'$ in $S$, Proposition \ref{b-completion} gives a canonical morphism 
	$$(\widehat{M}^b,\widehat{N}^b) \to (\widehat{M}^{b'},\widehat{N}^{b'})$$
	of abstract divisorial spaces over $\K$. 
	Then the direct limit
	$$(\widehat{M}^S, \widehat{N}^S) \coloneqq \varinjlim_{b\in S}(\widehat{M}^b,\widehat{N}^b)$$ 
	in the category of abstract divisorial spaces is called the \emph{$S$-completion of $(M,N)$}. 
								
	In the applications, we will often take $S \coloneqq M_{\geq 0}$ and define
	$$(\widehat{M},\widehat{N}) \coloneqq \varinjlim_{b\in M_{\geq 0}}(\widehat{M}^b,\widehat{N}^b).$$
\end{definition}

\begin{remark} \label{final object}
	The universal property of direct limits shows that for directed subsets $S \subset S' \subset {M_{\geq 0}}$, we have natural morphisms 
	\begin{equation} \label{direct sets and completions}
		(\widehat{M}^S, \widehat{N}^S) \rightarrow (\widehat{M}^{S'}, \widehat{N}^{S'}) \rightarrow (\widehat{M},\widehat{N}).
	\end{equation}
	If now the directed set $S$ is cofinal in {$M_{\geq 0}$}, which means that for any $b \in {M_{\geq 0}}$, there is $b' \in S$ with $b \leq b'$, then all morphisms in \eqref{direct sets and completions} are obviously isomorphisms. 
\end{remark}
						
\subsection{Pull-backs} 
							
In this subsection, we fix an $(n+1)$-intersection map $h\colon  (M,N)^{n+1}\rightarrow (M',N')$ of divisorial spaces over $\K$ and we write as usual in a formal manner
$$h(x_0,\dots,x_n)= x_0\cdots x_n.$$
Our goal is to extend $h$ to the completions considered above. This requires the following notion of pull-back which is motivated by the Deligne pairing. 
							
\begin{definition} \label{definition pull-back map}
	A \emph{pull-back for $h$ at $x_0' \in M'_{\geq 0}\cap N'$} is  an element $\varphi(x_0') \in M_{\geq 0} \cap N$ such that for any $x_1,\dots, x_n\in M$, we have that $h(\varphi(x_0'),x_1,\dots, x_n)\in  \K x_0'$. 
\end{definition}
Such a pull-back is not unique. We always have  $\varphi(x_0')=0$ as a pull-back for $h$ at $x_0'$. To have further applications, we need the following properties of pull-backs.

\begin{lemma} \label{n-intersection map associated to pull-back}
	Let $\varphi(x_0')$ be a pull-back for $h$ at a non-zero element $x_0' \in N'\cap M_{\geq 0}'$. Then for $x_1,\dots,x_n \in M$, there is a unique $i(x_1,\dots,x_n)\in \K$ such that
	\[h(\varphi(x_0'),x_1,\dots, x_n)=i(x_1,\dots, x_n)\cdot x_0'.\]
	Moreover, the induced map $i\colon (M,N)^{n}\to(\K,\K_{\geq 0})$ is an $n$-intersection map.
\end{lemma}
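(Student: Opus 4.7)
The scalar $i(x_1,\dots,x_n)$ is unique because $M'$ is a $\K$-vector space and $x_0'\neq 0$, so the map $\K\to M'$, $\lambda\mapsto \lambda x_0'$, is injective. Existence is exactly the defining property of a pull-back from Definition \ref{definition pull-back map}, which guarantees that $h(\varphi(x_0'),x_1,\dots,x_n)$ lies in the one-dimensional subspace $\K x_0'$.

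Multilinearity and symmetry of $i$ transfer directly from those of $h$. With the first slot fixed at $\varphi(x_0')$, the map $h(\varphi(x_0'),\cdot,\dots,\cdot)$ is multilinear and symmetric in its remaining $n$ arguments. Via the defining identity
\[ i(x_1,\dots,x_n)\cdot x_0' \,=\, h(\varphi(x_0'),x_1,\dots,x_n) \]
together with the uniqueness of the scalar $i(x_1,\dots,x_n)$, these properties carry over to $i$.

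The core of the argument is the verification of the axioms (NEF) and (EFF) for $i\colon (M,N)^{n}\to(\K,\K_{\geq 0})$. Both reduce to showing $i(x_1,\dots,x_n)\geq 0$ whenever exactly one of $x_1,\dots,x_n$ lies in $M_{\geq 0}$ and the remaining $n-1$ lie in $N$ (with (NEF) being the special case where all $x_k\in N\subset M_{\geq 0}$ is not automatic, so we argue uniformly via (EFF) of $h$). The key observation is that a pull-back satisfies $\varphi(x_0')\in N\cap M_{\geq 0}$. Hence, after using the symmetry of $h$ to put any $M_{\geq 0}$-entry next to an $N$-entry, the tuple $(\varphi(x_0'),x_1,\dots,x_n)$ has one entry in $M_{\geq 0}$ and $n$ entries in $N$, so axiom (EFF) for $h$ gives
\[ i(x_1,\dots,x_n)\cdot x_0' \,=\, h(\varphi(x_0'),x_1,\dots,x_n)\,\in\, M'_{\geq 0}. \]
Since $x_0'\in M'_{\geq 0}\setminus\{0\}$ and $M'_{\geq 0}$ is pointed (being the positive cone of an ordered $\K$-vector space), this forces $i(x_1,\dots,x_n)\geq 0$: otherwise $-i(x_1,\dots,x_n)\cdot x_0'\in M'_{\geq 0}$ as well, whence $i(x_1,\dots,x_n)\cdot x_0'\in M'_{\geq 0}\cap (-M'_{\geq 0})=\{0\}$, giving $i(x_1,\dots,x_n)=0$.

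No step appears deep. The only subtlety worth flagging is that one should not try to use (NEF) of $h$ to produce $i(x_1,\dots,x_n)\cdot x_0'\in N'$, since the cone $N'$ is not assumed to be pointed and the conclusion would not follow. Using (EFF) of $h$ together with the hypothesis $x_0'\in M'_{\geq 0}$ bypasses this obstacle cleanly.
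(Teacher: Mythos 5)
Your proof is correct and follows essentially the same route as the paper's: existence and uniqueness of the scalar from the pull-back definition and $x_0'\neq 0$, then positivity by feeding the tuple $(\varphi(x_0'),x_1,\dots,x_n)$ — using $\varphi(x_0')\in N\cap M_{\geq 0}$ together with symmetry — into axiom (EFF) for $h$, and finally concluding $i\geq 0$ from pointedness of $M'_{\geq 0}$ and $x_0'\in M'_{\geq 0}\setminus\{0\}$. Your remark that one must route through (EFF) rather than (NEF) of $h$ (since $N'$ is not assumed pointed) correctly identifies the one subtle point in the argument.
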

\begin{proof}
Existence of $i(x_1,\dots,x_n)$ is immediate from the definition of pull-back and uniqueness follows from $x_0' \neq 0$. It remains to show that $i$ is an $n$-intersection map, i.e. $i(N^n) \cup i(M_{\geq 0}\times N^{n-1})\subset\K_{\geq 0}$.     
For $x_1,\dots, x_n\in N$, the axiom $({\rm EFF})$ gives
\begin{equation} \label{consequence of EFF and i} h(\varphi(x_0'),x_1,\dots, x_n)=i(x_1,\dots, x_n)\cdot x_0'\in M_{\geq 0}'.
\end{equation}
Using that $M_{\geq 0}'\cap (-M_{\geq 0}')=\{0\}$ and $x_0'\in M'_{\geq 0}$, we deduce $i(x_1,\dots, x_n) \geq 0$. It remains to see that $i(x_1,\dots,x_n) \geq 0$ if $x_1, \dots, x_{n-1} \in N$ and $x_n \in M_{\geq 0}$. Symmetry of $h$ and axiom (EFF) yield again \eqref{consequence of EFF and i} and  as above we get $i(x_1,\dots,x_n)\geq 0$ proving the claim.
\end{proof}
The following continuity results are the main reason for introducing  pull-backs.
							
\begin{lemma} \label{continuity of intersection maps} 
Let $b \in M_{\geq 0}$ and let $b' \in M_{\geq 0}' \cap N'$. We assume that there is a pull-back $\varphi(b')$ for $h$ at $b'$ with $b\leq \varphi(b')$. Then $h$ induces a continuous map  $N^{n+1}\rightarrow N'$  when $N$ is endowed with the $b$-topology, and $N'$ is endowed with the $b'$-topology.
\end{lemma}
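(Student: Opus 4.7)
The plan is to reduce continuity to a telescoping estimate that uses axiom $(\mathrm{EFF})$ together with the $n$-intersection map $i$ from Lemma~\ref{n-intersection map associated to pull-back}. Fix a base point $(a_0,\dots,a_n)\in N^{n+1}$ and consider $(x_0,\dots,x_n)\in N^{n+1}$ with $d_b(x_i,a_i)\le\delta$ for a small $\delta\in\K_{>0}$ (say $\delta\le 1$). Setting $y_i\coloneqq x_i-a_i$, the defining inequalities of the $b$-topology give $-\delta b\le y_i\le\delta b$, and the assumption $b\le\varphi(b')$ then yields $-\delta\varphi(b')\le y_i\le\delta\varphi(b')$. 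Multilinearity of $h$ produces the telescoping identity
\[
h(x_0,\dots,x_n)-h(a_0,\dots,a_n)=\sum_{i=0}^n h(x_0,\dots,x_{i-1},\,y_i,\,a_{i+1},\dots,a_n).
\]

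For the $i$-th summand, all arguments other than $y_i$ lie in $N$, while $\delta\varphi(b')\pm y_i$ lies in $M_{\ge 0}$. Axiom $(\mathrm{EFF})$, applied after a symmetrisation to put the $M_{\ge 0}$-argument in position $i$, therefore gives $h(x_0,\dots,x_{i-1},\,\delta\varphi(b')\pm y_i,\,a_{i+1},\dots,a_n)\in M'_{\ge 0}$. Combining both signs and using the defining property of the pull-back via Lemma~\ref{n-intersection map associated to pull-back}, one obtains
\[
-\delta\,i(x_0,\dots,x_{i-1},a_{i+1},\dots,a_n)\,b'\;\le\;h(x_0,\dots,x_{i-1},y_i,a_{i+1},\dots,a_n)\;\le\;\delta\,i(x_0,\dots,x_{i-1},a_{i+1},\dots,a_n)\,b',
\]
where the coefficient is the value of the associated $n$-intersection map $i$ and is non-negative because all of its arguments lie in $N$.

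To upgrade the pointwise estimate to joint continuity, a uniform bound on these coefficients is needed. Since $0\le x_j\le a_j+\varphi(b')$ (using $\delta\le 1$ and $\varphi(b')\in N$, so that $a_j+\varphi(b')\in N$), monotonicity of $i$ in its $N$-arguments---a direct consequence of applying $(\mathrm{EFF})$ to $i$ itself---gives
\[
0\;\le\;i(x_0,\dots,x_{i-1},a_{i+1},\dots,a_n)\;\le\;i(a_0+\varphi(b'),\dots,a_{i-1}+\varphi(b'),a_{i+1},\dots,a_n)\;\eqqcolon\;C_i,
\]
a constant depending only on the base point and on $\varphi(b')$. Summing over $i$ yields $d_{b'}\bigl(h(x_0,\dots,x_n),h(a_0,\dots,a_n)\bigr)\le\delta\sum_{i=0}^n C_i$, which tends to $0$ as $\delta\to 0$. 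Combined with $(\mathrm{NEF})$, which guarantees $h(N^{n+1})\subset N'$, this proves the asserted continuity into $N'$ equipped with the $b'$-topology. The main obstacle is exactly this uniform estimate on $C_i$, resolved by the monotonicity of the $n$-intersection map $i$; once it is in place, everything else is a direct manipulation of the divisorial-space axioms.
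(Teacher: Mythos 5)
Your proof is correct and follows essentially the same route as the paper: reduce a $b$-ball around a base point into a $\varphi(b')$-ball, telescope $h(x)-h(a)$ into a sum of terms each containing one small factor $y_i$, bound each term via axiom $(\mathrm{EFF})$ and the auxiliary $n$-intersection map $i$ of Lemma~\ref{n-intersection map associated to pull-back}, and obtain a uniform constant $C_i$ by monotonicity of $i$. Two small points worth tidying: the inequality ``$0\le x_j$'' is not generally true for $x_j\in N$ (the lower bound $0\le i(\cdots)$ comes from $(\mathrm{NEF})$ applied to $i$, not from positivity of $x_j$), and you invoke Lemma~\ref{n-intersection map associated to pull-back}, which requires $b'\ne 0$; the degenerate case $b'=0$ should be disposed of separately (there $h(\varphi(b'),\cdots)=0$ and the telescoping sum shows $h$ is locally constant, hence trivially continuous), as the paper does explicitly.
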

\begin{proof}
  We may assume that $b=\varphi(b')$ and hence $b\in M_{\geq 0} \cap N$ as the $b$-topology then gets coarser. We may assume $b' \neq 0$, otherwise {$b\leq \varphi(b')=0$ which implies that $b\in M_{\geq 0}\cap M_{\leq 0}=\{0\}$, i.e. $b=0$,  hence} the $b$- and the $b'$-topology would be discrete which makes continuity obvious. Let 
  $i\colon  (M,N)^n\rightarrow (\K,\K_{\geq 0})$ be the $n$-intersection map from Lemma \ref{n-intersection map associated to pull-back} such that 
  \[h(\varphi(b'),x_1,\dots, x_n)=i(x_1,\dots, x_n)\cdot b'\]
  for $x_1,\dots, x_n\in M$.
								
  We fix $x_0,\dots, x_n\in N$. We claim that there is $C\coloneq C(b,x_0,\dots, x_n)\in \K_{>0}$ such that for any $y_0,\dots, y_n\in N$ with $-b\leq x_k-y_k\leq  b$, we have that
  \[0 \leq i(y_0,\dots, y_{j-1},x_{j+1},\dots, x_n) \leq C\] 
  for any $j=0,\dots, n$. Indeed, 
  since $i$ is an $n$-intersection map and $b=\varphi(b')\in N$, we have\begin{align*}
  	0 \leq i(y_0,\dots, y_{j-1},x_{j+1},\dots, x_n)&\leq i(x_0+ b,\dots, x_{j-1}+ b, x_{j+1},\dots, x_n).
  \end{align*}
Hence our claim holds. We fix such $C>0$.
								
  Let $\varepsilon \in \K_{>0}$. We take $\delta\coloneq \min\left\{1,\frac{\varepsilon}{(n+1)C}\right\}$. Then for any $y_0,\dots, y_n\in N$  with $-\delta b\leq x_k-y_k\leq \delta b$, we have that
  \begin{align*}
	y_0\cdots y_n-x_0\cdots x_n&=\sum\limits_{j=0}^ny_0\cdots y_{j-1}(y_j-x_j)x_{j+1}\cdots x_n\\
	& \leq \delta \sum\limits_{j=0}^ny_0\cdots y_{j-1}bx_{j+1}\cdots x_n\\
	& = \delta \sum\limits_{j=0}^ny_0\cdots y_{j-1}\varphi(b')x_{j+1}\cdots x_n\\
	& =\delta\sum\limits_{j=0}^ni(y_0,\dots, y_{j-1},x_{j+1},\dots, x_n)b'\\
	&\leq (n+1)\delta Cb'\\
	&\leq \varepsilon b'.
  \end{align*}
  Similarly, we can show that $y_0\cdots y_n-x_0\cdots x_n\geq -\varepsilon b'$. We have seen in Remark \ref{remarks pseudo-distance} that the sets $\{y_k \in M \mid -\delta b\leq y_k-x_k\leq \delta b\}_{\delta \in \K_{>0}}$ form a basis of neighborhoods of $x_k$. This completes the proof of the proposition.
\end{proof}

The continuity holds more generally when one factor is in $M$.
\begin{prop} \label{continuity of intersection maps for effective factor}
	Let $b\in {M_{\geq 0}}$, $b'\in N'\cap M_{\geq 0}'$. Assume that there is a pull-back  $\varphi(b')$ for $h$ at $b'$ with $b\leq \varphi(b')$. Then $h\colon  M \times N^{n}\rightarrow M'$ is continuous when $M,N$ are endowed with the $b$-topologies, and $M'$ is endowed with the $b'$-topology.
\end{prop}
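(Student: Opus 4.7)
The plan is to reduce to Lemma~\ref{continuity of intersection maps}, which handles the fully-$N$ case, plus one residual term carrying the single $M$-factor that is controlled directly via axiom $({\rm EFF})$ and the pull-back identity.

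First I would make the preliminary reductions already used in the proof of Lemma~\ref{continuity of intersection maps}: if $b' = 0$ the $b'$-topology on $M'$ is discrete and continuity is trivial (after noting that $\varphi(b')$ can be taken to be $0$, forcing $b = 0$ and a discrete $b$-topology on the source); and since $b \leq \varphi(b')$ implies the $\varphi(b')$-topology on $M$ (and on $N$) is coarser than the $b$-topology, replacing $b$ by $\varphi(b')$ only strengthens the claim, so we may assume $b = \varphi(b') \in N \cap M_{\geq 0}$ and $b' \neq 0$.

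Fix now $(x_0, \ldots, x_n) \in M \times N^n$ and, using $M = N - N$, decompose $x_0 = a - c$ with $a, c \in N$. For $(y_0, \ldots, y_n) \in M \times N^n$ satisfying $-\delta b \leq y_k - x_k \leq \delta b$ for all $k$, I would write by multilinearity
\[
h(y_0, \ldots, y_n) - h(x_0, \ldots, x_n) = h(y_0 - x_0,\, y_1, \ldots, y_n) + \bigl[h(a, y_1, \ldots, y_n) - h(a, x_1, \ldots, x_n)\bigr] - \bigl[h(c, y_1, \ldots, y_n) - h(c, x_1, \ldots, x_n)\bigr].
\]
The two bracketed differences involve only tuples in $N^{n+1}$, so Lemma~\ref{continuity of intersection maps} bounds each inside $[-\tfrac{\varepsilon}{3} b', \tfrac{\varepsilon}{3} b']$ provided $\delta$ is small enough. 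For the residual term, note that $\delta b \pm (y_0 - x_0) \in M_{\geq 0}$ while $(y_1, \ldots, y_n) \in N^n$; hence $({\rm EFF})$ combined with Lemma~\ref{n-intersection map associated to pull-back} applied to the pull-back $b = \varphi(b')$ gives
\[
\pm\, h(y_0 - x_0, y_1, \ldots, y_n) \leq \delta\cdot h(b, y_1, \ldots, y_n) = \delta \cdot i(y_1, \ldots, y_n) \cdot b'.
\]
Since the $y_k$ lie in $N$ with $y_k \leq x_k + b$ (for $\delta \leq 1$) and $i$ is itself an $n$-intersection map, monotonicity (via $({\rm EFF})$) yields $0 \leq i(y_1, \ldots, y_n) \leq C \coloneqq i(x_1 + b, \ldots, x_n + b)$, a bound independent of the $y_k$. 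Choosing $\delta$ small enough forces the residual term into $[-\tfrac{\varepsilon}{3}b', \tfrac{\varepsilon}{3}b']$ as well, and summing the three contributions yields the desired continuity.

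The main obstacle, and the reason the statement is not an immediate corollary of Lemma~\ref{continuity of intersection maps}, is that the factor $y_0$ lies in $M$ rather than $N$, so the earlier lemma does not apply directly in the first slot. The decomposition $x_0 = a - c$ transfers the bulk of the $y_0$-variation into two pieces lying in $N^{n+1}$, which are governed by Lemma~\ref{continuity of intersection maps}, while the remaining piece $h(y_0 - x_0, y_1, \ldots, y_n)$ with a genuinely $M$-valued factor is tamed by axiom $({\rm EFF})$ and the pull-back identity—this is precisely where the hypothesis that $b$ is dominated by a pull-back of $b'$ plays its essential role.
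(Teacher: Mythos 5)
Your proof is correct and follows essentially the same route as the paper's: both reduce to $b=\varphi(b')\in N\cap M_{\geq 0}$ with $b'\neq 0$, both split off the term carrying $y_0-x_0$ and bound it via axiom $({\rm EFF})$ together with the pull-back identity $h(b,y_1,\dots,y_n)=i(y_1,\dots,y_n)\,b'$, and both reduce the remaining variation to Lemma~\ref{continuity of intersection maps} by writing $x_0$ as a difference of two elements of $N$. The only difference is cosmetic — you package the three contributions as a single telescoping identity up front, whereas the paper first sandwiches $y_0\cdots y_n$ between $x_0y_1\cdots y_n\pm \delta C b'$ and then applies the $N$-decomposition of $x_0$ — so no comparison is needed beyond noting the equivalence.
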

\begin{proof}
	Again, we may assume $b' \neq 0$ and $b=\varphi(b') \in M_{\geq 0}\cap N$. We fix $x_0 \in M$ and $x_1,\dots, x_n \in N$. Let $\varepsilon \in \K_{>0}$. For $\delta \in \K$ with $0<\delta\leq 1$, we  assume that $y_0 \in M$ and $y_1, \dots, y_n \in N$ with $-\delta b \leq x_i - y_i \leq \delta b$ for $i=0,\dots, n$. For $\delta$ sufficiently small, we want to show that 
	\begin{equation} \label{continuity claim to show}
		-\varepsilon b' \leq y_0  \cdots y_n- x_0 \cdots x_n \leq \varepsilon b' .
	\end{equation} 
	Using axiom $({\rm EFF})$, we have 
	\begin{equation} \label{consequence of EFF}
		x_0 y_1 \cdots y_n - \delta b y_1 \cdots y_n\leq y_0  \cdots y_n \leq x_0 y_1 \cdots y_n + \delta b y_1 \cdots y_n.
	\end{equation}
	Using $b \leq \varphi(b')$, the $n$-intersection map $i\colon (M,N)^n \to (\K, \K_{\geq 0})$ and the constant $C=C(b,x_1,\dots,x_n)$ from the proof of  \cref{continuity of intersection maps} in \eqref{consequence of EFF}, we obtain
	\begin{equation} \label{consequence of EFF'}
		x_0 y_1 \cdots y_n - \delta C  b'\leq y_0  \cdots y_n \leq x_0 y_1 \cdots y_n + \delta C b' 
	\end{equation} 
	Now we use $M=N-N$ to find $v_0,w_0 \in N$ such that $x_0=v_0-w_0$. By the continuity for elements in $N$ obtained in  \cref{continuity of intersection maps}, we have for $\delta$ sufficiently small that 
	\begin{equation} \label{continuity consequence 1}
		-\frac{\varepsilon}{3} b' \leq v_0 y_1 \cdots y_n- v_0 x_1\cdots x_n \leq \frac{\varepsilon}{3} b' 
	\end{equation}
	and 
	\begin{equation} \label{continuity consequence 2}
	-\frac{\varepsilon}{3} b' \leq w_0  y_1\cdots y_n- w_0 x_1\cdots x_n \leq \frac{\varepsilon}{3} b' .
	\end{equation}
	We may assume $\delta C \leq \frac{\varepsilon}{3}$. Inserting \eqref{continuity consequence 1} and \eqref{continuity consequence 2} in \eqref{consequence of EFF'}, we deduce \eqref{continuity claim to show}.
\end{proof}

\begin{definition}
	Let $S\subset  {M_{\geq 0}}$ and $S'\subset N'\cap M_{\geq 0}'$ be directed subsets. We say $(S,S')$ is \emph{admissible with respect to $h$} if for any $b\in S$, there is an element $b'\in S'$ and a pull-back  $\varphi(b')$ with $b\leq \varphi(b')$. 
\end{definition}

\begin{corollary} \label{continuous extension of intersection maps in admissible case}
	Let $h\colon (M,N)^{n+1}\rightarrow (M',N')$ be an $(n+1)$-intersection map and let $S\subset  {M_{\geq 0}}$, $S'\subset N'\cap M'_{\geq 0}$ be directed subsets such that $(S,S')$ is admissible with respect to $h$.  Then there is a unique $(n+1)$-intersection map $\widehat{h}^S\colon (\widehat{M}^S,\widehat{N}^S)^{n+1}\rightarrow (\widehat{M'}^{S'},\widehat{N'}^{S'})$ such that
	\[\xymatrix{M^{n+1}\ar[r]^h\ar[d]& M'\ar[d]\\
	(\widehat{M}^S)^{n+1}\ar[r]& \widehat{M'}^{S'}}\]
	is a commutative diagram.
\end{corollary}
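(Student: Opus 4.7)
The plan is to construct the extension first for each admissible pair $(b,b')$ and then to pass to the direct limit. Fix $b\in S$, and by admissibility choose $b'\in S'$ together with a pull-back $\varphi(b')\geq b$. By \cref{continuity of intersection maps}, the restriction $h|_{N^{n+1}}\colon N^{n+1}\to N'$ is continuous for the $b$- and $b'$-topologies. Since $\widehat{N'}^{b'}$ is Hausdorff and complete by \cref{b-completion}, it admits a unique continuous symmetric extension $\widehat{h}\colon (\widehat{N}^b)^{n+1}\to\widehat{N'}^{b'}$, which is $\K_{\geq 0}$-multilinear in each argument since addition and scalar multiplication on $\widehat{N}^b$ are continuous. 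Using $\widehat{M}^b=\widehat{N}^b-\widehat{N}^b$ and $\widehat{M'}^{b'}=\widehat{N'}^{b'}-\widehat{N'}^{b'}$, the standard inclusion--exclusion formula produces a $\K$-multilinear symmetric extension $\widehat{h}^{b,b'}\colon (\widehat{M}^b)^{n+1}\to\widehat{M'}^{b'}$; well-definedness (independence from the choice of decompositions $x_i=u_i-v_i$ with $u_i,v_i\in\widehat{N}^b$) reduces to the additivity of $\widehat{h}$ in each argument.

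Next, I verify that $\widehat{h}^{b,b'}$ satisfies the axioms (NEF) and (EFF). The axiom (NEF) is built into the construction, since $\widehat{h}((\widehat{N}^b)^{n+1})\subset\widehat{N'}^{b'}$. For (EFF), \cref{continuity of intersection maps for effective factor} together with the symmetry of $h$ shows that $h\colon M\times N^n\to M'$ is continuous in the chosen topologies, and therefore admits a unique continuous extension to $\widehat{M}^b\times (\widehat{N}^b)^n\to\widehat{M'}^{b'}$. Since both this extension and the restriction of $\widehat{h}^{b,b'}$ to $\widehat{M}^b\times (\widehat{N}^b)^n$ extend $h|_{M\times N^n}$, and the latter is continuous by unfolding the inclusion--exclusion formula in terms of the continuous $\widehat{h}$ on $\widehat{N}^b$, they must coincide. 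The closedness of $\widehat{M'}^{b'}_{\geq 0}$ in $\widehat{M'}^{b'}$ (\cref{b-completion}) combined with the fact that $\widehat{M}^b_{\geq 0}$ is the closure of $M_{\geq 0}$ then forces $\widehat{h}^{b,b'}(\widehat{M}^b_{\geq 0}\times (\widehat{N}^b)^n)\subset\widehat{M'}^{b'}_{\geq 0}$.

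It remains to assemble these maps into a single $(n+1)$-intersection map on the $S$-completion. For fixed $b\in S$, I first check that the composition $\widetilde{h}^b\colon (\widehat{M}^b)^{n+1}\to\widehat{M'}^{S'}$ with the canonical map $\widehat{M'}^{b'}\to\widehat{M'}^{S'}$ is independent of the admissible choice of $b'$ and its pull-back: given two such choices $b_1',b_2'\in S'$, directedness of $S'$ furnishes $b_3'\in S'$ with $b_1',b_2'\leq b_3'$, and the two resulting continuous extensions into $\widehat{M'}^{b_3'}$ both restrict to $h$ on $M^{n+1}$, hence coincide by density of $M$ in $\widehat{M}^b$. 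The same density-plus-continuity argument shows that for $b_1\leq b_2$ in $S$ the transition map $\widehat{M}^{b_1}\to\widehat{M}^{b_2}$ followed by $\widetilde{h}^{b_2}$ recovers $\widetilde{h}^{b_1}$. The universal property of the direct limit $\widehat{M}^S=\varinjlim_{b\in S}\widehat{M}^b$ then delivers the desired $\widehat{h}^S\colon (\widehat{M}^S,\widehat{N}^S)^{n+1}\to(\widehat{M'}^{S'},\widehat{N'}^{S'})$, and its uniqueness follows once more from continuity combined with density of $M$ in each $\widehat{M}^b$.

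The main technical obstacle is the bookkeeping for the two independent directed systems $S$ and $S'$ together with the non-canonical choices of admissible $b'$ and pull-backs $\varphi(b')$. The decisive simplification is that every compatibility one needs to check reduces, by uniqueness of continuous extensions, to the observation that both the inclusion--exclusion extension on $(\widehat{M}^b)^{n+1}$ and the direct continuous extension on $\widehat{M}^b\times (\widehat{N}^b)^n$ from \cref{continuity of intersection maps for effective factor} restrict to $h$ on the dense subset $M^{n+1}$. No input beyond \cref{b-completion}, \cref{continuity of intersection maps}, and \cref{continuity of intersection maps for effective factor} is required.
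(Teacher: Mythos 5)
Your proof is correct and follows essentially the same route as the paper: extend $h$ on $N^{n+1}$ by continuity via \cref{continuity of intersection maps}, promote to $\widehat{M}^b$ by multilinearity, use \cref{continuity of intersection maps for effective factor} to match this against the continuous extension on $M\times N^n$ to establish (EFF), and then assemble via direct limits. One minor imprecision worth noting: the unique continuous extension furnished by \cref{continuity of intersection maps for effective factor} a priori lands in $\widehat{M'}^{d_{b'}}$ rather than in $\widehat{M'}^{b'}$; the paper is careful to work with $\widehat{M}^{d_b}$ and $\widehat{M'}^{d_{b'}}$ first and only afterward restrict, while you implicitly conflate the two. Your subsequent coincidence argument (both maps agree with $h$ on the dense subset) does repair this, so the gap is cosmetic, but stating the extension as living in $\widehat{M'}^{d_{b'}}$ before restricting would make the logic cleaner.
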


\begin{proof}
	Let $b\in S$. Since $(S,S')$ admissible with respect to $h$, there is an element $b'\in S'$ and a pull-back $\varphi(b')$ with $b\leq \varphi(b')$.  \cref{continuity of intersection maps} shows that $h\colon N^{n+1}\rightarrow N'$ is continuous with respect to the $b$- and $b'$-topologies and hence has a unique continuous extension
	\[\widehat{h}^b\colon  (\widehat{N}^b)^{n+1}\rightarrow \widehat{N'}^{b'}. \]
	There is a unique extension to a multilinear (symmetric) map
	\[\widehat{h}^b\colon  (\widehat{M}^b)^{n+1}\rightarrow \widehat{M'}^{b'}. \]
	These constructions depend only on continuity of $h|_{N^{n+1}}$ and not on the particular choice of the pull-backs $\varphi(b')$ for $b' \in S'$, hence they are functorial in the pairs $(b,b')$. Using the universal property of direct limits, we get 
	a multilinear (symmetric) map $\widehat{h}^S\colon (\widehat{M}^S)^{n+1}\rightarrow \widehat{M'}^{S'}$ fitting into the commutative diagram above. 
								
	It remains to see that $\widehat{h}^S$ is an $(n+1)$-intersection map. Using that direct limit exist in the category of abstract divisorial spaces by Lemma \ref{direct limits}, it is enough to show that $\widehat{h}^b$ is an $(n+1)$-intersection map. By  \cref{continuity of intersection maps for effective factor}, the map $h\colon M \times N^n \to M'$ is continuous with respect to the $b$- and $b'$-topologies and hence has a unique continuous extension
	$$\widehat{h}_0^{d_b}\colon \widehat{M}^{d_b} \times (\widehat{N}^b)^{n}\rightarrow \widehat{M'}^{d_{b'}}.$$
	By continuity, this extension is multilinear and symmetric. Obviously, it agrees with $\widehat{h}^b$ on $(\widehat{N}^b)^{n+1}$ and hence also on $\widehat{M}^{b}\times (\widehat{N}^b)^{n}$. By continuity and using the axiom  (\rm EFF) for $h$, the extension $\widehat{h}_0^{d_b}$ maps $\widehat{M}_{\geq 0}^{d_b}\times (\widehat{N}^b)^{n}$ to $\widehat{M'}_{\geq 0}^{d_{b'}}$. We conclude that restricting $\widehat{h}_0^{d_b}$ agrees with
	$$\widehat{h}^b \colon \widehat{M}^{b} \times (\widehat{N}^b)^{n}\rightarrow \widehat{M'}^{{b'}}$$
	and maps $\widehat{M}_{\geq 0}^{b}\times \widehat{N}^b=(\widehat{M}_{\geq 0}^{d_b} \cap \widehat{M}^b)\times \widehat{N}^b$ to $\widehat{M'}_{\geq 0}^{{b'}}=\widehat{M'}_{\geq 0}^{{d_b'}} \cap \widehat{M'}^{{b'}}$. This shows that $\widehat{h}^b$ is an $(n+1)$-intersection map.
\end{proof}

\begin{corollary}\label{continuous extension of intersection maps in absolute case}
	Let $h\colon  (M,N)^{n+1}\rightarrow (\R,\R_{\geq 0})$ be an $(n+1)$-intersection map, and let $S$ be a directed subset of {$M_{\geq 0}$ such that for every $b \in S$, there is $\widetilde{b}\in N$ such that $b\leq \widetilde{b}$}. Then the pair $(S, \{1\})$ is admissible with respect to $h$. In particular, we have a {unique} $(n+1)$-intersection map $(\widehat{M}^S,\widehat{N}^S)^{n+1}\rightarrow (\R,\R_{\geq 0})$ extending $h$.
\end{corollary}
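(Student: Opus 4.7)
The plan is to verify admissibility of the pair $(S, \{1\})$ with respect to $h$ and then invoke Corollary \ref{continuous extension of intersection maps in admissible case}. Given $b \in S$, by hypothesis there exists $\widetilde{b} \in N$ with $b \leq \widetilde{b}$. Since $b \in M_{\geq 0}$, we have $\widetilde{b} \geq b \geq 0$, so $\widetilde{b} \in N \cap M_{\geq 0}$. I claim that $\widetilde{b}$ serves as a pull-back for $h$ at the point $1 \in \R_{\geq 0}$: for any $x_1,\dots,x_n \in M$, the value $h(\widetilde{b}, x_1,\dots,x_n) \in \R$ is automatically a scalar multiple of $1$, which is all that is needed since the target $(\R,\R_{\geq 0})$ is one-dimensional. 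Combined with $b \leq \widetilde{b} = \varphi(1)$, this gives admissibility of $(S, \{1\})$ with respect to $h$.

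Applying Corollary \ref{continuous extension of intersection maps in admissible case} then produces a unique $(n+1)$-intersection map
$$\widehat{h}^S \colon (\widehat{M}^S, \widehat{N}^S)^{n+1} \longrightarrow (\widehat{\R}^{\{1\}}, \widehat{\R_{\geq 0}}^{\{1\}})$$
extending $h$. The remaining task is to identify the codomain with $(\R, \R_{\geq 0})$. Because $\K$ is dense in $\R$, for any $x,y\in\R$ we have $\inf\{\delta\in\K_{\geq 0} \mid -\delta \leq x-y \leq \delta\}=|x-y|$, so the pseudo-metric $d_1$ on $\R$ equals $\min\{|x-y|,1\}$ and induces the standard Euclidean topology, under which $\R$ is already Hausdorff and complete with $\R_{\geq 0}$ closed. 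Hence $\widehat{\R}^1 = \R$ and $\widehat{\R_{\geq 0}}^1 = \R_{\geq 0}$, and since $\{1\}$ is a singleton, the direct limit defining the $\{1\}$-completion is trivially this single term.

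The only place where care is warranted is the pull-back condition of Definition \ref{definition pull-back map}, which literally demands $h(\widetilde{b}, x_1,\dots,x_n) \in \K \cdot 1$. This is automatic when $\K=\R$, and when $\K=\Q$ one reads it via the one-dimensionality of the $\R$-target as in Remark \ref{absolute intersection maps}; the induced map $i$ of Lemma \ref{n-intersection map associated to pull-back} then takes values in $\R$, which is equally sufficient for the continuity estimates in Lemma \ref{continuity of intersection maps} and Proposition \ref{continuity of intersection maps for effective factor}. This interpretive point is the main (and minor) obstacle; beyond it, the argument is a direct specialization of Corollary \ref{continuous extension of intersection maps in admissible case} together with the completeness of $\R$.
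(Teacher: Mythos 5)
Your proposal is correct and takes essentially the same route as the paper: both verify that $\widetilde{b}$ serves as a pull-back for $h$ at $1 \in \R_{\geq 0}$ (the scalar-multiple condition being automatic since the target is one-dimensional) and then invoke Corollary~\ref{continuous extension of intersection maps in admissible case}. You additionally spell out the identification $\widehat{\R}^{\{1\}} = \R$, $\widehat{\R_{\geq 0}}^{\{1\}} = \R_{\geq 0}$ and address the $\K$-versus-$\R$ scalar subtlety in Definition~\ref{definition pull-back map}, both of which the paper's brief proof leaves implicit; these are reasonable clarifications but not a different argument.
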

\begin{proof}
	For $b \in S$, we set $\varphi(1)\coloneqq \widetilde b \in M_{\geq 0} \cap N$. Then $\varphi(1)$ is a pull-back for $h$ at $1$. Indeed, for any $x_1,\dots, x_n\in M$, we have that $h(\varphi(1), x_1,\dots, x_n)\in \R\cdot 1$. Hence $(S,\{1\})$ is admissible with respect to $h$ and we can apply \cref{continuous extension of intersection maps in admissible case}.
\end{proof}


\section{Compactified geometric divisors} 
	\label{sec: geometric theory}	
In this section, we fix a field $K$ and an algebraic variety $U$ over $K$ of dimension $d$. We extend Yuan--Zhang's construction of geometric compactified divisors from  \cite{yuan2021adelic} to this setting. We recall that a proper variety $X$ over $K$ is called a \emph{proper $K$-model} of $U$ if {$U$ comes with an open immersion into $X$ over $K$. A \emph{morphism of proper $K$-models $X,X'$ of $U$} is a morphism $\varphi\colon X \to X'$ over $K$ compatible with the open immersions of $U$.}
		
\begin{lemma} 	\label{lemma:compactifications form an inverse system}
	The isomorphism classes of proper $K$-models of $U$ form an inverse system.
\end{lemma}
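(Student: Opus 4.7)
The plan is to partially order the isomorphism classes of proper $K$-models of $U$ by declaring $[X] \geq [X']$ whenever there exists a morphism $X \to X'$ of proper $K$-models of $U$ (that is, compatible with the fixed open immersions of $U$), and then to check that this partial order is directed and that the required morphisms are canonical, so that we genuinely obtain an inverse system.

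First I would verify that the relation is well-defined on isomorphism classes and is a partial order. Reflexivity and transitivity are immediate. For antisymmetry, and more generally to see that between any two comparable classes $[X] \geq [X']$ there is a \emph{unique} morphism $X \to X'$ of proper $K$-models, I would use the standard separatedness argument: if $f, g \colon X \to X'$ are two such morphisms, then the equalizer of $f$ and $g$ is closed in $X$ (because $X'$ is separated), and it contains the dense open subscheme $U$; since $X$ is integral (hence reduced) this equalizer is all of $X$, so $f = g$. The same argument applied to $X \to X$ then yields antisymmetry: given morphisms $X \to X'$ and $X' \to X$ of proper $K$-models, both composites restrict to $\id_U$ and therefore agree with the identities on $X$ and $X'$. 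This simultaneously shows that the transition maps of our prospective inverse system exist and are unique, which is exactly what is needed for the data to define a functor from the indexing poset to schemes.

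The main content is directedness, i.e.\ given two proper $K$-models $X_1, X_2$ of $U$, exhibiting a proper $K$-model $X_3$ dominating both. The natural construction is the scheme-theoretic closure $X_3$ of the image of the locally closed immersion
\[
U \xrightarrow{\;\Delta\;} U \times_K U \longrightarrow X_1 \times_K X_2,
\]
where $\Delta$ is the diagonal (a closed immersion since $U$ is separated) and the second map is an open immersion. Then $X_3$ is a closed subscheme of the proper $K$-scheme $X_1 \times_K X_2$, hence proper over $K$; the two projections $X_1 \times_K X_2 \to X_i$ restrict to morphisms $X_3 \to X_i$ compatible with the open immersions of $U$. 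I would then check that $U$ sits in $X_3$ as a dense open subscheme: density is built into the closure construction, and openness follows because $U \to X_1 \times_K X_2$ is locally closed with scheme-theoretic image $X_3$, so $U$ is open in $X_3$. Finally, $X_3$ is geometrically integral because it contains the geometrically integral $U$ as a dense open subscheme.

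The main obstacle, such as it is, lies in this last step of verifying that the scheme-theoretic closure $X_3$ is genuinely a variety in the paper's sense (geometrically integral, separated, of finite type) and that $U$ really is open in it; everything else is formal. Once $X_3$ is in hand, directedness is established, and combined with the uniqueness of transition morphisms from the separatedness argument above, the isomorphism classes of proper $K$-models of $U$ with the canonical morphisms form a cofiltered inverse system, as claimed.
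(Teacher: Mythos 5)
Your proposal is correct and is essentially the paper's argument: you construct the dominating model as the closure of the image of $U\hookrightarrow X_1\times_K X_2$, which coincides with the graph $\Gamma_f$ of the rational map $f\colon X_1\dashrightarrow X_2$ used in the paper, since $U$ is dense in the domain of definition of $f$. You supply more of the routine verifications (openness and density of $U$ in the closure, uniqueness of transition morphisms via separatedness), but the key construction is the same.
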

\begin{proof}
	Let $X, X'$ be two proper $K$-models of $U$, then we have a rational map $f\colon {X}\dashrightarrow {X}'$ compatible with the open embeddings. Then the graph $\Gamma_f\subset {X}\times {X}'$ of $f$, defined as the closure of $(x,f(x))$ with $x$ in the set of definition of $f$, is also a proper $K$-model of $U$, and we have morphisms $\Gamma_f\rightarrow {X}$ and $\Gamma_f\rightarrow {X}'$.
\end{proof}
			
\begin{art} \label{Q-Cartier divisors}
	We denote the group of Cartier divisors on $U$ by $\Div(U)$. Then $\Div_\Q(U) \coloneqq \Div(U) \otimes_\Z \Q$ is called the  \emph{group of $\Q$-Cartier divisors}  on $U$. Then a $\Q$-Cartier divisor is   given by \emph{local equations} $f_W \in K(U)^\times\otimes_\Z \Q$ for  suitable open sets $W$ covering $U$. 
			
	The \emph{support} $|D|$ of a $\Q$-Cartier divisor $D$ is the complement of the union of all open subsets $V$ where $D$ can be written by local equations $f_V \in \OO_U(V)^\times \otimes_\Z \Q$. 
			
	A $\Q$-Cartier divisor $D\in\Div_\Q(U)$ is \emph{effective}, denoted by $D\geq 0$, if {it is a non-negative} linear combination of some effective divisors in $\Div(U)$. It is \emph{strictly effective}, denoted by $D>0$, if is effective and non-zero.
			
	For a proper variety $X$, we say a $\Q$-Cartier divisor $D\in\Div_\Q(U)$ is \emph{nef} if there is $m\in\N_{>0}$ such that $mD\in \Div(U)$ is nef, i.e.~the degree of $\OO_X(mD)|_C$ is non-negative for any closed curve $C$ on $X$. 
\end{art}
\begin{remark}
	Let $f\colon X\rightarrow X'$ be a morphism of proper $K$-models over $U$. Then the pull-back of Cartier divisors $\Div(X)\rightarrow \Div(X')$ is well-defined as $f$ is surjective, see \cite[Lemma~7.1.33]{liu2006algebraic}.
\end{remark}
		
\begin{definition} \label{geometric model divisors}
	The space of \emph{geometric} \emph{model divisors} on $U$ is defined as the limit 
	\[\Div_\Q(U)_{\mathrm{mo}}\coloneq\varinjlim_{X}\Div_\Q(X),\]
	where $X$ runs through all proper $K$-models of $U$. We say an element ${D}\in\Div_\Q(U)_{\mathrm{mo}}$ on $U$ is \emph{effective} (resp. \emph{strictly effective}), denoted by ${D}\geq0$ (resp. ${D}>0$) if there is a proper $K$-model $X$ of $U$ such that ${D}\in {\Div}_\Q(X)$ and ${D}\geq0$ (resp. ${D}>0$).
\end{definition}
		
\begin{definition} 	\label{def:geometric boundary topology}
	A {\emph{weak}  \emph{boundary divisor}} (resp. \emph{boundary divisor}) of $U$ is a pair $({X}_0,B)$ consisting of a proper $K$-model $U\hookrightarrow X_0$ and an effective divisor $B\in {\Div}_\Q({X}_0)$ such that {$|B|\subset {X}_0\setminus U$} (resp. $|B|={X}_0\setminus U$).
			
	Let $({X}_0,B)$ be a {weak} boundary divisor. The \emph{$B$-boundary topology} on $\mathrm{Div}_\Q(U)_{\mathrm{mo}}$ is defined such that a basis of neighborhoods of a divisor $D$ of the topology is given by
   \begin{align*}
		B(r,D)\coloneq\{E\in {\Div}_\Q(U)_\mo\mid -rB\leq {E}-{D}\leq rB\},\ \ r\in \Q_{>0}.
	\end{align*} 
	If $B$ is a boundary divisor, the corresponding topology is called the \emph{boundary topology}. The space of \emph{compactified} \emph{geometric} \emph{divisors} on $U$ is defined as the completion of $\mathrm{Div}_\Q(U)_{\mo}$ with respect to the boundary topology, denoted by $\widetilde{\Div}_\Q(U)_\cpt$. Every compactified geometric divisor $D$ is the limit of a Cauchy sequence of geometric model divisors $D_i$ of $U$. As the sequence $D_i|_U$ becomes constant for large $i$, we have a canonical homomorphism $$\widetilde{\Div}_\Q(U)_{\cpt}\rightarrow {\Div}_\Q(U).$$			
\end{definition}
\begin{remark} \label{remark:independent of the choice of boundary divisors}
	By Nagata's compactification theorem and blowing up, there is always a boundary divisor. Every weak boundary divisor is dominated by a boundary divisor. Let $b\coloneq (X_0, {B})$ be a boundary divisor. Although the lemma in \cite[Lemma~2.4.1]{yuan2021adelic} is formulated for quasi-projective varieties with projective models, its argument remains valid for general algebraic varieties.
	Hence the subset $\{nb\mid n\in\N\}$ is cofinal in the set of weak boundary divisors.  In particular, the boundary topology is independent of the choice of the  boundary divisor. 
\end{remark}

Next, we will associate to $U$ an abstract divisorial space where the intersection map can be defined.
		
\begin{definition} \label{def: geometric Yuan-Zhang completion}
	For a proper variety $X$ over $K$, we set
	\[N_{\gm,\Q}(X)\coloneq\{D\in\Div_\Q(X)\mid D \text{ is nef}\},\]
	\[M_{\gm,\Q}(X)\coloneq N_{\gm,\Q}(X)-N_{\gm,\Q}(X).\]
	{Here the subscript ${\gm}$ stands for \emph{geometric}.}
	Then $(M_{\gm,\Q}(X), N_{\gm,\Q}(X))$ is an abstract divisorial space  over $\Q$ ordered by the cone of effective divisors in $M_{\gm,\Q}(X)$. 
			
	For the algebraic variety $U$ over $K$, we set
	\[(M_{\gm,\Q}(U), N_{\gm,\Q}(U))\coloneq\varinjlim_X (M_{\gm,\Q}(X), N_{\gm,\Q}(X))\]
	where $X$ ranges over all proper $K$-models of $U$. Let $S$ be the  directed subset of weak boundary divisors $(X_0,B)$ in $M_{\gm,\Q}(U)_{\geq 0}$. 
	We define the \emph{Yuan-Zhang completion} as
	\[(\widehat{M}_{\gm,\Q}^{\mathrm{YZ}}(U), \widehat{N}_{\gm,\Q}^{\mathrm{YZ}}(U))\coloneq (\widehat{M}_{\gm,\Q}^S(U), \widehat{N}_{\gm,\Q}^S(U))\]
	using the $S$-completion of abstract divisorial spaces from \cref{S-completion}. 
\end{definition}

{Note that there are proper non-projective varieties with no non-trivial nef line bundles. Fujino and Payne \cite{fujino2005smooth} have found such an example given by a smooth toric threefold. However, if $U$ is quasi-projective, then there is always a boundary divisor $b \in M_{\gm,\Q}(U)_{\geq 0}$ by using Remark \ref{remark:independent of the choice of boundary divisors} and passing to a projective model.  By \cite[Lemma~2.4.1]{yuan2021adelic}, the subset $\N b$ of $S$ is then  cofinal in $S$ and we have
		\[(\widehat{M}_{\gm,\Q}^{\mathrm{YZ}}(U), \widehat{N}_{\gm,\Q}^{\mathrm{YZ}}(U))\coloneq (\widehat{M}_{\gm,\Q}^b(U), \widehat{N}_{\gm,\Q}^b(U)).\]}

For any algebraic variety $U$, the cone $N_{\gm,\Q}(U)$ agrees with its closure in $M_{\gm,\Q}(U)$ in the sense of \cref{closure for finite subspaces}, but $\widehat{N}_{\gm,\Q}^\YZ(U)$ does not necessarily agree with its closure in $\widehat{M}_{\gm,\Q}^\YZ(U)$.

\begin{definition} \label{strongly nef and nef compactified geometric divisors}
	We call a compactified geometric divisor $D$ on $U$ \emph{strongly nef} if $D \in \widehat{N}_{\gm,\Q}^\YZ(U)$ leading to the  cone $\widetilde{\Div}_\Q(U)_\snef \coloneqq \widehat{N}_{\gm,\Q}^\YZ(U)$.
		
	We say that a compactified geometric divisor $D$ is \emph{nef} if it is in the closure of the strongly nef cone in $\widehat{M}_{\gm,\Q}^\YZ(U)$, where the closure is with respect to the finite subspace topology in the sense of \ref{closure for finite subspaces}. The nef compactified geometric divisors form a cone and we get an abstract divisorial space over $\Q$ denoted by
	\[\left(\widetilde{\Div}_\Q(U)_{\mathrm{int}},\widetilde{\Div}_\Q(U)_{\nef}\right)\coloneq\left(\widehat{M}_{\gm,\Q}^\YZ(U),\overline{\widetilde{\Div}_\Q(U)_\snef}\right).\]
\end{definition}
		
For a proper $d$-dimensional variety $X$ over $K$, it follows as in Example \ref{geometric intersection numbers} that  intersection numbers define an $d$-intersection map on the above abstract divisorial space $(M_{\gm,\Q}(X), N_{\gm, \Q}(X))$. This can be generalized as follows.

\begin{theorem}\label{thm:algebraic interesection number}
	The intersection pairing of nef divisors on proper varieties extends 
	to a unique $d$-intersection map $\widehat{M}_{\gm,\Q}^{\mathrm{YZ}}(U)^d \longrightarrow \R$ with respect to the abstract divisorial space $\left(\widetilde{\Div}_\Q(U)_{\mathrm{int}} ,\widetilde{\Div}_\Q(U)_{\nef}\right)$. 
\end{theorem}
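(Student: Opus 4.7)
The plan is to construct the $d$-intersection map in three stages: first on each proper $K$-model of $U$, then on the direct limit $(M_{\gm,\Q}(U), N_{\gm,\Q}(U))$, then on its Yuan--Zhang completion via \cref{continuous extension of intersection maps in absolute case}, and finally to extend from the strongly nef cone to its closure in the finite subspace topology via \cref{closure for finite subspaces}.

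For each proper $K$-model $X$ of $U$, I would begin from the classical $\R$-valued intersection pairing on $\Div_\Q(X)$, which is multilinear and symmetric. Its restriction $h_X \colon M_{\gm,\Q}(X)^d \to \R$ is a $d$-intersection map on $(M_{\gm,\Q}(X), N_{\gm,\Q}(X))$: axiom (NEF) is Kleiman's theorem for nef classes on proper varieties, and axiom (EFF) follows by induction on dimension, decomposing the effective factor into its prime components and restricting the remaining nef factors. Since every morphism in the inverse system of proper $K$-models is the identity on the dense open subset $U$ and hence birational, the projection formula $\varphi_\ast(\varphi^\ast D_1 \cdots \varphi^\ast D_d) = D_1 \cdots D_d$ shows that the maps $h_X$ are compatible, so they assemble into a well-defined $d$-intersection map
\[h \colon (M_{\gm,\Q}(U), N_{\gm,\Q}(U))^d \longrightarrow (\R, \R_{\geq 0}).\]

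Next I would fix a boundary divisor $b = (X_0, B)$; by \cref{remark:independent of the choice of boundary divisors} the set $S \coloneqq \N b$ is cofinal among weak boundary divisors, so it is the natural directed set to use in \cref{continuous extension of intersection maps in absolute case}. The crux of admissibility is to produce $\tilde b \in N_{\gm,\Q}(U)$ with $\tilde b \geq b$. By Chow's lemma, one can choose a projective $K$-model $X$ of $U$ dominating $X_0$ together with an ample effective $\Q$-Cartier divisor $A$ on $X$; since the ample cone is open in $N^1(X)_\R$, the class $B + mA$ is ample (hence nef) for $m \gg 0$, so $\tilde b \coloneqq B + mA$ is nef with $\tilde b - b = mA$ effective. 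Scaling by positive integers treats every element of $S$, so $(S, \{1\})$ is admissible with respect to $h$, and \cref{continuous extension of intersection maps in absolute case} yields a unique extension
\[\widehat h \colon \bigl(\widehat M_{\gm,\Q}^{\YZ}(U),\, \widehat N_{\gm,\Q}^{\YZ}(U)\bigr)^d \longrightarrow (\R, \R_{\geq 0}),\]
which is by construction a $d$-intersection map on $(\widetilde{\Div}_\Q(U)_{\mathrm{int}}, \widetilde{\Div}_\Q(U)_\snef)$.

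Finally, \cref{closure for finite subspaces} automatically promotes $\widehat h$ to a $d$-intersection map on $(\widetilde{\Div}_\Q(U)_{\mathrm{int}}, \widetilde{\Div}_\Q(U)_\nef)$, because by \cref{strongly nef and nef compactified geometric divisors} the cone $\widetilde{\Div}_\Q(U)_\nef$ is exactly the closure of $\widetilde{\Div}_\Q(U)_\snef$ in the finite subspace topology, while the target cone $\R_{\geq 0}$ is already closed. Uniqueness on all of $\widehat M_{\gm,\Q}^{\YZ}(U)$ is inherited from the uniqueness in \cref{continuous extension of intersection maps in absolute case} together with density in the finite subspace topology. The hard part will be the admissibility verification: producing a nef upper bound $\tilde b$ for each boundary divisor. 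This is the only step requiring genuinely algebro-geometric input, namely Chow's lemma and openness of the ample cone; everything else is a formal consequence of the framework of \Cref{section: abstract divisorial spaces}.
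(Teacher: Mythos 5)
Your overall strategy---assembling the intersection maps $h_X$ over the inverse system of proper $K$-models into a $d$-intersection map on $(M_{\gm,\Q}(U), N_{\gm,\Q}(U))$, checking admissibility of $(\N b,\{1\})$, and then invoking \cref{continuous extension of intersection maps in absolute case} followed by \cref{closure for finite subspaces}---matches the paper's route. But the admissibility step, which you correctly single out as the crux, has a genuine gap.

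You write ``By Chow's lemma, one can choose a projective $K$-model $X$ of $U$ dominating $X_0$.'' This is false when $U$ is not quasi-projective. Chow's lemma produces a projective variety $X'$ and a proper birational morphism $\pi\colon X' \to X_0$ that is an isomorphism only over \emph{some} dense open subset $V$ of $X_0$, not necessarily over all of $U$. If $U$ were contained in a projective variety as an open subscheme, then $U$ would be quasi-projective by definition; so for non-quasi-projective $U$, a projective $K$-model of $U$ simply does not exist, and there is no canvas on which to perform your ``add a large multiple of an ample $A$'' trick. This is precisely why the paper's proof of \cref{thm:algebraic interesection number} first replaces $U$ by a quasi-projective dense open subset and then shrinks further to ensure a nef boundary divisor exists; the reduction is justified because the restriction map $\widehat{M}_{\gm,\Q}^{\YZ}(U) \to \widehat{M}_{\gm,\Q}^{\YZ}(U')$ is a morphism of abstract divisorial spaces and the projection formula guarantees the intersection numbers are birational invariants (compare also the treatment of Chow's lemma plus shrinking in \cref{dominated by S-nef boundary divisors}). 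To repair your argument: pass to the quasi-projective open $U' \coloneqq \pi^{-1}(V) \cap U$ on which $\pi$ is an isomorphism, run your nef-upper-bound construction on the projective model $X'$ of $U'$ to get the extension on $\widehat{M}_{\gm,\Q}^{\YZ}(U')$, and pull back along the canonical morphism from $\widehat{M}_{\gm,\Q}^{\YZ}(U)$. The rest of your proposal (NEF via Kleiman, EFF via dimension induction on prime components, and the closure step for the nef cone) is sound, though the $d$-intersection-map claim for each proper $X$ is already recorded as \cref{geometric intersection numbers} and need not be reproved.
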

	
For quasi-projective varieties, this was shown by Yuan and Zhang in \cite[Proposition~4.1.1]{yuan2021adelic}. In general, it follows from the theory of abstract divisorial spaces.
	
\begin{proof} 
	{Using the projection formula, we deduce easily that we might shrink $U$ to deduce the result. So we may assume $U$ quasi-projective. Then there is a projective $K$-model $X$ of $U$. After replacing $X$ by a suitable blowing up centered in $X \setminus U$, there is an effective ample Cartier divisor on $X$ with support containing $X \setminus U$.} 
Shrinking $U$ further, we may assume that $U$ has a nef boundary divisor $b$. Then the extension follows from  \cref{continuous extension of intersection maps in absolute case}.
\end{proof}
		
We describe the abstract divisorial space $	\widetilde{\Div}_\Q(U)_{\mathrm{int}}$ in a very simple example. In higher dimensions, more complicated examples can be studied with the theory of $b$-divisors \cite[Corollary 3.11]{burgos2023on}. In \cite[Chapter 3]{peralta-thesis24}, toric compactified geometric divisors are described in general. 
		
\begin{example} \label{example: projective line}
Let $U={\mathbb G}_{{\rm m},K}$ be the multiplicative torus of rank $1$ over the field  $K$ of characteristic $0$ and let $X=\mathbb P_K^1$ be the projective line as the smooth compactification of $U$. Then $X \setminus U=\{0,\infty\}$. Let $X'$ be any proper $K$-model of $U$. Replacing $X'$ by its normalization, we may assume that $X'$ is a normal variety and hence smooth as $K$ is of characteristic $0$, but then $X'=\mathbb P_K^1$. We conclude that 
$$\Div_\Q(U)_\mo=\Div_\Q(X)=\Div_\Q(U)\oplus \Q[0] \oplus \Q[\infty].$$ In this case, we can take $B=[0]+[\infty]$ as a boundary divisor. It follows that the boundary completion of $\Div_\Q(U)_\mo=\Div_\Q(X)$ is 
$$\widetilde{\Div}_\Q(U)_{\cpt}=\widetilde{\Div}_\Q(U)_{\mathrm{int}}=\Div_\Q(U) \oplus \R[0] \oplus \R[\infty].$$
\end{example}

Next, we recall the  compactified geometric line bundles defined in \cite[\S 2.5]{yuan2021adelic}.

\begin{art} \label{definition of compactified line bundles}
	For a proper variety $X$ over $K$, we denote $\Pic_\Q(X)\coloneq \Pic(X)\otimes_\Z\Q$ whose partial order is defined analogously to that in \cref{geometric with line bundles}, and $\Pic_\Q(X)_\nef$ the subcone generated by the nef line bundles in $\Pic_\Q(X)$. Set
	\[\text{$P_{\gm,\Q}(U) = \varinjlim_{X}\Pic_\Q(X)$  and  $\quad Q_{\gm,\Q}(U) = \varinjlim_{X}\Pic_\Q(X)_\nef$}\]
	where $X$ ranges over all proper $K$-models of $U$.
	
	We fix a boundary divisor $(X_0, {B})$ of $U$. 
	For  $L\in P_{\gm,\Q}(U)$, we have the restriction $L|_U \in \Pic_\Q(U)$. For $r \in \Q_{>0}$, we define $B(r,{L}) \subset P_{\gm,\Q}(U)$ as follows. An element $L' \in P_{\gm,\Q}(U)$ is in $B(r,L)$ if and only if we have an isomorphism $\sigma\colon L|_U\simeq L'|_U$ for some representatives $L,L'$ living on a joint proper $K$-model $X$ such that the induced $\Q$-Cartier divisor $\mathrm{div}(\sigma)$ of $L'\otimes L^{-1}$ on $X$ satisfies 
	$$-rB\leq \mathrm{div}(\sigma)\leq rB.$$
	Then the \emph{boundary topology}  of $P_{\gm,\Q}(U)$ is the unique topology such that for any $L\in P_{\gm,\Q}(U)$, the sets $(B(r,L))_{r \in \Q_{>0}}$ form a basis of neighborhoods.
	The boundary topology is independent of the choice of the  boundary divisor. 
	We define $\widetilde{\Pic}_\Q(U)_\cpt$ as the completion of $P_{\gm,\Q}(U)$ with respect to the boundary topology. An element in $\widetilde{\Pic}_\Q(U)_\cpt$ is called a \emph{compactified geometric line bundle} on $U$.
\end{art}

\begin{art} \label{abstract divisorial space of line bundles}
	We say that $\widetilde{L} \in \widetilde{\Pic}_\Q(U)_\cpt$ is \emph{strongly nef} (resp. \emph{effective}) if $\widetilde{L}$ is in the closure of $Q_{\gm,\Q}(U)$ (resp. $P_{\gm,\Q}(U)_{\geq 0}$) with respect to the boundary topology. The strongly nef elements form a cone $\widetilde{\Pic}_\Q(U)_\snef$ in $\widetilde{\Pic}_\Q(U)_\cpt$. We define the subspace
$$\widetilde{\Pic}_\Q(U)_\integrable \coloneqq \widetilde{\Pic}_\Q(U)_\snef - \widetilde{\Pic}_\Q(U)_\snef$$
of $\widetilde{\Pic}_\Q(U)_\cpt$. Finally, the \emph{nef cone} $\widetilde{\Pic}_\Q(U)_\nef$ is defined as the closure of $\widetilde{\Pic}_\Q(U)_\snef$ in $\widetilde{\Pic}_\Q(U)_\integrable$ with respect to the finite subspace topology from \cref{finite subspace topology}. 
We have a surjective map 
{${\Div}_\Q(U) \longrightarrow {\Pic}_\Q(U)$
which is continuous with respect to the boundary topologies. So it can be extended to a continuous map 
$$\widetilde{\Div}_\Q(U)_\cpt \longrightarrow \widetilde{\Pic}_\Q(U)_\cpt\, , \quad D \mapsto \OO_U(D)$$ 
which maps the sets  $\widetilde{\Div}_\Q(U)_\snef$, $\widetilde{\Div}_\Q(U)_\nef$ and $\widetilde{\Div}_\Q(U)_\integrable$ onto the sets $\widetilde{\Pic}_\Q(U)_\snef$, $\widetilde{\Pic}_\Q(U)_\nef$ and $\widetilde{\Pic}_\Q(U)_\integrable$, respectively.}

For $D_1, \dots, D_d \in \widetilde{\Div}_\Q(U)_\integrable$, the geometric intersection numbers $D_1 \cdots D_d$ from \cref{thm:algebraic interesection number} do depend only on $\OO_U(D_i) \in \widetilde{\Pic}_\Q(U)_\integrable$ for $i=0,\dots, d$, so we will use the notation
$$\OO_U(D_1) \cdots \OO_U(D_d)\coloneqq D_1 \cdots D_d $$
for the geometric intersection numbers. Indeed, this is obviously true for $D_1, \dots, D_d \in N_{\gm,\Q}(U)$ and yields the general case by continuity and multilinearity.
\end{art}

	
\section{Local theory} \label{section: local theory}
	
In this section, we explain the local theory of metrized line bundles considered in Arakelov theory. 
The adic generalization introduced by Yuan and Zhang \cite{yuan2021adelic} can be seen as the completion in the formalism of abstract divisorial spaces.

We fix a field $K$ which is complete with respect to a given absolute value $\val_v$, where $v$ denotes the corresponding place of $K$. If $v$ is non-archimedean, we denote by $\kcirc$ the valuation ring of $K$ and by $\ktilde$ the residue field. We also fix an algebraic variety $U$ over $K$ of dimension $d$.
	
\subsection{Metrics and Green functions}

\begin{art} \label{analytification}
	We denote by $U^\an=U_v^\an$ the Berkovich analytification of $U$ with respect to the place $v$. From \cite[Example 1.5.4]{berkovich1990spectral}, we obtain the following cases.
	\begin{enumerate}
		\item If
		$v$ is a complex place, then $U^\an$ is a classical complex analytic space.
		\item If $v$ is real place, then $U^\an$ is the quotient of a complex space over the algebraic closure $\C_v$ by complex conjugation. We will work with complex spaces over $\C_v$ and we will assume that all considered objects are invariant under complex conjugation.
		\item If $v$ is a non-archimedean place, then $U^\an$ is a classical Berkovich space \cite[\S 3.4, \S 3.5]{berkovich1990spectral}.
	\end{enumerate}
\end{art}
	
\begin{art} \label{metrics}
	Let $L$ be a line bundle on an algebraic variety $U$ over $K$. The line bundle $L$ induces an analytic line bundle $L^\an$ on $U^\an$. A \emph{metric} $\metr$ of $L$ means a metric $\metr$ on $L^\an$, a concept well-known from complex analysis which is readily transformed to the Berkovich setting in non-archimedean geometry (see \cite[2.1]{chambert2006measures} or \cite[Definition~2.1.8]{chen2020arakelov}). We call $\metr$ \emph{continuous} if it is continuous on $L^\an$. The isometry classes of {continuously} metrized line bundles on $U^\an$ form a group, denoted by $\widehat{\Pic}(U)$.
\end{art} 
	
\begin{art} \label{Green functions}
	For a Cartier divisor $D$ of $U$, a \emph{Green function} is a continuous real function $g_D$   on $(U \setminus |D|)^\an$ such that for any local equation $f$ of $D$ on an open subset $V$ of $U$, we have that  $g+\log|f|$ is a continuous function on $V^\an$.
		
	The concept of Green functions is clearly equivalent to continuous metrics. Let $s_D$ be the canonical meromorphic section of $\OO_U(D)$ with $\mathrm{div}(s_D)=D$. If $g_D$ is a Green function, then there is a unique continuous metric $\metr$ of $\OO_U(D)$ such that for all $x  \in (U \setminus |D|)^\an$, we have
	\begin{equation} \label{metric and Green functions}
		-\log \|s_D(x)\| = g_D(x).
	\end{equation}
	Conversely, every continuous metric $\metr$ of $\OO_U(D)$ defines a unique Green function by \eqref{metric and Green functions}.
\end{art}

\begin{remark} \label{Green functions for Q-divisors}
	A $\Q$-Cartier divisor $D$ of $U$ can be written by local equations $f_V \in \OO_U(V)^\times \otimes_\Z \Q$.  Using the universal property of the tensor product, we get a well-defined continuous real function $\log|f_V|$ on $(V \setminus |D|)^\an$ for such open subsets $V$. Then we define  Green functions for $D$ as in \ref{Green functions}.
	
    We denote $\widehat{\Div}(U)$ (resp. $\widehat{\Div}_\Q(U)$) the group of pairs $(D,g)$ with $D\in \Div(U)$ (resp. $D\in\Div_\Q(U)$) and $g$ a Green function for $D$. Obviously, we have that $\widehat{\Div}_\Q(U)\simeq\widehat{\Div}(U)\otimes_\Z\Q$. {We say that $(D,g)$ in $\widehat{\Div}(U)$ (resp.~in $\widehat{\Div}_\Q(U)$) is \emph{effective} if $D$ is effective and $g \geq 0$.} 
\end{remark}
	
The following lemma will be useful to prove injectivity of the restriction map of divisors with Green functions.
\begin{lemma}\label{lemma: injectivity of restriction from integrally closed}
	Let $X$ be a proper $K$-model of $U$. If $X$ is integrally closed in $U$, then an element $(D,g)\in\widehat{\Div}_{\Q}(X)$ is effective if and only if its image in $\widehat{\Div}_{\Q}(U)$ is effective. 
\end{lemma}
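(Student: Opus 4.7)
The plan is to prove the non-trivial direction by establishing separately that (a) the Green-function inequality $g \geq 0$ extends from $(U \setminus |D|)^{\mathrm{an}}$ to all of $(X \setminus |D|)^{\mathrm{an}}$, and (b) the $\Q$-Cartier divisor inequality $D \geq 0$ on $U$ extends to $X$. The forward direction is immediate, since the restriction of a Green function for $D$ to $U^{\mathrm{an}}$ is a Green function for $D|_U$ and restriction of an effective $\Q$-Cartier divisor is effective.

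For (a), I would argue purely by density and continuity. The open subset $U \cap (X \setminus |D|)$ of the irreducible $K$-variety $X \setminus |D|$ is non-empty and hence dense. Compatibility of analytification with dense open immersions, valid in both the archimedean and Berkovich non-archimedean settings recalled in \ref{analytification}, implies that $(U \setminus |D|)^{\mathrm{an}}$ is dense in $(X \setminus |D|)^{\mathrm{an}}$. Continuity of $g$ then forces $g \geq 0$ throughout $(X \setminus |D|)^{\mathrm{an}}$.

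For (b), I interpret ``$X$ integrally closed in $U$'' as saying that $X$ is normal (in particular, in codimension one along the boundary), so effectivity of a $\Q$-Cartier divisor on $X$ is equivalent to non-negativity of the Weil multiplicities $\mathrm{ord}_Y(D)$ along all prime divisors $Y$ of $X$. If $Y$ meets $U$, then $\mathrm{ord}_Y(D) = \mathrm{ord}_{Y \cap U}(D|_U) \geq 0$ by hypothesis. For the remaining case $Y \subset X \setminus U$, I argue by contradiction: assume $\mathrm{ord}_Y(D) = -m < 0$ and pick a point $x_0 \in Y$ in the smooth locus of $X$ and away from all other components of $|D|$, so that near $x_0$ the divisor $Y$ is cut out by a regular parameter $t$ and a local $\Q$-Cartier equation for $D$ can be taken as $f = t^{-m}$. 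The Green-function relation from \ref{Green functions for Q-divisors} then reads $g(x) = m \log|t(x)| + \varphi(x)$ on a neighborhood, with $\varphi$ continuous at $x_0$. Choosing a sequence of points $x_n \to x_0$ inside $(X \setminus |D|)^{\mathrm{an}} = (X \setminus Y)^{\mathrm{an}}$ (which exists by the same density input as in (a)) yields $|t(x_n)| \to 0$ and hence $g(x_n) \to -\infty$, contradicting the global inequality obtained in (a).

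The main obstacle I expect is the last limiting step in (b): in the Berkovich case one must check that $x_0$ can be approached by analytic points of $X \setminus Y$ along which $|t|$ genuinely tends to $0$. I would handle this by taking $x_0$ to be a classical rigid point of the smooth locus of $Y$ in $X$, where the local picture behaves exactly as in the archimedean case and a local analytic slice transverse to $Y$ provides the required sequence. Everything else amounts to unwinding the definitions of effectivity and of Green functions for $\Q$-Cartier divisors.
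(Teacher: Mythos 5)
Your overall strategy—density of $U^{\mathrm{an}}$ in $X^{\mathrm{an}}$ to extend $g \geq 0$, then a codimension-one argument forcing $\ord_Y(D)\geq 0$ along boundary divisors—is exactly the route the paper takes. The contradiction argument via a local parameter is sound (modulo replacing ``smooth locus'' with ``regular locus'' in positive characteristic, which is all you actually need since the multiplicity is read off from a discrete valuation), and the paper argues the same thing directly at the generic point $w$ of $Y$ rather than at a closed point.

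The gap is in your reading of the hypothesis. ``$X$ integrally closed in $U$'' does \emph{not} mean that $X$ is normal. It means $\mathcal{O}_X$ is integrally closed in $j_*\mathcal{O}_U$ for the open immersion $j\colon U \hookrightarrow X$, which is strictly weaker; indeed $U$ itself is not assumed normal in this lemma, and a non-normal singularity of $U$ would persist in $X$ while $X$ could still be integrally closed in $U$. Your interpretation as normality is what lets you assert that effectivity of a $\Q$-Cartier divisor on $X$ is detected by Weil multiplicities on all prime divisors, but that equivalence needs $R_1 + S_2$ (Serre), not just the codimension-one regularity along the boundary that the actual hypothesis delivers. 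The correct bridge here is precisely what the paper cites: the proof of \cite[Lemma~2.3.6]{yuan2021adelic} shows (i) integral closedness of $X$ in $U$ makes $\mathcal{O}_{X,w}$ a DVR for every codimension-one point $w\in X\setminus U$, and (ii) a divisor that is effective on $U$ with non-negative multiplicity at all such $w$ is effective on $X$—a Krull-type intersection statement tailored to the relative situation, which replaces the global normality you assumed. Once you invoke that lemma in place of normality, your proof becomes essentially identical to the paper's.
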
	
\begin{proof}
It suffices to show the "if" part. Let $(D,g)\in\widehat{\Div}_{\Q}(X)$ such that its image in $\widehat{\Div}_{\Q}(U)$ is effective.  Since $g$ is continuous with $g\geq 0$ on $U^\an$ and $U^\an$ is dense in $X^\an$, we have that $g\geq0$ on $X^\an$. 
{Let $w\in X\setminus U$ be a point with closure $Z$ of codimension $1$ in $X$. Using that $X$ is integrally closed in $U$, it is shown in the proof of \cite[Lemma~2.3.6]{yuan2021adelic} that the local ring $\OO_{X,w}$ is a discrete valuation ring. Since $g\geq0$, it follows that $\ord_w(D)\geq0$ as otherwise $g<0$ in a neighborhood of $Z$ in $X$.} 
Since $D|_U\geq 0$ and $\ord_w(D)\geq0$ for any $w\in X\setminus U$ of codimension $1$, by \cite[Lemma~2.3.6]{yuan2021adelic}, we have that $D\geq0$. 
\end{proof}

\begin{art} \label{classical Arakelov theory: definitions}
	In classical local Arakelov theory, one works with line bundles $L$ on a proper variety $X$ of dimension $n$ over $K$. The metrics are assumed to be \emph{smooth} if the place $v$ is archimedean and to be model metrics in the non-archimedean case. 
		
	We first deal with the archimedean case. Note that the analytic space $X^\an$ might be singular, so we use the smooth functions and differential forms on singular analytic spaces introduced by Bloom-Herrera \cite{bloom1969de}. The important property is that for every morphism $\varphi\colon X' \to X^\an$ from an analytic manifold $X'$, the pull-back of smooth forms on $X^\an$ leads to smooth forms on $X'$ in the usual sense. The metric $\metr$ of $L$ is called \emph{semipositive} if the $\varphi^*\metr$ is has semipositive curvature form $c_1(\varphi^*L,\varphi^*\metr)$ in the usual sense.

	In the non-archimedean case, smoothness is replaced by models. We call a flat proper scheme $\Xcal$ over the valuation ring $\kcirc$ a \emph{$\kcirc$-model} of $X$ if the generic fiber of $\Xcal$ is $X$. A line bundle $\Lcal$ on $\Xcal$ is a \emph{$\kcirc$-model of $L$} if $\Lcal|_X=L$. A metric $\metr$ of $L$ is called a \emph{model metric} if there is a non-zero $m \in \N$ and a $\kcirc$-model $\Lcal$ of $L^{\otimes m}$ such that $\metr^{\otimes m}$ is induced by the $\kcirc$-model $\Lcal$, see \cite[\S 2.3]{boucksom2021non} for details. The metric is called \emph{semipositive} if the line bundle $\Lcal$ is nef. This notion is independent of the choice of  $\Lcal$. 
\end{art}
	
\begin{definition} \label{model Green function} 
	A \emph{model Green function} for a Cartier divisor $D$  {on the proper variety $X$ over $K$}  is a Green function $g_D$ for $D$ which is induced by a smooth metric $\metr$ in the archimedean case and by a model metric $\metr$ in the non-archimedean case. We call $g_D$ \emph{semipositive} if $\metr$ is semipositive.
	We  consider the abelian group {of \emph{model metrized divisors}}
	$${\widehat{\Div}(X)_{\mathrm{mo}}} \coloneqq \{(D,g_D)\mid \text{$D$ Cartier divisor on $X$, \, $g_D$ model Green function for $D$}\},$$
	and set $\widehat{\Div}_\Q(X)_{\mathrm{mo}}\coloneqq \widehat{\Div}(X)_{\mathrm{mo}}\otimes_\Z\Q\subset \widehat{\Div}_\Q(X)$.

	We call a  model {metrized} divisor $\overline{D}\in \widehat{\Div}_\Q(X)_{\mathrm{mo}}$  \emph{effective} (resp. \emph{strictly effective})  if  $D\in \Div_\Q(X)$ is effective and $g \geq 0$ (resp. $g>0$). We will use the notation $\overline{D}\geq0$ for effective model metrized  divisors $\overline D$.
\end{definition}	
	
\begin{remark} \label{projective models}
	When $X$ is projective over $K$, note that for any $\kcirc$-model $\Xcal$ of $X$, there is always a projective $\kcirc$-model $\Xcal'$ of $X$ which dominates $\Xcal$ \cite[Proposition~10.5]{gubler2003canonical}. This means that the identity on $X$ extends to a morphism $\Xcal' \to \Xcal$. We conclude that in the projective case, the model metrics can be induced by line bundles $\Lcal$ on projective models.
\end{remark}

\begin{example} \label{Fubini-Study metric}
	Let $X$ be a proper variety over $K$ and let $L$ be a line bundle on $X$ generated by global sections $s_0, \dots, s_r$. This induces a metric $\metr_{\FS}$ of $L$ called a \emph{Fubini--Study metric} as follows. If $v$ is archimedean, then it is given in a neighborhood of $x \in X^\an$ for any local section $s$ of $L$ at $x$ by
	$$\|s(x)\|_{\FS}= \frac{|s(x)|_v}{\left(\sum_{j=0}^r|s_j(x)|_v^2\right)^{1/2}}$$
	using a fixed trivialization of $L$ at $x$ to make sense of $s(x)$ and $s_j(x)$. This is a smooth semipositive metric of $L$. 
		
	If $v$ is non-archimedean, then we define similarly
	$$\|s(x)\|_{\FS}= \frac{|s(x)|_v}{\max\{|s_j(x)|_v \mid j=0,\dots, r\}}.$$
	This is the model metric obtained by pull-back with respect to the morphism $\varphi \colon X \to \mathbb P_K^r$ induced by $s_0, \dots, s_r$ from the metric on $\OO_{\mathbb{P}_K^r}(1)$ induced by the canonical model $\OO_{\mathbb{P}_{\kcirc}^r}(1)$. Since the latter is very ample, we conclude that $\metr_\FS$ is a semipositive model metric. For $\lambda = (\lambda_0, \dots, \lambda_r) \in \R^{r+1}$, we define the \emph{twisted Fubini--Study metric} $\metr_{\tFS(\lambda)}$ of $L$ by 
	$$\|s(x)\|_{\tFS(\lambda)}= \frac{|s(x)|_v}{\max\{e^{-\lambda_j}|s_j(x)|_v \mid j=0,\dots, r\}}$$
	which is not always a model metric. 
    We can use similar twists in the archimedean case.				
\end{example}

\begin{prop} \label{classical local Arakelov theory as divisorial space}
	For a proper variety $X$ over $K$, we consider the submonoid $N_\mo(X)$ of $\widehat{\Div}(X)_\mo$ given by the model divisors $(D,g_D)$ with $g_D$ semipositive and the subgroup $M_\mo(X)\coloneqq{N_\mo}(X)-{N_\mo}(X)$ of $\widehat{\Div}(X)_\mo$. 
	Then the abelian group $M_\mo(X)$ is ordered by the submonoid $M_{\mo}(X)_{\geq 0} \coloneqq \{(D,g_D) \in M_\mo(X) \mid \text{$D$ effective and $g_D \geq 0$}\}$, and 
	$(M_\mo(X),N_\mo(X))$ satisfies the assumptions required in \ref{base extension from integers to rationals}, so  we obtain a pairing $(M_{\mo}(X)_{\Q},N_{\mo}(X)_{\Q})$ as an abstract divisorial space in the sense of \cref{section: abstract divisorial spaces}. Moreover, the cone $N_{\mo}(X)_{\Q}$ agrees with its closure in $M_{\mo}(X)_{\Q}$ from \cref{closure for finite subspaces}. If $X$ is projective, then we have $M_\mo(X)=\widehat{\Div}(X)_\mo$.
\end{prop}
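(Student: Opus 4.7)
The proof proceeds in three parts. First, to verify $(M_{\mo,\Q}, N_{\mo,\Q})$ is an abstract divisorial space, I check the hypotheses of~\ref{base extension from integers to rationals} on $(M_\mo, N_\mo)$. For antisymmetry of the order: if both $(D,g)$ and $(-D,-g)$ lie in $M_{\mo,\geq 0}$, then $D$ and $-D$ are effective, forcing $D = 0$; then $g$ and $-g$ are non-negative continuous functions on $X^\an$, so $g = 0$. The equality $M_\mo = \sqrt{N_\mo} - \sqrt{N_\mo}$ is immediate, since $M_\mo = N_\mo - N_\mo$ by definition and $N_\mo \subset \sqrt{N_\mo} \subset M_\mo$.

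Second, to show $N_{\mo,\Q}$ coincides with its closure in the finite subspace topology, by Lemma~\ref{lemma:closure in finite subspace topology} it suffices to show that if $(D,g) \in M_{\mo,\Q}$ and $(E,h) \in N_{\mo,\Q}$ satisfy $(D,g) + \tfrac{1}{n}(E,h) \in N_{\mo,\Q}$ for every $n \geq 1$, then $g$ is a semipositive Green function for $D$. In the archimedean case, pulling back via any smooth morphism $\varphi \colon X' \to X^\an$ from a manifold, the hypothesis yields $\varphi^* c_1(\OO_X(D),\metr_g) + \tfrac{1}{n}\varphi^* c_1(\OO_X(E),\metr_h) \geq 0$ at every point and every test vector; letting $n \to \infty$ pointwise gives semipositivity of $\varphi^* c_1(\OO_X(D),\metr_g)$. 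In the non-archimedean case, I pass to a common $\kcirc$-model $\Xcal$ dominating the models underlying $g$ and $h$; after clearing denominators, the metrics $\metr_g$ and $\metr_h$ are induced by $\Q$-line bundles $\Lcal_D, \Lcal_E$ on $\Xcal$, and the hypothesis translates to $\Lcal_D + \tfrac{1}{n}\Lcal_E$ being nef on $\Xcal$. Testing against each closed integral curve $C \subset \Xcal_s$ yields $\Lcal_D \cdot C + \tfrac{1}{n}\Lcal_E \cdot C \geq 0$ for every $n$, so $\Lcal_D \cdot C \geq 0$ in the limit; thus $\Lcal_D$ is nef and $g$ is semipositive.

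Third, for $X$ projective and $(D,g) \in \widehat{\Div}(X)_\mo$, I construct a semipositive dominator. Fix a very ample Cartier divisor $A$ on $X$ with an associated semipositive model Green function $g_A$---Fubini--Study via a closed immersion $X \hookrightarrow \mathbb{P}_K^r$ in the archimedean case, and the model Green function of an ample line bundle $\Hcal$ on a projective $\kcirc$-model $\Xcal$ (Remark~\ref{projective models}) in the non-archimedean case---so that $(A, g_A) \in N_\mo$. I claim $(D + nA, g + n g_A) \in N_\mo$ for $n$ large. In the archimedean case, Bloom--Herrera's local lifting of smooth forms from $X^\an$ to $\mathbb{P}_K^{r,\an}$, together with the compactness of $X^\an$ and the Kähler positivity of the Fubini--Study form, forces $c_1(\OO_X(D),\metr_g) + n\, c_1(\OO_X(A),\metr_{g_A}) \geq 0$ after any smooth pull-back. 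In the non-archimedean case, if $\Lcal$ on $\Xcal$ underlies $m g$ for some $m > 0$, Serre's theorem makes $\Lcal \otimes \Hcal^{\otimes mn}$ very ample (hence nef) on $\Xcal$ for $n$ large. Either way, $(D, g) = [(D + nA, g + n g_A)] - (nA, n g_A) \in N_\mo - N_\mo = M_\mo$.

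The main obstacle is the non-archimedean step in the closedness argument, where one must commit to a common $\kcirc$-model so that the semipositivity condition on each $g + \tfrac{1}{n}h$ becomes a nefness statement for a $\Q$-line bundle on a fixed scheme, testable curve by curve. Once this numerical reduction is in place, the archimedean closedness argument and both halves of the projective case are routine consequences of pointwise positivity and Serre's theorem, respectively.
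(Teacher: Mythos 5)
Your proof is correct and follows essentially the same approach as the paper: verify antisymmetry of the order directly, invoke Lemma~\ref{lemma:closure in finite subspace topology} together with the numerical characterization of semipositivity (curvature forms after pull-back to manifolds at archimedean places, intersection with curves in the special fibre of a common $\kcirc$-model at non-archimedean places) for closedness of the cone, and in the projective case dominate $(D,g)$ by adding a large multiple of a very ample $(A,g_A)$. Your phrasing of the non-archimedean projective case---directly checking that $\Lcal\otimes\Hcal^{\otimes mn}$ is nef so that $(D+nA,\,g+ng_A)\in N_\mo$ holds at the integral level---is slightly tidier than the paper's, which derives $m(D,g_D)\in N_\mo-N_\mo$ and leaves the descent to $(D,g_D)\in N_\mo-N_\mo$ implicit.
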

{For similicity of notation, we will use in the following $N_\mo\coloneqq N_\mo(X)$ and $M_\mo\coloneqq M_\mo(X)$ when the dependence on $X$ is clear.}    
\begin{proof}   
	Obviously, the submonoid $M_{\mo,\geq 0}$ of $M_\mo$ satisfies $M_{\mo,\geq 0}\cap (-M_{\mo,\geq 0})=\{0\}$ and hence $M_\mo$ is an ordered abelian group. It follows from \cref{base extension from integers to rationals} that $(M_{\mo,\Q},N_{\mo,\Q})$ is an abstract divisorial space. Semipositivity is a numerical criterion as we can check in the archimedean case if the curvature current is positive and as we can check in the non-archimedean case if the intersection numbers with closed curves in the special fiber of a $K^\circ$-model are non-negative. This proves  that the cone $N_{\mo,\Q}$ is closed in $M_{\mo,\Q}$ in the sense of \ref{closure for finite subspaces}. {We explain this argument in the non-archimedean case. We pick a finite dimensional subspace $E$ of $N_{\mo,\Q}$. It is generated by a finite basis $(D_j,g_j)_{j=1,\dots,r}$. We may assume that every $D_j$ is a Cartier divisor on $X$ and that there is a  $K^\circ$-model $\Xcal$ of $X$ such for every $j=1,\dots,r$, there is a line bundle $\Lcal_j$ on $\Xcal$ inducing the Green function $g_j$. Now let $(D_n',g_n')_{n \in \N}$ be a sequence in $E \cap N_{\mo,\Q} $ which converges  to $(D,g_D)$ in $E$. This means $(D_n',g_n')=\sum_{i=1}^n x_{in} (D_i,g_i) \in  N_{\mo,\Q}$ with $x_{in} \in \Q$ such that $x_i = \lim_{n \to \infty} x_{in} \in \Q$ and $(D,g_D)= \sum_{i=1}^n x_i (D_i,g_i)$. Note that $g_D$ is induced by the $\Q$-line bundle $\Lcal=\sum_{i=1}^n x_i \Lcal_i$ on $\Xcal$ using additive notation. By a limit argument, the intersection number of $\Lcal$ with any closed curve in the special fiber is non-negative and hence $\Lcal$ is nef proving $(D,g_D) \in N_{\mo,\Q}$ which means that $N_{\mo,\Q}$ is closed.}
			
	It remains to prove the last claim, so we assume that $X$ is projective. We have to show that $\widehat{\Div}(X)_\mo={{N_\mo}-{N_\mo}}$. Let $(D,g_D)\in \widehat{\Div}(X)_\mo$. Since $X$ is projective, there is a very ample Cartier divisor $H$. In the non-archimedean case, there is $m \in \N_{>0}$ such that $mg_D$ is induced by a Cartier divisor $\Dcal$ on a $\kcirc$-model $\Xcal$ of $X$. By \cref{projective models}, we may assume that $\Xcal$ is projective over $\kcirc$. As we are free to choose the very ample Cartier divisor $H$, we may assume that $H$ is the restriction of a very ample Cartier divisor $\Hcal$ on $\Xcal$. For $k \in \N_{>0}$ sufficiently large, the Cartier divisor $\Dcal +k\Hcal$ is very ample. For the model Green function $g_H$ induced by $\Hcal$, we conclude that $(mD,mg_D)+k(H,g_H)$ and $k(H,g_H)$ are both in $N_\mo$. This proves $m(D,g_D) \in N_\mo-N_\mo$, so {$(D,g_D) \in N_\mo-N_\mo$} and hence $\widehat{\Div}(X)_\mo = {N_\mo}-{N_\mo}$ in the non-archimedean case. In the archimedean case, let $g_H$ be the smooth Green function induced by a Fubini--Study metric on $\OO_X(H)$ as in \cref{Fubini-Study metric}. For $k \in \N_{>0}$ sufficiently large, it is clear that the smooth metric corresponding to $g_D+kg_H$ has semipositive curvature form and hence we deduce $\widehat{\Div}(X)_\mo=N_\mo-N_\mo$. 
\end{proof}   
		
\begin{remark} \label{abstract divisorial space and FS}
	A Green function for a Cartier divisor $D$ on $X$ is called a \emph{$\FS$-Green function} if its associated metric on $\OO_X(D)$ is a  Fubini--Study metric as in Definition \ref{Fubini-Study metric}. Let 
	$$N_\FS \coloneqq \{(D,g_D) \in \widehat{\Div}(X)_\mo \mid \text{$D$ Cartier divisor on $X$, \, $g_D$ $\FS$-Green function for $D$}\}$$
	and let $M_\FS = \sqrt{N_\FS}-\sqrt{N_\FS}$. Then it follows as above that $(M_{\FS,\Q},N_{\FS,\Q})$ is an abstract divisorial space over $\Q$. Using twisted Fubini--Study metrics, we get in the same way an abstract divisorial space over $\Q$ denoted by $(M_{\tFS,\Q},N_{\tFS,\Q})$. 
	For $X$ projective, the identity $M_\FS=\widehat{\Div}(X)_\mo$ holds  in the non-archimedean case which follows from the argument in the proof of \cref{classical local Arakelov theory as divisorial space}. Twisted Fubini--Study metrics are used in the approaches of Boucksom--Jonsson \cite{boucksom2022global} and Chen--Moriwaki \cite{chen2020arakelov} for local Arakelov theory to get also interesting applications in the case of the trivial valuation.
	
	If there is a semipositive metric on a line bundle, then the line bundle is nef. There are proper non-projective varieties without non-trivial nef line bundles and hence the cones $N_\mo$ and $N_\FS$ might be $\{0\}$ for proper non-projective varieties, see the following paper of Fujino and Payne {\cite{fujino2005smooth}}.
\end{remark}  
		
In algebraic geometry, the closure of the (semi-)ample cone is the nef cone (see Remark \ref{closure of semiample is nef}). The next statement is similar in non-archimedean Arakelov theory.
		
\begin{prop} \label{closure of FS is nef}
	Assume that $v$ is non-archimedean and $X$ projective. Then we have $M_{\FS,\Q}=M_{\mo,\Q}$ and the closure of the cone $N_{\FS,\Q}$ in the sense of \cref{closure for finite subspaces} is equal to $N_{\mo,\Q}$.
\end{prop}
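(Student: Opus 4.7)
The plan is to treat the two assertions separately; the first is essentially recorded in the preceding remark, while the second rests on a Fujita-type uniform global generation result.

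For the identity $M_{\FS,\Q} = M_{\mo,\Q}$, the argument already sketched in Remark \ref{abstract divisorial space and FS} applies: in the non-archimedean projective case, the same construction used in the proof of Proposition \ref{classical local Arakelov theory as divisorial space} shows that for any $(D,g_D) \in \widehat{\Div}(X)_\mo$, after clearing denominators and passing to a projective $\kcirc$-model $\Xcal$ via Remark \ref{projective models}, both $\Dcal + k\Hcal$ and $k\Hcal$ are very ample Cartier divisors on $\Xcal$ for sufficiently large $k$ and any fixed very ample $\Hcal$. Very ampleness yields global generation and hence Fubini--Study representatives, so $m(D,g_D) \in N_\FS - N_\FS$ for some $m \geq 1$; together with $M_\mo = \widehat{\Div}(X)_\mo$ from Proposition \ref{classical local Arakelov theory as divisorial space}, this gives $M_\FS = M_\mo$, and tensoring with $\Q$ yields $M_{\FS,\Q} = M_{\mo,\Q}$.

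For the closure assertion, the inclusion $\overline{N_{\FS,\Q}} \subseteq N_{\mo,\Q}$ is immediate since $N_{\FS,\Q} \subseteq N_{\mo,\Q}$ and $N_{\mo,\Q}$ is closed in $M_{\mo,\Q}$ in the finite subspace topology by Proposition \ref{classical local Arakelov theory as divisorial space}. For the reverse inclusion, the plan is to apply Lemma \ref{lemma:closure in finite subspace topology}. Given $(D,g_D) \in N_{\mo,\Q}$, clear denominators to obtain $(D',g_{D'}) \coloneqq m(D,g_D)$ induced by a nef line bundle $\Lcal$ on a projective $\kcirc$-model $\Xcal$ of $X$. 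The crucial input is the uniform global generation statement that there exists a line bundle $\Hcal$ on $\Xcal$ such that $\Lcal^{\otimes k} \otimes \Hcal$ is globally generated for every $k \geq 0$. Granting this, fix generating sections of $\Hcal$ to define $(H,g_H) \in N_\FS$; the standard identification of Fubini--Study and model metrics arising from generating sections then yields $k(D',g_{D'}) + (H,g_H) \in N_\FS$ for every $k \geq 0$. Setting $(H',g_{H'}) \coloneqq \tfrac{1}{m}(H,g_H) \in N_{\FS,\Q}$, for every $k \geq 1$ we obtain
\[ (D,g_D) + \tfrac{1}{k}(H',g_{H'}) \;=\; \tfrac{1}{km}\bigl(k(D',g_{D'}) + (H,g_H)\bigr) \;\in\; N_{\FS,\Q}, \]
so Lemma \ref{lemma:closure in finite subspace topology} gives $(D,g_D) \in \overline{N_{\FS,\Q}}$, as desired.

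The main obstacle is thus the uniform global generation claim. It is a standard consequence of Fujita's vanishing theorem combined with Castelnuovo--Mumford regularity: picking a very ample $\Hcal_0$ on $\Xcal$, Fujita supplies $m_0$ such that $H^i(\mathcal M \otimes \Hcal_0^{\otimes m}) = 0$ for every nef line bundle $\mathcal M$, every $i > 0$, and every $m \geq m_0$; regularity then forces $\mathcal M \otimes \Hcal_0^{\otimes m_0 + \dim \Xcal}$ to be globally generated. Taking $\Hcal \coloneqq \Hcal_0^{\otimes m_0 + \dim \Xcal}$ and applying this with $\mathcal M = \Lcal^{\otimes k}$ yields the claim. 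When $\kcirc$ fails to be Noetherian (i.e.~the valuation is non-discrete), one descends $\Xcal$, $\Lcal$ and $\Hcal_0$ to a Noetherian subring $R \subset \kcirc$, applies Fujita vanishing over $\Spec R$, and base-changes back to $\kcirc$ to conclude.
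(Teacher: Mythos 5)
Your proof of $M_{\FS,\Q} = M_{\mo,\Q}$ is essentially the paper's (both invoke the argument recorded in Remark \ref{abstract divisorial space and FS} together with Proposition \ref{classical local Arakelov theory as divisorial space}). For the closure assertion your argument is correct but takes a heavier route than the paper's. The paper picks the same starting data -- a nef $\Dcal$ and an ample $\Hcal$ on a projective $\kcirc$-model -- and then simply observes that $\Dcal + \tfrac{1}{k}\Hcal$ is an ample $\Q$-Cartier divisor on $\Xcal$, because nef plus $\varepsilon\cdot$ample is ample; a large multiple is then very ample, hence globally generated, so its metric is Fubini--Study, giving $(D + \tfrac{1}{k}H, g_D + \tfrac{1}{k}g_H) \in N_{\FS,\Q}$, after which the closure follows as you argue. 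You instead produce a single $\Hcal$ with $\Lcal^{\otimes k}\otimes\Hcal$ globally generated for all $k$ via Fujita's vanishing theorem and Castelnuovo--Mumford regularity. This is overkill here: the uniform global generation supplies more than is needed, and it forces you into the Noetherian-descent discussion (spreading out $\Xcal$, $\Lcal$, $\Hcal_0$ to a Noetherian subring and pulling back), whereas the paper's observation sidesteps that entirely because ampleness of $\Q$-divisors is preserved under the twist automatically. Your appeal to Lemma \ref{lemma:closure in finite subspace topology} is a legitimate reformulation of the paper's explicit limit \eqref{limit argument for nef and FS}; the two are equivalent. In short: same endpoint, but where the paper uses the one-line fact that the ample cone is open, you invoke Fujita vanishing, which buys nothing extra in this setting and adds a non-Noetherian caveat that the simpler argument avoids.
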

\begin{proof}
	It follows from Proposition \ref{classical local Arakelov theory as divisorial space} and Remark \ref{abstract divisorial space and FS} that $M_{\mo,\Q}$ and $M_{\FS,\Q}$ both agree with $\widehat{\Div}_\Q(X)_{\mo}$. 
	To prove the last claim, let $(D,g_D) \in N_{\mo,\Q}$. Again \cref{projective models} shows that  there is $m \in \N_{>0}$ such that $m(D,g_D)$ is induced by a nef  Cartier divisor $\Dcal$ on a projective $\kcirc$-model $\Xcal$. Let $\Hcal$ be an  ample  Cartier divisor on $\Xcal$ and let $g_H$ be the associated Green function of the restriction $H$ to the generic fiber $X$. Then we have
	\begin{equation}  \label{limit argument for nef and FS}
		(D,g_D)= \lim_{k \to \infty} \frac{1}{m}\left(mD+\frac{1}{k}H,mg_D+\frac{1}{k}g_H\right)
	\end{equation}
	as a limit in the subspace of $M_{\mo,\Q}$ generated by $(D,g_D)$ and $(H,g_H)$. Since {$\Dcal$} is a nef Cartier divisor on $\Xcal$ and since $\Hcal$ is an ample Cartier divisor on $\Xcal$, we conclude that $\Dcal+\frac{1}{k}\Hcal$ is an ample $\Q$-Cartier divisor on $\Xcal$. As it induces the elements on the right hand side of \eqref{limit argument for nef and FS} and as the corresponding Green functions are $\FS$ by \cref{Fubini-Study metric}, we conclude from \eqref{limit argument for nef and FS} that $(D,g_D)$ is in the closure of $N_{\FS,\Q}$. Since $N_{\mo,\Q}$ is closed in $M_{\mo,\Q}$ by \cref{classical local Arakelov theory as divisorial space}, this proves the claim.
\end{proof}

\begin{remark} \label{semipositive Green functions in sense of Zhang}
	Zhang \cite{zhang1995smallpoints} extended semipositivity of model metrics to continuous metrics as follows. Let $D$ be a Cartier divisor on a proper variety $X$ over $K$. We say that a Green function $g_D$ for $D$ is \emph{semipositive} if $g_D$ is the uniform limit of {semipositive} model Green functions $(g_{k,D})_{k \in \N}$ for $D$. Note that $g_{k,D}-g_D$ is a Green function for $0$ and hence a continuous real  function on $X^\an$, so uniform convergence means that the functions $g_{k,D}-g_D$ converge to $0$ with respect to the supremum norm on $X^\an$. A Green function for $D$ is called \emph{$\DSP$}, which stands for \emph{difference of semipositive}, if it is the difference of two semipositive Green functions. We set $$M_\SP \coloneqq \{(D,g_D) \mid \text{$D$ Cartier divisor on $X$, $g_D$ $\DSP$-Green function for $D$}\}.$$
	Then $M_\SP$ is an ordered abelian group with partial order $\geq$ given by 
	$$M_{\SP,{\geq 0}} \coloneqq \{(D,g_D)\in M_{\SP}\mid \text{$D$ effective and $g_D \geq 0$}\}.$$ 
	Let $N_\SP$ be given by $(D,g_D)\in M_\SP$ such that $g_D$ is a semipositive Green function. Then $M_\SP$ is an ordered abelian group, $N_{\SP}\subset M_{\SP}$ is a submonoid and $M_\SP=N_\SP-N_\SP$. Therefore, by performing base extension of scalars of $M_\SP$ to $\mathbb{Q}$, we obtain an abstract divisorial space $(M_{\SP,\Q},N_{\SP,\Q})$ over $\Q$, see \cref{base extension from integers to rationals}. We can see the elements of $M_{\SP,\Q}$ as pairs $(D,g_D)$ with $D$ a $\Q$-Cartier divisor and $g_D$ a Green function for $D$.
\end{remark}
		
We now relate the above construction to the completions of abstract divisorial spaces given in \cref{b-completion}. This is just an abstract reformulation of Zhang's construction of semipositive metrics and helps to understand later the generalizations of Yuan and Zhang to quasi-projective varieties.
		
\begin{prop} \label{Zhang metrics as divisorial space}
	Let $X$ be a proper variety over $K$ and let $S\subset M_{\mo,\Q,\geq 0} \cap N_{\mo,\Q}$ be given by those elements $(0,g)$ with constant non-negative Green functions $g$. Then the abstract divisorial space $(M_{\SP,\Q},N_{\SP,\Q})$ from \cref{semipositive Green functions in sense of Zhang} is the $S$-completion of the abstract divisorial space $(M_{\mo,\Q},N_{\mo,\Q})$ in the sense of \cref{S-completion}.
\end{prop}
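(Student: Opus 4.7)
The plan is to verify that $(M_{\SP,\Q}, N_{\SP,\Q})$ satisfies the universal property of Proposition \ref{b-completion} and then invoke it. First I would simplify the directed limit: the set $S$ is totally ordered by the constant, and the cofinal subset $\{n\cdot(0,1)\mid n\in\N\}$ lies in $S$, so by Remark \ref{final object} the $S$-completion coincides with the $b$-completion for the single choice $b\coloneqq(0,1)\in S$. Next I would make the $b$-topology explicit: since $-\varepsilon b\leq (D,g)\leq \varepsilon b$ forces $D=0$ in $\Div_\Q(X)$ and $-\varepsilon\leq g\leq\varepsilon$ pointwise, a sequence $(D_n,g_n)$ in $M_{\mo,\Q}$ is $b$-Cauchy precisely when its underlying $\Q$-Cartier divisors eventually stabilize to a common $D$ and the Green functions $g_n$ converge uniformly on $X^\an$. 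This is exactly the data determining an element of $M_{\SP,\Q}$ as introduced in Remark \ref{semipositive Green functions in sense of Zhang}.

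Then I would apply Proposition \ref{b-completion} to the inclusion $\iota\colon(M_{\mo,\Q},N_{\mo,\Q})\hookrightarrow(M_{\SP,\Q},N_{\SP,\Q})$ with $b'\coloneqq b$, so that $\iota(b)\leq b'$ is trivially satisfied. The two non-trivial hypotheses to check are that $N_{\SP,\Q}$ is Hausdorff complete with respect to the $b'$-topology and that $M_{\SP,\Q,\geq 0}$ is closed. Hausdorffness is immediate: $d_{b'}((D,g),0)=0$ forces $D=0$ and $g\equiv 0$. For completeness, a $b'$-Cauchy sequence $(D_n,g_n)$ in $N_{\SP,\Q}$ eventually has a common divisor $D$ with uniformly convergent $g_n\to g$; since each $g_n$ is itself a uniform limit of semipositive model Green functions for $D$, a standard diagonal extraction yields semipositive model Green functions $h_k$ for $D$ with $h_k\to g$ uniformly, so $(D,g)\in N_{\SP,\Q}$. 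Closedness of $M_{\SP,\Q,\geq 0}$ is analogous: any limit of a $b'$-convergent sequence of effective elements has $D\geq 0$ (being eventually equal) and $g\geq 0$ (as a uniform limit of non-negative functions). The universal property therefore produces a unique morphism of abstract divisorial spaces $\widehat{\iota}\colon(\widehat{M}_{\mo,\Q}^b,\widehat{N}_{\mo,\Q}^b)\to(M_{\SP,\Q},N_{\SP,\Q})$.

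To finish, I would show $\widehat{\iota}$ is an isomorphism. For surjectivity onto $N_{\SP,\Q}$, any $(D,g)\in N_{\SP,\Q}$ is by definition the uniform limit of semipositive model Green functions $g_k$ for $D$, and $(D,g_k)$ is therefore a $b$-Cauchy sequence in $N_{\mo,\Q}$ whose image in $M_{\SP,\Q}$ is $(D,g)$; taking differences gives surjectivity onto $M_{\SP,\Q}=N_{\SP,\Q}-N_{\SP,\Q}$. For injectivity, suppose $x\in\widehat{M}_{\mo,\Q}^b$ is represented by a $b$-Cauchy sequence $(D_n,g_n)$ in $M_{\mo,\Q}$ with $\widehat{\iota}(x)=0$ in $M_{\SP,\Q}$; by the explicit description of $b$-Cauchy sequences above, the image is the pair $(D,g)$ with $D_n=D$ eventually and $g_n\to g$ uniformly, so $\widehat{\iota}(x)=0$ forces $D=0$ and $g\equiv 0$, whence $d_b((D_n,g_n),0)\to 0$ and $x=0$ already in $\widehat{M}_{\mo,\Q}^b$.

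I expect the only real obstacle to be the completeness step for $N_{\SP,\Q}$, where one must verify that the class of semipositive Green functions in Zhang's sense is closed under uniform convergence; this reduces to a diagonal extraction and mirrors a familiar argument from Zhang's original construction, but is the one place where care is needed regarding the interaction between the two layers of uniform approximation.
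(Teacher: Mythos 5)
Your overall strategy — identifying the $S$-completion with a $b$-completion for a single cofinal $b$, then observing that the $b$-topology is uniform convergence, and finally checking that $(M_{\SP,\Q},N_{\SP,\Q})$ is complete (via a diagonal extraction) — is the same structure as the paper's argument, and your verification of Hausdorff completeness of $N_{\SP,\Q}$ and closedness of $M_{\SP,\Q,\geq 0}$ is sound.

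There is, however, a concrete gap in the very first step: you assert $(0,1)\in S$, i.e.\ that the constant Green function $1$ is a model Green function for $D=0$. This fails in two situations the proposition must cover. If $v$ is the \emph{trivial} valuation, the only model Green function for $0$ is $0$, so $S=\{(0,0)\}$, the $S$-completion is discrete and equals $(M_{\mo,\Q},N_{\mo,\Q})$ itself; in that case $N_{\SP,\Q}=N_{\mo,\Q}$ as well (since uniform limits are already attained), so the conclusion holds, but your proof does not apply because $(0,1)\notin S$. If $v$ is \emph{non-archimedean non-trivial}, a constant model Green function for $\OO_X$ comes from a vertical principal divisor, hence from some $-\log|a|$ with $a\in K^\times$; after base change to $\Q$ this puts the constant in the divisible hull of $\log|K^\times|$, which may not contain $1$ depending on the normalization. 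The paper sidesteps both issues by treating the trivially valued case separately and, in the non-trivial case, choosing $b=(0,r)$ for some $r\in v(K^\times)$ with $r>0$ (which is then genuinely an element of $S$); the ensuing $b$-completion is independent of $r$ because scaling $b$ by a positive rational does not change the $b$-topology. If you replace $(0,1)$ by such a $b=(0,r)$ and add the trivial-valuation case, the rest of your argument goes through.
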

\begin{proof}
	If $v$ is the trivial valuation on $K$, then the model Green function $g_D$ associated to a Cartier divisor $D$ is unique. If $\gamma$ is a local equation for $D$ on an open subset $U$ of $X$, then $g_D(x)=-\log |\gamma(x)|$ for $x \in U^\an$. This means that all semipositive Green functions are model Green functions. Since $S$ consists then only of $(0,0) \in M_{\mo,\Q}$, we conclude that $S$-completion of  $(M_{\mo,\Q},N_{\mo,\Q})$ is just $(M_{\mo,\Q},N_{\mo,\Q})=(M_{\SP,\Q},N_{\SP,\Q})$.
			
	Now we assume that $v$ is non-trivial. Then there is a positive $r \in v(K^\times)$ and hence $b=(0,r)$ is a non-trivial element in $S$. It is clear that the $b$-completion of $(M_{\mo,\Q},N_{\mo,\Q})$ does not depend on the choice of $r$ and hence it is enough to show that 
	\begin{equation} \label{b-completion of models}
    	\left(\widehat{M}_{\mo,\Q}^b,\widehat{N}_{\mo,\Q}^b \right)= (M_{\SP,\Q},N_{\SP,\Q}).
	\end{equation}
	Note that for $\varepsilon \in \Q_{>0}$ with $\varepsilon <1$ and for $\overline D=(D,g_D)\in M_{\mo,\Q}$, the condition $-\varepsilon b \leq \overline D \leq \varepsilon b$ is equivalent to $D=0$ as a $\Q$-Cartier divisor and $|g_D(x)| \leq \varepsilon r$ for all $x \in X^\an$. We conclude that the $b$-topology matches with the topology of uniform convergence of Green functions and hence \eqref{b-completion of models} holds.
\end{proof}

\begin{example} \label{running example for local Zhang}
We use the running example from \S \ref{subsec: a running example} to illustrate the above notions. Let $X=\mathbb P_K^1$  for $K=\Q_v$ where $v$ is either a prime number $p$ or  $\infty$ and where $\Q_v$ is the completion of $\Q$ with respect to the place $v$. We fix homogeneous coordinates $x_0,x_1$ of $X$ and use the affine coordinate $t=x_1/x_0$. Then the dense open torus $T=\mathbb G_{{\rm m},K}=X \setminus \{x_0\cdot x_1 \neq 0\}= X \setminus \{0,\infty\}$ acts on $X$ and we restrict our attention to  toric metrics on the line bundle $L=\OO_X(1)$ of $X$ and to the corresponding toric divisor $D=\mathrm{div}(x_0)=[\infty]$, see \S \ref{subsec: a running example}.

We have the \emph{tropicalization map}
$$\trop\colon X^\an \longrightarrow \overline \R, \, \, \quad x \mapsto -\log|t(x)|_v$$
where $ \overline \R\coloneqq \R \cup \{\pm \infty\}$. We note that $\trop(0)=\infty$ and $\trop(\infty)=-\infty$. The coordinate on $\R$ is denoted by $u$, then $D$ corresponds to the piecewise linear function $\Psi(u)=\min(0,u)$. Note that ampleness of $D$ is reflected by the fact that $\Psi$ is strictly concave.

Recall that a metric $\metr$ of $L$ is toric if and only if there is a continuous function $\psi\colon \R \to \R$ which extends continuously to a function $\overline \R \to \overline \R$ such that 
$$\psi\circ \trop(x)=\log\|x_0(x)\|$$
for all $x \in T^\an$. Note that $g_D = - \psi \circ \trop$ is then the Green function corresponding to the toric metric $\metr$.  The toric metric corresponding to $\Psi$ is called the \emph{canonical metric} of $L$. It is characterized by the property that $[m]^*\metr=\metr^{\otimes m}$ along the identification $[m]^*(L)=L^{\otimes m}$ for all $m \in \N$ (using the rigidification $x_0$ at $1 \in T(K)$). For $v$ non-archimedean, it follows directly from the definitions that the canonical metric is the Fubini--Study metric given by the global sections $s_0,s_1$. In the archimedean case, this is not true as the canonical metric is not smooth since $\Psi$ has a singularity at $u=0$. 

For a moment, we assume that $v$ is non-archimedean. Then the toric metric $\metr$ is a model metric if and only if the corresponding function $\psi$ is a piecewise affine function given by subdividing $\overline\R$ into finitely many intervals $[t_{j-1},t_j]$ with $-\infty=t_0<t_1< \dots < t_r=\infty$ such that 
$$\psi(u) \coloneqq m_j u + c_j$$
for $u \in [t_{j-1},t_j]$ with slopes $m_j \in \Q$ and constant terms $c_j \in \Q$; moreover $\metr$ is semipositive if and only if $\psi$ is concave (see \cite[\S 4.5]{BPS}). As $\metr$ is a toric metric of $L=\OO_X(1)$, we have $m_1=1$ and $m_r=0$ which means that the recession function of $\psi$ is $\Psi$.

Now we consider again the archimedean case and the non-archimedean case simultaneously. A toric metric of $\metr$ of $\OO_X(1)$ is semipositive in the sense of Zhang if and only if $\psi$ is concave and $\psi-\Psi$ is bounded, see  \cite[Theorem 4.8.1]{BPS}, or \cite[Theorem II]{gubler-hertel}.
\end{example}

\subsection{Compactified metrized divisors}

In this subsection, we recall the analytic description of adelic divisors (called \emph{compactified metrized divisors} in our paper) given by Yuan and Zhang in \cite[\S 3.6]{yuan2021adelic}. It is straightforward to generalize this description to any algebraic variety $U$ over a non-archimedean field $K$. The upshot of the construction is that we can allow Green functions for Cartier divisors on $U$ with prescribed singularities along the boundary.

\begin{definition}
	The space of \emph{model metrized divisors} (resp. \emph{{signed} twisted Fubini--Study divisors}) on $U$ is defined as the direct limit 
	\[\widehat{\Div}_\Q(U)_{\mo}\coloneq\lim\limits_{\underset{X}{\longrightarrow}}\widehat{\Div}_\Q(X)_{\mathrm{mo}},\]
    \[\text{(resp. {$\widehat{\Div}_\Q(U)_{\tFS}\coloneq\lim\limits_{\underset{X}{\longrightarrow}}M_{\tFS,\Q}(X)$})},\]
	where $X$ runs through all proper $K$-model of $U$ and $M_{\tFS,\Q}(X)$ was introduced in \cref{abstract divisorial space and FS}. We say an element $\overline{D}$ in $\widehat{\Div}_\Q(U)_{\mo}$ (resp. in  $\widehat{\Div}_\Q(U)_{\tFS}$) is \emph{effective}, denoted by $\overline{D}\geq0$, if there is a proper $K$-model $X$ such that $\overline{D}\in \widehat{\Div}_\Q(X)_{\mathrm{mo}}$ (resp. $M_{\tFS,\Q}(X)$) and $\overline{D}\geq0$. We call $\overline{D}=(D,g_D)$ \emph{strictly effective} if $D$ is effective and $g_D>0$.
\end{definition}
\begin{remark}
	A dominant morphism $\varphi\colon X_1\rightarrow X_2$ of proper varieties induces, via pullback, a homomorphism $\widehat{\Div}_\Q(X_2)_{\mathrm{mo}}\rightarrow \widehat{\Div}_\Q(X_1)_{\mathrm{mo}}$, so our definition of $\widehat{\Div}_\Q(U)_{\mathrm{mo}}$ makes sense.
\end{remark}
		
\begin{definition} 	\label{def:boundarytopologylocal}
	Assume that the valuation on $K$ is non-trivial. A \emph{{weak boundary divisor}} (resp. a \emph{boundary divisor}) of $U$ is a pair $({X}_0,\overline{B})$ consisting of a proper $K$-model $U\hookrightarrow {X}_0$ over $K$ and an effective  model divisor $\overline{B}\in \widehat{\Div}_\Q({X}_0)_{\mathrm{mo}}$ such that {$|B|\subset{X}_0\setminus U$} (resp. $|B|={X}_0\setminus U$). We say a boundary divisor $(X_0, \overline{B})$ is \emph{cofinal} if $\overline{B}$ is strictly effective. We will see in \cref{remarks about boundary topology} that there is always a cofinal boundary divisor.
			
	Let $({X}_0,\overline{B})$ be a {weak} boundary divisor with $\overline{B}=(B,g_B)$. The \emph{$\overline{B}$-boundary topology} on $\widehat{\mathrm{Div}}_\Q(U)_{\mathrm{mo}}$ is the unique topology such that a basis of neighborhoods of a model divisor $\overline{D}=(D,g)$ is given by
	\begin{align*}
		B(r,D)\coloneq&\{\overline{E}=(E,h)\in \widehat{\Div}_\Q(U)_{\mathrm{mo}}\mid -r\overline{B}\leq \overline{E}-\overline{D}\leq r\overline{B}\},
		\ \ r\in \Q_{>0}.
	\end{align*}
	If $({X}_0,\overline{B})$ is a cofinal boundary divisor, we call the corresponding topology the \emph{boundary topology}. The space of \emph{compactified metrized} \emph{divisors} (with respect to $v$) on $U$ is defined as the completion of $\widehat{\Div}_\Q(U)_{\mo}$ with respect to the boundary topology, denoted by $\widehat{\Div}_\Q(U)_{\cpt}$.
			
	When the valuation is trivial, we can do a similar procedure on $\widehat{\Div}_\Q(U)_{\tFS}$, the corresponding completion is also denoted by $\widehat{\Div}_\Q(U)_{\cpt}$, called the space of \emph{compactified metrized} \emph{divisors} (with respect to $v$) on $U$. The space $\widehat{\Div}_\Q(U)_{\cpt}$ is the largest space of  divisors {considered in the local setting} when $v$ is either nontrivial or trivial. 
\end{definition}
\begin{remark} \label{remarks about boundary topology}
	We claim first that there is always a cofinal boundary divisor. By blowing up the boundary $\partial X_0=X_0 \setminus U$, we may assume that the boundary is the support of an effective Cartier divisor $B$ (in the non-archimedean case even the support of an effective Cartier divisor $\Bcal$ on $\Xcal_0$, where $\Xcal_0$ is a model of $X_0$). There is a model Green function $g_B$ for $B$ with $g_B>0$ on $U^\an$. This is automatic in the non-archimedean case by choosing the Green function induced by $\Bcal$, and can be obtained in the archimedean case by adding a sufficiently large constant.
		
	The boundary topology is independent of the choice of the cofinal boundary divisor $b \coloneqq (X_0,\overline B)$. The argument uses that $\{nb \mid n \in \N\}$ is cofinal in the set of  weak boundary divisors 
	as shown in \cite[Lemma~2.4.1]{yuan2021adelic}. Then for any weak boundary divisor $(X_0',\overline{B'})$, the $\overline{B}$-boundary topology is coarser than the $\overline{B'}$-boundary topology. Conversely, if $(X_0',\overline{B'})$ is cofinal, then the $\overline{B'}$-boundary topology is coarser than the $\overline{B}$-boundary topology. This proves the claim.
\end{remark}

In the remaining part of this subsection, we fix a cofinal boundary divisor $(X_0,\overline{B})$ for $\overline B=(B,g_B)$. 

\begin{art}  \label{justification for writing pairs}
We have a canonical injective homomorphism 
\begin{equation} \label{injection into arithmetic divisors}
	\widehat{\Div}_\Q(U)_{\cpt}\longrightarrow \widehat{\Div}_\Q(U).
\end{equation} 
Moreover, there is a canonical injective homomorphism
\begin{equation} \label{injection into direct sum}
\widehat{\Div}_\Q(U)_{\cpt}\longrightarrow \widetilde{\Div}_\Q(U)_\cpt \oplus G(U^\an)
\end{equation} 
where $\widetilde{\Div}_\Q(U)_\cpt$ denotes the space of compactified geometric divisors on $U$ from \cref{def:geometric boundary topology} and where $G(U^\an)$ is the space of Green functions on $U^\an$.
To see this, we write $\overline D \in \widehat{\Div}_\Q(U)_{\cpt}$ as a limit of a sequence 
	$(D_i,g_i)_{i \in \N}$ 
from $\widehat{\Div}_\Q (U)_\mo$ in the non-trivially valued case (resp.~ from $\widehat{\Div}_\Q (U)_\tFS$ in the trivially valued case). Then the definition of the boundary topology shows that $D_i$ converges to a compactified geometric divisor $D \in \widetilde{\Div}_\Q(U)_\cpt$ with respect to the $B$-boundary topology. Moreover, we have $D|_U=D_i|_U$ for sufficiently large $i \in \N$, which is a well-defined Cartier divisor  on $U$ using that the support of $B$ is outside $U$. The definition of the boundary topology shows that $g_i$ converges locally uniformly to a Green function $g_D$ for $D|_U$ on $U^\an$. Clearly, the pair $(D,g_D)$ does not depend on the choice of $(D_i,g_i)_{i \in \N}$ and hence gives rise to the canonical homomorphism in \eqref{injection into direct sum} {and its restriction $(D|_U, g_D)$ on $U$ gives the canonical homomorphism in \eqref{injection into arithmetic divisors}}. In particular, the injectivity of \eqref{injection into arithmetic divisors} implies the injectivity of \eqref{injection into direct sum}. It remains to show the injectivity of \eqref{injection into arithmetic divisors}. {Let $\overline D=(D,g_D)\in  \widehat{\Div}_\Q(U)_{\cpt}$ with $D|_U=0$ and $g_D=0$. Then $\overline D$ is the limit of a sequence of elements $(D_i,g_i) \in \widehat{\Div}_\Q (X_i)_\mo$ in the non-trivially valued case (resp.~ from $\widehat{\Div}_\Q (X_i)_\tFS$ in the trivially valued case) for some proper $K$-model $X_i$ of $U$ for $i \in \N$. Replacing $X_i$ by its normalization in $U$,}  we can assume that $X_i$ is integrally closed in $U$. Since $D|_U=0$ and $g_D=0$, then for any $\varepsilon\in \Q_{>0}$, we have $B|_U=D_i|_U=0$ and
\[-\varepsilon g_B\leq g_i\leq \varepsilon g_B\]
on $U^\an$ for large $i$. By \cref{lemma: injectivity of restriction from integrally closed}, we have $({D}_i,g_i)+\varepsilon\overline{B}\geq 0$ and $\varepsilon \overline{B}-({D}_i,g_i)\geq 0$
for large $i$. Hence $\{(D_i,g_i)\}_{i\geq 1}$ is a Cauchy sequence equivalent to $0$ with respect to the $\overline{B}$-boundary topology. This proves injectivity of \eqref{injection into arithmetic divisors}.  
\end{art}

We use \cref{justification for writing pairs} to write an element $\overline D \in \widehat{\Div}_\Q(U)_{\cpt}$ from now on as $(D,g_D)$ with $D\in \widetilde{\Div}_\Q(U)_{\cpt}$.

\begin{definition} \label{Green functions for compactified divisors}
	Let $D\in \widetilde{\Div}_\Q(U)_{\cpt}$. A \emph{Green function} for $D$ is a  {Green function $g$ for $D|_U$} such that $(D,g)\in \widehat{\mathrm{Div}}_\Q(U)_{\cpt}$. 
\end{definition}

\begin{lemma}	\label{lemma:boundary convergence is locally uniformly convergence}
	Let $(D_i,g_i)_{i \in I}$ be a net in $\widehat{\Div}_\Q(U)_{\cpt}$ converging to a compactified metrized divisor $(D,g)$. Then $g_i$ converges locally uniformly to $g$ on $U^\an$. 
\end{lemma}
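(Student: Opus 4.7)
My plan is to translate convergence in the $\overline{B}$-boundary topology on $\widehat{\Div}_\Q(U)_{\cpt}$ into pointwise inequalities of Green functions on $U^\an$, and then exploit the fact that $g_B$ is continuous (and hence locally bounded) on $U^\an$ since $|B|$ lies outside $U$.

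The key auxiliary statement I will establish first is the following \emph{translation lemma}: if $(D',g') \in \widehat{\Div}_\Q(U)_{\cpt}$ satisfies $(D',g') \geq 0$ in the ordered structure of $\widehat{\Div}_\Q(U)_{\cpt}$, then its image in $\widehat{\Div}_\Q(U)$ under the injective homomorphism of \cref{justification for writing pairs} is effective; in particular $g' \geq 0$ pointwise on $U^\an$. By construction of $\widehat{M}^b_{\geq 0}$ in \cref{b-completion}, such an element is a $d_{\overline B}$-limit of a sequence $(D_j, g_j)$ of effective elements in $\widehat{\Div}_\Q(U)_\mo$ (resp.~$\widehat{\Div}_\Q(U)_\tFS$). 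Using the injection \eqref{injection into direct sum}, the convergence in $d_{\overline B}$ gives for each $\delta \in \Q_{>0}$ an index $j_0$ such that $-\delta g_B \leq g' - g_j \leq \delta g_B$ on $U^\an$ for $j \geq j_0$. Since $g_B(x)$ is finite for every $x \in U^\an$, this yields pointwise convergence $g_j(x) \to g'(x)$, and $g_j(x) \geq 0$ forces $g'(x) \geq 0$.

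With this translation lemma in hand, the convergence $(D_i, g_i) \to (D, g)$ in the $\overline{B}$-boundary topology gives, for any $\varepsilon \in \Q_{>0}$, an index $i_0$ such that
\[
-\varepsilon \overline{B} \leq (D_i, g_i) - (D, g) \leq \varepsilon \overline{B} \quad \text{in } \widehat{\Div}_\Q(U)_{\cpt} \quad \text{for all } i \geq i_0.
\]
Applying the translation lemma to the differences $(D_i - D, g_i - g) + \varepsilon \overline{B}$ and $\varepsilon \overline{B} - (D_i - D, g_i - g)$, which lie in the nonnegative cone of $\widehat{\Div}_\Q(U)_{\cpt}$, we conclude
\[
-\varepsilon\, g_B \leq g_i - g \leq \varepsilon\, g_B \quad \text{on } U^\an \text{ for all } i \geq i_0.
\]

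To finish, fix any compact subset $K \subset U^\an$. Since $|B| \subset X_0 \setminus U$, the Green function $g_B$ is continuous on the whole space $U^\an$, hence bounded on $K$ by some constant $M_K < \infty$. Given $\eta > 0$, choose $\varepsilon \coloneqq \eta / \max(M_K,1) \in \Q_{>0}$; then $|g_i - g| \leq \varepsilon g_B \leq \eta$ on $K$ for all $i \geq i_0$, which is exactly the locally uniform convergence claim.

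The main obstacle is the translation lemma: the partial order on $\widehat{\Div}_\Q(U)_{\cpt}$ is defined abstractly through the closure of the effective cone in the $d_{\overline B}$-pseudo-metric, and we need to convert this into a pointwise statement on $U^\an$. The crucial input making this work is that $g_B$ is finite everywhere on $U^\an$ (because $B$ avoids $U$), so the $d_{\overline B}$-convergence of approximating model divisors forces pointwise convergence of their Green functions on $U^\an$ and hence preserves pointwise nonnegativity in the limit.
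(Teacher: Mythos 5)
Your proposal is correct and follows the same strategy the paper intends when it says the proof is ``the same argument as used in \cref{justification for writing pairs}.'' You package that argument cleanly as a translation lemma, namely that the abstract order $\geq 0$ on $\widehat{\Div}_\Q(U)_\cpt$ implies pointwise nonnegativity of the Green function, using the fact that such an element is the $d_{\overline B}$-limit of effective model (resp.~twisted Fubini--Study) divisors whose Green functions converge locally uniformly, and then deduce the two-sided pointwise bound $-\varepsilon g_B \leq g_i-g \leq \varepsilon g_B$; since $g_B$ is continuous and finite on $U^\an$ this gives local uniform convergence. One small point worth making explicit is that the step ``convergence in $d_{\overline B}$ gives $-\delta g_B \leq g'-g_j \leq \delta g_B$ on $U^\an$'' is obtained by first writing the Cauchy condition $-\delta g_B \leq g_j - g_k \leq \delta g_B$ for $j,k$ large on some joint proper model, restricting to $U$, and passing to the pointwise limit in $k$ (which is licensed by the construction of $g'$ in \cref{justification for writing pairs}); this is what justifies the claimed inequality. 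With that gloss, the proposal matches the paper's intended argument.
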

\begin{proof}
	It is the same argument as used in \cref{justification for writing pairs}.
\end{proof}

\begin{proposition}[\cite{yuan2021adelic}~Theorem~3.6.4] \label{proposition:green functions for adelic divisors}
	The space 
	\[C(U^\an)_0\coloneq\Im(\{h\in C(X_0^\an)\mid h|_{|B|^\an}\equiv0\}\rightarrow C(U^\an))\]
	is independent of the choice of the cofinal boundary divisor $(X_0,\overline{B})$, and we have a canonical exact sequence 
	\[\xymatrix{0\ar[r]& V\ar[r]& \widehat{\Div}_\Q(U)_{\cpt}\ar[r]& \widetilde{\Div}_\Q(U)_{\cpt}}\]
	where $V$ is a subspace of $g_B\cdot C(U^\an)_0$. If the valuation on $K$ is non-trivial or if $U$ is quasi-projective, then the above sequence is the short exact sequence
	\[\xymatrix{0\ar[r]& g_B\cdot C(U^\an)_0\ar[r]& \widehat{\Div}_\Q(U)_{\cpt}\ar[r]& \widetilde{\Div}_\Q(U)_{\cpt}\ar[r]&0}.\]
\end{proposition}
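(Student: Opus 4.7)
The proposition asserts three things: independence of $C(U^\an)_0$, existence of the exact sequence with $V\subseteq g_B\cdot C(U^\an)_0$, and the sharper short exact sequence under the extra hypothesis. I treat them in turn.

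\emph{Independence of $C(U^\an)_0$.} Given cofinal boundary divisors $(X_i,\overline B_i)$, $i=1,2$, with $|B_i|=X_i\setminus U$, \cref{lemma:compactifications form an inverse system} yields a common proper $K$-model $X_3$ of $U$ with proper morphisms $\pi_i\colon X_3\to X_i$; after a blow-up we may assume $|B_3|=X_3\setminus U$. Pullback by $\pi_1^\an$ identifies $C(U^\an)_0$ computed with $X_1$ as a subspace of the same space computed with $X_3$. For the reverse inclusion, given $h_3\in C(X_3^\an)$ vanishing on $|B_3|^\an$, one extends $h_3|_{U^\an}$ to a continuous function $h_1$ on $X_1^\an$ vanishing on $|B_1|^\an$; continuity at $x\in|B_1|^\an$ uses properness of $\pi_1^\an$, which forces every convergent lift of a sequence in $U^\an$ approaching $x$ to land inside $(\pi_1^\an)^{-1}(x)\subseteq|B_3|^\an$, where $h_3$ vanishes (a standard subsequence compactness argument). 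By symmetry, both $X_1$ and $X_2$ define the same subspace $C(U^\an)_0\subseteq C(U^\an)$.

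\emph{Inclusion $V\subseteq g_B\cdot C(U^\an)_0$.} The map $(D,g)\mapsto D$ on $\widehat{\Div}_\Q(U)_\mo$ is continuous for the respective boundary topologies (the divisor inequality $-\varepsilon B\leq D\leq\varepsilon B$ projects literally), so extends to a homomorphism $\widehat{\Div}_\Q(U)_\cpt\to\widetilde{\Div}_\Q(U)_\cpt$ with kernel $V$ by construction. Take $(0,g)\in V$ represented as in \ref{justification for writing pairs} by a Cauchy sequence $(D_i,g_i)$. After subsequencing and passage to a common model dominating $X_0$, for any prescribed $\varepsilon>0$ we have eventually $-\varepsilon B\leq D_i\leq\varepsilon B$ (forcing $|D_i|\subseteq|B|$) and $|g_i-g_j|\leq\varepsilon g_B$ on $U^\an$. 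Thus $h_i\coloneq g_i/g_B$ is uniformly Cauchy on $U^\an$, converging to a continuous $f$ with $g=fg_B$. Near each prime component $V$ of $|B|$ with multiplicities $\alpha=\mathrm{mult}_V(D_i)$ and $\beta=\mathrm{mult}_V(B)$, the logarithmic behaviour of $g_i$ and $g_B$ yields $g_i/g_B\to\alpha/\beta$ along $V^\an$, with $|\alpha/\beta|\leq\varepsilon$; hence $h_i$ extends continuously to $X_0^\an$ with $\sup_{|B|^\an}|h_i|\leq\varepsilon$. Density of $U^\an$ in $X_0^\an$ propagates the uniform Cauchy bound from $U^\an$ to $X_0^\an$, producing a continuous extension $\tilde f\in C(X_0^\an)$ of $f$; the estimates $\sup_{|B|^\an}|\tilde f|\leq \sup_{|B|^\an}|h_i|+\sup_{X_0^\an}|\tilde f-h_i|\leq 2\varepsilon$ for every $\varepsilon>0$ force $\tilde f|_{|B|^\an}=0$, so $f\in C(U^\an)_0$.

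\emph{Short exact sequence in the special cases.} The remaining assertions -- that $g_B\cdot C(U^\an)_0\subseteq V$ and that the forgetful map is surjective onto $\widetilde{\Div}_\Q(U)_\cpt$ -- require an approximation result. For the non-trivially valued case, the input is Gubler's density theorem \cite{gubler2003canonical}, which guarantees that continuous functions on $X_0^\an$ are uniformly approximable by $\Q$-linear combinations of $\log|s|$ with $s$ a section of a line bundle on a $\kcirc$-model of $X_0$, i.e.\ by differences of model Green functions. Given $f\in C(U^\an)_0$ with extension $\tilde f$, one combines such approximants with the model data of $(B,g_B)$ to build a Cauchy sequence $(D_i,g_i)\in\widehat{\Div}_\Q(U)_\mo$ with $|D_i|\leq\varepsilon B$ and $|g_i-fg_B|\leq\varepsilon g_B$, converging to $(0,fg_B)\in V$; surjectivity of the forgetful map is handled by the same mechanism, lifting any sequence of approximating model divisors to pairs $(D_i,g_i)$ that remain Cauchy in $\overline B$-topology. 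In the quasi-projective trivially valued setting, twisted Fubini--Study Green functions (\cref{Fubini-Study metric}) play the analogous role. The main obstacle is precisely this density input: it is unavailable for arbitrary algebraic varieties over a trivially valued field, which is why the sharper exact sequence requires the additional hypothesis.
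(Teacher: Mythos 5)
Your sketch tracks the same route that Yuan--Zhang take (and which the paper simply cites and adapts), and you correctly identify that the extra hypothesis enters only to guarantee density of model (resp.\ twisted Fubini--Study) functions in $C(X_0^\an)$. That said, two of the intermediate claims need repair.

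First, the assertion that each $h_i = g_i/g_B$ ``extends continuously to $X_0^\an$'' is \emph{false} in general, and the multiplicity argument you give only computes the radial limit at the generic point of each prime component of $|B|$. At a point lying in two boundary components $V_1,V_2$ one has $g_i \sim \alpha_1\ell_1+\alpha_2\ell_2+O(1)$ and $g_B\sim\beta_1\ell_1+\beta_2\ell_2+O(1)$ with $\ell_k=-\log|f_{V_k}|$, and the ratio depends on the direction of approach (on the slope $\ell_1:\ell_2$); it has no limit unless $\alpha_1/\beta_1=\alpha_2/\beta_2$. This is not merely an omission of a higher-codimension case --- the stated continuity fails. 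Fortunately you don't need it: from $-\varepsilon B\leq D_i\leq\varepsilon B$ and boundedness of the $O(1)$ corrections you \emph{can} extract the one-sided estimate $\limsup_{y\to x}|h_i(y)|\leq\varepsilon$ for every $x\in |B|^\an$, and then $|f|\leq\|f-h_i\|_\infty+|h_i|$ forces $\lim_{y\to x}f(y)=0$. That suffices to define $\tilde f$ on $X_0^\an$ by setting it to zero on $|B|^\an$ and check continuity there directly; the claim that each $h_i$ itself lives in $C(X_0^\an)$, and the appeal to density to ``propagate the Cauchy bound,'' should be dropped.

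Second, the paragraph on the short exact sequence is largely a restatement of what needs to be done. You cannot simply multiply the model-function approximants $\phi_i$ of $\tilde f$ by $g_B$: the product $\phi_i g_B$ is not a (model) Green function. The actual construction of a Cauchy sequence $(D_i,g_i)$ of model metrized divisors converging to $(0,\tilde f g_B)$ in the $\overline B$-topology --- with $|D_i|\subseteq |B|$ and $-\varepsilon_i\overline B\leq(D_i,g_i)\leq\varepsilon_i\overline B$ eventually --- takes genuine work, and you give no indication of how the model data of $\overline B$ enter. The same is true for surjectivity. Finally, a minor technical point: your ``standard subsequence compactness argument'' for the independence of $C(U^\an)_0$ should be phrased with nets rather than sequences, since non-archimedean Berkovich spaces are compact but in general not first-countable; and the density result you want is \cite[Theorem 7.12]{gubler1998local} rather than \cite{gubler2003canonical}.
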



\begin{proof}
This follows from the same arguments as in \cite[\S 3.6.4]{yuan2021adelic} adapted to our setting. For the convenience of readers, we sketch the proof. 
	
	\smallskip
	\noindent\textbf{Step 1.} Let $(X_0,\overline{B})$ and $(X_0',\overline{B'})$ be two cofinal boundary divisors for $U$. By definition of the cofinal system, there exists a common refinement $(X_0'',\overline{B''})$ dominating both. So we may assume that $(X_0',\overline{B'})$ dominates $(X_0,\overline{B})$. We have inclusions 
	\[
	\{h\in C(X_0^\an)\mid h|_{|B|^\an}\equiv0\}\rightarrow 	\{h\in C((X_0')^\an)\mid h|_{|B'|^\an}\equiv0\}\rightarrow C(U^\an)
	\]
	{which are induced by pull-back maps. The argument in \cite[\S 3.6.4]{yuan2021adelic} shows that first map is in fact surjective.} 
	This shows that $C(U^\an)_0$ is well-defined.
	
	\smallskip
	\noindent\textbf{Step 2.}
	{The proof of  \cite[Theorem 3.6.4(1)]{yuan2021adelic} carries over to a large extend to the trivially valued case. In any case, the same arguments show that the kernel $V$ of 
	$\widehat{\Div}_\Q(U)_{\cpt}\rightarrow\widetilde{\Div}_\Q(U)_{\cpt}$ is a subspace of $g_B\cdot C(U^\an)_0$.}
	
	\smallskip
	\noindent\textbf{Step 3.} It remains to show $V=g_B\cdot C(U^\an)_0$ and the surjectivity of $\widehat{\Div}_\Q(U)_{\cpt}\rightarrow\widetilde{\Div}_\Q(U)_{\cpt}$ in the given two cases. This requires density of 
	model functions in $C(U^\an)_0$:
	\begin{itemize}
		\item If the valuation is non-trivial, model functions are dense in $C(X_0^\an)$ by \cite[Theorem~7.12]{gubler1998local}.
		\item If the valuation is trivial, quasi-projectivity of $U$ ensures that $X_0$ carries an ample line bundle, so that signed twisted Fubini--Study functions are dense in $C(X_0^\an)$.
	\end{itemize}
	{In any case, the argument in \cite[\S 3.6.4]{yuan2021adelic} shows that $C(X_0^\an)$ is dense in $g_B \cdot C(U^\an)_0$ under the boundary topology. Using the above two density results, we see that $g_0 \cdot C(U^\an)$ is a subspace of $\widehat{\Div}_\Q(U)_{\cpt}$ and hence $V=g_B \cdot C(U^\an)_0$.} 
	The density of model functions (or signed twisted Fubini--Study functions) ensures that each compactified divisor can be endowed with a Green function making it an element in  $\widehat{\Div}_\Q(U)_{\cpt}$, showing the surjectivity of $\widehat{\Div}_\Q(U)_{\cpt}\rightarrow\widetilde{\Div}_\Q(U)_{\cpt}$ (see the end of the proof in \cite[\S 3.6.4]{yuan2021adelic}).
\end{proof}

We use the above exact sequence to compute $\widehat{\Div}_\Q(U)_{\cpt}$ in our running example.

\begin{example} \label{compactified metrics in running example}
Let $K=\Q_v$ for a place $v$ of $\Q$ and let $U=T$ be the dense open torus of $X=\mathbb P_K^1$ as in Example \ref{running example for local Zhang}. We view the homogeneous coordinates $x_0, x_1$ as global sections of $L=\OO_X(1)$.

For $a,b \in \N$, we get a divisor $B(a,b)=a[\infty]+ b[0]$ with corresponding global section $x_0^ax_1^b$ of $\OO_X(a+b)$. We endow $\OO_X(a+b)$ with the Fubini--Study metric $\metr_\FS$ 
$$\|x_0^ax_1^b\|_\FS = \frac{|x_0^a x_1^b|_v}{\left(\sum_{i+j=a+b}|x_0^i x_1^j|_v^2\right)^{1/2}}$$
in the archimedean case and with
	$$\|x_0^ax_1^b(x)\|_{\FS}= \frac{|x_0^ax_1^b|_v}{\max\{|x_0^i x_1^j|_v \mid i+j=a+b\}}$$
in the non-archimedean case. These are the Fubini--Study metrics 
associated to the generating set $\{x_0^ix_1^j \mid i,j \in \N, \,i+j=a+b\}$ of global sections  by the definition in \cref{Fubini-Study metric}. For $r \in \R_{\geq 0}$, we get the Green function $g_{a,b,r}\coloneqq -\log \|x_0^ax_1^b\|_\FS+r$   for $B(a,b)$ and the corresponding twisted Fubini--Study metric. Then $\overline{B}=(B(a,b),g_{a,b,r})$ is a weak boundary divisor of $U$. It is clear that $\overline B$ is a boundary divisor if and only if $a,b>0$. In the non-archimedean case, it is obvious that $g_{a,b,r}\geq g_{a,b,r}(1)=r$ and so $\overline B$ is a cofinal boundary divisor of $U$ if and only if $a,b,r>0$. In the archimedean case, we have $g_{a,b,r}>r$ and hence $\overline B$ is a cofinal boundary divisor if and only if $a,b>0$.

We fix now such a cofinal boundary divisor $\overline B = (B,g_B)$. Usually, we take $a=b=1$ and $r$ a small positive number, but the choice does not matter as we have seen in \cref{remarks about boundary topology}. We obtain from \cref{proposition:green functions for adelic divisors} and   \cref{example: projective line} the short exact sequence
\[\xymatrix{0\ar[r]& g_B\cdot C(U^\an)_0\ar[r]& \widehat{\Div}_\Q(U)_{\cpt}\ar[r]& \Div_\Q(U) \oplus \R[0] \oplus \R[\infty]\ar[r]&0}.\]
Using that $U$ is the dense torus $T=\mathbb P_K^1 \setminus \{0,\infty\}$, all $\Q$-line bundles of $U$ are trivial. We have the following explicit description of a compactified metrized divisor $(D,g_D)$ of $U$. The restriction $D_U=D|_U$ is a $\Q$-Cartier divisor on $U=\mathbb P_K^1 \setminus \{0,\infty\}$ and hence is of the form $D_U=\frac{1}{k} \mathrm{div}(f(x_1/x_0))$ for some rational function $f(t) \in K(t)\setminus \{0\}$ with no zeros and poles at $0,\infty$ and some $k \in \N \setminus \{0\}$.

There are  uniquely determined $a,b \in \R$  with $D=D_U+a[0]+b[\infty]$. 
Using the Fubini--Study metrics, we get Green functions  $g_{[0]}, g_{[\infty]}$  for the divisors $[0]$ and $[\infty]$, respectively. Approximating $a,b$ with rational numbers, it is clear that $$\left(a[0]+b[\infty], ag_{[0]}+b g_{[\infty]}\right) \in \widehat{\Div}_\Q(U)_{\cpt}.$$
Now the Green function $g_D$ of the compactified metrized divisor $(D,g_D)$ of $U$ is 
 precisely a function of the form
\begin{equation} \label{compactified Green function in running example}
g_D=-\frac{1}{k}\log \left|f\left(\frac{x_1}{x_0}\right)\right|_v + a g_{[0]} + b g_{[\infty]} + \varphi
\end{equation}
where $\varphi =o(g_B)$. This follows from the short exact sequence above. Note that in this example the Green function $g_D$ completely determines the compactified metrized divisor $(D,g_D)$ as we can recover $D_U$ from the singularities of $g_D$ on $U$ and as we can recover $a,b$ from the singularities at $0$ and $\infty$. 
\end{example}

\subsection{The boundary completion introduced by Yuan and Zhang}

The space of compactified metrized divisors $\widehat{\Div}_\Q(U)_\cpt$ from the previous paragraph is too large to have an arithmetic intersection theory. We will use the abstract divisorial space associated to $X$ from Proposition \ref{classical local Arakelov theory as divisorial space} to have a subspace of $\widehat{\Div}_\Q(U)_\cpt$ where we can define a Radon measure for each element. 
In this subsection, we will explain how Yuan and Zhang complete the direct limit of these abstract divisorial spaces. The construction is similar to the geometric construction in Section \ref{sec: geometric theory}, where we obtained an abstract divisorial space $(\widehat{M}_{\gm,\Q}^\YZ(U), \widehat{N}_{\gm,\Q}^\YZ(U))$ in a similar way from the abstract divisorial spaces  $({M}_{\gm,\Q}(X), {N}_{\gm,\Q}(X))$.

\begin{art} \label{semipositive Green functions in sense of Yuan-Zhang}
	If $\varphi\colon X' \to X$ is a morphism of $K$-models of $U$, then pull-back induces a morphism $$\varphi^*\colon(M_{\mo,\Q}(X),N_{\mo,\Q}(X))  \longrightarrow (M_{\mo,\Q}(X'),N_{\mo,\Q}(X')), \quad (D,g_D) \mapsto (\varphi^*D,g_D \circ \varphi)$$
	of abstract divisorial spaces over $\Q$. Using the direct limit from \cref{direct limits}, we define an abstract divisorial space over $\Q$ by 
	$$(M_{\mo,\Q}(U),N_{\mo,\Q}(U)) \coloneqq \varinjlim_X (M_{\mo,\Q}(X),N_{\mo,\Q}(X))$$
	where $X$ ranges over all proper $K$-models of $U$. Let $S$ be the directed set of weak boundary divisors in $M_{\mo,\Q}(U)$. 
	Then the \emph{Yuan--Zhang completion} is
	$$(\widehat{M}_{\mo,\Q}^\YZ(U),\widehat{N}_{\mo,\Q}^\YZ(U)) \coloneqq (\widehat{M}_{\mo,\Q}^S(U),\widehat{N}_{\mo,\Q}^S(U))$$
	using the $S$-completion of abstract divisorial spaces from \cref{S-completion}. By construction, $\widehat{M}_{\mo,\Q}^\YZ(U)$ is a subspace of the $\Q$-vector space $\widehat{\Div}_\Q(U)_\cpt$ introduced in \ref{def:boundarytopologylocal}. 
\end{art}

\begin{remark} \label{interpretation of YZ-elements}
	We relate the above abstract divisorial space to the original construction of Yuan and Zhang as follows. As in their paper, we assume that $U$ is quasi-projective. {By \cite[Corollaire~5.7.14]{raynaud1971criteres}, for any proper $K$-model $Y$ of $U$, there exists a \emph{$U$-admissible blow-up} (i.e., the center of the blow-up is disjoint from $U$) $Y' \to Y$ such that $Y'$ is quasi-projective over $K$. Since $Y$ is proper over $K$, so is $Y'$, and hence $Y'$ is projective over $K$. Moreover, $Y'$ is a projective $K$-model of $U$ because the blow-up is $U$-admissible. Consequently, the system of projective $K$-models of $U$ is cofinal in the system of all proper $K$-models of $U$. So
	\begin{align}\label{proj model is cofinal}
(M_{\mo,\Q}(U), N_{\mo,\Q}(U)) = \varinjlim_X (M_{\mo,\Q}(X),N_{\mo,\Q}(X))
	\end{align} 
	where $X$ ranges over all projective $K$-models of $U$.}
	
We pick any cofinal boundary divisor $\overline B=(B,g_B)$ on a projective $K$-model $X_0$ of $U$, see Remark \ref{remarks about boundary topology}. 
This is indeed possible by using a projective $K$-model in the construction of the cofinal boundary divisor in Remark \ref{remarks about boundary topology} or by the fact that the system of projective $K$-models of $U$ is cofinal.
Then Proposition \ref{classical local Arakelov theory as divisorial space} shows that $\overline{B}$ represents an element $b$ in $M_{\mo,\Q}(X_0)$ and hence also in $M_{\mo,\Q}(U)$.
Let $b$ be the element in $M_{\mo,\Q}(U)_{> 0}$ represented by $\overline{B}$. By \cite[Lemma 2.4.1]{yuan2021adelic} (which also holds when we consider the system of all proper $K$-models of $U$), the subset $S'=\N b$ of $S$ is cofinal in $S$ and hence \cref{final object} shows
\begin{equation} \label{yuan-zhang as completion}
	(\widehat{M}_{\mo,\Q}^\YZ(U),\widehat{N}_{\mo,\Q}^\YZ(U))=(\widehat{M}_{\mo,\Q}^b(U), \widehat{N}_{\mo,\Q}^b(U)).
\end{equation}

For any proper $K$-model $X$ of $U$, we have a canonical morphism
$$({M}_{\mo,\Q}(X),{N}_{\mo,\Q}(X)) \longrightarrow ({M}_{\gm,\Q}(X), {N}_{\gm,\Q}(X)), \quad (D,g_D)\mapsto D$$
of abstract divisorial spaces over $\Q$. By passing to the direct limit and then using continuity with respect to the $b=\overline{B}$-topology on the source and with respect to the $B$-topology on the target, we get a canonical morphism
\begin{equation} \label{canonical geometric component}
	(\widehat{M}_{\mo,\Q}^\YZ(U),\widehat{N}_{\mo,\Q}^\YZ(U)) \longrightarrow (\widehat{M}_{\gm,\Q}^\YZ(U), \widehat{N}_{\gm,\Q}^\YZ(U))
\end{equation}
of abstract divisorial spaces over $\Q$. 

{Equalities \eqref{proj model is cofinal} \eqref{yuan-zhang as completion} show that} the abstract divisorial space $(\widehat{M}_{\mo,\Q}^\YZ(U),\widehat{N}_{\mo,\Q}^\YZ(U))$ agrees with the original construction $(\widehat{M}_{\mo,\Q}^b(U), \widehat{N}_{\mo,\Q}^b(U))$ of so called adelic divisors introduced in \cite[Chapter 2]{yuan2021adelic} (they only consider projective $K$-models of $U$ when forming the direct limit), see also \cite[Chapter 3]{yuan2021adelic} for a description in terms of Berkovich spaces.
Note that Yuan and Zhang introduced an integral structure over $(\widehat{M}_{\mo,\Q}^b(U), \widehat{N}_{\mo,\Q}^b(U))$ which can be seen as a fiber product of the map in \eqref{canonical geometric component} with the canonical map 
$$(\widehat{M}_{\gm}^\YZ(U), \widehat{N}_{\gm}^\YZ(U)) \longrightarrow (\widehat{M}_{\gm,\Q}^\YZ(U), \widehat{N}_{\gm,\Q}^\YZ(U)).$$
The integral structure will not be important for our paper as its base extension to $\Q$ is \eqref{yuan-zhang as completion}.
\end{remark}
		
\begin{art} \label{interpretation of elements in YZ-completion}
	Let {$U$ be quasi-projective and let}  
	$b\in M_{\mo,\Q}(U)_{>0}$ be represented by a cofinal boundary divisor $ \overline{B}=({B},g_B)$ as above. By definition, any $\widehat{D} \in \widehat{M}_{\mo,\Q}^\YZ(U)$ can be written as a limit $\widehat{D}=\lim\limits_{n\to\infty} (D_n,g_n)$ in the complete metric space $\widehat{M}_{\mo,\Q}^\YZ(U)=\widehat{M}_{\mo,\Q}^b(U)$ for a sequence $(D_n,g_n)_{n\geq 1}$ with $g_n$ a Green function for a $\Q$-Cartier divisor $D_n$ on a proper $K$-model $X_n$ of $U$. The $\Q$-Cartier divisors $D_n$ converge to an element $D \in \widehat{M}_{\gm,\Q}^\YZ(U)$ which is the image of $\widehat{D}$ under the map \eqref{canonical geometric component}. 
			
	By definition of the $B$-topology on $\widehat{M}_{\gm,\Q}^\YZ(U)=\widehat{M}_{\gm,\Q}^B(U)$, the restriction of $D_n$ to $U$ is a fixed $\Q$-Cartier divisor $D_U$ on $U$ for $n$ sufficiently large. Moreover, the definition of the $b$-topology on $\widehat{M}_{\mo,\Q}^\YZ(U)$ shows that the restrictions $g_n|_U$ are Green functions for $D_U$ on $U$ which converge locally uniformly. The limit is a Green function $g_D$ for $D_U$ which is obviously independent of the choice of the approximating sequence $(D_n,g_n)$. By \cref{justification for writing pairs}, $\widehat{D}$ is determined by $(D,g_D)$ and so we write $\widehat{D}=(D,g_D)$ from now on for  elements in $\widehat{M}_{\mo,\Q}^\YZ(U)$. 
\end{art}

\begin{remark}  \label{same Green function associated to YZ}
	{We use the above assumptions and notation.} By \cref{proposition:green functions for adelic divisors}, 
	if two elements $(D,g_D)$ and $(D,g_D')$ in $\widehat{M}_{\mo,\Q}^\YZ(U)$ have the same geometric part $D \in \widehat{M}_{\gm,\Q}^\YZ(U)$, then $g_D'-g_D= h $  for a continuous function $h$ on $U^\an$ with $h=o(g_B)$ along the boundary $\partial X_0 = X_0 \setminus U$ where $X_0$ is the proper $K$-model  for the Green function $g_B$ of the cofinal boundary divisor $\overline B$ from Remark \ref{interpretation of YZ-elements}. Together with the exact sequence in \cref{proposition:green functions for adelic divisors},  
	we see that the elements in $\widehat{M}_{\gm,\Q}^\YZ(U)$ allow Green functions on $U^\an$ with rather strong singularities along the boundary. 
\end{remark}

{In the following, $U$ is any algebraic variety over $K$.}  
		
\begin{remark} \label{trivial valuation}
	If $v$ is the trivial valuation, then for any proper $K$-model $X$ of $U$ and $(D,g_D)\in M_{\mo,\Q}(X)$, we note that the model Green function $g_D$ is determined by its $\Q$-Cartier divisor $D$ and hence we get
	$${(\widehat{M}_{\mo,\Q}^{\YZ}(U),\widehat{N}_{\mo,\Q}^{\YZ}(U))}=(\widehat{M}_{\gm,\Q}^\YZ(U), \widehat{N}_{\gm,\Q}^\YZ(U)).$$
	To get a richer theory for applications, it is then better to rely our constructions on twisted  Fubini--Study metrics as indicated in the following.
\end{remark}
		
\begin{art} \label{twisted Fubini-Study metrics and YZ-completion}
	For a proper $K$-model $X$ of $U$, let $(M_{\tFS,\Q}(X),N_{\tFS,\Q}(X))$ be the abstract divisorial space over $\Q$ based on twisted Fubini--Study metrics which we introduced in \cref{abstract divisorial space and FS}. 
	We define the abstract divisorial space over $\Q$
	\begin{equation} \label{direct limit for tFS}
		(M_{\tFS,\Q}(U), N_{{\tFS},\Q}(U)) \coloneqq \varinjlim_X (M_{\tFS,\Q}(X), {N_{\tFS,\Q}(X)})
	\end{equation}
	where $X$ ranges over all proper $K$-models of $U$. Let $S$ be the directed subset of $M_{\tFS,\Q}(U)$ given by the elements $(D,g_D)$ represented by effective $\Q$-Cartier divisors on a proper $K$-model $X$ of $U$ with support in $X \setminus U$ and by a 
	Green function $g_D \geq 0$. Then we define the abstract divisorial space over $\Q$ 
	\begin{equation} \label{YZ completion for tFS}
		(\widehat{M}_{\tFS,\Q}^\YZ(U),\widehat{N}_{\tFS,\Q}^\YZ(U)) \coloneqq (\widehat{M}_{\tFS,\Q}^S(U),\widehat{N}_{{\tFS},\Q}^S(U))
	\end{equation}
	using the $S$-completion of abstract divisorial spaces from \cref{S-completion}. 
	Note that for proper non-projective varieties $X$, the abstract divisorial space $(M_{\tFS,\Q}(X), {N_{\tFS,\Q}(X)})$  might have trivial geometric part as there are such $X$ with no non-trivial semiample (even nef) line bundles \cite{fujino2005smooth}, but even then we have $N_{\tFS,\Q}(X) = \{(0,r)\mid r\in{\R}\}$.
\end{art}

{The following result compares these notions with the model case. It will not be used in the sequel.}		
\begin{prop} \label{Yuan-Zhang's completion vs twisted FS}
	We assume that $v$ is non-trivial and that $U$ is quasi-projective. Then 
	\begin{equation} \label{inclusion for completions}
		\widehat{M}_{\tFS,\Q}^\YZ(U) \subset \widehat{M}_{\mo,\Q}^\YZ(U)  \quad{\text{and}} \quad
		\widehat{N}_{\tFS,\Q}^\YZ(U) \subset \widehat{N}_{\mo,\Q}^\YZ(U) .
	\end{equation}
\end{prop}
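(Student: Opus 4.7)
The intuition is simple: when the valuation of $K$ is non-trivial and non-archimedean, a twisted Fubini--Study metric is a uniform limit of genuine (untwisted) Fubini--Study model metrics, because the possible twists $\lambda_j\in\R$ can be approximated by elements of $\Q\cdot\log|K^\times|$. Since $\log|K^\times|$ is a nonzero subgroup of $\R$, its $\Q$-span is a nonzero $\Q$-subspace of $\R$, hence dense. Since $U$ is quasi-projective, the subsystem of projective $K$-models $X$ is cofinal in the direct limits defining both $(M_{\tFS,\Q}(U),N_{\tFS,\Q}(U))$ and $(M_{\mo,\Q}(U),N_{\mo,\Q}(U))$, so it suffices to work on projective $X$. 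Fix a cofinal boundary divisor $b=(X_0,\overline B)$ with $\overline B=(B,g_B)$; as $X_0^\an$ is compact and $g_B>0$ on $U^\an$, there exists $c>0$ with $g_B\geq c$ on $U^\an$.

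The core approximation goes as follows. Take $(D,g_D)\in N_{\tFS,\Q}(X)$, so $g_D$ is a twisted Fubini--Study Green function associated with generating global sections $s_0,\dots,s_r$ of $\OO_X(D)$ and twists $\lambda_0,\dots,\lambda_r\in\R$. Choose $\lambda_j^{(n)}\in\Q\cdot\log|K^\times|$ with $\max_j|\lambda_j-\lambda_j^{(n)}|<1/n$ and let $g_D^{(n)}$ be the resulting twisted FS Green function. Picking a common denominator $m$ so that $m\lambda_j^{(n)}=\log|c_j|_v$ for some $c_j\in K^\times$, the $m$-th tensor power of the metric $\metr_{g_D^{(n)}}$ on $\OO_X(mD)$ is the Fubini--Study metric associated with the generating sections $c_j^{-1}s_j^m$; by \cref{Fubini-Study metric} this is a semipositive model metric, hence $(D,g_D^{(n)})\in N_{\mo,\Q}(X)$. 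Moreover, $g_D-g_D^{(n)}$ is a continuous function on $X^\an$ of supremum norm at most $1/n$. For any $\varepsilon\in\Q_{>0}$, choosing $n$ with $1/n<\varepsilon c$ gives $|g_D-g_D^{(n)}|<\varepsilon g_B$ on $U^\an$, so $(D,g_D^{(n)})\to(D,g_D)$ in the $b$-topology of $\widehat{\Div}_\Q(U)_{\cpt}$. Using the identification $\widehat{N}_{\mo,\Q}^\YZ(U)=\widehat{N}_{\mo,\Q}^b(U)$ from \cref{interpretation of YZ-elements}, we conclude $(D,g_D)\in\widehat{N}_{\mo,\Q}^\YZ(U)$.

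Extending by $\Q$-linearity over $X$ and passing to the direct limit over projective $K$-models yields a morphism of abstract divisorial spaces $(M_{\tFS,\Q}(U),N_{\tFS,\Q}(U))\to(\widehat{M}_{\mo,\Q}^\YZ(U),\widehat{N}_{\mo,\Q}^\YZ(U))$. Any $b'\in S_\tFS$ (the directed set defining the tFS completion) is itself a cofinal boundary divisor in $\widehat{\Div}_\Q(U)_{\cpt}$, and its image in $\widehat{M}_{\mo,\Q}^\YZ(U)$ is dominated by $b$ itself; thus the $b'$-topology on the source is coarser than the topology inherited from the $b$-topology of the complete target. The universal property of completion (\cref{b-completion}) together with \cref{S-completion} then extends the morphism to $\widehat{M}_{\tFS,\Q}^\YZ(U)\to\widehat{M}_{\mo,\Q}^\YZ(U)$, giving the desired inclusions. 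The main technical obstacle is verifying that twisted FS metrics with twists in $\Q\cdot\log|K^\times|$ genuinely become Fubini--Study model metrics after passing to a suitable tensor power (the bookkeeping with the denominator $m$ and the rescaled generators $c_j^{-1}s_j^m$), and ensuring the uniform bound $\max_j|\lambda_j-\lambda_j^{(n)}|$ translates to a $b$-topology bound via the positive lower bound on $g_B$---which is precisely where non-triviality of the valuation enters.
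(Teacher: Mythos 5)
Your core approximation argument is sound and is essentially the same idea as the paper's: since $v$ is non-trivial, $\Q\cdot\log|K^\times|$ is dense in $\R$, so one can approximate the twists $\lambda_j$ by $\lambda_j^{(n)}\in\Q\cdot\log|K^\times|$, and the resulting twisted FS metric becomes an (untwisted) Fubini--Study model metric after raising to a suitable tensor power. (The sections $c_j^{-1}s_j^m$ have no common zero since $s_0,\dots,s_r$ generate $L$, so the pullback to $\mathbb P^r_{\kcirc}$ is still available; that part of the bookkeeping works.) The difference is organizational: the paper inserts Zhang's space $(M_{\SP,\Q},N_{\SP,\Q})$ as an intermediary, shows $N_{\tFS,\Q}\subset N_{\SP,\Q}$ by the same density argument, and then establishes $\widehat{M}_{\mo,\Q}^b(U)\cong\widehat{M}_{\SP,\Q}^b(U)$; you instead push the twisted FS divisor directly into the Yuan--Zhang completion via the $b$-topology. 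Both routes are acceptable, and neither is clearly shorter.

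However, the extension step to the completion has a real gap. Two statements you assert are problematic. First, ``any $b'\in S_\tFS$ is itself a cofinal boundary divisor in $\widehat{\Div}_\Q(U)_\cpt$'' is false: by Definition \ref{twisted Fubini-Study metrics and YZ-completion}, elements of $S_\tFS$ are merely \emph{weak} boundary divisors (support contained in $X\setminus U$, Green function $\geq 0$), not cofinal ones. Second, and more importantly, ``its image in $\widehat{M}_{\mo,\Q}^\YZ(U)$ is dominated by $b$ itself'' is not justified. What you actually need is that $\N b$ is \emph{cofinal in $S_\tFS$} so that the $S_\tFS$-completion equals the $b$-completion; only then can you invoke the universal property of \cref{b-completion} against the complete target $\widehat{M}_{\mo,\Q}^b(U)$. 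Establishing that cofinality requires $b$ to lie in $M_{\tFS,\Q}(U)_{\geq 0}$, which the paper arranges by choosing $b\in M_{\FS,\Q}(X_0)_{\geq 0}$ on a projective $K$-model $X_0$ (writing the boundary as a difference of very ample divisors) --- this is precisely where quasi-projectivity enters. Your write-up mentions that projective $K$-models form a cofinal subsystem, but that is a weaker statement and does not yield the cofinality of $\N b$ in $S_\tFS$. You should add: choose $b$ in $M_{\FS,\Q}(X_0)_{\geq 0}$ on a projective $K$-model $X_0$, observe $b\in M_{\tFS,\Q}(U)_{\geq 0}$ via $M_{\FS,\Q}\subset M_{\tFS,\Q}$, run the cofinality argument of \cite[Lemma~2.4.1]{yuan2021adelic} to get $\widehat{M}_{\tFS,\Q}^\YZ(U)=\widehat{M}_{\tFS,\Q}^b(U)$, and then the extension by the universal property is clean because your approximation shows $M_{\tFS,\Q}(U)\to\widehat{M}_{\mo,\Q}^b(U)$ is continuous for the $b$-topologies with $\varphi(b)=b$.
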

\begin{proof}
	The beginning of the proof works for all algebraic varieties $U$ over $K$. For a proper $K$-model $X$ of $U$, we have introduced the abstract divisorial space $(M_{\FS,\Q}(X), N_{{\FS},\Q}(X))$ over $\Q$ based on  $\FS$-Green functions on $X^\an$, see \cref{abstract divisorial space and FS}. Similarly as above, we define
	\begin{equation} \label{direct limit for FS}
		(M_{\FS,\Q}(U), N_{{\FS},\Q}(U)) \coloneqq \varinjlim_X (M_{\FS,\Q}(X), {N_{\FS,\Q}(X)})
	\end{equation}
	where $X$ ranges over all proper $K$-models of $U$. It follows from \cref{Fubini-Study metric} that every $\FS$-Green function $g_D$ for a $\Q$-Cartier divisor $D$ on a proper $K$-model $X$ of $U$ is a semipositive model Green function. We conclude that ${N}_{\FS,\Q}(X) \subset  {N}_{\mo,\Q}(X)$ and ${M}_{\FS,\Q}(X) \subset  {M}_{\mo,\Q}(X)$. 
	By passing to the direct limit, we get 
	\begin{equation} \label{inclusion for FS}
		{M}_{\FS,\Q}(U) \subset {M}_{\mo,\Q}(U)  \quad{\text{and}} \quad
		{N}_{{\FS},\Q}(U) \subset {N}_{\mo,\Q}(U).
	\end{equation} 
	For the abstract divisorial space   $(M_{\SP,\Q}(X),N_{\SP,\Q}(X))$ over $\Q$ given by Zhang's construction in Remark \ref{semipositive Green functions in sense of Zhang}, we set
	\begin{equation} \label{direct limit for Zhang}
		(M_{\SP,\Q}(U), N_{\SP,\Q}(U)) \coloneqq \varinjlim_X (M_{\SP,\Q}(X), {N_{\SP,\Q}(X)})
	\end{equation}
	where $X$ ranges over all proper $K$-models of $U$. Using that $v$ is non-trivial, we can approximate every $r \in \R$ by a sequence from the divisorial hull of $v(K^\times)$ and hence twisted $\FS$-Green functions on $X$ are semipositive Green functions in the sense of Zhang. Similarly as above, we get inclusions ${N}_{\tFS,\Q}(X) \subset  {N}_{\SP,\Q}(X)$ and ${M}_{\tFS,\Q}(X) \subset  {M}_{\SP,\Q}(X)$. By passing to the direct limit over all proper $K$-models $X$ of $U$, we get 
	\begin{equation} \label{inclusion for tFS}
		M_{\tFS,\Q}(U) \subset {M}_{\SP,\Q}(U)  \quad{\text{and}} \quad
		N_{{\tFS},\Q}(U) \subset {N}_{\SP,\Q}(U).
	\end{equation} 
	We recall from \cref{interpretation of YZ-elements} that the Yuan--Zhang completion $(\widehat{M}_{\mo,\Q}^\YZ(U),\widehat{N}_{\mo,\Q}^\YZ(U))$ is the $b$-completion of  $(M_{\mo,\Q}(U),N_{\mo,\Q}(U))$ with respect an element  $b=(B,g_B)$ induced by a $\Q$-Cartier divisor $B$ on a proper $K$-model $X_0$ of $U$ with $|B|=X_0\setminus U$ and by a Green function $g_B>0$ for $B$ on $X_0^\an$. We note that we can see $b$ also as an element of $M_{\SP,\Q}(U)_{\geq 0}$ using the canonical morphism 
	\begin{equation} \label{model to SP}
		(M_{\mo,\Q}(U),N_{\mo,\Q}(U)) \longrightarrow (M_{\SP,\Q}(U), N_{\SP,\Q}(U))
	\end{equation}
	induced by the corresponding morphisms for proper $K$-models $X$ of $U$ and then using direct limits. Then the universal property of $b$-completions from \cref{b-completion} gives a canonical morphism
	\begin{equation} \label{isomorphism of completions}
		(\widehat{M}_{\mo,\Q}^b(U),\widehat{N}_{\mo,\Q}^b(U)) \longrightarrow (\widehat{M}_{\SP,\Q}^b(U), \widehat{N}_{\SP,\Q}^b(U))
	\end{equation}
	of abstract divisorial spaces over $\Q$. We claim that this is an isomorphism. 
	By \cref{Zhang metrics as divisorial space}, Zhang's abstract divisorial space $(M_{\SP,\Q}(X),N_{\SP,\Q}(X))$ is the $b_r$-completion of $(M_{\mo,\Q}(X),N_{\mo,\Q}(X))$ with respect to $b_r=(0,r)$ for any $r \in \R_{>0}$. As the $b_r$-completion is independent of the choice of $r$, we may assume that $r \leq g_B$ on $X_0 \setminus |B|$. Then again the universal property of $b$-completions from \cref{b-completion} gives a canonical morphism
	$$(M_{\SP,\Q}(X),N_{\SP,\Q}(X))=(\widehat{M}_{\mo,\Q}^{b_r}(X),\widehat{N}_{\mo,\Q}^{b_r}(X)) \longrightarrow (\widehat{M}_{\mo,\Q}^b(U),\widehat{N}_{\mo,\Q}^b(U))$$
	of abstract divisorial spaces over $\Q$. Passing to the direct limit over all proper $K$-models of $X$,
	we get a canonical morphism 
	\begin{equation} \label{inverse for completions}
		(M_{\SP,\Q}(U),N_{\SP,\Q}(U))\longrightarrow (\widehat{M}_{\mo,\Q}^b(U),\widehat{N}_{\mo,\Q}^b(U))
	\end{equation}
of abstract divisorial spaces over $\Q$. Then the universal property of $b$-completions gives a morphism inverse to the morphism in \eqref{isomorphism of completions} proving that the latter is an isomorphism.
			
	Now we use that $U$ is quasi-projective over $K$.  Then we can choose a projective $K$-model $X_0$ of $U$ in \cref{interpretation of YZ-elements} and we may assume that the element  $b=(B,g_B)$ considered above is from $M_{\FS,\Q}(X_0)_{\geq 0}$ (using that every line bundle on a projective variety is the difference of two very ample line bundles; in the non-archimedean case, we apply this to the projective  $\kcirc$-model $\Xcal_0$). Similarly as in \cref{interpretation of YZ-elements}, we get that  $S'=\N b$  is cofinal in the directed subset of $M_{\tFS,\Q}(U)_{\geq 0}$ considered for completion in \eqref{YZ completion for tFS} and hence \cref{final object} shows
	\begin{equation} \label{tFS as b-completion}
		(\widehat{M}_{\tFS,\Q}^\YZ(U),\widehat{N}_{\tFS,\Q}^\YZ(U))=(\widehat{M}_{\tFS,\Q}^b(U), \widehat{N}_{{\tFS},\Q}^b(U)).
	\end{equation}
	Applying the $b$-completions in \eqref{inclusion for tFS}, we get \eqref{inclusion for completions}.			
\end{proof}

\begin{remark}
	Assume that $X$ is a projective variety over $K$. A Green function $g$ for a semiample divisor $D$ is plurisubharmonic in the sense of \cite[Definition~3.3.4]{chen2021arithmetic} if and only if $(D,g)\in \widehat{N}_{\mo,\Q}^\YZ(X)={N}_{\SP,\Q}(X)$ when $v$ is non-trivial, or $(D,g)\in \widehat{N}_{\tFS,\Q}^\YZ(X)$ when $v$ is trivial. Indeed, when $v$ is archimedean, this is from \cite[Theorem~2.3.7]{chen2020arakelov}; when $v$ is non-trivial and non-archimedean, this is from \cite[Theorem~3.2.19]{chen2021arithmetic}; when $v$ is trivial, this is from \cite[Theorem~1.19~(ii)]{boucksom2021spaces}.
\end{remark}

Now we will give the definition of the largest abstract divisorial spaces on $U$ which we will use in the local theory. 

\begin{definition} \label{strongly nef and nef compactified divisors}
We call a compactified metrized divisor $(D,g)$ on $U$ \emph{strongly nef} if $(D,g) \in \widehat{N}_{\mo,\Q}^\YZ(U)$ in case of $v$ a non-trivial valuation and if $(D,g) \in \widehat{N}_{\tFS,\Q}^\YZ(U)$ in the trivially valued case. 
{We denote the cone of strongly nef compactified metrized divisors by $\widehat{\Div}_\Q(U)_\snef$.}

We say that a compactified metrized divisor $(D,g)$ is \emph{nef} if it is in the closure of {$\widehat{\Div}_\Q(U)_\snef$} in {$\widehat{M}_{\mo,\Q}^\YZ(U)$ (resp. $\widehat{M}_{\tFS,\Q}^\YZ(U)$) if $v$ is non-trivial (resp. $v$ is trivial)}, where the closure is in the sense of \cref{closure for finite subspaces}. The nef compactified metrized divisors form a cone and we get an abstract divisorial space over $\Q$ denoted by
	{\[\left(\widehat{\Div}_\Q(U)_{\mathrm{int}},\widehat{\Div}_\Q(U)_{\nef}\right)\coloneq\begin{cases}
	\left(\widehat{M}_{\mo,\Q}^\YZ(U), \overline{\widehat{\Div}_\Q(U)_\snef}\right), & \text{ if $v$ is non-trivial,}\\
	\left(\widehat{M}_{\tFS,\Q}^\YZ(U), \overline{\widehat{\Div}_\Q(U)_\snef}\right), & \text{ if $v$ is trivial.}
\end{cases}\] }
\end{definition}

\begin{remark} \label{nef become strongly nef after shrinking U}
	If we have finitely many  nef compactified metrized divisors $\overline{D_j}=(D_j,g_j)$ for  $j=0,\dots, k$, then by \cref{lemma:closure in finite subspace topology}, there exists a strongly nef compactified metrized divisor $\overline{H_j} =(H_j,h_j)$ of $U$ such that $\overline{D_j}+\frac{1}{n}\overline{H_j}$ is also strongly nef for all $n \in \N_{\geq 0}$. Choosing an open subset $U'$ of $U$ sufficiently small so that it does not intersect the union of the supports of the divisors $H_j|_U$, it is easy to see that the restriction of each $\overline{D_j}$ to $U'$ is a strongly nef compactified metrized divisor of $U'$.
\end{remark}

\begin{art} \label{shrinking U}
Let $\varphi\colon U' \to U$ be a dominant morphism of algebraic varieties over $K$. By Nagata's compactification theorem, every proper $K$-model of $U$ is dominated by  a proper $K$-model of $U'$ using an extension of $\varphi$. Using pull-back of model metrics,  we get a canonical morphism
$$\varphi^*\colon (M_{\mo,\Q}(U),N_{\mo,\Q}(U)) \longrightarrow (M_{\mo,\Q}(U'),N_{\mo,\Q}(U'))$$
of abstract divisorial spaces over $\Q$. To pass to completions, we have to consider the directed sets $S$ for $U$ and $S'$ for $U'$ as in \ref{semipositive Green functions in sense of Yuan-Zhang}. Since $S$ maps to $S'$, the universal properties of completions in \cref{b-completion} and of direct limits in Lemma \ref{direct limits} give a canonical morphism 
\begin{equation} \label{restriction}
\varphi^* \colon	(\widehat{M}_{\mo,\Q}^\YZ(U), \widehat{N}_{\mo,\Q}^\YZ(U)) \longrightarrow (\widehat{M}_{\mo,\Q}^\YZ(U'), \widehat{N}_{\mo,\Q}^\YZ(U'))
\end{equation}
which we call the \emph{pull-back}. In the special case when $U'$ is an open subset of $U$ and $\varphi$ is the corresponding open immersion, then we call the pull-back just the \emph{restriction to $U'$}.
\end{art}
	
\begin{example}   \label{running example integrable local}
	We continue with our running example from \S \ref{subsec: a running example}, so let $K=\Q_v$ for a place $v$ of $\Q$ and let $U=\mathbb A_K^1$ embedded as usual in $X=\mathbb P_K^1$ by using homogeneous coordinates $x_0,x_1$ and $U= \mathbb P_K^1 \setminus D$ for the toric divisor $D= \mathrm{div}(x_0)$ of $X$. Similarly as in \cref{example: projective line}, we have then 
	$$\widetilde{\Div}_\Q(U)= \Div_\Q(U) \oplus \R[\infty]= \left(\bigoplus_{u}\Q[u]\right) \oplus \R  [\infty]$$
	where $u$ ranges over all closed points of $U=\mathbb A_K^1$. We want to describe elements  
	$(D,g_D) \in \widehat{\Div}_\Q(U)_{\snef}$. For simplicity, we assume that the underlying geometric compactified divisor in $\widetilde{\Div}_\Q(U)$ is $D=\mathrm{div}(x_0)=[\infty]$ and that the Green function $g_D$ of $D$ corresponds to a toric metric as in \S \ref{subsec: a running example} and as in \cref{running example for local Zhang}. This means that there is a continuous function $\psi:\R \to \R$ such that $g_D = -\psi \circ \trop$ on $T^\an=U^\an \setminus\{0\}$. Similarly as in \cref{compactified metrics in running example}, we choose the cofinal boundary divisor $\overline B$ with $B=D=[\infty]$ and with Green function $g_B$ corresponding to the global section $x_0$ of $\OO_X(1)$ endowed with the Fubini--Study metric twisted by some $r>0$. By definition, our strongly nef compactified metrized divisor $(D,g_D)$ is given as a limit of a sequence $(D_i,g_i)$ of semipositive model metrized divisors which means that for every $\varepsilon \in \Q_{>0}$, we have 
	\begin{equation} \label{Cauchy in running example}
		- \varepsilon (D,g_B) \leq (D_i,g_i)-(D,g_D) \leq \varepsilon (D,g_B)
	\end{equation}
for $i \gg 0$. Using this only for the first component, we deduce that $D_i|_U$ is the zero-divisor for $i \gg 0$. Using that $X \setminus U=D$, we see that by scaling we may assume $D_i=D$ for all $i \in \N$.

Recall that $T$ denotes the open dense torus of $U^\an =(\mathbb A_K^1)^\an$ and that $t=x_1/x_0$ is the affine coordinate. In a next step, we show that we also may assume that all Green functions $g_i$ are toric. This is due to a canonical process called \emph{torification} of a Green function $g$ for $D$ or what is equivalent of the corresponding continuous metric of $\OO_X(1)$. We note that the compact torus $\mathbb S=\{t \in (\mathbb A_K^1)^\an \mid |t|=1\}$ acts on the fiber $\trop^{-1}(u)$ for every $u \in \R$, so in the archimedean case we can average $g$ relative to this action with respect to the Haar measure of $\mathbb S$ which gives a toric Green function $g_{\mathbb S}$, see \cite[Definition 4.3.3]{BPS}. In the non-archimedean case, we use the distinguished weighted Gauss point $\eta_u$ in $\trop^{-1}(u)$ which is given by the multiplicative seminorm
$$|f(\eta_u)|= \max_{k=i,\dots,j}e^{-uk} |a_k|$$
for $f=\sum_{k=i}^j a_k t^k$.  
Then $S(T)\coloneqq \{\eta_u \mid u \in  \R\}$ is called the \emph{tropical skeleton} of $T$. The tropical skeleton $S(T)$ is a closed subset of $T^\an$ which can be identified with $ \R$ by using the inverse maps $\trop|_{S(T)}$ and $u \to \eta_u$. Hence we may see the restriction of $g$ to $S(T)$ as a continuous real function on $\R$ which we may compose with $\trop$ to get a toric Green function $g_{\mathbb S}$ of $D$, see \cite[Definition 4.3.3]{BPS}. In any case, the torification  leaves  toric Green functions invariant \cite[Proposition 4.3.4]{BPS}, maps model Green functions to toric model Green functions  and if they are additionally semipositive, then the torification is also semipositive \cite[Proposition 4.4.2]{BPS}. Applying torification to \eqref{Cauchy in running example}, we conclude that we may assume $g_i$ to be a semipositive toric model Green function. As we have seen in Example \ref{running example for local Zhang}, there is a corresponding  concave piecewise affine function $\psi_i\colon \R \to \R$  such that $g_i=-\psi_i \circ \trop$ on $T^\an$. Moreover, the asymptotic slope of $\psi_i$ are $1$ for $u\to -\infty$ and $0$ for $u \to \infty$, and $\psi_i$ is smooth in the archimedean case and piecewise affine with rational slopes and rational constant terms in the non-archimedean case. It follows from \eqref{Cauchy in running example} that the functions $\psi_i$ converge locally uniformly to the corresponding function $\psi$ of $g_D$. In particular, we get that $\psi$ is a concave function with asymptotic slope $1$ for $u \to -\infty$ and with $\lim_{u \to \infty}\psi(u)\in \R$. We leave it as an exercise in Analysis that conversely, every such concave function can be approximated in the above way. 
\end{example}

\begin{remark} \label{conclusion of running local example}
The above example shows that for $U=\mathbb A_K^1 \subset X=\mathbb P_K^1$ over $K=\Q_v$ and any toric geometric compactified divisor $D=a[0]+b[\infty]$ of $X$ with toric Green function $g_D$, we have $(D,g_D) \in \widehat{\Div}_\Q(U)_\snef$ if and only if the corresponding concave function $\psi$ has asymptotic slope $b$ for $u \to -\infty$ and asymptotic slope $-a$ for $u \to \infty$. Indeed, toric means that the support of $D$ is in $\{0,\infty\}$ and \cref{example: projective line} shows that $a \in \Q$ and $b \in \R$. Then $D$ is rationally equivalent to $(a+b)[\infty]$ and this rational equivalence means a linear change of the function $\Psi$, so we easily deduce from \cref{running example integrable local} the claim. Note that existence of such a $g_D$ or equivalently existence of such a concave function $\psi$ is only possible when $a+b \geq 0$. For the description of toric compactified metrized divisors in general, we refer to \cite[Chapter 4]{peralta-thesis24}.
\end{remark}

\begin{remark} \label{dc functions}
For the multiplicative torus $T=\mathbb G_m$ over $\Q_v$, one can show similarly as above that a toric Green function $g_D$ for a toric geometric compactified divisor $D$ of $X=\mathbb P^1$ induces  $(D,g_D) \in  \widehat{\Div}_\Q(U)_{\integrable}$ if and only if the function $\psi\colon \R \to \R$ with $g_D= -\psi \circ \trop$ is the difference of two concave functions with finite asymptotic slopes for $u \to \pm \infty$.  It is a well known fact from optimization theory (see e.g.~\cite{hartman}) that a function on $\R$ is the difference of two concave functions if and only if it has right and left derivatives and these derivatives are of bounded variation on every bounded closed interval. We see from \eqref{compactified Green function in running example} in  \cref{compactified metrics in running example} that the inclusion $\widehat{\Div}_\Q(U)_{\integrable} \subset \widehat{\Div}_\Q(U)_{\cpt}$ is strict as there are many functions $\varphi\colon \R \to \R$ with $\varphi=o(g_B)$ which do not satisfy these requirements. 
\end{remark}

\subsection{Compactified metrized line bundles}
\label{subsection:local adelic line bundles}

Recall that $U$ is an algebraic variety over the complete field $K$ and that $\widehat \Pic(U)$ denotes the group of isometry classes of continuously metrized line bundles on $U^\an$.

\begin{art} \label{boundary topology on metrized line bundles}
We fix a cofinal boundary divisor $(X_0, \overline{B})$ of $U$ with $\overline{B}=(B, g_B)$. Notice that $g_B$ is a continuous function on $U^\an$. The \emph{boundary topology} on $\widehat{\Pic}_\Q(U)$ is defined such that a {basis of neighborhoods} of an element $\overline{L}=(L, \metr)\in \widehat{\Pic}_\Q(U)$ of the topology is given by
\[B(r,\overline{L})\coloneq \left\{(L,\metr')\in \widehat\Pic_\Q(U) \,\middle\vert\, -rg_B\leq \log\frac{\metr'}{\metr}\leq rg_B\right\}, \quad r\in \Q_{>0}.\]
The boundary topology is independent of the choice of the cofinal boundary divisor. Similarly as in \cref{def: b-metric}, we can define a pseudo-metric $d_{\overline B}$ which defines the boundary topology.  Then $\widehat{\Pic}_\Q(U)$ is complete since the space of continuous functions on $U^\an$ is complete, see \cite[Lemma~3.6.3]{yuan2021adelic}.
\end{art}

\begin{art} \label{definition of model  line bundles}
Let $P_\Q(U)$ be the subspace of $\widehat{\Pic}_\Q(U)$ generated by all $(L,\metr)$ with $\metr$ a model metric if the valuation $v$ on $K$ is non-trivial (resp.~a twisted Fubini--Study metric if $v$ is trivial). We denote by $Q_\Q(U)$ the subcone of $P_\Q(U)$ induced by the  $(L,\metr)$ as above with $\metr$ semipositive (resp.~a twisted Fubini--Study metric). This means 
	\[\text{$P_\Q(U) = \varinjlim_{X}P_\Q(X)$  and  $\quad Q_\Q(U) = \varinjlim_{X}Q_\Q(X)$}\]
where $X$ ranges over all proper $K$-models of $U$. Indeed, we may restrict the direct limit to the proper $K$-models which are integrally closed and then the map $P_{\Q}(X) \to P_{\Q}(U)$ becomes injective by \cref{lemma: injectivity of restriction from integrally closed}, hence the above direct limits become unions. 
\end{art}

\begin{definition} \label{definition: compactified line bundles}
Using the above terminology, we define $\widehat{\Pic}_\Q(U)_\cpt$ as the closure of $P_\Q(U)$ in $\widehat{\Pic}_\Q(U)$ with respect to the boundary topology. By \ref{boundary topology on metrized line bundles}, this is the completion of $P_\Q(U)$ with respect to the boundary topology. 

We say that $(L,\metr) \in \widehat{\Pic}_\Q(U)$ is \emph{strongly nef} if $(L,\metr)$ is in the closure of $Q_{\Q}(U)$ with respect to the boundary topology. The strongly nef elements form a cone $\widehat{\Pic}_\Q(U)_\snef$ in $\widehat{\Pic}_\Q(U)$. We define the subspace
$$\widehat{\Pic}_\Q(U)_\integrable \coloneqq \widehat{\Pic}_\Q(U)_\snef - \widehat{\Pic}_\Q(U)_\snef$$
of $\widehat{\Pic}_\Q(U)$. Finally, the \emph{nef cone} $\widehat{\Pic}_\Q(U)_\nef$ is defined as the closure of $\widehat{\Pic}_\Q(U)_\snef$ in $\widehat{\Pic}_\Q(U)_\integrable$ with respect to the finite subspace topology from \cref{finite subspace topology}. 
\end{definition}

\begin{prop} \label{connecting metrized divisors and metrized line bundles}
For $(D,g_D) \in \widehat{\Div}_\Q(U)$, let $\metr_D$ be the metric on $\OO_U(D)$ associated to $g_D$ as in \eqref{metric and Green functions}. This gives a surjective map
$$\widehat{\Div}_\Q(U) \longrightarrow \widehat{\Pic}_\Q(U) \, , \quad (D,g_D) \mapsto (\OO(D),\metr_D)$$ 
which maps the sets $\widehat{\Div}_\Q(U)_\cpt$,  $\widehat{\Div}_\Q(U)_\snef$, $\widehat{\Div}_\Q(U)_\nef$ and $\widehat{\Div}_\Q(U)_\integrable$ onto the sets  $\widehat{\Pic}_\Q(U)_\cpt$, $\widehat{\Pic}_\Q(U)_\snef$, $\widehat{\Pic}_\Q(U)_\nef$ and $\widehat{\Pic}_\Q(U)_\integrable$, respectively.
\end{prop}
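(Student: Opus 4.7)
The assignment $\varphi\colon(D,g_D)\mapsto(\OO_U(D),\metr_D)$ is a well-defined group homomorphism since $\metr_D$ is uniquely determined by $-\log\|s_D\|_{\metr_D}=g_D$ on the canonical rational section $s_D$. Surjectivity of $\varphi$ on the full groups is immediate: for $(L,\metr)\in\widehat{\Pic}_\Q(U)$, pick a non-zero rational $\Q$-section $s$ of $L$ and set $(D,g_D)\coloneqq(\mathrm{div}(s),-\log\|s\|_\metr)$. I will first verify continuity of $\varphi$ with respect to the boundary topologies of \cref{def:boundarytopologylocal} and \cref{boundary topology on metrized line bundles}: for a cofinal boundary divisor $\overline B=(B,g_B)$, any inequality $-r\overline B\leq(D-D',g_D-g_{D'})\leq r\overline B$ forces $D|_U=D'|_U$ (because $B|_U=0$), and the Green-function inequality $|g_D-g_{D'}|\leq rg_B$ translates via $\log(\metr_D/\metr_{D'})=g_{D'}-g_D$ into the corresponding inequality on metrized line bundles. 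Since $\varphi$ sends $\widehat{\Div}_\Q(U)_\mo$ (resp.~$\widehat{\Div}_\Q(U)_\tFS$ in the trivially valued case) into $P_\Q(U)$, and the corresponding semipositive/tFS subcones into $Q_\Q(U)$, continuity gives $\varphi(\widehat{\Div}_\Q(U)_\cpt)\subseteq\widehat{\Pic}_\Q(U)_\cpt$ and $\varphi(\widehat{\Div}_\Q(U)_\snef)\subseteq\widehat{\Pic}_\Q(U)_\snef$; linearity extends this to $\widehat{\Div}_\Q(U)_\integrable$, and continuity in the finite subspace topology (\cref{finite subspace topology}) handles $\widehat{\Div}_\Q(U)_\nef$.

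For surjectivity onto $\widehat{\Pic}_\Q(U)_\cpt$ and $\widehat{\Pic}_\Q(U)_\snef$, I will lift Cauchy sequences through rational sections. Given a target $(L,\metr)$ in one of these sets, approximate by a Cauchy sequence $(L_n,\metr_n)\in P_\Q(U)$ (respectively in $Q_\Q(U)$) in the boundary topology. Fix once and for all a non-zero rational $\Q$-section $s$ of $L$, and use the identifications $L_n|_U\simeq L|_U$ (available for $n\gg 0$) to transport $s$ to rational sections $s_n$ of the $L_n$; after replacing representatives by a common dominating proper $K$-model via \cref{lemma:compactifications form an inverse system}, the pairs $(D_n,g_n)\coloneqq(\mathrm{div}(s_n),-\log\|s_n\|_{\metr_n})$ lie in $\widehat{\Div}_\Q(U)_\mo$ (respectively in its semipositive cone). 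The line-bundle Cauchy condition transfers verbatim to the divisor Cauchy condition in the $\overline B$-topology, so the limit $(D,g_D)\in\widehat{\Div}_\Q(U)_\cpt$ (respectively $\widehat{\Div}_\Q(U)_\snef$) maps to $(L,\metr)$. Surjectivity onto $\widehat{\Pic}_\Q(U)_\integrable$ then follows at once by writing each target as a difference of two strongly nef line bundles and lifting each piece.

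Surjectivity onto $\widehat{\Pic}_\Q(U)_\nef$ is the subtlest step and hinges on the observation that $\ker\varphi\subseteq\widehat{\Div}_\Q(U)_\snef$: every kernel element has the form $(\mathrm{div}(f),-\log|f|)$ with $f\in K(U)^\times\otimes\Q$ and represents the trivial $\Q$-line bundle endowed with its trivial metric via the trivialization $\OO_U(\mathrm{div}(f))\simeq\OO_U$ sending $s_D$ to $f$; the trivial metric is a semipositive model (respectively Fubini--Study) metric, so this element is strongly nef. Given $(L,\metr)\in\widehat{\Pic}_\Q(U)_\nef$, \cref{lemma:closure in finite subspace topology} produces $(H,h)\in\widehat{\Pic}_\Q(U)_\snef$ with $(L,\metr)+\tfrac{1}{n}(H,h)\in\widehat{\Pic}_\Q(U)_\snef$ for every $n\geq 1$. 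Using the already-proved cases, lift $(L,\metr)$ to some $(D,g_D)\in\widehat{\Div}_\Q(U)_\integrable$, $(H,h)$ to $(B,g_B)\in\widehat{\Div}_\Q(U)_\snef$, and for each $n$ lift $(L,\metr)+\tfrac{1}{n}(H,h)$ to $(E_n,g_{E_n})\in\widehat{\Div}_\Q(U)_\snef$. The difference $(E_n,g_{E_n})-(D,g_D)-\tfrac{1}{n}(B,g_B)$ lies in $\ker\varphi$, so it equals $(\mathrm{div}(f_n),-\log|f_n|)$ for some $f_n$, which yields
\[
(D,g_D)+\tfrac{1}{n}(B,g_B)=(E_n,g_{E_n})+\bigl(\mathrm{div}(f_n^{-1}),-\log|f_n^{-1}|\bigr),
\]
a sum of two elements of $\widehat{\Div}_\Q(U)_\snef$ and hence itself strongly nef. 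Applying \cref{lemma:closure in finite subspace topology} in the other direction places $(D,g_D)$ in $\widehat{\Div}_\Q(U)_\nef$.

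The main technical hurdle will be the Cauchy-sequence bookkeeping in the $\cpt$ and $\snef$ cases: selecting a common dominating proper $K$-model on which all the sections $s_n$ are defined, and checking that the boundary-topology Cauchy condition on the line bundle side is equivalent to the Cauchy condition on divisors along these chosen sections. The conceptually subtlest point is the nef case, where the argument rests on the observation that principal divisors paired with their canonical Green functions are already strongly nef on the divisor side.
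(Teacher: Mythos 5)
Your proof is correct. The core mechanism (continuity with respect to the boundary topologies, then explicit lifting of approximating sequences via a fixed rational $\Q$-section) is the same as the paper's, but the paper packages it differently: it observes that the map $\widehat{\Div}_\Q(U)_\mo \to P_\Q(U)$ sends the basic $r$-balls $B(r,\overline D)$ bijectively onto the corresponding balls $B(r,\overline L)\cap P_\Q(U)$, so the two spaces are \emph{locally isometric}, and then invokes the fact that local isometries pass to completions. Your fixed-section lifting is precisely how one would \emph{realize} the inverse of that local isometry in concrete terms, so for the $\cpt$, $\snef$ and $\integrable$ parts the two arguments are essentially equivalent.

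Where your write-up adds real content is the $\nef$ case. The paper disposes of it with the phrase ``the remaining claims then follow easily from the definitions as continuous maps extend to completions,'' which is not quite adequate: the nef cone is the closure in the \emph{finite subspace} topology, not the boundary topology, so neither local isometry nor continuity of completions applies directly. Your observation that $\ker\varphi\cap\widehat{\Div}_\Q(U)_\cpt\subseteq\widehat{\Div}_\Q(U)_\snef$ (because a kernel element is the class of $(\mathrm{div}_X(f),-\log|f|)$, which carries the trivial --- hence semipositive model / Fubini--Study --- metric, and the restriction map $\widehat{\Div}_\Q(U)_\cpt\to\widehat{\Div}_\Q(U)$ is injective by \cref{justification for writing pairs}) is exactly the extra ingredient needed. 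Combined with the criterion of \cref{lemma:closure in finite subspace topology}, it shows that a surjective linear map with kernel contained in the cone sends the closure of the cone onto the closure of the image cone, which is the general statement your chain $(D,g_D)+\tfrac1n(B,g_B)=(E_n,g_{E_n})+(\mathrm{div}(f_n^{-1}),-\log|f_n^{-1}|)$ is instantiating. So your argument is not just a paraphrase: it supplies a step the paper elides, and the kernel-containment lemma is reusable (indeed, the same argument underlies the analogous statements in the geometric and global settings).

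One small caveat for polish: when you invoke \cref{lemma:closure in finite subspace topology} on the cone $\widehat{\Pic}_\Q(U)_\snef\subset\widehat{\Pic}_\Q(U)_\integrable$, note that the paper states that lemma for abstract divisorial spaces, and the $\Pic$-side is not presented as such in the text. The lemma's proof, however, uses only that $N$ is a cone in a $\Q$-vector space (not the partial order), so the invocation is sound; you may want to flag this explicitly rather than cite the lemma as a black box.
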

\begin{proof}
Since any line bundle $L$ of $U$ has a non-zero meromorphic section, it is clear that the correspondence between Green functions and continuous metrics from \ref{Green functions} yields that the map is surjective. We prove the remaining cases in the non-trivially valued case. The arguments in the trivially valued case are similar. Obviously, the space $\widehat{\Div}_\Q(U)_\mo$ of model divisors is mapped onto $P_\Q(U)$ and the cone $N_{\mo,\Q}(U)$ of nef model divisors is mapped onto $Q_{\Q}(U)$. The crucial point is to show that the map 
$\widehat{\Div}_\Q(U)_\mo \to P_\Q(U)$ is continuous with respect to the boundary topologies. Let $\overline B=(B,g_B)$ be a boundary divisor of $U$ and $r \in \Q_{>0}$. For $(D,g), (D',g') \in \widehat{\Div}_\Q(U)$ with 
$$ -r \overline B \leq (D',g') - (D,g) \leq r \overline B.$$
Then we have $D=D'$ as divisors on $U$ and $-rg_B \leq g'-g \leq r g_B$ on $U^\an$. For the metrics $\metr,\metr'$ of $L=\OO_U(D)$ corresponding to $g,g'$, we have $g'-g=\log(\metr/\metr')$ and hence $(L,\metr') \in B(r,(L,\metr))$ using the neighborhoods from \cref{boundary topology on metrized line bundles}. This proves continuity of the map $\widehat{\Div}_\Q(U)_\mo \to P_\Q(U)$ with respect to the boundary topology. Moreover, the argument shows that the map $B(r,\overline D) \to B(r,\overline L)$ is bijective, where $B(r,\overline D)=\{\overline{D'}\in {\widehat{\Div}_\Q(U)}\mid -r \overline B \leq \overline{D'} - \overline{D} \leq r \overline B\}$. 
 Hence the spaces $\widehat{\Div}_\Q(U)_\mo$ and $P_\Q(U)$ are locally isometric. The remaining claims then follow easily from the definitions as continuous maps extend to completions.
\end{proof}

\begin{prop} \label{pull-back in local case}
	Let $\varphi\colon U'\to U$ be a morphism of algebraic varieties over $K$. Then the pull-back $\varphi^*\colon \widehat{\Pic}_{\Q}(U)\to \widehat{\Pic}_{\Q}(U')$ maps the sets $\widehat{\Pic}_{\Q}(U)_\cpt$, $\widehat{\Pic}_\Q(U)_\snef$, $\widehat{\Pic}_\Q(U)_\nef$ and $\widehat{\Pic}_\Q(U)_\integrable$ to the sets $\widehat{\Pic}_{\Q}(U')_\cpt$, $\widehat{\Pic}_\Q(U')_\snef$, $\widehat{\Pic}_\Q(U')_\nef$ and $\widehat{\Pic}_\Q(U')_\integrable$, respectively.
\end{prop}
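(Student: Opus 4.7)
The plan is to proceed in four stages. First, the pull-back $\varphi^*\colon \widehat{\Pic}_\Q(U) \to \widehat{\Pic}_\Q(U')$ is already induced by the Berkovich analytification $\varphi^\an\colon (U')^\an \to U^\an$ together with functoriality of pulling back line bundles endowed with continuous metrics; what remains is to verify that each of the four distinguished subspaces is preserved. Since the $\snef$, $\integrable$, and $\nef$ subspaces are obtained from the compactified one by closure and differences, the core work reduces to showing $\varphi^*(P_\Q(U)) \subseteq P_\Q(U')$ and $\varphi^*(Q_\Q(U)) \subseteq Q_\Q(U')$, together with continuity of $\varphi^*$ with respect to the boundary topologies.

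For the second stage, I will use a graph-closure construction: given any proper $K$-model $X$ of $U$ and any proper $K$-model $X_0'$ of $U'$, replace $X_0'$ by the closure $X'$ of the graph of $\varphi$ inside $X_0' \times_K X$; this is a proper $K$-model of $U'$ equipped with a morphism $X' \to X$ extending $\varphi$. In the non-trivially valued case, an analogous argument over $K^\circ$ (blowing up a joint model, or taking the graph closure in $\mathcal{X}_0' \times_{K^\circ} \mathcal{X}$, combined with \cref{projective models} in the projective case) lifts this to a morphism of $K^\circ$-models, so that a model metric on $L$ over $X$ pulls back to a model metric on $\varphi^*L$ over $U'$ and a nef model pulls back to a nef model. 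In the trivially valued case, a twisted Fubini--Study metric on $L$ given by generating sections $s_0,\dots,s_r$ and twist factors $\lambda_j$ pulls back to the twisted Fubini--Study metric on $\varphi^*L$ defined by $\varphi^*s_0,\dots,\varphi^*s_r$ (which still generate $\varphi^*L$) with the same twists. In the archimedean case, the pull-back of a smooth metric with semipositive curvature form is again smooth with semipositive curvature form within the Bloom--Herrera framework recalled in \cref{classical Arakelov theory: definitions}.

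For the third stage, fix a cofinal boundary divisor $(X_0,\overline B)$ of $U$ with $\overline B = (B,g_B)$. By the graph-closure trick there exists a proper $K$-model $X_0'$ of $U'$ together with a morphism $X_0' \to X_0$ extending $\varphi$, and $\varphi^*\overline B \coloneqq (\varphi^*B, g_B \circ \varphi^\an)$ is then a weak boundary divisor of $U'$, since $|\varphi^*B|$ lies in $X_0' \setminus U'$ and the pull-back Green function remains non-negative (and positive on $(U')^\an$). By construction, $\varphi^*$ is $1$-Lipschitz from the $\overline B$-pseudometric on $\widehat{\Pic}_\Q(U)$ to the $\varphi^*\overline B$-pseudometric on $\widehat{\Pic}_\Q(U')$; by \cref{remarks about boundary topology} applied to $U'$, the $\varphi^*\overline B$-topology is finer than (or equal to) the boundary topology on $\widehat{\Pic}_\Q(U')$, so $\varphi^*$ is continuous for the boundary topologies on source and target.

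With these ingredients everything falls into place: continuity together with the universal property of completions extends $\varphi^*$ to $\widehat{\Pic}_\Q(U)_\cpt \to \widehat{\Pic}_\Q(U')_\cpt$; combined with $\varphi^*(Q_\Q(U)) \subseteq Q_\Q(U')$, continuity forces $\varphi^*(\widehat{\Pic}_\Q(U)_\snef) \subseteq \widehat{\Pic}_\Q(U')_\snef$; linearity then handles the $\integrable$ subspace; and since $\varphi^*$ is linear and sends any finite-dimensional subspace of $\widehat{\Pic}_\Q(U)_\integrable$ into a finite-dimensional subspace of $\widehat{\Pic}_\Q(U')_\integrable$, it is automatically continuous with respect to the finite subspace topology from \cref{finite subspace topology}, so the $\nef$ cone is preserved as well. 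The main technical obstacle is the existence of compatible $K^\circ$-model structures extending $\varphi$ in the non-trivially valued non-archimedean case, but this is handled by the standard model-theoretic arguments combined with the graph-closure trick sketched above.
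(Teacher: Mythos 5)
Your proof is correct and follows essentially the same strategy as the paper: extend $\varphi$ to a morphism of proper $K$-models (graph closure), observe that pull-back preserves $P_\Q$, $Q_\Q$, and semipositivity in all three local settings, pull back a cofinal boundary divisor to a weak boundary divisor of $U'$ dominated by a cofinal one, deduce continuity for the boundary topologies, and finish by completion, linearity, and continuity in the finite-subspace topology. One small remark: when you invoke \cref{remarks about boundary topology} to compare the $\varphi^*\overline{B}$-topology with the boundary topology of $U'$, be aware the paper there uses the word ``coarser'' where your argument needs (and the mathematics supports) ``finer''; what is actually used in both your argument and the paper's own proof is simply that $\varphi^*\overline{B}\le \overline{B}'$ for a cofinal $\overline{B}'$, which yields the required Lipschitz estimate directly. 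You also omit the minor verification (done in the paper via \cite[Lemma~7.1.29, 7.1.33]{liu2006algebraic} and the observation $\varphi(U')\cap|B|=\emptyset$) that $\varphi^*\overline{B}$ is a well-defined Cartier divisor on the chosen $K$-model of $U'$, but this is routine.
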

\begin{proof}
	We only consider the case where $v$ is non-trivial, the trivially valued case is similar. For a {proper $K$-model} $X$ of $U$, we can find a {proper $K$-model} $X'$ of $U'$ and a morphism $X'\to X$ extending $\varphi$, such a morphism is still denoted by $\varphi$. Then we have the pull-back $\varphi^*\colon P_\Q(X)\to  P_\Q(X')$. Hence  $\varphi^*(P_\Q(U))\subset P_\Q(U')$. It is similar for semipositive model metrized divisors. We have to show that $\varphi^*$ is continuous with respect to the boundary topologies given in \cref{boundary topology on metrized line bundles}.
	{Recall that we fix a cofinal boundary divisor $(X_0, \overline{B})$.} 
	We choose an extension $\varphi_0\colon X'_0 \to X_0$ to a suitable proper $K$-model $X_0'$ as above. Notice that $\overline{\varphi(U')}=\varphi(X')$ in $X$ and $\varphi(U')\cap |B|=\emptyset$, then $\varphi^*\overline{B}\in \widehat{\Div}_{\Q}(X')$ is always defined, see \cite[Lemma~7.1.29, Lemma~7.1.33~(2)]{liu2006algebraic}, and {$\varphi^*\overline{B}$ is a weak boundary divisor of $U'$}. By \cref{remarks about boundary topology}, we can find a cofinal boundary divisor $\overline{B'}$ of $U'$ such that $\varphi^*\overline{B}\leq \overline{B'}$.
This immediately implies that $\varphi^*\colon \widehat{\Pic}_{\Q}(U)\to \widehat{\Pic}_{\Q}(U')$ is continuous with respect to the boundary topologies which proves all the claims except for the nef cones. For the latter, we use also the obvious fact that pull-back is continuous with respect to the finite subspace topologies.
\end{proof}

\begin{art} \label{forgetting map local}
Recall from \cref{definition of compactified line bundles} that the groups $P_\Q(U)$ and $Q_\Q(U)$ have their geometric analogues $P_{\gm,\Q}(U)$ and $Q_{\gm,\Q}(U)$. Obviously, we have a canonical map $P_\Q(U)\to P_{\gm,\Q}(U)$ which maps $Q_\Q(U)$ into $Q_{\gm,\Q}(U)$. By continuity, we get a linear map
	\begin{align}\label{local forgetting map}
		\widehat{\Pic}_\Q(U)_\cpt\longrightarrow \widetilde{\Pic}_\Q(U)_\cpt
	\end{align}
which maps the sets $\widehat{\Pic}_\Q(U)_\snef$, $\widehat{\Pic}_\Q(U)_\nef$ and $\widehat{\Pic}_\Q(U)_\integrable$ to the sets  $\widetilde{\Pic}_{\gm,\Q}(U)_\snef$, $\widehat{\Pic}_{\gm,\Q}(U)_\nef$ and $\widehat{\Pic}_{\gm,\Q}(U)_\integrable$, respectively.
\end{art}

\subsection{Mixed Monge--Amp\`ere  measures} 
\label{subsection: mixed MA measures}

In the archimedean case, mixed Monge--Amp\`ere  measures are a classical tool in pluripotential theory. In the non-archimedean case, they were introduced by Chambert-Loir \cite{chambert2006measures}. The following summarizes their properties in the compactified setting.

\begin{proposition} 	\label{proposition:measures for nef adelic}
	There is a unique way to associate to any $d$-dimensional algebraic variety $U$ over $K$ and to any family $\overline{L_1}, \dots, \overline{L_d}\in \widehat{\Pic}_\Q(U)_{\nef}$ a positive Radon measure $c_1(\overline{L_1})\wedge\cdots\wedge c_1(\overline{L_d})$ on $U^\an$ such that the following properties hold:
	\begin{enumeratea}
		\item \label{MA measure is multilinear and symmetric} The measure $c_1(\overline{L_1})\wedge\cdots\wedge c_1(\overline{L_d})$ is multilinear and symmetric in $\overline{L_1}, \dots, \overline{L_d}$.
		\item \label{MA measure via dominant morphism} If $\varphi\colon U'\rightarrow U$ is a dominant morphism of $d$-dimensional algebraic varieties over $K$, then the following projection formula holds:
		\[\varphi_*(c_1(\varphi^*\overline{L_1})\wedge\cdots\wedge c_1(\varphi^*\overline{L_d})) = \deg(\varphi)c_1(\overline{L_1})\wedge\cdots\wedge c_1(\overline{L_d})\]
		where on the left we use the image measure of Borel measures.
		\item \label{MA measure of model metric} Assume $K$ algebraically closed, $U=X$  a  proper variety over $K$ and  $L_1,\dots, L_d$ are nef model line bundles. 
		\begin{itemize}
			\item If $v$ is archimedean, $c_1(\overline{L_1})\wedge\cdots\wedge c_1(\overline{L_d})$ is exactly given by the $\wedge$-product of first Chern forms $c_1(\overline{L_i})$ of $\overline{L_i}$.
			\item If $v$ is non-archimedean, $\mathcal{X}$ is a model of $X$ over $K^\circ$ with reduced special fiber $\mathcal{X}_s$, and the metric of $\overline{L_j}$ is induced by a  model $\mathcal{L}_j$ of $L_j$ on $\mathcal{X}$ for every $j=1,\dots, d$, then
			\[c_1(\overline{L_1})\wedge\cdots\wedge c_1(\overline{L_d})=\sum\limits_{Y}\mathcal{L}_1|_{Y}\cdots\mathcal{L}_d|_{Y}\delta_{\xi_Y},\]
			where $Y$ ranges over the irreducible components of $\mathcal{X}_s$ and $\delta_{\xi_Y}$ is the Dirac measure in the unique point $\xi_Y\in X^\an$ which reduces to the generic point of $Y$ in $\mathcal{X}_s$.
		\end{itemize}
		\item \label{MA measure via field extension} {Let $K'/K$ be an extension of valued fields of rank $\leq 1$, and let $\pi\colon U_{K'}\to U$ be the canonical morphism. Then 
		\[c_1(\overline{L_1})\wedge\cdots\wedge c_1(\overline{L_d})=\pi_*(c_1(\pi^*\overline{L_1})\wedge\cdots\wedge c_1(\pi^*\overline{L_d})).\]} 
		\item \label{MA measure via convergence in boundary topology} If $\overline{L_j}$ are limits of sequences of semipositive model metrized line bundles $(L_{j,n_j})_{n_j}$ with respect to the boundary topology, then
		\[c_1(\overline{L_{1,n_1}})\wedge\cdots\wedge c_1(\overline{L_{d,n_d}}) \overset{v}{\rightarrow } c_1(\overline{L_{1}})\wedge\cdots\wedge c_1(\overline{L_{d}}), \ \ {(n_1,\dots, n_d)\to\infty}\]
where the convergence is the vague convergence of measures (see \cref{def: vague convergence}).		
		\item \label{MA measure via convergence in finite subspace topology} If $\overline{L}_j$ are limits of sequences of strongly nef line bundles $(\overline{L_{j,n_j}})_{n_j}$ with respect to the finite subspace topology of $\widehat{\Pic}_\Q(U)_\integrable$, then
		\[c_1(\overline{L_{1,n_1}})\wedge\cdots\wedge c_1(\overline{L_{d,n_d}}) \overset{w}{\rightarrow } c_1(\overline{L_{1}})\wedge\cdots\wedge c_1(\overline{L_{d}}), \ \ {(n_1,\dots, n_d)\to\infty}\]
where the convergence is the weak convergence of measures (see \cref{def: vague convergence}).
		\item \label{guo's theorem} (\cite[Theorem~1.2]{guo2025an}) The total mass is given by
		$$\int_{U^\an}c_1(\overline{L_1})\wedge\cdots\wedge c_1(\overline{L_d}) = \widetilde{L_1}\cdots \widetilde{L_d},$$
		where $\widetilde{L_i}\in \widetilde{\Pic}_\Q(U)_{\integrable}$ is the underlying compactified line bundle of $L_i$ defined in \cref{forgetting map local}, and $\widetilde{L_1}\cdots \widetilde{L_d}$ is the geometric intersection number defined in \cref{abstract divisorial space of line bundles}.
	\end{enumeratea}
\end{proposition}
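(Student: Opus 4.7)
The plan is to build up the measures in layers matching the hierarchy of metrics: model metrics on proper varieties, semipositive (in Zhang's sense) model metrics, strongly nef compactified metrics, and finally nef compactified metrics; and to reduce from general $U$ to proper $K$-models via Nagata compactification and restriction to $U^\an$. Uniqueness will come essentially for free from the continuity properties \ref{MA measure via convergence in boundary topology} and \ref{MA measure via convergence in finite subspace topology} once the base case is settled, since every strongly nef compactified metrized $\Q$-line bundle is by definition a boundary-topology limit of semipositive model line bundles on proper $K$-models, and the nef cone is the finite-subspace-topology closure of the strongly nef cone.

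For existence, the classical base case is standard: on a proper variety $X$, for nef model line bundles $\overline L_1,\dots,\overline L_d$ we take the wedge product of Chern forms (archimedean) or the Chambert-Loir atomic measure supported on the Shilov skeleton (non-archimedean), and \ref{MA measure is multilinear and symmetric}--\ref{MA measure via field extension} together with \ref{guo's theorem} are known in this generality (the last being the classical degree formula $L_1\cdots L_d = \int_X c_1(\overline L_1)\wedge\dots\wedge c_1(\overline L_d)$). Zhang's uniform-convergence construction then extends the measures to continuous semipositive model metrics on proper $X$, by passing to vague limits. I will first reduce the general $U$ case to this setting: given a proper $K$-model $X$ of $U$ and strongly nef $\overline L_j \in \widehat{\Pic}_\Q(U)_\snef$ realised as boundary-topology limits of semipositive model line bundles $\overline L_{j,n_j}$ living on proper $K$-models $X_{j,n_j}$ of $U$, I will produce the measure on $U^\an$ as a vague limit on compact subsets of $U^\an$. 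The crucial input is \cref{lemma:boundary convergence is locally uniformly convergence}: the Green-function differences converge \emph{locally uniformly} on $U^\an$ to zero. Locally uniform convergence of potentials implies vague convergence of mixed Monge--Amp\`ere measures by the Bedford--Taylor continuity theorem in the archimedean case and by the Chambert-Loir--Gubler continuity theorem for $(d,d)$-forms in the non-archimedean case. This defines the measure on every relatively compact open subset of $U^\an$, and by uniqueness of the local limits, these patch to a positive Radon measure on $U^\an$. Independence of the approximating sequence is by a standard diagonal argument using symmetry and multilinearity.

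To pass from strongly nef to nef compactified metrized line bundles, I will invoke \cref{lemma:closure in finite subspace topology}: given $\overline L_1,\dots,\overline L_d \in \widehat{\Pic}_\Q(U)_\nef$, there exist strongly nef $\overline H_j$ with $\overline L_j + \tfrac{1}{n}\overline H_j$ strongly nef for every $n \geq 1$. Multilinearity from the previous stage lets me expand
\[
  c_1\!\left(\overline L_1 + \tfrac{1}{n}\overline H_1\right)\wedge\dots\wedge c_1\!\left(\overline L_d + \tfrac{1}{n}\overline H_d\right)
\]
and identify the coefficient of the leading term as the sought measure, by a pointwise $n\to\infty$ argument on test functions combined with Guo's total-mass bound \cite{guo2023an} to control the residual terms. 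Once existence is in place, properties \ref{MA measure is multilinear and symmetric}--\ref{MA measure via field extension} follow by extending the classical identities through continuity in the boundary and finite subspace topologies, using the naturality of pull-back established in \cref{pull-back in local case} and of analytifications under base change. Property \ref{MA measure via convergence in boundary topology} is built into the construction at the strongly nef level, \ref{MA measure via convergence in finite subspace topology} at the nef level, and \ref{guo's theorem} is the cited theorem of Guo, which is precisely what guarantees that total masses remain finite and converge correctly through the limiting procedure.

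The hardest step will be \textbf{Stage 4}: the vague convergence of mixed Monge--Amp\`ere measures along Cauchy sequences for the boundary topology on a non-proper base $U$. The subtlety is twofold. First, in the non-archimedean case the Chambert-Loir measures are atomic and supported at type-II points on models of ever-growing combinatorial complexity, so convergence of the Dirac sums must be extracted from the local uniform convergence of Green functions via the Chambert-Loir formula $c_1(\overline L_1)\wedge \cdots \wedge c_1(\overline L_d) = dd^c g_1 \wedge \cdots \wedge dd^c g_d$ in the sense of Chambert-Loir--Ducros. Second, mass can a priori escape to the boundary $\partial X = X \setminus U$; preventing this requires the \emph{uniform} mass bound from \ref{guo's theorem} and a careful comparison against the fixed cofinal boundary divisor $\overline B$. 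Once this convergence lemma is established, the remainder of the proof is a systematic extension-by-continuity exercise; and the trivially valued case is handled by the same arguments with twisted Fubini--Study metrics replacing model metrics throughout, using \cref{Yuan-Zhang's completion vs twisted FS} to ensure that the approximations live in the correct divisorial spaces.
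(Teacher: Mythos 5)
Your plan is correct and overlaps substantially with the paper's proof, but the routes diverge at two structural points. For existence, the paper does not redo the limiting construction on a general algebraic variety: it first restricts to quasi-projective $U$ and invokes the Yuan--Zhang construction \cite[\S 3.6.6]{yuan2021adelic} as a black box for the strongly nef case (local uniform convergence of Green functions feeding into the archimedean non-pluripolar theory and the Chambert-Loir--Ducros theory), and it then extends to an arbitrary algebraic variety by taking a quasi-projective open covering and patching the locally constructed measures, using that they do not charge proper Zariski closed subsets --- itself a consequence of Guo's mass formula \ref{guo's theorem}. Your plan instead rebuilds the vague-limit argument directly on compact subsets of $U^\an$ via \cref{lemma:boundary convergence is locally uniformly convergence}, which works but duplicates labor the paper avoids by citation, and you would still need to observe the no-mass-on-proper-subsets property to make the ``diagonal argument'' for well-definedness airtight. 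For \ref{MA measure via convergence in finite subspace topology} (the passage from strongly nef to nef), the paper's argument is noticeably cleaner than your expansion: it picks a finite-dimensional subspace $V\subset\widehat{\Pic}_\Q(U)_{\integrable}$ containing all the relevant classes, chooses a basis $\overline N_1,\dots,\overline N_m$ of strongly nef elements, writes $\overline L_{j,n_j}=\sum_k a_{jn_jk}\overline N_k$ with $a_{jn_jk}\to a_{jk}$, and then \ref{MA measure via convergence in finite subspace topology} follows immediately from multilinearity in the coordinates plus finiteness of total mass by \ref{guo's theorem}; your leading-term extraction from $c_1(\overline L_1+\tfrac1n\overline H_1)\wedge\cdots\wedge c_1(\overline L_d+\tfrac1n\overline H_d)$ is correct but more convoluted, and in particular positivity drops out of the coordinate argument with no separate step. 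Finally, the uniqueness argument in the paper proceeds from \ref{MA measure is multilinear and symmetric}--\ref{MA measure via convergence in boundary topology} together with \cref{nef become strongly nef after shrinking U} (shrink $U$ so that nef classes become strongly nef, then appeal to the boundary topology continuity and the projection formula under the open immersion), whereas you go through \ref{MA measure via convergence in finite subspace topology} directly; both are valid, but the paper's route is more elementary in that it does not presuppose the harder weak convergence statement.
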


Note that the notation $c_1(\overline{L_1})\wedge\cdots\wedge c_1(\overline{L_d})$ of the Radon measure is just formal and not meant as a product of first Chern forms, except for smooth metrics in the archimedean case. In the non-archimedean case, it would make sense as a product of delta-forms introduced in \cite{gubler2017a}, but we don't need that here.

\begin{proof} 
For nef model line bundles, this proposition is well-known, see \cite[Proposition~2.23]{boucksom2021non}. Uniqueness is clear  from \ref{MA measure is multilinear and symmetric},  \ref{MA measure via dominant morphism}, \ref{MA measure of model metric}, \ref{MA measure via field extension}, \ref{MA measure via convergence in boundary topology} and \cref{nef become strongly nef after shrinking U}. 

To show existence, we assume  first that $U$ is quasi-projective. Then Yuan and Zhang \cite[\S 3.6.7]{yuan2021adelic} constructed the mixed Monge--Amp\`ere measures using that semipositive compactified metrics are locally uniform limits of semipositive model metrics and then using the classical non-pluripolar mixed Monge--Amp\`ere measures in the archimedean case \cite{demailly2012complex} and the corresponding  theory of Chambert--Loir and Ducros \cite[\S 5.6]{chambert2012formes} in the non-archimedean case. Yuan and Zhang showed property \ref{MA measure via convergence in boundary topology} and hence \ref{MA measure is multilinear and symmetric}, \ref{MA measure via dominant morphism}, \ref{MA measure via field extension} follow from the model case by passing to {limits}. Property \ref{guo's theorem} is Guo's result \cite[Theorem 1.2]{guo2025an}. {For \ref{MA measure via convergence in finite subspace topology}, we take a finite-dimensional subspace $V$ of $\widehat{\Pic}_\Q(U)_{\mathrm{int}}$ containing $\overline{L_j}$, $\overline{L_{j,n_j}}$ and $\lim_{n_j\to\infty}\overline{L_{j,n_j}}=\overline{L_j}$ in $V$ for all $j$. We may assume that $V$ is generated by a basis $\overline{N_1},\dots, \overline{N_m} \in V$ of strongly nef line bundles. Write $\overline{L_j}= \sum_{k=1}^ma_{jk}\overline{N_k}$ and $\overline{L_{j,n_j}}=\sum_{k=1}^ma_{jn_jk}\overline{N_k}$. Then $\lim_{n_j\to\infty}a_{jn_jk}=a_{jk}$. By multilinearity and  \ref{guo's theorem}, it is easy to see that \ref{MA measure via convergence in finite subspace topology} holds.}

For an arbitrary algebraic variety $U$, we can use that $U$ is locally quasi-projective, so the same argument yields the non-pluripolar mixed Monge--Amp\`ere measures on $U^\an$. We can also use that the mixed Monge--Amp\`ere measures constructed in the quasi-projective case do not charge proper Zariski closed subsets as a consequence of Guo's result. It follows that we can construct the mixed Monge--Amp\`ere measures on an arbitrary algebraic variety $U$ by using a quasi-projective open covering $(U_i)_{i \in I}$ and then patching the mixed Monge--Amp\`ere measures for the $U_i$'s together. Properties \ref{MA measure is multilinear and symmetric}--\ref{guo's theorem} follow immediately from the quasi-projective case. 
\end{proof}

\begin{definition}
	Let $\overline{D_1}, \dots, \overline{D_d}\in \widehat{\Div}_\Q(U)_{\nef}$. We define a positive Radon measure associated to $\overline{D_1}, \dots, \overline{D_d}$ on $U^\an$ as
	\[c_1(\overline{D_1})\wedge\cdots\wedge c_1(\overline{D_d})\coloneq c_1(\OO_U(\overline{D_1}))\wedge\cdots\wedge c_1(\OO_U(\overline{D_d})).\]
	By linearity, we can assign a signed Radon measure $c_1(\overline{D_1})\wedge\cdots\wedge c_1(\overline{D_d})$ (resp. $c_1(\overline{L_1})\wedge\cdots\wedge c_1(\overline{L_d})$) to any family $\overline{D_1}, \dots, \overline{D_d} \in \widehat{\Div}_\Q(U)_{\mathrm{int}}$ (resp. $\overline{L_1},\dots, \overline{L_d}\in \widehat{\Pic}_\Q(U)_{\mathrm{int}}$).
\end{definition}

\begin{prop} 	\label{corollary:weakly convergence of full mass}
	Let  $\overline{D_{1}}, \dots, \overline{D_{k-1}} \in \widehat{\Div}_\Q(U)_{\nef}$ for some $k \in \{1,\dots,d\}$. Moreover, let $\overline{D_{k}}, \dots, \overline{D_{d}}\in \widehat{\Div}_\Q(U)_{\snef}$ with $\overline{D_{j}}$ given as the limit of a 
 a sequence $(\overline{D_{j,n_j}})_{n_j\in \N}$
	{of strongly nef compactified metrized divisors on $U$} for all $j \in \{k,\dots,d\}$. 
 Then
	\[c_1(\overline{D_{1}})\wedge\cdots\wedge c_1(\overline{D_{k-1}})\wedge c_1(\overline{D_{k,n_{k}}})\wedge\cdots\wedge c_1(\overline{D_{d,n_d}})\overset{w}{\longrightarrow}c_1(\overline{D_{1}})\wedge\cdots\wedge c_1(\overline{D_{d}})\]
	for $(n_{k},\dots, n_d)\to\infty$.
\end{prop}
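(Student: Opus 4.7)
The plan is to establish weak convergence by separately proving vague convergence of the mixed Monge--Amp\`ere measures and convergence of their total masses; on the locally compact Hausdorff space $U^\an$ these two ingredients together imply weak convergence of positive Radon measures (by a standard tightness argument, using a compactly supported bump function capturing nearly all the mass of the limit).

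For vague convergence, the starting point is Proposition~\ref{proposition:measures for nef adelic}\ref{MA measure via convergence in boundary topology}, which directly yields vague convergence when all factors are strongly nef and approximated in the boundary topology. The difficulty is that $\overline D_1, \dots, \overline D_{k-1}$ lie only in the nef cone. Using $\widehat{\Div}_\Q(U)_\nef \subset \widehat{\Div}_\Q(U)_\integrable = \widehat{\Div}_\Q(U)_\snef - \widehat{\Div}_\Q(U)_\snef$, I would embed each $\overline D_i$ ($i<k$) into a finite-dimensional subspace of $\widehat{\Div}_\Q(U)_\integrable$ spanned by a basis of strongly nef divisors, write $\overline D_i$ as a real linear combination of such basis vectors, and expand the Monge--Amp\`ere measure by multilinearity. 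This reduces matters to a finite signed sum of terms in which every factor is strongly nef. For each such term, a diagonal argument -- picking semipositive model approximators to each strongly nef factor close enough in boundary topology as the multi-index $(n_k,\ldots, n_d)\to\infty$ -- reduces the claim directly to Proposition~\ref{proposition:measures for nef adelic}\ref{MA measure via convergence in boundary topology}, yielding vague convergence of each summand and hence of the whole sum.

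For total mass convergence, Proposition~\ref{proposition:measures for nef adelic}\ref{guo's theorem} identifies the total mass of the approximating measures with the geometric intersection number $\widetilde L_1 \cdots \widetilde L_{k-1}\cdot \widetilde L_{k,n_k}\cdots \widetilde L_{d,n_d}$ of the underlying compactified geometric line bundles. The forgetting map \eqref{local forgetting map} is continuous in boundary topology, so the convergence $\overline D_{j,n_j}\to \overline D_j$ descends to a boundary-topology convergence $\widetilde L_{j,n_j}\to \widetilde L_j$ in $\widetilde{\Pic}_\Q(U)_\snef$. Since geometric intersection numbers extend to a continuous $d$-intersection map on the strongly nef cone (Theorem~\ref{thm:algebraic interesection number}, obtained via Corollary~\ref{continuous extension of intersection maps in absolute case}), the total masses converge to $\widetilde L_1\cdots \widetilde L_d$, which by Proposition~\ref{proposition:measures for nef adelic}\ref{guo's theorem} again equals the total mass of the limit measure.

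The main obstacle is the diagonal argument in the vague-convergence step: one must realize each strongly nef factor as a limit of semipositive model metrized divisors in a manner compatible with the multi-indexed convergence $(n_k,\ldots,n_d)\to\infty$, and verify that the resulting approximations satisfy the hypotheses of Proposition~\ref{proposition:measures for nef adelic}\ref{MA measure via convergence in boundary topology}. The rest is essentially bookkeeping with multilinearity and the definitions already in place.
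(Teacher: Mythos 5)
Your proposal follows the same overall architecture as the paper's proof: establish weak convergence via vague convergence plus total-mass convergence (Proposition~\ref{thm:equivalence of vague convergence and weak convergence}), obtain total-mass convergence from Guo's theorem (Proposition~\ref{proposition:measures for nef adelic}~\ref{guo's theorem}) together with continuity of the geometric intersection pairing (Theorem~\ref{thm:algebraic interesection number}), and obtain vague convergence by a diagonal argument reducing to the model case of Proposition~\ref{proposition:measures for nef adelic}~\ref{MA measure via convergence in boundary topology}. The one genuinely different step is how the merely nef (not strongly nef) factors $\overline D_1,\dots,\overline D_{k-1}$ are disposed of. You decompose each into a linear combination of strongly nef divisors in $\widehat{\Div}_\Q(U)_\integrable$ and expand by multilinearity, reducing to a finite signed sum of terms in which every factor is strongly nef. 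The paper instead shrinks $U$ to a dense open $U'$ on which all $d$ restricted factors become strongly nef (Remark~\ref{nef become strongly nef after shrinking U}) and transfers the claim back via the projection formula, Proposition~\ref{proposition:measures for nef adelic}~\ref{MA measure via dominant morphism}, using that the measures in question do not charge $U^\an\setminus U'^\an$. Both reductions are sound; the shrinking trick is a bit cleaner because every measure involved stays positive, so Proposition~\ref{thm:equivalence of vague convergence and weak convergence} applies directly, whereas your signed expansion requires you to keep the decomposition confined to the vague-convergence argument (summing positive terms) and apply Proposition~\ref{thm:equivalence of vague convergence and weak convergence} only to the original, undecomposed positive sequence, which your proposal in effect does. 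One small correction: the coefficients in your decomposition are rational, not real---$\widehat{\Div}_\Q(U)_\integrable$ is a $\Q$-vector space, and by Lemma~\ref{lemma:closure in finite subspace topology} any nef $\overline D_i$ can in fact be written as $(\overline D_i+\overline H_i)-\overline H_i$ with both summands strongly nef.
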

\begin{proof} 
	By \cref{nef become strongly nef after shrinking U} and \cref{proposition:measures for nef adelic}~\ref{MA measure via dominant morphism}, we can assume that $\overline{D_1},\dots, \overline{D_d}$ are strongly nef after shrinking $U$ and hence we may assume that $k=1$.
	We write $\overline{D_j}=(D_j,g_{j})$, $\overline{D_{j,n_j}}=(D_{j,n_j},g_{j,n_j})$ for $n\in \N$, $1 \leq j\leq d$, and take a {cofinal} boundary divisor $(X_0, (B,g_B))$. Notice that ${D}_{j,n_j}$ converges to ${D}_j$ in $\widetilde{\Div}_\Q(U)_{\cpt}$ for any $1\leq j\leq d$. By \cref{thm:equivalence of vague convergence and weak convergence}, \cref{thm:algebraic interesection number} and \cref{proposition:measures for nef adelic}~\ref{guo's theorem}, it is sufficient to show that 
	\[c_1(\overline{D_{1,n_1}})\wedge\cdots\wedge c_1(\overline{D_{d,n_d}})\overset{v}{\longrightarrow}c_1(\overline{D_{1}})\wedge\cdots\wedge c_1(\overline{D_{d}}),  \ \ {(n_1,\dots, n_d)\to\infty}.\] 
	By \cref{proposition:measures for nef adelic}~\ref{MA measure via convergence in boundary topology}, this claim holds when $\overline{D_{j,n_j}}$ are semipositive model metrized divisors of $U$.
	In general, every $\overline{D_{j,n_j}}$ is a limit of elements of semipositive model metrized divisors of $U$
	and hence the claim follows easily from the special case.
\end{proof}

\begin{example} \label{MA measure in running example}
{We describe the Monge--Amp\`ere measures in our running example with $U=\mathbb A_K^1$ for $K=\Q_v$ the completion of $\Q$ with respect to a place $v$. We restrict our attention to the toric divisor $D=[\infty]$ and to toric Green functions $g_D$ for $D$ which means that there is a continuous function $\psi\colon \R \to \R$ with $g_D= -\psi \circ \trop$. Moreover, we assume that $(D,g_D)$ is strongly nef which means that $\psi$ is a concave function with asymptotic slope $1$ for $u \to -\infty$ and with $\lim_{u \to \infty} \psi(u) \in \R$, see Example \ref{running example integrable local}. Let $\mathrm{MA}(\psi)$ be the real Monge--Amp\`ere operator of the concave function $\psi$, see \cite[\S 2.7]{BPS}. In the archimedean case, we consider the compact torus $\mathbb S\coloneqq \{t \in U^\an \mid |t|=1\}$. Then the Monge--Amp\`ere measure $c_1(D,g_D)$ is the unique $\mathbb S$-invariant positive Radon measure $\mu$ on $T^\an$ with $\trop_*(\mu)=\mathrm{MA}(\psi)$. In the non-archimedean case, we may identify $\R$ with the tropical skeleton $S(T)$ in $T^\an$, see \cref{running example integrable local}, and then the Monge--Amp\`ere measure $c_1(D,g_D)$ is the unique positive Radon measure $\mu$ supported on $S(T)$ with $\trop_*(\mu)=\mathrm{MA}(\psi)$. For details and generalizations to toric varieties, we refer to \cite[Proposition~6.3.1,  6.3.2]{bgjk2021}.}
\end{example}

\begin{remark} \label{remark: no local heights}
{For a projective or more generally a proper $d$-dimensional variety $X$ over $K$, Monge--Amp\`ere integrals $\int_{X^\an} f \,d\mu$ are the special case of local heights $\lambda_{\overline{D_0}, \dots, \overline{D_d}}(X)$ where $D_0=0$ and where $\overline{D_0}=(0,f)$ for a continuous real function $f$ on $X^\an$. For arbitrary $\overline{D_0} \in \widehat{\Div}_\Q(X)_\nef$, the local height depends on the choice of the underlying $\Q$-Cartier divisors $D_i$ and is only defined if $|D_0| \cap \dots \cap |D_d|=\emptyset$ where $|D_i|$ denotes the support of $D_i$, see \cite{gubler2003canonical}. For a $d$-dimensional quasi-projective variety $U$ over $K$, an extension of local heights to $(\widehat{\Div}_\Q(U)_\snef)^{d+1}$ becomes messy as the condition  about the empty intersection of supports has to involve the boundary.  However, the completion with respect to the boundary topology allows to extend global heights as we will see in Section~\ref{section: The global boundary completion}.}
\end{remark}


\section{Adelic curves} \label{sec: adelic curves}

In this section, we recall the definition of adelic curves in \cite{chen2020arakelov}.

\subsection{Adelic curves} \label{subsec: adelic curves}

\begin{definition}[\cite{chen2020arakelov}~\S~3.1] 	\label{def:adelic curve}
	Let $K$ be a field. We denote the set of absolute values on $K$ by $M_K$. An \emph{adelic structure on $K$} is a measure space $(\Omega,\mathcal{A},\nu)$ equipped with a map $\phi\colon \Omega\rightarrow M_K, \ \ \omega\mapsto \val_\omega$ such that for any $a\in K^\times$, the function $\log|a|\colon \Omega\rightarrow \R$ is $\mathcal{A}$-measurable, integrable with respect to $\nu$. We call $(K, (\Omega, \mathcal{A},\nu),\phi)$  an \emph{adelic curve}. Moreover, the space $\Omega$ and the map $\phi$ are called a \emph{parameter space of $M_K$} and a \emph{parameter map}. Further, if the equality 
	\begin{align}
		\int_{\Omega} \log|a|_\omega\ \nu(d\omega)=0
		\label{product formula}
	\end{align}
	holds for each $a\in K^\times$, then the adelic curve $(K, (\Omega, \mathcal{A},\nu),\phi)$ is said to be \emph{proper}. We call \eqref{product formula} the \emph{product formula}.
\end{definition}
\begin{remark} \label{assumeption on adelic curves}
	We set 
	\[\Omega_\infty\coloneq\{\omega\in \Omega\mid \text{$\phi(\omega)$ archimedean}\},\]
	\[\Omega_\mathrm{fin}\coloneq\{\omega\in \Omega\mid \text{$\phi(\omega)$ non-archimedean}\},\]
	\[\Omega_0\coloneq\{\omega\in \Omega\mid \text{$\phi(\omega)$ trivial}\}\subset \Omega_\mathrm{fin}.\]
	For any $\omega\in \Omega$, we set $K_\omega$ the completion of $K$ with respect to $\val_\omega$. By Ostrowski's theorem, for any $\omega\in\Omega_\infty$, we have $K_\omega= \R$ or $\C$. In this case, there is a function $\kappa\colon \Omega_\infty\to (0,1]$ such that $|a|_\omega=a^{\kappa(\omega)}$ for any $a\in\Q_{\geq0}$. 
	
	 {In this paper, we will always assume that adelic curves satisfy the following properties. 
	\begin{enumeratea}
		\item The function $\kappa\equiv 1$. As \cite{chen2024positivity} mentioned, this assumption is harmless since in general case we can replace the absolute values $\{\val_\omega\}_{\omega\in\Omega_\infty}$ by the usual ones and consider the measure $d\widetilde{\nu}=(\mathbbm{1}_{\Omega\setminus\Omega_\infty}+\kappa\mathbbm{1}_{\Omega_\infty})d\nu$.  Under this assumption, we have that $\nu(\Omega_\infty)<\infty$ by \cite[Proposition~3.1.2]{chen2020arakelov}.
	    \item The set $\nu(\mathcal{A})\not\subset\{0,+\infty\}$, otherwise, the theory is not interesting.
	\end{enumeratea}}	
\end{remark}

In this paper, for simplicity, we write $(K,\Omega,\mathcal{A},\nu)$ for an adelic curve. Given an adelic curve $S=(K,\Omega, \mathcal{A},\nu)$ and algebraic extension $L/K$, there is a canonical structure of adelic curve on $L$, denoted by $S_L=S\otimes_KL = (L,\Omega_L, \mathcal{A}_L,\nu_L)$, see \cite[\S 3.3, \S 3.4]{chen2020arakelov}. If $S$ is proper, then so is $S_L$.

\begin{definition} 	\label{def:heightfunction}
	Let $S=(K, \Omega, \mathcal{A},\nu)$ be a proper adelic curve, and $S_{\overline{K}}=(\overline{K}, \Omega_{\overline{K}}, \mathcal{A}_{\overline{K}},\nu_{\overline{K}})$. We define the \emph{$S$-height} $h_S\colon \overline{K} \rightarrow \R_{\geq 0}$ as follows: for $\alpha \in \overline{K}$, we set
	$$h_S(\alpha) =  \int_{\Omega_{\overline{K}}}\log\max\{1,|\alpha|_x\}\,\nu_{\overline{K}}(dx).$$
\end{definition}

\begin{example} \label{example:adelic structure of number fields}
	Let $K$ be a number field. Denote by $\Omega$ the set of all places of $K$, equipped with the discrete $\sigma$-algebra $\mathcal{A}$. For any $\omega\in\Omega$, let $\phi(\omega)=\val_\omega$ be the absolute value on $K$ in the equivalence class $\omega$, which extends either the usual absolute value when $\omega$ is archimedean, or the usual $p$-adic absolute values, i.e. $|p|_\omega=\frac{1}{p}$, when $\omega$ is non-archimedean. Let $\nu$ be the measure on $(\Omega,\mathcal{A})$ such that $\nu(\{\omega\})=[K_\omega\colon\Q_\omega]{/[K:\Q]}$. Then $(K,\Omega,\mathcal{A},\nu)$ is a proper adelic curve.
\end{example}

\subsection{Adelic vector bundles on adelic curves}

The arithmetic degree and the slopes are classical invariants of adelic vector bundles over a number field or a function field. These constructions were generalized by Chen and Moriwaki to the setting of adelic curves, see \cite[\S 2.5]{chen2024positivity}. For convenience of the reader, we recall their definitions here.

We fix a proper adelic curve $S=(K, \Omega, \mathcal{A},\nu)$. 

\begin{definition}
	An \emph{adelic vector bundle} $\overline{E}=(E,\metr)$ on $S$ is a finite dimensional vector space $E$ over $K$ with a family of norms $\metr=(\metr_\omega)_{\omega\in\Omega}$ on $E_\omega\coloneq E\otimes_KK_\omega$, which is assumed to be ultrametric over non-archimedean places (as in \cite[\S 2.4.5]{chen2024positivity}), satisfying the following conditions:
	\begin{enumeratea}
		\item $(E,\metr)$ is \emph{strongly dominated}, i.e. there exists an integrable function $C\colon\Omega\rightarrow\R_{\geq 0}$ and a basis $(e_1,\dots, e_r)$ of $E$ over $K$, such that, for any $\omega\in \Omega$ and $(\lambda_1,\dots, \lambda_r)\in K^r_\omega\setminus\{(0,\dots,0)\}$,
		\[\left|\log\|\lambda_1e_1+\cdots+\lambda_re_r\|_\omega - \log\max\limits_{1\leq i\leq r}|\lambda_i|_\omega\right|\leq C(\omega).\]
		\item $(E,\metr)$ is \emph{measurable}, i.e. for any $s\in E$, the function $\|s\|\colon \Omega\rightarrow\R$ is $\mathcal{A}$-measurable.
	\end{enumeratea}
\end{definition}
\begin{remark}
	If $\overline{E}=(E,\metr)$ is an adelic vector bundle over $S$, any subspace (resp. quotient space) of $V$ with the family of restricted norms (resp. quotient norms) is also an adelic vector bundle over $S$.
\end{remark}

\begin{definition}
	Let $\overline{E}=(E, \metr)$ be an adelic vector bundle over $S$. For any $0\not=s\in E$, we have $\log\|s\|_\omega\in \mathscr{L}^1(\Omega, \mathcal{A},\nu)$. We define the \emph{Arakelov degree} of $s$ with respect to $\metr$ as
	\[\widehat{\deg}_{\metr}(s)\coloneq -\int_{\Omega}\log\|s\|_\omega \,\nu(d\omega).\]
\end{definition}
\begin{remark}
	Since $S$ is proper, we have 
	\[\widehat{\deg}_{\metr}(as) = \widehat{\deg}_{\metr}(s)\]
	for any $a\in K^*$.
\end{remark}

\begin{definition}
	Let $\overline{L}=(L, \metr)$ be an adelic line bundle over $S$. The \emph{Arakelov degree} of $\overline{L}$ is defined as
	\[\widehat{\deg}(\overline{L})\coloneq\widehat{\deg}_{\metr}(s),\]
	where $s$ is a non-zero element in $L$. It doesn't depend on the choice of $s$. 
\end{definition}

\begin{definition} \label{def:minimal slop}
	Let $\overline{E}=(E, \metr)$ be an adelic vector bundle over $S$. We define the \emph{Arakelov degree} of $\overline{E}$ as 
	\[\widehat{\deg}(\overline{E})\coloneq\widehat{\deg}(\det\overline{E})=-\int_{\Omega}\log\|e_1\wedge\cdots\wedge e_r\|_{\omega,\det}\,\nu(d\omega),\]
	where $e_1,\dots, e_r$ is a basis of $E$ over $K$, and $\metr_{\omega,\det}$ is the determinant norm of $\metr_\omega$, i.e. for any $\eta\in\wedge^rE$,
	\[\|\eta\|_{\omega,\det}=\inf\limits_{\eta=s_1\wedge\cdots\wedge s_r}\|s_1\|_\omega\cdots\|s_r\|_\omega.\] In the case where $E$ is non-zero, we denote by 
	\[\widehat{\mu}(\overline{E})\coloneq \frac{\widehat{\deg}(\overline{E})}{\dim_KE},\]
	the \emph{slope} of $\overline{E}$. We define the \emph{minimal slope} of $\overline{E}$ as 
	\[\widehat{\mu}_{\min}(\overline{E})\coloneq \inf\limits_{E\twoheadrightarrow W\neq\{0\}}\widehat{\mu}(\overline{W}),\]
	where $\overline{W}$ runs over the non-zero quotient adelic vector bundles of $\overline{E}$. 
\end{definition}
\begin{remark}
	Let $\overline{E}=(E, \metr)$ be an adelic vector bundle over $S$, and $c\in\mathscr{L}^1(\Omega,\mathcal{A},\nu)$. Then  $\overline{E'}=(E, e^{-c}\metr)$ is an adelic vector bundle over $S$, and 
	\[\widehat{\mu}(\overline{E'})=\widehat{\mu}(\overline{E})+\int_{\Omega}c\,\nu(d\omega),\]
	\[\widehat{\mu}_{\min}(\overline{E'})=\widehat{\mu}_{\min}(\overline{E})+\int_{\Omega}c\,\nu(d\omega).\]
\end{remark}

\section{Global theory on proper varieties} \label{section: global theory}

In this section, we recall first the approach of Chen and Moriwaki for arithmetic intersection theory on projective varieties over a proper adelic curve and then we give a rather straightforward generalization to proper varieties. We will formulate the theory in the language of abstract divisorial spaces introduced in Section \ref{section: abstract divisorial spaces}. This will be useful for the boundary completion considered in the next section.

We fix a proper adelic curve $S=(K,\Omega,\mathcal{A},\nu)$ satisfying the normalization assumption for archimedean valuations given in \cref{assumeption on adelic curves}. We also assume that either the $\sigma$-algebra $\mathcal{A}$ is discrete, or {that $K$ is countable.}

\subsection{Bounded and measurable $S$-metrics and $S$-Green functions}

Let $U$ be an algebraic variety over $K$ of dimension $d$. For $\omega \in \Omega$,  we  set 
\[U_\omega \coloneq U\times_{K}\Spec(K_\omega)\] 
which is an algebraic variety over the completion $K_\omega$ of $K$ and we denote its analytification by $U_\omega^\an$ as in   \ref{analytification}. We will also use 
$$U_S^\an \coloneqq \coprod_{\omega \in \Omega} U_\omega^\an.$$

\begin{art} \label{global metrics}
	Let $L$ be a line bundle on an algebraic variety $U$ over $K$. Then $L$ induces an analytic line bundle $L_\omega^\an$ on $U_\omega^\an$ for every $\omega \in \Omega$. An \emph{$S$-metric} $\metr$ of $L$ is a family of metrics $\metr_\omega$ on $L_\omega^\an$. We call $\metr$ \emph{continuous} if every $\metr_\omega$ on $L_\omega^\an$ is continuous on $L_\omega^\an$. 
\end{art}

\begin{art} \label{points and valuations}
For $\omega \in \Omega$, we have the kernel map $\mathrm{Ker}\colon U_\omega^\an\rightarrow U$, see \cite[Remarks~1.2.5]{berkovich1990spectral}. For any $x\in U_\omega^\an$, locally $x$ corresponds to a multiplicative seminorm $\val_x\colon A\rightarrow \R_{\geq 0}$, where $A=\OO_U(V)$ for some affine open subset $V$ of $U$. Then $\Ker(x)$ is the kernel of $\val_x$ and $\val_x$ induces an absolute value on the residue field $F\coloneq \kappa(\Ker(x))$. 

We assume now that $\omega$ leads to the trivial valuation. 
If the transcendence degree $\mathrm{trdeg}(F/K) = 1$, then we have a unique regular projective curve $C_F$ over $K$ such that the function field of $C_F$ is $F$. {Using that the valuation is trivial on $K$, we have} $\val_x= \exp(-q\ord_\xi(\cdot))$ for some closed point $\xi\in C_F$ and $q\in \R_{\geq0}$. We say that $q$ is the \emph{exponent} of the absolute value $\val_x$. We set $U_{1,\Q}^\an$ as the set of such points of $U_\omega^\an$ with rational exponents. 
By \cite[Lemma~6.1.21]{chen2020arakelov}, 
the set $U_{1,\Q}^\an$ together with the closed points of $U$ is dense in $U_\omega^\an$ with respect to the Berkovich topology.
\end{art}

Recall that $\Omega_0$ denotes the set of $\omega \in \Omega$ inducing the trivial valuation on $K$. We endow it with the $\sigma$-algebra $\mathcal A_0$ obtained from $\mathcal A$ by restriction. We have also seen in \S \ref{subsec: adelic curves} that for every algebraic field extension $L/K$, there is a canonical adelic curve $S_L=(L,\Omega_L, \mathcal{A}_L,\nu_L)$.

\begin{definition} \label{definition: measurable metrics}
	Let $\metr$ be a continuous $S$-metric of a line bundle $L$ on $U$. We say that $\metr$ is \emph{measurable} or more precisely \emph{$S$-measurable} if for any $x \in U$ and any local section $s$ of $L$ at $x$ the following hold:
	\begin{enumerate}
	\item If $x$ is a closed point of $U$, then the function $\Omega_{\kappa(x)} \to \R$, $\omega \to \|s(x)\|_\omega$ is measurable. 
	\item If $x\in U_{1, \Q}^\an$, then the function  $\Omega_0 \to \R$, $\omega \to \|s(x)\|_\omega$ is measurable. 
	\end{enumerate}
\end{definition}

\begin{remark} \label{properties of measurable metrics}
 It is shown in \cite[\S 6.1.4]{chen2020arakelov} that the concept of measurable metrics is compatible with tensor product and pull-back, with passing to the dual metric and with passing to pointwise limits. Note that the projectivity assumptions were never used in the proofs. It follows also from \cite[Lemma~4.1.11]{chen2021arithmetic} that for any algebraic extension $K'$ of $K$ endowed with the canonical structure as a proper adelic curve, the metric $\metr$ of $L$ is measurable if and only if the base change $\metr'$ to $L \otimes_K K'$ is measurable.
\end{remark}

We next need the concept of bounded subsets of $U_S^\an$. This is a classical notion from diophantine geometry which was generalized to $M$-fields (a similar concept as adelic curves) in \cite{gubler1997heights}.

\begin{definition} \label{definition of bounded subsets in affine case}
Let $W$ be an affine variety over $K$. Then a subset $E$ of $W_S^\an$ is called \emph{bounded}  or more precisely \emph{$S$-bounded} in $W$ if for any $f \in \mathcal{O}_U(W)$, there is an integrable function $c$ on $\Omega$ such that for all $(x,\omega) \in E$, we have 
${\log}|f(x)|_\omega \leq c(\omega)$.
\end{definition}

This is generalized to arbitrary varieties as follows:

\begin{definition} \label{definition of bounded subsets}
	A subset $E$ of $U_S^\an$ is called \emph{bounded}  or more precisely \emph{$S$-bounded} in $U$ if there is  a covering of $U$ by affine open subsets $W_1, \dots, W_r$ and a decomposition $E = \bigcup_{j=1}^r E_j$ with $E_j \subset (W_j)_S^\an$ such that $E_j$ is $S$-bounded in $W_j$ in the sense of Definition \ref{definition of bounded subsets in affine case}.
\end{definition}

\begin{remark} \label{properties of bounded sets}
Obviously, both definitions agree in the case of affine varieties. The image of an $S$-bounded set with respect to a morphism is $S$-bounded. The preimage of an $S$-bounded subset with respect to a proper $S$-morphism is $S$-bounded. In particular, for a projective or more generally a proper variety $X$ over $K$, the set $X_S^\an$ is bounded. For details, we refer to \cite[\S 2]{gubler1997heights}.
\end{remark}

\begin{definition} \label{definition of locally bounded functions}
	A function $\alpha\colon X_S^\an \to \R$ is called \emph{locally $S$-bounded} if for any $S$-bounded subset $E$ of $X_S^\an$, there is an integrable function $c\colon \Omega \to \R$ such that for all $(x,\omega) \in E$, we have
	$|\alpha(x,\omega)| \leq c(\omega)$. 
	
	We call $\alpha$ \emph{$S$-bounded} if there is an integrable function such that 	$|\alpha(x,\omega)| \leq c(\omega)$ holds for all $\omega \in \Omega$ and all $x \in U_\omega^\an$. 
\end{definition}

\begin{definition} \label{definition: bounded metrics}
	Let $\metr$ be a continuous metric of a line bundle $L$ on $U$. We say that $\metr$ is \emph{locally bounded} or more precisely \emph{locally $S$-bounded} if for any open subset $W$ of $U$ and any nowhere vanishing section $s \in H^0(W,L)$, the function $\log \|s\|_\omega$ is locally $S$-bounded on $W_S^\an$. 
\end{definition}

\begin{remark} \label{properties of bounded metrics}
Obviously, the trivial metric of $\mathcal O_U$ is a locally $S$-bounded metric.  The concept of locally $S$-bounded metrics is compatible with tensor product, with pull-back and with passing to the dual metric. It follows also from \cite[Lemma~4.1.11]{chen2021arithmetic} that for any algebraic extension $K'$ of $K$ endowed with the canonical structure as a proper adelic curve, the metric $\metr$ of $L$ is locally $S$-bounded if and only if the base change $\metr'$ to $L \otimes_K K'$ is locally $S$-bounded.
\end{remark}

\begin{remark} \label{boundedness on proper varieties}
 On a proper variety $X$ over $K$, a function $\alpha \colon X_S^\an \to \R$ is locally $S$-bounded if and only if it is $S$-bounded as $X_S^\an$ is bounded.
 Therefore, we call a locally $S$-bounded metric on a line bundle $L$ of $X$ just \emph{$S$-bounded}. If $\metr, \metr'$ are $S$-bounded metrics on the same line bundle $L$ of $X$, then it follows that $\log(\metr'/\metr)$ is an $S$-bounded function on $X_S^\an$. 
\end{remark}

\begin{example} \label{global Fubini-Study metric}
	Let $X$ be a proper variety over $K$ and 
	let $L$ be a line bundle on $X$ generated by global sections $s_0, \dots, s_r$. Using Example \ref{Fubini-Study metric} $\metr_{\FS,\omega}$ for every $\omega \in \Omega$, we get a \emph{(global) Fubini--Study metric} of $L$.
	
    More generally, for integrable real functions  $\lambda_0, \dots, \lambda_r$ on $\Omega_{\mathrm{fin}}$, the twisted Fubini--Study metrics $\metr_{\tFS(\lambda),\omega}\coloneq \metr_{\tFS(\lambda(\omega))}$ of $L_\omega$ with respect to the twists $\lambda_0(\omega), \dots, \lambda_r(\omega)$ define the \emph{(global) twisted Fubini--Study metric of $L$}, {where $\lambda=(\lambda_0,\dots, \lambda_r)$ and $\lambda(\omega)=(\lambda_0(\omega),\dots, \lambda_r(\omega))$}.
It follows easily from  the definitions that $\metr_{\tFS(\lambda)}$ is a (locally) $S$-bounded $S$-measurable metric.
\end{example}

\begin{remark} \label{projective case for locally bounded}
If $L$ is a line bundle on a projective variety $X$ over $K$ endowed with an $S$-metric $\metr$, then it follows from Example \ref{global Fubini-Study metric} that $\metr$ is (locally) $S$-bounded if and only if $\metr$ is dominated in the sense of Chen and Moriwaki \cite[Definition~6.1.9]{chen2020arakelov}. 
Indeed, we have seen in Example \ref{global Fubini-Study metric} that a Fubini--Study metric is $S$-bounded and it is also dominated in the sense of Chen and Moriwaki.
Using additivity together with Serre's theorem, it suffices to assume that $L$ is very ample. We endow $L$ with a Fubini--Study metric $\metr_\FS$. It remains to show that $\alpha \coloneqq \log(\metr/\metr_\FS)$ is an $S$-bounded function if and only if it is dominated. Both properties mean by definition that there is an integrable function $c$ on $\Omega$ such that $|\alpha(x,\omega)| \leq c(\omega)$ for all $\omega \in \Omega$ and all $x \in X_\omega^\an$.
\end{remark}

\begin{remark} \label{group of adelic line bundles} 
The isometry classes of line bundles on $U_S^\an$ endowed with locally $S$-bounded and $S$-measurable metrics form a group, denoted by {$\widehat{\Pic}_S(U)$}. This is the ambient group where arithmetic intersection theory takes place. If $U=X$ is a projective variety over $K$, then it is precisely the group of isometry classes of adelic line bundles of $X$  considered by Chen and Moriwaki \cite[Definition 4.1.9]{chen2021arithmetic}. 
{Similarly, we define $\widehat{\Pic}_{S,\Q}(U)\coloneq \widehat{\Pic}_S(U)\otimes_\Z\Q$ for $\Q$-line bundles.} 
These groups are stable under pull-back with respect to morphisms of algebraic varieties.
\end{remark}

\begin{art}
 \label{global Green functions}
	For a Cartier divisor (resp.~a $\Q$-Cartier divisor) $D$ of $U$, an \emph{$S$-Green function} $g_D$ is a family of Green functions $g_{D,\omega}$ for the base change $D_\omega$ of $D$ to $U_\omega$ with $\omega$ running over $\Omega$.  We recall from \ref{Green functions} that there is an associated continuous metric $\metr_\omega$ of $L_\omega^\an$ for the line bundle (resp.~$\Q$-line bundle) $L=\OO_X(D)$. By abuse of notation, we say that the Green function $g_D$ is \emph{$S$-measurable} (resp.~\emph{locally $S$-bounded}) if the metric $\metr_\omega$ of $L_\omega$ is $S$-measurable (resp.~locally $S$-bounded) for all $\omega \in \Omega$. We denote by $\widehat{\Div}_S(U)$ (resp.~$\widehat{\Div}_{S,\Q}(U)$) the group of pairs $(D,g)$ with $D$ a Cartier divisor (resp.~$\Q$-Cartier divisor) on $U$  and $g$ an $S$-measurable locally $S$-bounded Green function for $D$. {We say that an element $(D,g)\in\widehat{\Div}_S(U)$ (resp. $(D,g)\in\widehat{\Div}_{S,\Q}(U)$) is \emph{effective} if the underlying Cartier divisor (resp.\ $\Q$-Cartier divisor) $D$ is effective and $g_\omega\geq 0$ on $U^\an_\omega$ for any $\omega\in\Omega$.}
\end{art}

We have the following lemma which is the global version of \cref{lemma: injectivity of restriction from integrally closed}.
	\begin{lemma}\label{lemma: global injectivity of restriction from integrally closed}
		Let $X$ be a proper $K$-model of $U$. If $X$ is integrally closed in $U$, then an element $(D,g)\in\widehat{\Div}_{S,\Q}(X)$ is effective if and only if its image in $\widehat{\Div}_{S,\Q}(U)$ is effective. 
	\end{lemma}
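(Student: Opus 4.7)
The plan is to reduce this to the local analogue \cref{lemma: injectivity of restriction from integrally closed} by exploiting the fact that effectivity in $\widehat{\Div}_{S,\Q}$ is defined fiberwise over $\omega \in \Omega$. The ``only if'' direction is trivial, so I will assume $(D,g) \in \widehat{\Div}_{S,\Q}(X)$ has effective image in $\widehat{\Div}_{S,\Q}(U)$, meaning $D|_U \geq 0$ and $g_\omega \geq 0$ on $(U_\omega\setminus|D_\omega|)^\an$ for every $\omega \in \Omega$.

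For the divisor part, I would fix any $\omega_0 \in \Omega$ (which exists thanks to the standing assumption $\nu(\mathcal{A})\not\subset\{0,+\infty\}$ in \cref{assumeption on adelic curves}) and then repeat the argument from the proof of \cref{lemma: injectivity of restriction from integrally closed} almost verbatim. Since $X$ is integrally closed in $U$, for any codimension-one point $w \in X \setminus U$ the ring $\OO_{X,w}$ is a DVR, so $\ord_w(D)$ is well-defined. If $\ord_w(D)<0$, then a local equation $f$ of $D$ at $w$ has $|f| \to \infty$ along points of $X_{\omega_0}^\an$ converging to the analytification of $\overline{\{w\}}\times_K K_{\omega_0}$; using density of $U_{\omega_0}^\an$ in $X_{\omega_0}^\an$ (which is preserved under base change and analytification since $U$ is dense in $X$), we may even choose these approaching points to lie in $(U_{\omega_0}\setminus|D_{\omega_0}|)^\an$. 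The continuity of $g_{\omega_0}+\log|f|$ would then force $g_{\omega_0}$ to take arbitrarily negative values on $(U_{\omega_0}\setminus|D_{\omega_0}|)^\an$, contradicting our assumption. Hence $\ord_w(D)\geq 0$ for all such $w$, and \cite[Lemma~2.3.6]{yuan2021adelic} applied with $D|_U\geq 0$ yields $D\geq 0$ on $X$.

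For the Green-function condition, the argument is even cleaner: each $g_\omega$ is by hypothesis continuous on $(X_\omega\setminus|D_\omega|)^\an$, the open subset $(U_\omega\setminus|D_\omega|)^\an$ is dense therein (again because density of $U$ in $X$ is preserved by the two geometric operations), and $g_\omega \geq 0$ on this dense open. Continuity propagates the inequality to all of $(X_\omega\setminus|D_\omega|)^\an$, and the step is complete for each $\omega$ separately.

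I do not foresee any genuine obstacle. Both steps decouple completely across $\omega$; the integral closure hypothesis is applied to $X$ over $K$ itself rather than to any base change $X_{\omega_0}$, so no compatibility of the ``integrally closed in $U$'' property under $K\to K_{\omega_0}$ is required; and nothing from the $S$-boundedness or $S$-measurability of $g$ is used beyond continuity of each individual $g_\omega$. The only tiny care needed is verifying density of $U_{\omega_0}^\an$ in $X_{\omega_0}^\an$, which is standard for Berkovich (and complex) analytifications of dense open immersions of irreducible varieties.
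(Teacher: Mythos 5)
Your proposal is correct and follows essentially the same path as the paper's proof: both use density of the analytification of the dense open $U$ in that of $X$ to control the Green functions, and both invoke \cite[Lemma~2.3.6]{yuan2021adelic} on integral closedness to conclude from $\ord_w(D)\geq 0$ at codimension-one points of $X\setminus U$ that $D\geq 0$. The only differences are cosmetic: you derive $\ord_w(D)\geq 0$ directly from positivity of a single $g_{\omega_0}$ on $(U_{\omega_0}\setminus|D_{\omega_0}|)^\an$ rather than first extending $g_\omega\geq 0$ to all of $X_\omega^\an$, and you are slightly more careful about the locus $(U_\omega\setminus|D_\omega|)^\an$ on which the Green function is a priori defined.
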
	
	\begin{proof}
It suffices to show the "if" part. Let $(D,g)\in\widehat{\Div}_{S,\Q}(X)$ such that its image in $\widehat{\Div}_{S,\Q}(U)$ is effective.  For 
$\omega\in\Omega$, since $g_\omega$ is continuous with $g_\omega\geq 0$ on $U_\omega^\an$ and $U_\omega^\an$ is dense in $X^\an_\omega$, we have that $g_\omega\geq0$ on $X_\omega^\an$. Let $v\in X\setminus U$ be a point with closure $Z$ of codimension $1$ in $X$. Using that $X$ is integrally closed in $U$, it is shown in the proof of \cite[Lemma~2.3.6]{yuan2021adelic} that the local ring $\OO_{X,v}$ is a discrete valuation ring. Since $g_\omega\geq0$, it follows that $\ord_v(D)\geq0$ otherwise $g_\omega<0$ in a neighborhood of $Z_\omega$ in $X_\omega$.
Since $D|_U\geq 0$ and $\ord_v(D)\geq0$ for any $v\in X\setminus U$ of codimension $1$, by \cite[Lemma~2.3.6]{yuan2021adelic}, we have that $D\geq0$, so our claim holds.
\end{proof}

The following lemma is well-known. We state and prove it for convenience of the reader. 
\begin{lemma} \label{Green functions for effective divisors}
Let $g_D$ be a locally $S$-bounded Green function for an effective $\Q$-Cartier divisor $D$ of $U$ and let $E$ be a bounded subset of $U_S^\an$. Then there is an integrable function $c$ on $\Omega$ such that $g_{D,\omega}(x)\geq c(\omega)$ for all $\omega \in \Omega$ and all $x \in E\cap U_\omega^\an$. In particular, if $U=X$ is a proper variety over $K$, then $g_D$ is bounded from below by an integrable function on $\Omega$.
\end{lemma}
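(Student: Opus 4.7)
My approach is to reduce, via a finite affine open cover, to the situation of a single affine open $W\subset U$ on which $D$ has a global regular local equation (up to clearing denominators), and then bound $g_{D,\omega}$ from below by splitting the Green function into a purely algebraic part coming from the local equation and a metric part coming from a trivialization of $\mathcal{O}_U(mD)$.

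First, since $D$ is an effective $\Q$-Cartier divisor, there exists $m\in\N_{>0}$ such that $mD$ is an effective Cartier divisor. Pick a finite affine open cover $(W_j)_{j=1}^r$ of $U$ and regular functions $\tilde f_j\in\mathcal{O}_U(W_j)$ with $mD|_{W_j}=\operatorname{div}(\tilde f_j)$. Let $e_j$ be the trivializing section of $\mathcal{O}_U(mD)$ on $W_j$ dual to $\tilde f_j$, so that the canonical section $s_{mD}$ of $\mathcal{O}_U(mD)$ satisfies $s_{mD}|_{W_j}=\tilde f_j\cdot e_j$. The metric $\metr$ of $\mathcal{O}_U(D)$ associated to $g_D$ via \eqref{metric and Green functions} gives $\metr^{\otimes m}$ on $\mathcal{O}_U(mD)$, and for every $\omega\in\Omega$ and every $x\in(W_j)_\omega^{\an}\setminus|D|_\omega$ we obtain the identity
\[
m\,g_{D,\omega}(x)\;=\;-\log\|s_{mD}(x)\|_\omega\;=\;-\log|\tilde f_j(x)|_\omega-\log\|e_j(x)\|_\omega.
\]
Since $g_D$ is continuous on $(U\setminus|D|)^{\an}_{S}$ and equals $+\infty$ along $|D|$, this identity extends (as a statement about lower bounds) to all of $(W_j)_\omega^{\an}$.

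Next, by Definition~\ref{definition of bounded subsets} we can decompose $E=\bigcup_{j=1}^{r}E_j$ with $E_j\subset (W_j)_S^{\an}$ and each $E_j$ bounded in $W_j$. Applied to $\tilde f_j\in\mathcal{O}_U(W_j)$, boundedness yields an integrable function $c_j^{(1)}$ on $\Omega$ with $\log|\tilde f_j(x)|_\omega\leq c_j^{(1)}(\omega)$ for all $(x,\omega)\in E_j$. Local $S$-boundedness of the metric $\metr$ (hence of $\metr^{\otimes m}$) applied to the nowhere vanishing section $e_j$ on $W_j$ yields an integrable function $c_j^{(2)}$ with $|\log\|e_j(x)\|_\omega|\leq c_j^{(2)}(\omega)$ for all $(x,\omega)\in E_j$; see Definition~\ref{definition: bounded metrics}. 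Combining, on $E_j$ we have
\[
m\,g_{D,\omega}(x)\;\geq\;-c_j^{(1)}(\omega)-c_j^{(2)}(\omega).
\]
Setting $c(\omega)\coloneqq \tfrac{1}{m}\min_{1\leq j\leq r}\bigl(-c_j^{(1)}(\omega)-c_j^{(2)}(\omega)\bigr)$, which is integrable as a finite minimum of integrable functions, gives $g_{D,\omega}(x)\geq c(\omega)$ for all $(x,\omega)\in E$, which is the desired lower bound.

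For the final assertion, when $U=X$ is a proper variety over $K$, Remark~\ref{properties of bounded sets} shows that $X_S^{\an}$ is itself $S$-bounded, so we can take $E=X_S^{\an}$ in the statement just proved, producing an integrable function on $\Omega$ bounding $g_D$ from below everywhere. I do not anticipate a serious obstacle; the only point requiring care is that the local equation of a $\Q$-Cartier effective divisor need not itself be regular, which is circumvented by passing to $mD$, and that the two integrable bounds (on the algebraic and metric sides) must be combined coherently across the finite cover.
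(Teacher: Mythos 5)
Your proof is correct and follows essentially the same approach as the paper's: clear denominators to reduce to an effective Cartier divisor, decompose the bounded set along a finite affine cover, split the Green function into an algebraic term (local equation) and a metric term (nowhere-vanishing section), and bound each by an integrable function using boundedness of $E$ and local $S$-boundedness of the metric, respectively. The only cosmetic difference is that you carry the factor $m$ explicitly to the end, while the paper replaces $D$ by $mD$ once and for all at the start.
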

\begin{proof}
Multiplying with a suitable positive integer, we may assume that $D$ is an effective Cartier divisor of $U$. Then there is a finite open affine covering $(U_j)_{j \in J}$ of $U$ such that $D$ is given on $U_j$ by $\gamma_j \in \mathcal O_U(U_j)$. For the canonical global section $s_D$ of $L=\mathcal O_U(D)$, there is a nowhere vanishing section $s_j \in H^0(U_j,L)$ with
\begin{equation} \label{decomposition of Green function}
	-\log \|s_j\|_\omega = g_{D,\omega}+ \log |\gamma_j|_\omega
	\end{equation}
for all $\omega \in \Omega$. By definition of a Green function, this is a continuous function on $U_{j,\omega}^\an$. By definition of a locally $S$-bounded Green function, $\log \|s_j\|$ is locally $S$-bounded on $U_j$ for every $j \in J$. Since $E$ is $S$-bounded, we have a decomposition $E = \bigcup_j E_j$ with $E_j \subset U_{j,S}^\an$ such that $E_j$ is $S$-bounded in the affine variety $U_j$ in the sense of Definition \ref{definition of bounded subsets in affine case}, see \cite[Lemma 2.15]{gubler1997heights}. It follows that there are integrable functions $c_j',c_j''$ on $\Omega$ such that for all $\omega \in \Omega$ and all $x \in E_j \cap U_{j,\omega}^\an$, we have 
$$\log|\gamma_j(x)|_\omega \leq c_j'(\omega)$$
by boundedness of $E_j$ in $U_j$ and
$$|\log \|s_j(x)\|_\omega| \leq c_j''(\omega)$$
by using that $\log \|s_j\|$ is locally $S$-bounded. Using this in \eqref{decomposition of Green function} and that  $E = \bigcup_j E_j$ is a finite decomposition, we get the claim with the lower bound $c=-\max\{c_j'+c_j''\mid j \in J\}$. If $U=X$ is a proper variety, we can apply this to the $S$-bounded set $E \coloneqq X_S^\an$ from \cref{properties of bounded sets} to deduce the last claim.
\end{proof}

We recall from Remark \ref{assumeption on adelic curves} that $\Omega_\infty$, $\Omega_{\mathrm{fin}}$ and $\Omega_0$ denote the partition of $\Omega$ into the parts which induce archimedean, non-archimedean and trivial valuations, respectively.

\begin{definition} \label{global adelic Green functions for proper varieties}
	Let $X$ be a proper variety over $K$.
	We consider the submonoid
	$N_{S,\Q}(X)$  of $\widehat{\Div}_{S,\Q}(X)$ consisting of $(D,g_D)\in \widehat{\Div}_{S,\Q}(X)$ with $g_{D,\omega}$ a uniform limit of semipositive model (resp.~twisted 
	Fubini--Study) 
	Green functions for $D_\omega$ on $X_\omega^\an$ if $\omega \in \Omega \setminus \Omega_0$ (resp.~$\omega \in \Omega_0$). This uses the notation introduced in the local theory in \cref{section: local theory} which we recall in the following. 
	We assume that $g_{D,\omega}$ is the uniform limit of a sequence of Green functions $g_{D,\omega,k}$ for $D_\omega$ on $X_\omega^\an$ satisfying the following conditions:
	
	If $\omega \in \Omega_\infty$, then $g_{D,\omega,k}$ is a semipositive smooth Green function for $D_\omega$  or in other words $(D_\omega,g_{D,\omega})$ is in the cone $N_{\SP,\Q}(X_\omega)$ of the abstract divisorial space $(M_{\SP,\Q}(X_\omega),N_{\SP,\Q}(X_\omega))$  over $\Q$ from \cref{Zhang metrics as divisorial space}. 
	
	If $\omega \in \Omega_{\mathrm{fin}} \setminus \Omega_0$, then $g_{D,\omega,k}$ is a semipositive model Green function for $D_\omega$ or equivalently $(D_\omega,g_{D,\omega})$ is again in the cone $N_{\SP,\Q}(X_\omega)$. 
	
	If $\omega \in \Omega_0$, then $g_{D,\omega,k}$ is 
	{a twisted Fubini--Study Green function for $D_\omega$, see \cref{abstract divisorial space and FS}}. 
\end{definition}

\begin{prop} \label{classical global Arakelov theory as divisorial space}
    Under the above assumptions, let
	$M_{S,\Q}(X)\coloneqq {N_{S,\Q}(X)}-{N_{S,\Q}(X)}$ as a subgroup of $\widehat{\Div}_{S,\Q}(X)$ . Then the abelian group $M_{S,\Q}(X)$ is ordered by the submonoid 
	$$M_{S,\Q}(X)_{\geq 0} \coloneqq \{(D,g_D) \in M_{S,\Q}(X) \mid \text{$D$ effective and $g_{D,\omega} \geq 0$ for  
		all $\omega \in \Omega$}\},$$ and
	$(M_{S,\Q}(X),N_{S,\Q}(X))$ is an abstract divisorial space over $\Q$ in the sense of \cref{section: abstract divisorial spaces}. 
\end{prop}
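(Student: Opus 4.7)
The plan is to verify the three defining properties of an abstract divisorial space over $\Q$: that $N_{S,\Q}(X)$ is a cone, that $M_{S,\Q}(X)_{\geq 0}$ endows $M_{S,\Q}(X)$ with the structure of an ordered $\Q$-vector space, and that $M_{S,\Q}(X) = N_{S,\Q}(X) - N_{S,\Q}(X)$; the last is true by definition.

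First, I will check that $N_{S,\Q}(X)$ is a cone in $\widehat{\Div}_{S,\Q}(X)$. Closure under sums: if $(D_i,g_i) \in N_{S,\Q}(X)$ for $i=1,2$, then for each $\omega \in \Omega$ the Green function $g_{i,\omega}$ is a uniform limit on $X_\omega^\an$ of Green functions $g_{i,\omega,k}$ of the appropriate type. For $\omega \in \Omega_\infty$ (resp.~$\omega \in \Omega_{\mathrm{fin}}\setminus\Omega_0$), the sum of two semipositive smooth (resp.~semipositive model) Green functions is of the same type, so $g_{1,\omega}+g_{2,\omega}$ is a uniform limit of semipositive smooth (resp.~semipositive model) Green functions; this is just the statement that $N_{\SP,\Q}(X_\omega)$ is a cone (\cref{classical local Arakelov theory as divisorial space} combined with \cref{Zhang metrics as divisorial space}). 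For $\omega \in \Omega_0$, we use that $(M_{\tFS,\Q}(X_\omega),N_{\tFS,\Q}(X_\omega))$ is already an abstract divisorial space over $\Q$ (Remark~\ref{abstract divisorial space and FS}), whence $N_{\tFS,\Q}(X_\omega)$ is a cone; uniform limits of elements in this cone still form a cone since uniform convergence is preserved by pointwise addition. An identical argument handles scalar multiplication by $r \in \Q_{\geq 0}$ by passing to divisible hulls. Finally, local $S$-boundedness and $S$-measurability are stable under finite $\Q$-linear combinations by Remarks~\ref{properties of measurable metrics} and~\ref{properties of bounded metrics}, so the sum and scalar multiple remain in $\widehat{\Div}_{S,\Q}(X)$.

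Second, $M_{S,\Q}(X) := N_{S,\Q}(X) - N_{S,\Q}(X)$ is a $\Q$-subspace of $\widehat{\Div}_{S,\Q}(X)$ by the cone property just established. The submonoid $M_{S,\Q}(X)_{\geq 0}$ is obviously closed under addition and under multiplication by elements of $\Q_{\geq 0}$, so it defines a translation-invariant preorder compatible with non-negative rational scaling. To obtain a genuine partial order (antisymmetry), it remains to show that $M_{S,\Q}(X)_{\geq 0} \cap \bigl(-M_{S,\Q}(X)_{\geq 0}\bigr) = \{0\}$. Let $(D,g)$ lie in this intersection. Then $D$ and $-D$ are both effective $\Q$-Cartier divisors on $X$, so $D=0$. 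Likewise, $g_\omega \geq 0$ and $g_\omega \leq 0$ hold pointwise on $X_\omega^\an$ for every $\omega \in \Omega$, forcing $g_\omega \equiv 0$. Hence $(D,g)=0$ as required, and $(M_{S,\Q}(X),M_{S,\Q}(X)_{\geq 0})$ is an ordered $\Q$-vector space in the sense of Section~\ref{section: abstract divisorial spaces}.

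The main technical subtlety is the trivially valued case: one must ensure that the fiberwise twisted Fubini--Study cone behaves well under addition and rational scaling, which is precisely the content of Remark~\ref{abstract divisorial space and FS}. Once that point is granted, the global assertion reduces to a fiberwise check together with the elementary stability of local $S$-boundedness and $S$-measurability under $\Q$-linear combinations, and the proposition follows.
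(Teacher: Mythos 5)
Your proof is correct, but it takes a genuinely different route from the paper. You verify the three axioms of an abstract divisorial space directly: the cone property of $N_{S,\Q}(X)$ (checked $\omega$-by-$\omega$ via the local cones $N_{\SP,\Q}(X_\omega)$ and $N_{\tFS,\Q}(X_\omega)$, plus stability of $S$-measurability and local $S$-boundedness under $\Q$-linear combinations), antisymmetry of the order on $M_{S,\Q}(X)$ (split into the divisor part and the pointwise Green-function part), and $M = N - N$ by definition. The paper instead argues structurally: for each $\omega$ it defines an abstract divisorial space $(M_{\omega,\Q}(X), N_{\omega,\Q}(X))$, takes the product $(M_{\gm,\Q}(X), N_{\gm,\Q}(X)) \times \prod_\omega (M_{\omega,\Q}(X), N_{\omega,\Q}(X))$ (a product of abstract divisorial spaces, which exists by Lemma~\ref{direct limits}), and then realizes $(M_{S,\Q}(X), N_{S,\Q}(X))$ as the induced divisorial subspace (in the sense of \ref{induced divisorial subspace}) on the subspace cut out by $\widehat{\Div}_\Q(X)$. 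This makes the axioms automatic, inherited from the product and the induced-subspace formalism. Your argument is more elementary and self-contained, and is useful if one does not want to lean on the categorical machinery; the paper's is shorter and more reusable, since the same product-and-induced-subspace pattern recurs elsewhere. Both are valid proofs of the proposition.
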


\begin{proof}
This follows easily from the definitions as $M_{S,\Q}(X)_{\geq 0}$ is a pointed cone in $\widehat{\Div}_{S,\Q}(X)$.
\end{proof}

\begin{remark} \label{global functoriality}
Let $\varphi\colon X' \to X$ be a surjective morphism of proper varieties over $K$. Then we get a morphism
$$\varphi^*\colon M_{S,\Q}(X) \longrightarrow M_{S,\Q}(X') \quad (D,g_D) \longmapsto (\varphi^*(D), g_D \circ \varphi)$$
of abstract divisorial spaces over $\Q$. This often will be used to reduce to the projective situation considered next.
\end{remark}

Chen and Moriwaki \cite{chen2021arithmetic} have extended arithmetic intersection theory to a projective variety $X$ over the proper adelic curve $S$. We will compare their notions with ours.

\begin{remark} \label{adelic Green functions of Chen-Moriwarki}
	Let $X$ be a projective variety over $K$. Chen and Moriwaki consider \emph{adelic line bundles} on $X$ which means a line bundle $L$ endowed with  a continuous $S$-metric $\metr$ which is locally $S$-bounded (dominated in their terminology, see Remark \ref{projective case for locally bounded}) and measurable.
	If $L$ is semiample, then they call such a metric $\metr$ \emph{semipositive} if $\metr_\omega$ is uniform limit of so called \emph{quotient metrics} of $L$ \cite[Definition~3.2.6]{chen2021arithmetic}. A \emph{semipositive adelic} {\emph{$\Q$-}}\emph{divisor} is then a pair $(D,g_D)$ with $D$ a $\Q$-Cartier divisor on $X$ such that $L=\OO_X(D)$ is semiample and such that the $S$-metric of $L$ corresponding to $g_D$ is semipositive in the sense of Chen and Moriwaki.

	We claim that, under the assumption that $D$ is semiample, $(D,g_D)\in N_{S,\Q}(X)$ if and only if $(D,g_D)$ is a semipositive adelic $\Q$-divisor in the above sense of Chen and Moriwaki.  
	To see this, we note first for $\omega \in \Omega_\infty$, that the continuous metric $\metr_\omega$ is plurisubharmonic in the sense of complex analysis if and and only if it is a (decreasing) limit of Fubini--Study metrics, see  \cite[Theorem~7.1]{boucksom2021spaces}. The latter are special quotient metrics giving rise to smooth metrics, so this proves the archimedean case. If $\omega \in \Omega_{\mathrm{fin}} \setminus \Omega_0$, it follows from \cite[Theorem~3.2.19]{chen2021arithmetic} that a continuous metric $\metr_\omega$ of $L_\omega$ is a uniform limit of semipositive model metrics if and only if it is semipositive in the sense of Chen and Moriwaki. If $\omega \in \Omega_0$, then we have used twisted Fubini--Study metrics. As the latter are also quotient metrics, it follows that a uniform limit of twisted Fubini--Study metrics of $L_\omega$ is semipositive in the sense of Chen and Moriwaki. Conversely, it remains to show that a quotient metric of $L$ is a uniform limit of twisted Fubini--Study metrics of $L$. This follows from density of diagonalizable ultra-metric norms  in the space of ultra-metric norms \cite[Theorem~1.19]{boucksom2021spaces}, from \cite[(3.1)]{chen2021arithmetic} and from the fact that every quotient metric of $L$ can be defined by an ultra-metric norm \cite[Remark 1.1.19, Proposition 1.2.14]{chen2020arakelov}. Putting these statements comparing semipositivity for 
	every $\omega \in \Omega$ together, we readily prove the above claim.
	
	Chen and Moriwaki call an adelic divisor $(D,g_D)$ of $X$ \emph{integrable} if it is the difference of two semipositive adelic divisors. Since any $\Q$-Cartier divisor on $X$ is the difference of two (semi-)ample $\Q$-Cartier divisors, the above shows that $(D,g_D)\in \widehat{\Div}_{S,\Q}(X)$ is in $M_{S,\Q}(X)$ if and only if $(D,g_D)$ is an integrable adelic divisor in the sense of Chen and Moriwaki. 
\end{remark}

In this paper, we generalize these terminologies to proper varieties. If $X$ is proper over $K$, an element in $\widehat{\Div}_{S,\Q}(X)$ is called an \emph{adelic divisor in the sense of Chen and Moriwaki}, and adelic divisors in $N_{S,\Q}(X)$ (resp. $M_{S,\Q}(X)$) are called \emph{semipositive} (resp. \emph{integrable}). Similarly, we proceed for $S$-metrized line bundles.
	{Note that if $\Omega_0 \neq \emptyset$ (i.e.~there are trivial valuations in $\Omega$), then semipositive $S$-metrized line bundles  {have underlying semiample line bundles.} It is certainly possible that a proper non-projective variety has non-trivial semiample line bundles (using e.g.~the product of a non-projective and a projective variety).}

\begin{example} \label{semipositive adelic metrics in running example}
Let $X=\mathbb P_K^1$ for $K=\mathbb Q$. We endow $K$ with the canonical structure of an adelic curve $S=(K,\Omega,\mathcal A,\nu)$ as in \cref{example:adelic structure of number fields}. For simplicity, we just consider the divisor $D=[\infty]$. Let $\Psi(u) \coloneqq \min(u,0)$ be the corresponding piecewise linear function on $\R$. We have seen in \cref{running example for local Zhang} that for a place $v$ of $\Q$, a semipositive toric Green function $g_{D,v}$ for the base change $D_v$ of $D$ to $\mathbb P_{\mathbb \Q_v}^1$ corresponds to a concave function $\psi_v\colon \R \to \R$ with $\psi_v-\Psi=O(1)$. We conclude that a toric $S$-Green function $g_D=(g_{D,v})_{v \in \Omega}$ for $D$ is given by a family $(\psi_v)_{v \in \Omega}$ of such concave functions. Then $g_D$ is 
locally $S$-bounded if and only if there is an integrable function $C \colon \R \to \R$ such that 
\begin{equation} \label{integrable bound}
|\psi_v- \Psi| \leq C(v)
\end{equation}
for all $v \in \Omega$. This condition is necessary and sufficient for $(D,g_D) \in N_{S,\Q}(X)$ as $g_D$ is always $S$-measurable since the $\sigma$-algebra $\mathcal A$ of $S$ is discrete.
\end{example}

Chen and Moriwaki defined \emph{arithmetic intersection numbers} for integrable adelic divisors in the above setting. This is easily generalized to proper varieties. We summarize their result in  the  following proposition.

\begin{prop} \label{global intersection number}
	Let $S=(K,\Omega,\mathcal{A},\nu)$ be the given  proper adelic curve. For any proper variety $X$ over $K$, for any $\overline{D_0}, \dots, \overline{D_k} \in M_{S,\Q}(X)$ and any $k$-dimensional cycle $Z$ of $X$, there is a unique $(\overline{D_0} \cdots \overline{D_k} \mid Z)_S \in \R$ with the following properties:
	\begin{enumerate}
		\item \label{classical global intersection number line bundles} The number $(\overline{D_0} \cdots \overline{D_k} \mid Z)_S \in \R$ depends only on the isometry classes of the underlying $S$-metrized $\Q$-line bundles $\overline{L_j}=(\mathcal O_X(D_j), \metr_j)$, $j=0,\dots, k$, and on the cycle $Z$, but not on the particular choices of $\overline{D_0}, \dots, \overline{D_k}$, so we set
		$$(\overline{L_0} \cdots \overline{L_k} \mid Z)_S \coloneqq (\overline{D_0} \cdots \overline{D_k} \mid Z)_S.$$
		\item The pairing  $(\overline{L_0} \cdots \overline{L_k} \mid Z)_S \in \R$ is multilinear and symmetric in $\overline{L_0}, \dots, \overline{L_k}$ and linear in $Z$.
		\item \label{classical global intersection number factorial} If $\varphi \colon X' \to X$ is a morphism of proper varieties over $K$ and if $Z'$ is a $k$-dimensional cycle on $X'$, then the projection formula holds:
		$$ (\varphi^*\overline{L_0} \cdots \varphi^*\overline{L_k} \mid Z')_S = (\overline{L_0} \cdots \overline{L_k} \mid \varphi_*Z')_S.$$
		\item \label{global intersection number by induction} If $Z=X$ and  $k=d=\dim(X)$, then $\int_{X_\omega^\an} g_{D_0,\omega} c_1(\overline{L_1}) \wedge \dots \wedge c_1( \overline{L_d})$ is an integrable function in $\omega \in \Omega$ and the induction formula holds:
		$$(\overline{L_0} \cdots \overline{L_d} \mid X)_S= (\overline{L_1} \cdots \overline{L_d} \mid  \mathrm{cyc}(D_0))_S+ \int_{\Omega} \left(\int_{X_\omega^\an} g_{D_0,\omega} c_1(\overline{L_1}) \wedge \dots \wedge c_1( \overline{L_d})\right)\,\nu(d\omega).$$
		\item \label{classical global intersection number field extension} Let $K'/K$ be an algebraic extension of $K$ and let $S'$ be the canonical adelic curve on $K'$ induced by $S$. Let $X',\overline{L_0'}, \dots, \overline{L_k'},Z'$ be obtained from $X,\overline{L_0}, \dots, \overline{L_k},Z$ by base change. Then we have
		$$(\overline{L_0'} \cdots \overline{L_k'} \mid Z')_{S'}=(\overline{L_0} \cdots \overline{L_k} \mid Z)_S.$$
	\end{enumerate}
\end{prop}
\begin{proof}
	The uniqueness is from \ref{global intersection number by induction}. The existence is from \cite[Theorem~4.2.11, Proposition~4.4.4, Theorem~4.4.9, Proposition~3.8.9]{chen2021arithmetic} for projective varieties, and the proper case follows easily from the projection formula and Chow's lemma.
\end{proof}

In the following, for any $\overline{D}\in M_{S,\Q}(X)$ (resp.~an $S$-metrized line bundle $\overline{L}$ corresponding to some divisor $\overline{D}\in M_{S,\Q}(X)$) and any $k$-dimensional cycle $Z$ of $X$, we denote by 
$h_{S,\overline{D}}(Z)\coloneq (\overline{D}^{k+1}\mid Z)_S$ (resp.~$h_{S,\overline{L}}(Z)\coloneq (\overline{L}^{k+1}\mid Z)_S$)  
the {\emph{height of $Z$ with respect to $\overline{D}$ (resp.~$\overline{L}$)}.} We will omit $S$ when the adelic curve is clear.

\subsection{{$S$-ample} line bundles} \label{subsection: arithmetically nef line bundles}

We recall the following definition from Chen and Moriwaki \cite[Definition~9.1.1]{chen2024positivity}. 

\begin{definition} \label{definition: arithmetically ample}
	Let $\overline{L}=(L,\metr)$ be an adelic line bundle on a projective variety $X$ over $K$ in the sense of Chen and Moriwaki (see Remark \ref{adelic Green functions of Chen-Moriwarki}).  We say that $\overline{L}$ is 
	\begin{itemize}
		\item \emph{relatively $S$-ample} if $L$ is an ample line bundle and if $\metr$ is a semipositive $S$-metric in the sense of Chen and Moriwaki (see Remark \ref{adelic Green functions of Chen-Moriwarki});
		\item \emph{$S$-ample} if $\overline{L}$ is relatively $S$-ample and if there is $\varepsilon >0$ such that for every integral closed subscheme $Y$ of $X$, we have
		\begin{equation} \label{ample height inequality}
			h_{\overline L}(Y) \geq \varepsilon \deg_L(Y) (\dim(Y)+1).
		\end{equation} 
	\end{itemize}
	{For $\overline{D}\in  N_{S,\Q}(X)$, we say that $\overline{D}$ is \emph{$S$-ample} (resp. \emph{relatively $S$-ample}) if $\OO_X(n\overline{D})$ is induced by an $S$-ample (resp. relatively $S$-ample)  adelic line bundle for some $n\in\N_{>0}$.}
\end{definition}

\begin{remark} \label{existence of S-ample}
Let $\overline L=(L,\metr)$ be a relatively $S$-ample adelic line bundle on the projective variety $X$ over $K$. For an integrable function $c$ on $S$, we define the \emph{twist} $\overline{L}(c)$ by using the metrics  $\metr_{\overline L(c),\omega} \coloneqq e^{-c(\omega)}\metr_{\overline L,\omega}$ for all $\omega \in \Omega$. Obviously, this is again a relatively $S$-ample adelic line bundle. 

Let $Y$ be a closed integral subscheme of $X$. Since $L$ is ample, there is $n \in \N$ such that $L^{\otimes n}$ has a  global section $s_n$ with $s_n|_Y\neq 0$ and the induction formula  yields
\begin{equation} \label{induction formula here}
	h_{\overline L}(Y)=n^{-1}h_{\overline L}(\mathrm{div}(s_n))-n^{-1}\int_\Omega \int_{Y_\omega^\an} \log\|s_n\|_{\overline L^{\otimes n},\omega} c_1(L_\omega,\metr_\omega)^{\wedge\dim(Y)}\,\nu(d\omega),
\end{equation}
see \cref{global intersection number}\ref{global intersection number by induction}. Applying this with $\overline L(c)$ as well, we get
\begin{equation} \label{twist and height}
	h_{\overline L(c)}(Y)=h_{\overline L}(Y)+(\dim(Y)+1)\deg_L(Y)\int_\Omega c \,\nu(d\omega).
\end{equation}
Using that $\metr$ is (locally) $S$-bounded and that $s_n$ is a global section, there is an integrable function $c'$ on $\Omega$ such that $-\log\|s_n\|_{\omega} \geq c'(\omega)$ for all $\omega \in \Omega$. By induction on $\dim(Y)$ and by semipositivity of the metrics, we deduce then from \eqref{induction formula here} that $h_{\overline L}(Y) \geq -C\deg_L(Y)$ for some $C \in \R$ independent of $Y$. 
Choosing $c$ sufficiently large in \eqref{twist and height} and using $\nu(\Omega)\not\in \{0,\infty\}$, we conclude that we always find a twist such that $\overline L(c)$ is $S$-ample. In particular,  any ample line bundle has a twisted Fubini--Study metric which is $S$-ample.
\end{remark}

\begin{lemma} \label{arithmetic ampelness and base change}
Let $K'/K$ be an algebraic field extension, {let $S'\coloneqq S\otimes_K K'$ be the canonical adelic curve on $K'$ as before \cref{def:heightfunction}} and let $X'\coloneqq X_{K'}$ be the base change of the projective variety $X$ to $K'$.  Then  $\overline D\in N_{S,\Q}(X)$  is $S$-ample if and only if the base change $\overline{D'} \coloneqq \overline D_{K'}$ to $X'$ is {$S'$-ample}.
\end{lemma}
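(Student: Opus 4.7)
The plan is to verify independently the three conditions defining $S$-ampleness (ampleness of $L$, semipositivity of $\metr$, and the height inequality) under base change. Ampleness of line bundles is preserved and reflected under faithfully flat base change, so $L$ is ample iff $L'$ is ample. Semipositivity of $\metr$ in the sense of Chen--Moriwaki (see Remark \ref{adelic Green functions of Chen-Moriwarki}) is a place-by-place condition: at each $\omega \in \Omega$, $\metr_\omega$ is a uniform limit of semipositive model metrics (resp.~twisted Fubini--Study metrics at trivial places). Since base change $X_\omega \to (X_{K'})_{\omega'}$ for $\omega'$ above $\omega$ preserves and reflects this limit property for model and tFS metrics (the model/tFS cones transfer via base change), $\metr \in N_{S,\Q}(X)$ iff $\metr_{K'} \in N_{S',\Q}(X')$. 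So it remains to compare the uniform height inequalities.

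For the forward direction, assume $\overline{L}$ is $S$-ample with constant $\varepsilon > 0$. Given an integral closed $Y' \subset X'$, a standard spreading-out argument gives a finite subextension $K''/K$ inside $K'/K$ and an integral closed $Y'' \subset X_{K''}$ with $Y' = Y'' \times_{K''} K'$. Enlarging $K''$ to its Galois closure in $\overline{K}$, we may assume $K''/K$ is finite Galois with group $G$. By Proposition~\ref{global intersection number}\ref{classical global intersection number field extension} applied to the algebraic extension $K'/K''$, we have $h_{\overline{L}'}(Y') = h_{\overline{L}_{K''}}(Y'')$, $\deg_{L'}(Y') = \deg_{L_{K''}}(Y'')$, and $\dim Y' = \dim Y''$. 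Thus it suffices to prove the inequality for $Y''$ on $X_{K''}$.

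Let $\pi\colon X_{K''}\to X$ and let $Y = \pi(Y'')_{\mathrm{red}}$, so that $Y$ is an integral closed subscheme of $X$ with $\dim Y = \dim Y''$. The scheme-theoretic base change $Y \times_K K''$ defines a cycle on $X_{K''}$ whose irreducible components are precisely the $G$-orbit $\{Y_1,\dots,Y_m\}$ of $Y''$, each appearing with the same multiplicity $n$ (by $G$-equivariance, which is valid also in the inseparable case). Since $Y_i$ are Galois conjugate integral subschemes, they share the same height and degree with respect to $\overline{L}_{K''}$. Using Proposition~\ref{global intersection number}\ref{classical global intersection number field extension} for $K''/K$ together with linearity of heights and degrees in cycles yields
\[h_{\overline L}(Y) = h_{\overline{L}_{K''}}(Y \times_K K'') = nm \cdot h_{\overline{L}_{K''}}(Y''), \qquad \deg_L(Y) = nm \cdot \deg_{L_{K''}}(Y'').\]
Dividing the defining inequality $h_{\overline L}(Y) \geq \varepsilon\, \deg_L(Y)(\dim Y + 1)$ by $nm$ gives the same inequality for $Y''$ with the \emph{same} constant $\varepsilon$, proving $S'$-ampleness.

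The reverse direction is symmetric but simpler. Assuming $\overline{L}'$ is $S'$-ample with constant $\varepsilon'>0$, take any integral closed $Y \subset X$. The base-change cycle $Y \times_K K'$ on $X'$ decomposes as $\sum_i n_i Y_i'$ with $Y_i'$ integral closed subschemes of dimension $\dim Y$. By Proposition~\ref{global intersection number}\ref{classical global intersection number field extension} and linearity,
\[h_{\overline L}(Y) = \sum_i n_i\, h_{\overline{L}'}(Y_i'), \qquad \deg_L(Y) = \sum_i n_i\, \deg_{L'}(Y_i').\]
Applying the ampleness inequality on $X'$ termwise gives $h_{\overline L}(Y) \geq \varepsilon'\, \deg_L(Y)(\dim Y + 1)$, so $\overline{L}$ is $S$-ample with constant $\varepsilon = \varepsilon'$. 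The only nontrivial point throughout is the Galois-equivariance of multiplicities in the forward direction (relevant when $K''/K$ is not separable), which is handled intrinsically by the cycle formalism.
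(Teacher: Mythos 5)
Your proof takes a genuinely different route from the paper's for the direction that $\overline{L}$ $S$-ample implies $\overline{L}'$ $S'$-ample. After reducing to a finite subextension, the paper views $\pi\colon X_{K'}\to X$ as a morphism of projective varieties \emph{over $K$} and applies the projection formula, writing
$[K':K]\,h_{S',\overline L'}(Y')=h_{S,\pi^*\overline L}(Y')=[K'(Y'):K(Y)]\,h_{S,\overline L}(Y)$
and the analogous degree formula. You instead pass to a Galois closure and compare $Y''$ with the whole $G$-orbit inside $Y\times_K K''$, so that all heights are computed over $S$ (or $S''$) and only base-change invariance of heights is used, never the comparison of heights for the same cycle across $S$ and $S'$. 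That is a reasonable trade: you sidestep the ``by the way heights are set up'' normalization, at the cost of the Galois closure. You also explicitly track ampleness of $L$ and semipositivity of $\metr$ under base change, which the paper leaves implicit; both directions of the reverse implication agree.

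There is, however, a gap in the enlargement step. You spread out to obtain a finite subextension $K''\subset K'$ and an integral $Y''\subset X_{K''}$ with $Y'=Y''\times_{K''}K'$, and then ``enlarge $K''$ to its Galois closure.'' But the Galois closure $\widetilde K''$ of $K''$ over $K$ inside $\overline K$ need not be contained in $K'$, so the equation $Y'=Y''\times_{K''}K'$ no longer makes sense after replacing $K''$ by $\widetilde K''$; and even when $\widetilde K''\subset K'$, the base change $Y''\times_{K''}\widetilde K''$ need not remain integral (take $Y''$ a closed point whose residue field is a non-trivial separable extension of $K''$). The argument as written therefore does not literally apply. This is repairable: decompose the cycle $Y''\times_{K''}\widetilde K''=\sum_j m_j Z_j$ into irreducible components (a single $\Gal(\widetilde K''/K'')$-orbit), run your $G$-orbit argument to obtain the height inequality for each $Z_j$ over $S\otimes_K\widetilde K''$, and then deduce it for $Y''$ over $S\otimes_K K''$ by linearity and base-change invariance---exactly the pattern you already use in the reverse direction. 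You flagged inseparability as ``the only nontrivial point,'' but this integrality/containment issue at the Galois closure is the one that actually needs the extra paragraph.
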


\begin{proof} 
	Multiplying $\overline D$ by a suitable non-zero $n \in \N$, we may assume that $\overline D$ is induced by an adelic line bundle $\overline{L}$ and hence $\overline{D'}$ is induced by the base change $\overline{L'}\coloneqq \overline{L}_{K'}$ of $L$ to $X'$. 
First, we assume that $\overline{L'}$ is $S'$-ample. Let $Y$ be an integral closed subscheme of $X$. We note that the base change $Y'\coloneqq Y_{K'}$ is a closed subscheme of $X'$ of pure dimension. Since $\overline{L'}$ is $S$-ample, the inequality \eqref{ample height inequality} holds for every irreducible component of $Y'$. By linearity of the height and the degree, we conclude that \eqref{ample height inequality} holds for $Y'$ and hence for $Y$ by invariance of the height and the degree under base change.

Conversely, we assume that $\overline L$ is $S$-ample. Let $Y'$ be a closed integral subscheme of $X'$. For clarity, we denote by $h_{S',\overline{L'}}(Y')$ the height of $Y'$. 
Then have to show 
	\begin{equation} \label{ample height inequality after base change}
	h_{S',\overline{L'}}(Y') \geq \varepsilon \deg_{L'}(Y') (\dim(Y')+1).
\end{equation}
Since $Y'$ and $X'$ are defined over a finite  subextension of $K'/K$, we may assume that $K'/K$ is finite. Let $Y$ be the image of $Y'$ with respect to the canonical morphism $\pi\colon X_{K'}\to X$. 
The way the heights are set up, and the projection formula show
\begin{equation} \label{heights and base change}
	[K':K]h_{S',\overline{L'}}(Y')= h_{S,\pi^*\overline L}(Y')=[K'(Y'):K(Y)]h_{S,\overline L}(Y)
\end{equation}
and a similar formula holds for the degree. Since $\overline L$ is $S$-ample, we have \eqref{ample height inequality} for $Y$. Plugging in \eqref{heights and base change} and the corresponding identity for the degree, we deduce \eqref{ample height inequality after base change}.
\end{proof}

\begin{art} \label{our notion for arithmetic nef}
	For $X$ projective, we define  
	$$N_{S,\Q}^+(X) \coloneqq \{(D,g_D) \in  M_{S,\Q}(X)\mid \text{$(D,g_D)$ is $S$-ample}\}$$
	and its closure $\overline{N_{S,\Q}^+}(X)$ in $M_{S,\Q}(X)$ in the sense of  \ref{closure for finite subspaces}. 
\end{art}

\begin{lemma} \label{lemma: difference of S-ample}
	On a projective variety $X$, 
	any adelic divisor in $M_{S,\Q}(X)$ is the difference of two effective $S$-ample adelic divisors, in particular $M_{S,\Q}(X)=N_{S,\Q}^+(X)-N_{S,\Q}^+(X)$. 
\end{lemma}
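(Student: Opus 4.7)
The plan is to start from the tautological decomposition $M_{S,\Q}(X) = N_{S,\Q}(X) - N_{S,\Q}(X)$, write $\overline D = (D, g_D) = \overline D_1 - \overline D_2$ with $\overline D_i = (D_i, g_i)$ semipositive, and then upgrade both summands simultaneously to effective $S$-ample adelic $\Q$-divisors by adding a large common multiple of a fixed auxiliary $\overline H$. To construct this auxiliary $\overline H = (H, g_H) \in N_{S,\Q}^+(X) \cap M_{S,\Q}(X)_{\geq 0}$, I will take $H$ an effective very ample divisor on $X$ (e.g.\ a hyperplane section for a projective embedding) and equip $\mathcal O_X(H)$ with a Fubini--Study-type adelic metric as in Example~\ref{global Fubini-Study metric}; Remark~\ref{existence of S-ample} then allows me to twist the metric by a non-negative integrable function large enough to make $\overline H$ $S$-ample, and Lemma~\ref{Green functions for effective divisors} lets me enlarge the twist further so that $g_H \geq 0$ pointwise on every $X_\omega^{\mathrm{an}}$.

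Given $\overline D = \overline D_1 - \overline D_2$, I first replace the decomposition by $\overline D = (\overline D_1 + m\overline H) - (\overline D_2 + m\overline H)$ for an integer $m$ large enough that $D_i + mH$ becomes an effective $\Q$-Cartier divisor for $i = 1, 2$; this uses that on a projective variety every $\Q$-Cartier divisor is dominated by a sufficiently large multiple of an effective very ample divisor via Serre's vanishing. After relabelling I assume $D_1, D_2$ themselves effective. For any $k \in \Q_{>0}$, $\mathcal O_X(D_i + kH)$ is then ample (semiample plus $k$ times ample, the semiampleness coming from Remark~\ref{adelic Green functions of Chen-Moriwarki}) and $g_i + k g_H$ defines a semipositive metric, so each $(D_i + kH, g_i + k g_H)$ is relatively ample in the sense of Definition~\ref{definition: arithmetically ample}. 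Remark~\ref{existence of S-ample} applied to both provides a non-negative integrable function $c$ on $\Omega$ (take the pointwise maximum of the two twists it supplies) so that
\[\overline A_i \coloneqq (D_i + kH, g_i + k g_H + c), \qquad i = 1,2,\]
are both $S$-ample; applying Lemma~\ref{Green functions for effective divisors} to the effective divisor $D_i + kH$ gives an integrable lower bound on $g_i + k g_H$, so enlarging $c$ still further I arrange $g_i + k g_H + c \geq 0$ everywhere, making both $\overline A_i$ effective. The identity $\overline D = \overline A_1 - \overline A_2$ then yields both assertions, in particular $M_{S,\Q}(X) = N_{S,\Q}^+(X) - N_{S,\Q}^+(X)$.

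The main obstacle is the effective modification step: showing that the initial semipositive decomposition can be arranged with effective underlying divisors amounts to decomposing a prescribed $\Q$-Cartier divisor as a difference of effective $\Q$-Cartier divisors, which is routine on smooth or normal projective varieties via the Weil-divisor decomposition and Serre's vanishing but requires extra care in the non-normal case.
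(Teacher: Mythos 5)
Your overall strategy matches the paper's: reduce via $M_{S,\Q}(X) = N_{S,\Q}(X) - N_{S,\Q}(X)$, introduce an auxiliary ample adelic divisor, and upgrade both summands to $S$-ample by twisting using Remark~\ref{existence of S-ample}. The paper arranges this slightly differently (it first reduces to $\overline D$ itself semipositive, then writes $D = D_1 - D_2$ with $D_1, D_2$ ample, equips $\mathcal O_X(D_2)$ with an arbitrary semipositive adelic metric and transports one to $D_1$ by tensoring with $\overline D$), but that is cosmetic. The substantive difference is that you try to establish effectivity of the two summands head-on, which the paper's own proof actually does not address (it opens with ``We have to show that $\overline D\in N_{S,\Q}^+(X)-N_{S,\Q}^+(X)$'' and only establishes that, not the ``effective'' part of the statement). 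So you are right to notice something must be said here.

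However, the step you use to get effectivity is false as stated. You claim that $D_i + mH$ becomes an effective $\Q$-Cartier divisor for $m$ large, invoking Serre's vanishing. Serre vanishing tells you that $D_i + mH$ is \emph{linearly equivalent} to an effective divisor for $m\gg 0$; it does not make the fixed Cartier divisor $D_i + mH$ itself effective. Concretely, take $X = \mathbb P^1_K$, $H = [0]$, and $D_1 = [0] - [\infty]$, which is semiample (indeed it is $\mathrm{div}(t)$, so $\OO_X(D_1)\cong\OO_X$). Then $D_1 + mH = (m+1)[0] - [\infty]$ is never effective. This is a smooth curve, so it is not a non-normal pathology. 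To make an argument of this shape work one would need to choose $H$ with $|H|$ containing the negative support of the $D_i$, but extracting the ``negative part'' of a $\Q$-Cartier divisor as a $\Q$-Cartier divisor is itself nontrivial (and can fail on non-normal varieties), which is precisely the issue you flag at the end. So the caveat in your last paragraph is not merely a technicality in the non-normal case: the specific ``add $mH$'' step does not work even on $\mathbb P^1$, and a genuinely different construction is needed to obtain the effectivity asserted in the lemma.
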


\begin{proof}
Let $\overline D\in M_{S,\Q}(X)$. We have to show that  $\overline D\in N_{S,\Q}^+(X)-N_{S,\Q}^+(X)$.  By Remark \ref{adelic Green functions of Chen-Moriwarki}, we have $M_{S,\Q}(X)=N_{S,\Q}(X)-N_{S,\Q}(X)$, so we may assume that $\overline D\in N_{S,\Q}(X)$ which means that $\overline D$ is a semipositive adelic divisor in the sense of Chen and Moriwaki. Since $X$ is projective, there are ample $\Q$-Cartier divisors $D_1,D_2$ with $D=D_1-D_2$. We endow the ample $\Q$-line bundle 
$\mathcal O_X(D_2)$ with a semipositive adelic metric $\metr_2$ and $\mathcal O_X(D_1)=\mathcal O_X(D) \otimes\mathcal O_X(D_2)$ with the induced tensor metric denoted by $\metr_1$. Then $\metr_1$ is a semipositive adelic metric as well and we get induced relatively $S$-ample adelic divisors $\overline{D_1}, \overline{D_2}$ with $\overline D=\overline{D_1}- \overline{D_2}$. We choose an integrable function $c$ on $\Omega$ such that $\int_\Omega c \, \nu(d\omega) \gg 0$. Then Remark \ref{existence of S-ample} shows that  $\mathcal O_X(\overline{D_i})(c)$ is $S$-ample for both $i=1,2$. Replacing $\metr_1,\metr_2$ by the twist with $c$, we get the claim.
\end{proof}

We conclude from Lemma \ref{lemma: difference of S-ample} that $(M_{S,\Q}(X),N^+_{S,\Q}(X))$ is an abstract divisorial space for any projective variety $X$.
When $K$ is perfect, we will show next that we have a numerical description for $\overline{N_{S,\Q}^+}(X)$ using the  minimal slopes introduced  in \cref{def:minimal slop}.

\begin{art} \label{def: asymptotic minimal slope}
	Let $\overline{L}=(L,\metr)$ be an adelic line bundle on a projective variety $X$ in the sense of Chen and Moriwaki. Let $\pi\colon X \to \Spec(K)$ be the morphism of structure. {Since {either $\mathcal{A}$ is discrete or} $K$ is countable,} it follows from \cite[Theorem~6.1.13, Theorem~6.1.32]{chen2020arakelov} that  $\pi_*(L)$ endowed with the supremum norm of global sections is an adelic vector bundle over $S$ which we denote by $\pi_*(\overline L)$. {In \cref{def:minimal slop}, we defined the minimal slope $\widehat{\mu}_{\min}(\overline{E})$ for a non-zero adelic vector bundle $\overline E$ over $S$.}
	
	Now we assume that $K$ is perfect. For $L$ ample, the \emph{asymptotic minimal slope} $\widehat{\mu}^{\mathrm{asy}}_{\min}(\overline{L})$ is defined in \cite[Definition~6.2.1]{chen2024positivity} by
	$$\widehat{\mu}^{\mathrm{asy}}_{\min}(\overline{L})= \lim_{n \to \infty} \frac{1}{n} \widehat{\mu}_{\min}(\pi_*(\overline L^{\otimes n}))$$
	where the existence of the limit in $\R$ follows from \cite[Proposition~6.1.3]{chen2024positivity}. For $L$ nef,  we choose an adelic line bundle $\overline A$ on $X$ with $A$ ample. Following \cite[Definition~6.4.2]{chen2024positivity}, we set
	$$\widehat{\mu}^{\mathrm{asy}}_{\min}(\overline{L})\coloneqq\lim\limits_{n\to+\infty}\frac{1}{n}\widehat{\mu}^{\mathrm{asy}}_{\min}(\overline{L}^{\otimes n}\otimes\overline{A})$$
	where the limit exists in  $\R\cup\{-\infty\}$, does not depend on the choice of $\overline{A}$ and agrees with the previous definition in the ample case, see \cite[Proposition~6.4.1]{chen2024positivity}.

Assuming $K$ still perfect, the asymptotic minimal slope can be defined on an adelic $\Q$-line bundle $\overline{L}$ {with $L$ nef} by the formula $\widehat{\mu}_{\min}^{\mathrm{asy}}(\overline{L}) \coloneq \frac{1}{m}\widehat{\mu}_{\min}^{\mathrm{asy}}(\overline{L}^{\otimes m})$ for any $m\in\N_{>0}$ such that $L^{\otimes m}$ is induced by a line bundle on $X$. For $\overline{D}\in M_{S,\Q}(X)$ with $D$ nef, we set $\widehat{\mu}_{\min}^{\mathrm{asy}}(\overline{D})\coloneq \widehat{\mu}_{\min}^{\mathrm{asy}}(\OO_X(\overline{D}))$.
\end{art}

\begin{remark} \label{slopes and relatively ample}
If $K$ is perfect and $\overline L$ is a {relatively $S$-ample} adelic line bundle on the projective variety $X$ over $K$, then it is shown in \cite[Theorem~8.8.3]{chen2024positivity} that 
\begin{equation} \label{slope formula with intersection numbers}
	\widehat{\mu}_{\min}^{\mathrm{asy}}(\overline{L}) =\inf_{Y\in\Theta_X}\frac{(\overline{L}^{\dim(Y)+1}\mid Y)_S}{(\dim(Y)+1)\deg_L(Y)} 
\end{equation}
where  $\Theta_X$ is the set of all integral closed subschemes of $X$.

By definition and \eqref{slope formula with intersection numbers}, a relatively $S$-ample $\Q$-line bundle $\overline{L}$ is $S$-ample if and only if  $\widehat{\mu}_{\min}^{\mathrm{asy}}(\overline{L})>0$, see \cite[Proposition~9.1.2]{chen2024positivity}. Since the $S$-ampleness is invariant under algebraic base change, we can use this characterization for $S$-ampleness even if $K$ is not perfect using base changes to a perfect algebraic field extension. In particular, by \cite[Proposition~6.4.4]{chen2024positivity} we know that $N_{S,\Q}^+(X), \overline{N_{S,\Q}^+}(X)$ are cones in any case. 
\end{remark}

\begin{proposition} \label{proposition:arithmetically nef for projective}
	Let $X$ be a projective variety  and $\overline{D}\in N_{S,\Q}(X)$. Then the following are equivalent.
	\begin{enumerate}
		\item \label{arithmetically nef by closure} $\overline{D}\in\overline{N_{S,\Q}^+}(X)$.
		\item \label{arithmetically nef by chen} There is $\overline{A}\in{N_{S,\Q}^+}(X)$ and  $N \in \N$ such that $n\overline{D}+ \overline{A}\in {N_{S,\Q}^+}(X)$ for any $n\in \N_{\geq N}$.
		\item \label{arithmetically nef by an S-ample} For any $\overline{A}\in{N_{S,\Q}^+}(X)$, we have that $\overline{D}+\overline{A}\in {N_{S,\Q}^+}(X)$.
		\item \label{arithmetically nef by minimal slope}   
		There is a perfect algebraic field extension $K'/K$ such that $\widehat{\mu}^{\mathrm{asy}}_{\min}(\overline{D}_{K'})\geq 0$, where $\overline{D}_{K'}$ is the base change of $\overline{D}$ to $K'$.
		\item \label{arithmetically nef by minimal slope variant}   
		For all perfect algebraic  field extensions $K'/K$, we have  $\widehat{\mu}^{\mathrm{asy}}_{\min}(\overline{D}_{K'})\geq 0$.
	\end{enumerate} 
\end{proposition}

Chen and Moriwaki define nef adelic line bundles by \ref{arithmetically nef by chen} (\cite[Definition~9.1.6]{chen2024positivity}) not assuming $\overline D \in N_{S,\Q}(X)$.
\begin{proof} 
It follows from  \cref{lemma: difference of S-ample} that $(M_{S,\Q}(X),N^+_{S,\Q}(X))$ is an abstract divisorial space.  Then \cref{lemma:closure in finite subspace topology} yields that \ref{arithmetically nef by chen} and \ref{arithmetically nef by closure} are equivalent.
	Obviously,  \ref{arithmetically nef by an S-ample} implies \ref{arithmetically nef by chen}.

	\ref{arithmetically nef by chen} $\Longrightarrow$ \ref{arithmetically nef by minimal slope variant}. This follows from \cref{arithmetic ampelness and base change} and \cite[Proposition~9.1.7]{chen2024positivity}. Obviously, 
    \ref{arithmetically nef by minimal slope variant} yields \ref{arithmetically nef by minimal slope}.
	
	\ref{arithmetically nef by minimal slope} $\Longrightarrow$ \ref{arithmetically nef by an S-ample}. Let $\overline{A}\in{N_{S,\Q}^+}(X)$. Since $\overline{D}\in N_{S,\Q}(X)$, then $\overline{D}+\overline{A}$ is relatively $S$-ample.  
	Let $K'/K$ be a field extension as in \ref{arithmetically nef by minimal slope}. {By \cref{arithmetic ampelness and base change},} it suffices to show that $\overline{D}_{K'}+\overline{A}_{K'}$ is $S'$-ample, where $S'$ is the adelic curve for the field extension $K'/K$. By \cite[Proposition~6.4.4]{chen2024positivity}, we have that
	\[\widehat{\mu}_{\min}^{\mathrm{asy}}(\overline{D}_{K'}+\overline{A}_{K'})\geq \widehat{\mu}_{\min}^{\mathrm{asy}}(\overline{D}_{K'})+\widehat{\mu}_{\min}^{\mathrm{asy}}(\overline{A}_{K'})>0.\]
	Hence $\overline{D}_{K'}+\overline{A}_{K'}$ is $S'$-ample using Remark \ref{slopes and relatively ample}.
	\end{proof}

\begin{corollary}  \label{cor:arith nef stable under algebraic field extension}
	Let $X$ be a projective variety over $K$, $\overline{D}\in N_{S,\Q}(X)$, and $K'/K$ an algebraic field extension. Then $\overline{D}\in \overline{N_{S,\Q}^+}(X)$  if and only if $\overline{D}_{K'}\in \overline{N_{S',\Q}^+}(X_{K'})$ where $S'$ is the natural adelic curve for $K'$.
\end{corollary}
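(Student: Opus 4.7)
\textbf{Plan for the proof of Corollary \ref{cor:arith nef stable under algebraic field extension}.}
My strategy is to reduce to the numerical criterion \ref{arithmetically nef by minimal slope} from Proposition \ref{proposition:arithmetically nef for projective}, which characterizes $S$-nefness by membership in $N_{S,\Q}(X)$ together with non-negativity of the asymptotic minimal slope over some perfect algebraic extension of the base field. The two ingredients of that criterion then need to be checked to be compatible with the base change from $K$ to $K'$.

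The first step is to show that $\overline D \in N_{S,\Q}(X)$ if and only if $\overline D_{K'} \in N_{S',\Q}(X_{K'})$. Via Remark \ref{adelic Green functions of Chen-Moriwarki}, this is a place-by-place statement about Chen--Moriwaki semipositivity. For each $\omega \in \Omega$ and each $\omega' \in \Omega_{K'}$ above $\omega$, the completion $K'_{\omega'}$ is an algebraic extension of $K_\omega$, and the local notion of semipositivity -- uniform limits of semipositive smooth metrics at archimedean places, uniform limits of semipositive model metrics at non-trivial non-archimedean places, and uniform limits of twisted Fubini--Study metrics at trivially-valued places -- is both preserved and reflected by the flat base change $X_{\omega'} \to X_\omega$. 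The preservation direction is formal; the descent direction uses Chen--Moriwaki's characterization of local semipositivity via quotient metrics (cf.\ \cite[Theorem~3.2.19]{chen2021arithmetic}) together with the invariance statements \cite[Lemma~4.1.11]{chen2021arithmetic}, and in the trivially-valued case the density of diagonalizable ultrametric norms.

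For the $(\Rightarrow)$ direction, assume $\overline D$ is $S$-nef. By Proposition \ref{proposition:arithmetically nef for projective}\ref{arithmetically nef by minimal slope} there is a perfect algebraic extension $K''/K$ with $\widehat\mu_{\min}^{\asy}(\overline D_{K''}) \geq 0$. Let $L$ be the perfect closure of the compositum of $K'$ and $K''$ inside an algebraic closure of $K'$; then $L/K'$ is a perfect algebraic extension, and by monotonicity of the asymptotic minimal slope under algebraic base change (cf.\ \cite[Proposition~6.4.4]{chen2022hilbert}) we have $\widehat\mu_{\min}^{\asy}(\overline D_L) \geq \widehat\mu_{\min}^{\asy}(\overline D_{K''}) \geq 0$. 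Combined with $\overline D_{K'} \in N_{S',\Q}(X_{K'})$ from the first step, criterion \ref{arithmetically nef by minimal slope} applied over $K'$ with the witnessing extension $L/K'$ shows that $\overline D_{K'}$ is $S'$-nef.

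Conversely, assume $\overline D_{K'}$ is $S'$-nef. Again by \ref{arithmetically nef by minimal slope} there is a perfect algebraic extension $L/K'$ with $\widehat\mu_{\min}^{\asy}(\overline D_L) \geq 0$. Since $L/K'$ and $K'/K$ are both algebraic, so is $L/K$; as $L$ is perfect as a field, $L/K$ is a perfect algebraic extension. Together with the descent part of the first step, $\overline D \in N_{S,\Q}(X)$, and criterion \ref{arithmetically nef by minimal slope} applied over $K$ with the extension $L/K$ yields that $\overline D$ is $S$-nef. The main obstacle is the descent statement in the first step at non-archimedean, non-trivially-valued places, where one must argue that an algebraic pull-back which is a uniform limit of semipositive model Green functions implies the same for the original Green function; once this is granted (from the cited Chen--Moriwaki results), the rest of the argument is a direct application of the slope criterion.
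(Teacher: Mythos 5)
The paper's proof is a short application of criteria~\ref{arithmetically nef by chen}/\ref{arithmetically nef by an S-ample} of Proposition~\ref{proposition:arithmetically nef for projective} together with Lemma~\ref{arithmetic ampelness and base change}: for the non-trivial direction one fixes an $S$-ample $\overline A$ on $X$, uses criterion~\ref{arithmetically nef by an S-ample} over $K'$ to get $n\overline D_{K'}+\overline A_{K'}$ $S'$-ample for all $n$, descends $S$-ampleness via Lemma~\ref{arithmetic ampelness and base change}, and concludes with criterion~\ref{arithmetically nef by chen}. Your proof instead routes everything through criterion~\ref{arithmetically nef by minimal slope} (the asymptotic minimal slope), which requires two extra ingredients: (Step 1) the equivalence $\overline D\in N_{S,\Q}(X)\Leftrightarrow\overline D_{K'}\in N_{S',\Q}(X_{K'})$, and the compatibility of $\widehat\mu_{\min}^{\mathrm{asy}}$ under algebraic base change together with some bookkeeping with composita of perfect extensions. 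So the approaches are genuinely different, but yours does not actually save anything: in the paper, both implications involving criterion~\ref{arithmetically nef by minimal slope} are themselves proved using Lemma~\ref{arithmetic ampelness and base change}, so you are invoking that lemma at one remove while also shouldering the extra burden of Step~1 and the perfect-extension manipulations. The paper's route is the more economical one.

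The one genuine soft spot is Step~1. You correctly identify the descent direction (``$\overline D_{K'}$ CM-semipositive $\Rightarrow$ $\overline D$ CM-semipositive'') as the main obstacle, but the references you give do not by themselves establish it: \cite[Theorem~3.2.19]{chen2021arithmetic} characterizes semipositivity over a fixed non-archimedean field, and \cite[Lemma~4.1.11]{chen2021arithmetic} is quoted in the paper only for $S$-measurability and local $S$-boundedness (see Remarks~\ref{properties of measurable metrics} and~\ref{properties of bounded metrics}), not for semipositivity. What is really needed is a per-place descent statement for non-archimedean semipositivity (and the analogue for twisted Fubini--Study metrics at trivially-valued $\omega$). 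To be fair, the paper's Lemma~\ref{arithmetic ampelness and base change} also leans on this descent silently in its ``if'' direction (only the height inequality part of $S$-ampleness is verified there; ampleness and semipositivity of the metric are assumed to descend), so this is not a defect unique to your write-up -- but since you bring it into the open as a standalone Step~1, you should supply a precise reference or argument, for instance via Boucksom--Jonsson or Gubler--Martin on invariance of continuous semipositive metrics under non-archimedean ground field extension. One small further imprecision: the asymptotic minimal slope is in fact \emph{invariant} under algebraic base change of perfect fields rather than merely ``monotone''; what you need is only $\widehat\mu_{\min}^{\mathrm{asy}}(\overline D_L)\geq 0$ given $\widehat\mu_{\min}^{\mathrm{asy}}(\overline D_{K''})\geq 0$, which does follow, but the word ``monotonicity'' and the inequality $\geq$ should be replaced by the invariance statement you actually use.
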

\begin{proof} 
	Let $S'$ be the adelic curve for the field extension $K'/K$. 
	The "only if" part is obvious from \cref{arithmetic ampelness and base change}. Conversely, assume that $\overline{D}_{K'}\in \overline{N_{S',\Q}^+}(X_{K'})$. 
	Let us pick any $S$-ample adelic divisor $\overline A$ on $X$. By \cref{proposition:arithmetically nef for projective},  we have that $\overline{D}_{K'}+\overline{A}_{K'}$ is $S'$-ample. By \cref{arithmetic ampelness and base change}, this implies that $\overline{D}+\overline{A}$ is $S$-ample. It follows from \cref{proposition:arithmetically nef for projective} that $\overline{D} \in \overline{N_{S,\Q}^+}(X)$.
\end{proof}

We illustrate these positivity notions in our running example. 
For generalizations to toric varieties, we refer to \cite{bmps-2016}. In this reference, only finitely many local metrics of an adelic toric metric are allowed to be different from the canonical metric, but the characterization remains the same and the arguments are easily adapted.

\begin{example}   \label{ample and nef adelic metrics in running example}
	We continue with the Example \ref{semipositive adelic metrics in running example} where $X=\mathbb P_K^1$ for $K=\mathbb Q$ and $S=(\Q, \Omega, \mathcal A,\nu)$ is the canonical structure of a proper adelic curve on $\Q$. For $D=[\infty]$, we have the corresponding piecewise linear function $\Psi(u) =\min(u,0)$ on $\R$. We have seen that a semipositive toric $S$-Green function $g_D$ is induced by family of corresponding concave functions $(\psi_v)_{v \in \Omega}$ with $\psi_v- \Psi=O(1)$. Then $L=\OO_X(D)$ is a toric line bundle which we endow with the semipositive toric metric $\metr$ induced by  $g_D$. We will also consider the canonical metric $\metr_{\rm can}$ of $L$ which is the semipositive toric metric of $L$ corresponding to the concave function $\Psi$ itself (for all $v \in \Omega$).
			
For the concave function $\psi_v\colon \R \to \R$,  the \emph{Legendre--Fenchel dual} is defined by
$$\psi_v^\vee\colon [0,1] \longrightarrow \R \cup \{-\infty\} \, , \quad    \psi_v^\vee(m) \coloneqq \inf_{u \in \R} \left(mu-\psi_v(u)\right)$$
using that dual polytope is $[0,1]$ as the asymptotic slopes of $\psi_v$ and $\Psi$ are $0$ and $1$. See \cite[Chapter 2]{BPS} for details and properties. 
In \cite[Definition 5.1.1]{BPS}, the toric local height $h_{(L_v,\metr_v)}^{\rm tor}(X)$ of the toric variety $X=\mathbb P^1$ was introduced with respect to the toric line bundle $(L_v,\metr_v)$ as  the local height of $X$ with respect to $(L_v,\metr_v)$ minus  the local height of $X$ with respect to $(L_v,\metr_{{\rm can},v})$. By \cite[Theorem 5.1.6]{BPS}, we have
$$h_{(L_v, \metr_v)}^{\rm tor}(X_v)=2\int_{[0,1]} \psi_v^\vee(m) \, dm.$$

Now we assume that $(D,g_D)\in N_{S,\Q}(X)$. In other words, we assume that the toric metric $\metr$ of $\overline L$ is $S$-measurable and locally $S$-bounded. 
The first condition is always satisfied as the $\sigma$-algebra $\mathcal A$ of $S$ is discrete.

We want also an interpretation of locally $S$-boundedness in terms of $\psi_v^\vee$. This is based on \emph{Fenchel's inequality} which states that for all concave functions $f$ on $\R$ with asymptotic slopes $1$ for $u \to -\infty$ and $0$ for $u\to \infty$,
we have
$$mu \geq f(u) + f^\vee(m)$$
 for all $u \in \R$ and $m \in [0,1]$. This inequality follows immediately from the definition of $f^\vee$, and yields for any other concave function $g$ on $\R$ with the same asymptotic slopes as $f$ that
 $$f^\vee(m)= \inf_{u \in \R} \left(mu - f(u)\right)\geq \inf_{u \in \R}\left(g(u)+ g^\vee(m)-f(u)\right)= g^\vee(m)-\sup_{u \in \R}\left(f(u)-g(u)\right).$$
Now we assume that $f=\Psi+O(1)$ and $g=\Psi+O(1)$. Using the supremum norm and reversing the role of $f$ and $g$, we deduce $\|g^\vee-f^\vee\|_\infty \leq \|g-f\|_\infty$. Applying the same  for $f^\vee$ and $g^\vee$ and using biduality, we get 
\begin{equation} \label{isometry of Legendre-Fenchel}
\|g^\vee-f^\vee\|_\infty = \|g-f\|_\infty	
\end{equation}
Recall from \eqref{integrable bound} that locally $S$-bounded means that there is an integrable function $C$ on $\Omega$ such that
$|\psi_v- \Psi| \leq C(v)$. Then \eqref{isometry of Legendre-Fenchel} gives $|\psi_v^\vee- \Psi^\vee| \leq C(v)$ and using $\Psi^\vee=0$, we get
\begin{equation*} \label{Legendre-Fenchel dual and uniform estimate}
|\psi_v^\vee(m)|\leq C(v)
\end{equation*}
for all $m \in \Delta$. We conclude that $\psi_v^\vee(m)$ is an integrable function in $v \in \Omega$ for all $m \in [0,1]$, so we can define the \emph{roof function}
\begin{equation} \label{roof function}
 \vartheta \colon	[0,1]  \longrightarrow \R \, , \quad m \mapsto \vartheta(m) \coloneqq \int_{\Omega} \psi_v^\vee(m) \,\nu(dv).
\end{equation} 
Since every $\psi_v$ is concave, we conclude that $\vartheta$ is a concave function. Let us first consider the canonical metric $\metr_{\rm can}$ of $L$. Then it is easy to show by using the projection formula and $[m]^*\metr_{\rm can}=\metr^{\otimes m}$ that $h_{(L,\metr_{\rm can})}(X)=0$ and hence Fubini's theorem gives 
\begin{equation} \label{BPS formula}
	h_{(L,\metr)}(X)= \int_{[0,1]} h_{(L_v, \metr_v)}^{\rm tor}(X_v)=2\int_{[0,1]} \vartheta(m) \, dm.
\end{equation}
Note that the closed points $0,\infty$ of $X$ are toric subvarieties. As above, we get $h_{(L,\metr)}(0)=\vartheta(0)=-\int_\Omega \sup_{u \in \R}\psi_v(u) \, \nu(dv)$ and $h_{(L,\metr)}(\infty)=\vartheta(1)$. 
Now let $Y$ be any closed point of $T=X\setminus\{0,\infty\}$ and let $u_v\coloneqq \trop_v(Y)$ for $v \in \Omega$. Then  we get
\begin{equation} \label{global height formula in running example}
\frac{h_{(L,\metr)}(Y)}{\deg_L(Y)}= -\frac{\int_\Omega\log\|s_D(Y)\|_v \, \nu(dv)}{[\Q(Y):\Q]}= - \int_\Omega \psi_v(u_v) \, \nu(dv) \geq \vartheta(0)= h_{(L,\metr)}(0).
\end{equation}
It follows from \eqref{slope formula with intersection numbers} in Remark \ref{slopes and relatively ample} and from \eqref{global height formula in running example} together with   the concavity of $\vartheta$ that 
\begin{equation*} 
	\widehat{\mu}_{\min}^{\mathrm{asy}}(\overline{L}) =\inf_{Y\in\Theta_X}\frac{(\overline{L}^{\dim(Y)+1}\mid Y)_S}{(\dim(Y)+1)\deg_L(Y)}=\min(h_{\overline L}(0),h_{\overline L}(\infty)) = \min(\vartheta(0),\vartheta(1)).
\end{equation*}
Using $\vartheta=\psi^\vee$ concave,  $\overline L$ is $S$-ample (resp.~in  $\overline{N_{S,\Q}^+}(X)$) if and only if $\vartheta > 0$ (resp.~$\geq 0$). Since $h_{\overline L}(\infty)=\vartheta(1)=-\int_\Omega\sup_{u\in \R}(\psi_v(u)-u) \, \nu(dv)$, the asymptotic minimal slopes is given by
\begin{equation} \label{asymptotic slope and psi}
\widehat{\mu}_{\min}^{\mathrm{asy}}(\overline{L}) =\min(\vartheta(0),\vartheta(1))= -\int_\Omega \sup_{u \in \R} \left(\psi_v(u)-\Psi(u) \right) \,\nu(dv).
\end{equation}
\end{example}

The following is a rather quick generalization of a result of Chen and Moriwaki \cite[Proposition~9.1.8~(1)]{chen2024positivity} which was formulated there in case of a perfect field.

\begin{proposition} \label{prop:positivity of arith nef}
	Let $X$ be a projective variety over $K$ of dimension $d$.
		\begin{enumerate1}
			\item \label{positivity of arith nef} For any $\overline{D_0},\dots, \overline{D_d}\in\overline{N_{S,\Q}^+}(X)$, we have that {$(\overline{D_0}\cdots \overline{D_d}\mid X)_S\geq0$.}
			\item \label{positivity of arith nef with positive} If $\overline{D_0}\leq \overline{D_0}'$ in $M_{S,\Q}(X)$, then for any  $\overline{D_1},\dots, \overline{D_d}\in\overline{N_{S,\Q}^+}(X)$, we have
			{$$(\overline{D_0} \overline{D_1}\cdots \overline{D_d}\mid X)_S\leq (\overline{D_0}' \overline{D_1}\cdots \overline{D_d}\mid X)_S.$$}
		\end{enumerate1}
\end{proposition}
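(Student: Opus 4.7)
For part~\ref{positivity of arith nef}, I reduce to the case where $K$ is perfect and then invoke Chen--Moriwaki's result. Let $K'$ be a perfect closure of $K$ with its induced adelic structure $S'=S\otimes_K K'$. By Proposition~\ref{global intersection number}\ref{classical global intersection number field extension} the intersection number is invariant under base change to $S'$, and by Corollary~\ref{cor:arith nef stable under algebraic field extension} each $\overline{D}_{i,K'}$ lies in $\overline{N_{S',\Q}^+}(X_{K'})$. Hence we may assume $K$ is perfect, in which case the claim is exactly \cite[Proposition~9.1.7~(1)]{chen2022hilbert}.

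For part~\ref{positivity of arith nef with positive}, set $\overline{E}\coloneq\overline{D}_0'-\overline{D}_0\in M_{S,\Q}(X)_{\geq 0}$, so the underlying $\Q$-Cartier divisor $E$ is effective and $g_{E,\omega}\geq 0$ for every $\omega\in\Omega$. By multilinearity it suffices to show $(\overline{E}\cdot\overline{D}_1\cdots\overline{D}_d\mid X)_S\geq 0$. My first reduction is to the case where $\overline{D}_1,\dots,\overline{D}_d$ are $S$-ample. Fix any $\overline{A}\in N_{S,\Q}^+(X)$, which exists by Remark~\ref{existence of S-ample}. Proposition~\ref{proposition:arithmetically nef for projective}\ref{arithmetically nef by an S-ample} yields $\overline{D}_i+\tfrac{1}{n}\overline{A}\in N_{S,\Q}^+(X)$ for every $n\in\N^+$, and these converge to $\overline{D}_i$ inside the finite-dimensional $\Q$-subspace spanned by $\overline{E},\overline{D}_1,\dots,\overline{D}_d,\overline{A}$. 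On this subspace the multilinear intersection pairing is a polynomial in the coordinates, hence continuous, so passing to the limit reduces the claim to the case where each $\overline{D}_i$ is $S$-ample.

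With this reduction in hand, I apply the induction formula of Proposition~\ref{global intersection number}\ref{global intersection number by induction} with $\overline{E}$ placed in the first slot, obtaining
\begin{equation*}
(\overline{E}\cdot\overline{D}_1\cdots\overline{D}_d\mid X)_S \;=\; (\overline{D}_1\cdots\overline{D}_d\mid\mathrm{cyc}(E))_S \;+\; \int_{\Omega}\int_{X_\omega^{\an}} g_{E,\omega}\,c_1(\overline{L}_1)\wedge\cdots\wedge c_1(\overline{L}_d)\,\nu(d\omega).
\end{equation*}
Since each $\overline{D}_i$ is semipositive, Proposition~\ref{proposition:measures for nef adelic} makes the mixed Monge--Amp\`ere measure on $X_\omega^{\an}$ a positive Radon measure; combined with $g_{E,\omega}\geq 0$ this forces the second term to be non-negative. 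For the first term, $\mathrm{cyc}(E)=\sum_Y n_Y[Y]$ with $n_Y\in\Q_{\geq 0}$ and each $Y$ an integral closed subvariety of dimension $d-1$, and it suffices to show $(\overline{D}_1|_Y\cdots\overline{D}_d|_Y\mid Y)_S\geq 0$ for each such $Y$. The restriction of an $S$-ample divisor along the closed immersion $Y\hookrightarrow X$ is again $S$-ample --- this is immediate from Definition~\ref{definition: arithmetically ample}, because closed subvarieties of $Y$ are closed subvarieties of $X$ and both heights and degrees are preserved under restriction --- so each $\overline{D}_i|_Y\in\overline{N_{S,\Q}^+}(Y)$, and part~\ref{positivity of arith nef} applied on $Y$ delivers the desired inequality. \textbf{The main obstacle} is purely bookkeeping: checking continuity of the intersection pairing in the finite subspace topology (immediate from multilinearity) and verifying restriction stability of $S$-ampleness. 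Neither is deep, but both must be in place to close the argument.
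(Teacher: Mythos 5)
Your argument for part~\ref{positivity of arith nef} is exactly the paper's: reduce to a perfect base field via \cref{global intersection number}\ref{classical global intersection number field extension} and \cref{cor:arith nef stable under algebraic field extension}, then cite Chen--Moriwaki. For part~\ref{positivity of arith nef with positive}, your core mechanism --- isolate the effective difference $\overline{E}$, apply the induction formula of \cref{global intersection number}\ref{global intersection number by induction}, observe that the global integral term is non-negative because the mixed Monge--Amp\`ere measure is positive and $g_{E,\omega}\geq 0$, and handle the geometric term by invoking part~\ref{positivity of arith nef} on the components of $\mathrm{cyc}(E)$ --- is precisely the paper's proof. What you add is an intermediate reduction to $S$-ample factors via a limit along $\overline{D}_i + \tfrac{1}{n}\overline{A}$. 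The paper skips this: it applies part~\ref{positivity of arith nef} directly to the $S$-nef restrictions $\overline{D}_i|_Y$, relying on the pull-back stability of $S$-nefness recorded in \cref{S-nef on proper varieties} (which in turn cites Chen--Moriwaki). Your detour lets you deduce restriction stability from scratch using the explicit height inequality in \cref{definition: arithmetically ample}, at the cost of an extra continuity step; the trade-off is a slightly more self-contained but slightly longer argument. One small point worth flagging: you assert that restriction stability of $S$-ampleness is ``immediate from Definition~\ref{definition: arithmetically ample}'' by inspecting closed subvarieties and heights, but you should also note that semipositivity of the underlying metric (the relative-ampleness part of the definition) is itself stable under pull-back to $Y$ --- a standard fact in the Chen--Moriwaki framework, but not something the height inequality alone gives you.
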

\begin{proof}
	Notice that arithmetic intersection numbers are invariant under algebraic field extensions 
	and that $S$-ampleness is stable under such field extension,   
    so we may assume $K$ perfect. 
	Then \ref{positivity of arith nef} is shown in \cite[Proposition~9.1.8~(1)]{chen2024positivity}.
	
	For \ref{positivity of arith nef with positive}, we can assume that $\overline{D_0}=0$, $\overline{D_0}'=(D_0',g')\geq 0$. Then $(\overline{D_0}' \overline{D_1}\cdots \overline{D_d}\mid X)_S$ equals
	\[(\overline{D_1}\cdots \overline{D_d} \mid \mathrm{cyc}(D_0'))_S +\int_{\Omega}\left(\int_{X^\an_\omega}g_{\omega}'c_1(\overline{D_{1,\omega}})\wedge\cdots\wedge c_1(\overline{D_{d,\omega}})\right)\,\nu(d\omega)\geq 0\]
	by using \ref{positivity of arith nef} where $\mathrm{cyc}(D_0')$ is the Weil $\Q$-divisor associated to $D_0'$.
\end{proof}

\begin{lemma} \label{S-nef invariant under birational}
		Let $\varphi\colon X'\to X$ be a birational morphism of projective varieties, and $\overline{D}\in {N_{S,\Q}(X)}$. Then $\overline{D}\in \overline{N_{S,\Q}^+}(X)$ if and only if $\varphi^*\overline{D}\in\overline{N_{S,\Q}^+}(X')$.
\end{lemma}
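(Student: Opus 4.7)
The plan is to reduce the statement to an equality of asymptotic minimal slopes via the characterization \ref{arithmetically nef by minimal slope} in \cref{proposition:arithmetically nef for projective}, and then to prove this equality using the projection formula for strict transforms. First, by \cref{cor:arith nef stable under algebraic field extension}, we may assume that $K$ is perfect, so that \ref{arithmetically nef by minimal slope} is available. Second, the pullback of semipositive model Green functions is a semipositive model Green function (checked place by place: in the archimedean case, smoothness and semipositivity in the Bloom--Herrera sense are preserved under pullback by morphisms; in the non-archimedean case, pullback of a nef model line bundle by a morphism of proper $K^\circ$-models is nef; at trivial places, twisted Fubini--Study metrics pull back to twisted Fubini--Study metrics). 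Hence $\overline{D} \in N_{S,\Q}(X)$ implies $\varphi^*\overline{D} \in N_{S,\Q}(X')$, and the lemma reduces to showing
\[
\widehat{\mu}^{\mathrm{asy}}_{\min}(\overline{D}) \geq 0 \iff \widehat{\mu}^{\mathrm{asy}}_{\min}(\varphi^*\overline{D}) \geq 0.
\]

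To establish this, I plan to prove the stronger equality $\widehat{\mu}^{\mathrm{asy}}_{\min}(\overline{D}) = \widehat{\mu}^{\mathrm{asy}}_{\min}(\varphi^*\overline{D})$. Fix an $S$-ample adelic $\Q$-divisor $\overline{A}$ on $X$ (which exists by \cref{existence of S-ample}) and an $S$-ample $\overline{H}$ on $X'$. For $n \geq 1$, the divisor $n\overline{D}+\overline{A}$ is relatively ample on $X$, and for any $\epsilon>0$, $\varphi^*(n\overline{D}+\overline{A}) + \epsilon\overline{H}$ is relatively ample on $X'$ (since $\varphi^*(nD+A)$ is nef, $\epsilon H$ is ample, their sum is ample, and all metrics are semipositive). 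The slope formula \eqref{slope formula with intersection numbers} from \cref{slopes and relatively ample} expresses both corresponding asymptotic minimal slopes as infima of normalized heights over $\Theta_X$ and $\Theta_{X'}$, respectively. For every $Y \in \Theta_X$ with strict transform $Y' \in \Theta_{X'}$, the morphism $\varphi|_{Y'} \colon Y' \to Y$ is birational, so the projection formula in \cref{global intersection number}~\ref{classical global intersection number factorial} yields $h_{\varphi^*\overline{L}}(Y') = h_{\overline{L}}(Y)$ and $\deg_{\varphi^*L}(Y') = \deg_L(Y)$ for any adelic $\Q$-line bundle $\overline{L}$ on $X$; hence the height-to-degree ratios for $n\overline{D}+\overline{A}$ and for its pullback match along the strict-transform correspondence $\Theta_X \hookrightarrow \Theta_{X'}$. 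Letting $\epsilon \to 0$ (using Chen--Moriwaki's continuity \cite[Proposition~6.4.4]{chen2022hilbert}) and then dividing by $n$ and passing to $n \to \infty$ (which recovers $\widehat{\mu}^{\mathrm{asy}}_{\min}$ of the nef divisor by \cref{def: asymptotic minimal slope}), this matching propagates to yield the desired equality.

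The main obstacle is the careful treatment of two kinds of integral closed subschemes that lie outside the strict-transform correspondence: subvarieties $Y' \subset X'$ contracted by $\varphi$ (for which $\deg_{\varphi^*D}(Y')=0$, so that in the nef case the naive height-to-degree ratio degenerates), and subvarieties $Y \subset X$ lying in the null locus of $D$ where $\deg_D(Y)=0$. The strategy is to expand the height and degree polynomials in $n$ and $\epsilon$ and to use the projection formula together with $\varphi_*(Y')=0$ for contracted $Y'$ to show that contributions of such subvarieties to the infimum are, after normalization by $n$, controlled by terms in $\epsilon\overline{H}$ or mixed terms $\overline{D}^j\overline{A}^{k+1-j}$ with $j\leq k$, which become negligible in the relevant asymptotic regime. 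Once this technical bookkeeping is carried out, the two infima coincide in the double limit, completing the proof.
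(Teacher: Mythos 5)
Your skeleton follows the paper's: reduce to $K$ perfect via \cref{cor:arith nef stable under algebraic field extension}, use the slope characterization from \cref{proposition:arithmetically nef for projective}~\ref{arithmetically nef by minimal slope}, and treat the relatively ample case first via the slope formula \eqref{slope formula with intersection numbers}. The ``only if'' direction, which you don't spell out, is exactly as in the paper: $\widehat{\mu}^{\mathrm{asy}}_{\min}(\varphi^*\overline D)\geq\widehat{\mu}^{\mathrm{asy}}_{\min}(\overline D)$ is cited from Chen--Moriwaki.

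Where you diverge is the ``if'' direction, and this is where the proposal has a genuine gap. You try to establish the equality $\widehat{\mu}^{\mathrm{asy}}_{\min}(\overline D)=\widehat{\mu}^{\mathrm{asy}}_{\min}(\varphi^*\overline D)$, which forces you to compare the infimum of height-to-degree ratios over $\Theta_X$ against the infimum over the strictly larger set $\Theta_{X'}$, and to do so for $\varphi^*(n\overline D+\overline A)+\epsilon\overline H$ followed by a double limit $\epsilon\to 0$, $n\to\infty$. You explicitly acknowledge that the behaviour of the ratio on $\varphi$-contracted subvarieties $Y'$ (where $\deg_{\varphi^*(nD+A)}(Y')=0$, so both the height and degree are $O(\epsilon)$) and the interchange of the double limit with the infimum need ``technical bookkeeping'' that you do not carry out. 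This bookkeeping is not trivial: you would have to show uniformity in $Y'$ so that the contracted subvarieties cannot push the infimum below the strict-transform contribution in the $\epsilon\to0$ limit, and you would need a continuity statement for $\widehat{\mu}^{\mathrm{asy}}_{\min}$ in the ample perturbation parameter. Moreover, the citation you give (Proposition~6.4.4 of Chen--Moriwaki) is a superadditivity statement, not the continuity statement you need. As written, this leaves the ``if'' direction unproved.

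The paper's argument for the ``if'' direction is considerably simpler and bypasses all of this. Assuming $\varphi^*\overline D$ is $S$-nef, and working in the relatively ample case for $\overline D$, it reads off from \eqref{slope formula with intersection numbers} that
\[
\widehat{\mu}^{\mathrm{asy}}_{\min}(\overline D)=\inf_{Y\in\Theta_X}\frac{(\overline D^{\dim Y+1}\mid Y)_S}{(\dim Y+1)\deg_D(Y)},
\]
and for each $Y$ picks a single integral closed $Z\subset\varphi^{-1}(Y)$ with $\varphi(Z)=Y$ and $\dim Z=\dim Y$; then the projection formula identifies the $Y$-ratio of $\overline D$ with the $Z$-ratio of $\varphi^*\overline D$, and the numerator is $\geq 0$ by \cref{prop:positivity of arith nef}~\ref{positivity of arith nef} applied to the $S$-nef $\varphi^*\overline D$. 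No comparison of infima over $\Theta_X$ and $\Theta_{X'}$ is needed, no $\epsilon$-perturbation, and no discussion of contracted subvarieties. The general nef case then follows by the same reduction you indicate (replace $\overline D$ by $n\overline D+\overline A$ and let $n\to\infty$). You should abandon the equality-of-slopes strategy and instead use positivity of arithmetic intersection numbers for the pulled-back divisor, which is exactly what makes the ``if'' direction tractable.
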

\begin{proof}
	 By \cref{cor:arith nef stable under algebraic field extension}, we can assume that $K$ is {perfect}. The "only if" part {follows from \cref{proposition:arithmetically nef for projective}} since $\widehat{\mu}_{\min}^{\mathrm{asy}}(\varphi^*\overline{D})\geq \widehat{\mu}_{\min}^{\mathrm{asy}}(\overline{D})\geq 0$ by \cite[Theorem~6.6.6, Proposition~9.1.7]{chen2024positivity}. Conversely, we consider first the case where $\overline{D}$ is relatively $S$-ample with $\varphi^*\overline{D}\in \overline{N_{S,\Q}^+}(X')$. In this case, by {Remark \ref{slopes and relatively ample}}, 
	we have that
	\[\widehat{\mu}_{\min}^{\mathrm{asy}}(\overline{D})=\inf_{Y\in\Theta_X}\frac{(\overline{D}^{\dim(Y)+1}\mid Y)_S}{(\dim(Y)+1)\deg_D(Y)}\]
	where $\Theta_X$ is the set of all integral closed subschemes of $X$. Notice that for any $Y\in \Theta_X$, we can find {an integral closed} subscheme $Z\subset\varphi^{-1}(Y)$ such that $\varphi(Z)=Y$ {set theoretically} and $\dim(Z)=\dim(Y)$. By projection formula and \cref{prop:positivity of arith nef}~\ref{positivity of arith nef}, we have that
	\[\frac{(\overline{D}^{\dim(Y)+1}\mid Y)_S}{(\dim(Y)+1)\deg_D(Y)}=\frac{((\varphi^*\overline{D})^{\dim(Z)+1}\mid Z)_S}{(\dim(Z)+1)\deg_{\varphi^*D}(Z)}\geq 0.\]
	Hence the above shows  $\widehat{\mu}_{\min}^{\mathrm{asy}}(\overline{D})\geq 0$. This implies that $\overline{D}\in \overline{N_{S,\Q}^+}(X)$ by \cref{proposition:arithmetically nef for projective}. 
Let us deal with the general case. 
For any $\overline{A}\in N_{S,\Q}^+(X)$, the adelic divisor $\overline{D}+\frac{1}{2}\overline{A}$ is relatively $S$-ample, and $\varphi^*\overline{D}+\frac{1}{2}\varphi^*\overline{A}\in\overline{N_{S,\Q}^+}(X')$ by the "only if" part.  
	By our discussion above, we have that $\overline{D}+\frac{1}{2}\overline{A}\in\overline{N_{S,\Q}^+}(X)$, so $\overline{D}+\overline{A}=(\overline{D}+\frac{1}{2}\overline{A})+\frac{1}{2}\overline{A}\in N_{S,\Q}^+(X)$ by \cref{proposition:arithmetically nef for projective}.
	Hence $\overline{D}\in \overline{N_{S,\Q}^+}(X)$ by \cref{proposition:arithmetically nef for projective} {again}. This completes the proof of the lemma.
\end{proof}

Recall for a proper variety $X$ over $K$, we denote by $M_{S,\Q}(X)$  the space of integrable adelic divisors in the sense of Chen and Moriwaki, see \cref{classical global Arakelov theory as divisorial space} and \cref{adelic Green functions of Chen-Moriwarki}.

\begin{prop} \label{prop: S-nef cone}
	For any proper variety $X$ over $K$, 
	there is a  cone $N_{S,\Q}'(X)$  contained in the semipositive cone $N_{S,\Q}(X)$ of
	 $M_{S,\Q}(X)$ with the following three properties:
	\begin{enumerate}
		\item \label{first property N'}
		If $X$ is projective, then $N_{S,\Q}'(X)=\overline{N_{S,\Q}^+}(X)\cap N_{S,\Q}(X)$. 
		\item \label{second property N'}
		Let $\varphi \colon X' \to X$ be a birational morphism of proper varieties over $K$. Then {$\overline{D} \in N_{S,\Q}(X)$} 
		is in $N_{S,\Q}'(X)$ if and only if $\varphi^*\overline{D} \in N_{S,\Q}'(X')$.
		\item \label{third property N'}
		If $K'/K$ is an algebraic field extension and $\overline{D}  \in N_{S,\Q}(X)$, then  $\overline{D}  \in N_{S,\Q}'(X)$ if and only if base change to $K'$ yields $\overline{D}_{K'} \in N_{S,\Q}'(X_{K'})$.
	\end{enumerate}
	The cones  $N_{S,\Q}'(X)$ are uniquely characterized by \ref{first property N'} and \ref{second property N'}.  
\end{prop}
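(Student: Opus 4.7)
The plan is to define $N_{S,\Q}'(X)$ for an arbitrary proper variety $X$ by pullback to a projective birational model. By Chow's lemma there is a projective variety $X'$ over $K$ and a projective birational morphism $\varphi\colon X'\to X$; we set
$$N_{S,\Q}'(X) \coloneqq \{\overline D \in N_{S,\Q}(X) \mid \varphi^*\overline D \in \overline{N_{S,\Q}^+}(X')\}.$$
This is a subcone of $N_{S,\Q}(X)$ because $\varphi^*$ is linear, pullback preserves semipositivity of $S$-Green functions, and $\overline{N_{S,\Q}^+}(X')$ is a cone in $M_{S,\Q}(X')$.

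The first step is to show independence of the choice of $(X',\varphi)$. Given two such pairs $(X_i',\varphi_i)$ for $i=1,2$, let $X_3'$ be the closure of the graph of the birational map $X_1'\dashrightarrow X_2'$ inside $X_1'\times_K X_2'$; the projections $\pi_i\colon X_3'\to X_i'$ are projective birational morphisms of projective varieties and satisfy $\varphi_1\circ\pi_1 = \varphi_2\circ\pi_2$. Applying \cref{S-nef invariant under birational} to $\pi_1$ and to $\pi_2$, we obtain
$$\varphi_1^*\overline D \in \overline{N_{S,\Q}^+}(X_1') \Longleftrightarrow (\varphi_1\circ\pi_1)^*\overline D \in \overline{N_{S,\Q}^+}(X_3') \Longleftrightarrow \varphi_2^*\overline D \in \overline{N_{S,\Q}^+}(X_2'),$$
so $N_{S,\Q}'(X)$ is well-defined.

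Next I verify the three properties. For \ref{first property N'}, when $X$ is projective I take $\varphi = \mathrm{id}_X$: then $N_{S,\Q}'(X) = \overline{N_{S,\Q}^+}(X) \cap N_{S,\Q}(X) = \overline{N_{S,\Q}^+}(X)$, using that condition \ref{arithmetically nef by minimal slope} of \cref{proposition:arithmetically nef for projective} forces $\overline{N_{S,\Q}^+}(X) \subseteq N_{S,\Q}(X)$. For \ref{second property N'}, given birational $\psi\colon X'\to X$ of proper varieties, Chow's lemma provides a projective birational $\pi\colon Y\to X'$; then $\psi\circ\pi\colon Y\to X$ is projective birational, and both $\overline D \in N_{S,\Q}'(X)$ and $\psi^*\overline D \in N_{S,\Q}'(X')$ unfold to the same condition $(\psi\circ\pi)^*\overline D \in \overline{N_{S,\Q}^+}(Y)$. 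For \ref{third property N'}, let $S'$ be the induced adelic curve on $K'$: base changing $\varphi$ gives a projective birational morphism $\varphi_{K'}\colon X'_{K'}\to X_{K'}$ of projective varieties (geometric integrality is preserved under algebraic extension), and the equivalence follows from $(\varphi^*\overline D)_{K'} = \varphi_{K'}^*(\overline D_{K'})$ together with \cref{cor:arith nef stable under algebraic field extension}. Uniqueness: any cone $N''$ satisfying \ref{first property N'} and \ref{second property N'} must satisfy $\overline D \in N''(X) \Longleftrightarrow \varphi^*\overline D \in \overline{N_{S,\Q}^+}(X')$ for any projective birational $\varphi\colon X'\to X$, which is precisely our definition.

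The main obstacle is the well-definedness step. Everything else reduces by pullback to the already-established projective case, but the independence of the projective birational model requires that \cref{S-nef invariant under birational}, stated for birational morphisms of projective varieties, can be chained across a common projective birational refinement constructed by graph closure. Once this is in place, the remaining verifications are formal consequences of functoriality of pullback and the results already proved in the projective setting.
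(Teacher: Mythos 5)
Your proof is correct and follows essentially the same route as the paper: define $N_{S,\Q}'(X)$ by pullback along a Chow-lemma projective birational model and rely on \cref{S-nef invariant under birational} for well-definedness, property \ref{second property N'}, and uniqueness, with \cref{cor:arith nef stable under algebraic field extension} giving \ref{third property N'}. The paper's proof is terse and elides the comparison of two projective models, which you spell out via the graph-closure construction; you also correctly repair what appears to be a typo in the paper's displayed definition (which writes $\varphi^*\overline D\in M_{S,\Q}(X')$ where $\varphi^*\overline D\in \overline{N_{S,\Q}^+}(X')$ is clearly intended).
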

\begin{proof} 
By Chow's lemma, there is a projective variety $X'$ over $K$ and  a birational morphism $\varphi \colon X' \to X$ over $K$. We define
\begin{equation} 
N_{S,\Q}'(X) \coloneqq \{\overline D \in N_{S,\Q}(X) \mid \varphi^*\overline D \in \overline{N_{S,\Q}^+}(X')\}.
\end{equation}
By \cref{S-nef invariant under birational}, this does not depend on the choice of $\varphi$ and satisfies \ref{first property N'}, \ref{second property N'}. {Uniqueness is clear and property \ref{third property N'} follows from \cref{cor:arith nef stable under algebraic field extension}.}
\end{proof}

\begin{remark} \label{S-nef on proper varieties}
We claim that $N_{S,\Q}'(X)$  is stable under tensor product and under pull-back. The first claim is immediately clear from {\cref{prop: S-nef cone}}. For projective varieties, the second claim follows from \cite[Proposition 9.1.8~(2)]{chen2024positivity} and hence follows for proper varieties from {\cref{prop: S-nef cone}}.
\end{remark}

\begin{remark} \label{twist of relatively nef divisors}
	Let $\overline D=(D,g_D) \in N_{S,\Q}(X)$ on a proper variety $X$. 
	For an integrable function $c$ on $\Omega$, we consider the twisted adelic divisor $\overline D(c)=(D,g_D+c)$. There is such a $c \geq 0$ with $\int_\Omega c \,\nu(d\omega) >0$. 
{We investigate the \emph{twist property} whether $\overline D(nc){\in N_{S,\Q}'(X)}$ 
	for $n \in \N$ sufficiently large.} 
We may assume that $X$ is projective and that $K$ is {perfect}  by {\cref{prop: S-nef cone}}. Using the  criterion in Proposition \ref{proposition:arithmetically nef for projective} with the asymptotic minimal slope, the twist property holds if and only if $\widehat{\mu}^{\mathrm{asy}}_{\min}(\overline{D})>-\infty$. Indeed,
the claim follows from 
	$$\widehat{\mu}_{\min}^{\mathrm{asy}}(\overline E(c))=\widehat{\mu}_{\min}^{\mathrm{asy}}(\overline E)+\int_\Omega c \,\nu(d\omega) >0$$ for any adelic vector bundle $\overline E=(E,\metr)$ on $S$ with twist $\overline E(c) \coloneqq (E,e^{-c}\metr)$. {By \cite[Remark~6.4.3]{chen2024positivity}, if $D$ is semiample, then the twist property for $(\overline{D},c)$ holds (even when $X$ is proper by \cref{prop: S-nef cone}~\ref{second property N'}).}
\end{remark}

In the language of abstract divisorial spaces introduced in Section \ref{section: abstract divisorial spaces}, the above results and constructions can be generalized to proper varieties as follows:

\begin{theorem} \label{global abstract divisorial space}
Let $X$ be a proper variety over $K$ of dimension $d$. 
{For the subspace $M_{S,\Q}'(X)\coloneqq N_{S,\Q}'(X)-N_{S,\Q}'(X)$ of $M_{S,\Q}(X)$, we get} an abstract divisorial space  over $\Q$ given by $(M_{S,\Q}'(X),N_{S,\Q}'(X))$   for which the arithmetic intersection numbers from \cref{global intersection number} give a $(d+1)$-intersection map to $\R$ in the sense of \cref{def: intersection map}. 
{If $X$ is projective or if there is a trivial valuation in $\Omega$, then $M_{S,\Q}'(X)=M_{S,\Q}(X)$.}
\end{theorem}

{\begin{proof}
It follows from \cref{global intersection number}~\ref{classical global intersection number factorial} and \cref{prop:positivity of arith nef} that the arithmetic intersection numbers give a $(d+1)$-intersection map on the  abstract divisorial space over $\Q$ defined as {$(M_{S,\Q}'(X),N_{S,\Q}'(X))$}. 
If $X$ is projective, then we $M_{S,\Q}'(X)=M_{S,\Q}(X)$ by {\cref{lemma: difference of S-ample} and} Proposition \ref{prop: S-nef cone}. If there is a trivial valuation in $\Omega$, then we have seen after Remark \ref{adelic Green functions of Chen-Moriwarki} that all line bundles in $N_{S,\Q}(X)$ are semiample. Using suitable twists as in Remark \ref{twist of relatively nef divisors}, we get $M_{S,\Q}(X)= N_{S,\Q}'(X)-N_{S,\Q}'(X)=M_{S,\Q}'(X)$.
\end{proof}}


\section{The boundary completion in the global theory} \label{section: The global boundary completion}

In this section, we will generalize the boundary completion, introduced by Yuan and Zhang in the geometric and in the number field case, to the case of a proper adelic base curve using the Arakelov theory by Chen and Moriwaki considered in Section \ref{section: global theory}.

We fix a proper adelic curve $S=(K,\Omega,\mathcal{A},\nu)$ satisfying the assumptions in \cref{assumeption on adelic curves} as in \cref{section: global theory}. We also fix a $d$-dimensional algebraic variety $U$ over $K$.

\subsection{Compactified $S$-metrized divisors on an algebraic variety}  \label{subsection: adelic divisors on an algebraic variety}

For a proper variety $X$ over $K$, we recall {the submonoid $N_{S,\Q}(X)$ of $\widehat{\Div}_{S,\Q}(X)$ in \cref{global adelic Green functions for proper varieties}, and $M_{S,\Q}(X)=N_{S,\Q}(X)-N_{S,\Q}(X)$ which is the space of integrable adelic divisors $(D,g_D)$ in the sense of Chen and Moriwaki when $X$ is projective.} 
Then we have seen in \cref{classical global Arakelov theory as divisorial space} that $(M_{S,\Q}(X),N_{S,\Q}(X))$ is an abstract divisorial space over $\Q$. The following generalizes this notion to any algebraic variety $U$ over $K$. 
We will also use  the groups $\widehat{\Div}_{S,\Q}(X)$ from \ref{global Green functions} given by pairs $(D,g_D)$, where $D$ is a $\Q$-Cartier divisor on $X$ and where $g_D$ is the Green function for $D$ associated to a (locally) $S$-bounded $S$-measurable metric on the $\Q$-line bundle $\mathcal{O}_X(D)$. It is clear that $M_{S,\Q}(X)$ is a subspace of $\widehat{\Div}_{S,\Q}(X)$.

\begin{definition} \label{def:CM divisors on non-proper}
	The space  $M_{S,\Q}(U)$ on $U$ and the subcone $N_{S,\Q}(U)$  are defined by the direct limit  
	\[(M_{{S,\Q}}(U),N_{{S,\Q}}(U))\coloneq \varinjlim_X (M_{{S,\Q}}(X),N_{{S,\Q}}(X))\]
	in the category of abstract divisorial spaces over $\Q$, where $X$ runs through  all proper compactifications of $U$. 
	{Recall that we call such $X$ \emph{proper $K$-models of $U$}, see Section \ref{sec: geometric theory}.}
	In particular,  $M_{{S,\Q}}(U)$ is an ordered $\Q$-vector space.
\end{definition}

\begin{art} 	\label{def:boundarytopologyglobal}
	A {\emph{weak boundary divisor}} (resp. a \emph{boundary divisor}) of $U$ is a pair $({X}_0,\overline{B})$ consisting of a proper $K$-model $U\hookrightarrow {X}_0$ over $K$ and 
	$\overline{B}\in M_{S,\Q}(U)_{\geq 0}$  such that $|B|\subset{X}_0\setminus U$ (resp. $|B|={X}_0\setminus U$). 
		 Since the isomorphism classes of proper $K$-models of $U$ form a directed system, it is clear that the {weak} boundary divisors form a directed subset $T$ of $M_{S,\Q}(U)_{\geq 0}$. 
		 {We emphasize that we assume here in the global theory that 	$\overline{B}\in M_{S,\Q}(U)_{\geq 0}$.} 
		 {In contrast to the local theory, there is usually no cofinal boundary divisor, see Example \ref{singular arithmetic nef in running example}.}  
		 We set
	\[\widehat{\Div}_{S,\Q}(U)_\CM\coloneq\varinjlim_{X}\widehat{\Div}_{S,\Q}(X)\supset M_{S,\Q}(U),\] 
	where $X$ runs through all proper $K$-models of $U$. 
	As in \cref{def:geometric boundary topology} and in  \cref{def:boundarytopologylocal}, see also \cref{def: b-metric}, a {weak} boundary divisor $(X_0,\overline{B})$ gives a topology, called \emph{$\overline{B}$-boundary topology}, on $\widehat{\Div}_{S,\Q}(U)_\CM$,  hence on $M_{S,\Q}(U)$, such that a basis of neighborhoods of  $\overline{D}\in \widehat{\Div}_{S,\Q}(U)_\CM$  is given by
	$$B(r,D)\coloneq\{\overline{E}\in \widehat{\Div}_{S,\Q}(U)_\CM\mid -r\overline{B}\leq \overline{E}-\overline{D}\leq r\overline{B}\}, \, r \in \Q_{>0}.$$ 
	The $\overline B$-boundary topology can also be defined using a natural pseudo-metric $d_{\overline B}$ as we have seen in \S~\ref{subsection: b-topology and completion}. 
	We set $\widehat{\Div}_{S,\Q}(U)_\CM^{d_{\overline{B}}}$ as the completion of the $\Q$-vector space $\widehat{\Div}_{S,\Q}(U)_\CM$ with respect to the $\overline{B}$-boundary topology. The space
	\[\widehat{\Div}_{S,\Q}(U)_{\cpt}\coloneq\varinjlim_{\overline{B}\in T}\widehat{\Div}_{S,\Q}(U)_\CM^{d_{\overline{B}}}\]
	is called the space of \emph{compactified $S$-metrized} \emph{divisors of $U$}. 
	{A compactified $S$-metrized divisor $\overline D\in \widehat{\Div}_{S,\Q}(U)_{\cpt}$ is called 
	\begin{enumerate}
			\item \label{strongly relatively nef}
			\emph{strongly relatively nef} if $\overline D$ is in the $T$-completion of $N_{S,\Q}(U)$; 
			\item \label{relatively integrable}
			\emph{relatively integrable} if it is the difference of two strongly relatively nef ones;
			\item  \label{relatively nef}
			\emph{relatively nef} if $\overline D$ is in the closure of the cone of strongly relatively nef compactified $S$-metrized divisors {in the subspace of relatively integrable compactified $S$-metrized divisors} with respect to the finite subspace topology.
		\end{enumerate}
	We denote the subspace of relatively integrable compactified $S$-metrized divisors in the above definition  \ref{relatively integrable} by $\widehat{\Div}_{S,\Q}(U)_\relint$. Together with the cone of strongly relatively nef compactified $S$-metrized divisors from \ref{strongly relatively nef}, {denoted by $\widehat{\Div}_{S,\Q}(U)_\relsnef$}, we get an abstract divisorial space over $\Q$ which is the $T$-completion of $(M_{S,\Q}(U),N_{S,\Q}(U))$ in the category of abstract divisorial spaces over $\Q$ as introduced in \cref{S-completion}. The closure in \ref{relatively nef} is meant with respect to the finite subspace topology introduced in \cref{closure for finite subspaces}. We denote the cone of relatively nef compactified $S$-metrized divisors by $\widehat{\Div}_{S,\Q}(U)_\relnef$, then we have an abstract divisorial space}
	$$(\widehat{\Div}_{S,\Q}(U)_\relint, \widehat{\Div}_{S,\Q}(U)_\relnef)$$
	over $\Q$.
	In more concrete terms, 
	we have the following descriptions:
	
	\begin{enumerate}
		\item \label{concrete relatively nef adelic divisors}
		A \emph{{strongly} relatively nef compactified $S$-metrized divisor of $U$} is given by choosing a {weak} boundary divisor $\overline B$ and then by a Cauchy sequence contained in $N_{{S,\Q}}(U)$ with respect to the $\overline{B}$-boundary topology.
		\item \label{concrete relatively integrable adelic divisors}
		A \emph{relatively integrable compactified $S$-metrized divisor of $U$}  is the difference of two strongly relatively nef compactified $S$-metrized divisors of $U$. Since $T$ is a directed set, both strongly relatively nef $S$-metrized divisors can be represented in \ref{concrete relatively nef adelic divisors} by Cauchy sequences in $N_{{S,\Q}}(U)$ with respect to the same $\overline{B}$-boundary topology.
		\item \label{relatively nef adelic divisors}
		{A \emph{relatively nef compactified $S$-metrized divisor $\overline D$ of $U$} is a relatively integrable compactified $S$-metrized divisor such that there is a strongly relatively nef compactified $S$-metrized divisor $\overline A$ of $U$ with $\overline D + \frac{1}{n}\overline A$ strongly relatively nef for all $n\in\N_{\geq 1}$, see \cref{lemma:closure in finite subspace topology}.} 
		\item \label{identification of Cauchy sequences}
		Two relatively integrable $S$-metrized compactified divisors of $U$ agree if and only if there is a {weak} boundary divisor $\overline B$ of $U$ such that their difference can be represented by a Cauchy sequence in $M_{S,\Q}(U)$ which converges to $0$ with respect to the $\overline B$-boundary topology. 
	\end{enumerate}
\end{art}
\begin{remark} \label{dominated by S-nef boundary divisors}
	Every weak boundary divisor $(X_0,\overline E)$ is dominated by a  boundary divisor in $N^+_{S,\Q}(X_0')$ for a suitable projective $K$-model $X_0'$ dominating $X_0$ after shrinking $U$. 
	{Indeed,} by Chow's lemma, we have a birational morphism $\pi\colon X_0'\to X_0$ with $X_0'$ a  projective variety. We can take a quasi-projective open subset $U'$ of $U$ such that $\pi$ induces an isomorphism $\pi^{-1}(U')\simeq U'$ which we use for identification. Then $X_0'$ is a projective $K$-model of $U'$. 
	By blowing up the boundary, we may assume that the boundary $X_0' \setminus U'$ is the support of an effective Cartier divisor $E'$. Since $X_0'$ is projective, we may write $E'=B'-A'$ as the difference of two very ample  $\Q$-Cartier divisors $A',B'$ on $X_0'$. Similarly, by \cref{lemma: difference of S-ample}, we write $\pi^*\overline{E}=\overline{B}-\overline{A}$ as the difference of two effective $S$-ample $\Q$-divisors  $\overline{A}, \overline{B}$ on $X_0'$. Using a global twisted Fubini--Study Green function $g_{B'}\geq0$ for $B'$,  
	we get an {effective} $S$-ample divisor $\overline{B'} =(B',g_{B'})$ 
	on $X_0'$. Note that 
	$\pi^*\overline{E}\leq \overline{B}+\overline{B'}$, so $\overline{B}+\overline{B'}$ is $S$-ample on $X_0'$ and a
	boundary divisor of $U''\coloneqq U' \setminus |B'+B|$ which dominates $\pi^*\overline{E}$. 
\end{remark}

The following result connects the global theory with the local theory from \cref{section: local theory}.

\begin{proposition} \label{prop:from global adelic to local adelic}
	For every $\overline D \in \widehat{\Div}_{S,\Q}(U)_\cpt$, there is a unique compactified geometric divisor $D \in \widetilde{\Div}_\Q(U)_\cpt$ and a unique $S$-Green function $g_D=(g_{D,\omega})_{\omega \in \Omega}$ for the $\Q$-Cartier divisor $D|_U$ with the following three  properties:
	\begin{enumerate}
		\item \label{first property for pair}
		If $\overline D$ is induced by a locally $S$-bounded $S$-measurable $S$-metrized divisor on a proper $K$-model $X$ of $U$,  then $D$ is induced by the underlying $\Q$-Cartier divisor on $X$ and $g_D$ is induced by the underlying $S$-Green function  on $X$. 
		\item \label{first' property for pair} The following map is continuous:
		$$\widehat{\Div}_{S,\Q}(U)_\cpt \longrightarrow \widetilde{\Div}_\Q(U)_\cpt \,, \quad \overline{D} \mapsto D.$$
		\item \label{second property for pair}
		For every $\omega \in \Omega$, the following map is continuous:
		$$\widehat{\Div}_{S,\Q}(U)_\cpt \longrightarrow \widehat{\Div}_\Q(U_\omega)_\cpt \,, \quad \overline{D} \mapsto ({D_\omega},g_{D,\omega}),$$
		{where $D_\omega$ is the pull-back of $D$ to $U_\omega$.}
		{Furthermore, it induces a morphism
			$$(\widehat{\Div}_{S,\Q}(U)_\relint, \widehat{\Div}_{S,\Q}(U)_\relnef) \longrightarrow 
			(\widehat{\Div}_{\Q}(U_\omega)_\integrable, \widehat{\Div}_{\Q}(U_\omega)_\nef)$$
			of abstract divisorial spaces over $\Q$.}
		\item \label{third property for pair}
		The $S$-Green function $g_D$ is $S$-measurable and locally S-bounded on $U$, and the map
		\[\widehat{\Div}_{S,\Q}(U)_\cpt\to \widehat{\Div}_{S,\Q}(U)\,, \quad \overline{D}\mapsto (D|_U,g_{D}),\]
		is additive and injective.
	\end{enumerate}
\end{proposition}
\begin{proof}
Uniqueness of $D\in\widetilde{\Div}_{S,\Q}(U)_\cpt$ is clear from \ref{first property for pair} and from continuity in \ref{first' property for pair}. Similarly, the uniqueness of $g_D$ is from \ref{first property for pair} and \ref{second property for pair}. We have to show existence.
	
	Let $(X_0,\overline B)$ be a {weak} boundary divisor of $U$ with $\overline B=(B,g_B)$. Then it is clear that $b_\omega =(B_\omega,g_{B,\omega})$ is a {weak} boundary divisor for $U_\omega$. The map 
	$${\widehat{\Div}_{S,\Q}(U)_\CM} \longrightarrow \widetilde{\Div}_\Q(U)_\cpt \, , \quad (D,g_D) \mapsto D$$
	is continuous with respect to the $\overline B$-boundary topology on ${\widehat{\Div}_{S,\Q}(U)_\CM}$ and with respect to the boundary topology on $\widetilde{\Div}_\Q(U)_\cpt$ {since $B$ is dominated by a geometric boundary divisor of $U$, see \cref{remark:independent of the choice of boundary divisors}}.  Hence the universal properties of direct limits and of completions give a unique continuous extension of the above map  from ${\widehat{\Div}_{S,\Q}(U)_\CM}$ to $\widetilde{\Div}_\Q(U)_\cpt$. For any $\overline D \in \widehat{\Div}_{S,\Q}(U)_\cpt$, we define $D\in \widetilde{\Div}_{\Q}(U)_\cpt$ as the image of $\overline D$ along this map. Then \ref{first' property for pair} is satisfied.
	
	For every $\omega \in \Omega$, 
	we proceed similarly using the local theory from Section \ref{section: local theory}. The map
	$${\widehat{\Div}_{S,\Q}(U)_\CM} \longrightarrow \widehat{\Div}_\Q(U_\omega)_\cpt \, , \quad (D,g_D) \mapsto (D_\omega,g_{D,\omega})$$
	 is continuous with respect to the $\overline B$-boundary topology on ${\widehat{\Div}_{S,\Q}(U)_\CM}$ and with respect to the boundary topology on $\widehat{\Div}_\Q(U_\omega)_\cpt$ since $(B_\omega,g_{B,\omega})$ is dominated by a cofinal boundary divisor of $U_\omega$, see Remark \ref{remarks about boundary topology}. 
	 Hence the universal properties of direct limits and of completions give a unique continuous extension of the above map  from ${\widehat{\Div}_{S,\Q}(U)_\CM}$ to $\widehat{\Div}_{S,\Q}(U)_\cpt$. For any $\overline D \in \widehat{\Div}_{S,\Q}(U)_\cpt$, the image along this map is given by  $\overline{D_\omega}\coloneq(D_\omega, g_{D,\omega})\in \widehat{\Div}_\Q(U_\omega)_\cpt$, where $D_\omega \in \widetilde{\Div}_\Q(U_\omega)_\cpt$ and $g_{D, \omega}$ is a Green function for the $\Q$-Cartier divisor $D_\omega|_{U_\omega}$ on $U_\omega$, see \ref{justification for writing pairs}. This gives a canonical continuous map $\widehat{\Div}_{S,\Q}(U)_\cpt \to \widehat{\Div}_{\Q}(U_\omega)_\cpt$ mapping $\overline{D}$ to $\overline{D_\omega}$. In more concrete terms, we can write $\overline D\in \widehat{\Div}_{S,\Q}(U)_\cpt$
	 as the limit of a sequence $\overline{D_i}=(D_i,g_i)\in {\widehat{\Div}_{S,\Q}(U)_\CM}$ inside $\widehat{\Div}_{S,\Q}(U)_\cpt$. For $\varepsilon \in \Q_{>0}$, there is $i_0 \in \N$ such that we have
	 $-\varepsilon \overline B \leq \overline{D_i} - \overline{D_j} \leq \varepsilon \overline B$ for all $i,j \geq i_0$. 
	 Equivalently, this means
	 \begin{align} \label{ineq:cauchy sequence of divisors}
	 	-\varepsilon B\leq D_{i}-D_j\leq \varepsilon B,
	 \end{align}
	 and 
	 \begin{align} \label{ineq:cauchy sequence of green}
	 	|g_{i,\omega}(x)-g_{j,\omega}(x)|\leq \varepsilon g_{B,\omega}(x)
	 \end{align}
	 for every $i,j \geq i_0$, for every $\omega \in \Omega$ and for every $x \in U_\omega^\an$. Since $B|_U=0$, we conclude from \eqref{ineq:cauchy sequence of divisors} that the sequence $D_i$ converges to  $D \in \widetilde{\Div}_\Q(U)_\cpt$ and that  $D_i|_U$ becomes eventually stationary and so defines a well-defined $\Q$-Cartier divisor denoted by $D|_U$. Then \eqref{ineq:cauchy sequence of green} shows that for every $\omega \in \Omega$ the sequence of Green functions $g_{i,\omega}$ converges locally uniformly on $U_\omega^\an$ to the Green function $g_{D,\omega}$ of $(D|_U)_\omega=D_\omega|_{U_\omega}$. In particular, $g_D \coloneqq (g_{D,\omega})_{\omega \in \Omega}$ is an $S$-Green function for $D|_U$. Since the $S$-Green functions $g_i$ are $S$-measurable, we deduce from the pointwise limit $g_{D,\omega}=\lim\limits_{j\to \infty} g_{j,\omega}$ on $U_\omega^\an$ that  $g_D$ is $S$-measurable on $U$. 
	 Since $g_B$ and all $g_{i}$ are locally $S$-bounded Green functions, it follows from \eqref{ineq:cauchy sequence of green} and from the pointwise limit $g_{D,\omega}=\lim\limits_{j\to\infty} g_{j,\omega}$ on $U_\omega^\an$ that $g_D$ is locally $S$-bounded on $U$. 
	 
	 {We conclude from the above that we can associate to $\overline{D} \in \widehat{\Div}_{S,\Q}(U)_\cpt$ a well-defined pair $(D,g_D)$ with $D \in \widetilde{\Div}_\Q(U)_\cpt$ and $g_D$ an $S$-measurable and locally $S$-bounded Green function for $D|_U$. Clearly, the map $\overline D \mapsto (D, g_D)$ is additive.  By construction, properties  \ref{first property for pair} and \ref{second property for pair} are satisfied.}
	 
	 To prove \ref{third property for pair}, we note first that the map $\overline D \mapsto (D|_U, g_D)$ is also  additive. It remains to show that this map is injective. In the definition $\widehat{\Div}_{S,\Q}(U)_\CM=\varinjlim_{X}\widehat{\Div}_{S,\Q}(X)$, we can replace each $X$ by its normalization in $U$, such that $X$ is integrally closed in $U$. 
For any Cauchy sequence $\{(D_i,g_i)\}_{i\geq 1}$ of $\widehat{\Div}_{S,\Q}(U)_\CM$ with respect to some $\overline{B}$-boundary topology, where $\overline{B}$ is a weak boundary divisor living on a proper $K$-model $X_0$ which is integrally closed in $U$, if its image in $\widehat{\Div}_{S,\Q}(U)$ is $0$, then for $\varepsilon\in \Q_{>0}$, we have $B|_U=D_i|_U=0$ and
	 \[-\varepsilon g_{B,\omega}\leq g_{i} \leq\varepsilon g_{B,\omega}\]
	 on $U_\omega^\an$ for any large $i$ and any $\omega\in\Omega$. By {\cref{lemma: global injectivity of restriction from integrally closed},}  we have $({D}_i,g_i)+\varepsilon\overline{B}\geq 0$ and $\varepsilon \overline{B}-({D}_i,g_i)\geq 0$
	 for any large $i$. Hence $\{(D_i,g_i)\}_{i\geq 1}$ is a Cauchy sequence equivalent to $0$ with respect to the $\overline{B}$-boundary topology. This proves  injectivity and hence we get \ref{third property for pair}.
\end{proof}

\begin{definition} \label{global Green functions for compactified divisors}
	We use \cref{prop:from global adelic to local adelic} to write an element $\overline D \in \widehat{\Div}_{S,\Q}(U)_{\cpt}$ from now on as $(D,g_D)$ with $D\in\widetilde{\Div}_\Q(U)_\cpt$ and $g_D$ an $S$-measurable, locally $S$-bounded $S$-Green function for $D|_U$.
	
	Let $D\in \widetilde{\Div}_{\Q}(U)_{\cpt}$. An (\emph{$S$-})\emph{Green function for $D$} is an $S$-Green function $g$ for $D|_U$ such that $(D,g)\in \widehat{\mathrm{Div}}_{S,\Q}(U)_{\cpt}$. 
\end{definition}

{\begin{remark} \label{remark: determined by Green function}
	A compactified $S$-metrized divisor $\overline D=(D,g_D) \in \widehat{\mathrm{Div}}_{S,\Q}(U)_{\cpt}$ is not always determined by the $S$-Green function $g_D$, but it is so if $U$ is normal. To see this, we assume that $g_D=0$. By Proposition \ref{prop:from global adelic to local adelic}~\ref{third property for pair}, it is enough to show that $D|_U=0$. We pick any $\omega \in \Omega$. Since $g_{D,\omega}=0$ is a Green function for $(D|_U)_\omega$ on $U_\omega$, the Weil $\Q$-divisor associated to $(D|_U)_\omega$ is $0$ as a Weil $\Q$-divisor on $U_\omega$. This is obvious if $U_\omega$ is normal, but follows also in general by passing to the normalization of $U_\omega$ and using the projection formula. Using that base change is injective on Weil $\Q$-divisors, we deduce that the Weil $\Q$-divisor associated to $D|_U$ is zero. Since $U$ is normal, we deduce $D=0$ as desired.
\end{remark}}

The abstract divisorial space $(\widehat{\Div}_{S,\Q}(U)_\relint, \widehat{\Div}_{S,\Q}(U)_\relnef)$ over $\Q$ is too large for extending the arithmetic intersection numbers as we will see later. This is the reason that we have to repeat the above construction replacing $(M_{S,\Q}(X),N_{S,\Q}(X))$ by 
the abstract divisorial space $(M_{S,\Q}'(X),N_{S,\Q}'(X))$ from \cref{global abstract divisorial space}. 

\begin{art} \label{boundary divisors in M'}
	We start the construction by defining
	$$(M_{S,\Q}'(U),N_{{S,\Q}}'(U))\coloneq 
	{\varinjlim_X (M_{{S,\Q}}'(X),N_{{S,\Q}}'(X))}$$
	as a direct limit in the category of abstract divisorial spaces over $\Q$ where $X$ runs over all proper $K$-models of $U$. 
\end{art}
	
\begin{art} \label{divisorial space on U based on arithmetic nef}
	{We denote by $T'$ the set of weak boundary divisors of $U$ contained in  {$M'_{S,\Q}(U)$}.} Similarly as in \cref{def:boundarytopologyglobal}, 
	a compactified $S$-metrized divisor $\overline D\in \widehat{\Div}_{S,\Q}(U)_{\cpt}$ is called 
	\begin{enumerate}
		\item \label{strongly arithmetically nef}
		\emph{strongly {arithmetically} nef} if $\overline D$ is in the $T'$-completion of $N'_{S,\Q}(U)$; 
		\item \label{arithmetically integrable}
		\emph{{arithmetically} integrable} if it is the difference of two strongly {arithmetically} nef ones;
		\item  \label{arithmetically nef}
		\emph{arithmetically nef} if $(D,g)$ is in the closure of the cone of strongly {arithmetically} nef compactified $S$-metrized divisors in the space of arithmetically integrable compactified $S$-metrized divisors with respect to the finite subspace topology.
	\end{enumerate}
	We denote the subspace of {arithmetically} integrable compactified $S$-metrized divisors in the above definition  \ref{arithmetically integrable} by $\widehat{\Div}_{S,\Q}(U)_\arint$. Together with the cone of strongly arithmetically  nef compactified $S$-metrized divisors from \ref{strongly relatively nef}, {denoted by $\widehat{\Div}_{S,\Q}(U)_\arsnef$,} we get an abstract divisorial space 
	which is the $T'$-completion of $(M_{S,\Q}'(U),N'_{S,\Q}(U))$ in the category of abstract divisorial spaces over $\Q$ as introduced in \cref{S-completion}. 
	We denote the cone of {arithmetically} nef compactified $S$-metrized divisors by $\widehat{\Div}_{S,\Q}(U)_\arnef$, then we have an abstract divisorial space		
	$$(\widehat{\Div}_{S,\Q}(U)_\arint, \widehat{\Div}_{S,\Q}(U)_\arnef)$$ 
	over $\Q$.
	Since $N_{{S,\Q}}'(U) \subset N_{{S,\Q}}(U)$, it follows  that we have a canonical injective morphism 
	$$(\widehat{\Div}_{S,\Q}(U)_\arint, \widehat{\Div}_{S,\Q}(U)_\arnef)\longrightarrow (\widehat{\Div}_{S,\Q}(U)_\relint, \widehat{\Div}_{S,\Q}(U)_\relnef)$$
	of abstract divisorial spaces over $\Q$ given as subspaces of $\widehat{\Div}_{S,\Q}(U)_\cpt$. The explicit description in \ref{def:boundarytopologyglobal} holds here as well with {arithmetically} nef/integrable replacing relatively nef/integrable and with $(M',N')$ replacing $(M,N)$. 
\end{art}

\begin{remark} \label{functoriality of rel-int}
	Let $\varphi\colon U'\to U$ be a dominant morphism of algebraic varieties. We have a pull-back map $\varphi^*\colon\widehat{\Div}_{S,\Q}(U)\to \widehat{\Div}_{S,\Q}(U')$ which will induce morphisms of abstract divisorial spaces
		\[(\widehat{\Div}_{S,\Q}(U)_{\relint},\widehat{\Div}_{S,\Q}(U)_{\relnef})\to (\widehat{\Div}_{S,\Q}(U')_{\relint}, \widehat{\Div}_{S,\Q}(U')_{\relnef}),\]
		\[(\widehat{\Div}_{S,\Q}(U)_{\arint},\widehat{\Div}_{S,\Q}(U)_{\arnef})\to (\widehat{\Div}_{S,\Q}(U')_{\arint}, \widehat{\Div}_{S,\Q}(U')_{\arnef}).\]
    so we get in particular a restriction map for open subsets. Indeed, $\varphi^*\colon\widehat{\Div}_{S,\Q}(U)\to \widehat{\Div}_{S,\Q}(U')$ is obviously well-defined. {For any proper $K$-model $X$ of $U$, Nagata's compactification theorem yields an extension of $\varphi$ to a morphism $X'\to X$ over $K$ for a suitable proper $K$-model $X'$ of $U'$. Using the pull-back from \cref{global functoriality}, we get a morphism $\varphi^*\colon (M_{S,\Q}(U), N_{S,\Q}(U))\to (M_{S,\Q}(U'), N_{S,\Q}(U'))$ of abstract divisorial spaces over $\Q$.} 
    For a weak boundary divisors $b=(X_0,\overline{B})$ of $U$ in $M_{S,\Q}(U)$, we can find an extension $\widetilde\varphi\colon X_0'\to X_0$ {of $\varphi$ to a proper $K$-model $X_0'$ of $U'$.} Then $\widetilde\varphi^*\overline{B}$ is a weak boundary divisor of $U'$. 
    By the universal property of $b$-completion in \cref{b-completion}, we have a morphism of abstract divisorial spaces $$(\widehat{\Div}_{S,\Q}(U)_{\relint},\widehat{\Div}_{S,\Q}(U)_{\relnef})\to (\widehat{\Div}_{S,\Q}(U')_{\relint}, \widehat{\Div}_{S,\Q}(U')_{\relnef}).$$ By \cref{S-nef on proper varieties}, the argument is similar in the arithmetically integrable case replacing ($M_{S,\Q},N_{S,\Q})$ by $(M_{S,\Q}',N'_{S,\Q})$.
\end{remark}

Next, we compare $\widehat{\Div}_{S,\Q}(X)_{\relnef}$ (resp. $\widehat{\Div}_{S,\Q}(X)_{\arnef}$) and $N_{S,\Q}(X)$ (resp. $N_{S,\Q}'(X)$) when $X$ is proper.

\begin{proposition} \label{prop:adelic is CM on proper varieties}
	Let $X$ be a proper variety over $K$. Then the following statements hold.
	\begin{enumerate}
		\item \label{boundary on proper 1}
		An adelic divisor of the form $(0,c)$ with $c\in\mathscr{L}^1(\Omega,\mathcal{A},\nu)$ pointwise non-negative is a boundary divisor of $X$. Conversely, for every {weak} boundary divisor $\overline{B}$ on $X$, there is a boundary divisor $\overline{B'}$ of the form above such that $\overline{B}\leq \overline{B'}$.
	\item \label{boundary on proper 2}
	We have that
	$$\widehat{\Div}_{S,\Q}(X)_{\relint}=M_{S,\Q}(X), \quad   \widehat{\Div}_{S,\Q}(X)_{\arint}=M_{S,\Q}'(X).$$
	\item \label{boundary and relatively nef}
		The  strongly relatively nef cone in $\widehat{\Div}_{S,\Q}(X)_{\relint}$ agrees with {$N_{S,\Q}(X)$.}
	\item \label{boundary and arithmetically nef}
	The  strongly arithmetically nef cone in $\widehat{\Div}_{S,\Q}(X)_{\arint}$ agrees with $N'_{S,\Q}(X)$.
	\item \label{boundary and arithmetically nef for projective}
    If $X$ is projective, then  $\widehat{\Div}_{S,\Q}(X)_{\arnef}$ agrees with the closure of the $S$-ample cone $N^+_{S,\Q}(X)$ in $M_{S,\Q}(X)$.
\end{enumerate}
\end{proposition}
\begin{proof}
	\ref{boundary on proper 1} Obviously, the pair $(0,\{c_\omega\}_{\omega\in\Omega})$ with $c\in\mathscr{L}^1(\Omega,\mathcal{A},\nu)$ pointwise non-negative is a boundary divisor of $X$. Conversely, let $B=(0,g)$ be a {weak} boundary divisor of $X$. 
By \cite[Corollary~6.2.14]{chen2020arakelov}, the function
	$$c\colon \Omega \longrightarrow \R, \,\omega \mapsto \sup\{g(x)\mid x\in X_\omega^\an\}$$
	is $\nu$-integrable. This completes the proof of \ref{boundary on proper 1}.
	
	 \ref{boundary and relatively nef} 
	  By definition, we have that $N_{S,\Q}(X)$ 
	  {is contained in the cone of strongly relatively nef compactified $S$-metrized divisors on $X$.} Conversely, let $\overline{D}=(D,g)$  
	 be a strongly relatively nef compactified $S$-metrized divisor on $X$ represented by a Cauchy sequence of $S$-metrized divisors $\overline{D_n}=(D,g_n)\in N_{S,\Q}(X)$ with respect to some $(0,g_0)$-boundary topology, where $(0,g_0)\in M_{S,\Q}(X)$ with $g_{0,\omega}\geq0$ for any $\omega\in\Omega$. By \ref{boundary on proper 1} there is a divisor $(0,c)\in M_{S,\Q}(X)_{\geq 0}$ with $c\in \mathscr{L}^1(\Omega,\mathcal{A},\nu)$ such that $g_{0,\omega}\leq c(\omega)$. For any $\varepsilon\in \Q_{>0}$, there is $n\in\N$ such that
	\begin{equation}\label{limits of divisors on proper varieties}
\max\limits_{x\in X_\omega^\an}\{|g_{n,\omega}(x)-g_\omega(x)|\}\leq \varepsilon\max\limits_{x\in X_\omega^\an}\{g_0(x)\}\leq \varepsilon c(\omega).
	\end{equation}
	This implies that $(D,g)$ is (locally) $S$-bounded. On the other hand,
	since $\lim\limits_{n\to \infty}g_{n,\omega}(x)=g_\omega(x)$ for any $\omega\in\Omega, x\in X_\omega^\an$, we have that $(D,g)$ is $S$-measurable as the pointwise limit of measurable functions is measurable. Hence $(D,g)\in\widehat{\Div}_{S,\Q}(X)$. Moreover, \eqref{limits of divisors on proper varieties} implies that $(D,g)\in N_{S,\Q}(X)$ by \cref{global adelic Green functions for proper varieties}. 
This proves \ref{boundary and relatively nef}.
	
	\ref{boundary and arithmetically nef}
	Obviously, we have that $N'_{S,\Q}(X)\subset \widehat{\Div}_{S,\Q}(X)_\arsnef$. Conversely, let $\overline{D}=(D,g)$ 
	be a {strongly arithmetically nef compactified $S$-metrized divisor represented by a Cauchy sequence of $S$-metrized divisors $\overline{D_n}=(D,g_n)\in N_{S,\Q}'(X)$} with respect to some $(0,g_0)$-boundary topology. By our discussion above, we have that $\overline{D}\in N_{S,\Q}(X)$. Let $\varphi\colon X'\to X$ be a birational morphism with $X'$ projective. By Proposition \ref{prop: S-nef cone}, it suffices to show that $\varphi^*\overline{D}\in N'_{S,\Q}(X')$. Hence we can assume that $X$ itself is projective and also that $K$ is {perfect}  by \cref{prop: S-nef cone} again. Let $\overline{A}\in N_{S,\Q}^+(X)$. 
	Then $\overline D+\overline A$ is relatively $S$-ample. Set $\Theta_X$ the set of all integral closed subschemes of $X$. {Using \eqref{limits of divisors on proper varieties} and \cref{global intersection number}~\ref{global intersection number by induction} for the change of metrics}, {it follows}  
	\begin{equation} \label{limit formula for intersection numbers}
	\lim\limits_{n\to\infty}((\overline{D_n}+{\overline{A}})^{\dim Y+1}\mid Y)_S=((\overline{D}+{\overline{A}})^{\dim Y+1}\mid Y)_S
    \end{equation} 
    for any $Y\in \Theta_X$.  
	By \cref{slopes and relatively ample}, we have that
	\begin{equation}  \label{asympotic slope formula for D_n}
		\widehat{\mu}_{\min}^{\mathrm{asy}}(\overline{D_n}+{\overline{A}}) =\inf_{Y\in\Theta_X}\frac{((\overline{D_n}+{\overline{A}})^{\dim(Y)+1}\mid Y)_S}{(\dim(Y)+1)\deg_{{D_{{n}}+{A}}}(Y)}.
	\end{equation}
{Since $\overline{D_n}+\overline{A} \in N'_{S,\Q}(X)\subset \overline{N_{S,\Q}^+}(X)$, we have $\widehat{\mu}_{\min}^{\mathrm{asy}}(\overline{D_n}+\overline{A}) \geq 0$ by \cref{proposition:arithmetically nef for projective}. It follows from \eqref{asympotic slope formula for D_n} that $((\overline{D_n}+\overline{A})^{\dim Y+1}\mid Y)_S\geq 0$ for any $Y\in \Theta_X$ and hence \eqref{limit formula for intersection numbers} shows that $((\overline{D}+\overline{A})^{\dim Y+1}\mid Y)_S\geq 0$. Using the formula \eqref{asympotic slope formula for D_n} for the relatively $S$-ample $\overline{D}+\overline{A}$ instead of $\overline{D_{{n}}}+\overline{A}$ based  on \cref{slopes and relatively ample}, we conclude that $\widehat{\mu}_{\min}^{\mathrm{asy}}(\overline{D}+\overline{A}) \geq 0$. By \cref{proposition:arithmetically nef for projective}, we get {$\overline{D}\in \overline{N_{S,\Q}^+}(X)$, hence} $\overline D \in N'_{S,\Q}(X){=\overline{N_{S,\Q}^+}(X)\cap N_{S,\Q}(X)}$ proving \ref{boundary and arithmetically nef}.}

Since any arithmetically {integrable} compactified $S$-metrized divisor is the difference of two strongly arithmetically nef ones, we get from \ref{boundary and arithmetically nef} that
	$$\widehat{\Div}_{S,\Q}(X)_{\arint}=M_{S,\Q}'(X).$$
Similarly, we deduce from \ref{boundary and relatively nef} that 
$$\widehat{\Div}_{S,\Q}(X)_{\relint}=M_{S,\Q}(X).$$
which proves \ref{boundary on proper 2}.	


{For a projective variety $X$, \cref{prop: S-nef cone}  shows that $N^+_{S,\Q}(X)\subset N'_{S,\Q}(X) \subset \overline{N^+_{S,\Q}}(X)$. Since $\widehat{\Div}_{S,\Q}(X)_{\arnef}$ is the closure of $\widehat{\Div}_{S,\Q}(X)_{\arsnef}$ in $\widehat{\Div}_{S,\Q}(X)$, we deduce \ref{boundary and arithmetically nef for projective} from \ref{boundary on proper 2} and \ref{boundary and arithmetically nef}.}
\end{proof}
\begin{remark} \label{c is S-nef}
	Let $(0,c)$ be an adelic divisor on a proper variety $X$ with $c\in\mathscr{L}^1(\Omega,\mathcal{A},\nu)$ pointwise non-negative. Then $(0,c)\in N_{S,\Q}'(X)$. 
	{Using Chow's lemma and \cref{prop: S-nef cone}, we may assume that $X$ is projective.} For any $S$-ample divisor $\overline{A}$ on {$X$}, by \cref{definition: arithmetically ample}  {and \eqref{twist and height}},
	we  see that $\overline{A}+(0,c)$ is $S$-ample. 
	{By \cref{proposition:arithmetically nef for projective}}, we deduce that $(0,c)\in N'_{S,\Q}(X)$. 
\end{remark}

\begin{remark}
	{Assume that $K$ is perfect.} If $X$ is projective, since relatively $S$-ample adelic divisors are in $N_{S,\Q}(X)$, and $\widehat{\Div}_{S,\Q}(X)_\relnef$ is the closure of $N_{S,\Q}(X)$ in $\widehat{\Div}_{S,\Q}(X)_\relint=M_{S,\Q}(X)$, then our definition of relative nefness coincide with the one in \cite[Definition~6.4.5]{chen2024positivity} in the projective case.
	{Moreover, \cref{prop:adelic is CM on proper varieties}~\ref{boundary and arithmetically nef for projective} shows that the arithmetically nef compactified divisors on a projective variety are precisely the nef divisors considered by Chen and Moriwaki \cite[Definition 9.1.6]{chen2024positivity}.}
\end{remark}

We illustrate the above notions in our running example.

\begin{example} \label{singular arithmetic nef in running example}
Let $K=\Q$ with canonical adelic curve $S=(\Q,\Omega,\mathcal{A},\nu)$ as in \cref{example:adelic structure of number fields} and let $U=\mathbb A_K^1$ embedded as usual in $X=\mathbb P_K^1$. We consider the  toric divisor $D= [\infty]$ of $X$ which we can view as a geometric compactified divisor in $\widetilde{\Div}_\Q(U)_\cpt$ as in \cref{running example integrable local}.

We consider the toric boundary divisor $\overline H = (H,g_H)$ on $X$ with $H=D=[\infty]$ and $g_H$ the Green function associated to the canonical metric of $L=\OO_X(1)$. This means that $g_{H,v}=\Psi \circ \trop_v$ for all places $v \in \Omega$, where $\Psi(u)=\min(0,u)$ is the concave function on $\R$ corresponding to $D$. Let $\overline B=(B,g_B)$ be any weak boundary divisor of $U$. Replacing the proper $K$-model of $U$ by its normalization, we may assume that $B$ is a Cartier divisor on $X$ with support in $X \setminus U$ which means that $B=k[\infty]$ for some $k \in \Q_{\geq 0}$. Then $kg_H$ and $g_B$ are both Green functions for $kD=k[\infty]$ on $X$, hence locally $S$-boundedness yields a non-negative integrable function $C\colon \Omega \to \R$ such that $kg_H-g_B \geq -C$. We conclude that the set of boundary divisors $\{(H,g_H+C)\mid C \in \mathscr{L}^1(\Omega,\mathcal{A},\nu)_{\geq 0}\}$ is cofinal in the set of weak boundary divisors of $U$. Since there is no $C \in \mathscr{L}^1(\Omega,\mathcal{A},\nu)_{\geq 0}$  such that $\{kC\mid k \in \N\}$ is cofinal in $\mathscr{L}^1(\Omega,\mathcal{A},\nu)$, we conclude that there is no cofinal boundary divisor of $U$. 

In the following, we pick a boundary divisor $\overline B=(B,g_B)$ of $U$ of the form $B=H=[\infty]$ and $g_B=g_H+C$ for any twist  $C \in   \mathscr{L}^1(\Omega,\mathcal{A},\nu)_{\geq 0}$. We have seen above that such boundary divisors are cofinal in the set of weak boundary divisors of $U$.

We describe first the elements $(D,g_D) \in \widehat{\Div}_{S,\Q}(U)_\relsnef$ for $D=[\infty]$ and for a toric Green function $g_D$.
We have seen in \cref{running example integrable local}  that $g_{D,v}$ is a semipositive toric Green function for $D$ if and only if there is a concave function $\psi_v\colon \R \to \R$ with asymptotic slope $1$ for $u \to -\infty$ and $\lim_{u \to \infty}\psi(u)\in \R$ such that $g_{D,v}=-\psi_v \circ \trop_v$. 
 By construction, the element $(D,g_D)\in \widehat{\Div}_{S,\Q}(U)_\relsnef$ is the limit of a sequence $(D_n,g_n)\in \widehat{\Div}_{S,\Q}(X)_\relnef$ as we always can pass to the normalization $X$ of a $K$-model of $U$. The limit is with respect to the $\overline B$-topology for a suitable choice of a boundary divisor $\overline B=(H,g_B=g_H+C)$ as above.   Then $D_n = k_n [\infty]$ for a sequence $k_n \in \Q$ converging to $1$, so we may assume that $D_n=[\infty]=D$ for all $n \in \N$ by scaling. Using torification of the Green functions $g_n$ as in \cref{running example integrable local}, we may assume that $g_{n,v}$ is a semipositive toric Green function induced by a concave function $\psi_{n,v} \colon \R \to \R$. Since $g_{n,v}$ is a Green function for $D=[\infty]$ on $X_v$, 
 we have $\psi_{n,v}-\Psi=O(1)$, see \cref{running example for local Zhang}.    
 Convergence with respect to the $\overline B$-topology can be translated by the tropicalization to $\R$ and means that for all  $\varepsilon \in \Q_{>0}$, we have
\begin{equation} \label{b-convergence for concave functions}
	-\varepsilon \left(\Psi+C(v)\right) \leq \psi_{n,v} - \psi_v \leq \varepsilon \left(\Psi+C(v)\right)
\end{equation}
for $n \gg 0$ (independent of the choice of $v \in \Omega$). We conclude that $(D,g_D) \in \widehat{\Div}_{S,\Q}(U)_\relsnef$ for a toric Green function of $D=[\infty]$ if and only if there is a sequence $\psi_{n,v}$ of concave functions on $\R$ with $\psi_{n,v}-\Psi=O(1)$ and an integrable function $C$ on $\Omega$ such that for all $\varepsilon \in \Q_{>0}$, there is $n_0 \in \N$ such that the inequalities \eqref{b-convergence for concave functions} hold for $n \geq n_0$ and all $v \in \Omega$.

For every $v \in  \Omega$, it follows from \eqref{b-convergence for concave functions} that the concave functions $\psi_{n,v}$  converge pointwise (even local uniformly) to $\psi_v$. Moreover,  as in \cite[Lemma 3.10]{burgos2023on}, it is easy to see that we may assume that the sequence $(\psi_n)_{n \in \N}$ increasingly converges to $\psi$.

We assume now that $(D,g_D) \in \widehat{\Div}_{S,\Q}(U)_\relsnef$ as above. Since the asymptotic slope for the corresponding concave function $\psi_v$ is $1$ for $u \to -\infty$ and since $\lim_{u \to \infty} \psi_v(u) \in \R$, we know that the Legendre--Fenchel dual $\psi_v^\vee$ is a concave function on $[0,1]$ and the same holds for the roof function 
$$ \vartheta(m) \coloneqq \int_{\Omega} \psi_v^\vee(m) \,\nu(dv) \quad (m \in [0,1])$$
from \eqref{roof function} in \cref{ample and nef adelic metrics in running example}.
We claim that $(D,g_D) \in \widehat{\Div}_{S,\Q}(U)_\arsnef$ if and only if $\vartheta \geq 0$. 
We look first at the \emph{non-singular} case which means that $g_D$ is Green function for $D$ on $X$. Equivalently, we have $\psi_v-\Psi=O(1)$ for all $v \in \Omega$. This was the case considered in Example \ref{semipositive adelic metrics in running example} where we have shown that	$(D,g_D) \in \widehat{\Div}_{S,\Q}(X)_\arsnef$ if and only if $\vartheta \geq 0$. In the \emph{singular} case, we describe $(D,g_D) \in \widehat{\Div}_{S,\Q}(U)_\arsnef$  as a limit of $(D_n,g_n) \in \widehat{\Div}_{S,\Q}(X)_\arnef$ with respect to a suitable boundary divisor $\overline B=(H,g_H+C)$. As above, we may assume that $D_n=D$ for all  $n \in \N$ and that all semipositive Green functions $g_n$ are toric. Then for all $\varepsilon \in \Q_{>0}$, there is $n_0 \in \N$ such that \eqref{b-convergence for concave functions} holds for all  $n \geq n_0$ and all $v \in \Omega$. Again, we may assume that for every $v \in \Omega$, the functions $\psi_{n,v}$ increasingly converge to $\psi_v$ pointwise. Since the Legendre-Fenchel dual reverses the partial order between functions and by biduality, we conclude that the sequence $\psi_{n,v}^\vee$ decreasingly converges to $\psi_v^\vee$ pointwise on $[0,1]$. It follows from the non-singular case  and the monotone convergence theorem that
$$\vartheta(m)= \int_\Omega \psi_v^\vee(m) \,\nu(dv) = \lim_{n \to \infty}\int_\Omega \psi_{n,v}^\vee(m) \,\nu(dv) \geq 0$$
for all $m \in [0,1]$ proving one direction of the claim. Conversely if $\vartheta \geq 0$ for $(D,g_D) \in \widehat{\Div}_{S,\Q}(U)_\relsnef$ with a toric Green function $g_D$, we describe $g_D$ as above as a decreasing limit of semipositive toric Green functions $g_n$ for $D$ on $X$. Then the corresponding concave functions $\psi_{n,v}$ converge increasingly pointwise to $\psi_v$, hence the Legendre--Fenchel duals $\psi_{n,v}^\vee$ converge decreasingly to $\psi_v^\vee$ pointwise on $[0,1]$. We conclude that the roof function $\vartheta_n$ of $g_{n}$ satisfies $\vartheta_n \geq \vartheta \geq 0$. We deduce from the non-singular case (see \cref{semipositive adelic metrics in running example}) that $(D,g_n) \in  \widehat{\Div}_{S,\Q}(X)_\arnef$ and hence $(D,g_D) \in \widehat{\Div}_{S,\Q}(U)_\arsnef$ proving the reverse direction.
\end{example}

\begin{remark} \label{rareness of arithmetic nef}
For a description of toric compactified $S$-metrized divisors in higher dimensions, we refer to \cite[Chapter 5]{peralta-thesis24}. {The above example was inspired by \cite[Construction 4.1]{burgos2023on} and we can draw now similar conclusions as in  \cite[Proposition 4.2]{burgos2023on}.  Let $U 	= \mathbb A_K^1 \subset X=\mathbb P_K^1$  over $K=\Q$ and let  $(D,g_D) \in  \widehat{\Div}_{S,\Q}(U)_\relsnef$ such that $D$ is a toric compactified geometric divisor and such that $g_D$ is a toric Green function for $D$. By \cref{example: projective line} and \cref{conclusion of running local example}, we have $D=a[0]+b[\infty]$ for $a \in \Q$ and   $b \in \R_{\geq 0}$ with $a+b \geq 0$. In this case, the asymptotic slopes of the concave functions $\psi_v$ corresponding to the toric Green functions $g_{D,v}$ have asymptotic slopes $b$ for $u \to -\infty$ and $-a$ for $u \to \infty$, hence the roof function $\vartheta$ is a concave function on $[-a,b]$. By a linearity argument similarly as in \cref{conclusion of running local example}, we obtain from Example \ref{singular arithmetic nef in running example} that 
$(D,g_D)$  is strongly arithmetically nef if and only if  $\vartheta \geq 0$. Note that this can only happen if  $(D,g_D) \in  \widehat{\Div}_{S,\Q}(X)_\relsnef$, i.e.~in the non-singular case considered above. Indeed, if $g_D$ is singular, then there is $v \in \Omega$ such that the concave function $\psi_v$ corresponding to the toric Green function $g_{D,v}$ satisfies $\lim_{u \to -\infty} \psi_v(u)-\Psi(u)=-\infty$. This is equivalent to $\lim_{m \to b} \psi_v^\vee(m)=-\infty$ where $m \in [-a,b[$. Then the roof function $\vartheta$ has the same singularity in $m=b$ and hence cannot be $\geq 0$.} 

More generally, we consider a toric $(D,g_D) \in  \widehat{\Div}_{S,\Q}(U)_\arint$. Then we claim that $g_D$ is \emph{non-singular} which means $(D,g_D)\in  \widehat{\Div}_{S,\Q}(X)_\arnef$. By definition, we have $(D,g_D)=(E,g_E)-(F,g_F)$ with $(E,g_E)$ and $(F,g_F)$ in $\widehat{\Div}_{S,\Q}(U)_\arsnef$. Since $D$ is toric, the restrictions of $E,F$ to $T=\mathbb G_{\rm m}^1$ give the same divisor $E_U=F_U$. 
There is $q \in \N$ such that $q[\infty]-E_U$ is ample on $X$. Picking an arithmetically nef Green function $g_q$ for $q[\infty]-E_U$ on $X$ and adding $(q[\infty]-E_U, g_q)$ to $(E,g_E)$ and to $(F,g_F)$, we may assume that both $E$ and $F$ are toric compactified geometric divisors. By torification as in \cref{singular arithmetic nef in running example}, we may assume  that $g_E, g_F$ are toric Green functions. Then the above shows that $(E,g_E),(F,g_F)\in  \widehat{\Div}_{S,\Q}(X)_\relnef$ and hence $(D,g_D)\in  \widehat{\Div}_{S,\Q}(X)_\arnef$.
\end{remark}

{The following result will be applied in later sections.}

	\begin{lemma} \label{lemma:integrable bound for green functions}
		Let $\overline{D}=(D,g) \in \widehat{\Div}_{S,\Q}(U)_{\cpt}$, and $\overline{B}=(B,g_B)\in M_{S,\Q}(U)_{\geq 0}$ a weak boundary divisor. Assume that ${D}\leq0$ and there is a Cauchy sequence in $M_{S,\Q}(U)$ converging to $\overline{D}$ with respect to $\overline{B}$-topology. Then for any $\varepsilon\in\Q_{>0}$, there is a $\nu$-integrable function $C\in\mathscr{L}^1(\Omega,\mathcal{A},\nu)$ such that 
		\[\sup\limits_{x\in U_\omega^\an}\left\{g_\omega(x)-\varepsilon g_{B,\omega}(x)\right\}\leq C(\omega)\]
		for any $\omega\in\Omega$.
	\end{lemma}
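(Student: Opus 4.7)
The plan is as follows. Given $\varepsilon\in\Q_{>0}$, set $\varepsilon'\coloneqq \varepsilon/4$. By the assumed Cauchy convergence $(D_n,g_n)\to\overline{D}$ in the $\overline{B}$-topology, I would first fix an index $n$ with
\[
-\varepsilon'\overline{B}\leq(D_n,g_n)-\overline{D}\leq\varepsilon'\overline{B}
\]
in $\widehat{\Div}_{S,\Q}(U)_\cpt$. Passing this inequality through \cref{prop:from global adelic to local adelic} yields both the pointwise Green-function bound $g_\omega\leq g_{n,\omega}+\varepsilon' g_{B,\omega}$ on $U_\omega^\an$ for every $\omega\in\Omega$, and the compactified geometric inequality $D_n-D\leq\varepsilon' B$ in $\widetilde{\Div}_\Q(U)_\cpt$. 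Adding $-D\geq 0$, which follows from the hypothesis $D\leq 0$, gives $\varepsilon' B-D_n\geq 0$ in $\widetilde{\Div}_\Q(U)_\cpt$.

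The central step is then to upgrade this inequality to one of honest $\Q$-Cartier divisors on a proper $K$-model. Since $\varepsilon' B-D_n$ already lies in $\Div_\Q(U)_\mo$ and is a $B$-topological limit of effective geometric model divisors, I would choose $F\in(\Div_\Q(U)_\mo)_{\geq 0}$ with $F-(\varepsilon' B-D_n)\leq\varepsilon' B$. Together with $F\geq 0$ this gives $D_n\leq F+D_n\leq 2\varepsilon' B=(\varepsilon/2)B$ in $\Div_\Q(U)_\mo$. Using \cref{lemma:compactifications form an inverse system} to pass to a common proper $K$-model $X$ of $U$ dominating the representatives of $(D_n,g_n)$, $\overline{B}$ and $F$, I arrive at the $\Q$-Cartier inequality $D_n\leq(\varepsilon/2)B$ on $X$, and hence at an element $(\varepsilon/2)\overline{B}-(D_n,g_n)\in\widehat{\Div}_{S,\Q}(X)$ whose underlying divisor $(\varepsilon/2)B-D_n$ is effective on the proper variety $X$.

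Its Green function $(\varepsilon/2)g_B-g_n$ is locally $S$-bounded, so \cref{Green functions for effective divisors} (last statement, in the proper case) supplies $C\in\mathscr{L}^1(\Omega,\mathcal{A},\nu)$ with $(\varepsilon/2)g_{B,\omega}(x)-g_{n,\omega}(x)\geq -C(\omega)$ for all $\omega\in\Omega$ and $x\in X_\omega^\an\supset U_\omega^\an$. Combining with the Green-function bound above and using $g_B\geq 0$ (from $\overline{B}\in M_{S,\Q}(U)_{\geq 0}$), I would conclude
\[
g_\omega(x)-\varepsilon g_{B,\omega}(x)\leq g_{n,\omega}(x)+\varepsilon' g_{B,\omega}(x)-\varepsilon g_{B,\omega}(x)\leq C(\omega)-(\varepsilon/4)g_{B,\omega}(x)\leq C(\omega)
\]
for all $x\in U_\omega^\an$, which is precisely the asserted bound.

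The hard part is the middle paragraph: descending an effectivity statement from the abstract completion $\widetilde{\Div}_\Q(U)_\cpt$ to an honest effectivity of $\Q$-Cartier divisors on a proper $K$-model. This uses crucially that $\varepsilon' B-D_n$ already belongs to $\Div_\Q(U)_\mo$, together with the explicit description of the partial order on the completion in terms of $B$-topological approximation by effective geometric model divisors.
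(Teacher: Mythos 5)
Your proof is correct and follows the same strategy as the paper: use the Cauchy approximation to control $g_\omega$ by $g_{n,\omega}$ plus a small multiple of $g_{B,\omega}$, deduce from $D\leq 0$ that the relevant geometric divisor is (after a small $B$-correction) effective on a proper $K$-model, and then invoke \cref{Green functions for effective divisors}. The only organizational difference is that the paper subtracts a small positive multiple of $\overline B$ from $\overline D_n$ — relying tacitly on the same closure argument you spell out — so that the Green-function lemma applies directly to $(-D_n,-g_n)$, whereas you keep the original sequence and apply it to $(\varepsilon/2)\overline B-\overline D_n$; both are correct and equivalent in content.
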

	\begin{proof}
		We choose a Cauchy sequence $\overline{D_n}=(D_n,g_n)$ in $M_{S,\Q}(U)$ converging to $\overline{D}$ with respect to $\overline{B}$-topology. Since $D \leq 0$, subtracting from $\overline{D_n}$ a small positive multiple of $\overline{B}$, we may assume that $D_n \leq 0$. For $n \in \N_{\geq 1}$ sufficiently large, the definition of the limit yields
		$$(D,g)-(D_n,g_n) \leq \varepsilon \cdot (B,g_B).$$
		There is a proper $K$-model $X_n$ of $U$ such that $\overline{D_n}\in M_{S,\Q}(X_n)$. 
		Since $D_n \leq 0$, Lemma \ref{Green functions for effective divisors} yields that $g_n \leq C$ for an integrable function $C$ on $\Omega$. This proves the claim.
\end{proof}
	
\subsection{Compactified $S$-metrized line bundles}

\label{subsection:global adelic line bundles}

As in the local case in \cref{subsection:local adelic line bundles}, we will consider compactified $S$-metrized line bundles in the global case. Recall $\widehat \Pic_{S,\Q}(U)$ denotes the group of isometry classes of locally $S$-bounded, $S$-measurable $S$-metrized $\Q$-line bundles on $U_S$, see \cref{group of adelic line bundles}.

\begin{art} \label{boundary topology on S-metrized line bundles}
We fix a weak boundary divisor $(X_0, \overline{B})$ of $U$ with $\overline{B}=(B, g_B)$. The \emph{$\overline{B}$-boundary topology} on $\widehat{\Pic}_{S,\Q}(U)$ is defined such that a {basis of neighborhoods} of an element $\overline{L}=(L, \metr)\in \widehat{\Pic}_{S,\Q}(U)$ of the topology is given by
\[B(r,\overline{L})\coloneq \left\{(L,\metr')\in \widehat\Pic_{S,\Q}(U) \,\middle\vert\, -rg_{B,\omega}\leq \log\frac{\metr_\omega'}{\metr_\omega}\leq rg_{B,\omega} \text{ for any $\omega\in\Omega$}\right\}, \quad r\in \Q_{>0}.\]
{Similarly as in \cref{def: b-metric}, we can define a pseudo-metric $d_{\overline B}$ which defines the boundary topology.} {Using} the local case, it is not hard to show that $\widehat{\Pic}_{S,\Q}(U)$ is complete with respect to the $\overline{B}$-boundary topology for any weak boundary divisor $\overline{B}$ of $U$. In fact, suppose that $(\overline{L_i})_{i\in\N}$ is a Cauchy sequence in $\widehat{\Pic}_{S,\Q}(U)$ with respect to the $\overline{B}$-boundary topology, write $\overline{L_i}=(L_i,\metr_i)$. We may assume that $L_i=L_j$ for any $i,j\in\N$, and denote $L$ the underlying line bundle. Then for any  $\omega\in\Omega$, $(\overline{L_{i,\omega}})$ is a Cauchy sequence in $\widehat{\Pic}_S(U_\omega)$ with respect to the $\overline{B_\omega}$-boundary topology. {Similarly as in \ref{boundary topology on metrized line bundles}, we conclude that the metrics $\metr_{i\omega}$ of $L_\omega$ converge to a metric $\metr_\omega$ of $L_\omega$ with respect to the $\overline{B_\omega}$-boundary topology. This is based on completeness of continuous functions on $U_\omega^\an$ as the argument in \cite[Lemma 3.6.3]{yuan2021adelic} does not use that the boundary divisor $\overline{B_\omega}$ is cofinal. As the $S$-metric $\metr \coloneqq (\metr_\omega)_{\omega \in \Omega}$ is obtained by limits of locally $S$-bounded and $S$-measurable metrics and as $g_B$ also has these properties, it follows  that $\metr$ is a locally $S$-bounded and $S$-measurable metric on $L$. Moreover, it is clear from the construction the $(L,\metr)$ is the limit of $(L_i,\metr_i)$ with respect to the $\overline{B}$-topology. This proves completeness.} 
\end{art}
														
\begin{art} \label{definition of global line bundles}
Let $P_{S,\Q}(U)$ be the subspace of $\widehat{\Pic}_{S,\Q}(U)$ consisting of all $(L,\metr)$ with $\metr$ an $S$-metric induced by an adelic $\Q$-line bundle in the sense of Chen and Moriwaki on some proper $K$-model of $U$. We denote by $Q_\Q(U)$ the subcone of $P_\Q(U)$ induced by the  $(L,\metr)$ as above corresponding {$S$-metrized divisors in $N_{S,\Q}'(U)$}. This means 
\[\text{$P_{S,\Q}(U) = \varinjlim_{X}P_{S,\Q}(X)$  and  $\quad Q_{S,\Q}(U) = \varinjlim_{X}Q_{S,\Q}(X)$}\]
where $X$ ranges over all proper $K$-models of $U$. Indeed, we may restrict the direct limit to the proper $K$-models which are integrally closed and then the map $P_{S,\Q}(X) \to P_{S,\Q}(U)$ becomes injective by \cref{lemma: global injectivity of restriction from integrally closed}, hence the above direct limits become unions. 
\end{art}

\begin{definition} \label{definition: compactified global line bundles}
Using the above terminology, we define $\widehat{\Pic}_{S,\Q}(U)_\cpt$ as the direct limit of the closure of $P_{S,\Q}(U)$ in $\widehat{\Pic}_{S,\Q}(U)$ with respect to the $\overline{B}$-boundary topology where the direct limit is over the weak boundary divisors $\overline{B}$ of $U$.
															
We say that $(L,\metr) \in \widehat{\Pic}_{S,\Q}(U)$ is \emph{strongly arithmetically nef} if $(L,\metr)$ is in the direct limit of the closure of $Q_{\Q}(U)$ with respect to some  $\overline{B}$-boundary topology. The strongly arithmetically nef elements form a cone $\widehat{\Pic}_{S,\Q}(U)_\arsnef$ in $\widehat{\Pic}_{S,\Q}(U)$. We define the subspace
$$\widehat{\Pic}_{S,\Q}(U)_\arint \coloneqq \widehat{\Pic}_{S,\Q}(U)_\arsnef - \widehat{\Pic}_{S,\Q}(U)_\arsnef$$
of $\widehat{\Pic}_{S,\Q}(U)$. Finally, the \emph{arithmetically nef cone} $\widehat{\Pic}_{S,\Q}(U)_\arnef$ is defined as the closure of $\widehat{\Pic}_{S,\Q}(U)_\arsnef$ in $\widehat{\Pic}_{S,\Q}(U)_\arint$ with respect to the finite subspace topology from \cref{finite subspace topology}. 
\end{definition}
														
We have the global versions of \cref{connecting metrized divisors and metrized line bundles} and \cref{pull-back in local case}, the proofs are similar as in the local case, we leave them to the readers. 
																
\begin{prop} \label{global:connecting metrized divisors and metrized line bundles}
	For $(D,g_D) \in \widehat{\Div}_{S,\Q}(U)$, let $\metr_D$ the $S$-metric on $\OO_U(D)$ associated to $g_D$ as in \ref{global Green functions} using \eqref{metric and Green functions}. This gives a surjective map
$$\widehat{\Div}_{S,\Q}(U) \longrightarrow \widehat{\Pic}_{S,\Q}(U) \, , \quad (D,g_D) \mapsto (\OO_U(D),\metr_D)$$ 
which maps the sets $\widehat{\Div}_{S,\Q}(U)_\cpt$,  $\widehat{\Div}_{S,\Q}(U)_\arsnef$, $\widehat{\Div}_{S,\Q}(U)_\arnef$ and $\widehat{\Div}_{S,\Q}(U)_\arint$ onto the sets  $\widehat{\Pic}_{S,\Q}(U)_\cpt$, $\widehat{\Pic}_{S,\Q}(U)_\arsnef$, $\widehat{\Pic}_{S,\Q}(U)_\arnef$ and $\widehat{\Pic}_{S,\Q}(U)_\arint$, respectively.
\end{prop}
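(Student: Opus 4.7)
The plan is to follow the pattern of the local analogue \cref{connecting metrized divisors and metrized line bundles}, keeping track of the parameter $\omega\in\Omega$. Surjectivity of the underlying forgetful map is clear, since every $\Q$-line bundle on $U$ admits a non-zero rational section, so each isometry class in $\widehat{\Pic}_{S,\Q}(U)$ is represented by some $(D,g_D)$ via \eqref{metric and Green functions}; the local $S$-boundedness and $S$-measurability of $\metr$ correspond exactly to those of $g_D$ by \ref{global Green functions}. At the level of the building blocks, this restricts to surjections $M_{S,\Q}(U)\twoheadrightarrow P_{S,\Q}(U)$ and $N'_{S,\Q}(U)\twoheadrightarrow Q_{S,\Q}(U)$: for every proper $K$-model $X$ of $U$ the map $\widehat{\Div}_{S,\Q}(X)\twoheadrightarrow P_{S,\Q}(X)$ is surjective by construction, and $N'_{S,\Q}(X)$ maps onto $Q_{S,\Q}(X)$ by \cref{prop: S-nef cone} and \cref{definition of global line bundles}; passing to the direct limit over $X$ preserves surjectivity.

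The key analytic input is a \emph{local isometry} for every weak boundary divisor $\overline{B}=(B,g_B)$. For $(D,g_D),(D',g_{D'})\in \widehat{\Div}_{S,\Q}(U)_{\CM}$, the inequality $-r\overline{B}\leq (D',g_{D'})-(D,g_D)\leq r\overline{B}$ is equivalent to $D'|_U=D|_U$ together with the pointwise estimate $|g_{D',\omega}-g_{D,\omega}|\leq r\,g_{B,\omega}$ on $U_\omega^\an$ for every $\omega\in\Omega$. Since $g_{D,\omega}-g_{D',\omega}=\log(\metr_{D',\omega}/\metr_{D,\omega})$, this is precisely the condition $(\OO_U(D),\metr_{D'})\in B(r,(\OO_U(D),\metr_D))$ in the $\overline{B}$-boundary topology of \cref{boundary topology on S-metrized line bundles}. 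Hence the forgetful map is a local isometry between $M_{S,\Q}(U)$ and $P_{S,\Q}(U)$, and between $N'_{S,\Q}(U)$ and $Q_{S,\Q}(U)$, for each $\overline{B}$.

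By continuity the map extends to a surjection between the respective $\overline{B}$-completions; taking the direct limit over $\overline{B}$ gives surjectivity onto $\widehat{\Pic}_{S,\Q}(U)_\cpt$ and onto $\widehat{\Pic}_{S,\Q}(U)_\arsnef$, and taking differences yields surjectivity onto $\widehat{\Pic}_{S,\Q}(U)_\arint$. For the arithmetically nef case, an element $\overline{L}\in \widehat{\Pic}_{S,\Q}(U)_\arnef$ admits, by \cref{lemma:closure in finite subspace topology}, an $\overline{A}\in \widehat{\Pic}_{S,\Q}(U)_\arsnef$ with $\overline{L}+\tfrac{1}{n}\overline{A}\in \widehat{\Pic}_{S,\Q}(U)_\arsnef$ for all $n\geq 1$; choosing a coherent lift of $\overline{A}$ and of each $\overline{L}+\tfrac{1}{n}\overline{A}$ to $\widehat{\Div}_{S,\Q}(U)_\arsnef$ on a common $\Q$-line bundle (reconciling representatives by principal divisors) produces the desired lift in $\widehat{\Div}_{S,\Q}(U)_\arnef$. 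The main obstacle is exactly this last step, since the finite subspace topology is not controlled by any single pseudo-metric, so the compatible lifts must be arranged by hand as in the local analogue.
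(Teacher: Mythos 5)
Your proposal follows the paper's intended approach closely (the paper explicitly says to adapt the local proof of \cref{connecting metrized divisors and metrized line bundles}), and the identification of the $\overline B$-local isometry as the key analytic input is exactly right; the surjectivity onto $\widehat{\Pic}_{S,\Q}(U)_\cpt$, $\widehat{\Pic}_{S,\Q}(U)_\arsnef$ and $\widehat{\Pic}_{S,\Q}(U)_\arint$ then comes out cleanly. One notational slip: it is $\widehat{\Div}_{S,\Q}(U)_\CM=\varinjlim_X\widehat{\Div}_{S,\Q}(X)$, not $M_{S,\Q}(U)$, that is required to surject onto $P_{S,\Q}(U)$ for the $\cpt$ step (the integrable space $M_{S,\Q}(X)$ is a proper subspace of $\widehat{\Div}_{S,\Q}(X)$ and does not map onto $P_{S,\Q}(X)=\widehat{\Pic}_{S,\Q}(X)$); your actual argument via $\widehat{\Div}_{S,\Q}(X)\twoheadrightarrow P_{S,\Q}(X)$ is however correct.

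The step you flag as ``the main obstacle'' (the $\arnef$ case) does not actually require arranging coherent lifts, and the remark that ``the finite subspace topology is not controlled by a pseudo-metric'' is a red herring. The clean observation is that the kernel of the forgetful map $\widehat{\Div}_{S,\Q}(U)_\cpt\to\widehat{\Pic}_{S,\Q}(U)_\cpt$ consists precisely of the principal compactified $S$-metrized divisors $\widehat{\mathrm{div}}(f)=(\mathrm{div}(f),-\log|f|)$, $f\in K(U)^\times\otimes_\Z\Q$: if $(D,g_D)$ maps to $0$, then $D|_U=\mathrm{div}(f)$ and $g_D=-\log|f|$, whence $(D,g_D)=\widehat{\mathrm{div}}(f)$ by the injectivity in \cref{prop:from global adelic to local adelic}~\ref{third property for pair}. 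Each such $\widehat{\mathrm{div}}(f)$ lives in $N'_{S,\Q}(X)$ for any proper $K$-model $X$ (its associated adelic $\Q$-line bundle is the trivial one, so every height vanishes by the product formula), and so does $-\widehat{\mathrm{div}}(f)=\widehat{\mathrm{div}}(1/f)$. Consequently \emph{any} element of $\widehat{\Div}_{S,\Q}(U)_\cpt$ lifting an arsnef class is automatically arsnef, since it differs from an arsnef lift by an element of the arsnef cone. Now given $\overline L\in\widehat{\Pic}_{S,\Q}(U)_\arnef$, take $\overline A\in\widehat{\Pic}_{S,\Q}(U)_\arsnef$ with $\overline L+\tfrac1n\overline A\in\widehat{\Pic}_{S,\Q}(U)_\arsnef$ for all $n$ by \cref{lemma:closure in finite subspace topology}, any lift $\overline D\in\widehat{\Div}_{S,\Q}(U)_\arint$ of $\overline L$, and any lift $\overline E\in\widehat{\Div}_{S,\Q}(U)_\arsnef$ of $\overline A$. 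Then $\overline D+\tfrac1n\overline E$ maps to the arsnef class $\overline L+\tfrac1n\overline A$, hence is arsnef by the above, so $\overline D\in\widehat{\Div}_{S,\Q}(U)_\arnef$ by \cref{lemma:closure in finite subspace topology} again. No coordination of representatives is needed.
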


\begin{prop} \label{global:pull-back in local case}
Let $\varphi\colon U'\to U$ be a morphism of algebraic varieties over $K$. Then the pull-back $\varphi^*\colon \widehat{\Pic}_{S,\Q}(U)\to \widehat{\Pic}_{S,\Q}(U')$ maps {the sets $\widehat{\Pic}_{S,\Q}(U)_\cpt$, $\widehat{\Pic}_{S,\Q}(U)_\arsnef$, $\widehat{\Pic}_{S,\Q}(U)_\arnef$ and $\widehat{\Pic}_{S,\Q}(U)_\arint$ to the sets $\widehat{\Pic}_{S,\Q}(U')_\cpt$, $\widehat{\Pic}_{S,\Q}(U')_\arsnef$, $\widehat{\Pic}_{S,\Q}(U')_\arnef$ and $\widehat{\Pic}_{S,\Q}(U')_\arint$, respectively.}
\end{prop}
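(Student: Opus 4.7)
The plan is to mimic the proof of the local analogue \cref{pull-back in local case}, with appropriate modifications reflecting that the global theory has no cofinal boundary divisor (cf.\ Example~\ref{singular arithmetic nef in running example}). First I would check that $\varphi^*\colon \widehat{\Pic}_{S,\Q}(U)\to \widehat{\Pic}_{S,\Q}(U')$ is well-defined at the level of ambient groups: this uses Remark~\ref{properties of measurable metrics} and Remark~\ref{properties of bounded metrics}, which state that $S$-measurability and local $S$-boundedness are preserved by pull-back (the latter via the fact that pull-backs of $S$-bounded subsets are $S$-bounded along morphisms, cf.\ Remark~\ref{properties of bounded sets}). Then for every proper $K$-model $X$ of $U$, Nagata's compactification theorem supplies a proper $K$-model $X'$ of $U'$ together with an extension of $\varphi$ to a morphism $X'\to X$; pull-back of adelic $\Q$-line bundles in the sense of Chen--Moriwaki is well-defined, so $\varphi^*(P_{S,\Q}(U))\subset P_{S,\Q}(U')$. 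By Remark~\ref{S-nef on proper varieties}, $S$-nefness of adelic $\Q$-line bundles on proper varieties is preserved by pull-back along arbitrary morphisms, hence $\varphi^*(Q_{S,\Q}(U))\subset Q_{S,\Q}(U')$.

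The key step is continuity with respect to the boundary topologies on $\widehat{\Pic}_{S,\Q}(U)$ and $\widehat{\Pic}_{S,\Q}(U')$ from \ref{boundary topology on S-metrized line bundles}. I would fix an arbitrary weak boundary divisor $(X_0,\overline{B})$ of $U$ with $\overline{B}=(B,g_B)$ and choose an extension $\varphi_0\colon X_0'\to X_0$ to some proper $K$-model $X_0'$ of $U'$. Since $|B|\subset X_0\setminus U$ and $\varphi_0$ maps $U'$ into $U$, we have $\varphi_0(U')\cap |B|=\emptyset$, so $\varphi_0^*\overline{B}$ is defined in $\widehat{\Div}_{S,\Q}(X_0')$ by \cite[Lemma~7.1.29, Lemma~7.1.33~(2)]{liu2006algebraic} and is easily seen to be a weak boundary divisor of $U'$: its Cartier part has support in $X_0'\setminus U'$ and its $S$-Green function is pointwise non-negative. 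Unfolding the definition of the neighborhoods $B(r,\overline L)$ and using $\varphi^*g_{B,\omega}=g_{B,\omega}\circ\varphi$, one obtains that $(L,\metr')\in B(r,(L,\metr))$ with respect to $\overline B$ implies $(\varphi^*L,\varphi^*\metr')\in B(r,(\varphi^*L,\varphi^*\metr))$ with respect to $\varphi_0^*\overline B$. This yields continuity of $\varphi^*$ between the corresponding $\overline B$- and $\varphi_0^*\overline B$-boundary topologies.

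Taking direct limits over all weak boundary divisors $\overline{B}$ of $U$, and noting that $\varphi_0^*\overline{B}$ ranges over a cofinal (in fact, any) family of weak boundary divisors of $U'$ as $\overline B$ varies, continuity forces $\varphi^*$ to map the closure of $P_{S,\Q}(U)$ (resp.\ of $Q_{S,\Q}(U)$) with respect to the $\overline{B}$-topology into the closure of $P_{S,\Q}(U')$ (resp.\ of $Q_{S,\Q}(U')$) with respect to the $\varphi_0^*\overline{B}$-topology. Passing to the direct limit gives $\varphi^*(\widehat{\Pic}_{S,\Q}(U)_\cpt)\subset \widehat{\Pic}_{S,\Q}(U')_\cpt$ and $\varphi^*(\widehat{\Pic}_{S,\Q}(U)_\arsnef)\subset \widehat{\Pic}_{S,\Q}(U')_\arsnef$, and by linearity the same for $\widehat{\Pic}_{S,\Q}(U)_\arint$. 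For the arithmetically nef cone, I would invoke the trivial fact that any linear map between $\Q$-vector spaces is continuous for the finite subspace topology, so $\varphi^*$ transports the closure of $\widehat{\Pic}_{S,\Q}(U)_\arsnef$ in $\widehat{\Pic}_{S,\Q}(U)_\arint$ into the closure of $\widehat{\Pic}_{S,\Q}(U')_\arsnef$ in $\widehat{\Pic}_{S,\Q}(U')_\arint$.

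The only genuine technical point—and hence the main (but still mild) obstacle—is verifying that $\varphi_0^*\overline{B}$ actually belongs to $M_{S,\Q}(U')_{\geq 0}$ as an element of the global theory, i.e.\ that pull-back of an integrable adelic divisor on the proper $K$-model $X_0$ along a morphism $X_0'\to X_0$ of proper $K$-models is again integrable in the sense of Chen--Moriwaki. This reduces to checking that pull-back of semipositive adelic divisors is semipositive and that the pulled-back $S$-Green function remains $S$-measurable and locally $S$-bounded, both of which are known for adelic line bundles on proper varieties; everything else in the proof is a straightforward transcription of the local argument.
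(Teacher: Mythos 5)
Your proposal is correct and reconstructs the argument the paper intends: the paper itself leaves this proof to the reader, referring to the local analogue \cref{pull-back in local case}, and your proof transcribes that argument with the right modifications for the global setting. In particular, you correctly identify that the absence of a cofinal boundary divisor (cf.\ \cref{singular arithmetic nef in running example}) forces the direct-limit structure of $\widehat{\Pic}_{S,\Q}(\cdot)_\cpt$ to do the work that the single cofinal boundary divisor did locally, and you verify the hypotheses on $\varphi_0^*\overline B$ needed to invoke \cite[Lemma~7.1.29, 7.1.33~(2)]{liu2006algebraic} in the same way the paper does in the local proof.

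One small quibble: in the direct-limit step you say that $\varphi_0^*\overline{B}$ ranges over a \emph{cofinal} family of weak boundary divisors of $U'$. That is not true in general (and certainly fails when $\varphi$ is not dominant, e.g.\ a closed immersion), but it is also not needed: since $\widehat{\Pic}_{S,\Q}(U')_\cpt$ is the direct limit over \emph{all} weak boundary divisors of $U'$, it is enough that each $\varphi_0^*\overline B$ is \emph{some} weak boundary divisor of $U'$. Your parenthetical ``(in fact, any)'' shows you realize this; in a final write-up the ``cofinal'' claim should simply be dropped. Everything else — preservation of $S$-measurability and local $S$-boundedness via \cref{properties of measurable metrics} and \cref{properties of bounded metrics}, preservation of $S$-nefness via \cref{S-nef on proper varieties}, the explicit verification that $(L,\metr')\in B(r,(L,\metr))$ pulls back to $B(r,\cdot)$ for $\varphi_0^*\overline B$, and the finite-subspace-topology argument for $\widehat{\Pic}_{S,\Q}(U)_\arnef$ — matches what the local proof does and is carried over correctly.
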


\begin{remark} \label{forgetting map global}
	Similarly as in the local case from \cref{forgetting map local}, we have a canonical linear map \begin{align*}
			\widehat{\Pic}_\Q(U)_\cpt\to \widetilde{\Pic}_\Q(U)_\cpt
		\end{align*}
		which maps the sets 
		$\widehat{\Pic}_{S,\Q}(U)_\arsnef$, $\widehat{\Pic}_{S,\Q}(U)_\arnef$ and $\widehat{\Pic}_{S,\Q}(U)_\arint$ to the corresponding geometric sets $\widetilde{\Pic}_{\gm,\Q}(U')_\relsnef$, $\widehat{\Pic}_{\gm,\Q}(U')_\relnef$ and $\widehat{\Pic}_{\gm,\Q}(U')_\relint$, respectively.
\end{remark}

\subsection{Intersection pairing}

\begin{theorem} \label{global intersection number on algebraic varieties}
	Let $S=(K,\Omega,\mathcal{A},\nu)$ be the given  proper adelic curve. For any algebraic variety $U$ over $K$, for any $\overline{D_0}, \dots \overline{D_k} \in \widehat{\Div}_{S,\Q}(U)_{\arint}$ and any $k$-dimensional cycle $Z$ of $U$, there is a unique $(\overline{D_0} \cdots \overline{D_k} \mid Z)_S \in \R$ with the following properties:
	\begin{enumerate}
		\item \label{global intersection number of compactified line bundles} The number $(\overline{D_0} \cdots \overline{D_k} \mid Z)_S \in \R$ depends only on the isometry classes of the underlying $S$-metrized $\Q$-line bundles $\overline{L_j}=(\OO_U(D_j), \metr_j)$, $j=0,\dots, k$, and on the cycle $Z$, but not on the particular choices of the arithmetically integrable $S$-metrized divisors $\overline{D_0}, \dots, \overline{D_k}$, so we set
		$$(\overline{L_0} \cdots \overline{L_k} \mid Z)_S \coloneqq (\overline{D_0} \cdots \overline{D_k} \mid Z)_S.$$
		\item \label{global intersection number multilinear symmetric} The pairing $(\overline{L_0} \cdots \overline{L_k} \mid Z)_S \in \R$ is multilinear and symmetric in $\overline{L_0}, \dots, \overline{L_k}$ and linear in $Z$. Moreover, if $Z=U$ and $k=d=\dim(U)$, then the pairing induces a 
		$(d+1)$-intersection map from $(\widehat{\Div}_{S,\Q}(U)_\arint, \widehat{\Div}_{S,\Q}(U)_\arnef)$ to $(\R,\R_{\geq0})$.
	    \item \label{global intersection number on proper varieities} If $U=X$ is proper, then $(\overline{L_0},\cdots, \overline{L_k}\mid Z)_S$ are the arithmetic intersection numbers introduced by Chen and Moriwaki, see \cref{global intersection number}. 
		\item \label{global intersection number factorial} If $\varphi \colon U' \to U$ is a morphism of algebraic varieties over $K$ and if $Z'$ is a $k$-dimensional cycle on $U'$, then the projection formula holds:
		$$ (\varphi^*\overline{L_0} \cdots \varphi^*\overline{L_k} \mid Z')_S = (\overline{L_0} \cdots \overline{L_k} \mid \varphi_*Z')_S.$$
		\item \label{global intersection number limits} 
		If $\overline{D_j}$ are the limits of Cauchy sequences $(\overline{D_{j,n_j}})_{n_j\geq 1}$ in {$\widehat{\Div}_{S,\Q}(U)_\arsnef$} with respect to {the $\overline B$-boundary topology for some weak boundary divisor $\overline B$ of $U$,} then
		\[\lim\limits_{(n_0,\dots,n_k)\to\infty}(\overline{D_{0,n_0}}\cdots\overline{D_{k,n_k}}\mid Z)_S = (\overline{D_0}\cdots\overline{D_k}\mid Z)_S.\]

		\item \label{global intersection number field extension} Let $K'/K$ be an algebraic extension of $K$ and let $S'$ be the canonical adelic curve on $K'$ induced by $S$. Let $X',\overline{L_0'}, \dots, \overline{L_k'}, Z'$ be obtained from $X,\overline{L_0}, \dots, \overline{L_k},Z$ by base change. Then we have
		$$(\overline{L_0'} \cdots \overline{L_k'} \mid Z')_{S'}=(\overline{L_0} \cdots \overline{L_k} \mid Z)_S.$$
	\end{enumerate}
\end{theorem}
\begin{proof}
	Uniqueness is from \ref{global intersection number on proper varieities}, \ref{global intersection number factorial}, \ref{global intersection number limits}.

For the existence, we consider first the case where $Z=U$, $k=d=\dim(U)$. A morphism $\varphi:X\to X'$ of proper $K$-models of $U$ extends by definition the identity on $U$ and hence $\varphi$ is surjective. By \cref{global functoriality} and \cref{S-nef on proper varieties}, we have a pull-back $(M_{S,\Q}(X'),N_{S,\Q}'(X'))\to (M_{S,\Q}(X),N_{S,\Q}'(X))$ which is a morphism of abstract divisorial spaces.
Moreover, by \cref{global intersection number}~\ref{classical global intersection number factorial} and \cref{global abstract divisorial space}, the pull-back is a morphism of $(d+1)$-intersection maps where the latter are given by arithmetic intersection numbers. After taking the direct limit of the intersection map $(M_{S,\Q}(X),N_{S,\Q}'(X))^{d+1}\to (\R,\R_{\geq0})$ over proper $K$-models $X$ of $U$, we have a $(d+1)$-intersection map on the abstract divisorial space $(M_{S,\Q}(U),N_{S,\Q}'(U))$ {given by arithmetic intersection numbers.} 

To pass to the completion, let $(X_0, \overline{B})$ be a weak boundary divisor of $U$. By \cref{dominated by S-nef boundary divisors}, there is a dense open subset $V$ of $U$ and a boundary divisor $(X_0', \overline{B'})$  in $N_{S,\Q}'(V)$ such that $\overline{B}\leq \overline{B'}$ in $M_{S,\Q}(V)$. Then we have a morphism $(M_{S,\Q}(U),N_{S,\Q}'(U))\rightarrow (M_{S,\Q}(V),N_{S,\Q}'(V))$ of abstract divisorial spaces over $\Q$. 
{By the universal property of completions in \cref{b-completion},} we have a continuous morphism
	\begin{align} \label{morphism induced by shrinking variety}
(M_{S,\Q}(U)^{\overline B},N_{S,\Q}'(U)^{\overline B})\to (M_{S,\Q}(V)^{\overline{B'}}, N_{S,\Q}'(V)^{\overline{B'}}), 
	\end{align}
	where $(M_{S,\Q}(U)^{\overline B},N_{S,\Q}'(U)^{\overline B})$ (resp. $(M_{S,\Q}(V)^{\overline{B'}}, N_{S,\Q}'(V)^{\overline{B'}})$) is the $\overline B$-completion (resp. $\overline{B'}$-completion) of $(M_{S,\Q}(U)^{\overline B},N_{S,\Q}'(U)^{\overline B})$ (resp. $(M_{S,\Q}(V)^{\overline{B'}}, N_{S,\Q}'(V)^{\overline{B'}})$), see \cref{b-completion}. By \cref{continuous extension of intersection maps in absolute case}, we have a unique $(d+1)$-intersection map
	\begin{align} \label{intersection map on shrinking variety}
(M_{S,\Q}(V)^{\overline{B'}}, N_{S,\Q}'(V)^{\overline{B'}})^{d+1}\to (\R,\R_{\geq 0})
	\end{align} extending the one on $(M_{S,\Q}(V),N'_{S,\Q}(V))$. The composition of \eqref{morphism induced by shrinking variety} and \eqref{intersection map on shrinking variety} gives a $(d+1)$-intersection map on $(M_{S,\Q}(U)^{\overline B},N_{S,\Q}'(U)^{\overline B})$ extending the one on $(M_{S,\Q}(U),N'_{S,\Q}(U))$. Notice that $(d+1)$-intersection maps above are continuous with respect to the corresponding boundary topologies, so the $(d+1)$-intersection map we constructed above is unique. In particular, our construction is independent of the choice of $V$ and $(X_0',\overline{B'})$, and the $(d+1)$-intersection maps are compatible with the partial order on weak boundary divisors of $U$. After taking the direct limit {over all weak boundary divisors $\overline B$ and passing to the closure with respect to the finite subspace topology}, we have a $(d+1)$-intersection map on $(\widehat{\Div}_{S,\Q}(U)_{\arint},\widehat{\Div}_{S,\Q}(U)_{\arnef})$.

The above construction gives for $\overline{D_0}, \dots, \overline{D_d} \in \widehat{\Div}_{S,\Q}(U)_{\arint}$ arithmetic intersection numbers $(\overline{D_0} \cdots \overline{D_d} \mid X)_S \in \R$. We check properties \ref{global intersection number of compactified line bundles}--\ref{global intersection number field extension} in this special case. 
	The arithmetic intersection numbers are multilinear and symmetric in $\overline{D_0}, \dots, \overline{D_d}$. It follows from \cref{global intersection number}~\ref{classical global intersection number line bundles} that they depend only on the isometry class of the underlying $S$-metrized $\Q$-line bundles $\overline{L_j}=(\mathcal O_U(D_j), \metr_j)$, $j=0,\dots, d$. If $U=X$, then by construction we just get the arithmetic intersection numbers introduced by Chen and Moriwaki. Since $(d+1)$-intersection maps are continuous with respect to the boundary topologies, we also get \ref{global intersection number limits} in the special case $k=d$ and $Z=X$. Let $\varphi\colon U' \to U$ be a dominant morphism of $d$-dimensional algebraic varieties over $k$. Then functoriality
	$$(\varphi^*(\overline{D_0}) \cdots \varphi^*(\overline{D_d})\mid U')_S = [U':U] \cdot (\overline{D_0} \cdots \overline{D_d}\mid U)_S$$
	follows from the projection formula in \cref{global intersection number} and from continuity in the special case of \ref{global intersection number limits} shown above. Moreover, we deduce \ref{global intersection number field extension} in the same way from \cref{global intersection number}~\ref{classical global intersection number field extension}.

To define the arithmetic intersection numbers $(\overline{D_0} \cdots \overline{D_k} \mid Z)_S$ for any $k$-dimensional cycle $Z$ of $X$, we proceed by linearity in the components. Since the components are usually not geometrically integral and hence not varieties, we have also to involve base change to an algebraic closure of $K$. Using properties \ref{global intersection number of compactified line bundles}--\ref{global intersection number field extension} in the special case shown above, it is straightforward to show that the procedure leads to well-defined arithmetic intersection numbers $(\overline{D_0} \cdots \overline{D_k} \mid Z)_S$ which satisfy \ref{global intersection number of compactified line bundles}--\ref{global intersection number field extension} in general. We leave the details to the reader.
\end{proof}


\section{Comparing with Yuan-Zhang's theory} \label{sec: Comparing with Yuan-Zhang's theory}

We recall  Yuan-Zhang's theory of adelic divisors in \cite[\S 2.4]{yuan2021adelic} on a quasi-projective variety $U$ over a number field $K$, and compare their definition and ours from \cref{def:boundarytopologyglobal}.

In this section, we fix a number field $K$ and quasi-projective variety $U$ over $K$. We endow $K$ with the structure of a proper adelic curve $S=(K,\Omega,\mathcal{A},\nu)$ as in \cref{example:adelic structure of number fields}.

We start with the following example of an abstract divisorial space which is basic for Arakelov theory. 

\begin{ex} \label{basic divisorial space in Arakelov theory}
	Let $X$ be a projective variety of dimension $d$ over the number field $K$. A \emph{projective $\OO_K$-model} of $X$ is a projective integral scheme $\mathcal{X}$ over $\OO_K$ with generic fiber $X$. We denote by $M_{S}(\mathcal{X})_\mo$ the group of pairs $(\mathcal{D},g_{D,\infty})$, where $\mathcal{D}$ is a Cartier divisor on $\mathcal{X}$ and where $g_{D,\infty}$ is a smooth Green function for $D=\mathcal{D}|_X$ at the archimedean places of $K$. We use the partial order on $M_{S}(\mathcal{X})_\mo$ given by $(\mathcal{D},g_{D,\infty}) \geq 0$ if and only if $\mathcal D$ is an effective Cartier divisor and $g_{D,\infty}\geq 0$. Let  $N_{S}(\mathcal{X})_\mo$ be the submonoid of  $M_{S}(\mathcal{X})_\mo$ given by the pairs $(\mathcal{D},g_{D,\infty})$ with $\mathcal{D}$ a nef divisor on $\mathcal{X}$ and such that the associated smooth hermitian metric of $\mathcal{O}_X(D)$ has positive curvature for all archimedean places. Using that every Cartier divisor on $\mathcal{X}$ is the difference of two very ample Cartier divisors and a similar consideration for the curvature, we see that
	$$\left(M_{S,\Q}(\mathcal{X})_\mo,N_{S,\Q}(\mathcal{X})_\mo \right)
	\coloneqq 
	\left(M_{S}(\mathcal{X})_\mo \otimes_\Z \Q,(N_{S}(\mathcal{X})_\mo)_\Q \right)$$
	is an abstract divisorial space over $\Q$ where $(N_{S}(\mathcal{X})_\mo)_\Q$ is the cone in $M_{S,\Q}(\mathcal{X})_\mo$ generated by the image of $N_{S}(\mathcal{X})_\mo$, see  \ref{base extension from integers to rationals}.

	Let $(M_{S,\Q}(X),N_{S,\Q}(X))$ be the abstract divisorial space over $\Q$ given by the adelic divisors in the sense of Chen and Moriwaki, see Proposition \ref{classical global Arakelov theory as divisorial space} and \cref{adelic Green functions of Chen-Moriwarki}. Then we have a morphism $M_{S,\Q}(\mathcal{X})_\mo \to M_{S,\Q}(X)$ of abstract divisorial spaces over $\Q$, given by $(\mathcal{D},g_{D,\infty})\mapsto (D,g_D)$, where $g_D$ is the Green function for $D=\mathcal{D}|_X$ given at the non-archimedeam places by the Green functions associated to the model metrics of $\mathcal{O}_X(\mathcal{D})$. To see that $g_D$ induces really an $S$-bounded and $S$-measurable metric on $X$, we note that this is the case if $\mathcal{D}$ is induced by a very ample divisor as then the associated metric is a Fubini--Study metric. In general, any model divisor $\mathcal D$ is the difference of two very ample divisors and so the claim follows easily by linearity. 
\end{ex}	

\begin{art} \label{arithmetically nef in Arakelov theory}
	Let $X$ be a projective variety over $K$, and $\mathcal{X}$ a projective model of $X$. We denote by $\overline{\mathcal{D}}$ the elements  $(\mathcal{D},g_{D,\infty})\in M_{S,\Q}(\mathcal{X})_\mo$. We call $\overline{\mathcal{D}}$ \emph{arithmetically nef} if the Green function $g_{\mathcal{D},\omega}$ is semipositive for any $\omega\in\Omega_\infty$ and $\overline{\mathcal{D}}$ has a non-negative arithmetic degree on any $1$-dimension integral closed subscheme of $\mathcal{X}$ (see \cite[A.5.3]{yuan2021arithmetic} for the definition of arithmetic degree). We denote   
	by $N'_{S,\Q}(\mathcal{X})_\mo$ the set of {arithmetically} nef elements in $M_{S,\Q}(\mathcal{X})_\mo$. It follows similarly as in \cref{lemma: difference of S-ample} that 
	$$\left(M_{S,\Q}(\mathcal{X})_\mo,  N'_{S,\Q}(\mathcal{X})_\mo   \right)$$
	is an abstract divisorial space over $\Q$. Recall that $N'_{S,\Q}(X)=\overline{N_{S,\Q}^+}(X)\cap N_{S,\Q}(X)$ from \cref{prop: S-nef cone}~\ref{first property N'}.
	By \cref{example:arithmetically nef over number fields} below, the morphism $M_{S,\Q}(\mathcal{X})_\mo \to M_{S,\Q}(X)$ from Example \ref{basic divisorial space in Arakelov theory} maps $N'_{S,\Q}(\mathcal{X})_\mo$ to $N'_{S,\Q}(X)$ and hence induces a morphism of abstract divisorial spaces with respect to the arithmetically nef cones.
\end{art}

The following lemma shows that in the number field case, 
$N'_{S,\Q}$ introduced in \cref{prop: S-nef cone} agrees with  the set of arithmetically nef divisors used in classical Arakelov theory. 
\begin{lemma} \label{example:arithmetically nef over number fields}
	Let $X$ be a projective variety over $K$ and let $\overline{D}\in {N}_{S,\Q}(X)$ induced by some $\overline{\mathcal{D}}\in N_{S,\Q}(\mathcal{X})_\mo$ on some projective model $\mathcal{X}$ of $X$. Then  $\overline{D}\in N_{S,\Q}'(X)$  if and only if $h_{\overline{D}}(x)\geq 0$ for any $x\in X(\overline{K})$.
\end{lemma}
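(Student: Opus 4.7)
The plan is to handle the two directions separately. The forward direction is an easy consequence of the definition of $S$-ampleness; the real content is in the reverse direction, where I need to bring in the structure of classical Arakelov theory over $\OO_K$ and translate it into Chen–Moriwaki's setup.

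\textbf{Forward direction.} Suppose $\overline{D}\in N'_{S,\Q}(X)$. Using the characterization \ref{arithmetically nef by an S-ample} in Proposition \ref{proposition:arithmetically nef for projective}, pick an $S$-ample divisor $\overline{A}$ and observe that $n\overline{D}+\overline{A}$ is $S$-ample for every $n\in\N_{\geq 1}$. For a point $x\in X(\overline K)$, let $Y_x$ be its image closed point in $X$ and apply \eqref{ample height inequality} to $Y=Y_x$: there is $\varepsilon>0$ such that
\[
h_{n\overline D+\overline A}(Y_x)\geq \varepsilon\,[K(x):K].
\]
Using the projection formula and invariance under finite base change from Proposition~\ref{global intersection number}, we have $h_{n\overline D+\overline A}(Y_x)=[K(x):K]\bigl(nh_{\overline D}(x)+h_{\overline A}(x)\bigr)$, and dividing by $n[K(x):K]$ and letting $n\to\infty$ gives $h_{\overline D}(x)\geq 0$, since $h_{\overline A}(x)$ is a fixed real number for each $x$.

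\textbf{Reverse direction.} Here I want to show that if $h_{\overline D}(x)\geq 0$ for every $x\in X(\overline K)$, then $\overline D$ lies in $N'_{S,\Q}(X)$. I would use the characterization \ref{arithmetically nef by chen} of Proposition \ref{proposition:arithmetically nef for projective}: it suffices to produce an $S$-ample $\overline A$ with $n\overline D+\overline A$ being $S$-ample for all large $n$. The hypothesis $\overline D$ is induced from a model element $\overline{\mathcal D}\in N_{S,\Q}(\mathcal X)_\mo$ places us in the classical Arakelov setting over $\OO_K$, so I would invoke Zhang's theorem (see \cite{zhang1995smallpoints}): a model divisor $\overline{\mathcal D}$ on a projective arithmetic variety $\mathcal X/\OO_K$ which is nef on $\mathcal X$ and of positive curvature at archimedean places is arithmetically nef in the classical sense iff the degrees along all horizontal $1$-dimensional integral closed subschemes of $\mathcal X$ are non-negative. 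A horizontal curve is the Zariski closure of the Galois orbit of a point $x\in X(\overline K)$, and its arithmetic degree equals $[K(x):K]\cdot h_{\overline D}(x)$, so the hypothesis is exactly what Zhang needs.

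To pass from classical arithmetic nefness of $\overline{\mathcal D}$ to the Chen–Moriwaki cone $N'_{S,\Q}(X)$, I would use Proposition \ref{prop:positivity of arith nef}: for any $S$-ample $\overline A$ and any integral closed subscheme $Y\subset X$, the nefness of $\overline{\mathcal D}$ gives $h_{\overline D}(Y)\geq 0$, while $S$-ampleness of $\overline A$ contributes a strictly positive term. Expanding $(n\overline D+\overline A)^{\dim(Y)+1}\cdot Y$ by multilinearity and bounding cross-terms via \cite[Theorem~6.6.6, Proposition~9.1.6]{chen2022hilbert} (applied to the mixed intersection numbers of a nef plus an $S$-ample divisor) would give $\widehat\mu^{\asy}_{\min}(n\overline D+\overline A)>0$, hence $S$-ampleness. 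Then Proposition \ref{proposition:arithmetically nef for projective} finishes the job.

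\textbf{Main obstacle.} The technical heart is the reverse direction: turning non-negativity of \emph{point heights} into non-negativity of heights of \emph{subvarieties of all dimensions}, as required by the $S$-ample inequality \eqref{ample height inequality}. This is precisely the content of Zhang's theorem in classical Arakelov theory, and its translation into the Chen--Moriwaki language via the asymptotic minimal slope formula \eqref{slope formula with intersection numbers} is what makes the two notions of arithmetic nefness coincide. The positivity of the archimedean curvature and the nefness of $\mathcal D$ on $\mathcal X$ (both built into the hypothesis $\overline{\mathcal D}\in N_{S,\Q}(\mathcal X)_\mo$) are what let me reduce to the horizontal case, where the point-height hypothesis applies.
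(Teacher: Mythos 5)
Your forward direction is a correct, slightly more self-contained argument than the paper's: the paper simply cites \cite[Proposition~9.1.7~(3)]{chen2022hilbert} for that implication, whereas you deduce it directly from the $S$-ampleness inequality \eqref{ample height inequality} applied to $n\overline D + \overline A$ and let $n\to\infty$. Both are fine.

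The reverse direction, however, has a genuine gap, and it is in exactly the place you flag as ``the technical heart.'' You correctly see that one needs to pass from non-negativity of point heights to $S$-ampleness of $n\overline D + \overline A$ via a Zhang-type theorem. But your bridge breaks on the cross terms. To run the asymptotic-minimal-slope criterion you need
$\bigl((n\overline D + \overline A)^{\dim Y + 1}\mid Y\bigr)_S$
to be bounded below by a positive constant times $\deg_{nD+A}(Y)(\dim Y + 1)$, and after multilinear expansion the mixed terms $(\overline D^{\,i}\,\overline A^{\dim Y + 1 - i}\mid Y)_S$ for $0<i\leq \dim Y +1$ are not controlled by anything you cite. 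You propose to bound them via Proposition~\ref{prop:positivity of arith nef}, but that proposition applies to tuples from $\overline{N_{S,\Q}^+}(X) = N'_{S,\Q}(X)$; since membership of $\overline D$ in $N'_{S,\Q}(X)$ is precisely what you are trying to prove, this appeal is circular. Neither \cite[Theorem~6.6.6]{chen2022hilbert} nor \cite[Proposition~9.1.6]{chen2022hilbert} gives the required non-negativity of the mixed intersection numbers for an arbitrary $S$-ample $\overline A$. The implicit step ``heights $h_{\overline D}(Y)\geq 0$ and $h_{\overline A}(Y)>0$ imply $h_{n\overline D+\overline A}(Y)>0$'' is not linear in $\overline L$ once $\dim Y\geq 1$.

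The paper avoids this entirely by not going through the classical Zhang theorem and mixed-term estimates at all. Instead it exploits a very specific choice of $\overline A$: an ample $A$ with a twisted Fubini--Study metric twisted only at one archimedean place, so that $\OO_X(\overline A)$ is induced by a model over $\OO_K$ and is \emph{strongly Minkowskian} in the sense of \cite[Definition~8.9.1]{chen2022hilbert}. This makes $\OO_X(n\overline D + \overline A)$ strongly Minkowskian as well, and then the adelic-curve analogue of Zhang's theorem, namely \cite[Proposition~9.1.4]{chen2022hilbert}, directly converts the uniform lower bound $h_{n\overline D + \overline A}(x)\geq\varepsilon$ on point heights into $S$-ampleness. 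This is the ingredient your proposal lacks; without the strongly Minkowskian hypothesis (or alternatively, choosing $\overline A$ itself to be a classical arithmetically nef model divisor and invoking classical mixed positivity for both $\overline D$ and $\overline A$), the passage from points to all subvarieties does not go through as stated.
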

\begin{proof}
	If $\overline{D}$ is in $N'_{S,\Q}(X)$, by \cite[Proposition~9.1.8~(3)]{chen2024positivity}, we have that $h_{\overline{D}}(x)\geq 0$ for any $x\in X(\overline{K})$. Conversely, assume that $h_{\overline{D}}(x)\geq 0$ for any $x\in X(\overline{K})$. 
	{We choose an ample divisor $A$ of $X$ which we endow with a twisted Fubini--Study metric such that $\overline A$ is $S$-ample, see Remark \ref{existence of S-ample}. We may choose the twist $c$ such that $c(v)\neq 0$  only for archimedean $v \in \Omega$ (one such $v$ would be enough) and hence $\overline A$ is induced by an element in $N_{S,\Q}'(\mathcal{X})_\mo$ for a suitable projective $\OO_K$-model $\mathcal{X}$ of $X$. 
		We conclude that the $S$-metrized adelic line bundles $\OO_X(\overline A)$ and $\OO_X(\overline A + n \overline D)$ are strongly Minkowskian in the sense of \cite[Definition~8.9.1]{chen2024positivity} for any $n \in \N_{>0}$}.  
	By \cite[Proposition~9.1.4]{chen2024positivity}, there is $\varepsilon>0$ such that for any $n\in\N_{>0}$, $x\in X(\overline{K})$, we have that 
	\[h_{n\overline{D}+\overline{A}}(x)=n\cdot h_{\overline{D}}(x)+h_{\overline{A}}(x)>\varepsilon.\]
	Note that $n\overline{D}+\overline{A}$ is relatively $S$-ample. It follows from  the converse direction in \cite[Proposition~9.1.4]{chen2024positivity} that $n\overline{D}+\overline{A}$ is $S$-ample, so $\overline{D}{\in N_{S,\Q}'(X)}$  by \cref{proposition:arithmetically nef for projective}.
\end{proof}

\begin{art} \label{arithmetic intersection numbers in Arakelov theory}
	Let $d \coloneqq \dim(X)=\dim(\mathcal{X})-1$. For $\overline{\mathcal{D}_0}, \dots, \overline{\mathcal{D}_d} \in M_{S,\Q}(\mathcal{X})_\mo$, Arakelov theory gives arithmetic intersection numbers $(\overline{\mathcal{D}_0} \cdots \overline{\mathcal{D}_d} \mid X) \in \R$. 
	They can also be obtained by using the morphism $M_{S,\Q}(\mathcal{X})_\mo \to M_{S,\Q}(X)$ and then using the arithmetic intersection numbers from \cref{global intersection number}. 
	Then the  \emph{height} is a $(d+1)$-intersection map 
	$$h\colon (M_{S,\Q}(\mathcal{X})_\mo,N'_{S,\Q}(\mathcal{X})_\mo)^{d+1} \longrightarrow (\R,\R_{\geq0}), \, (\overline{\mathcal{D}_0},  \dots ,\overline{\mathcal{D}_d}) \longmapsto \left(\overline{D_0} \cdots \overline{D_d} \mid X\right).$$ 
\end{art}

\begin{art} \label{model divisors on quasi-projective model}
	For a quasi-projective variety $U$ over $K$ and a quasi-projective integral scheme $\mathcal{U}$ over $\OO_K$ with generic fiber $U$, we consider the abstract divisorial space
	$$\left(M_{S,\Q}(\mathcal{U})_\mo,N'_{S,\Q}(\mathcal{U})_\mo\right)\coloneqq \varinjlim_{\mathcal{X}} \left(M_{S,\Q}(\mathcal{X})_\mo,N'_{S,\Q}(\mathcal{X})_\mo \right)$$
	where $\mathcal{X}$ ranges over all projective integral schemes over $\OO_K$ containing $\mathcal{U}$ as an open subset, see \cref{direct limits}. These schemes $\mathcal{X}$ over $\OO_K$ are called \emph{projective $\OO_K$-models of $\mathcal U$}. 
\end{art}

\begin{art} \label{original boundary completion of Yuan-Zhang}
	Let $\mathcal{U}$ be a quasi-projective integral scheme over $\OO_K$ with generic fiber $U$. A \emph{cofinal boundary divisor} of $\mathcal{U}$ is a pair $(\mathcal{X}_0, \overline{\mathcal{B}})$ consisting of a projective model $\mathcal{X}_0$ of $\mathcal{U}$ over $\OO_K$ and  $\overline{\mathcal{B}}\in M_{S,\Q}(\mathcal{X}_0)_\mo$ such that $|\mathcal{B}|=\mathcal{X}_0\setminus\mathcal{U}$ and such that $g_{B,\omega}>0$ on $U_\omega^\an$ for all $\omega \in \Omega_\infty$.
	
	Yuan and Zhang showed in \cite[\S 2.4.1]{yuan2021adelic} that a cofinal boundary divisor $\overline{\mathcal{B}}$ exists and that the induced $\overline{\mathcal{B}}$-topology on the abstract divisorial space $M_{S,\Q}(\mathcal{U})_\mo$ does not depend on the choice of $\overline{\mathcal{B}}$. 
	{In fact, a cofinal boundary divisor $\overline{\mathcal{B}}$ determines a pseudo-metric $d_{\overline{\mathcal{B}}}$ on ${M}_{S,\Q}(\mathcal{U})_\mo$ which gives the boundary topology, see \cref{def: b-metric}.} Then Yuan and Zhang considered the completion  
	$$\widehat{\Div}_{S,\Q}(\mathcal{U})_{\cpt}^{\YZ} \coloneqq \widehat{M}_{S,\Q}^{d_{\overline{\mathcal{B}}}}(\mathcal{U})_\mo$$
	of $M_{S,\Q}(\mathcal{U})_\mo$ with respect to this boundary topology, see \cref{b-completion}. Note that this group of compactified divisors on $\mathcal{U}$ is denoted by $\widehat{\Div}(\mathcal{U}/\OO_K)_\Q$ in \cite{yuan2021adelic} and they also define a lattice structure $\widehat{\Div}(\mathcal{U}/\OO_K)$, but the lattice structure does not play a role for our considerations about arithmetic intersection numbers. Moreover, Yuan and Zhang defined $(\widehat{\Div}_{S,\Q}(\mathcal{U})_{\arint}^{\YZ}, \widehat{\Div}_{S,\Q}(\mathcal{U})_{\arsnef}^{\YZ})$ as the completion of the abstract divisorial space over $\Q$ $(M_{S,\Q}(\mathcal{U})_\mo,N'_{S,\Q}(\mathcal{U})_\mo)$ with respect to the $\overline{\mathcal{B}}$-topology as defined in \cref{b-completion}. This means that $\widehat{\Div}_{S,\Q}(\mathcal{U})_{\arsnef}^{\YZ}$ is the closed cone in $\widehat{\Div}_{S,\Q}(\mathcal{U})_{\cpt}^{\YZ}$ generated by the image of $N'_{S,\Q}(\mathcal{U})_\mo$ and that
	$$\widehat{\Div}_{S,\Q}(\mathcal{U})_{\arint}^{\YZ}=\widehat{\Div}_{S,\Q}(\mathcal{U})_{\arsnef}^{\YZ}-\widehat{\Div}_{S,\Q}(\mathcal{U})_{\arsnef}^{\YZ}.$$
\end{art}

\begin{definition} \label{def:adel divisors on essentially quasi-projective}
	The set {of isomorphism classes} of quasi-projective models of $U$ over $\OO_K$ is directed. Yuan and Zhang defined the set of \emph{compactified $S$-metrized divisors} on $U$ by
	\[ \widehat{\Div}_{S,\Q}(U)^{\mathrm{YZ}}_{\cpt}\coloneq\varinjlim\limits_{\mathcal{U}}\widehat{\Div}_{S,\Q}(\mathcal{U})_{\cpt}^{\mathrm{YZ}},\]
	where $\mathcal{U}$ runs through the set of isomorphism classes of quasi-projective models $\mathcal{U}$ of $U$ over $\OO_K$. 
	We say that $\overline D \in \widehat{\Div}_{S,\Q}(U)^{\mathrm{YZ}}_{\cpt}$ is \emph{{strongly} arithmetically nef} if $\overline D$ is
		contained in the image of $\widehat{\Div}_{S,\Q}(\mathcal{U})_{\arsnef}^{\YZ}$ for some quasi-projective model $\mathcal{U}$ of $U$. We call $\overline D$ \emph{arithmetically integrable} if $\overline D$ is the difference of two strongly arithmetically nef elements in $\widehat{\Div}_{S,\Q}(U)^{\mathrm{YZ}}_{\cpt}$. The arithmetically integrable elements form a $\Q$-vector space $\widehat{\Div}_{S,\Q}(U)^{\mathrm{YZ}}_{\arint}$. We use the closure  $\widehat{\Div}_{S,Q}(\mathcal{U})_{\arnef}^{\YZ}$ of the cone $\widehat{\Div}_{S,\Q}(\mathcal{U})_{\arsnef}^{\YZ}$ in $\widehat{\Div}_{S,\Q}(U)^{\mathrm{YZ}}_{\arint}$ with respect to the finite subspace topology to get an abstract divisorial space 
		$$\left(\widehat{\Div}_{S,\Q}(U)^{\mathrm{YZ}}_{\arint},\widehat{\Div}_{S,\Q}(U)^{\mathrm{YZ}}_{\arnef} \right)$$
		over $\Q$, see \cref{closure for finite subspaces} and \cref{finite subspace topology}. 
	Yuan and Zhang show in \cite[Proposition 4.1.1]{yuan2021adelic} that the height from \cref{arithmetic intersection numbers in Arakelov theory} extends uniquely to a $(d+1)$-intersection map
	$$h\colon \left(\widehat{\Div}_{S,\Q}(U)^{\mathrm{YZ}}_{\arint},\widehat{\Div}_{S,\Q}(U)^{\mathrm{YZ}}_{\arnef} \right)^{d+1} \longrightarrow (\R,\R_{\geq0}), \, (\overline{D_0},  \dots ,\overline{D_d}) \longmapsto \left(\overline{D_0} \cdots \overline{D_d} \mid U\right).$$
\end{definition}

Now we relate the compactified $S$-metrized divisors in the sense of Yuan and Zhang from this section to the compactified $S$-metrized divisors introduced in \cref{section: The global boundary completion} based on the adelic divisors of Chen and Moriwaki.

\begin{art} \label{the comparison map iota}
	For any quasi-projective model $\mathcal{U}$ of $U$ over $\OO_K$, it follows from \cref{arithmetic intersection numbers in Arakelov theory} by passing to the direct limit that there is a canonical map $\iota \colon M_{S,\Q}(\mathcal{U})_\mo \to M_{S,\Q}(U)$ which maps the  cone $N'_{S,\Q}(\mathcal{U})_\mo$ to  the cone $N'_{S,\Q}(U)$ from \cref{boundary divisors in M'}. 
	This induces a morphism
	$$\iota \colon ( M_{S,\Q}(\mathcal{U})_\mo, N'_{S,\Q}(\mathcal{U})_\mo) \longrightarrow ( M_{S,\Q}({U}), N'_{S,\Q}({U}))$$
	of abstract divisorial spaces over $\Q$.
	Let $(\mathcal{X}_0,\overline{\mathcal{B}})$ be a cofinal boundary divisor of $\mathcal{U}$ as in \ref{original boundary completion of Yuan-Zhang}. Then $\overline B \coloneqq \iota(\overline{\mathcal{B}}) \in M_{S,\Q}(U)$ is a boundary divisor for $U$ in the sense of \ref{def:boundarytopologyglobal}. It is clear that $\iota$ 
	is continuous with respect to the boundary topologies induced by $\overline{\mathcal{B}}$ and $\overline B$, respectively. By passing to completions and then to the direct limits, we get   a canonical map $$\iota \colon 
	\widehat{\Div}_{S,\Q}(U)^{\mathrm{YZ}}_{\cpt} \longrightarrow \widehat{\Div}_{S,\Q}(U)_{\cpt}$$
	to the space of compactified $S$-metrized divisors introduced in \cref{def:boundarytopologyglobal}. We denote this map also by $\iota$. It induces a morphism
	$$\iota \colon 
	\left(\widehat{\Div}_{S,\Q}(U)^{\mathrm{YZ}}_{\arint},\widehat{\Div}_{S,\Q}(U)^{\mathrm{YZ}}_{\arnef} \right) \longrightarrow \left(\widehat{\Div}_{S,\Q}(U)_{\arint},\widehat{\Div}_{S,\Q}(U)_{\arnef} \right)$$
	of abstract divisorial spaces over $\Q$.
\end{art}

\begin{prop}\label{prop:comparison with YZ-theory}
	The canonical map $\iota \colon \widehat{\Div}_{S,\Q}(U)^{\mathrm{YZ}}_{\cpt} \to \widehat{\Div}_{S,\Q}(U)_{\cpt}$ is injective. Moreover, for $\overline{D_0}, \dots, \overline{D_d} \in \widehat{\Div}_{S,\Q}(U)^{\mathrm{YZ}}_{\arint}$, we have 
	$$\left( \overline{D_0} \cdots \overline{D_d} \mid U \right)= \left( \iota(\overline{D_0}) \cdots \iota(\overline{D_d}) \mid U \right)_S.$$
\end{prop}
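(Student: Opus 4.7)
I plan to proceed in two steps: verifying injectivity by comparing both theories through the common ``forgetful'' map to pairs $(D,g_D)$ of \cref{prop:from global adelic to local adelic}, and then establishing equality of intersection numbers by a uniqueness-of-extension argument in the category of $(d+1)$-intersection maps.

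For injectivity, let me represent $\overline D \in \widehat{\Div}_{S,\Q}(U)^{\mathrm{YZ}}_{\cpt}$ by a Cauchy sequence $(\overline{\mathcal{D}}_n)_{n\geq 1}$ in $M_{S,\Q}(\mathcal{U})_\mo$ with respect to the $d_{\overline{\mathcal{B}}}$-topology for some cofinal boundary divisor $(\mathcal{X}_0,\overline{\mathcal{B}})$ of a suitable quasi-projective $\OO_K$-model $\mathcal{U}$ of $U$. Since every effective element $\overline{\mathcal{D}} \in M_{S,\Q}(\mathcal{U})_\mo$ has an effective restriction to $U$ and induces non-negative Green functions at all places (using model metrics at non-archimedean places), $\iota$ is a morphism of abstract divisorial spaces, and $\iota(\overline{\mathcal{B}}) = \overline B$ is a weak boundary divisor of $U$ in the sense of \cref{def:boundarytopologyglobal}. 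Therefore $(\iota(\overline{\mathcal{D}}_n))_{n\geq 1}$ is $\overline B$-Cauchy in $\widehat{\Div}_{S,\Q}(U)_\CM$ and its limit $\iota(\overline D)$ is, by \cref{prop:from global adelic to local adelic}, completely determined by a pair $(D,g_D)$ with $D \in \widetilde{\Div}_\Q(U)_\cpt$ and $g_D$ a locally $S$-bounded $S$-measurable $S$-Green function for $D|_U$ obtained as the pointwise limit of the pairs associated to the $\overline{\mathcal{D}}_n$. If $\iota(\overline D)=0$ then $(D,g_D)=0$, and to conclude $\overline D = 0$ in the Yuan--Zhang completion I would invoke an arithmetic analogue of the argument in \cref{justification for writing pairs}: after replacing $\mathcal{X}_0$ by its normalization in $\mathcal{U}$ (which only refines the boundary topology), effectivity of $\mathcal{D}_n + \varepsilon \mathcal{B}$ on the model follows from effectivity on $U$ together with non-negativity of the associated Green function at every $\omega \in \Omega$, giving $\overline{\mathcal{D}}_n \to 0$ in the $d_{\overline{\mathcal{B}}}$-topology.

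For equality of intersection numbers, both pairings are $(d+1)$-intersection maps on $b$-completed abstract divisorial spaces over $\Q$ in the sense of \cref{continuous extension of intersection maps in admissible case}. Yuan--Zhang's map is by construction the unique continuous extension of the classical Arakelov height from $N'_{S,\Q}(\mathcal{U})_\mo$ to $(\widehat{\Div}_{S,\Q}(U)^{\mathrm{YZ}}_{\arint}, \widehat{\Div}_{S,\Q}(U)^{\mathrm{YZ}}_{\arnef})$, while the pull-back $\iota^{*}h_S$ is continuous with respect to the same $d_{\overline{\mathcal{B}}}$-topology by \cref{global intersection number on algebraic varieties}\ref{global intersection number limits} (since $\iota$ is continuous for the corresponding boundary topologies) and thus extends to the same completion. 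On the subspace $N'_{S,\Q}(\mathcal{U})_\mo$, both maps reduce to the classical Arakelov intersection number on a projective $\OO_K$-model $\mathcal{X}$: on the Yuan--Zhang side by definition (see \cref{arithmetic intersection numbers in Arakelov theory}), and on our side because Chen--Moriwaki's intersection numbers on the proper $K$-model $X$ specialize to the classical Arakelov intersection numbers on any model divisor induced by a projective $\OO_K$-model $\mathcal{X}$, as can be seen by induction on $d$ from \cref{global intersection number}\ref{global intersection number by induction} using smooth semipositive metrics at archimedean places and model metrics from $\mathcal{X}$ at non-archimedean places. The uniqueness part of \cref{continuous extension of intersection maps in absolute case} then forces the two $(d+1)$-intersection maps to agree on all of $\widehat{\Div}_{S,\Q}(U)^{\mathrm{YZ}}_{\arint}$, and multilinearity propagates equality to the arithmetically nef cone via the finite subspace topology from \cref{closure for finite subspaces}.

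The hard part will be the arithmetic analogue of \cref{lemma: global injectivity of restriction from integrally closed} used in the injectivity step, namely: once $\mathcal{X}_0$ is normalized in $\mathcal{U}$, every codimension-one point $v \in \mathcal{X}_0 \setminus \mathcal{U}$ has a discrete valuation ring as local ring, and non-negativity of the Green function at the corresponding place (either an archimedean place for horizontal boundary components in $X_0 \setminus U$, or the non-archimedean place attached to a vertical prime divisor $v$) together with effectivity of the restriction to $U$ should force $\ord_v(\mathcal{D}_n + \varepsilon\mathcal{B}) \geq 0$ for every such $v$. This is structurally identical to \cref{lemma: global injectivity of restriction from integrally closed}, replacing proper $K$-models by proper $\OO_K$-models and Green functions on $X_\omega^\an$ by the appropriate family of local Green functions on $\mathcal{X}_{0,\omega}^\an$; once this is in place, both assertions of the proposition follow mechanically from the abstract framework of \cref{section: abstract divisorial spaces}.
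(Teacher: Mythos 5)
Your proof of the second assertion (equality of intersection numbers) is essentially the same as the paper's: both recognize that the restriction of $\iota$ to $(M_{S,\Q}(\mathcal{U})_\mo, N'_{S,\Q}(\mathcal{U})_\mo) \to (M_{S,\Q}(U), N'_{S,\Q}(U))$ is a morphism of $(d+1)$-intersection maps in the sense of \cref{def: morphism of ADS} that preserves the classical Arakelov height, and that this property is preserved by the $b$-completion of \cref{b-completion} and by passing to direct limits via \cref{direct limits}. Your final sentence about propagating to the nef cone via the finite subspace topology is superfluous, since the statement is about $\widehat{\Div}_{S,\Q}(U)^{\mathrm{YZ}}_{\arint}$, where multilinearity already reduces to the strongly nef cone.

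For injectivity, you take a genuinely different and much more laborious route. The paper's proof is a one-liner: it cites \cite[Proposition 3.3.1]{yuan2021adelic}, which already shows that an element $\overline D \in \widehat{\Div}_{S,\Q}(U)^{\mathrm{YZ}}_{\cpt}$ is completely determined by the induced pair $(D_U, g_{D_U})$ on $U$; combined with \cref{prop:from global adelic to local adelic}, which shows that $\iota(\overline D)$ is determined by the very same pair, injectivity is immediate. Your proposal, by contrast, works directly with Cauchy sequences and an ``arithmetic analogue'' of \cref{lemma: global injectivity of restriction from integrally closed} over $\OO_K$-models (normalizing $\mathcal{X}_0$ in $\mathcal{U}$, reducing to DVR local rings at codimension-one boundary points, separating horizontal and vertical components, \emph{etc.}). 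That strategy would work if carried out carefully — it is in fact close to how Yuan and Zhang prove their Proposition 3.3.1 — but you are essentially reproving a result that the paper can and does cite directly. You also correctly flag the arithmetic DVR step as ``the hard part,'' and it genuinely is: one must treat vertical prime divisors (and the corresponding non-archimedean places) and horizontal boundary components (and the corresponding archimedean places) by separate but parallel arguments, which is extra bookkeeping the citation spares you. If you choose to keep your self-contained route, you should spell out that argument rather than leaving it as a claim.
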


\begin{proof}
	Let $\overline{D} \in \widehat{\Div}_{S,\Q}(U)^{\mathrm{YZ}}_{\cpt}$. By \cite[Proposition 3.3.1]{yuan2021adelic}, the compactified $S$-metrized divisor $\overline{D}$ is determined by a $\Q$-Cartier divisor $D_U$ on $U$ and a $S$-Green function $g_{D_U}$ for $D_U$ on $U$. By \cref{prop:from global adelic to local adelic}, we have similarly that $\iota(\overline D)$ is determined by $D_U$ and the $S$-Green function $g_D$ for $D_U$. Clearly, we have  $g_D=g_{D_U}$. This proves injectivity.
	
	Let $\mathcal{U}$ be a quasi-projective $\OO_K$-model of $U$. 
	The restriction of $\iota$ gives a morphism 
	$$\iota \colon ( M_{S,\Q}(\mathcal{U})_\mo, N'_{S,\Q}(\mathcal{U})_\mo) \longrightarrow ( M_{S,\Q}({U}), N'_{S,\Q}({U}))$$
	which preserves arithmetic intersection numbers using \ref{arithmetic intersection numbers in Arakelov theory}. In other words, it is a morphism between the $(d+1)$-intersection maps in the sense of \ref{def: morphism of ADS}. Since this is kept after completion by \cref{b-completion} and after passing to direct limits by \cref{direct limits}, we deduce the last claim.
\end{proof}

\begin{remark} \label{comparison is not surjective}
	Note that the map $\iota$ cannot be surjective in general as the space $M_{S,\Q}(X)$ from the Chen-Moriwaki theory is usually larger than $\widehat{\Div}_{S,\Q}(X)^{\mathrm{YZ}}_{\cpt}$ for a  {projective} variety $X$ over $K$. As an example, we can take $X=\mathbb P_K^1$ for $K=\Q$ and $(D,g_D)$ for the toric divisor $D=[\infty]$ and the toric Green function $g_D$ induced by the family of concave functions $\psi_v=\Psi+\frac{1}{v^2}$ with $v$ ranging over all prime numbers and $\infty$ identifying them with the places of $K=\Q$. {In particular, $\psi_\infty \coloneqq \Psi$.} Then it follows from \cref{semipositive adelic metrics in running example} and \cref{prop:adelic is CM on proper varieties} that $(D,g_D)$ is in the cone $N_{S,\Q}(X) =\widehat{\Div}_{S,\Q}(X)_\relsnef$ of $M_{S,\Q}(X)=\widehat{\Div}_{S,\Q}(X)_\arint$. On the other hand, we have $(D,g_D) \not\in \widehat{\Div}_{S,\Q}(X)^{\mathrm{YZ}}_{\cpt}$ as otherwise there would be a projective model $\mathcal X$ of $X$ over a non-empty open subset of $\Spec(\Z)$ and a $\Q$-Cartier divisor $\mathcal D$ on $\mathcal X$ with generic fiber $D$ such that $g_{D,v}$ corresponds to the model metric induced by $\mathcal D$ up to finitely many primes $v$. Since two models with the same generic fiber agree outside finitely many places, we would have $\psi_v=\Psi$ up to finitely many places as $\Psi$ is the concave function corresponding to the canonical metric of $\OO_X(D)$ which is a model metric for $v \neq \infty$.
\end{remark}


\section{Mixed relative energy in the local case} \label{section: mixed relative energy in the local case}

In this section, we fix a complete field $K$ with a non-trivial  valuation $v$, and fix an algebraic variety $U$ over $K$. 
We follow our conventions from the local theory given in \cref{section: local theory}. This section is strongly inspired by \cite{burgos2023on} where the complex case is handled. The non-archimedean case is new here, in the archimedean case there is a slight difference to \cite{burgos2023on} as they use pluripotential theory on complex K\"ahler manifolds while we work with semipositive metrics in the algebraic setting which allows us to handle the non-archimedean case simultaneously. At the end of every subsection, we extend our results to the trivially valued case.

\subsection{Singularities of Green functions}

The study of singularities of plurisubharmonic functions is well known in complex pluripotential theory. In this subsection, we apply it to Green functions and handle also the non-archimedean case.

\begin{definition} \label{def: more singular}
	Let $D\in  {\widetilde{\Div}_\Q(U)_\cpt}$, and $g_1, g_2$ Green functions for $D$ {in the sense of \cref{Green functions for compactified divisors}}. We say that $g_1$ is \emph{more singular than $g_2$}, denoted by $[g_1]\leq [g_2]$, if there is a constant $C\in\R$ such that $g_1\leq g_2+C$. We say that $g_1$, $g_2$ \emph{have equivalent singularities}, denoted by $[g_1]=[g_2]$, if $[g_2]\leq [g_1]$ and $[g_1]\leq[g_2]$, i.e. $|g_1-g_2|$ is bounded on $U^\an$.
\end{definition}

\begin{remark}\label{rmk:local D,g}
	A Green function for $D\in\widetilde{\Div}_\Q(U)_\cpt$ in the sense of \cref{Green functions for compactified divisors} is also a Green function for $D|_U$ in sense of \cref{Green functions} . In the following, the notation $(D,g)\in\widehat{\Div}_{\Q}(U)_\cpt$ always means that $D$ is a compactified divisor and $g$ is a Green function for $D$ as in \cref{Green functions for compactified divisors}.
\end{remark}

\begin{lemma} 	\label{lemma:maximum is nef}
	Let $(D,g), (D',g')\in \widehat{\Div}_\Q(U)_{\cpt}$ with $D\geq D'$. Set 
	\[h\coloneq\max\{g,g'\}.\]
	Then $(D,h)\in \widehat{\Div}_\Q(U)_{\cpt}$. Moreover, {if $(D,g), (D',g')$ are both nef (resp.~strongly nef) and} 
	if there is a {cofinal} boundary divisor in $N_\mo(U)$, then  $(D,h)$ is {nef (resp.~strongly nef).}
\end{lemma}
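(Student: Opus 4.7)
The plan is to proceed in three stages. First, I will verify that $h = \max\{g, g'\}$ is a Green function for $D$, so that $(D, h) \in \widehat{\Div}_\Q(U)$; then I will show $(D, h)$ lies in the compactified subspace $\widehat{\Div}_\Q(U)_\cpt$ by constructing an approximating Cauchy sequence; finally, under the positivity hypotheses, I will refine the construction to deduce strong nefness, and the nef case will follow formally from \cref{lemma:closure in finite subspace topology}.

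For the first stage, I will work locally. Given local equations $f, f'$ for $D, D'$ on an open $V \subseteq U$, the hypothesis $D \geq D'$ yields $f/f' \in \OO_U(V) \otimes \Q$, and
\[h + \log|f| = \max\bigl\{\,g + \log|f|\,,\ g' + \log|f'| + \log|f/f'|\,\bigr\}\]
exhibits $h + \log|f|$ as the pointwise maximum of two functions on $V^\an$: the first is continuous on $V^\an$, while the second is continuous on $V^\an \setminus |D - D'|$ and tends to $-\infty$ along $|D - D'|$. Since the first summand remains finite and continuous there, the maximum is continuous on $V^\an$, so $h$ is a Green function for $D$.

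For the second stage, I will fix a cofinal boundary divisor $(X_0, \overline{B})$ (whose existence is guaranteed by \cref{remarks about boundary topology}) and approximate $(D, g)$ and $(D', g')$ in the $\overline{B}$-topology by $(D_n, g_n), (D'_n, g'_n) \in \widehat{\Div}_\Q(U)_\mo$ (resp.~twisted Fubini--Study divisors in the trivially valued case). Perturbing $(D_n, g_n)$ by $\varepsilon_n \overline{B}$ with $\varepsilon_n \to 0$ lets me arrange $\tilde{D}_n \coloneqq D_n + \varepsilon_n B \geq D'_n$ as divisors on a common proper model $X_n$. The first stage then shows $\tilde{h}_n \coloneqq \max\{g_n + \varepsilon_n g_B, g'_n\}$ is a continuous Green function for $\tilde{D}_n$ on $X_n$, and density of model functions in continuous functions on $X_n^\an$ (via \cite[Theorem~7.12]{gubler1998local} in the non-trivially valued case, and the analogous density result for twisted Fubini--Study functions in the trivially valued quasi-projective case used in the proof of \cref{proposition:green functions for adelic divisors}) exhibits $\tilde{h}_n$ as a uniform limit of model Green functions for $\tilde{D}_n$, so $(\tilde{D}_n, \tilde{h}_n) \in \widehat{\Div}_\Q(U)_\cpt$. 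The Lipschitz estimate $|\max\{a_1, b_1\} - \max\{a_2, b_2\}| \leq \max\{|a_1 - a_2|, |b_1 - b_2|\}$ gives $(\tilde{D}_n, \tilde{h}_n) \to (D, h)$ in the $\overline{B}$-topology, so $(D, h) \in \widehat{\Div}_\Q(U)_\cpt$.

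For the strong nef case, I refine the approximation so that $(D_n, g_n), (D'_n, g'_n)$ are themselves semipositive; the hypothesis of a cofinal boundary $\overline{B} \in N_\mo(U)$ is precisely what ensures that the perturbation $\varepsilon_n \overline{B}$ preserves semipositivity. The hard part will then be verifying that $\tilde{h}_n$, viewed as a continuous Green function for $\tilde{D}_n$ on the proper model $X_n$, is itself semipositive in the sense of Zhang: this reduces to the classical fact that the pointwise maximum of two semipositive Green functions on a common $\Q$-line bundle is semipositive, which is standard for plurisubharmonic functions in the archimedean case and has a direct analogue for semipositive model (resp.~twisted Fubini--Study) metrics in the non-archimedean case. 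Granting this, $(\tilde{D}_n, \tilde{h}_n) \in N_{\SP,\Q}(X_n) \subseteq \widehat{\Div}_\Q(U)_\snef$ via \cref{Zhang metrics as divisorial space}, and passing to the $\overline{B}$-limit yields $(D, h)$ strongly nef. The nef case then follows formally: choosing $(E, e) \in \widehat{\Div}_\Q(U)_\snef$ such that both $(D, g) + \tfrac{1}{n}(E, e)$ and $(D', g') + \tfrac{1}{n}(E, e)$ are strongly nef for every $n \geq 1$ (via \cref{lemma:closure in finite subspace topology}), the inequality $D + \tfrac{1}{n}E \geq D' + \tfrac{1}{n}E$ lets me apply the strong nef conclusion to the translated pair, giving $(D, h) + \tfrac{1}{n}(E, e)$ strongly nef for every $n$, whence $(D, h)$ is nef.
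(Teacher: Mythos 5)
Your proposal follows the same overall architecture as the paper's proof: verify that $h$ is a Green function via a local-equation argument, approximate $(D,g)$ and $(D',g')$ by model metrized divisors, perturb by $\varepsilon_n\overline{B}$ to arrange the divisor inequality on a common proper model, take the max of the approximating Green functions, and bootstrap from the strongly nef case to the nef case via \cref{lemma:closure in finite subspace topology}. The first and last stages are fine, and the perturbation step is essentially what the paper does.

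The real gap is in the stage you yourself flag as ``the hard part'': you defer the semipositivity of $\tilde h_n = \max\{g_n + \varepsilon_n g_B, g_n'\}$ to a ``classical fact'' and assert that the non-archimedean case ``has a direct analogue.'' This glosses over two issues. First, the classical psh fact you cite concerns the max of two semipositive Green functions for the \emph{same} divisor, but here $g_n + \varepsilon_n g_B$ is a Green function for $\tilde D_n$ and $g_n'$ is a Green function for $D_n' \neq \tilde D_n$; the correct statement requires the more careful local reduction (subtract a local equation to put $D_n'=0$, observe $g = +\infty$ near $|\tilde D_n - D_n'|$, so the max equals $g$ there). Second, the non-archimedean case is \emph{not} obtained by analogy with the archimedean one. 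In the paper's proof, the archimedean case uses Demailly's regularized maximum $M_\eta$ to produce a genuinely smooth semipositive approximation (the max of two smooth semipositive Green functions is only piecewise smooth, so naïve density of smooth functions does not directly give you a semipositive approximating sequence — you need the convexity of $M_\eta$ to propagate plurisubharmonicity). The non-archimedean case uses instead the locality of semipositivity from \cite{gubler2019on}, reduction to $D'=0$, closure of semipositive \emph{functions} under max, and the $g=\infty$ observation near the divisor. These are structurally different arguments, and your density-based shortcut for the archimedean case (``density of model/smooth functions'') would require an extra step to guarantee that the approximating sequence can be taken inside the semipositive cone. So the proposal's outline is sound but its core technical step is incomplete as written.
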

\begin{proof}
	We only consider the case where $\overline{D}\coloneq(D,g)\in \widehat{\Div}_\Q(U)_{\nef}$ and $\overline{D'}\coloneq(D',g')\in \widehat{\Div}_\Q(U)_{\nef}$, the proofs for general compactified metrized divisors are similar. {Moreover, it is clear that it is enough to show that if $\overline{D}, \overline{D'}$ are strongly nef compactified metrized divisors, then $(D,h)$ is also strongly nef.}
	
	We first consider the case where $U=X$ is proper over $K$.
	It is clear that $h$ is a Green function for $D\in\Div_\Q(X)$. Indeed, since $D\geq D'$, we have that $g-g'=\infty$ on the support of the effective $\Q$-Cartier divisor $D-D'$ and so 
	\[h-g = \max\{0,g'-g\}\]
	is a continuous function on $X^\an$.
	Since $g$ is a Green function for $D$, the same is true for $h$. 
	To prove that $(D,h)$ is the limit of some nef model metrized divisors, it is sufficient to consider the case where $(D,g), (D',g') \in N_{\mo,\Q}(X)$. By multiplying with a suitable positive integer, we may also assume that $D,D'$ are Cartier divisors on $X$. 
	
	When $v$ is archimedean, we will use the regularized maximum from \cite[Lemma~I.5.18]{demailly2012complex}. 
	For any $\eta>0$, there is a function $M_\eta\colon\R^2\rightarrow \R$ having the following properties:
	\begin{enumeratea}
		\item \label{Meta smooth} $M_\eta$ is smooth, convex and not decreasing in all variables;
		\item \label{bound Meta} $\max\{t_1,t_2\} \leq M_\eta(t_1, t_2)\leq \max\{t_1,t_2\}+\eta$;
		\item \label{Meta linearity} $M_\eta(t_1+a, t_2+a) = M_\eta(t_1, t_2)+a$ for any $a\in \R$.
	\end{enumeratea}
	For any $\eta>0$, we set 
	\[h_\eta\coloneq M_\eta(g,g').\]
	The lemma holds in the archimedean case if the following claims hold:
	\begin{enumerate}
		\item \label{heta uniformly converge} $h_\eta \to h$ uniformly when $\eta\to 0$; 
		\item \label{heta smooth} for any $\eta>0$,  $h_\eta$ is a smooth semipositive Green function  for $D$.
	\end{enumerate}
	The statement \ref{heta uniformly converge} is from the property \ref{bound Meta} of $M_\eta$. 
	For \ref{heta smooth}, let $V\subset X$ be an open subset such that the Cartier divisor $D$ is given by a local equation $f_{D}$. Using property \ref{Meta linearity}, we have
	$$h_\eta+\log|f_D|=M_\eta(g+\log|f_D|,g'+\log|f_{D}|).$$
	Property \ref{Meta smooth} shows that the right hand side is a smooth psh function on $V^\an$, where we use \cite[Theorem I.5.6]{demailly2012complex} for plurisubharmonicity. This proves \ref{heta smooth}.

	When  $v$ is non-archimedean, this is shown as follows. First, we note that semipositivity is a local property, see \cite[Proposition 3.11]{gubler2019on}. Then around a given point $x \in X^{\rm an}$, we may change $D'$ to $D'-{\rm div}(f')$ for a local equation $f'$ of $D'$ and replace $D$ by $D-{\rm div}(f')$, so we may assume that $D'=0$. Now if $x$ is not in the support of $D$, then $g,g'$ are semipositive functions and hence $\max\{g,g'\}$ is semipositive. This follows from  \cite[Propositions 2.7 and 3.12]{gubler2019on}. If $x$ is in the support of $D$, then $g(x)=\infty$ and hence $\max\{g,g'\}=g$ in a neighborhood of $x$. Then $\max\{g,g'\}$ is semipositive at $x$ as this holds for $g$. This proves the non-archimedean case.

	Next, we consider the general case when $U$ is not necessarily proper over $K$. We fix a {cofinal} boundary divisor $(X_0, \overline{B})$ {in $N_{\mo}(U)$}. 
	{By \cref{interpretation of YZ-elements}, we may represent $\overline D$ and $\overline{D'}$ by Cauchy sequences} 
	$(\overline{D_n})_{n\geq 1}$ and $(\overline{D_n'})_{n\geq 1}$ with respect to the $\overline B$-topology. We assume that $\overline{D_n}=(D_n,g_n), \overline{D_n'}=(D_n',g_n')\in {N_{\mo,\Q}(X_n)}$ 
	for some proper $K$-model $X_n$ of $U$. Since $D\geq D'$, we can assume that $D_n\geq D_n'$ after adding to $\overline{D_n}$ a small positive multiple of $\overline{B}$ if necessary. Set 
	\[h_{n}\coloneq\max\{g_{n},g'_{n}\}.\]
	By our discussion above, $(D_n, h_n)\in \widehat{\Div}_\Q(X_n)_{\nef}\subset \widehat{\Div}_\Q(U)_{\nef}$. 
	Since $(D_n,g_n)$ converges to $(D,g)$ and $(D_n',g_n')$ converges to $(D',g')$ with respect to the boundary topology on $\widehat{\Div}_\Q(U)_{\cpt}$, we conclude that $(D_n,h_n)$ converges to $(D,h)$ proving the claim.
\end{proof}

\begin{remark} \label{nef boundary divisor assumption}
	The existence of a {cofinal} boundary divisor $\overline B \in N_\mo(U)$ is crucial here and in the following. Note that we can always obtain it by passing to a suitable dense open subset $U'$. Indeed, in a first step, we can choose a quasi-projective  open subset $U'$ of $U$. Then there is a projective $K$-model $X'$ of $U'$. By blowing up the boundary, we may assume that the boundary $X' \setminus U'$ is the support of an effective Cartier divisor $E$. Since $X'$ is projective, we may write $E=B-A$ as the difference of two very ample divisors $A,B$ on $X'$. Using a twisted Fubini--Study Green function $g_B>0$ for $B$ and replacing $U'$ by $U' \setminus |B|$, we get a {cofinal} boundary divisor $\overline B =(B,g_B) \in N_{\mo,\Q}(U')$ of $U'$. 
\end{remark}

\begin{proposition}	\label{prop:local convergent sequence}
	Let $(D,g), (D,g')\in \widehat{\Div}_\Q(U)_{\cpt}$. 
	For $n\in\N$, we set
	\[g_{n}'\coloneq\max\{g-n,g'\}.\]
	Then for each $n\in\N$, we have that $(D,g_n')\in\widehat{\Div}_\Q(U)_{\cpt}$ and the sequence $(D,g_n')_{n\geq 1}$ converges decreasingly to $(D,g')$ with respect to the boundary topology. If $(D,g), (D,g')$ {are both nef (resp.~strongly nef)} 
	and if there is a cofinal boundary divisor of $U$ in $N_\mo(U)$, then $(D,g_n')$ is nef (resp.~strongly nef).
\end{proposition}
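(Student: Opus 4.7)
The strategy is to deduce the proposition from \cref{lemma:maximum is nef} applied to the pair $(D, g - n)$ and $(D, g')$, whose underlying divisors trivially satisfy $D \geq D$. First, I would verify that $(D, g_n') \in \widehat{\Div}_\Q(U)_{\cpt}$: since the valuation $v$ is non-trivial, $\Q \cdot \log|K^\times|$ is dense in $\R$, so the constant $-n$ is a uniform limit of rational multiples of constants $\log|\pi|$ with $\pi \in K^\times$, each of which is a semipositive model Green function for the zero divisor (cf.\ \cref{Zhang metrics as divisorial space}). Hence $(0, -n) \in \widehat{\Div}_\Q(U)_\snef$, so $(D, g-n) = (D, g) + (0, -n) \in \widehat{\Div}_\Q(U)_{\cpt}$, and the first part of \cref{lemma:maximum is nef} yields $(D, g_n') \in \widehat{\Div}_\Q(U)_{\cpt}$. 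The pointwise decrease $g_{n+1}' \leq g_n' \to g'$ on $U^\an$ is immediate from the definition. For the nef and strongly nef statements, $\widehat{\Div}_\Q(U)_\nef$ (resp.~$\widehat{\Div}_\Q(U)_\snef$) is a cone closed under addition, so $(D, g-n)$ inherits nefness (resp.~strong nefness) from $(D, g)$, and the second part of \cref{lemma:maximum is nef}, invoking the hypothesis on the existence of a cofinal boundary divisor in $N_\mo(U)$, yields the corresponding conclusion for $(D, g_n')$.

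The main technical step is convergence in the boundary topology. Fixing a cofinal boundary divisor $\overline B = (B, g_B)$ with $g_B > 0$ on $U^\an$, the element $(0, g - g') = (D, g) - (D, g') \in \widehat{\Div}_\Q(U)_{\cpt}$ has trivial underlying compactified geometric divisor, so \cref{proposition:green functions for adelic divisors} (whose sequence is short exact in the non-trivially valued case) produces a continuous function $h$ on $X_0^\an$ with $h|_{|B|^\an} \equiv 0$ such that $g - g' = g_B \cdot h$ on $U^\an$. Given $\epsilon \in \Q_{>0}$, I would split $U^\an$ along the closed subset $E_\epsilon \coloneqq \{h \geq \epsilon\}$ of $X_0^\an$: since $E_\epsilon$ is disjoint from $|B|^\an$, it is compact and contained in $U^\an$, so $g_B$ is bounded on $E_\epsilon$ by some constant $M$. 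For $n \geq M \|h\|_\infty$, on $E_\epsilon$ we have $g_B h \leq n$ so $g_n' - g' = \max\{g_B h - n, 0\} = 0$, while on the complement $\{h < \epsilon\}$ the estimate $g_B h < \epsilon g_B$ gives $g_n' - g' \leq \epsilon g_B$. Combined with the trivial lower bound $g_n' - g' \geq 0$, this proves $-\epsilon \overline B \leq (D, g_n') - (D, g') \leq \epsilon \overline B$ for all sufficiently large $n$, i.e.\ the desired convergence.

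The main obstacle will be this boundary-topology convergence step: naively $g_n' - g' \to 0$ only pointwise on $U^\an$, and the uniform control against $g_B$ requires the structural description of Green functions for the zero compactified divisor provided by \cref{proposition:green functions for adelic divisors} together with the compactness of $X_0^\an$.
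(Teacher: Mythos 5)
Your proof is correct and follows essentially the same route as the paper's: it deduces that $(D,g_n')\in\widehat{\Div}_\Q(U)_{\cpt}$ and the nef/strongly nef statements from \cref{lemma:maximum is nef}, and it establishes the boundary-topology convergence by writing $g-g'=h\,g_B$ via the exact sequence of \cref{proposition:green functions for adelic divisors} and splitting along the compact set $\{h\geq\varepsilon\}$. The small variations — spelling out that $(0,-n)\in\widehat{\Div}_\Q(U)_\snef$ using density of $\Q\cdot\log|K^\times|$, and using the threshold $n\geq M\|h\|_\infty$ instead of the paper's $n_\varepsilon$ — are cosmetic rather than substantive.
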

\begin{proof}
	The proof is similar to the one in \cite[Proposition~3.42]{burgos2023on}.
	
	We fix a {cofinal} boundary divisor $(X_0,\overline{B})$ with $\overline{B}=(B,g_B)$. {By Lemma \ref{lemma:maximum is nef},} 
	$g_n'$ is {a Green function for $D$ and $(D,g_n')\in \widehat{\Div}_\Q(U)_{\cpt}$.}

	By \cref{proposition:green functions for adelic divisors}, there is a continuous function $h$ with $h\in C(X_{0}^\an)$ such that $g-g'=h\cdot g_{B}$, and $h$ vanishes on $|B|^\an$. For every $\varepsilon\in \Q_{>0}$, let
	\[K_\varepsilon\coloneq\{x\in X_{0}^\an\mid h(x)\geq \varepsilon\}\]
	which is a compact subset of $U^\an$. Since $g-g'$ is continuous, we can find an integer $n_\varepsilon\geq 1$ such that $g-g'=hg_{B}\leq n_\varepsilon$ on $K_\varepsilon$.
	For any $n\geq n_\varepsilon$, we claim that
	\[0\leq g_n'-g'\leq \varepsilon g_B,\]
	this will imply that $(D,g_n')_{n\geq 1}$ converges decreasingly to $(D,g')$ with respect to the boundary topology. The left-hand side is clear since $g_n'-g'=\max\{g-g'-n,0\}\geq0$. For the right-hand side, let $x\in U^\an$. If $g(x)-n\leq g'(x)$, then $g_n'(x)=g'(x)$, and the right-hand side inequality follows from positivity of $g_B$.  If $g(x)-n> g'(x)$, then $$h(x)g_B(x)=g(x)-g'(x)>n\geq n_\varepsilon,$$ so $x\not\in K_\varepsilon$, i.e. $h(x)<\varepsilon$. Hence 
	\[g'_{n}(x)-g'(x)=g(x)-g'(x)-n\leq g(x)-g'(x)=h(x)g_{B}(x)\leq \varepsilon g_{B}(x).\]
	This proves our claim.
	
	If $(D,g), (D,g')$ {are both nef (resp.~strongly nef)} 
	{and if $\overline B \in N_\mo(U)$},  then it follows from \cref{lemma:maximum is nef}  that $(D,g_n')$ is {nef (resp.~strongly nef).}  
	This completes the proof.
\end{proof}

\begin{lemma} 	\label{lemma: same divisoral part for local divisors}
	Let $(D,g), (D,g')\in \widehat{\Div}_\Q(U)_{\cpt}$. Then there are sequences of model metrized divisors $(D_n,g_n)_{n\geq 1}, (D_n,g_{n}')_{n\geq 1}$ in $\widehat{\Div}_\Q(U)_{\mathrm{mo}}$ with the same divisorial part on proper $K$-models of $U$, decreasingly converging to $(D,g)$ and to $(D,g')$, respectively. 
	Moreover, 
	\begin{enumerate1}
		\item \label{local:same divisoral part nef} if there is a {cofinal} boundary divisor in $N_\mo(U)$ and if $(D,g), (D,g')$ are  nef (resp. strongly nef), then we can choose $(D_n,g_n)$ and $(D_n,g'_n)$ to be  nef (resp. strongly nef). 
		\item \label{local:same divisoral part bounded} if $|g-g'|\leq C$, then for any sequence of positive real numbers $(\varepsilon_n)_{n\geq 1}$ converging to $0$, we can choose $(D_n,g_n)$ and $(D_n,g_{n}')$ such that $|g_n-g_n'|\leq C+\varepsilon_n$; 
		\item \label{local:same divisoral part nef bounded} if both conditions \ref{local:same divisoral part nef}  and \ref{local:same divisoral part bounded} hold, then we can choose $(D_n,g_n)$ and $(D_n,g_{n}')$ satisfying the properties in \ref{local:same divisoral part nef}  and \ref{local:same divisoral part bounded} at the same time.
	\end{enumerate1}
\end{lemma}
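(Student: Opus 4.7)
My strategy is to (a) construct model approximations with matching divisorial parts on a common proper $K$-model, (b) shift by small multiples of a cofinal boundary divisor to achieve monotone decrease, and (c) track the additional properties (1)--(3) through both constructions.

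For (a), fix a cofinal boundary divisor $\overline B = (B, g_B)$ of $U$ and take Cauchy sequences $(E_n, h_n) \to (D, g)$ and $(F_n, k_n) \to (D, g')$ in $\widehat{\Div}_\Q(U)_\mo$ with respect to the $\overline B$-topology. By \cref{lemma:compactifications form an inverse system} we may assume both sequences live on a common proper $K$-model $X_n$. For $n$ large, $E_n|_U = F_n|_U = D|_U$, so $E_n - F_n$ is supported on $X_n \setminus U$ with $-\varepsilon_n B \leq E_n - F_n \leq \varepsilon_n B$ for some $\varepsilon_n \to 0$. By decomposing $E_n - F_n + \varepsilon_n B$ and $\varepsilon_n B - (E_n-F_n)$ as effective boundary divisors bounded by a multiple of $B$, one obtains a model Green function $\delta_n$ for $E_n - F_n$ with $|\delta_n| \leq \varepsilon'_n g_B$ on $U^\an$ for some $\varepsilon'_n \to 0$. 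Replacing $(E_n, h_n)$ by $(F_n, h_n - \delta_n)$ yields sequences converging to $(D, g), (D, g')$ with common divisorial part $F_n$. For (b), arrange the approximations to satisfy $-2^{-n}\overline B \leq \bar L_n - (D,g) \leq 2^{-n}\overline B$; then the shifted sequence $\bar L_n + 3\cdot 2^{-n}\overline B$ still converges to $(D,g)$ in the $\overline B$-topology, lies in $\widehat{\Div}_\Q(U)_\mo$, and is decreasing since $\bar L_n - \bar L_{n+1} \geq -3 \cdot 2^{-(n+1)}\overline B$ by the triangle inequality. Applying this shift identically to both sequences from (a) preserves the common divisorial part and proves the basic claim.

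For (1), assuming a cofinal $\overline B$ lies in $N_\mo(U)$, the initial sequences can be chosen (strongly) nef by definition of the (strongly) nef cone; the correction $\delta_n$ can be built in the (strongly) nef cone by the same decomposition, using semipositivity of $g_B$; and the $\overline B$-shift preserves (strong) nefness since $\overline B$ is nef, invoking \cref{lemma:maximum is nef} as needed. For (2), observe that both the $\delta_n$ subtraction and the $\overline B$-shift in (a)--(b) are performed identically on the two sequences, so the difference of the resulting model Green functions equals $h_n - k_n$ pointwise on $U^\an$. Hence it suffices to arrange $|h_n - k_n| \leq C + \varepsilon_n$ on $U^\an$ from the outset, which is achieved by truncating each initial model approximation against the other via the regularized max/min operation (archimedean case) or the standard max/min (non-archimedean case) as in the proof of \cref{lemma:maximum is nef}. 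Part (3) follows by combining the constructions of (1) and (2), which are mutually compatible.

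The main technical obstacle lies in part (2): $\overline B$-topology convergence only gives $|h_n - k_n| \leq |g - g'| + O(\varepsilon_n g_B)$, which is too weak because $g_B$ is unbounded near the boundary. Upgrading to the uniform estimate $|h_n - k_n| \leq C + \varepsilon_n$ on $U^\an$ requires a careful truncation of the Green functions that remains inside $\widehat{\Div}_\Q(U)_\mo$ and is compatible with both the matching-divisorial-part adjustment from (a) and (for (3)) the nefness modifications from (1); ensuring these operations interact consistently while preserving the $\overline B$-topology convergence is the key technical subtlety.
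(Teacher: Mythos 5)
Your approach in step (a) — constructing a correction $\delta_n$ for $E_n - F_n$ and \emph{subtracting} it to align divisorial parts — has two genuine gaps, and they are not the one you flag as the "main technical obstacle."

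First, the construction of $\delta_n$ is not established. You assert that $E_n - F_n \pm \varepsilon_n B$ being effective lets one "obtain a model Green function $\delta_n$ for $E_n-F_n$ with $|\delta_n|\leq \varepsilon_n' g_B$," but no argument is given that such a Green function exists within the class of model (resp.\ smooth) Green functions, nor that it can be made small in the $\overline B$-topology. Effective inequalities on divisors do not directly produce inequalities on Green functions for those divisors without a careful model or regularization argument, and this is precisely the kind of step that needs to be spelled out.

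Second — and this is the more fundamental problem — even granting such a $\delta_n$, replacing $(E_n,h_n)$ by $(F_n, h_n - \delta_n)$ destroys nefness. Subtraction of nef (or effective) classes does not preserve nefness, so part (1) cannot be proved this way: $h_n - \delta_n$ has no reason to be a semipositive Green function, and adding a small multiple of a nef $\overline B$ afterward does not repair this, since nef plus effective is not nef. Your invocation of \cref{lemma:maximum is nef} "as needed" does not enter your actual construction. The paper's proof avoids subtraction entirely: after adding a small multiple of $\overline B$ to force $D_n \geq D_n'$, it sets $h_n' \coloneqq \max\{g_n', g_n - n\}$, a Green function for $D_n$ that is nef by \cref{lemma:maximum is nef} (both inputs are nef, and adding a constant preserves nefness), and then shows $(D_n, h_n')$ converges to $(D,g')$ using the exact sequence from \cref{proposition:green functions for adelic divisors}. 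The maximum construction is the key device that simultaneously matches divisors and preserves positivity; subtraction does neither. A smaller point: in your part (2) you claim the difference of the adjusted Green functions equals $h_n - k_n$, but only one sequence gets the $\delta_n$ correction, so the actual difference is $(h_n - \delta_n) - k_n$, which is off by $\delta_n$. For (3) the paper combines the max construction $\max\{g_{0,n}, g_{0,n}'-C\}$ and $\max\{g_{0,n}', g_{0,n}-C\}$, which automatically forces $|h_n-h_n'| \leq C$ while staying in the nef cone; this is what your gesture toward "truncating against the other" needs to become.
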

\begin{proof}
	We fix a {cofinal} boundary divisor $(X_0, \overline{B})$ in $N_\mo(U)$. We only consider the case where $\overline{D}\coloneq(D,g), \overline{D'}\coloneq(D,g')$ {are strongly nef,} 
	the proofs for general compactified metrized divisors and nef compactified metrized divisors are similar. So we only prove \ref{local:same divisoral part nef} and \ref{local:same divisoral part nef bounded}. The other cases are similar.
	
	\ref{local:same divisoral part nef} Let $({D}_{n},g_n)_{n\geq 1}$ and $({D}_{n}', g_n')_{n\geq 1}$ be Cauchy sequences of nef model metrized divisors representing $\overline{D}$ and $\overline{D'}$ respectively. Assume that $\overline{D_n}\coloneq (D_n, g_n), \overline{D_n'}\coloneq(D_n',g_n')\in N_{\mo,\Q}(X_n)$. We claim that we can assume that $D_n'=D_n$. Since the underlying divisors of $\overline{D}$ and $\overline{D'}$ coincide in $\widetilde{\Div}_\Q(U)_\cpt$, we can assume that $D_n\geq D_n'$ after adding to $\overline{D_n}$ a small positive multiple of $\overline{B}$. Set 
	\[h'_{n}\coloneq\max\{g_{n}',g_{n}-n\}.\]
	Then $h_{n}'$ is a Green function for $D_n$, and $(D_n,h'_{n})$ is nef by \cref{lemma:maximum is nef}. Moreover, we will show that $(D_n,h_{n}')$ converges to $(D,g')$ following the proof of \cite[Lemma~3.41~(i)]{burgos2023on}. By \cref{proposition:green functions for adelic divisors}, we have that $|g-g'|=o(g_{B})$ when approaching the boundary $|B|^\an \subset X_{0}^\an$. For any $\varepsilon\in \Q_{>0}$, since $X_{0}^\an$ is compact, there is a constant $C>0$ such that 
	\[|g-g'|\leq C+\frac{\varepsilon}{2}g_{B}.\]
	We take $n_0>C$ such that 
	\[|g_{n}-g|\leq \frac{\varepsilon}{2} g_{B} \ \ \text{ and } \ \ |g_{n}'-g'|\leq {\varepsilon}g_{B}\]
	hold for all $n\geq n_0$. Then, for any $n\geq n_0$, we have that
	\[g_{n}-n \leq g-n+\frac{\varepsilon}{2} g_{B}\leq g'+C-n+{\varepsilon}g_{B}\leq g'+\varepsilon g_{B}.\]
	Since $g'-\varepsilon g_{B} \leq g_{n}'\leq g'+\varepsilon g_{B}$, we have that
	\[g'-\varepsilon g_{B}\leq g_{n}' \leq h'_{n} = \max\{g_{n}',g_{n}-n\}\leq g'+\varepsilon g_{B},\]
	which shows that $(D_n,h_{n}')$ converges to $(D,g')$ with respect to the boundary topology. By \cref{lemma:maximum is nef}, $(D_n,h_n')\in \widehat{\Div}_\Q(X_n)_{\nef}$. So we can replace $h_n'$ by another Green function $h_n$ for $D_n$ such that $(D_n,h_n)$ is a nef model metrized divisor and converges to $(D,g')$. So our claim holds.
	
	We refine $\overline{D_n}, \overline{D_n'}$ by adding a small positive multiple of $\overline{B}$.
	After extracting subsequences, we can assume that
	\[\overline{D}-\frac{1}{2^n}\overline{B}\leq \overline{D_n}\leq \overline{D}+\frac{1}{2^n}\overline{B},\]
	\[\overline{D}'-\frac{1}{2^n}\overline{B}\leq \overline{D_n'}\leq \overline{D'}+\frac{1}{2^n}\overline{B}.\]
	Set $\overline{E_n}\coloneq\overline{D_n}+\frac{4}{2^n}\overline{B}$ and $\overline{E_n'}\coloneq\overline{D_n'}+\frac{4}{2^n}\overline{B}$. Then
	\[\overline{E_{n+1}}=\overline{D_{n+1}}+\frac{4}{2^{n+1}}\overline{B}\leq \overline{D}+\frac{5}{2^{n+1}}\overline{B}\leq \overline{D_n}+\frac{7}{2^{n+1}}\overline{B}\leq \overline{D_n}+\frac{4}{2^n}\overline{B}=\overline{E_n}.\]
	Similarly, we have $\overline{E_{n+1}'}\leq \overline{E_{n}'}$. This completes the proof of (1) after replacing $\overline{D_n}, \overline{D_n'}$ by $\overline{E_n}, \overline{E_{n}'}$.

	\ref{local:same divisoral part nef bounded} Assume that $|g-g'|\leq C$. By (1), we take Cauchy sequences $({D}_{n},g_{0,n})_{n\geq 1}$ and $({D}_{n}, g_{0,n}')_{n\geq 1}$ of nef model metrized divisors representing $\overline{D}$ and $\overline{D}'$, respectively. Set
	\[h_{n}= \max\{g_{0,n},g'_{0,n}-C\} \ \  \text{ and } \ \ h_{n}'= \max\{g_{0,n}',g_{0,n}-C\}.\]
	It is not hard to see that $|h_n-h_n'|\leq C$ and $(D_n,h_n)$ (resp. $(D_n,h_n')$) converges to $\overline{D}$ (resp. $\overline{D'}$) with respect to the boundary topology. 
	By  \cref{lemma:maximum is nef}, $(D_n,h_n)$ is a nef compactified metrized divisor {on a proper $K$-model $X_n$} of $U$, so for any $\varepsilon_n>0$, we can take a Green function $g_n$ for $D_n$ such that $(D_n, g_n)$ is a nef model metrized divisor {on $X_n$} and $|g_n-h_n|\leq \frac{\varepsilon_n}{2}$. Similarly, we can take $g_n'$ for $h_n'$. Then $|g_n-g_n'|\leq C+\varepsilon_n$. After extracting subsequences and adding small positive multiples of $\overline{B}$ as above, we can assume that $(D_n,g_n)_{n\geq 1}$ and $(D_n, g_n')_{n\geq 1}$ decrease. This proves (3). 
\end{proof}

\begin{lemma}[Integration by parts] 	\label{lemma:integral by parts}
	Let $\overline{D_{0}}=(D_0,g_{0}), \overline{D_{0}'}=(D_0,g_{0}'), \overline{D_{1}}=(D_1,g_{1}), \overline{D_{1}}'=(D_1,g_{1}'),\overline{D_{2}}, \dots, \overline{D_{d}}\in \widehat{\Div}_\Q(U)_{\nef}$. Assume that $[g_{0}]=[g'_{0}]$, $[g_{1}]=[g'_{1}]$. Then we have that
	\begin{align*}
		&\int_{U^\an}(g_{0}'-g_{0})c_1(0,g_{1}'-g_{1})\wedge c_1(\overline{D_{2}})\wedge\cdots\wedge c_1(\overline{D_{d}})\\
		=& \int_{U^\an}(g_{1}'-g_{1})c_1(0,g_{0}'-g_{0})\wedge c_1(\overline{D_{2}})\wedge\cdots\wedge c_1(\overline{D_{d}}).
	\end{align*}
\end{lemma}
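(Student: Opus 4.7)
The strategy is a standard approximate-then-pass-to-the-limit argument, in the spirit of Burgos--Kramer. The key tools are Lemma \ref{lemma: same divisoral part for local divisors} to approximate $(D_i, g_i)$ and $(D_i, g_i')$ by sequences of model metrized divisors sharing the same divisorial part and with a uniform bound on their differences, Proposition \ref{corollary:weakly convergence of full mass} for weak convergence of mixed Monge--Amp\`ere measures, and classical integration by parts in the model case (Stokes' theorem on a compact K\"ahler manifold in the archimedean case; the Chambert-Loir--Ducros formula on Berkovich spaces in the non-archimedean case). Note that the hypotheses $[g_0]=[g_0']$ and $[g_1]=[g_1']$ ensure that $g_0'-g_0$ and $g_1'-g_1$ are bounded continuous real functions on $U^\an$, so that $(0,g_i'-g_i)=(D_i,g_i')-(D_i,g_i)$ lies in $\widehat{\Div}_\Q(U)_\integrable$ and the signed measures under the integral sign make sense.

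First I would reduce to a situation where all divisors are strongly nef and a cofinal boundary divisor in $N_\mo(U)$ is available: applying Remark \ref{nef become strongly nef after shrinking U} to the finite collection $\{\overline{D_0},\overline{D_0}',\overline{D_1},\overline{D_1}',\overline{D_2},\dots,\overline{D_d}\}$ together with Remark \ref{nef boundary divisor assumption}, we can achieve both by replacing $U$ with a suitable dense open subset $U'$. Since the mixed Monge--Amp\`ere measures associated to nef compactified divisors do not charge proper Zariski closed subsets (a consequence of Guo's theorem, as used in the proof of Proposition \ref{proposition:measures for nef adelic}), both integrals in the statement are unchanged by this shrinking. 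Next I would invoke Lemma \ref{lemma: same divisoral part for local divisors}\ref{local:same divisoral part nef bounded} applied separately to the pairs $((D_0,g_0),(D_0,g_0'))$ and $((D_1,g_1),(D_1,g_1'))$ to obtain, on proper $K$-models $X_n$ of $U$, strongly nef model metrized divisors $(D_{i,n},g_{i,n})$ and $(D_{i,n},g_{i,n}')$ ($i=0,1$) sharing the same divisorial part $D_{i,n}$, converging in the boundary topology, and with $|g_{i,n}-g_{i,n}'|\leq C_i+\varepsilon_n$ for the constants $C_i$ given by $|g_i-g_i'|\leq C_i$ and a sequence $\varepsilon_n\to 0$. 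For $j=2,\dots,d$, strong nefness of $\overline{D_j}$ provides approximating sequences $(D_{j,n_j},g_{j,n_j})$ of strongly nef model metrized divisors.

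In the model case, the identity for each approximating datum is a classical integration by parts: since $D_{i,n}$ is the common divisorial part, the function $g_{i,n}'-g_{i,n}$ is a model (smooth in the archimedean case) bounded function on $X_n^\an$ for $i=0,1$, and the identity on the compact space $X_n^\an$ reduces to Stokes' theorem in the archimedean setting or to the Chambert-Loir--Ducros integration by parts formula in the non-archimedean setting; the integrals over $U^\an$ and over $X_n^\an$ coincide since the relevant measures do not charge the Zariski closed boundary. To pass to the limit, I would use that by Proposition \ref{corollary:weakly convergence of full mass}, applied to both $c_1(D_1,g_{1,n}')$ and $c_1(D_1,g_{1,n})$, the signed measures
\[
c_1(0,g_{1,n}'-g_{1,n})\wedge c_1(\overline{D_{2,n_2}})\wedge\cdots\wedge c_1(\overline{D_{d,n_d}})
\]
converge weakly to $c_1(0,g_1'-g_1)\wedge c_1(\overline{D_2})\wedge\cdots\wedge c_1(\overline{D_d})$, while the continuous integrands $g_{0,n}'-g_{0,n}$ are uniformly bounded by $C_0+\sup_n\varepsilon_n$ and converge locally uniformly to $g_0'-g_0$. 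The main technical hurdle, and what I expect to be the principal obstacle, is to justify the convergence $\int f_n\,d\mu_n\to\int f\,d\mu$ in this mixed regime of weak convergence of finite signed Radon measures and merely locally uniform convergence of bounded continuous integrands on the possibly non-compact space $U^\an$; I would handle this by combining the uniform bound $|f_n|\leq C_0+\sup_n\varepsilon_n$ with Guo's theorem \ref{proposition:measures for nef adelic}\ref{guo's theorem} to control total variations, and with a compact-exhaustion argument on $U^\an$ (or equivalently by lifting the whole computation to $X_n^\an$), which reduces the claim to the uniform case. A symmetric argument yields the convergence of the right-hand side, and the lemma follows.
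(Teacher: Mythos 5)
Your proposal is correct and follows essentially the same route as the paper's own proof: reduce by shrinking $U$ to the strongly nef case with a cofinal boundary divisor in $N_\mo(U)$, apply Lemma \ref{lemma: same divisoral part for local divisors}\ref{local:same divisoral part nef bounded} to get approximating model sequences with matching divisorial parts and uniformly bounded differences, invoke the classical model case, and pass to the limit via Proposition \ref{corollary:weakly convergence of full mass}, Lemma \ref{lemma:boundary convergence is locally uniformly convergence}, and Corollary \ref{cor:weakly converges of measures}. The ``technical hurdle'' you flag at the end (mixing weak convergence of measures with locally uniform convergence of bounded integrands on a non-compact space) is precisely what Corollary \ref{cor:weakly converges of measures} in the appendix resolves, so there is no gap; the paper cites it directly rather than re-deriving it.
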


\begin{proof}
	We first assume that all divisors are nef model metrized divisors on a proper $K$-model $X$ of $U$. Then the equality is from \cite[Theorem~5.6]{moriwaki2014arakelov} for the the archimedean case, and from \cite[Proposition~11.5]{gubler2017a} for the non-archimedean case.
	
	In general, we note first that the projection formula in \cref{proposition:measures for nef adelic}\ref{MA measure via dominant morphism} shows that we may replace $U$ by any dense open subset. Then  \cref{nef boundary divisor assumption} shows that we may assume that $U$ has a {cofinal} boundary divisor $\overline B$ in $N_\mo(U)$.
	Moreover, by {\cref{nef become strongly nef after shrinking U} and projection formula in \cref{proposition:measures for nef adelic}\ref{MA measure via dominant morphism}}, we may assume that $\overline{D_0},\overline{D_0'}, \overline{D_1}, \dots, \overline{D_d}$ are strongly nef {after shrinking $U$}.
	Let $(\overline{D_{j,n}})_{n\geq 1}$ (resp. $(\overline{D'_{j,n}})_{n\geq 1}$) be sequences of  nef model divisors converging to $\overline{D_{j}}$ (resp. $\overline{D_{j}}'$) {in the $\overline B$-topology} with $\overline{D_{j,n}}=(D_{j,n},g_{j,n})$ (resp. $\overline{D_{j,n}'}=(D_{j,n}',g_{j,n}')$) $\in N_{\mo,\Q}(X_n)$ for some proper $K$-model $X_n$ of $U$ for $j=0,\dots, d$ (resp. $j=0,1$). By \cref{lemma: same divisoral part for local divisors}\ref{local:same divisoral part nef bounded} , we can assume that $D_{0,n}=D_{0,n}',$ $D_{1,n}=D_{1,n}'$, and $g_{0,n}'-g_{0,n}$, $g_{1,n}'-g_{1,n}$ are uniformly bounded. Then the first case shows
	\begin{align*}
		&\int_{U^\an}(g_{0,n}'-g_{0,n})c_1(0,g_{1,n}'-g_{1,n})\wedge c_1(\overline{D_{2,n}})\wedge\cdots\wedge c_1(\overline{D_{d,n}})\\
		=& \int_{U^\an}(g_{1,n}'-g_{1,n})c_1(0,g_{0,n}'-g_{0,n})\wedge c_1(\overline{D_{2,n}})\wedge\cdots\wedge c_1(\overline{D_{d,n}}).
	\end{align*}
	By \cref{corollary:weakly convergence of full mass}, \cref{lemma:boundary convergence is locally uniformly convergence} and \cref{cor:weakly converges of measures}, the lemma follows after we take the limits of both sides of the equality above.
\end{proof}

\begin{remark} \label{trivially valued case}
	The results of this subsection also hold in the trivially valued case.  
	The crucial Lemma \ref{lemma:maximum is nef} is proved similarly as in the non-trivially valued case by replacing model functions by twisted Fubini--Study Green functions. We have to use that the maximum of two twisted Fubini--Study Green functions is a twisted Fubini--Study Green function which readily follows from the definition of twisted Fubini--Study metrics given in \cref{Fubini-Study metric}. Then \cref{prop:local convergent sequence} is again a direct consequence of \cref{lemma:maximum is nef}.  
	In the trivially valued case, \cref{lemma: same divisoral part for local divisors} and \ref{lemma:integral by parts} follow by the same  arguments as before, the latter can be also deduced by base change to a non-trivially valued non-archimedean field extension. 
\end{remark}

\subsection{Mixed relative energy} \label{subsection: local mixed relative energy}

The mixed relative energy for singular psh functions was studied in the complex case by  Darvas, Di Nezza and Lu \cite{darvas2023relative}. We give here an approach for singular semipositive Green functions which works also in the non-archimedean case. This extends work of \cite{boucksom2015solution}, \cite{boucksom2021spaces}, \cite{boucksom2021non} and \cite{boucksom2022global}.

For any $(D,g)\in \widehat{\Div}_\Q(U)_{\nef}$, we define $\mathcal{E}(g)$ as the set of Green functions $h$ of $D$ with $(D,h)\in \widehat{\Div}_\Q(U)_{\nef}$ and $[h]\leq [g]$. 
Given $(D_0,g_0),\dots, (D_d,g_d) \text{ and } (D_0,h_0), \dots, (D_d, h_d)\in \widehat{\Div}_\Q(U)_{\nef}$, {we set $\mathbf{g}=(g_0,\dots, g_d), \mathbf{h}=(h_0,\dots, h_d)$. Then we define the relation $[\mathbf{h}]\leq [\mathbf{g}]$ (resp. $[\mathbf{h}]= [\mathbf{g}]$, $\mathbf{h}\leq \mathbf{g}$) by requiring $[h_i]\leq [g_i]$ (resp. $[h_i]= [g_i]$, resp. $h_i\leq g_i$) for each $i$.} We further define
\[\mathcal{E}(\mathbf{g})\coloneq\mathcal{E}(g_0)\times\cdots\times \mathcal{E}(g_d).\]
We say that $\mathbf{g}$ is \emph{strongly nef} if $(D_0, g_0),\dots, (D_d, g_d)\in \widehat{\Div}_\Q(U)_\snef$.
\begin{definition} \label{definition: local mixed energy}
	Let $(D_0,g_0),\dots, (D_d,g_d)\in \widehat{\Div}_\Q(U)_{\nef}$. 
	For any $\mathbf{h}=(h_{0},\dots, h_{d})\in \mathcal{E}(\mathbf{g})$,  
	we define the \emph{mixed relative energy} 
	by
	\[E(\mathbf{g},\mathbf{h}) \coloneq \sum\limits_{j=0}^d\int_{U^\an}(h_{j}-g_{j})c_1(D_0,h_{0})\wedge\cdots\wedge c_1(D_{j-1},h_{j-1})\wedge c_1(D_{j+1},g_{j+1})\wedge \cdots\wedge c_1(D_{d},g_{d}).\] 
	{Note that the mixed Monge--Amp\`ere measures were defined in \cref{proposition:measures for nef adelic}. Since $h_j-g_j$ is a continuous function on $U^\an$ which is bounded from above by a constant using $[h_j] \leq [g_j]$, we conclude that the above integral is well-defined with values in $\R \cup \{-\infty\}$. If $[\mathbf{h}]=[\mathbf{g}]$, then the mixed relative energy is finite.}
	We set 
	$\mathcal{E}^1(\mathbf{g})\coloneq\{\mathbf{h}\in \mathcal{E}(\mathbf{g})\mid  E(\mathbf{g},\mathbf{h})>-\infty\}$.
	\label{def:mixed relative energy at v} 
	
{In the \emph{unmixed case} when $g_0= \dots = g_d=g$ and $h_0=\dots=h_d=h$, then we call $\mathcal E(g,h)\coloneqq E(\mathbf{g},\mathbf{h})$ just the \emph{relative energy}.}
\end{definition}

\begin{theorem}	\label{lemma:basic properties of mixed relative energy}
	Let $(D_0,g_0),\dots, (D_d,g_d)\in \widehat{\Div}_\Q(U)_{\nef}$ and $\mathbf{h}\in \mathcal{E}(\mathbf{g})$.
	\begin{enumerate}
		\item \label{energy1} {Let $U'$ be a dense open subset of $U$. For $\mathbf{g'}\coloneq \mathbf{g}|_{U'}$ and $\mathbf{h'}\coloneq \mathbf{h}|_{U'}$, we have $\mathbf{h'} \in \mathcal{E}(\mathbf{g'})$ on $U'$ and $E(\mathbf{g'},\mathbf{h'})=E(\mathbf{g},\mathbf{h})$.}  
		\item \label{energy2}
		For every permutation $\sigma\in S_{d+1}$ of the set $\{0,\dots, d\}$, we have \[E(\mathbf{g},\mathbf{h})=E(\sigma(\mathbf{g}),\sigma(\mathbf{h})).\]
		\item \label{energy3} Let $\mathbf{u}\in \mathcal{E}(\mathbf{g})$ with $\mathbf{h}\leq \mathbf{u}$. Then
		\[E(\mathbf{g},\mathbf{h})\leq E(\mathbf{g},\mathbf{u}).\]
		In particular, if $\mathbf{h}\in \mathcal{E}^1(\mathbf{g})$, then so is $\mathbf{u}$.
		\item \label{energy4}
		Let $\mathbf{c}=(c_0,\dots, c_d)\in \R^{d+1}$. Then we have
		\[E(\mathbf{g},\mathbf{h}+\mathbf{c}) = E(\mathbf{g},\mathbf{h})+\sum\limits_{j=0}^dc_jD_0\cdots D_{j-1}D_{j}\cdots D_d.\] 
		\item \label{energy5}
		Let $\mathbf{h}_{n}$ be a decreasing sequence in $\mathcal{E}(\mathbf{g})$ such that for $j=0,\dots,d$, we have that $(D_j,h_{j,n})$ {is {strongly} nef and} converges to $(D_j,h_j)$ with respect to the boundary topology when $n\to\infty$. Then 
		\[E(\mathbf{g},\mathbf{h})=\lim\limits_{(n_0,\dots,n_d)\to\infty}E(\mathbf{g},(h_{0,n_0},\dots, h_{d,n_d})).\]
		\item \label{energy6}
		{If 	$[\mathbf g]=[\mathbf{h}]$, then we  have
			\begin{equation*} \label{first part of energy 6}
				E(\mathbf{g},\mathbf{h}) =\inf\{E(\mathbf{g},\mathbf{u}) \mid \text{$\mathbf{u}\in \mathcal{E}(\mathbf{g})$  with $[\mathbf{g}]=[\mathbf{u}]$ and $\mathbf{u}\geq \mathbf{h}$}\}.
		\end{equation*}}
		\item \label{energy7}
		If $\mathbf g$ and $\mathbf h$ are strongly nef 
		and if $U$ has a cofinal boundary divisor in $N_{\mo}(U)$, then 
			\[E(\mathbf{g},\mathbf{h}) =\inf\{E(\mathbf{g},\mathbf{u}) \mid \text{$\mathbf{u}\in \mathcal{E}(\mathbf{g})$ strongly nef with $[\mathbf{g}]=[\mathbf{u}]$ and $\mathbf{u}\geq \mathbf{h}$}\}.\]
	\end{enumerate}
\end{theorem}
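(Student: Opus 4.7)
The plan is to prove the seven properties in the order (1), (4), (5), (2), (3), (6), (7), using at each stage the integration-by-parts formula \cref{lemma:integral by parts}, the approximation result \cref{lemma: same divisoral part for local divisors}, and the weak convergence of Monge--Amp\`ere measures established in \cref{corollary:weakly convergence of full mass}. Part (1) reduces to the claim that the mixed Monge--Amp\`ere measure does not charge $(U\setminus U')^{\an}$; this follows from \cref{proposition:measures for nef adelic}\ref{guo's theorem}, because the total mass on $U^\an$ and the total mass on $U'^\an$ both equal the geometric intersection number of the underlying compactified geometric line bundles, which is a birational invariant. Then the integrands agree on $U'^\an$ and vanish ``in measure'' on the complement. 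Part (4) is direct: adding constants $c_j$ to $h_j$ leaves every Chern measure $c_1(\bar D_j,h_j)$ unchanged, so each $\mu_k$ is unchanged, and the $j$-th summand picks up $c_j \int_{U^\an} d\mu_j = c_j \cdot (D_0\cdots \widehat{D_j}\cdots D_d)$ by \cref{proposition:measures for nef adelic}\ref{guo's theorem}.

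For (5), I will write each summand as $\int_{U^\an} (h_{j,n_j} - g_j)\, d\mu_{j,\mathbf n}$, where $\mu_{j,\mathbf n}$ is the mixed Monge--Amp\`ere measure appearing in the $j$-th term with $h_{i,n_i}$ for $i<j$ and $g_i$ for $i>j$. Since the $h_{j,n_j}$ decrease to $h_j$ and are bounded above (as $[h_{j,n_j}]\leq [g_j]$), while $\mu_{j,\mathbf n}$ weakly converges to $\mu_j$ by \cref{corollary:weakly convergence of full mass}, a standard monotone-convergence argument for integrals against weakly convergent positive measures with bounded upper semicontinuous integrands gives $\int (h_{j,n_j}-g_j)\,d\mu_{j,\mathbf n}\to \int(h_j-g_j)\,d\mu_j$. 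Summing over $j$ yields (5).

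Part (2) reduces to adjacent transpositions $\sigma=\tau_j=(j,j+1)$. A direct computation shows that only the $j$-th and $(j+1)$-th summands differ under $\tau_j$, and their difference equals
\[
\int_{U^\an}(h_{j+1}-g_{j+1})\,c_1(0,h_j-g_j)\wedge \nu \;-\; \int_{U^\an}(h_j-g_j)\,c_1(0,h_{j+1}-g_{j+1})\wedge \nu,
\]
where $\nu = c_1(\bar D_0,h_0)\wedge\cdots\wedge c_1(\bar D_{j-1},h_{j-1})\wedge c_1(\bar D_{j+2},g_{j+2})\wedge\cdots\wedge c_1(\bar D_d,g_d)$. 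This vanishes by \cref{lemma:integral by parts} provided $[\mathbf g]=[\mathbf h]$; the general case follows by approximating each $h_j$ from above by $h_{j,n}\coloneqq \max\{h_j, g_j - n\}$, which lies in $\widehat{\Div}_\Q(U)_\nef$ with $[h_{j,n}]=[g_j]$ by \cref{prop:local convergent sequence}, and then applying (5). An analogous telescoping calculation together with \cref{lemma:integral by parts} proves (3) by reducing to the case where $\mathbf h$ and $\mathbf u$ agree outside one coordinate (say index $0$), giving
\[
E(\mathbf g,\mathbf u)-E(\mathbf g,\mathbf h) = \int_{U^\an}(u_0-h_0)\,c_1(\bar D_1,h_1)\wedge\cdots\wedge c_1(\bar D_d,h_d)\geq 0,
\]
with the general case again handled by the $\max\{h_j,g_j-n\}$ approximation and (5).

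Finally, (6) is an immediate consequence of (3): $\mathbf u = \mathbf h$ itself satisfies the defining constraints of the infimum (using $[\mathbf g]=[\mathbf h]$), while for any other admissible $\mathbf u \geq \mathbf h$ the monotonicity in (3) forces $E(\mathbf g,\mathbf u)\geq E(\mathbf g,\mathbf h)$. For (7), the hypothesis that $U$ admits a cofinal boundary divisor in $N_\mo(U)$ is precisely what \cref{prop:local convergent sequence} requires to produce the decreasing sequence $u_{j,n}\coloneqq \max\{h_j, g_j - n\}$ while remaining in the \emph{strongly} nef cone; these $u_{j,n}$ satisfy $u_{j,n}\geq h_j$ and $[u_{j,n}]=[g_j]$, and by (5) we have $E(\mathbf g,\mathbf u_n)\searrow E(\mathbf g,\mathbf h)$, so the infimum over strongly nef $\mathbf u$ with $[\mathbf u]=[\mathbf g]$ is attained in the limit. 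The main obstacle throughout is the interplay between singular Green functions and the integration-by-parts formula \cref{lemma:integral by parts}, which holds only when the singularities match; this is what forces the repeated use of the $\max\{h_j, g_j-n\}$-approximation and makes the cofinal boundary divisor hypothesis essential in part (7), as it is needed to ensure the approximations stay inside the strongly nef cone and hence carry the correct mixed Monge--Amp\`ere measures.
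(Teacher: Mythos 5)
Your plan follows the paper closely in parts (1), (2), (3), (4), and in the reduction to equivalent singularities by replacing $h_j$ with $\max\{h_j,g_j-n\}$ via \cref{prop:local convergent sequence}. The gap is in part (5), which you need in full generality to close all the other reductions.

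You claim that ``a standard monotone-convergence argument for integrals against weakly convergent positive measures with bounded upper semicontinuous integrands'' gives
$\int (h_{j,n_j}-g_j)\,d\mu_{j,\mathbf n}\to \int(h_j-g_j)\,d\mu_j.$
This is not a valid step. When $[\mathbf{h}]\lneq[\mathbf{g}]$ the integrands $h_{j,n_j}-g_j$ are bounded above but \emph{not} bounded below (they tend to $-\infty$ along the boundary once $n_j$ is large), and monotone convergence in $n_j$ only helps against a \emph{fixed} measure, whereas here the measures $\mu_{j,\mathbf n}$ themselves vary with the indices $n_i$, $i<j$. Weak convergence of measures against a merely upper semicontinuous, unbounded-below integrand gives at best a $\limsup$ inequality, not the equality you need; and even the simplest one-variable counterexamples ($\mu_n=\delta_{1/n}\to\delta_0$, $f(x)=-1/|x|$) show that $\int f\,d\mu_n\not\to\int f\,d\mu$ in this regime. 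Note that \cref{cor:weakly converges of measures}, the only convergence result of this kind available in the paper, explicitly requires the uniform two-sided bound $|f_m|\le C$; that hypothesis is exactly what equivalence of singularities gives you and what fails when $[\mathbf h]\lneq[\mathbf g]$. So what you have actually proved is (5) under the extra assumption $[h_j]=[h_{j,n_j}]=[g_j]$ for all $j$ (the paper's Step~4), not (5) in general.

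Because (2), (3), (6), (7) in the general case all pass through (5), the gap propagates. The paper circumvents it by a separate device: it introduces the auxiliary quantity
$E'(\mathbf g,\mathbf h)=\inf\{E(\mathbf g,\mathbf u)\mid\mathbf u\ \text{strongly nef},\ [\mathbf u]=[\mathbf g],\ \mathbf u\ge\mathbf h\}$,
first establishes $E'(\mathbf g,\mathbf h)=\lim_m E(\mathbf g,\mathbf h^{(m)})$ for the cut-offs $h^{(m)}_j=\max\{h_j,g_j-m\}$, then performs a careful two-parameter limit to prove the $E'$-analogue of (5) (Step~6), and only afterwards proves $E'=E$ (Step~7, which \emph{is} property (7)). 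Property (5) in general is then \emph{derived} from the $E'$-version, not proved directly. Your ordering --- proving (5) in general first and then (7) from it --- is therefore not only incomplete in its key step, it also inverts the logical dependence: (7) is needed to establish (5) in general, not the other way around. To repair the proof you need to insert this $E'$-machinery between your Step for (5)-with-equivalent-singularities and the remaining parts.
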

\begin{proof} 
	The proof proceeds in several steps related to each others. 
	
	\vspace{2mm} \noindent
	Step 1: {\it Properties \ref{energy1} and  \ref{energy4} hold.}
	
	\vspace{2mm} \noindent
	Property \ref{energy1} follows from the projection formula in \cref{proposition:measures for nef adelic}. Property \ref{energy4} follows from Guo's Theorem, see \cref{proposition:measures for nef adelic}~\ref{guo's theorem}. This proves Step 1.

	Most of the remaining properties will be shown first under the assumption $[\mathbf{g}]=[\mathbf{h}]$.
	
	\vspace{2mm} \noindent
	Step 2: {\it Property \ref{energy2} holds if $[\mathbf{g}]=[\mathbf{h}]$.}
	
	\vspace{2mm} \noindent
	It is sufficient to consider the case where $\sigma=\tau_{j,j+1}$ is the transposition which interchanges $j$ and $j+1$. 
	By \cref{lemma:integral by parts}, we have that
	\[\int_{U^\an}(h_{j}-g_{j})c_1(0,h_{j+1}-g_{j+1})\wedge \Theta = \int_{U^\an}(h_{j+1}-g_{j+1})c_1(0,h_{j}-g_{j})\wedge\Theta.\] 
	where $\Theta$ stands for 
	\[\Theta\coloneq c_1({D}_0,h_{0})\wedge\cdots\wedge c_1({D}_{j-1},h_{j-1})\wedge c_1({D}_{j+2},g_{j+2})\wedge\cdots\wedge c_1({D}_d, g_{d}).\]
	Rearranging terms, we have
	\begin{align*}
		&\int_{U^\an}(h_{j}-g_{j})c_1(D_{j+1},g_{j+1})\wedge \Theta + \int_{U^\an}(h_{j+1}-g_{j+1})c_1(D_{j},h_{j})\wedge \Theta\\
		=& \int_{U^\an}(h_{j+1}-g_{j+1})c_1(D_{j},g_{j})\wedge \Theta + \int_{U^\an}(h_{j}-g_{j})c_1(D_{j+1},h_{j+1})\wedge \Theta.
	\end{align*}
	This immediately implies that $E(\mathbf{g},\mathbf{h})=E(\tau_{j,j+1}(\mathbf{g}),\tau_{j,j+1}(\mathbf{h}))$ {in case of $[\mathbf{g}]=[\mathbf{h}]$.}
	
	\vspace{2mm} \noindent
	Step 3: {\it Property \ref{energy3} holds if $[\mathbf{g}]=[\mathbf{h}]=[\mathbf{u}]$.}
	
	\vspace{2mm} \noindent	
	It is enough to show that for any $j=0,\dots, d$, we have that
	\[E(\mathbf{g},(h_{0},\dots, h_{j-1},h_{j},u_{j+1},\dots, u_{d}))\leq E(\mathbf{g},(h_{0},\dots, h_{j-1},u_{j},u_{j+1},\dots, u_{d})).\]
	By the special case of \ref{energy2} shown in Step 2, it is sufficient to show that
	\[E(\mathbf{g},(h_{0},\dots, h_{d-1},h_d))\leq E(\mathbf{g},(h_{0}, \dots,h_{d-1},u_d)).\]
	This follows from
	\begin{align*}
		&E(\mathbf{g},(h_{0},\dots, h_{d-1},h_d))- E(\mathbf{g},(h_{0},\dots,h_{d-1},u_d))\\
		=&\int_{U^\an}(h_{d}-u_{d})c_1(D_0,h_{0})\wedge\cdots\wedge c_1(D_{d-1},h_{d-1})
		\leq 0.
	\end{align*}

	\vspace{2mm} \noindent
	Step 4: {\it Property \ref{energy5} holds if  $[{h}_j]= [{h}_{j,n_j}] = [{g}_j]$ for all $j\in \{0,\dots,d\}$.}
	
	\vspace{2mm} \noindent	
	Using the definition of the relative mixed energy, it is sufficient to show that for any $0\leq j\leq d$, we have that

	$$\int_{U^\an}(h_{j,n_j}-g_{j}) \,c_1(D_0,h_{0,n_0})\wedge\cdots\wedge c_1(D_{j-1},h_{j-1,n_{j-1}})\wedge c_1(D_{j+1},g_{j+1})\wedge \cdots\wedge c_1(D_{d},g_{d})$$
	converges for $(n_0,\dots,n_{j-1})\to\infty$ to 
	$$ \int_{U^\an}(h_{j}-g_{j}) \,c_1(D_0,h_{0})\wedge\cdots\wedge c_1(D_{j-1},h_{j-1})\wedge c_1(D_{j+1},g_{j+1})\wedge \cdots\wedge c_1(D_{d},g_{d}).$$

	Notice that $h_{j,n_j}-g_{j}$ decreasingly converges to $h_{j}-g_j$ with respect to the boundary topology, and that there is $C\in \R_{>0}$ such that $C>h_{j,0}-g_{j}\geq h_{j,1}-g_{j}\geq \cdots\geq h_{j}-g_{j} > -C$. 
	{The above convergence is also locally uniform on $U^\an$ by \cref{lemma:boundary convergence is locally uniformly convergence} and hence} 
	the claimed convergence holds by \cref{corollary:weakly convergence of full mass} and \cref{cor:weakly converges of measures} 
	which proves \ref{energy5} in this special case.
	
	\vspace{2mm} \noindent
	Step 5: {\it Property \ref{energy6} holds if $[\mathbf{g}]=[\mathbf{h}]$.}
	
	\vspace{2mm} \noindent	
	Since $[\mathbf{g}]=[\mathbf{h}]$, it is clear  that $\mathbf h$ is the smallest element of 
	the set $$\{\mathbf{u}\in \mathcal{E}(\mathbf{g}) \mid \text{ $[\mathbf{g}]=[\mathbf{u}]$ and $\mathbf{u}\geq \mathbf{h}$}\}$$ and hence \ref{energy6} follows from monotony of the energy shown in Step 3.
	
	\vspace{2mm}
	
	To prove the remaining properties for $[\mathbf h] \leq [\mathbf g]$, we will first assume that $(D_j,g_j)$ and $(D_j,h_j)$ are strongly nef and that $U$ has a cofinal boundary divisor $\overline B \in N_{\mo,\Q}(U)$. Later, to achieve that, the idea is to shrink $U$ by \cref{nef become strongly nef after shrinking U} and by \cref{nef boundary divisor assumption} using \ref{energy1} shown in Step 1. We introduce the following notion:
	\begin{equation} \label{alternative energy definition}
			E'(\mathbf{g},\mathbf{h}) \coloneqq \inf\{E(\mathbf{g},\mathbf{u}) \mid \mathbf{u}\in \mathcal{E}(\mathbf{g}) \text{ {strongly nef} with $[\mathbf{g}]=[\mathbf{u}]$ and } \mathbf{u}\geq \mathbf{h}\}.
	\end{equation} 
	
	\vspace{2mm} \noindent
	Step 6: {\it Under the above assumptions, let $\mathbf{h}_{n}$ be a decreasing sequence in $\mathcal{E}(\mathbf{g})$ such that for $j=0,\dots,d$,  {we have that $(D_j,h_{j,n})$ is strongly nef and} converges to $(D_j,h_j)$ with respect to the $\overline B$-topology when $n\to\infty$. Then} 
	\[E'(\mathbf{g},\mathbf{h})=\lim\limits_{(n_0,\dots,n_d)\to\infty}E'(\mathbf{g},(h_{0,n_0},\dots, h_{d,n_d})).\]

	\vspace{2mm} \noindent	
	For {$m \in \N$}, we set \[h^{(m)}_{j}\coloneq
	\max\{h_{j},g_{j}-m\}.\]
	{Clearly, we have $[\mathbf{h}^{(m)}]=[\mathbf{g}]$.}
	By  \cref{prop:local convergent sequence} and by using $\overline B \in N_\mo(U)$, we have that $\mathbf{h}^{(m)}\in \mathcal{E}({\mathbf{g}})$ is strongly nef and for $j=0,\dots,d$ that $(D_j,h_j^{(m)})$ decreasingly converges to $(D_j,h_j)$ with respect to the $\overline B$-topology. We claim that 
	\begin{equation} \label{relative energy and cut}
		E'(\mathbf{g},\mathbf{h})=\lim\limits_{m\to\infty}E(\mathbf{g},\mathbf{h}^{(m)}).
	\end{equation}
	For any {strongly nef} $\mathbf{u}\in \mathcal{E}(\mathbf{g})$ satisfying $[\mathbf{u}]=[\mathbf{g}]$ and $\mathbf{u}\geq \mathbf{h}$, we  define $\mathbf{u}^{(m)}$  as above.  
	We may apply the above with $\mathbf u$ instead of $\mathbf h$ to see that $(D_j,u_j^{(m)})$ is strongly nef and decreasingly converges to $(D_j,u_j)$ with respect to the $\overline B$-topology. 
	Since $[\mathbf{u}^{(m)}]=[\mathbf u]=[\mathbf g]$, we can use Step 4 to deduce  
	\[E(\mathbf{g},\mathbf{u})=\lim_{m\to\infty}E(\mathbf{g},\mathbf{u}^{(m)})\geq \limsup\limits_{m\to\infty}E(\mathbf{g},\mathbf{h}^{(m)}).\]
	Here, the inequality follows from Step 3 as we have obviously $\mathbf{h}^{(m)} \leq \mathbf{u}^{(m)}$. By definition of $E'(\mathbf{g},\mathbf{h})$, the above yields $E'(\mathbf{g},\mathbf{h})\geq \limsup_{m\to\infty}E(\mathbf{g},\mathbf{h}^{(m)})$. 
	Conversely, as $\mathbf{h}^{(m)}$ is part of the set $\{\mathbf{u}\in \mathcal{E}(\mathbf{g}) \mid \text{{$\mathbf{u}$ strongly nef,} $[\mathbf{g}]=[\mathbf{u}]$ and  $\mathbf{u}\geq \mathbf{h}$}\}$, we deduce
	\begin{equation*} \label{zeroth identity in energy proof}
		\liminf\limits_{m\to\infty}E(\mathbf{g},\mathbf{h}^{(m)})\geq E'(\mathbf{g},\mathbf{h}).
	\end{equation*}
	This proves \eqref{relative energy and cut}. 
	
	Next, we claim that for  any $m\in \N$, we have that $[\mathbf{h}^{(m)}]=[\mathbf{h}_{n}^{(m)}] = [\mathbf{g}]$ and that for $j=0,\dots,d$ the sequence $(D_j,h_{j,n}^{(m)})_{n \in \N}$ is strongly nef and decreasingly converges to $(D_j,{h}_j^{(m)})$ with respect to the $\overline B$-topology on $\widehat{\Div}_\Q(U)_\snef$. 
	The first property was already shown above for $\mathbf{h}$ and follows in the same way for $\mathbf{h}_n$. Since $\mathbf{h}_n$ is decreasing, it is clear that $\mathbf{h}_{n}^{(m)}$ is decreasing in $n$. It remains to see the convergence with respect to the $\overline B$-topology. Let $\varepsilon \in\Q_{>0}$.  For  $n \gg 0$,
	$$0 \leq (D_j,h_{j,n}) - (D_j,h_j) \leq \varepsilon \overline B$$
	holds. This comes from the assumption that $(D_j,h_{j,n})$ decreasingly converges to $(D_j,h_j)$ with respect to the $\overline B$-topology. Since
	$$0 \leq h_{j,n}^{(m)}-h_j^{(m)}\leq \max\{h_{j}+\varepsilon g_B,g_j-m\} - h_j^{(m)} \leq h_j^{(m)}+ \varepsilon g_B - h_j^{(m)}=\varepsilon g_B,$$
	we deduce that 
	$$0 \leq (D_j,h_{j,n}^{(m)}) - (D_j,h_j^{(m)}) \leq \varepsilon \overline B$$
	proving the claimed convergence with respect to the $\overline B$-topology. 
	
	Using the claim just proved, Step 4 yields \[E(\mathbf{g},\mathbf{h}^{(m)})=\lim\limits_{(n_0,\dots,n_d)\to\infty}E(\mathbf{g},({h}_{0,n_0}^{(m)},\dots,{h}_{d,n_d}^{(m)})).\]
	After taking the limit $m \to \infty$ on the both sides, {it follows from \eqref{relative energy and cut} that}
	\begin{equation} \label{first identity in energy proof}
		E'(\mathbf{g},\mathbf{h})=\lim\limits_{m\to\infty}E(\mathbf{g},\mathbf{h}^{(m)})=\lim\limits_{m \to \infty} \lim\limits_{(n_0,\dots,n_d)\to\infty}E(\mathbf{g},({h}_{0,n_0}^{(m)},\dots,{h}_{d,n_d}^{(m)})).
	\end{equation}
	{For any $n_0,\dots,n_d \in \N$, we have $E'(\mathbf{g},(h_{0,n_0}, \dots, h_{d,n_d})) \geq E'(\mathbf{g},\mathbf{h})$ by simply using $h_{j,n_j} \geq h_j$ and the definition of $E'$ in \eqref{alternative energy definition}.} We conclude that 
	\begin{equation} \label{second identity in energy proof}
		\liminf_{(n_0,\dots,n_d)\to \infty } E'(\mathbf{g},(h_{0,n_0}, \dots, h_{d,n_d})) \geq E'(\mathbf{g}, \mathbf{h}).
	\end{equation}
	As $({h}_{0,n_0}^{(m)},\dots,{h}_{d,n_d}^{(m)}) \in \{\mathbf{u}\in \mathcal{E}(\mathbf{g}) \mid \text{{$\mathbf u$ strongly nef,} $[\mathbf{g}]=[\mathbf{u}]$ and  $\mathbf{u}\geq (h_{0,n_0}, \dots, h_{d,n_d})$}\}$, the definition of $E'$ in \eqref{alternative energy definition} yields
	\begin{equation*}
		E(\mathbf{g},({h}_{0,n_0}^{(m)},\dots,{h}_{d,n_d}^{(m)})) \geq E'(\mathbf{g},(h_{0,n_0}, \dots, h_{d,n_d}))
	\end{equation*}
	and hence
	\begin{equation} \label{third identity in energy proof}
		\lim\limits_{(n_0,\dots,n_d)\to\infty}E(\mathbf{g},({h}_{0, n_0}^{(m)},\dots,{h}_{d,n_d}^{(m)}))
		\geq \limsup_{(n_0,\dots,n_d)\to\infty}  E'(\mathbf{g},(h_{0,n_0}, \dots, h_{d,n_d})).
	\end{equation}
	Using \eqref{third identity in energy proof} in \eqref{first identity in energy proof} and then \eqref{second identity in energy proof}, we conclude
	$$\liminf_{(n_0,\dots,n_d)\to \infty } E'(\mathbf{g},(h_{0,n_0}, \dots, h_{d,n_d})) \geq E'(\mathbf{g}, \mathbf{h}) \geq  \limsup_{(n_0,\dots,n_d)\to\infty}  E'(\mathbf{g},(h_{0,n_0}, \dots, h_{d,n_d}))$$
	proving Step 6.
	
	\vspace{2mm} \noindent
	{Step 7: {\it   If $U$ has a cofinal boundary divisor in $N_{\mo,\Q}(U)$ and if ${\mathbf g, }\mathbf h$ are  strongly nef, 
	then $E'(\mathbf g, \mathbf h)=E(\mathbf g, \mathbf h)$ which means that property \ref{energy7} holds}.}

	\vspace{2mm} \noindent	
	We will apply Step 6 to the sequences $h_{j,n}\coloneq \max\{h_{j}, g_{j}-n\}$ for $j=0,\dots,d$. 
	By \cref{prop:local convergent sequence}, we know that $(D_{j}, h_{j,n})$ {is strongly nef and} decreasingly converges to $(D_{j},h_{j})$  
	with respect to the boundary topology. For $n_0,\dots, n_{j-1} \in \N$, we set
	\[\Theta_{j,n_0,\dots, n_{j-1}}\coloneq c_1({D}_0,h_{0,n_0})\wedge\cdots\wedge c_1({D}_{j-1},h_{j-1,n_{j-1}})\wedge c_1({D}_{j+1},g_{j+1})\wedge\cdots\wedge c_1({D}_d, g_{d}).\]
	and
	\[\Theta_{j}\coloneq c_1({D}_0,h_{0})\wedge\cdots\wedge c_1({D}_{j-1},h_{j-1})\wedge c_1({D}_{j+1},g_{j+1})\wedge\cdots\wedge c_1({D}_d, g_{d}),\]
	By \cref{corollary:weakly convergence of full mass}, we have $\Theta_{j,n_0,\dots, n_{j-1}}\overset{w}{\to} \Theta$ for $(n_0,\dots, n_{j-1})\to \infty$. For every $n_j \in \N$, we have  $[h_{j,n_j}]=[g_j]$ and hence we get
		\begin{align*}
			\lim\limits_{n_{j-1}\to\infty}\cdots\lim\limits_{n_0\to\infty}\int_{U^\an}(h_{j,n_j}-g_{j})\Theta_{j,n_0,\dots, n_{j-1}}
			=\int_{U^\an}(h_{j,n_j}-g_{j})\Theta_{j}.
		\end{align*}
		The monotone convergence theorem yields
		\begin{equation} \label{multiple limit in energy proof}
			\lim_{n_j \to \infty} \lim\limits_{n_{j-1}\to\infty}\cdots\lim\limits_{n_0\to\infty}\int_{U^\an}(h_{j,n_j}-g_{j})\Theta_{j,n_0,\dots, n_{j-1}} = \int_{U^\an}(h_{j}-g_{j})\Theta_{j}.
		\end{equation}
		Using $[h_{j,n_j}]=[g_j]$, we show as in Step 5 that  $$E'(\mathbf{g}, (h_{0,n_0},\dots, h_{d,n_d}))=E(\mathbf{g}, (h_{0,n_0},\dots, h_{d,n_d}))$$ and hence Step 6 gives
	\begin{align*}
		E'(\mathbf{g},\mathbf{h})=&\lim\limits_{n_{d}\to\infty}\cdots\lim\limits_{n_0\to\infty}E(\mathbf{g},(h_{0,n_0},\dots, h_{d,n_d}))\\
		=&\lim\limits_{n_{d}\to\infty}\cdots\lim\limits_{n_0\to\infty}\sum\limits_{j=0}^d\int_{U^\an}(h_{j,n_j}-g_{j})\Theta_{j,n_0,\dots, n_{j-1}}.
	\end{align*}
	By inserting \eqref{multiple limit in energy proof}, we get
	$$E'(\mathbf{g},\mathbf{h})=\sum\limits_{j=0}^d\int_{U^\an}(h_{j}-g_{j})\Theta_{j}=E(\mathbf{g},\mathbf{h})$$
	proving Step 7.
	
	\vspace{2mm} \noindent
	Step 8: {\it   The remaining properties \ref{energy2}, \ref{energy3} and \ref{energy5} 
	hold in general.}
	
	\vspace{2mm} \noindent	
	For the proof, we may assume that $U$ has a cofinal boundary divisor $\overline B \in N_{\mo,\Q}(U)$ and that $\mathbf g, \mathbf h$ are 
	strongly nef as we have already remarked before Step 6. We have shown the remaining properties in the special case of equivalent singularities, see 
	Steps 2--4.  This yields  properties \ref{energy2} and \ref{energy3} 
	in general, but for $E'$ instead of $E$. The property \ref{energy5} for $E'$ was shown already in Step 6. Since $E$ agrees with $E'$ by Step 7, this proves Step 8 and hence the theorem.
\end{proof}

\begin{proposition} \label{prop: transitivity of local mixed energy}
	Let $(D_0,g_0),\dots, (D_d,g_d)\in \widehat{\Div}_\Q(U)_{\nef}$, and $\mathbf{g}'\in \mathcal{E}(\mathbf{g}), \mathbf{h}\in \mathcal{E}(\mathbf{g}')$.  Then we have
	\begin{align} \label{eq:transitivity}
		E(\mathbf{g},\mathbf{h}) =E(\mathbf{g},\mathbf{g}')+ E(\mathbf{g}',\mathbf{h}).
	\end{align}
	In particular, $\mathbf{h}\in \mathcal{E}^1(\mathbf{g})$ if and only if $\mathbf{h}\in \mathcal{E}^1(\mathbf{g}')$ and $\mathbf{g}'\in\mathcal{E}^1(\mathbf{g})$. 
\end{proposition}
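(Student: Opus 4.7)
The plan is to first establish the cocycle identity $E(\mathbf{g},\mathbf{h}) = E(\mathbf{g},\mathbf{g}') + E(\mathbf{g}',\mathbf{h})$ under the strong assumption $[\mathbf{g}] = [\mathbf{g}'] = [\mathbf{h}]$, and then extend to the general case by a truncation-and-limit argument using Theorem~\ref{lemma:basic properties of mixed relative energy}. The last assertion about $\mathcal{E}^1$ will be immediate from the identity once finiteness is unraveled.

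To set things up, by Theorem~\ref{lemma:basic properties of mixed relative energy}\ref{energy1} I can shrink $U$ to a dense open subset without changing any of the three energies; using Remark~\ref{nef boundary divisor assumption} and Remark~\ref{nef become strongly nef after shrinking U}, I may assume that $U$ carries a cofinal boundary divisor in $N_{\mo}(U)$ and that all divisors $(D_j,g_j), (D_j,g_j'), (D_j,h_j)$ are strongly nef. By Theorem~\ref{lemma:basic properties of mixed relative energy}\ref{energy4}, translating $\mathbf{g}'$ and $\mathbf{h}$ by suitable real constants changes both sides of \eqref{eq:transitivity} by the same quantity, so I may further arrange $\mathbf{h} \le \mathbf{g}' \le \mathbf{g}$ pointwise on $U^{\an}$.

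Next, assume $[\mathbf{g}]=[\mathbf{g}']=[\mathbf{h}]$, so all three energies are finite. Using the decomposition $h_j - g_j = (h_j - g_j') + (g_j' - g_j)$ in the defining sum of $E(\mathbf{g},\mathbf{h})$ and expanding by multilinearity of the mixed Monge--Amp\`ere measures, $E(\mathbf{g},\mathbf{h})$ splits into two sums that almost coincide with $E(\mathbf{g}',\mathbf{h})$ and $E(\mathbf{g},\mathbf{g}')$; the discrepancies consist of cross-terms of the form $\int(v_k - u_k)\,\bigl(c_1(D_\ell,\alpha_\ell)-c_1(D_\ell,\beta_\ell)\bigr)\wedge\Theta$, which can be pairwise matched and shown to cancel by the symmetric integration-by-parts formula of Lemma~\ref{lemma:integral by parts}. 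This proves the identity when all singularities are equivalent.

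For the general case, introduce the truncations $g_j^{\prime(m)} := \max\{g_j',\, g_j-m\}$ and $h_j^{(m)} := \max\{h_j,\, g_j-m\}$. By Proposition~\ref{prop:local convergent sequence} these are strongly nef, satisfy $[\mathbf{g}^{\prime(m)}]=[\mathbf{h}^{(m)}]=[\mathbf{g}]$, decrease in $m$ to $\mathbf{g}'$ resp.~$\mathbf{h}$ in the boundary topology, and (using the pointwise inequalities from above) obey $\mathbf{h}^{(m)} \le \mathbf{g}^{\prime(m)} \le \mathbf{g}$. Applying the equivalent-singularities identity to the triple $(\mathbf{g}, \mathbf{g}^{\prime(m)}, \mathbf{h}^{(m)})$ gives
\[
E(\mathbf{g},\mathbf{h}^{(m)}) = E(\mathbf{g},\mathbf{g}^{\prime(m)}) + E(\mathbf{g}^{\prime(m)},\mathbf{h}^{(m)}).
\]
As $m \to \infty$, Theorem~\ref{lemma:basic properties of mixed relative energy}\ref{energy5} (with fixed base $\mathbf{g}$) handles the two terms involving base $\mathbf{g}$. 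For the remaining term $E(\mathbf{g}^{\prime(m)},\mathbf{h}^{(m)})$, where both the base and the argument vary, I apply the equivalent-singularities identity again to $(\mathbf{g}^{\prime(m)}, \mathbf{g}^{\prime(k)}, \mathbf{h}^{(m)})$ for $k\ge m$, letting $k \to \infty$ first (using Theorem~\ref{lemma:basic properties of mixed relative energy}\ref{energy5} with base $\mathbf{g}^{\prime(m)}$), then $m \to \infty$, together with monotonicity (Theorem~\ref{lemma:basic properties of mixed relative energy}\ref{energy3}) to control the resulting double sequence.

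The main obstacle is twofold: (i) the bookkeeping of the cancellation of cross-terms in the equivalent-singularities step, which rests on a careful iterated application of the integration-by-parts Lemma~\ref{lemma:integral by parts}; and (ii) the fact that Theorem~\ref{lemma:basic properties of mixed relative energy}\ref{energy5} provides continuity only in the argument with fixed base, which forces the two-stage limit procedure in the last paragraph to handle the joint change of base and argument in $E(\mathbf{g}^{\prime(m)},\mathbf{h}^{(m)})$.
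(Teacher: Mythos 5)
Your proposal is considerably more elaborate than the paper's proof, and more importantly it has a genuine gap in the last truncation step. The paper proceeds by induction on $n=\#\{j\mid h_j\neq g_j'\}$. The induction reduces to $n=1$, where by the symmetry already established in Theorem~\ref{lemma:basic properties of mixed relative energy}~\ref{energy2} one may assume $h_j=g_j'$ for $j<d$. In that case the $j<d$ summands of $E(\mathbf{g},\mathbf{h})$ and $E(\mathbf{g},\mathbf{g}')$ literally coincide, the $j<d$ summands of $E(\mathbf{g}',\mathbf{h})$ vanish, and the three $j=d$ summands all integrate against the same measure $\mu_d=c_1(D_0,g_0')\wedge\cdots\wedge c_1(D_{d-1},g_{d-1}')$, so the identity collapses to $\int (h_d-g_d)\,\mu_d=\int (g_d'-g_d)\,\mu_d+\int (h_d-g_d')\,\mu_d$. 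Since both integrands on the right are bounded above and $\mu_d$ has finite mass by Guo's theorem, all three integrals lie in $\R\cup\{-\infty\}$ and additivity of the integral gives the claim with no truncation, no integration by parts, and no hypothesis of equivalent singularities. Your route of expanding $h_j-g_j=(h_j-g_j')+(g_j'-g_j)$ and cancelling cross-terms via Lemma~\ref{lemma:integral by parts} is workable but far heavier; more seriously, it forces a limit argument that you do not actually close.

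The gap is in your treatment of $E(\mathbf{g}^{\prime(m)},\mathbf{h}^{(m)})$ as $m\to\infty$. Theorem~\ref{lemma:basic properties of mixed relative energy}~\ref{energy5} gives continuity only in the \emph{second} argument with the base held fixed, and here both vary. Your auxiliary step inserts $\mathbf{g}^{\prime(k)}$ and applies the equivalent-singularity identity to the triple $(\mathbf{g}^{\prime(m)},\mathbf{g}^{\prime(k)},\mathbf{h}^{(m)})$, obtaining
\[
E(\mathbf{g}^{\prime(m)},\mathbf{h}^{(m)})=E(\mathbf{g}^{\prime(m)},\mathbf{g}^{\prime(k)})+E(\mathbf{g}^{\prime(k)},\mathbf{h}^{(m)}),
\]
but taking $k\to\infty$ reproduces the original difficulty: the second term on the right still has a varying base $\mathbf{g}^{\prime(k)}$, so \ref{energy5} does not apply to it, and the first term only converges to $E(\mathbf{g}^{\prime(m)},\mathbf{g}')$. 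You have not explained how to pass to the limit in the term that matters, and the obvious attempts (e.g.~rewriting $E(\mathbf{g}^{\prime(m)},\mathbf{h}^{(m)})$ as a difference $E(\mathbf{g},\mathbf{h}^{(m)})-E(\mathbf{g},\mathbf{g}^{\prime(m)})$) either beg the question by presupposing the transitivity being proved, or run into $(-\infty)-(-\infty)$ indeterminacies when the energies are not finite. The monotonicity from \ref{energy3} gives one-sided bounds but not the matching inequality needed to pin the limit down. So as written the argument does not establish the limit $\lim_m E(\mathbf{g}^{\prime(m)},\mathbf{h}^{(m)})=E(\mathbf{g}',\mathbf{h})$ and the proof is incomplete. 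You should abandon the truncation scheme and instead check the identity directly from Definition~\ref{definition: local mixed energy}, reducing to the single-differing-component case via \ref{energy2}.
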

\begin{proof}
	{Using induction on $n\coloneq\#\{j\in\{0,\dots, d\}\mid h_j\not=g_j'\}$, we easily reduce to show the claim for $n=1$. Using \cref{lemma:basic properties of mixed relative energy}~\ref{energy2}, we may assume $(h_0,\dots, h_d)=(g_1',\dots, g_{d-1}',h_d)$. In this case, the claimed transitivity property is obvious from the definition of the relative mixed energy in Definition \ref{definition: local mixed energy}.}
\end{proof}

We compute the relative energy in our running example.
\begin{example} \label{relative energy in running example}
	{Let $K=\Q_v$ for a place $v$ of $\Q$ and let $U=\mathbb A_K^1$ embedded in $X=\mathbb P_K^1$ as usual. We fix our attention to the toric divisor $D=[\infty]$ and to a toric Green function $g$ for $D$ such that $(D,g) \in \widehat{\Div}_{S,\Q}(U)_\snef$. By \cref{running example integrable local}, this means that there is a concave function $\psi \colon \R \to \R$ with asymptotic slopes $1$ for $u \to -\infty$ and $0$ for $u\to \infty$ such that $g= -\psi \circ \trop$ on $T^\an = X^\an\setminus \{0,\infty\}$. We pick a toric $h \in \mathcal E(g)$ which means that $h$ is similarly induced by a concave function $\varphi\colon \R \to \R$ with asymptotic slopes  $1$ for $u \to -\infty$ and $0$ for $u\to \infty$ such that $\psi \leq \varphi+C$ for some constant $C \in \R$. It follows from \cref{MA measure in running example} that the relative energy is given by
	$$E(g,h)= \int_\R (\psi-\varphi)(u) \, \left(\mathrm{MA}(\varphi)+\mathrm{MA}(\psi)\right)(du) \in \R \cup \{-\infty\}.$$}
\end{example}
				
\begin{remark} \label{local energy depends only on isometry class}
	Let $(D_0,g_0),\dots, (D_d,g_d)\in \widehat{\Div}_\Q(U)_{\nef}$ and let $\mathbf{h}=(h_{0},\dots, h_{d})\in \mathcal{E}(\mathbf{g})$. Then it is obvious from the definition that the relative mixed energy $E(\mathbf{g},\mathbf{h})$ depends only on the isometry class of the underlying metrized $\Q$-line bundles $\overline{L_j}= (\OO_U(D_j), \metr_j)$ and $\overline{L_j'}=(\OO_U(D_j), \metr_j')$, where $\metr_j$ is the metric corresponding to the Green function $g_j$ and $\metr_j'$ is the metric corresponding to the Green function $h_j$ for $j=0,\dots,d$. We will then use also the notation
	$$E(\mathbf{\overline{L}}, \mathbf{\overline{L'}}) \coloneqq E(\mathbf{g},\mathbf{h})$$
	for the relative mixed energy. In this way, we can use the relative mixed energy on the groups $\widehat{\Pic}_\Q(U)_\nef$ defined in \cref{definition: compactified line bundles} for $(L_j,\metr_j), (L_j,\metr_j')$ in $\widehat{\Pic}_\Q(U)_\nef$ with $\metr_j'$ more singular that $\metr_j$ for $j=0,\dots, d$. The latter means that $-\log(\metr_j'/\metr_j)$ is bounded from above on $U^\an$.
\end{remark}

\begin{prop} \label{functoriality of energy}
	Let $\varphi\colon U' \to U$ be  a morphism of $d$-dimensional algebraic varieties over $K$. For $j=0,\dots, d$, let $(L_j,\metr_j), (L_j,\metr_j')$ in $\widehat{\Pic}_\Q(U)_\nef$ with $\metr_j'$ more singular than $\metr_j$ for $j=0,\dots, d$. Then we have
	$$E(\varphi^*\mathbf{\overline{L}}, \varphi^*\mathbf{\overline{L'}}) = \deg(\varphi)E(\mathbf{\overline{L}},\mathbf{\overline{L'}}).$$
\end{prop}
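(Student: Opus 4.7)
The plan is to apply the projection formula for mixed Monge--Amp\`ere measures term-by-term in the definition of the mixed relative energy, handling the dominant and non-dominant cases separately. First, writing $\overline L_j = (L_j, \metr_j)$ and $\overline L_j' = (L_j, \metr_j')$, we observe that $\varphi^*\overline L_j, \varphi^*\overline L_j' \in \widehat{\Pic}_\Q(U')_{\nef}$ by \cref{pull-back in local case}, and that $\varphi^*\metr_j'$ is more singular than $\varphi^*\metr_j$ since $\log(\metr_j'/\metr_j)$ being bounded from above on $U^\an$ is preserved under composition with $\varphi$. Thus the left-hand side is well-defined in $\R \cup \{-\infty\}$.

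In terms of associated Green functions $g_j,h_j$ (with $h_j\in \mathcal{E}(g_j)$), set $f_j \coloneqq h_j - g_j$, which is a continuous function on $U^\an$ bounded from above. For each $j = 0, \ldots, d$, write
\[
T_j \coloneqq \int_{U^\an} f_j \, \mu_j, \qquad T_j' \coloneqq \int_{(U')^\an} \varphi^*f_j \, \mu_j',
\]
where $\mu_j$ is the mixed Monge--Amp\`ere measure built from $(D_i, h_i)$ for $i<j$ and $(D_i, g_i)$ for $i>j$, and $\mu_j'$ is its pull-back analogue on $U'$. Suppose first that $\varphi$ is dominant. Then by \cref{proposition:measures for nef adelic}\ref{MA measure via dominant morphism}, $\varphi_*\mu_j' = \deg(\varphi)\,\mu_j$, and the change-of-variables formula yields $T_j' = \deg(\varphi) T_j$; summing over $j$ gives the desired identity. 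The change of variables must accommodate that $f_j$ may be $-\infty$ along the boundary, which we handle by decomposing $f_j = f_j^+ - f_j^-$ with $f_j^+$ bounded continuous and $f_j^-$ non-negative continuous (valued in $[0,+\infty]$), and applying monotone convergence to the increasing approximations $\min(f_j^-, n)$.

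Next, suppose $\varphi$ is not dominant, so $\deg(\varphi) = 0$ by convention. We must show $T_j' = 0$ for every $j$. By Guo's theorem (\cref{proposition:measures for nef adelic}\ref{guo's theorem}), the total mass of the positive Radon measure $\mu_j'$ equals the geometric intersection number $\prod_{i\neq j} \varphi^*\widetilde L_i$ on $U'$, where $\widetilde L_i$ denotes the underlying compactified geometric line bundle of $\overline L_i$ (or $\overline L_i'$, which coincide). Applying the projection formula for the $d$-intersection map on $\widehat{M}_{\gm,\Q}^{\YZ}(U')$ from \cref{thm:algebraic interesection number} (which is proved by reducing to nef divisors on projective varieties and invoking the classical projection formula, where non-dominant morphisms have intersection-theoretic degree zero), this total mass equals $\deg(\varphi)\prod_{i\neq j}\widetilde L_i = 0$. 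As $\mu_j'$ is a positive measure of total mass zero, it is identically zero, and therefore $T_j' = 0 = \deg(\varphi) T_j$ (using the convention $0 \cdot (-\infty) = 0$, which is the value of $\int \varphi^*f_j \, d\mu_j'$ against the zero measure).

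The main obstacle is the non-dominant case, since \cref{proposition:measures for nef adelic}\ref{MA measure via dominant morphism} is only stated for dominant morphisms; the resolution goes through Guo's theorem, which pins down the total mass of the pulled-back Monge--Amp\`ere measure via the geometric intersection number and thereby forces the measure to vanish. A second, minor technical point is the integrability of $\varphi^*f_j$ against $\mu_j'$: since $f_j$ is continuous and bounded above, the positive part integrates against a finite Radon measure, and the negative part is handled by monotone convergence. In the trivially valued case, the same argument goes through unchanged using the relevant twisted Fubini--Study machinery underlying the nef compactified divisor theory from \cref{strongly nef and nef compactified divisors}.
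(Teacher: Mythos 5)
Your proposal is correct and, for the dominant case, takes the same route as the paper: the proof in the text is a single line invoking the projection formula $\varphi_*(c_1(\varphi^*\overline{L}_1)\wedge\cdots\wedge c_1(\varphi^*\overline{L}_d)) = \deg(\varphi)\,c_1(\overline{L}_1)\wedge\cdots\wedge c_1(\overline{L}_d)$ of \cref{proposition:measures for nef adelic}\ref{MA measure via dominant morphism}, applied term by term in the defining sum for $E(\mathbf{g},\mathbf{h})$ together with the transfer formula for the image measure, exactly as you do. What you add beyond the paper is a spelled-out treatment of the non-dominant case (where $\deg(\varphi)=0$), and this is a genuine observation: since \cref{proposition:measures for nef adelic}\ref{MA measure via dominant morphism} is only stated for dominant $\varphi$, the paper's one-liner is implicitly leaving this case to the reader. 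Your route there — use Guo's theorem \cref{proposition:measures for nef adelic}\ref{guo's theorem} to identify the total mass of each $\mu_j'$ with a geometric intersection number on $U'$ and then argue that number vanishes — works, and the conclusion $\mu_j'=0$ forces all the relevant integrals (hence the left-hand energy) to vanish, matching the convention $\deg(\varphi)\cdot(-\infty)=0$. One imprecision worth flagging: you cite ``the projection formula for the $d$-intersection map from \cref{thm:algebraic interesection number},'' but that theorem states only the existence of the extended intersection map and not a projection formula. The vanishing you need is cleaner to argue directly: a non-dominant $\varphi$ factors through a proper closed $Z\subsetneq U$ with $\dim Z < d$, so the $d$ pulled-back compactified geometric line bundles $\varphi^*\widetilde L_i$ all come from $\widetilde\Pic_\Q(Z)$, and a $d$-fold product of classes pulled back from a space of dimension $<d$ vanishes already at the level of nef divisors on proper varieties, then by continuity on $\widehat{M}_{\gm,\Q}^{\YZ}(U')$. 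Finally, a small correction of framing: $f_j = h_j - g_j$ is a genuine real-valued continuous function on $U^\an$ (it never equals $-\infty$ there); what may be $-\infty$ is the integral, because $f_j$ need not be bounded below on the non-compact $U^\an$, so your monotone-convergence decomposition $f_j = f_j^+ - f_j^-$ is the right tool but the justification should reference unboundedness rather than actual $-\infty$ values.
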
		
\begin{proof}
	This follows from the projection formula in \cref{proposition:measures for nef adelic}.
\end{proof}
				
\begin{remark} \label{relative energy in the trivially vaued case}
	The results of this subsection hold also in the trivially valued case. We can just use that there is a non-archimedean field extension $L/K$ with non-trivial valuation. The definition of relative energy does not depend on the choice of $L$ which follows from \cref{proposition:measures for nef adelic}\ref{MA measure via field extension}.
\end{remark}


\section{{Mixed relative energy in the global case}} \label{section: mixed relative energy in the global case}
				
In this section, we generalize the notions from the previous section to the global case.  We fix a proper adelic curve $S=(K,\Omega,\mathcal{A},\nu)$ satisfying the normalization assumption for archimedean valuations given in \cref{assumeption on adelic curves}. We also assume that either the $\sigma$-algebra $\mathcal{A}$ is discrete, or that $K$ is countable. We consider any algebraic variety $U$ over $K$.
				
We recall first the following notation introduced in \cref{section: global theory}. We will use the abstract divisorial space $(M_{S,\Q}(U),N_{S,\Q}(U))$  from Definition \ref{def:CM divisors on non-proper} build as a direct limit of the $S$-integrable adelic divisors on proper $K$-models of $U$ in the sense of Chen and Moriwaki. Moreover, we use the abstract divisorial space $(M_{S,\Q}'(U),N'_{S,\Q}(U))$ from \ref{boundary divisors in M'} build similarly.

\begin{definition} \label{def:global more singular}
	Let $D\in \widetilde{\Div}_\Q(U)_\cpt$, and $g_1, g_2$ $S$-Green functions for $D$, see \cref{global Green functions for compactified divisors}. We say that $g_1$ is \emph{more singular than $g_2$}, 
	denoted by $[g_1]\leq [g_2]$, if there is a $\nu$-integrable function $C\in \mathscr{L}^1(\Omega,\mathcal{A},\nu)$ such that $g_{1,\omega}\leq g_{2,\omega}+C(\omega)$ for any $\omega\in\Omega$. We say that $g_1$, $g_2$ \emph{have equivalent singularities}, 
	denote by $[g_1]=[g_2]$, if $[g_2]\leq [g_1]$ and $[g_1]\leq[g_2]$, i.e. $|g_{1,\omega}-g_{2,\omega}|$ is bounded as a function of $\omega \in \Omega$ by a $\nu$-integrable function.
\end{definition}
\begin{remark}
As in Remark~\ref{rmk:local D,g} in the local case,  an $S$-Green function for $D\in\widetilde{\Div}_\Q(U)_\cpt$ in the sense of \cref{global Green functions for compactified divisors} is also a $S$-Green function for $D|_U$ in sense of \cref{global Green functions}. In the following, the notation $(D,g)\in\widehat{\Div}_{S,\Q}(U)_\cpt$ always means that $D$ is a compactified divisor and $g$ is an $S$-Green function for $D$ as in \cref{global Green functions for compactified divisors}.
\end{remark}

\begin{lemma}	\label{lemma:maximum is nef global}
	Let $(D,g), (D',g')\in \widehat{\Div}_{S,\Q}(U)_\relsnef$
	with $D\geq D'$ and let $h\coloneq\max\{g,g'\}$. Then $(D,g), (D',g')$ are given as limits of sequences $(\overline{D_{n}})_{n\geq 1}$ and $(\overline{D_{n}'})_{n\geq 1}$ in $N_{S,\Q}(U)$ with respect to the $\overline{B}$-boundary topology for some {weak} boundary divisor $\overline B$ of $U$. 
	\begin{enumerate}
		\item If we can choose $\overline B$ in $N_{S,\Q}(U)$, then $h$ is an $S$-Green function for $D\in \widetilde{\Div}_{\Q}(U)_\cpt$, and $(D,h) {\in \widehat{\Div}_{S,\Q}(U)_\relsnef}$. 
		\item If we can choose 
		$\overline B$ in $N_{S,\Q}'(U)$  and if additionally $(D,g)\in \widehat{\Div}_{S,\Q}(U)_\arsnef$ is given as a limit of a sequence $(\overline{D_{n}})_{n\geq 1}$ in $N'_{S,\Q}(U)$, then  $(D,h)\in \widehat{\Div}_{S,\Q}(U)_\arsnef$. 
	\end{enumerate}
\end{lemma}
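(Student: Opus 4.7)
The plan is to imitate the local Lemma \ref{lemma:maximum is nef} pointwise in $\omega\in\Omega$ and then globalize. I start by picking Cauchy sequences $\overline{D}_n=(D_n,g_n)$ and $\overline{D}_n'=(D_n',g_n')$ in $N_{S,\Q}(U)$ (respectively with $(\overline{D}_n)$ in $N'_{S,\Q}(U)$ for part (2)) representing $(D,g)$ and $(D',g')$ in the $\overline{B}$-topology. After dominating the proper $K$-models, I may assume that both sequences live on a common proper $K$-model $X_n$ of $U$. Since $D\geq D'$ as compactified geometric divisors and $\overline{B}\geq 0$, by adding a small positive multiple of $\overline{B}$ to $\overline{D}_n$ and passing to a subsequence, I can arrange $D_n\geq D_n'$ on $X_n$. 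Following the local proof, I then set $h_n\coloneqq\max\{g_n,g_n'\}$.

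Next, I verify that $(D_n,h_n)\in N_{S,\Q}(X_n)\subset N_{S,\Q}(U)$: the function $h_n$ is $S$-measurable and locally $S$-bounded because both $g_n$ and $g_n'$ are, and at every $\omega\in\Omega$, the hypothesis $\overline{B}\in N_{S,\Q}(U)$ guarantees that $\overline{B}_\omega$ is a (weak) boundary divisor lying in $N_{\mo,\Q}(U_\omega)$, so Lemma \ref{lemma:maximum is nef} applies and shows that $h_{n,\omega}$ is a semipositive Green function for $D_{n,\omega}$ in the sense of Chen--Moriwaki. The convergence $(D_n,h_n)\to(D,h)$ in the $\overline{B}$-topology is then immediate from the elementary inequality $|\max\{a,b\}-\max\{c,d\}|\leq\max\{|a-c|,|b-d|\}$, which yields
\[
|h_{n,\omega}-h_\omega|\leq\max\{|g_{n,\omega}-g_\omega|,|g_{n,\omega}'-g_\omega'|\}\leq\varepsilon\,g_{B,\omega}
\]
for $n\gg 0$. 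This settles part (1) and shows in particular that $h$ is a well-defined $S$-Green function for $D\in\widetilde{\Div}_\Q(U)_\cpt$ with $(D,h)\in\widehat{\Div}_{S,\Q}(U)_\relsnef$.

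For part (2) it remains to show that $(D_n,h_n)\in N'_{S,\Q}(X_n)$ whenever $(D_n,g_n)\in N'_{S,\Q}(X_n)$. I decompose $(D_n,h_n)=(D_n,g_n)+(0,h_n-g_n)$. Because $D_n\geq D_n'$, the difference $g_n-g_n'$ is a locally $S$-bounded $S$-Green function for the effective divisor $E_n\coloneqq D_n-D_n'\geq 0$ on the proper model $X_n$; hence by Lemma \ref{Green functions for effective divisors} applied on $X_{n,S}^\an$ (which is $S$-bounded), we obtain an integrable function $c_n\in\mathscr L^1(\Omega,\mathcal A,\nu)$ with $g_n-g_n'\geq -c_n$, so $0\leq h_n-g_n=\max\{0,g_n'-g_n\}\leq\max\{0,c_n\}$, in particular $(0,h_n-g_n)\in M_{S,\Q}(X_n)$. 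To conclude $S$-nefness, I reduce to the projective case by passing to a birational projective cover of $X_n$ using Proposition \ref{prop: S-nef cone}\ref{second property N'}, and then use the characterization in Proposition \ref{proposition:arithmetically nef for projective}\ref{arithmetically nef by an S-ample}: for any $S$-ample $\overline{A}$ and $m\in\N^+$, the sum $(mD_n,mg_n)+\overline{A}$ is $S$-ample, the sum $(mD_n,mh_n)+\overline{A}$ remains semipositive (as already shown), and its heights on integral closed subschemes $Y$ are controlled by the monotonicity of Proposition \ref{prop:positivity of arith nef}\ref{positivity of arith nef with positive}, giving
\[
h_{(mD_n,mh_n)+\overline{A}}(Y)\geq h_{(mD_n,mg_n)+\overline{A}}(Y)\geq\varepsilon(\dim Y+1)\deg_{mD_n+A}(Y).
\]
Hence $(mD_n,mh_n)+\overline{A}$ is $S$-ample for all $m$, so $(D_n,h_n)\in N'_{S,\Q}(X_n)$; passing to the limit in the $\overline{B}$-topology then gives $(D,h)\in\widehat{\Div}_{S,\Q}(U)_\arsnef$.

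The main obstacle is transporting the essentially local semipositivity of $h_n$ into the genuinely global condition of $S$-nefness. This is handled by writing $(D_n,h_n)$ as an arithmetically nef piece $(D_n,g_n)$ plus a non-negative Green function perturbation $(0,h_n-g_n)$, and then exploiting the monotonicity of $S$-ample heights in the Chen--Moriwaki framework to keep the ampleness inequality intact after the perturbation.
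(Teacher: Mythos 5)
Part (1) of your proof follows essentially the paper's Steps~1 and~3: restrict to a common proper model $X_n$, arrange $D_n\geq D_n'$ by adding small multiples of $\overline B$, apply the local Lemma \ref{lemma:maximum is nef} at each place $\omega$ on the proper model $X_{n,\omega}$, and pass to the limit in the $\overline B$-topology via $|h_{n,\omega}-h_\omega|\leq\max\{|g_{n,\omega}-g_\omega|,|g_{n,\omega}'-g_\omega'|\}$. The remark about $\overline B_\omega\in N_{\mo,\Q}(U_\omega)$ is superfluous at the place level, since the local lemma on the \emph{proper} $X_{n,\omega}$ needs no boundary divisor, but the argument is sound.

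In part (2) you take a route different from the paper (the $S$-ample/height criterion instead of asymptotic minimal slopes), and there is a gap in the decisive step. You deduce
\[
h_{(mD_n,mh_n)+\overline{A}}(Y)\geq h_{(mD_n,mg_n)+\overline{A}}(Y)
\]
from Proposition \ref{prop:positivity of arith nef}\ref{positivity of arith nef with positive}, but that proposition gives monotonicity of $(\overline{D}_0\overline{D}_1\cdots\overline{D}_d\mid X)_S$ in the single entry $\overline{D}_0$ \emph{provided} $\overline{D}_1,\dots,\overline{D}_d\in\overline{N_{S,\Q}^+}(X)$. To compare self-intersections one must telescope: with $\overline{D}'=(mD_n,mh_n)+\overline{A}$ and $\overline{D}=(mD_n,mg_n)+\overline{A}$,
\[
(\overline{D}')^{k+1}-(\overline{D})^{k+1}=\sum_{j=0}^{k}(\overline{D}')^{j}\,(\overline{D}'-\overline{D})\,(\overline{D})^{k-j},
\]
and for every $j\geq 1$ the factor $\overline{D}'$ occurs, which at this point is only known to be relatively ample, not $S$-nef --- that is exactly what you are trying to prove. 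So the citation is circular.

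The step can be repaired because $\overline{D}'-\overline{D}=(0,m(h_n-g_n))$ has \emph{zero} Cartier divisor part. Feeding each telescoping term into the induction formula of Proposition \ref{global intersection number}\ref{global intersection number by induction}, the cycle contribution $((\overline{D}')^{j}(\overline{D})^{k-j}\mid\mathrm{cyc}(0)\cdot Y)_S$ vanishes identically, and the remaining contribution
\[
\int_\Omega\int_{Y_\omega^\an} m\,(h_{n,\omega}-g_{n,\omega})\,c_1(\overline{D}')^{\wedge j}\wedge c_1(\overline{D})^{\wedge(k-j)}\,\nu(d\omega)
\]
is non-negative, since $h_n\geq g_n$ and the mixed Monge--Amp\`ere measure is positive --- and for positivity of the measure one only needs $\overline{D}'$ relatively ample, which you already have. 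The paper's own Step~2 sidesteps all of this and is shorter: after reducing to $X$ projective and $K$ perfect via Proposition \ref{prop: S-nef cone} and Chow's lemma, it uses that $h\geq g$ forces $\widehat{\mu}_{\min}^{\mathrm{asy}}(D,h)\geq\widehat{\mu}_{\min}^{\mathrm{asy}}(D,g)\geq 0$ (the asymptotic minimal slope is monotone in the Green function, because a larger Green function gives smaller supremum norms), and then invokes the numerical criterion of Proposition \ref{proposition:arithmetically nef for projective}\ref{arithmetically nef by minimal slope}.
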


\begin{proof}
	It follows from \cref{lemma:maximum is nef} that $h$ is an $S$-Green function for $D$. To prove the claims, let us assume first that $U=X$ is a proper variety over $K$. Recall from Definition \ref{global adelic Green functions for proper varieties} that $N_{S,\Q}(X)$ denotes the space of $(D,g)$ with $D$ a $\Q$-Cartier divisor on $X$ and $g$ an $S$-measurable (locally) $S$-bounded Green function for $D$ such that for every non-trivial $\omega\in \Omega$, the Green function $g_{D,\omega}$ is a uniform limit of semipositive model Green functions. If  $\omega$ gives  the trivial valuation, then $g_{D,\omega}$ is supposed to be a uniform limit of twisted Fubini--Study Green functions.

	\vspace{2mm}
	\noindent
	Step 1: \emph{Let $(D,g), (D,g') \in N_{S,\Q}(X)$ with $D \geq D'$ and let $h \coloneqq \max\{g,g'\}$, then $(D,h)\in N_{S,\Q}(X)$.}
								
	\vspace{2mm}
	\noindent
	By definition, this means that $g,g'$ are $S$-measurable locally $S$-bounded functions on $U$ and  $g_\omega$ (resp.~$g_\omega'$)  is a uniform limit of semipositive model/twisted Fubini--Study Green functions $g_{\omega,n}$ (resp.~$g'_{\omega,n}$) for $D$ (resp.~$D'$). Obviously, the maximum of two $S$-measurable (resp.~locally $S$-bounded) functions of $X$ is $S$-measurable (resp.~locally $S$-bounded). We conclude that $h$ is an $S$-measurable, locally $S$-bounded  function on $X$. Moreover, Lemma \ref{lemma:maximum is nef} and Remark \ref{trivially valued case} show that  $\max\{g_{\omega,n}, g'_{\omega,n}\}$ is a nef model/twisted Fubini--Study $S$-Green function for $D_\omega$. Since $\max\{g_{\omega,n}, g'_{\omega,n}\}$ converges uniformly to $h_\omega$ on $X_\omega^\an$, we conclude that $(D,h) \in  N_{S,\Q}(X)$ proving Step 1.
								
	\vspace{2mm}
	
		We use now the cone $N_{S,\Q}'(X)$ 
		from  \cref{prop: S-nef cone}.
	
	\noindent
	Step 2: \emph{Under the assumptions from Step 1 and if $(D,g) \in N_{S,\Q}'(X)$, then $(D,h)  \in N_{S,\Q}'(X)$.}
								
	\vspace{2mm}
	\noindent
	Using Chow's lemma and 
	\cref{prop: S-nef cone}~\ref{second property N'}, we may assume $X$ projective. By base change to the algebraic closure and using  \cref{prop: S-nef cone}~\ref{third property N'}, we may assume $K$ perfect. Then we have seen in Proposition \ref{proposition:arithmetically nef for projective} that an element of  $N_{S,\Q}(X)$ is in $N'_{S,\Q}(X)$ if and only if its asymptotic minimal slope is non-negative. Since $h \geq g$, Step 2 follows from $0 \leq \widehat{\mu}_{\min}^{\mathrm{asy}}(D,g)\leq \widehat{\mu}_{\min}^{\mathrm{asy}}(D,h)$.
								
	\vspace{2mm}
	
	Now we come back to the general case when $U$ is any algebraic variety over $K$. 
	
	\noindent
	Step 3:  \emph{Let $\overline{D}=(D,g), \overline{D'}=(D',g')\in \widehat{\Div}_{S,\Q}(U)_\relsnef$ given as limits of sequences in $N_{S,\Q}(U)$ with respect to 
	a {weak} boundary divisor $\overline{B}\in N_{S,\Q}(U)$ with $D\geq D'$, then $(D,h)\in \widehat{\Div}_{S,\Q}(U)_\relsnef$.}
	
	\vspace{2mm} \noindent								
	Let $(\overline{D_{n}})_{n\geq 1}$, $(\overline{D_{n}'})_{n\geq 1}$ be sequences  in $N_{S,\Q}(U)$ converging to $\overline{D}$, $\overline{D'}$ respectively. Using $D \geq D'$ and adding small multiples of the {weak} boundary divisor $\overline B$, we may assume that $D_n \geq D_n'$. Passing to a  dominating $K$-model, we may assume that $D_n$ and $D_n'$ are $\Q$-Cartier divisors on the same proper $K$-model $X_n$ of $U$. For $h_{n}\coloneq\max\{g_{n},g'_{n}\}$, we have $(D_n,h_n)\in N_{S,\Q}(X_n)$ by Step 1.  Since $(D_n,g_n)$ converges to $(D,g)$ and $(D_n',g_n')$ converges to $(D',g')$ with respect to $\overline{B}$-boundary topology, we deduce that $(D_n,h_n)$ converges to $(D,h)$ with respect to the $\overline{B}$-topology proving Step 3.
								
	\vspace{2mm}
	\noindent
	{Step 4:}  \emph{Let $(D,g)$ (resp.~$(D',g')$) be strongly arithmetically nef (resp.~strongly relatively nef) given as a limit of a sequence in $N'_{S,\Q}(U)$ (resp.~$N_{S,\Q}(U)$) with respect to
	a weak boundary divisor $(X_0, \overline{B})\in  {N'_{S,\Q}(U)}$. Then $(D,h)\in \widehat{\Div}_{S,\Q}(U)_\arsnef$.}
	
		\vspace{2mm} \noindent								
Step 4 follows from the same proof as in Step 3 relying now on Step 2 instead of Step 1.
\end{proof}

\begin{remark} \label{global nef boundary divisor assumption}
	The assumptions on the {weak} boundary divisor $\overline B$ in \cref{lemma:maximum is nef global} are crucial.  Similarly as in Remark \ref{nef boundary divisor assumption}, we can always obtain it by passing to a suitable dense open subset $U'$, see \cref{dominated by S-nef boundary divisors}. {Moreover, we may assume in this way that $\overline B$ is a boundary divisor.}
											
	By shrinking $U'$ further, we may  assume, in addition to the above assumptions, that finitely many given relatively/arithmetically nef compactified $S$-metrized divisors become strongly relatively/arithmetically nef similarly as in \cref{nef become strongly nef after shrinking U}.
\end{remark}

\begin{lemma} 	\label{lemma: same divisoral part for global divisors}
	Let $(D,g), (D,g')\in \widehat{\Div}_{S,\Q}(U)_{\cpt}$ with the same divisorial part. We assume that $(D,g)$ is given by a Cauchy sequence $({D}_{n},g_n)\in N_{S,\Q}(U)$ 
	with respect to a boundary divisor $\overline B \in N_{S,\Q}(U)$  
	and that $(D,g')$ is given by  a Cauchy sequence  $(D'_{n},g'_n)$ in {$N_{S,\Q}(U)$} 
	with respect to a {weak} boundary divisor {$\overline{B'}\in N_{S,\Q}(U)$} 
	of $U$. Replacing the {weak} boundary divisors by a higher {weak} boundary divisor in {$N_{S,\Q}(U)$},  
	we may choose the Cauchy sequences such that the following properties hold:
	\begin{enumerate}
		\item \label{item same divisor} $D_n=D_n'$ for all $n \in \N$.
		
		\item \label{item decreasing}$(D_n,g_n)\geq (D_{n+1},g_{n+1})$ and $(D_n',g_n')\geq (D_{n+1}',g_{n+1}')$ for all $n \in \N$.
		
		\item  \label{item bound} If $|g-g'|\leq C$ for  some $C\in\mathscr{L}^1(\Omega,\mathcal{A},\nu)$, then $|g_n-g_n'|\leq C$.
	\end{enumerate}
If we have the stronger assumptions that $(D,g)$ is given by a Cauchy sequence $(D_n,g_n)\in N_{S,\Q}'(U)$ and that the weak boundary divisors $\overline{B},\overline{B'}$ are in $N_{S,\Q}'(U)$ where $(D,g')$ is still given by a Cauchy sequence in $N_{S,\Q}(U)$ with respect to the $\overline{B'}$-topology, then the same conclusion holds for choosing the Cauchy sequences.
\end{lemma}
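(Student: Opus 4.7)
The plan is to adapt the argument of \cref{lemma: same divisoral part for local divisors} to the global setting, with \cref{lemma:maximum is nef global} playing the role of \cref{lemma:maximum is nef}. First I would unify the two boundary divisors: since $N_{S,\Q}(U)$ is a submonoid, the sum $\overline B'' \coloneqq \overline B + \overline B'$ is again a boundary divisor of $U$ lying in $N_{S,\Q}(U)$ and dominates both $\overline B$ and $\overline B'$; hence both original Cauchy sequences remain Cauchy with respect to the (coarser) $\overline B''$-boundary topology, and in the ``resp.'' case the sequence $(D_n,g_n)$ still lies in $N'_{S,\Q}(U)$. By extracting subsequences and adding small positive multiples of $\overline B''$ to each $(D_n,g_n)$, I may arrange that $D_n \geq D_n'$ for every $n$.

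Next, to force the two sequences to share the same divisor, I would fix a sequence of non-negative integrable functions $c_n$ on $\Omega$ with $c_n(\omega) \to +\infty$ pointwise (for example, $c_n \coloneqq n \cdot \rho$ for a fixed strictly positive integrable $\rho$) and replace $(D_n',g_n')$ by $(D_n,h_n')$, where
\[
h_n' \coloneqq \max\{g_n',\ g_n - c_n\}.
\]
Since $D_n \geq D_n'$ and since $g_n - c_n$ is an $S$-Green function for $D_n$ which blows up along the support of $D_n - D_n'$, the function $h_n'$ is an $S$-Green function for $D_n$. Applying \cref{lemma:maximum is nef global} with the boundary divisor $\overline B'' \in N_{S,\Q}(U)$ (respectively $\overline B''(c) \in N'_{S,\Q}(U)$ after a twist by a sufficiently large integrable function $c$, using \cref{twist of relatively nef divisors}) shows that $(D_n,h_n') \in N_{S,\Q}(U)$. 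Convergence $(D_n,h_n') \to (D,g')$ in the $\overline B''$-topology follows by the same argument as in the local case: on the region where $g_n - c_n \leq g_n'$ the maximum is $g_n'$, whereas outside this region one uses that $g - g'$ has singularities dominated by a multiple of $g_{B''}$ (see \cref{proposition:green functions for adelic divisors} and \cref{lemma:integrable bound for green functions}) together with pointwise growth of $c_n$.

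To enforce the decreasing property \ref{item decreasing}, I would pass to a further subsequence so that
\[
-2^{-n}\overline B'' \leq (D_n,g_n) - (D,g) \leq 2^{-n}\overline B'',
\]
and similarly for the second sequence, and then replace each term by $(D_n,g_n) + 2^{-n+2}\overline B''$. The telescoping estimate used in the proof of \cref{lemma: same divisoral part for local divisors}\,\ref{local:same divisoral part nef} yields monotone decreasing sequences while preserving limits and membership in the cone. Finally, for \ref{item bound}, if $|g-g'| \leq C$ with $C \in \mathscr{L}^1(\Omega,\mathcal{A},\nu)$, I would apply the construction symmetrically by setting
\[
h_n \coloneqq \max\{g_n,\ g_n' - C\}, \qquad h_n' \coloneqq \max\{g_n',\ g_n - C\};
\]
an elementary case analysis gives $|h_n - h_n'| \leq C$ on $U_\omega^{\mathrm{an}}$ for every $\omega$, and \cref{lemma:maximum is nef global} again keeps us in the appropriate cone. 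The main obstacle I anticipate is verifying convergence of the modified sequences uniformly in $\omega$: one must check that the integrable truncations $c_n$ can be chosen so that the $\overline B''$-estimates for $(D_n,h_n') - (D,g')$ are controlled by a single integrable function going to zero, which I expect to follow from \cref{lemma:integrable bound for green functions} applied to $(0, g-g') \in \widehat{\Div}_{S,\Q}(U)_\cpt$.
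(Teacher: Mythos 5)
Your overall strategy — unify the boundary divisors, force equal divisorial parts via a truncated maximum $\max\{g_n',\,g_n - c_n\}$, apply \cref{lemma:maximum is nef global} to stay in the cone, then refine to decreasing sequences and handle the bounded case symmetrically — matches the paper's proof. The gap is in the choice of the truncation sequence.

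Your proposed $c_n = n\rho$ for a fixed strictly positive integrable $\rho$ does not work, and this is not a minor technicality. For the truncated maximum to converge to $(D,g')$ at rate $\frac{1}{n}\overline B''$ one needs, on every $U_\omega^\an$, an estimate of the shape $g_\omega - g'_\omega \leq c_n(\omega) + \frac{1}{2n}g_{B'',\omega}$. By \cref{proposition:green functions for adelic divisors} (applied place by place), $g_\omega - g'_\omega = o(g_{B'',\omega})$ along the boundary, so $\sup_x\{g_\omega(x) - g'_\omega(x) - \frac{1}{2n}g_{B'',\omega}(x)\}$ is a finite number $C_n(\omega)$ — but this number is determined entirely by $g - g'$ and the cutoff scale $1/(2n)$, and has no a priori relation to your fixed $\rho$. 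There is no reason $n\rho(\omega) \geq C_n(\omega)$; in fact $C_n(\omega)$ can grow with $n$ in an $\omega$-dependent way that $n\rho(\omega)$ fails to track. In the local case constants suffice precisely because there is a single place and $C_n$ is just a real number; the global theorem replaces the real numbers by $\nu$-integrable functions and this replacement is the whole point.

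You correctly identify \cref{lemma:integrable bound for green functions} as the needed tool, and even that it should be applied to $(0, g-g')$, but you treat this as a verification to be done afterward rather than as the \emph{definition} of the truncations. The paper instead defines $C_n$ directly by applying \cref{lemma:integrable bound for green functions} at $\varepsilon = 1/(2n)$ to $g - g'$ (both directions), arranges $(C_n)_n$ to be increasing, and in parallel extracts increasing indices $i_n$ for the original Cauchy sequences so that $|g_{i_n} - g| \leq \frac{1}{2n}g_{B''}$ and $|g_{i_n}' - g'| \leq \frac{1}{n}g_{B''}$. The two-index interleaving $h_n' = \max\{g_{i_n}',\, g_{i_n} - C_n\}$ is then what makes the telescoping estimate $g' - \frac{1}{n}g_{B''} \leq h_m' \leq g' + \frac{1}{n}g_{B''}$ for $m \geq n$ close. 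This interleaving, and the fact that the cutoffs must be extracted from the data rather than chosen in advance, is the piece your proposal is missing.

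Your treatments of parts \ref{item decreasing} and \ref{item bound} (telescoping with $2^{-n}$-multiples of the boundary divisor, and the symmetric maxima $\max\{g_{0,n},\,g_{0,n}'-C\}$, $\max\{g_{0,n}',\,g_{0,n}-C\}$) are correct and agree with the paper.
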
									
\begin{proof}
	Write $\overline{D}\coloneq(D,g), \overline{D'}\coloneq(D,g')$. Passing to $\overline B + \overline{B'}$, we may assume $\overline{B'}=\overline{B}=(B,g_B) {\in N_{S,\Q}(X_0)}$ {(even in  $ N_{S,\Q}'(X_0)$ under the stronger assumptions)} for a proper $K$-model $X_0$ of $U$. 
	Passing to a higher proper $K$-model $X_n$ of $U$ which also dominates $X_0$, we may assume that $(D_n, g_n)\in N_{S,\Q}(X_n)$ {(resp.~$(D_n, g_n)\in N'_{S,\Q}(X_n)$)} and $(D_n',g_n')\in N_{S,\Q}(X_n)$. We claim that we can assume that $D_n'=D_n$. Since the underlying divisors of $\overline{D}$ and $\overline{D'}$ in $\widetilde{\Div}_\Q(U)_{\cpt}$ coincide, we can assume that $D_n\geq D_n'$ after adding to $\overline{D_n}$ a small positive multiple of $\overline{B}$ using that {$\overline{B} \in N_{S,\Q}(X_n)$ (resp.~$\overline{B} \in N'_{S,\Q}(X_n)$)}. For any $n\in\N_{>0}$, by \cref{lemma:integrable bound for green functions}, there is a $\nu$-integrable function $C_n\in\mathscr{L}^1(\Omega,\mathcal{A},\nu)$ such that
	\[\max\left\{\sup\limits_{x\in X^\an_{0,\omega}}\left\{g_\omega-g_\omega'-\frac{1}{2n}g_{B,\omega}\right\}, \sup\limits_{x\in X^\an_{0,\omega}}\left\{g_\omega'-g_\omega-\frac{1}{2n}g_{B,\omega}\right\}\right\}\leq C_n(\omega)\]
	for all $\omega\in \Omega$. We can assume that $(C_n(\omega))_{n\geq 1}$ is an increasing sequence. For any $n\in\N_{>0}$, we take $i_n\in\N$ such that for any $m\geq i_n$, we have that
	\[|g_m-g|\leq \frac{1}{2n}g_B, \ \ |g_m'-g'|\leq \frac{1}{n}g_B.\]
	We can assume that the sequence $(i_n)_{n \geq 1}$ is increasing, and set 
	\[h'_{n}\coloneq\max\{g_{i_n}',g_{i_n}-C_{n}\}.\]
	By \cref{lemma:maximum is nef global},  $h_{n}'$ is a Green function for $D_n$  and $(D_n,h'_{n})\in N_{S,\Q}(X_n)$. Moreover, we will show that $(D_{i_n},h_{n}')$ converges to $(D,g')$ following the proof of \cite[Lemma~3.41~(i)]{burgos2023on}. 
	For any $n\in \N_{>0}$, by our choice of $C_n$, we have that
	\[|g_\omega-g_\omega'|\leq C_{n}(\omega)+\frac{1}{2n}g_{B,\omega}\]
	on $X_{0,\omega}^\an$ for all $\omega\in\Omega$.
	For any $m\geq n$ and $\omega\in\Omega$, we have that $i_m\geq i_n$ and
	\[g_{i_m,\omega}-C_m(\omega) \leq g_{\omega}-C_m(\omega)+\frac{1}{2n} g_{B,\omega}\leq g_\omega'+C_{n}(\omega)-C_m(\omega)+\frac{1}{n}g_{B,\omega}\leq g_\omega'+\frac{1}{n} g_{B,\omega}.\]
	Since $g'-\frac{1}{n} g_{B} \leq g_{i_m}'\leq g'+\frac{1}{n} g_{B}$, we have that
	\[g'-\frac{1}{n} g_{B}\leq g_{{i_m}}' \leq h'_{m} = \max\{g_{i_m}',g_{i_m}-C_m\}\leq g'+\frac{1}{n} g_{B}.\]
	This shows that $(D_{i_n},h_{n}')_{n\geq 1}$ converges to $(D,g')$ with respect to $\overline{B}$-topology. On the other hand, we have that $(D_{i_n},g_{i_n})_{n\geq 1}$ converges to $(D,g)$, hence our claim holds.
	
	From our claim, we can assume that $D_n=D_n'$. {By passing to  subsequences and adding small positive multiples of $\overline B$, we may assume that  $\overline{D_n}\coloneq(D_n,g_n), \overline{D_{n}'}\coloneq(D_n,g_n')$ are} 
	decreasing Cauchy sequences. This proves \ref{item same divisor} and \ref{item decreasing}.
	
	To prove \ref{item bound}, we assume that $|g_\omega-g_\omega'|\leq C(\omega)$ for $C\in\mathscr{L}^1(\Omega,\mathcal{A},\nu)$. We take Cauchy sequences $({D}_{n},g_{0,n})$ and $({D}_{n}, g_{0,n}')$ as above satisfying   \ref{item same divisor}, \ref{item decreasing} and  representing $\overline{D}$ and $\overline{D'}$, respectively. Set
	\[h_{n}= \max\{g_{0,n},g'_{0,n}-C\} \ \  \text{ and } \ \ h_{n}'= \max\{g_{0,n}',g_{0,n}-C\}.\]
	It is not hard to see that $|h_n-h_n'|\leq C$ and $(D_n,h_n)$ (resp. $(D_n,h_n')$) decreasingly converges to $\overline{D}$ (resp. $\overline{D'}$) with respect to $\overline{B}$-topology. By {Step 1 (resp.~Step 2) of the proof of \cref{lemma:maximum is nef global}, we have $(D_n,h_n)\in N_{S,\Q}(X_n)$ (resp.~$(D_n,h_n)\in N'_{S,\Q}(X_n)$) and $(D_n,h_n')\in N_{S,\Q}(X_n)$.} This proves \ref{item bound}. 
\end{proof}

\begin{lemma}		\label{lemma:convergent sequence}
Let $(D,g), (D,g')\in \widehat{\Div}_{S,\Q}(U)_{\cpt}$  
and let $\overline B$ be a {weak} boundary divisor of $U$ contained in $N_{S,\Q}(U)$ {(resp. in $N_{S,\Q}'(U)$)}.
We assume that $(D,g)$ is strongly relatively nef
and that it is the  limit of a sequence in $N_{S,\Q}(U)$ with respect to the $\overline B$-boundary topology. Moreover, we assume that   $(D,g')$ is {strongly relatively nef (resp.~strongly arithmetically nef)}  
and that it is the limit of a sequence in $N_{S,\Q}(U)$ (resp.~in $N'_{S,\Q}(U)$) with respect to the $\overline B$-topology.  
Then  there is an increasing sequence of $\nu$-integrable functions $(C_n)_{n\geq 1}$ in  $\mathscr{L}^1(\Omega,\mathcal{A},\nu)$ such that $(D,g_n')\coloneq(D,\max\{g-C_n,g'\})$ is {a strongly relatively nef (resp.~ strongly arithmetically nef) compactified $S$-metrized divisor of $U$} 
and the sequence $(D,g'_n)_{n\in\N}$ converges decreasingly to $(D,g')$ with respect to the  $\overline B$-boundary topology.
\end{lemma}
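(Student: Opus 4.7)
The plan is to mirror the proof of the local Proposition \ref{prop:local convergent sequence}, replacing the constants $n$ used there by integrable functions $C_n\colon \Omega \to \R$. The idea is that $(D,g-g')$ is a compactified $S$-metrized divisor with zero underlying compactified geometric divisor, and its Green function $g-g'$ is ``$o(g_B)$'' along the boundary in the quantitative form provided by Lemma~\ref{lemma:integrable bound for green functions}.

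First, since $(D,g)$ and $(D,g')$ are limits of Cauchy sequences in $N_{S,\Q}(U)$ with respect to the $\overline B$-boundary topology, subtraction yields a Cauchy sequence in $M_{S,\Q}(U)$ converging to $(0,g-g')$. Applying Lemma~\ref{lemma:integrable bound for green functions} to $(0,g-g')$ with $\varepsilon = 1/n$, I obtain, for each $n \in \N_{\geq 1}$, a function $C_n' \in \mathscr{L}^1(\Omega,\mathcal{A},\nu)$ such that
\[\sup_{x \in U_\omega^{\an}}\left(g_\omega(x)-g'_\omega(x)-\tfrac{1}{n}g_{B,\omega}(x)\right)\leq C_n'(\omega) \quad \text{for all }\omega\in\Omega.\]
Then I set $C_n \coloneqq \max\{C_1',\dots,C_n', 0\}$, which is a non-decreasing sequence of non-negative $\nu$-integrable functions retaining the bound $g-g'-\tfrac{1}{n}g_B\leq C_n$.

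Next, I check that $(D,g'_n)$ lies in the appropriate cone. The key observation is that subtracting $C_n$ (a function of $\omega$ alone) from a semipositive $S$-Green function preserves semipositivity, since on each fiber $X_\omega$ the constant shift does not affect curvature; concretely, uniform limits of semipositive model or twisted Fubini--Study Green functions are preserved under rational (and hence, by approximation, real) constant shifts. Therefore $(D,g-C_n) \in \widehat{\Div}_{S,\Q}(U)_{\relsnef}$, represented by the Cauchy sequence obtained from one for $(D,g)$ by subtracting $C_n$. Applying Lemma~\ref{lemma:maximum is nef global}\ref{energy1} (with $(D,g-C_n)$ and $(D,g')$ both strongly relatively nef) yields $(D,g'_n)\in \widehat{\Div}_{S,\Q}(U)_\relsnef$. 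In the arithmetically nef case, I apply Lemma~\ref{lemma:maximum is nef global}\ref{energy2} taking $(D,g')$ in the role of the strongly arithmetically nef divisor ``$(D,g)_{\text{lemma}}$'' and $(D,g-C_n)$ as the other, strongly relatively nef, divisor.

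Monotony $(D,g'_{n+1})\leq (D,g'_n)$ follows immediately from $C_n\leq C_{n+1}$. For the decreasing convergence to $(D,g')$ with respect to $\overline B$-boundary topology, fix $\varepsilon \in \Q_{>0}$ and $n\geq 1/\varepsilon$. Pointwise on $U_\omega^{\an}$, if $g_\omega - C_n(\omega) \leq g'_\omega$ then $g'_{n,\omega}-g'_\omega = 0$; otherwise $g'_{n,\omega}-g'_\omega = g_\omega - g'_\omega - C_n(\omega) \leq \tfrac{1}{n}g_{B,\omega}\leq \varepsilon g_{B,\omega}$, using the defining bound for $C_n$. In either case $0 \leq g'_n - g' \leq \varepsilon g_B$, giving the desired convergence. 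The main subtle point is ensuring that $(D,g-C_n) \in \widehat{\Div}_{S,\Q}(U)_{\relsnef}$, which rests on the observation that the fiberwise constant $C_n(\omega)$ does not disturb semipositivity and that the Cauchy property with respect to $\overline B$-topology is invariant under the shift by $C_n$ since $C_n$ is independent of the index of the sequence.
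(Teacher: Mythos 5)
Your proof is correct and follows essentially the same route as the paper: obtain integrable bounds $C_n$ from Lemma~\ref{lemma:integrable bound for green functions} applied to $(0,g-g')$, form the cut-off $g'_n=\max\{g-C_n,g'\}$, invoke Lemma~\ref{lemma:maximum is nef global} for the positivity of $(D,g'_n)$, and verify the decreasing $\overline B$-convergence by the same pointwise case distinction. You are slightly more explicit than the paper in two places that the paper leaves implicit — namely that $(D,g-C_n)$ is represented by the $C_n$-shifted Cauchy sequence in $N_{S,\Q}(U)$, and that in the arithmetically nef case one must feed $(D,g')$ into the ``arithmetically nef'' slot of Lemma~\ref{lemma:maximum is nef global}(2) — and you use a one-sided estimate where the paper quotes a symmetric bound but only uses one side. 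One cosmetic flaw: the labels \texttt{energy1} and \texttt{energy2} that you attach to Lemma~\ref{lemma:maximum is nef global} actually belong to the items of Theorem~\ref{lemma:basic properties of mixed relative energy}; the parts of Lemma~\ref{lemma:maximum is nef global} carry no item labels in the source, so those \texttt{\textbackslash ref}s would resolve to the wrong statement.
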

												
\begin{proof}
The proof is similar  to the one of \cref{lemma: same divisoral part for global divisors}. 
By \cref{lemma:integrable bound for green functions}, for any $n\in\N_{>0}$, there is a $\nu$-integrable function $C_n\in\mathscr{L}^1(\Omega,\mathcal{A},\nu)$ such that
\[\max\left\{\sup\limits_{x\in X^\an_{0,\omega}}\left\{g_\omega-g_\omega'-\frac{1}{2n}g_{B,\omega}\right\}, \sup\limits_{x\in U^\an_{\omega}}\left\{g_\omega'-g_\omega-\frac{1}{2n}g_{B,\omega}\right\}\right\}\leq C_n(\omega),\]
and hence
\[|g_\omega-g_\omega'|\leq C_n(\omega)+\frac{1}{2n}g_{B,\omega} \ \ \text{on $U_{\omega}^\an$},\]
for any $\omega\in \Omega$. We can assume that $(C_n(\omega))_{n\geq 1}$ is an increasing sequence. We set $g_n'=\max\{g-C_n,g'\}$. By \cref{lemma:maximum is nef global}, we have that $(D,g_n')$ {is strongly relatively nef (resp.~strongly arithmetically nef).} 
Since the sequence is obviously decreasing, it remains to show that $(D,g_n')$ converges to $(D,g')$ with respect to $\overline{B}$-topology. For any $n\in\N_{>0}$ and $\omega\in\Omega$, we have that
\[0\leq g_{n,\omega}'-g_\omega'=\max\{g_\omega-g_\omega'-C_n(\omega),0\}\leq \frac{1}{2n}g_{B,\omega}.\]
This completes the proof.
\end{proof}

{For any $(D,g)\in \widehat{\Div}_{S,\Q}(U)_{\relnef}$, we define $\mathcal{E}(g)$ as the set of Green functions $h$ of $D$ with $(D,h)\in \widehat{\Div}_{S,\Q}(U)_{\relnef}$ and $[h]\leq [g]$. 
For $(D_0,g_0),\dots, (D_d,g_d)\in \widehat{\Div}_{S,\Q}(U)_{\relnef}$, we set
\[\mathcal{E}(\mathbf{g})\coloneq\mathcal{E}(g_0)\times\cdots\times \mathcal{E}(g_d).\]
For  $h\in \mathcal{E}(g)$, we have obviously $h_\omega \in \mathcal{E}(g_\omega)$ for every $\omega \in \Omega$ using the corresponding local definition from \S \ref{subsection: local mixed relative energy}.}

\begin{lemma} \label{lemma:measurability of mixed relative energy}
	Let $\overline{D_j}=(D_j,g_j)\in \widehat{\Div}_{S,\Q}(U)_{\relnef}$ for $j=0,\dots, d$. Then for any $\mathbf{h}\in \mathcal{E}(\mathbf{g})$, the function
	\[\Omega\longrightarrow\R, \ \ \omega\mapsto E(\mathbf{g}_\omega,\mathbf{h}_\omega)\]
	is $\mathcal{A}$-measurable and bounded above by a $\nu$-integrable function. If $[\mathbf g]=[\mathbf h]$, then the above function is $\nu$-integrable.
\end{lemma}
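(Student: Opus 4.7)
The plan is to establish measurability by approximating $\mathbf{g}$ and $\mathbf{h}$ by Cauchy sequences in $N_{S,\Q}(U)$ coming from Chen--Moriwaki adelic divisors on proper $K$-models, where measurability of the relevant integrals is known, and then passing to pointwise limits via the local monotone convergence property of the mixed energy. By Theorem~\ref{lemma:basic properties of mixed relative energy}(\ref{energy1}) applied fiberwise, the function $\omega \mapsto E(\mathbf g_\omega, \mathbf h_\omega)$ is unchanged if we restrict to a dense open subset of $U$, so by Remark~\ref{global nef boundary divisor assumption} we may assume that every $\overline D_j$ is strongly relatively nef and that $U$ admits a boundary divisor $\overline B \in N_{S,\Q}(U)$.

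For the upper bound and integrability claim, choose $C_j \in \mathscr L^1(\Omega,\mathcal A,\nu)$ with $h_{j,\omega} - g_{j,\omega} \leq C_j(\omega)$ from $h_j \in \mathcal E(g_j)$, and assume $C_j \geq 0$. Guo's theorem (Proposition~\ref{proposition:measures for nef adelic}(\ref{guo's theorem})) shows that the total mass of each mixed Monge--Amp\`ere measure on $U_\omega^\an$ equals a geometric intersection number independent of $\omega$. Bounding each summand in Definition~\ref{definition: local mixed energy} fiberwise by $C_j(\omega)$ times this constant yields a $\nu$-integrable upper bound on $\omega \mapsto E(\mathbf g_\omega, \mathbf h_\omega)$; in the case $[\mathbf g] = [\mathbf h]$, the same argument applied to $|h_{j,\omega} - g_{j,\omega}|$ yields a $\nu$-integrable absolute bound, whence integrability follows from measurability.

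For measurability when $[\mathbf g] = [\mathbf h]$, apply Lemma~\ref{lemma: same divisoral part for global divisors}(\ref{item bound}) to obtain, for each $j$, decreasing Cauchy sequences $(D_{j,n}, g_{j,n}), (D_{j,n}, h_{j,n}) \in N_{S,\Q}(X_n)$ on a common proper $K$-model $X_n$ of $U$, converging in $\overline B$-topology to $(D_j, g_j)$ and $(D_j, h_j)$, with $|g_{j,n,\omega} - h_{j,n,\omega}|$ bounded by a fixed $\nu$-integrable function uniformly in $n$. Convergence in $\overline B$-topology together with Lemma~\ref{lemma:integrable bound for green functions} ensures $[g_{j,n,\omega}] = [g_{j,\omega}] = [h_{j,n,\omega}]$ for each $\omega$. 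Expanding $E((\mathbf g_n)_\omega, (\mathbf h_n)_\omega)$ as a finite sum of integrals $\int_{X_{n,\omega}^\an} (h_{j,n,\omega} - g_{j,n,\omega})\, c_1(\overline D_{0,n,\omega}) \wedge \cdots$, each summand is $\mathcal A$-measurable (even $\nu$-integrable) in $\omega$ by the induction formula Proposition~\ref{global intersection number}(\ref{global intersection number by induction}) applied to Chen--Moriwaki adelic divisors on $X_n$. Theorem~\ref{lemma:basic properties of mixed relative energy}(\ref{energy5}) applied $\omega$-wise gives $E(\mathbf g_\omega, \mathbf h_\omega) = \lim_{\mathbf n \to \infty} E((\mathbf g_n)_\omega, (\mathbf h_n)_\omega)$, so measurability follows as a pointwise limit of measurable functions.

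For the general case $[\mathbf h] \leq [\mathbf g]$, apply Lemma~\ref{lemma:convergent sequence} to each pair $((D_j, g_j), (D_j, h_j))$ to produce $C_{j,n} \in \mathscr L^1(\Omega,\mathcal A,\nu)$ and strongly relatively nef compactified $S$-metrized divisors $(D_j, h_{j,n}) \coloneqq (D_j, \max\{g_j - C_{j,n}, h_j\})$ decreasing to $(D_j, h_j)$ in $\overline B$-topology. A direct estimate using $h_{j,\omega} - g_{j,\omega} \leq C_j(\omega)$ yields $-C_{j,n}(\omega) \leq h_{j,n,\omega} - g_{j,\omega} \leq C_j(\omega)$, so $[h_{j,n}] = [g_j]$. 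The previous step then gives $\mathcal A$-measurability of $\omega \mapsto E(\mathbf g_\omega, (\mathbf h_n)_\omega)$ for each $\mathbf n$, and a second application of Theorem~\ref{lemma:basic properties of mixed relative energy}(\ref{energy5}) pointwise in $\omega$ concludes. The main obstacle is aligning the pointwise application of the local convergence result with global approximations that simultaneously preserve measurability at the model level and the equivalent-singularity condition required for Theorem~\ref{lemma:basic properties of mixed relative energy}(\ref{energy5}); this is precisely what the careful constructions in Lemmas~\ref{lemma: same divisoral part for global divisors} and \ref{lemma:convergent sequence} achieve.
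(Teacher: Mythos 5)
Your proposal is correct and follows essentially the same strategy as the paper's proof: shrink $U$ via Remark~\ref{global nef boundary divisor assumption}, reduce the general case $[\mathbf h]\leq[\mathbf g]$ to the equivalent-singularity case via Lemma~\ref{lemma:convergent sequence} combined with the fiberwise continuity property Theorem~\ref{lemma:basic properties of mixed relative energy}~\ref{energy5}, reduce further to the case of Chen--Moriwaki adelic divisors on a common proper $K$-model via Lemma~\ref{lemma: same divisoral part for global divisors}, invoke the known measurability/integrability for that base case, and obtain the integrable upper bound from Guo's theorem. The only cosmetic differences are the order in which you arrange the two reduction steps and the specific reference for the base case (you cite the induction formula in Proposition~\ref{global intersection number}~\ref{global intersection number by induction}, whereas the paper cites the underlying Chen--Moriwaki result directly; these amount to the same thing).

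One small point worth flagging for yourself: when you approximate \emph{both} $\mathbf g$ and $\mathbf h$ simultaneously by $(\mathbf g_n,\mathbf h_n)$ and assert $E(\mathbf g_\omega,\mathbf h_\omega)=\lim_n E((\mathbf g_n)_\omega,(\mathbf h_n)_\omega)$, Theorem~\ref{lemma:basic properties of mixed relative energy}~\ref{energy5} as stated only varies the second argument. The passage to a simultaneous limit needs the transitivity (Proposition~\ref{prop: transitivity of local mixed energy}) to split $E((\mathbf g_n)_\omega,(\mathbf h_n)_\omega) = -E(\mathbf g_\omega,(\mathbf g_n)_\omega) + E(\mathbf g_\omega,(\mathbf h_n)_\omega)$ before applying \ref{energy5} to each term separately. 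The paper's own write-up is equally terse on this, so it is not a gap particular to your argument, but it is worth spelling out since without it the cited continuity result does not apply verbatim.
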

\begin{proof}
	Shrinking $U$ as in \cref{global nef boundary divisor assumption}, we may assume  that all $(D_j,h_j)$ are strongly relatively nef  given by Cauchy sequences in $N_{S,\Q}(U)$ with respect to the same boundary divisor $\overline B$ of $U$. 
Since the limit of a sequence of $\mathcal{A}$-measurable functions is also $\mathcal{A}$-measurable,  \cref{lemma:convergent sequence} and \cref{lemma:basic properties of mixed relative energy}~\ref{energy5} show that we can assume  $[\mathbf{h}]=[\mathbf{g}]$. Moreover, for the same reason and \cref{lemma: same divisoral part for global divisors}~\ref{item same divisor}, we can assume that $(D_j,g_j), (D_j,h_j)\in N_{S,\Q}(U)$, i.e.~there is a proper $K$-model $X$ of $U$ such that $(D_j, g_j), (D_j,h_j)\in N_{S,\Q}(X)$ for any $j=0,\dots, d$. Then measurability of the  mixed relative energy is a consequence of \cite[Theorem~4.2.9]{chen2021arithmetic}. 

Since $[\mathbf h]\leq [\mathbf g]$,  for $j=0, \dots, d$, there is an integrable function $C_j$ on $\Omega$ with $h_j \leq g_j+C_j $ and hence the definition of $E_\omega(\mathbf g_\omega,\mathbf h_\omega)$ in \cref{definition: local mixed energy} and Guo's Theorem  (see \cref{proposition:measures for nef adelic}~\ref{guo's theorem}) yield
$$E_\omega(\mathbf g_\omega,\mathbf h_\omega) \leq \sum_{j=1}^d C_j(\omega)  \, D_0 \cdots D_{j-1}D_{j+1} \cdots D_d .$$
This gives a $\nu$-integrable upper bound. If $[g_j]=[h_j]$, then by symmetry, there is also an integrable lower bound. Using measurability shown above, we conclude that the mixed relative energy is integrable on $\Omega$.
\end{proof}

\begin{definition} \label{definition: global mixed relative energy}
	Let $(D_0,g_0),\dots, (D_d,g_d)\in \widehat{\Div}_{S,\Q}(U)_{\relnef}$. For any $\mathbf{h}=(h_{0},\dots, h_{d})\in \mathcal{E}(\mathbf{g})$, we define the \emph{mixed relative energy}  by
	\[E(\mathbf{g},\mathbf{h}) \coloneq \int_{\Omega} E(\mathbf g_\omega,\mathbf h_\omega) \,\nu(d\omega) \in \R \cup \{-\infty\}\]
     by using \cref{lemma:measurability of mixed relative energy}. If $[\mathbf{h}]=[\mathbf{g}]$, then it follows also from \cref{lemma:measurability of mixed relative energy} that the mixed relative energy is finite.
     We set $\mathcal{E}^1(\mathbf{g})\coloneq\{\mathbf{h}\in \mathcal{E}(\mathbf{g})\mid  E(\mathbf{g},\mathbf{h})>-\infty\}$.
\end{definition}

\begin{theorem}	\label{lemma:basic properties of global mixed relative energy}
	Let $(D_0,g_0),\dots, (D_d,g_d)\in \widehat{\Div}_{S,\Q}(U)_{\relnef}$ and $\mathbf{h}\in \mathcal{E}(\mathbf{g})$.
	\begin{enumerate}
		\item \label{global energy1} Let $U'$ be a dense open subset of $U$. For $\mathbf{g'}\coloneq \mathbf{g}|_{U'}$ and $\mathbf{h'}\coloneq \mathbf{h}|_{U'}$, we have $\mathbf{h'} \in \mathcal{E}(\mathbf{g'})$ on $U'$ and $E(\mathbf{g'},\mathbf{h'})=E(\mathbf{g},\mathbf{h})$. 
       \item \label{global energy2} For every permutation $\sigma\in S_{d+1}$ of the set $\{0,\dots, d\}$, we have \[E(\mathbf{g},\mathbf{h})=E(\sigma(\mathbf{g}),\sigma(\mathbf{h})).\]
       \item \label{global energy3} Let $\mathbf{u}\in \mathcal{E}(\mathbf{g})$ with $\mathbf{h}\leq \mathbf{u}$. Then
       \[E(\mathbf{g},\mathbf{h})\leq E(\mathbf{g},\mathbf{u}).\]
       In particular, if $\mathbf{h}\in \mathcal{E}^1(\mathbf{g})$, then so is $\mathbf{u}$.
       \item \label{global energy4}
       Let $\mathbf{c}=(c_0,\dots, c_d)$  for integrable functions $c_j$ on $\Omega$. Then we have
       \[E(\mathbf{g},\mathbf{h}+\mathbf{c}) = E(\mathbf{g},\mathbf{h})+\sum\limits_{j=0}^d \int_{\Omega}c_j(\omega) \,\nu(d\omega) \, D_0\cdots D_{j-1}D_{j}\cdots D_d.\] 
       \item \label{global energy5} Let $\mathbf{h}_{n}$ be a decreasing sequence in $\mathcal{E}(\mathbf{g})$ such that for $j=0,\dots,d$, we have that $(D_j,h_{j,n})$ is strongly relatively nef and converges to $(D_j,h_j)$ with respect to {some weak boundary divisor} when $n\to\infty$. Then
       \[E(\mathbf{g},\mathbf{h})=\lim\limits_{(n_0,\dots,n_d)\to\infty}E(\mathbf{g},(h_{0,n_0},\dots, h_{d,n_d})).\]
       \item \label{global energy6} If $[\mathbf g]=\mathbf[\mathbf{h}]$, then \[E(\mathbf{g},\mathbf{h}) =\inf\{E(\mathbf{g},\mathbf{u}) \mid \mathbf{u}\in \mathcal{E}(\mathbf{g}) \text{ with $[\mathbf{g}]=[\mathbf{u}]$ and } \mathbf{u}\geq \mathbf{h}\}.\]
       \item \label{global energy7} If  {$(D_j,g_j)$ and} $(D_j,h_j)$ {are} given as {limits} of sequences  in $N_{S,\Q}(U)$ with respect to a boundary divisor $\overline B \in N_{S,\Q}(U)$ of $U$, then 
       \[E(\mathbf{g},\mathbf{h}) =\inf\{E(\mathbf{g},\mathbf{u}) \mid \mathbf{u}\in \mathcal{E}(\mathbf{g}) \text{ strongly relatively nef with $[\mathbf{g}]=[\mathbf{u}]$ and } \mathbf{u}\geq \mathbf{h}\}.\]
   \end{enumerate}
\end{theorem}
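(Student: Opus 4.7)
The overall plan is to reduce each of the seven properties to its fiberwise analogue in the local Theorem~\ref{lemma:basic properties of mixed relative energy} by exploiting the very definition
\[
E(\mathbf{g},\mathbf{h})=\int_{\Omega} E(\mathbf{g}_\omega,\mathbf{h}_\omega)\,\nu(d\omega),
\]
which is legitimate by \cref{lemma:measurability of mixed relative energy}. Once the desired pointwise statement is established on each fiber, one integrates over $\Omega$, invoking Fubini, monotone convergence, or dominated convergence at the appropriate step. Throughout, the integrable upper bound $E(\mathbf{g}_\omega,\mathbf{h}_{0,\omega})\in\mathscr{L}^1(\Omega,\mathcal{A},\nu)$ furnished by \cref{lemma:measurability of mixed relative energy} plays the role of the missing lower bound, so that the sign convention in the $-\infty$ case is preserved.

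Properties \ref{global energy1}--\ref{global energy4} are essentially mechanical: \ref{global energy1} follows from \ref{energy1}, applied fiber by fiber, and then by integrating; \ref{global energy2} from the pointwise symmetry \ref{energy2}; \ref{global energy3} from the pointwise monotonicity \ref{energy3} and monotonicity of the integral; and \ref{global energy4} from \ref{energy4} together with $\int_\Omega c_j\,\nu(d\omega)$ replacing the constant $c_j$. The one subtle point is to verify that the pointwise inequality from \ref{energy3} also makes sense when one side is $-\infty$, which is immediate from the definition.

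For \ref{global energy5} the key input is that, by \cref{lemma:boundary convergence is locally uniformly convergence} together with \ref{energy5}, the fiberwise energies $E(\mathbf{g}_\omega,(h_{0,n_0,\omega},\dots,h_{d,n_d,\omega}))$ converge to $E(\mathbf{g}_\omega,\mathbf{h}_\omega)$ for every $\omega\in\Omega$. Since these fiberwise energies are monotonically controlled by the energy attached to the first term of each Cauchy sequence, which is integrable by \cref{lemma:measurability of mixed relative energy}, the dominated (in fact, the monotone) convergence theorem gives the required limit of the integral. After shrinking $U$ by \cref{global nef boundary divisor assumption} to guarantee a cofinal nef boundary divisor, one checks that fiberwise \ref{energy5} can indeed be applied.

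The main obstacle is in \ref{global energy6} and \ref{global energy7}, where one must produce \emph{genuinely global} approximants $\mathbf u$, not merely fiberwise ones coming from \ref{energy6} and \ref{energy7}. The idea, inspired by the local proofs, is to set $u_j^{(m)}\coloneqq\max\{h_j,\,g_j-C_{j,m}\}$ for a suitable sequence of non-negative $\nu$-integrable functions $C_{j,m}$ on $\Omega$, chosen so that $C_{j,m}(\omega)\uparrow+\infty$ for each $\omega$ in an appropriate subset. For \ref{global energy7} one can directly invoke \cref{lemma:convergent sequence} to produce such $\mathbf u^{(m)}$ in the class of strongly relatively (or arithmetically) nef compactified $S$-metrized divisors; \cref{lemma:maximum is nef global} ensures that the maximum preserves the strong nefness. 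For \ref{global energy6} one only needs $\mathbf u^{(m)}\in\mathcal{E}(\mathbf{g})$ with $[\mathbf{u}^{(m)}]=[\mathbf{g}]$ and $\mathbf{u}^{(m)}\geq\mathbf{h}$, which follows from $[\mathbf{h}]\leq[\mathbf g]$ and the boundedness of $h_j-g_j$ above by an element of $\mathscr{L}^1(\Omega,\mathcal A,\nu)$. The fiberwise energies $E(\mathbf{g}_\omega,\mathbf{u}^{(m)}_\omega)$ are then decreasing in $m$, bounded above by the integrable function $E(\mathbf{g}_\omega,\mathbf{u}^{(0)}_\omega)$, and tend to $E(\mathbf{g}_\omega,\mathbf{h}_\omega)$ pointwise by the local statement \ref{energy5}. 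Monotone convergence yields $E(\mathbf{g},\mathbf{u}^{(m)})\downarrow E(\mathbf{g},\mathbf{h})$, while monotonicity \ref{global energy3} already shows $E(\mathbf{g},\mathbf{u})\geq E(\mathbf{g},\mathbf{h})$ for every admissible $\mathbf u$; together these give the claimed infimum formulas. The technical heart of this final step is the verification that the candidate sequences $C_{j,m}$ can indeed be chosen integrable with $C_{j,m}\to\infty$ pointwise in a way compatible with the measure-theoretic hypothesis on $(\Omega,\mathcal{A},\nu)$ from \cref{assumeption on adelic curves}, and that for \ref{global energy7} the proper $K$-model underlying the approximants can be arranged uniformly after shrinking $U$.
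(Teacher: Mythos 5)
Your proof is correct and follows the paper's strategy essentially verbatim: reduce each property to the fiberwise statement from \cref{lemma:basic properties of mixed relative energy}, then integrate using monotone or dominated convergence, with \cref{lemma:measurability of mixed relative energy} supplying the integrable upper bound and \cref{lemma:convergent sequence} supplying the global cutting functions needed for \ref{global energy7}.

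Two points where you work harder than necessary. First, for \ref{global energy5} you propose shrinking $U$ via \cref{global nef boundary divisor assumption} to arrange a cofinal nef boundary divisor; this is not needed, since the fiberwise divisor $(B_\omega, g_{B,\omega})$ is automatically dominated on $U_\omega$ by a cofinal local boundary divisor (cf.\ \cref{remarks about boundary topology}), so convergence in the $\overline B$-boundary topology passes to the fibers and local \ref{energy5} applies directly. Second, your treatment of \ref{global energy6} reuses the cutting-function machinery designed for \ref{global energy7}, but this is overkill: under the hypothesis $[\mathbf g]=[\mathbf h]$, the function $\mathbf h$ itself lies in the set $\{\mathbf u\in\mathcal E(\mathbf g)\mid[\mathbf g]=[\mathbf u],\ \mathbf u\geq\mathbf h\}$ and is its minimum, so \ref{global energy6} follows immediately from \ref{global energy3} with no approximation. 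The paper groups \ref{global energy6} with \ref{global energy1}--\ref{global energy4} for exactly this reason.
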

\begin{proof}
	Properties \ref{global energy1}--\ref{global energy4} {and \ref{global energy6}} follow immediately from the corresponding properties in \cref{lemma:basic properties of mixed relative energy} by integrating. 
	
	For \ref{global energy5}, there is a {weak} boundary divisor $\overline B=(B,g_B)$ of $U$ such that for $j=0,\dots, d$, the sequence $(D_j,h_{j,n})$ decreasingly converges to $(D_j,h_j)$ with respect to the $\overline B$-boundary topology. Since $(B_\omega,g_{B,\omega})$ is dominated by a 
	cofinal boundary divisor of $U_\omega$, the sequence $(D_{j,\omega},h_{j,n,\omega})$ decreasingly converges to $(D_{j,\omega},h_{j,\omega})$ with respect to the boundary topology of $\widehat{\Div}_{\Q}(U_\omega)_\cpt$ for every $\omega \in \Omega$. Then \cref{lemma:basic properties of mixed relative energy}~\ref{energy5} and  the monotone convergence theorem give \ref{global energy5}.
	
	To prove \ref{global energy7}, we get from \cref{lemma:convergent sequence} an increasing sequence $(C_{j,n})_{n \geq 1}$ of $\nu$-integrable functions on $\Omega$ such that the cutting functions $h^{(n)}_{j}\coloneq\max\{h_{j},g_{j}-C_{j,n}\}$ lead to a decreasing sequence $(D_j,h^{(n)}_j)_{n \in \N}$ of strongly relatively nef compactified $S$-metrized divisors of $U$ 
	which converges to $(D_j,h_j)$ with respect to the boundary divisor $\overline{B} \in N_{S,\Q}(U)$ for every $j=0,\dots,d$. We set
	\[E'(\mathbf{g},\mathbf{h}) =\inf\{E(\mathbf{g},\mathbf{u}) \mid \mathbf{u}\in \mathcal{E}(\mathbf{g}) \text{{ strongly relatively nef} with $[\mathbf{g}]=[\mathbf{u}]$ and } \mathbf{u}\geq \mathbf{h}\}.\]
	Note that $[h^{(n)}_j]=[g_j]$ and hence $E'(\mathbf{g},\mathbf{h}) 
	\leq E(\mathbf{g},\mathbf{h}_j^{(n)})$. It follows from \ref{global energy5} that the right hand side converges to $E(\mathbf{g},\mathbf{h})$ for $n \to \infty$ and hence $E'(\mathbf{g},\mathbf{h}) \leq E(\mathbf{g},\mathbf{h})$. On the other hand, property \ref{global energy3} yields $E(\mathbf{g},\mathbf{h}) \leq E'(\mathbf{g},\mathbf{h})$ and so we get \ref{global energy7}.
\end{proof}

\begin{proposition} \label{prop: transitivity of global mixed energy}
	Let $(D_0,g_0),\dots, (D_d,g_d)\in \widehat{\Div}_{S,\Q}(U)_{\relnef}$, and $\mathbf{g}'\in \mathcal{E}(\mathbf{g}), \mathbf{h}\in \mathcal{E}(\mathbf{g}')$.  Then we have
	\begin{align} 
		E(\mathbf{g},\mathbf{h}) = E(\mathbf{g}',\mathbf{h})+E(\mathbf{g},\mathbf{g}').
	\end{align} 
\end{proposition}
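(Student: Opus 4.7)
The plan is to reduce the global identity to the local one by integrating over $\Omega$. By the definition in \ref{definition: global mixed relative energy}, each of the three energies in the statement is given by an integral over $\Omega$ of the corresponding local mixed relative energy. The local transitivity \cref{prop: transitivity of local mixed energy} applied at every $\omega\in\Omega$ yields the pointwise identity
$$E(\mathbf{g}_\omega,\mathbf{h}_\omega) = E(\mathbf{g}'_\omega,\mathbf{h}_\omega)+E(\mathbf{g}_\omega,\mathbf{g}'_\omega)$$
where each term takes values in $\R\cup\{-\infty\}$; note that $\mathbf{h}_\omega\in\mathcal{E}(\mathbf{g}'_\omega)$ and $\mathbf{g}'_\omega\in\mathcal{E}(\mathbf{g}_\omega)$ (and hence also $\mathbf{h}_\omega\in\mathcal{E}(\mathbf{g}_\omega)$, using $[h_\omega]\le[g'_\omega]\le[g_\omega]$), so all three local energies are defined.

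Next I would check measurability and upper bounds. \cref{lemma:measurability of mixed relative energy} shows that each of the three maps $\omega\mapsto E(\mathbf{g}_\omega,\mathbf{h}_\omega)$, $\omega\mapsto E(\mathbf{g}'_\omega,\mathbf{h}_\omega)$, $\omega\mapsto E(\mathbf{g}_\omega,\mathbf{g}'_\omega)$ is $\mathcal{A}$-measurable and bounded above by a $\nu$-integrable function on $\Omega$. In particular, all three integrals make sense in $\R\cup\{-\infty\}$ and each of the three global energies is well-defined.

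The main obstacle is that, a priori, some of the local energies may equal $-\infty$ on a set of positive measure, so naively splitting the integral into a sum of integrals risks an $\infty-\infty$ ambiguity. I would handle this by splitting into two cases. If both $E(\mathbf{g}',\mathbf{h})>-\infty$ and $E(\mathbf{g},\mathbf{g}')>-\infty$, then $E(\mathbf{g}'_\omega,\mathbf{h}_\omega)$ and $E(\mathbf{g}_\omega,\mathbf{g}'_\omega)$ are both finite $\nu$-a.e.\ and are each bounded above by integrable functions, so each is the difference of an integrable function and a non-negative measurable function with finite integral; linearity of the integral then gives the desired identity after integrating the pointwise equation. If on the other hand (say) $E(\mathbf{g},\mathbf{g}')=-\infty$, then using the integrable upper bound $C_1$ for $E(\mathbf{g}'_\omega,\mathbf{h}_\omega)$, the pointwise identity yields
$$E(\mathbf{g}_\omega,\mathbf{h}_\omega)\le C_1(\omega)+E(\mathbf{g}_\omega,\mathbf{g}'_\omega),$$
whose integral equals $\int_\Omega C_1\,\nu(d\omega)-\infty=-\infty$, so $E(\mathbf{g},\mathbf{h})=-\infty$ as well and the claimed identity holds in $\{-\infty\}$. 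The symmetric argument handles the case $E(\mathbf{g}',\mathbf{h})=-\infty$, completing the proof.
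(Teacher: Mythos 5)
Your proof is correct and takes essentially the same route as the paper, which simply invokes the definition of the global mixed relative energy (as an integral over $\Omega$ of the local one) together with the local transitivity formula \cref{prop: transitivity of local mixed energy}. You are more careful than the paper about the possible $\infty-\infty$ ambiguity when splitting the integral, and your case analysis using the integrable upper bounds from \cref{lemma:measurability of mixed relative energy} is a sound way to justify that the integral of the pointwise sum equals the sum of the integrals in $\R\cup\{-\infty\}$.
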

\begin{proof}
	This follows from the definition of the global mixed relative energy and the corresponding formula for the local mixed relative energy given in \cref{prop: transitivity of local mixed energy}.
\end{proof}
\begin{remark} \label{global energy depends only on isometry class}
	It follows from \cref{local energy depends only on isometry class} that the global mixed relative energy depends only on the isometry classes of the underlying $S$-metrized line bundles and that we have a projection formula for the global mixed relative energy as in \cref{functoriality of energy}.
\end{remark}


\section{An extension of the arithmetic intersection pairing}  \label{section: extension of arithmetic intersection pairing}

We fix a proper adelic curve $S=(K, \Omega,\mathcal{A},\nu)$ satisfying the normalization assumption for archimedean valuations given in \cref{assumeption on adelic curves}. We also assume that either the $\sigma$-algebra $\mathcal{A}$ is discrete, or that $K$ is countable.

We consider any algebraic variety $U$ over $K$ of dimension $d$. Recall from Section \ref{section: The global boundary completion} that we have defined the arithmetic intersection pairing as a $(d+1)$-multilinear map on $\widehat{\Div}_{S,\Q}(U)_\arnef$. The goal of this section is to extend the pairing to all relatively nef compactified {$S$-metrized} divisors on $U$ which allow an arithmetically nef $S$-metric.  The backslash is that we have to allow that the arithmetic intersection numbers take the value $-\infty$. This is a generalization to the adelic setting of a construction of Burgos and Kramer \cite{burgos2023on} where the number field case was considered and only variations of archimedean places were allowed.

Recall from \cref{def:CM divisors on non-proper} that the abstract divisorial space $(M_{S,\Q}(U),N_{S,\Q}(U))$  is  built upon the $S$-semipositive adelic divisors on proper $K$-models of $U$ in the sense of Chen and Moriwaki and that the abstract divisorial space $(M_{S,\Q}(U),N'_{S,\Q}(U))$ from \ref{boundary divisors in M'} is built upon the  {$S$-metrized divisors in $N'_{S,\Q}(X)$ on proper $K$-models $X$} of $U$. From the boundary completions of these two spaces, we obtained $\widehat{\Div}_{S,\Q}(U)_{\relsnef}$ and $\widehat{\Div}_{S,\Q}(U)_{\arsnef}$. {Passing to closures with respect to the finite subspace topologies, we obtained $\widehat{\Div}_{S,\Q}(U)_{\relnef}$ and $\widehat{\Div}_{S,\Q}(U)_{\arnef}$.}

\begin{lemma} 		\label{lemma:difference of intersection of arith nef}
	Let $\overline{D_0'}=(D_0,g_0'), \overline{D_0}=(D_0,g_0),\overline{D_1},\dots, \overline{D_d} \in\widehat{\Div}_{S,\Q}(U)_{\arsnef}$ 
	which are given by Cauchy sequences in $N'_{S,\Q}(U)$ with respect to the $\overline{B}$-boundary topology for some {weak} boundary divisor  $\overline B \in {N_{S,\Q}'(U)}$. Assume that $[g_0]=[g_0']$. 
	Then the function
	\[\Omega\longrightarrow\R, \ \ \omega\mapsto \int_{U_\omega^\an}(g_{0,\omega}-g_{0,\omega}') c_1(\overline{D_1})_\omega\wedge\dots\wedge c_1(\overline{D_d})_\omega\]
	is $\nu$-integrable, and
	\begin{align*}
(\overline{D_0}\cdot\overline{D_1}\cdots\overline{D_d}\mid U)_S-&(\overline{D_0'}\cdot\overline{D_1}\cdots\overline{D_d}|U)_S \\&= \int_{\Omega}\left(\int_{U_\omega^\an}(g_{0,\omega}-g_{0,\omega}') c_1(\overline{D_1})_\omega\wedge\dots\wedge c_1(\overline{D_d})_\omega\right)\,\nu(d\omega).
	\end{align*}
\end{lemma}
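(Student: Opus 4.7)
The strategy is to approximate every factor by strongly arithmetically nef model divisors on a common proper $K$-model of $U$, apply the Chen--Moriwaki induction formula on each such model, and pass to the limit. Continuity of the arithmetic intersection pairing (Theorem \ref{global intersection number on algebraic varieties}\ref{global intersection number limits}) will handle the left hand side, while Guo's total mass theorem together with weak convergence of mixed Monge--Amp\`ere measures will handle the right hand side.

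First, I will produce compatible Cauchy approximations. Applying Lemma \ref{lemma: same divisoral part for global divisors} to $(D_0,g_0)$ and $(D_0,g_0')$ (with both Cauchy sequences chosen in the stronger cone $N'_{S,\Q}(U)$, using that Step 2 of the proof of Lemma \ref{lemma:maximum is nef global} extends to show the maximum of two elements of $N'_{S,\Q}(U)$ stays in $N'_{S,\Q}(U)$) yields Cauchy sequences $\overline D_{0,n}=(D_{0,n},g_{0,n})$ and $\overline D_{0,n}'=(D_{0,n},g_{0,n}')$ in $N'_{S,\Q}(U)$ sharing the \emph{same} divisorial part $D_{0,n}$. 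The hypothesis $[g_0]=[g_0']$ furnishes $C\in\mathscr L^1(\Omega,\mathcal A,\nu)$ with $|g_0-g_0'|\leq C$, and part \ref{item bound} of that lemma secures $|g_{0,n}-g_{0,n}'|\leq C$ uniformly in $n$. Taking Cauchy sequences $\overline D_{j,n}$ in $N'_{S,\Q}(U)$ converging to $\overline D_j$ for $j=1,\dots,d$ in the $\overline B$-topology and using Lemma \ref{lemma:compactifications form an inverse system} to pass to a common proper $K$-model $X_n$ of $U$, every $\overline D_{j,n},\overline D_{0,n},\overline D_{0,n}'$ can be realized as $S$-nef adelic divisors on $X_n$ in the sense of Chen--Moriwaki.

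Next, Proposition \ref{global intersection number}\ref{global intersection number by induction} on $X_n$, with the cancellation $\mathrm{cyc}(D_{0,n})=\mathrm{cyc}(D_{0,n}')$ wiping out the cycle contribution, gives
\[
\overline D_{0,n}\cdots\overline D_{d,n}-\overline D_{0,n}'\cdots\overline D_{d,n}=\int_\Omega F_n(\omega)\,\nu(d\omega),
\]
where $F_n(\omega):=\int_{X_{n,\omega}^{\an}}(g_{0,n,\omega}-g_{0,n,\omega}')\,c_1(\overline D_{1,n})_\omega\wedge\cdots\wedge c_1(\overline D_{d,n})_\omega$. Theorem \ref{global intersection number on algebraic varieties}\ref{global intersection number limits} sends the left hand side to $\overline D_0\cdots\overline D_d-\overline D_0'\cdots\overline D_d$ as $n\to\infty$. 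The $\nu$-measurability and $\nu$-integrability of $F(\omega):=\int_{U_\omega^{\an}}(g_{0,\omega}-g_{0,\omega}')c_1(\overline D_1)_\omega\wedge\cdots\wedge c_1(\overline D_d)_\omega$ are then automatic from Lemma \ref{lemma:measurability of mixed relative energy} applied to $\mathbf g=(g_0,g_1,\dots,g_d)$, $\mathbf h=(g_0',g_1,\dots,g_d)$, noting that $F(\omega)=-E(\mathbf g_\omega,\mathbf h_\omega)$ collapses to a single term because $h_j=g_j$ for $j\geq 1$ and that $[\mathbf g]=[\mathbf h]$.

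The remaining step is to prove $\int_\Omega F_n\,d\nu\to\int_\Omega F\,d\nu$ by dominated convergence. For the uniform $\nu$-integrable bound, Guo's theorem (Proposition \ref{proposition:measures for nef adelic}\ref{guo's theorem}) identifies the total mass of $c_1(\overline D_{1,n})_\omega\wedge\cdots\wedge c_1(\overline D_{d,n})_\omega$ with the compactified geometric intersection number $\widetilde D_{1,n}\cdots\widetilde D_{d,n}$, which is uniformly bounded in $n$ by the continuity of the geometric intersection pairing (Theorem \ref{thm:algebraic interesection number}); hence $|F_n(\omega)|\leq M\cdot C(\omega)$ for some $M>0$ independent of $n$. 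For pointwise convergence in $\omega$, the equality between the total masses of the model measure on $X_{n,\omega}^{\an}$ and of the local theory limit measure on $U_\omega^{\an}$ forces the model measure to put no mass on the boundary $X_{n,\omega}^{\an}\setminus U_\omega^{\an}$, so both $F_n(\omega)$ and $F(\omega)$ reduce to integrals on $U_\omega^{\an}$; combining the weak convergence of measures on $U_\omega^{\an}$ (Proposition \ref{corollary:weakly convergence of full mass}) with the locally uniform convergence $g_{0,n,\omega}-g_{0,n,\omega}'\to g_{0,\omega}-g_{0,\omega}'$ from Lemma \ref{lemma:boundary convergence is locally uniformly convergence} under the uniform bound $C(\omega)$, a standard tightness argument delivers $F_n(\omega)\to F(\omega)$. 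The main obstacle is precisely this last reconciliation: the model-level integrals $F_n$ live on shifting compactifications $X_{n,\omega}^{\an}$, while the limit $F$ is intrinsic to $U_\omega^{\an}$, and only Guo's exact total-mass identity, which is unavailable outside the strongly arithmetically nef setting, rules out any concentration of mass on the Zariski closed complement of $U$.
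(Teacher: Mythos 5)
The proposal's strategy parallels the paper's (induction formula on a common proper model, Guo's theorem for the uniform $\nu$-integrable bound, weak convergence of Monge--Amp\`ere measures, dominated convergence), but it contains a genuine gap at the very first normalization step.

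You assert that Lemma~\ref{lemma: same divisoral part for global divisors} can be strengthened to put \emph{both} Cauchy sequences $(D_{0,n},g_{0,n})$ and $(D_{0,n},g_{0,n}')$ in $N'_{S,\Q}(U)$, citing that Step~2 of Lemma~\ref{lemma:maximum is nef global} shows a maximum of two elements of $N'_{S,\Q}$ stays in $N'_{S,\Q}$. That abstract fact is correct, but it does not apply to the max that actually occurs in the proof of Lemma~\ref{lemma: same divisoral part for global divisors}. The key construction there is $h'_n = \max\{g'_{i_n},\, g_{i_n}-C_n\}$: the arithmetically nef term $g'_{i_n}$ is a Green function for the \emph{smaller} divisor $D'_{i_n}$, while the term $g_{i_n}-C_n$ attached to the \emph{larger} divisor $D_{i_n}$ is obtained by a downward twist, which is only relatively nef --- subtracting the $\nu$-integrable $C_n$ can drop $\widehat{\mu}^{\mathrm{asy}}_{\min}$ below zero. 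Step~2 of Lemma~\ref{lemma:maximum is nef global} requires the term attached to the larger divisor to be in $N'_{S,\Q}$, because its proof compares $h \geq g$ as Green functions for the \emph{same} divisor $D$ and uses monotonicity of the asymptotic minimal slope; the inequality $h \geq g'$ is of no use since $g'$ lives on $D'$. Consequently, the normalized $(D_{0,n},g'_{0,n})$ lands only in $N_{S,\Q}(U)$, and you cannot invoke Theorem~\ref{global intersection number on algebraic varieties}\ref{global intersection number limits} to deduce $\overline D'_{0,n}\cdots\overline D_{d,n}\to\overline D_0'\cdots\overline D_d$ as required. The paper avoids exactly this obstruction via the upward twist: since $g_{0,n}'+C \geq g_{0,n}$ on the common divisor $D_{0,n}$, the element $(D_{0,n},g_{0,n}'+C)$ \emph{is} in $N'_{S,\Q}$, so continuity applies to it and to $(0,C)$ separately, and the two limits are then recombined. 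Without that correction, your limit for the $g'$-factor remains unjustified.

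A secondary imprecision: your appeal to Guo's total-mass identity to ``rule out concentration of mass on the complement of $U$'' is circuitous. For model metrics on a proper $K$-model $X_n$, the Chambert--Loir measure is a sum of Dirac masses at divisorial (type~2) points whose Zariski closure is all of $X_n$, hence these points automatically lie in $U_\omega^{\an}$; Guo's theorem is not needed for this. Where Guo's theorem is genuinely used is for the uniform $\nu$-integrable bound on $F_n$, as both you and the paper do.
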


\begin{proof}
	Replacing the weak boundary divisor $\overline B$ by a higher {weak} boundary divisor in ${N_{S,\Q}'(U)}$, it follows from  \cref{lemma: same divisoral part for global divisors} that we can take Cauchy sequences $(D_{0,n},g_{0,n}')_{n\geq 1}$, $(D_{0,n},g_{0,n})_{n\geq 1 }$ and $(D_{j,n},g_{j,n})_{n\geq 1}$ in $N_{S,\Q}(U)$ (decreasingly)  converging with respect to the $\overline B$-topology to $\overline{D_0'}, \overline{D_0},  \overline{D_j}$ for $j=1,\dots, d$, respectively. Moreover, again by \cref{lemma: same divisoral part for global divisors}, we can assume that all 	$(D_{j,n},g_{j,n})$ for $j=0,\dots, d$ are contained in $N'_{S,\Q}(U)$ and that for any $\omega\in\Omega$, we have $|g_{0,n}-g_{0,n}'|$  bounded by a $\nu$-integrable function $C$ on $\Omega$. 
	Since $(D_{0,n},g_{0,n}'+C)\in N_{S,\Q}(U)$ and  $(D_{0,n},g_{0,n})\in N'_{S,\Q}(U)$, we use $g_{0,n}'+C \geq g_{0,n}$ to deduce from Lemma \ref{lemma:maximum is nef global} that $(D_{0,n},g_{0,n}'+C)\in N'_{S,\Q}(U)$. Since this is a Cauchy sequence with respect to the $\overline B$-topology on $N'_{S,\Q}(U)$ converging to $(D,g'_0+C)$, we conclude that
	\begin{equation} \label{first arithmetic convergence}
		\lim_{n \to \infty} ((D_{0,n},g_{0,n}'+C) \cdot (D_{1,n},g_{1,n}) \cdots (D_{d,n},g_{d,n})\mid U)_S= ((D_0,g'_0+C)\cdot \overline{D_1}\cdots\overline{D_d}\mid U)_S.
	\end{equation}
	Since $C \geq 0$, by \cref{c is S-nef}, we have $(0,C) \in N'_{S,\Q}(U)$ 
	and hence the same argument gives 
	\begin{equation} \label{second arithmetic convergence}
		\lim_{n \to \infty} ((0,C) \cdot (D_{1,n},g_{1,n}) \cdots (D_{d,n},g_{d,n})\mid U)_S= ((0,C)\cdot \overline{D_1}\cdots\overline{D_d}\mid U)_S.
	\end{equation}
	There is a proper $K$-model $X_n$ of $U$ such that 
	$(D_{j,n},g_{j,n})$ for $j=0,\dots,d$ and $(D_{0,n},g'_{0,n})$ are all in $N'_{S,\Q}(X_n)$. By Guo's theorem (see \cref{proposition:measures for nef adelic}~\ref{guo's theorem}), we have that
	\[\left|\int_{U_\omega^\an}f_\omega c_1(\overline{D_1})_\omega\wedge\dots\wedge c_1(\overline{D_d})_\omega\right|\leq C(\omega)\left|\int_{U_\omega^\an} c_1(\overline{D_1})_\omega\wedge\dots\wedge c_1(\overline{D_d})_\omega\right|=C(\omega)D_1\cdots D_d\]
	for $f_\omega \coloneqq g_{0,\omega}-g'_{0,\omega}$.
	By \cref{lemma:measurability of mixed relative energy}, the integral $ \int_{U_\omega^\an}f_\omega c_1(\overline{D_1})_\omega\wedge\dots\wedge c_1(\overline{D_d})_\omega$ is a $\nu$-integrable function in $\omega \in \Omega$. Similarly, by our choice of $g_{0,n}$ and $g_{0,n}'$, we have that
	\begin{align*}
		&\left|\int_{X_{n,\omega}^\an}(g_{0,n,\omega}-g_{0,n,\omega}') c_1(D_{1,n},g_{1,n})_\omega\wedge\dots\wedge c_1(D_{d,n},g_{d,n})_\omega\right|\\
		&\leq C(\omega)\left|\int_{X_{n,\omega}^\an} c_1(D_{1,n},g_{1,n})_\omega\wedge\dots\wedge c_1(D_{d,n},g_{d,n})_\omega\right|\\
		& = C(\omega)(D_{1,n}\cdots D_{d,n})\\
		& \leq C(\omega)(D_1\cdots D_d+1)
	\end{align*}
	for sufficiently large $n$. We deduce from weak convergence in  \cref{corollary:weakly convergence of full mass} that
	\begin{align*}
		&\lim_{n \to \infty} \int_{X_{n,\omega}^\an}(g_{0,n,\omega}-g_{0,n,\omega}') c_1(D_{1,n},g_{1,n})_\omega\wedge\dots\wedge c_1(D_{d,n},g_{d,n})_\omega\\
		=& \int_{U_{\omega}^\an} f_\omega \, c_1(D_{1},g_{1})_\omega\wedge\dots\wedge c_1(D_{d},g_{d})_\omega
	\end{align*} 
	and from the dominated convergence theorem, we get
	\begin{align} \label{limit formula and integration}
		\begin{split}
			&\lim_{n \to \infty} \int_{\Omega} \left(\int_{X_{n,\omega}^\an}(g_{0,n,\omega}-g_{0,n,\omega}') c_1(D_{1,n},g_{1,n})_\omega\wedge\dots\wedge c_1(D_{d,n},g_{d,n})_\omega \right) \, \,\nu(d\omega)\\
			=& \int_{\Omega} \left(\int_{U_{\omega}^\an} f_\omega \, c_1(D_{1,n},g_{1,n})_\omega\wedge\dots\wedge c_1(D_{d,n},g_{d,n})_\omega \right) \, \nu(d\omega).
		\end{split}
	\end{align} 	 
	In the obvious formula
	\begin{align} \label{obvious formula}
		\begin{split}
			&(\overline{D_0}\overline{D_1}\cdots\overline{D_d}\mid U)_S-(\overline{D_0'}\overline{D_1}\cdots\overline{D_d}\mid U)_S\\
			=& (\overline{D_0}\overline{D_1}\cdots\overline{D_d}\mid U)_S-((\overline{D_0'}+(0,C))\overline{D_1}\cdots\overline{D_d}\mid U)_S+((0,C)\cdot\overline{D_1}\cdots\overline{D_d}\mid U)_S,
		\end{split}
	\end{align}
	we use that the definition of the arithmetic intersection numbers gives
	$$\lim_{n \to \infty} ((D_{0,n},g_{0,n}) (D_{1,n},g_{1,n}) \cdots (D_{d,n},g_{d,n})\mid U)_S= (\overline{D_0}\overline{D_1}\cdots\overline{D_d}\mid U)_S$$
	for the first summand. For the second summand on the right of \eqref{obvious formula}, we use \eqref{first arithmetic convergence} to get 
	\begin{align} \label{first two summands from obvious formula}
		\begin{split}
			&(\overline{D_0}\overline{D_1}\cdots\overline{D_d}\mid U)_S
			-((\overline{D_0'}+(0,C))\overline{D_1}\cdots\overline{D_d}\mid U)_S \\
			=&\lim_{n \to \infty} ((0,g_{0,n}-g'_{0,n})\cdot(D_{1,n},g_{1,n})\cdots (D_{d,n},g_{d,n})\mid U)_S\\
			&-((0,C)\cdot(D_{1,n},g_{1,n})\cdots (D_{d,n},g_{d,n})\mid U)_S\\
			=&\lim_{n \to \infty} \int_{\omega \in \Omega} \left( \int_{X_{n,\omega}^\an}(g_{0,n,\omega}-g_{0,n,\omega}') c_1(D_{1,n},g_{1,n})_\omega\wedge\cdots\wedge c_1(D_{d,n},g_{d,n})_\omega \right) \, \nu(d\omega)\\
			&-((0,C) \cdot (D_{1,n},g_{1,n}) \cdots (D_{d,n},g_{d,n})\mid U)_S.
		\end{split}
	\end{align}
	Inserting \eqref{first two summands from obvious formula} in \eqref{obvious formula}, the claim follows from additivity of limits and from \eqref{second arithmetic convergence}, \eqref{limit formula and integration}.
\end{proof}

In the following main result, we will use the space $\mathcal{E}(\mathbf{g})$ introduced before Lemma \ref{lemma:measurability of mixed relative energy}. We will also use the mixed relative energy $E(\mathbf g, \mathbf h)$ introduced in the local case in Section \ref{section: mixed relative energy in the local case} and in the global case in Section \ref{section: mixed relative energy in the global case}.
															
\begin{theorem} 	\label{thm:difference of intersection of arithmetically nef}
	Let $\overline{D_j}=(D_j,g_j), \overline{D_j'}=(D_j,g_j')\in \widehat{\Div}_{S,\Q}(U)_{\arnef}$ for $j=0,\dots, d$ {and assume that $[\mathbf{g'}]\leq [\mathbf{g}]$.}   
   Then $E(\mathbf{g}_\omega,\mathbf{g}_\omega')$ is a $\nu$-integrable function of $\omega \in \Omega$ and 
	\[(\overline{D_0'}\cdots\overline{D_d'}\mid U)_S=(\overline{D_0}\cdots\overline{D_d}\mid U)_S+\int_{\Omega}E(\mathbf{g}_\omega,\mathbf{g}_\omega')\,\nu(d\omega)= (\overline{D_0}\cdots\overline{D_d}\mid U)_S+E(\mathbf{g},\mathbf{g}').\]
\end{theorem}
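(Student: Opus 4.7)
The first step will be to establish integrability of $\omega \mapsto E(\mathbf{g}_\omega, \mathbf{g}'_\omega)$ and to reduce to a situation where all divisors are \emph{strongly} arithmetically nef with respect to a common boundary divisor in $N_{S,\Q}(U)$. Measurability and the integrable upper bound come from \cref{lemma:measurability of mixed relative energy} applied locally at every $\omega \in \Omega$, so the integral makes sense in $\R \cup \{-\infty\}$. For the reduction, we use that both sides of the claimed identity are invariant under restriction to a dense open $U' \subset U$: the arithmetic intersection number is preserved by \cref{global intersection number on algebraic varieties}~\ref{global intersection number factorial} applied to the open immersion (which has degree $1$), and the mixed energy is preserved by \cref{functoriality of energy} together with \cref{lemma:basic properties of global mixed relative energy}~\ref{global energy1}. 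Shrinking $U$ as in \cref{global nef boundary divisor assumption}, we may then assume the $\overline{D}_j$ and $\overline{D}_j'$ are all strongly arithmetically nef, obtained as limits of Cauchy sequences in $N'_{S,\Q}(U)$ with respect to a common boundary divisor $\overline{B} \in N_{S,\Q}(U)$.

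Next I will treat the \emph{equivalent singularities} case $[\mathbf{g}] = [\mathbf{g}']$ by the standard telescoping trick. Writing
\begin{equation*}
\overline{D}_0' \cdots \overline{D}_d' - \overline{D}_0 \cdots \overline{D}_d = \sum_{j=0}^d \left( \overline{D}_0' \cdots \overline{D}_{j-1}' \, \overline{D}_j' \, \overline{D}_{j+1} \cdots \overline{D}_d - \overline{D}_0' \cdots \overline{D}_{j-1}' \, \overline{D}_j \, \overline{D}_{j+1} \cdots \overline{D}_d \right),
\end{equation*}
each summand has only the $j$-th factor changed and both versions of the $j$-th divisor carry equivalent singularities. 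After a permutation (using symmetry of the intersection product), \cref{lemma:difference of intersection of arith nef} applies and yields
\begin{equation*}
\int_{\Omega} \int_{U_\omega^{\an}} (g_{j,\omega}' - g_{j,\omega}) \, c_1(\overline{D}_0')_\omega \wedge \cdots \wedge c_1(\overline{D}_{j-1}')_\omega \wedge c_1(\overline{D}_{j+1})_\omega \wedge \cdots \wedge c_1(\overline{D}_d)_\omega \, \nu(d\omega).
\end{equation*}
Summing over $j$, the inner integrals collect exactly to the definition of $E(\mathbf{g}_\omega, \mathbf{g}'_\omega)$ from \cref{definition: local mixed energy}, and Fubini gives $E(\mathbf{g}, \mathbf{g}')$ as in \cref{definition: global mixed relative energy}.

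Finally, for the general case $[\mathbf{g}'] \leq [\mathbf{g}]$, I will cut off from below: for each $j$, by \cref{lemma:convergent sequence} there exists an increasing sequence of integrable functions $(C_{j,m})_m$ on $\Omega$ such that $g_j'^{(m)} \coloneqq \max\{g_j', g_j - C_{j,m}\}$ yields strongly arithmetically nef compactified $S$-metrized divisors $(D_j, g_j'^{(m)})$ converging decreasingly to $(D_j, g_j')$ with respect to the $\overline{B}$-boundary topology, and moreover $[g_j'^{(m)}] = [g_j]$ for every $m$. Applying the equivalent-singularities case to $\mathbf{g}$ and $\mathbf{g}'^{(m)}$ gives
\begin{equation*}
\overline{D}_0'^{(m)} \cdots \overline{D}_d'^{(m)} = \overline{D}_0 \cdots \overline{D}_d + E(\mathbf{g}, \mathbf{g}'^{(m)}).
\end{equation*}
Letting $m \to \infty$, the left-hand side converges to $\overline{D}_0' \cdots \overline{D}_d'$ by the continuity of arithmetic intersection numbers in \cref{global intersection number on algebraic varieties}~\ref{global intersection number limits}, while the right-hand side converges to $\overline{D}_0 \cdots \overline{D}_d + E(\mathbf{g}, \mathbf{g}')$ by \cref{lemma:basic properties of global mixed relative energy}~\ref{global energy5}. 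The main obstacle in carrying this out is the interplay between the decreasing cutoff procedure and the simultaneous requirement that all divisors remain strongly arithmetically nef along a fixed boundary topology; this is precisely what \cref{lemma:convergent sequence} is built to handle, and combined with the monotone-type continuity statements of \cref{lemma:basic properties of global mixed relative energy}~\ref{global energy5} and the limit property of intersection numbers, the passage to the limit is valid even when $E(\mathbf{g}, \mathbf{g}') = -\infty$, in which case both sides are $-\infty$.
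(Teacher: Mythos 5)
Your proposal is correct and follows essentially the same strategy as the paper's proof: reduce to strongly nef divisors with a common boundary divisor by shrinking $U$, prove the equivalent‑singularities case by telescoping and \cref{lemma:difference of intersection of arith nef}, then pass to the general case via the cutoff functions of \cref{lemma:convergent sequence} and the continuity statements \cref{global intersection number on algebraic varieties}~\ref{global intersection number limits} and \cref{lemma:basic properties of global mixed relative energy}~\ref{global energy5}. One small imprecision in your closing remark: since all $\overline{D}_j,\overline{D}_j'$ are arithmetically nef, the intersection numbers on both sides are finite real numbers, so the limit argument in fact \emph{forces} $E(\mathbf{g},\mathbf{g}')>-\infty$ (this is what makes the asserted $\nu$-integrability hold, given that \cref{lemma:measurability of mixed relative energy} only provides an integrable upper bound when $[\mathbf{g}']\leq[\mathbf{g}]$); the case ``both sides are $-\infty$'' does not actually occur.
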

															
This was shown in \cite[Theorem~4.4]{burgos2023on} for the number field case where the metrics are only allowed to change at archimedean places and where $U$ was supposed to be normal. We will use similar arguments to prove the claim over proper adelic curves.
\begin{proof}
Note that the function $\omega\mapsto E(\mathbf{g}_\omega,\mathbf{g}_\omega')$ is measurable by \cref{lemma:measurability of mixed relative energy}. {By \cref{global nef boundary divisor assumption}, we may replace $U$ by a suitable dense open subset of $U$ to assume that all $\overline{D_j}=(D_j,g_j), \overline{D_j'}=(D_j,g_j')$ are given by Cauchy sequences in $N'_{S,\Q}(U)$ with respect to a boundary divisor $\overline B \in {N_{S,\Q}'(U)}$. Here, we use that the arithmetic intersection numbers do not change by passing to an open dense subset, see the projection formula in Theorem \ref{global intersection number on algebraic varieties}.}
																
First, we assume that, for any $j=0,\dots, d$, there is $C_j\in\mathscr{L}^1(\Omega,\mathcal{A},\nu)$ such that $|{g}_{j,\omega}-{g}_{j,\omega}'|\leq C_j(\omega)$ on $U_\omega^\an$ for any $\omega\in\Omega$. By \cref{lemma:difference of intersection of arith nef} {and the symmetry of arithmetic intersection numbers}, we have that
\begin{align*}
	&(\overline{D_0'}\cdots\overline{D_d'}\mid U)_S-(\overline{D_0}\cdots\overline{D_d}\mid U)_S\\
	=&\sum\limits_{j=0}^d((\overline{D_0'}\cdots\overline{D_j'}\cdot\overline{D_{j+1}}\cdots\overline{D_d}\mid U)_S-(\overline{D_0'}\cdots\overline{D_{j-1}'}\cdot\overline{D_j}\cdots\overline{D_d}\mid U)_S)\\
	=& \sum\limits_{j=0}^d\int_{\Omega}\left(\int_{U_\omega^\an}(g_{j,\omega}'-g_{j,\omega})c_1(\overline{D_0'})_\omega\wedge\cdots\wedge c_1(\overline{D_{j-1}'})_\omega\wedge c_1(\overline{D_{j+1}})_\omega\cdots\wedge c_1(\overline{D_d})_\omega\right) \,\nu(d\omega)\\
	=&
	\int_{\Omega}E(\mathbf{g}_\omega,\mathbf{g}_\omega')\,\nu(d\omega) = E(\mathbf{g},\mathbf{g'}).
\end{align*}
																
Next, we consider the general case when $|g_{j,\omega}-g_{j,\omega}'|$ is not necessarily bounded by an integrable function. 
For any $j=0,\dots, d$, by \cref{lemma:convergent sequence},   there is an increasing sequence of $\nu$-integrable functions $(C_{j,n})_{n\geq 1}\in \mathscr{L}^1(\Omega,\mathcal{A},\nu)$ such that 
\[\overline{D_{j,n}'}\coloneq(D_j,g_{j,n}')\coloneq(D_j,\max\{g_j-C_{j,n},g_j'\}) \in\widehat{\Div}_{S,\Q}(U)_{\arsnef}\]
converges decreasingly to $(D_j,g_j')$ with respect to the $\overline B$-topology.
Since ${g}_{j}-C_{j,n} \leq g_{j,n}'$ and $[g_{j}']\leq [g_{j}]$, we have that $[g'_{j,n}]=[g_j]$.
From the preceding, we have that
\[(\overline{D_{0,n}'}\cdots\overline{D_{d,n}'}\mid U)_S-(\overline{D_0}\cdots\overline{D_d}\mid U)_S =E(\mathbf{g},\mathbf{g}_{n}').\]
By the continuity of arithmetic intersection in Theorem \ref{global intersection number on algebraic varieties}, we have that
\[\lim\limits_{n\to\infty}(\overline{D_{0,n}'}\cdots\overline{D_{d,n}'}\mid U)_S = (\overline{D_{0}'}\cdots\overline{D_{d}'}\mid U)_S\]
and hence the claim follows from \cref{lemma:basic properties of global mixed relative energy}~\ref{global energy5}.
\end{proof}

In the following, we extend the arithmetic intersection numbers from \cref{global intersection number on algebraic varieties} to all {$S$-metrized} divisors $(D,g_D) \in \widehat{\Div}_{S,\Q}(U)_{\relnef}$ such that there is an $S$-Green function $h_D$ of $D$ with $(D,h_D) \in \widehat{\Div}_{S,\Q}(U)_{\arnef}$. In other words, we assume that the underlying $S$-metrized $\Q$-line bundle $\overline L=(\mathcal O_U(D), \metr)$ has another $S$-metric $\metr'$ such that $(L,\metr') \in \widehat{\Pic}_{S,\Q}(U)_{\arnef}$. We denote the monoid of all such relatively nef compactified $S$-metrized divisors $(D,g_D)$ by $\widehat{\Div}_{S,\Q}(U)_{\relnef}^\arnef$.

\begin{theorem} \label{extension of global intersection number on algebraic varieties}
  Let $S=(K, \Omega,\mathcal{A},\nu)$ be the given  proper adelic curve. For any  algebraic variety $U$ over $K$ of dimension $d$, for any $\overline{D_0}, \dots, \overline{D_k} \in \widehat{\Div}_{S,\Q}(U)_{\relnef}^\arnef$ and any effective $k$-dimensional cycle $Z$ of $U$, there is a unique $(\overline{D_0} \cdots \overline{D_k} \mid Z)_S \in \R \cup \{-\infty\}$ with the following properties:
  \begin{enumerate}
	\item \label{main1} The number $(\overline{D_0} \cdots \overline{D_k} \mid Z)_S \in \R \cup \{-\infty\}$ depends only on the isometry classes of the underlying $S$-metrized $\Q$-line bundles $\overline{L_j}=(\mathcal O_U(D_j), \metr_j)$, $j=0,\dots, k$, and on the cycle $Z$, but not on the particular choice of the {compactified $S$-metrized} divisors $\overline{D_0}, \dots, \overline{D_k}$, so we set
	$$(\overline{L_0} \cdots \overline{L_k} \mid Z)_S \coloneqq (\overline{D_0} \cdots \overline{D_k} \mid Z)_S.$$
	If $k=d$ and $Z=U$, then we just write $\overline{L_0} \cdots \overline{L_d}\coloneqq \overline{D_0} \cdots \overline{D_d} \coloneqq (\overline{D_0} \cdots \overline{D_d} \mid U)_S$.
	\item \label{main2} The pairing $(\overline{L_0} \cdots \overline{L_k} \mid Z)_S \in \R\cup \{-\infty\}$ is multilinear and symmetric in $\overline{L_0}, \dots, \overline{L_k}$ and linear in $Z$. 
	\item \label{main3} If $\overline{D_0}, \dots, \overline{D_k} \in \widehat{\Div}_{S,\Q}(U)_{\arnef}$, then $(\overline{L_0} \cdots \overline{L_k} \mid Z)_S$ is the arithmetic intersection pairing from Theorem \ref{global intersection number on algebraic varieties}. 
    \item \label{main4} For $\overline{D_0}=(D_0,g_0), \dots, \overline{D_d}=(D_d,g_d)\in  \widehat{\Div}_{S,\Q}(U)_\relnef^\arnef$ and $\mathbf h =(h_0,\dots,h_d)\in \mathcal E(\mathbf g)$, we have
    $$(D_0,h_0) \cdots (D_d,h_d) =(D_0,g_0) \cdots (D_d,g_d)+E(\mathbf g,\mathbf h).$$
    \item \label{main5} Let $K'/K$ be an algebraic extension of $K$ and let $S'$ be the canonical adelic curve on $K'$ induced by $S$. Let $U',\overline{L_0'}, \dots, \overline{L_k'},Z'$ be obtained from $U,\overline{L_0}, \dots, \overline{L_k},Z$ by base change. Then we have
    $$(\overline{L_0'} \cdots \overline{L_k'} \mid Z')_{S'}=(\overline{L_0} \cdots \overline{L_k} \mid Z)_S.$$
    \item \label{main6} If $\varphi \colon U' \to U$ is a morphism of algebraic varieties over $K$ and if $Z'$ is an effective $k$-dimensional cycle on $U'$, then the projection formula holds:
    $$(\varphi^*\overline{L_0} \cdots \varphi^*\overline{L_k} \mid Z')_S = (\overline{L_0} \cdots \overline{L_k} \mid \varphi_*Z')_S.$$
    \end{enumerate}
\end{theorem}
\begin{proof}
	We first show that the uniqueness follows from \ref{main1}--\ref{main5}. By base change and \ref{main5}, we may assume $K$ algebraically closed. By multilinearity in \ref{main2}, we may assume $k=d$ and $Z=U$. We use \ref{main4} to reduce to the arithmetically nef case and then uniqueness follows from \ref{main3}. 
	
	To define the pairing, we use the same procedure. We easily reduce to the case $k=d$ and $Z=U$  by using base change to the algebraic closure of $K$ and then by proceeding by linearity in the components of $Z$. Then we have to define the arithmetic intersection pairing for $(D_0,g_0), \dots, (D_d,g_d)\in \widehat{\Div}_{S,\Q}(U)_{\relnef}^\arnef$. By definition, for any $j=0,\dots, d$, there is an $S$-Green function $h_j$ for $D_j$ such that $(D_j,h_j) \in \widehat{\Div}_{S,\Q}(U)_{\arnef}$. By Remark \ref{global nef boundary divisor assumption}, there is a dense open subset $U'$ of $U$ with a boundary divisor $\overline B \in N_{S,\Q}(U')$ such that $(D_j,g_j)$ is given as a limit of a sequence in $N_{S,\Q}(U')$ with respect to the $\overline B$-boundary topology and such that $(D_j,h_j)|_{U'}$ is given  as a limit of a sequence in $N'_{S,\Q}(U')$ with respect to the $\overline B$-boundary topology for $j=0,\dots,d$. For $D_j'\coloneqq D_j|_{U_j'}$ and $h_j' \coloneqq \max\{g_j|_{U'},h_j|_{U'}\}$,    \cref{lemma:maximum is nef global}  shows that  $(D_j',h_j')$ {is a strongly arithmetically nef compactified $S$-metrized divisor of $U'$.} 
	As we have $[g_j] \leq [h_j]$, we get $\mathbf{g}|_{U'}\in \mathcal{E}(\mathbf h')$. Then we define 
	$$((D_0,g_0) \cdots (D_d,g_d)\mid U)_S \coloneqq ((D_0|_{U'},h_0') \cdots (D_d|_{U'},h_d')\mid U')_S + E(\mathbf h', \mathbf g|_{U'}).$$
	The arithmetic intersection product on the right and the global mixed relative energy are computed on $U'$. The projection formulas for the arithmetic intersection product in \cref{global intersection number on algebraic varieties} and for the global relative mixed energy in \cref{global energy depends only on isometry class} show that the above definition does not depend on the choice of $U'$. To see that the definition is independent of the choice of $h_0,\dots, h_d$, let $f_0, \dots, f_d$ be another choice. Using the above, we may assume that the same $U'$ works for both choices and even that $U=U'$. Replacing $f_j$ by $\max\{f_j,h_j\}$ and using \cref{lemma:maximum is nef global}, we may assume $h_j \leq f_j$. Then independence follows from \cref{prop: transitivity of global mixed energy} and \cref{thm:difference of intersection of arithmetically nef}.  This shows that the extension of the arithmetic intersection pairing is well-defined. 
	
	The properties \ref{main1}--\ref{main6} then follow from the corresponding properties of the pairing in the arithmetically nef case given in \cref{global intersection number on algebraic varieties} and from the properties of the global mixed relative energy given in \cref{lemma:basic properties of global mixed relative energy}, \cref{prop: transitivity of global mixed energy} and \cref{global energy depends only on isometry class}. The proof of \ref{main4} follows in fact from the same arguments as above.
\end{proof}

We illustrate the above extension of arithmetic intersection numbers in our running example.

\begin{example} \label{running example: extension of arithmetic intersection numbers}
We view $K=\Q$ in the standard way as an adelic curve $S=(\Q,\Omega,\mathcal{A},\nu)$, see \cref{example:adelic structure of number fields}. We  embed $U=\mathbb A_K^1$  as usual in $X=\mathbb P_K^1$. We consider the toric divisor $D= [\infty]$ of $X$  as a geometric compactified divisor in $\widetilde{\Div}_\Q(U)_\cpt$, see \cref{example: projective line}. Let $\Psi(u)\coloneqq \min(u,0)$ be the associated concave piecewise linear function on $\R$. We pick an $S$-metrized divisor $(D,g_D)\in \widehat{\Div}_{S,\Q}(U)_\relsnef$ with toric Green function $g_D$. We have seen in \cref{singular arithmetic nef in running example} that for every $v \in \Omega$, there is a concave function $\psi_v\colon \R \to \R$ with asymptotic slopes $1$ for $u \to -\infty$ and $0$ for $u \to \infty$ such that $g_{D,v}= \psi_v \circ \trop_v$ on $T^\an=X^\an\setminus \{0,\infty\}$. Moreover, the function $\psi_v$ is an increasing limit of concave functions $\psi_{n,v}$ with the same asymptotic slopes and with $\psi_{n,v}-\Psi=O(1)$ for each $v \in \Omega$ such that \eqref{b-convergence for concave functions} is satisfied. Equivalently, this means that the corresponding Green functions $g_n$ for $D$ give rise to $(D,g_n)\in \widehat{\Div}_{S,\Q}(X)_\relsnef$ decreasingly converging to $(D,g_D)$ with respect to the boundary topology with respect to a suitable choice of a boundary divisor $\overline B$ as in \cref{singular arithmetic nef in running example}.

We recall from \cref{ample and nef adelic metrics in running example} that the Green function $g$ for $D$ associated to $\Psi$ is the Green function for $D$ corresponding to the canonical metric of $L=\OO_X(D)$. We have seen there that $(D,g) \in \widehat{\Div}_{S,\Q}(X)_\arnef$.  By definition, we have $h\coloneqq g_D \in \mathcal E(g)$ and $(D,h) \in \widehat{\Div}_{S,\Q}(U)_{\relnef}^\arnef$. Our goal is to compute the \emph{global height}
$$h_{\overline L}(U) \coloneqq  \overline L \cdot \overline L = (D,g_D) \cdot (D,g_D).$$
We have seen in \cref{ample and nef adelic metrics in running example} that the global height of $X$ (and hence of $U$) with respect to the canonical metric is $0$ and  that for non-singular metrics (i.e.~in the case $U=X$) the global height is given by the formula of Burgos--Philippon--Sombra \eqref{BPS formula} involving the roof function. We apply this to the Green functions $g_n$ and their roof functions $\vartheta_n$ to deduce
$$E(g,g_n)=(D,g_n)\cdot (D,g_n) = 2 \int_{[0.1]} \vartheta_n(m) \, dm$$
where the first equality uses \cref{extension of global intersection number on algebraic varieties}\ref{main4}. Since the concave functions $\psi_{n,v}$ increasingly converge to the concave function $\psi_v$ with the same asymptotic slopes, it follows from biduality that the Legendre--Fenchel duals $\psi_{n,v}^\vee$ decreasingly converge to the concave function $\psi_v^\vee$, hence the roof functions $\vartheta_n$ decreasingly converge to the roof function $\vartheta$ of $g$. The continuity of the relative energy in Theorem \ref{lemma:basic properties of global mixed relative energy}\ref{global energy5} and the monotone convergence theorem give the desired formula for the global height
\begin{equation} \label{roof formula for singular}
	h_{\overline L}(X)=(D,g) \cdot (D,g) =E(g,h)=  2 \int_{[0.1]} \vartheta(m) \, dm
\end{equation}
where the second equality uses again \cref{extension of global intersection number on algebraic varieties}\ref{main4} and that the global height of $X$ with respect to the canonical metric is zero. For a generalization of this formula to arbitrary toric varieties over a number field, we refer to \cite[Theorem 5.5.3]{peralta-thesis24}.
\end{example}

We specify the height formula \eqref{roof formula for singular} as in \S \ref{subsec: a running example}.

\begin{example} \label{specific example for height function}
	We use the same setting as in \cref{running example: extension of arithmetic intersection numbers}. We fix $\alpha \in ]0,1[$ and a place $v$ of $K=\Q$ where the singular Green function $g_{D,v}$ is given by the concave piecewise smooth function $\psi_v \coloneqq \Psi + \rho$ with $\rho(u)\coloneqq \frac{1}{\alpha}$ for $u \geq 0$ and $\rho(u)\coloneqq \frac{1}{\alpha}(1-u)^\alpha$ for $u \leq 0$. For all other places $\omega \in \Omega$, we take $\psi_\omega=\Psi$ which means that the corresponding Green function $g_{D,\omega}$ is associated to the canonical metric of $\OO_{X_\omega}(D_\omega)$. Since the asymptotic slopes of $\psi_v$ are $1$ for $u\to -\infty$ and $0$ for $u \to \infty$, this choice gives rise to $(D,g_D)\in \widehat{\Div}_{S,\Q}(U)_\relsnef$ and hence fits to \cref{running example: extension of arithmetic intersection numbers}. We have seen in \cref{rareness of arithmetic nef} that the metric $(D,g_D)$ is not arithmetically integrable. We give two approaches to compute the global height $h_{\overline L}(U)$ of $U$ with respect to $\overline L=\OO_U(D,g_D)$.
	
	First, we use the formula for the relative energy from \cref{relative energy in running example} applied with $g$ the Green function of $D$ associated to the canonical metric of $L$ and $h=g_D$ leading to
	$$E(g,h)= \int_\R (\Psi-\psi_v)(u) \, \left(\mathrm{MA}(\psi_v)+\mathrm{MA}(\Psi)\right)(du) .$$
	A priori, this is the local relative energy at the place $v$, but as $\psi_\omega=\Psi$ for all other places $\omega$, this is also the global relative energy. By \cref{MA measure in running example}, we have that $\mathrm{MA}(\Psi)$ is the Dirac measure in the origin $0$ of $\R$.  Since the slope of $\psi_v$ in $0$ is $0$ from both sides, we get
	$$\mathrm{MA}(\psi_v)=-\psi_v''(u) \chi_{[-\infty,0]} du= -(\alpha-1)(1-u)^{\alpha-2}\chi_{[-\infty,0]}du,$$
	where $\chi_{[-\infty,0]}$ is the characteristic function of ${[-\infty,0]}$.  Then we get 
	\begin{align*}
		E(g,h)=&\int_{\R}-\rho(u) \, \mathrm{MA}(\psi_v)+\int_{\R}-\rho(u)\,\mathrm{MA}(\Psi) \\
		=&\frac{\alpha-1}{\alpha}\int_{[-\infty,0]}(1-u)^{2\alpha-2}du- \frac{1}{\alpha}\\
		=&\begin{cases}
			\frac{2-3\alpha}{\alpha(2\alpha-1)} & \text{if $\alpha <1/2$,}\\
			-\infty & \text{ if $\alpha\geq 1/2$.}
		  \end{cases}
	  \end{align*} Hence $g_D=h$ has finite relative energy with respect to $g$  if and only if $\alpha < 1/2$. Since the height of $U$ with respect to the canonical metric is $0$, we deduce from  \cref{extension of global intersection number on algebraic varieties}\ref{main4}  that $h_{\overline L}(U)=E(g,h)$ and hence the above is our desired formula for the global height.
	  
	  In our second approach to compute the global height, we use the roof function $\vartheta$ of $g_D$ introduced in \cref{ample and nef adelic metrics in running example} and the formula \eqref{roof formula for singular}. In this specific case, the roof function $\vartheta$ is the Legendre--Fenchel dual $\psi_v^\vee$. Note that $\psi_v'(u)=1-(1-u)^{\alpha-1}$ is a diffeomorphism from $]-\infty,0]$ onto $[0,1[$, hence the inverse $f$ can be easily computed as $$f(m)=1-(1-m)^{\frac{1}{\alpha-1}}$$ for $m \in [0,1[$. It follows from the computation of the Legendre--Fenchel dual in the smooth case \cite[Theorem 2.4.2]{BPS} that 
	  \begin{align*}
	  	\vartheta(m) &= \psi_v^\vee(m)= m f(m)- \psi(f(m))=(m-1)f(m){-\frac{1}{\alpha}(1-f(m))^\alpha}\\&= (m-1) \left(1-(1-m)^{\frac{1}{\alpha-1}} \right){-\frac{1}{\alpha}(1-m)^{\frac{\alpha}{\alpha-1}}}\\
	  	&{=\left(1-\frac{1}{\alpha}\right)(1-m)^{\frac{\alpha}{\alpha-1}}+m-1}.
	  \end{align*}
	  This is singular in $m=1$. Now we deduce from \eqref{roof formula for singular} in \cref{running example: extension of arithmetic intersection numbers} that
	  $$h_{\overline L}(U)= 2 \int_{[0.1]} \vartheta(m) \, dm= -1 +2\cdot\frac{\alpha-1}{\alpha}\int_0^1(1-m)^{\frac{\alpha}{\alpha-1}} \,dm$$
	  which leads to the same result as above.
\end{example}

\appendix

\section{Radon measures and convergence results}

In measure theory, we have different definitions of convergence of measures. To show the main results, i.e. \cref{cor:weakly converges of measures}, we recall two kinds of convergence. {Before that, we fix our notation used in this appendix. For a topological space $X$, we denote $C_b(X)$ (resp. $C_c(X)$) the set of bounded continuous real functions (resp. of continuous real functions with compact support) on $X$.}

\begin{definition} \label{def: vague convergence}
	Let $X$ be a topological space. A net $(\mu_n)_{n \in I}$ of measures on $X$ \emph{weakly} (resp. \emph{vaguely}) \emph{converges} to a measure $\mu$, denoted by $\mu_n\overset{w}{\rightarrow}\mu$ (resp. $\mu_n\overset{v}{\rightarrow}\mu$) if for each $f\in C_b(X)$ (resp. $f\in C_c(X)$), we have the equality
	\[\lim\limits_{n\to\infty}\int_Xf\mu_n=\int_Xf\mu.\]
\end{definition}

These two definitions of convergence are different in general, but they are equivalent in some cases. {The following proposition is formulated in terms of nets. Strictly speaking, we don't have to use nets in this paper, but it is very convenient as we see in} {\cref{proposition:measures for nef adelic}~\ref{MA measure via convergence in boundary topology} and~\ref{MA measure via convergence in finite subspace topology} where one can take the net $\N^d$.}

 \begin{prop}[\cite{bogachev2018weak}, Proposition~4.5.11]	\label{thm:equivalence of vague convergence and weak convergence}
	Let $X$ be a locally compact (Hausdorff) space, and $\mu$ a positive Radon measure, $(\mu_n)_{n \in I}$ a net of positive Radon measures on $X$ with $\mu(X),\mu_n(X)<\infty$. Then $\mu_n\overset{w}{\rightarrow}\mu$ if and only if $\mu_n\overset{v}{\rightarrow}\mu$ and $\lim\limits_{n\to\infty}\mu_n(X)=\mu(X)$. 
\end{prop}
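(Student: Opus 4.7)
The forward implication is immediate: if $\mu_n \overset{w}{\to} \mu$, then applying the definition to the constant function $1 \in C_b(X)$ gives $\mu_n(X) \to \mu(X)$, and since $C_c(X) \subset C_b(X)$, vague convergence also follows.

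For the reverse direction, assume $\mu_n \overset{v}{\to} \mu$ and $\mu_n(X) \to \mu(X)$, and fix $f \in C_b(X)$ with $\|f\|_\infty \le M$. Let $\varepsilon > 0$. Since $\mu$ is a finite Radon measure on a locally compact Hausdorff space, it is inner regular, so there exists a compact subset $K \subset X$ with $\mu(X \setminus K) < \varepsilon$. By Urysohn's lemma (applicable here because $X$ is locally compact Hausdorff), we may choose $\phi \in C_c(X)$ with $0 \le \phi \le 1$ and $\phi \equiv 1$ on $K$, so that $\int (1-\phi)\, d\mu \le \mu(X \setminus K) < \varepsilon$.

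The plan is then to decompose $f = f\phi + f(1-\phi)$. The first summand $f\phi$ lies in $C_c(X)$, so vague convergence yields $\int f\phi \, d\mu_n \to \int f\phi\, d\mu$. The second summand is controlled by $|{\int f(1-\phi)\, d\mu_n}| \le M \int (1-\phi)\, d\mu_n$, and here the key observation is that $\int (1-\phi)\, d\mu_n = \mu_n(X) - \int \phi\, d\mu_n \to \mu(X) - \int \phi\, d\mu = \int (1-\phi)\, d\mu < \varepsilon$ by combining the hypothesis $\mu_n(X) \to \mu(X)$ with the vague convergence applied to $\phi \in C_c(X)$. Consequently, $\int (1-\phi)\, d\mu_n < 2\varepsilon$ eventually, giving the bound $|{\int f(1-\phi)\, d\mu_n}| < 2M\varepsilon$, and analogously for $\mu$.

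Combining, we find $\limsup_n |\int f\, d\mu_n - \int f\, d\mu| \le 3M\varepsilon$, and letting $\varepsilon \to 0$ gives the desired weak convergence. The argument goes through verbatim for nets since it only uses eventual bounds and linearity. The main subtle point is that tightness (inner regularity by compact sets) is needed to pass from $C_c$-testing to $C_b$-testing, and this is exactly where the local compactness of $X$ together with finiteness of $\mu$ is used; without $\mu_n(X) \to \mu(X)$, mass could escape to infinity and the reduction would fail, which explains the necessity of this hypothesis.
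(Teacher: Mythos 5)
Your proof is correct and rests on the same core ingredients as the paper's: inner regularity of the finite Radon measure $\mu$, a Urysohn cutoff that equals $1$ on a large compact set, and the combination of vague convergence with $\mu_n(X)\to\mu(X)$ to control the tail. The only real difference is one of streamlining: the paper first establishes a separate ``uniform tightness'' claim (a compact $K$ and $N$ with $\mu_n(X\setminus K)\le\varepsilon$ for $n\ge N$, proved with one Urysohn function $\varphi$) and then runs the main estimate with a second Urysohn function $\rho$ and a split of the integration domain, whereas you achieve the same end in one pass by decomposing $f = f\phi + f(1-\phi)$, noting $f\phi\in C_c(X)$, and using the algebraic identity $\int(1-\phi)\,d\mu_n = \mu_n(X) - \int\phi\,d\mu_n$ to bound the remainder; your version is a bit tighter but the underlying argument is the same.
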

\begin{proof}
	\cite[Proposition~4.5.11]{bogachev2018weak} states the result for a sequence $(\mu_n)_{n}$, but it also works when  $(\mu_n)_{n}$ is a net. For convenience of the readers, we give a direct proof here. It suffices to show that the vague convergence and  $\lim\limits_{n\to\infty}\mu_n(X)=\mu(X)$ implies the weak convergence. We can assume that $\mu_n(X), \mu(X)\leq 1$. We claim that for any $\varepsilon>0$, there is a compact subset $K$ and $N$ such that for any $n\geq N$, we have that 
	\[\mu_n(X\setminus K), \mu(X\setminus K)\leq \varepsilon.\]
	Indeed, this can be proved similarly as \cite[Proposition~4.5.11]{bogachev2018weak}. Let $\varepsilon>0$, by regularity of Radon measures, we find a compact subset $K'$ and $N'$ such that $\mu(K')>\mu(X)-\frac{\varepsilon}{3}$ and $\mu_n(X)<\mu(X)+\frac{\varepsilon}{3}$ for any $n\geq N'$. 
	By Urysohn's lemma for locally compact spaces, there is $\varphi\in C_c(X)$ with $\varphi|_{K'}=1$ and $0\leq \varphi\leq 1$, hence $\int_X\varphi\mu\geq \mu(K')$.
	We take $N\geq N'$ such that\[\int_X\varphi\mu_n\geq \int_X\varphi\mu-\frac{\varepsilon}{3} \ \ \text{ for all $n\geq N$}.\]
	Note that the compact set $K\coloneq\Supp(\varphi)$ contains $K'$. Then for any $n\geq N$, we have that
	\[\mu_n(K)\geq \int_X\varphi\mu_n\geq \int_X\varphi\mu-\frac{\varepsilon}{3}\geq\mu(K')-\frac{\varepsilon}{3}\geq \mu(X)-\frac{2\varepsilon}{3}\geq \mu_n(X)-\varepsilon,\]
	\[\mu(K)\geq\int_X\varphi\mu \geq \mu(K')\geq \mu(X)-\frac{\varepsilon}{3}.\]
	This proves our claim.
	
	For any $\varepsilon>0$, we take $K$ and $N$ as above. Let $f\in C_b(X)$. We may assume that $|f|\leq 1$. By Urysohn's lemma for locally compact spaces, there is a compactly supported continuous function $\rho\colon X\rightarrow [0,1]$ such that $\rho|_K\equiv 1$. By vague convergence, there is $N'\geq N$ such that for any $n\geq N'$, we have that
	\[\left|\int_X\rho f\mu_n-\int_X\rho f\mu\right|\leq \varepsilon.\]
	Then for any $n\geq N'$, we have that
	\begin{align*}
		\left|\int_Xf\mu_n-\int_Xf\mu\right| &\leq \left|\int_Kf\mu_n-\int_Kf\mu\right|+\left|\int_{X\setminus K}f\mu_n-\int_{X\setminus K}f\mu\right|\\
		&\leq \left|\int_K\rho f\mu_n-\int_K\rho f\mu\right|+2\varepsilon\\
		&\leq \left|\int_X\rho f\mu_n-\int_X\rho f\mu\right|+ \left|\int_{X\setminus K}\rho f\mu_n-\int_{X\setminus K}\rho f\mu\right|+2\varepsilon\\
		&\leq 5\varepsilon
	\end{align*}
	where we used $\mu_n(X \setminus K)\leq \varepsilon$ for the second inequality and $\mu(X \setminus K) \leq \varepsilon$ for the last inequality. This completes the proof.
\end{proof}

This theorem allows us to translate the results for weak convergence to the ones for vague convergence which is appears frequently in Arakelov geometry.

\begin{lemma} \label{lemma:weakly converges of measures}
Let $X$ be a locally compact (Hausdorff) space, $(f_m)_{m \in J}$ a net of continuous functions on $X$ converging uniformly to a bounded (continuous) function $f$, and $(\mu_n)_{n \in I}$ a net of positive Radon measures on $U$ converging weakly to a positive Radon measure $\mu$ with $\mu(X)<\infty$. 
Then $f_m\mu_n$ weakly converges to $f\mu$ when $(m,n)\to\infty$.
\end{lemma}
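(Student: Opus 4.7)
The plan is to reduce the joint convergence $f_m \mu_n \overset{w}{\to} f\mu$ to a two-step estimate: a uniform estimate in $n$ that handles the replacement of $f_m$ by $f$, followed by the weak convergence $\mu_n \overset{w}{\to} \mu$ applied to the bounded continuous test function. Concretely, for any $h \in C_b(X)$, I would write
\[
\int_X h f_m \, d\mu_n - \int_X h f \, d\mu
= \int_X h(f_m - f)\, d\mu_n + \left(\int_X h f\, d\mu_n - \int_X h f\, d\mu\right).
\]
Since $f \in C_b(X)$ and $h \in C_b(X)$, the product $hf$ lies in $C_b(X)$, so the weak convergence hypothesis immediately gives that the second summand tends to $0$.

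For the first summand, I would use that the constant function $1 \in C_b(X)$, hence weak convergence applied to $h=1$ yields $\mu_n(X) \to \mu(X) < \infty$; in particular the net $(\mu_n(X))_{n\in I}$ is bounded by some constant $M$. Combined with the uniform convergence $f_m \to f$, this gives
\[
\left| \int_X h(f_m - f)\, d\mu_n \right| \leq \|h\|_\infty \cdot \|f_m - f\|_\infty \cdot M,
\]
which tends to $0$ as $m \to \infty$, uniformly in $n$. Hence an $\varepsilon/2$-argument combining both estimates shows that for each $h \in C_b(X)$,
\[
\lim_{(m,n)\to\infty} \int_X h f_m\, d\mu_n = \int_X h f\, d\mu,
\]
which is exactly the weak convergence $f_m \mu_n \overset{w}{\to} f\mu$.

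There is no substantial obstacle in this argument; the only mild subtlety is ensuring a uniform bound on $\mu_n(X)$, which as noted follows at once from weak convergence by testing against the constant function $1$. (Strictly speaking one should also note that $f_m$ itself is bounded for $m$ large, so that the products $h f_m$ are $\mu_n$-integrable; this is automatic since $\|f_m\|_\infty \leq \|f\|_\infty + \|f_m - f\|_\infty$ is eventually finite.)
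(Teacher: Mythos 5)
Your proposal is correct and follows essentially the same route as the paper: the identical decomposition into $\int \varphi(f_m-f)\,d\mu_n + \int \varphi f\,d(\mu_n-\mu)$, with the first term controlled via uniform convergence together with a bound on $\mu_n(X)$ obtained from $\mu_n(X)\to\mu(X)$, and the second handled directly by weak convergence. One small wording caveat: since these are nets rather than sequences, convergence of $\mu_n(X)$ gives only \emph{eventual} boundedness, not boundedness of the whole net—this is all the $\varepsilon$-argument needs (and is how the paper phrases it, taking $\mu_n(X)\leq\mu(X)+1$ for $n\geq n_0$), but the blanket claim ``the net is bounded by $M$'' is technically off.
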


\begin{proof}
Let $\varphi\in C_b(X)$. There is $C>0$ such that $|\varphi|\leq C$. For any $\varepsilon>0$, uniform convergence yields $m_0\in J$ such that for any $m\geq m_0$, we have 
$$|f_m-f| \leq  \frac{\varepsilon}{2C(\mu(X)+1)}.$$
Weak convergence of measures gives $n_0 \in I$ such that for any $n \geq n_0$, we have 
$$\left|\int_X \varphi f \mu_n - \int_X \varphi f \mu \right| \leq \frac{\varepsilon}{2}.$$
Using $\mu_n(X) \to \mu(X)$, we may also assume that for all $n \geq n_0$, we have
$$\mu_n(X) \leq \mu(X)+1.$$
For $m \geq m_0$ and $n \geq n_0$, the three displayed inequalities above yield
\begin{align*}
	\left|\int_{X}\varphi f_m\mu_n-\int_{X}\varphi f\mu\right| &\leq \int_{X}\left|\varphi (f_m-f)\right|\mu_n+\left|\int_{X}\varphi f(\mu_n-\mu)\right|\\
	& \leq C \cdot \frac{\varepsilon}{2C(\mu(X)+1)}\cdot  \mu_n(X) + \frac{\varepsilon}{2} \leq 
	\varepsilon. 
\end{align*}
This proves the lemma.
\end{proof}

\begin{corollary} 	\label{cor:weakly converges of measures}
Let $X$ be a locally compact (Hausdorff) space, $(f_m)_{m \in J}$ a net of  continuous functions on $X$ converging locally uniformly to a (continuous) function $f$, and $(\mu_n)_{n \in I}$ a net of positive Radon measures on $X$ converging weakly to a positive Radon measure $\mu$ with $\mu(X)<\infty$. Assume that there is a constant $C$ such that for all $m \in J$, we have $|f_m| \leq C$. 
Then $f_m\mu_n$ weakly converges to $f\mu$ when $(m,n)\to\infty$. In particular,
\[\lim\limits_{(m,n)\to\infty}\int_Xf_m\mu_n=\int_Xf\mu.\]
\end{corollary}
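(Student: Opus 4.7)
The strategy is to reduce the locally uniform situation to the uniformly convergent case treated in \cref{lemma:weakly converges of measures} by cutting off with a suitable compactly supported function. By definition of weak convergence, it suffices to show that for every $\varphi \in C_b(X)$ we have $\int_X \varphi f_m\,\mu_n \to \int_X \varphi f\,\mu$; note that the pointwise bound $|f_m| \leq C$ passes to the pointwise limit, so $|f| \leq C$ as well and $f$ is continuous, hence $\varphi f \in C_b(X)$ and $\int_X f\,\mu$ makes sense.

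The first step is to exploit tightness. Applying the tightness claim established inside the proof of \cref{thm:equivalence of vague convergence and weak convergence} to the weakly convergent net $(\mu_n)$, for each $\varepsilon > 0$ there exist a compact subset $K \subset X$ and an index $n_0$ such that $\mu(X \setminus K) \leq \varepsilon$ and $\mu_n(X \setminus K) \leq \varepsilon$ for all $n \geq n_0$. By Urysohn's lemma on the locally compact space $X$, pick $\rho \in C_c(X)$ with $\rho|_K \equiv 1$ and $0 \leq \rho \leq 1$. Decompose
\[
\varphi f_m = \varphi \rho f_m + \varphi(1-\rho) f_m,
\]
and analogously for $\varphi f$.

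For the tail $\varphi(1-\rho) f_m$, we use that $1-\rho$ vanishes on $K$ together with the uniform bound $|f_m| \leq C$ to estimate $|\int_X \varphi(1-\rho) f_m\,\mu_n| \leq \|\varphi\|_\infty C \,\mu_n(X \setminus K) \leq \|\varphi\|_\infty C \varepsilon$ for $n \geq n_0$, and similarly for the $\mu$-integral of $\varphi(1-\rho) f$. For the main term, observe that $\mathrm{supp}(\rho)$ is compact, so the locally uniform convergence $f_m \to f$ (which on a locally compact space is equivalent to uniform convergence on compact subsets) implies that $\rho f_m$ converges uniformly on $X$ to $\rho f$, the latter being continuous and bounded. \cref{lemma:weakly converges of measures} then applies to $(\rho f_m)$ and $(\mu_n)$, giving $\int_X \varphi \rho f_m\,\mu_n \to \int_X \varphi \rho f\,\mu$. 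Combining the three estimates yields $\limsup_{(m,n)\to\infty} |\int_X \varphi f_m\,\mu_n - \int_X \varphi f\,\mu| \leq 2\|\varphi\|_\infty C \varepsilon$, and sending $\varepsilon \to 0$ finishes the argument. The final assertion $\int_X f_m\,\mu_n \to \int_X f\,\mu$ is the special case $\varphi \equiv 1$.

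The only mildly delicate point is the equivalence between "locally uniform" convergence and uniform convergence on compact subsets, which is needed to conclude that $\rho f_m \to \rho f$ uniformly on the compact set $\mathrm{supp}(\rho)$; everything else is bookkeeping with the tightness estimate and the already established uniform-convergence case.
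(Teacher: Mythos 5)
Your proof is correct and follows the same overall strategy as the paper's (Urysohn cutoff, split into main plus tail, feed the main term to \cref{lemma:weakly converges of measures}), but it differs in two minor implementation choices that are worth noting. First, where the paper constructs relatively compact open sets $W \subset V$ and then applies \cref{lemma:weakly converges of measures} to the restricted measures $\mu_n|_V$ on the subspace $V$, you observe that because $\rho$ has compact support, $\rho f_m \to \rho f$ uniformly on \emph{all} of $X$, so the lemma can be applied on $X$ directly, with no need to pass to a subspace. This is cleaner: weak convergence of $\mu_n$ on $X$ does not, strictly speaking, hand you weak convergence of $\mu_n|_V$ on $V$ (a bounded continuous function on $V$ need not extend to one on $X$), so the paper's invocation of the lemma on $V$ is an informal shortcut whose justification actually comes down to exactly the observation you make. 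Second, for the tail estimate the paper cites the portmanteau inequality $\liminf_n \mu_n(W) \geq \mu(W)$ from Bogachev, whereas you appeal to the tightness claim established inside the proof of \cref{thm:equivalence of vague convergence and weak convergence}; both deliver the needed $\limsup_n \mu_n(X \setminus K) \leq \varepsilon$, but your route is more self-contained since it recycles an estimate already proved in the paper. Your application of that tightness claim is legitimate because weak convergence on $X$ implies vague convergence together with $\mu_n(X) \to \mu(X)$, which is precisely the hypothesis under which the claim was established. The rest of the bookkeeping (observing $|f| \leq C$ passes to the limit, reducing to $\varphi \equiv 1$ for the final statement) is sound.
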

\begin{proof}
The proof is similar to the one of \cite[Proposition~3.22~(iv)]{burgos2023on}. 
Let $\varphi\in C_b(X)$. By the regularity of Radon measures {and the assumption that $X$ is locally compact}, for any $\varepsilon>0$, there are relatively compact open subsets {$W\subset V\subset X$} such that $\mu(X\setminus W) \leq \varepsilon$ {and $\overline W\subset V$, where $\overline W$ is the closure of $W$ in $X$}. By Urysohn's lemma for locally compact spaces, there is a continuous function $\rho\colon X\rightarrow [0,1]$ with $\rho|_W\equiv1$ and $\rho|_{X\setminus V}\equiv 0$. Since $f_m$ locally uniformly converges to $f$ on $X$, we have that $\rho f_m$ is a net of continuous functions converging uniformly to $\rho f$ on $V$. 
By \cref{lemma:weakly converges of measures}, we have that $\rho f_m\mu_n$ weakly converges to $\rho f\mu$ on $V$. Hence
\[\lim\limits_{(m,n)\to\infty}\int_X\rho\varphi f_m\mu_n = \lim\limits_{(m,n)\to\infty}\int_V\rho\varphi f_m\mu_n=\int_V\rho\varphi f\mu = \int_X\rho\varphi f\mu.\]
On the other hand, we have that
\[\liminf\limits_{n\to\infty}\mu_n(W)\geq \mu(W),\]
see \cite[Corollary~4.3.9]{bogachev2018weak}, thus  
\[\limsup\limits_{n\to\infty}\mu_n(X\setminus W)\leq \mu(X\setminus W)\leq \varepsilon.\]
Since $\varphi, \rho, f_m, f$ are uniformly bounded on $X$, there is a constant $C'>0$ independent of $\varepsilon$ such that the quantities
\[\lim\limits_{(m,n)\to\infty}\int_{X\setminus W}|\varphi f_m|\mu_n, \ \  \lim\limits_{(m,n)\to\infty}\int_{X\setminus W}\rho|\varphi f_m|\mu_n, \ \ \int_{X\setminus W}|\varphi f|\mu, \ \  \int_{X\setminus W}\rho|\varphi f|\mu\]
are all bounded by $\varepsilon C'$. Hence, we have that
\begin{align*}
&\limsup\limits_{(m,n)\to\infty}\left|\int_X\varphi f_m\mu_n-\int_X\varphi f\mu\right|\\
 & \leq \limsup\limits_{(m,n)\to\infty}\left|\int_X \rho\varphi f_m\mu_n-\int_X\rho\varphi f\mu\right|+\limsup\limits_{(m,n)\to\infty}\left|\int_{X\setminus W} (1-\rho)\varphi f_m\mu_n-\int_{X\setminus W}(1-\rho)\varphi f\mu\right|\\
 & \leq 4\varepsilon C
\end{align*}
proving the claim.
\end{proof}

\begin{remark}
If a net of continuous functions $f_m$ converges pointwise and decreasingly to a continuous function $f$ on a locally compact space, then it converges locally uniformly to $f$ by Dini's theorem.
\end{remark}

\bibliography{references}
\bibliographystyle{alpha.bst}
\end{document}